\documentclass[english]{smfbook}

\usepackage{etex}
\usepackage[francais,english]{babel}%
\usepackage{xspace}%
\usepackage{amsfonts,amsmath,amssymb,smfthm}%
\usepackage{graphicx,mathdots}%
\usepackage{enumerate}%
\usepackage[thinlines,thicklines]{easybmat}%
\usepackage{pinlabel}%
\usepackage{color}%

\input{xy}
\xyoption{all}
\objectmargin={3mm}


\theoremstyle{plain}
\newtheorem{proposition}[table]{Proposition}
\newtheorem{theorem}[table]{Theorem}
\newtheorem{corollary}[table]{Corollary}
\newtheorem{lemma}[table]{Lemma}
\newtheorem{fact}[table]{Fact}

\newtheorem{claim}[table]{Claim}
\newtheorem{conjecture}[table]{Conjecture}
\newtheorem{example}[table]{Example}
\newtheorem{def-lem}[table]{Definition-Lemma}

\newtheoremstyle{theoremwithref}{}{}{\itshape}{}{\bfseries}{.}{.5em}{#1 #2 #3}
\theoremstyle{theorem}

\theoremstyle{definition}
\newtheorem{definition}[table]{Definition}

\newtheorem{remark}[table]{Remark}
\newtheorem{remarks}[table]{Remarks}
\newtheorem{observation}[table]{Observation}

\setcounter{tocdepth}{1}

\newcommand{\C}{\mathbb{C}}
\newcommand{\R}{\mathbb{R}}
\newcommand{\Z}{\mathbb{Z}}
\newcommand{\N}{\mathbb{N}}
\newcommand{\GL}{\mathrm{GL}}
\newcommand{\SL}{\mathrm{SL}}
\newcommand{\PSL}{\mathrm{PSL}}
\newcommand{\OO}{\mathrm{O}}
\newcommand{\SO}{\mathrm{SO}}
\newcommand{\PSO}{\mathrm{PSO}}
\newcommand{\SU}{\mathrm{SU}}
\newcommand{\U}{\mathrm{U}}
\newcommand{\Sp}{\mathrm{Sp}}
\newcommand{\Spin}{\mathrm{Spin}}
\newcommand{\g}{\mathfrak{g}}
\newcommand{\h}{\mathfrak{h}}
\newcommand{\kk}{\mathfrak{k}}
\newcommand{\p}{\mathfrak{p}}
\newcommand{\q}{\mathfrak{q}}
\newcommand{\aaa}{\mathfrak{a}}
\newcommand{\bb}{\mathfrak{b}}
\newcommand{\jj}{\mathfrak{j}}
\newcommand{\n}{\mathfrak{n}}

\newcommand{\ttt}{\mathfrak{t}}
\newcommand{\ssl}{\mathfrak{sl}}
\newcommand{\so}{\mathfrak{so}}
\newcommand{\su}{\mathfrak{su}}
\newcommand{\ssp}{\mathfrak{sp}}
\newcommand{\e}{\mathfrak{e}}
\newcommand{\f}{\mathfrak{f}}
\newcommand{\Hom}{\mathrm{Hom}}
\newcommand{\End}{\operatorname{End}}
\newcommand{\D}{\mathbb{D}}
\newcommand{\Diag}{\mathrm{Diag}}
\newcommand{\M}{\mathcal{M}}
\newcommand{\Spec}{\mathrm{Spec}}
\newcommand{\Ad}{\operatorname{Ad}}
\newcommand{\ad}{\operatorname{ad}}
\newcommand{\HH}{\mathbb{H}}
\newcommand{\PP}{\mathbb{P}}
\newcommand{\rank}{\operatorname{rank}}
\newcommand{\V}{\mathcal{V}}
\newcommand{\ZZ}{\mathcal{Z}}
\newcommand{\vol}{\mathrm{vol}}
\newcommand{\AdS}{\mathrm{AdS}}
\newcommand{\ie}{\emph{i.e.}\ }
\newcommand{\resp}{resp.\ }

\newcommand*{\longhookrightarrow}{\ensuremath{\lhook\joinrel\relbar\joinrel\rightarrow}}
\newcommand*{\longhookleftarrow}{\ensuremath{\leftarrow\joinrel\relbar\joinrel\rhook}}
\newcommand*{\longtwoheadrightarrow}{\relbar\joinrel\twoheadrightarrow}
\newcommand*{\longlongrightarrow}{\ensuremath{\relbar\joinrel\relbar\joinrel\relbar\joinrel\longrightarrow}}

\newenvironment{changemargin}[2]{\begin{list}{}{%
\setlength{\topsep}{0pt}%
\setlength{\leftmargin}{0pt}%
\setlength{\rightmargin}{0pt}%
\setlength{\listparindent}{\parindent}%
\setlength{\itemindent}{\parindent}%
\setlength{\parsep}{0pt plus 1pt}%
\addtolength{\leftmargin}{#1}%
\addtolength{\rightmargin}{#2}%
}\item }{\end{list}}

\def\rightset#1{\Big\downarrow\vcenter{%
                 \rlap{$\scriptstyle #1$}}}

\title[Poincar\'e series for locally symmetric spaces]{Poincar\'e series for non-Riemannian locally symmetric spaces}
\alttitle{S\'eries de Poincar\'e pour les espaces localement sym\'etriques non riemanniens}

\author{Fanny Kassel}
\address{CNRS and Universit\'e Lille 1, Laboratoire Paul Painlev\'e, 59655 Villeneuve d'Ascq Cedex, France}
\email{fanny.kassel@math.univ-lille1.fr}

\author{Toshiyuki Kobayashi}
\address{Kavli IPMU and Graduate School of Mathematical Sciences,
The University of Tokyo, 3-8-1 Komaba, Meguro,
153-8914 Tokyo, Japan}
\email{toshi@ms.u-tokyo.ac.jp}

\begin{document}
\frontmatter

\numberwithin{equation}{chapter}
\numberwithin{table}{chapter}

\begin{abstract}
The discrete spectrum of the Laplacian has been extensively studied on reductive symmetric spaces and on Riemannian locally symmetric spaces.
Here we examine it for the first time in the general setting of non-Riemannian, reductive, locally symmetric spaces.

For any non-Riemannian, reductive symmetric space $X$ on which the discrete spectrum of the Laplacian is nonempty, and for any discrete group of isometries $\Gamma$ whose action on~$X$ is sufficiently proper, we construct $L^2$-eigenfunctions of the Laplacian on $X_{\Gamma}:=\Gamma\backslash X$ for an infinite set of eigenvalues.
These eigenfunctions are obtained as generalized Poincar\'e series, \ie as projections to~$X_{\Gamma}$ of sums, over the $\Gamma$-orbits, of eigenfunctions of the Laplacian on~$X$.

We prove that the Poincar\'e series we construct still converge, and define nonzero $L^2$-functions, after any small deformation of~$\Gamma$, for a large class of groups~$\Gamma$.
In other words, the infinite set of eigenvalues we construct is stable under small deformations.
This contrasts with the classical setting where the nonzero discrete spectrum varies on the Teichm\"uller space of a compact Riemann surface.

We actually construct joint $L^2$-eigenfunctions for the whole commutative algebra of invariant differential operators on~$X_{\Gamma}$.
\end{abstract}

\begin{altabstract}
Le spectre discret du laplacien a \'et\'e beaucoup \'etudi\'e sur les espaces sym\'etriques r\'eductifs, ainsi que sur les espaces localement sym\'etriques riemanniens.
Dans cet article, nous l'\'etudions pour la premi\`ere fois dans le cadre g\'en\'eral des espaces localement sym\'etriques r\'eductifs non riemanniens.

Pour tout espace sym\'etrique r\'eductif $X$ dont le spectre discret du laplacien est non vide, et pour tout groupe discret d'isom\'etries $\Gamma$ dont l'action sur~$X$ est suffisamment propre, nous construisons des fonctions propres $L^2$ du laplacien sur $X_{\Gamma}:=\Gamma\backslash X$ pour une infinit\'e de valeurs propres.
Ces fonctions propres sont obtenues comme s\'eries de Poincar\'e g\'en\'eralis\'ees, c'est-\`a-dire comme projections sur~$X_{\Gamma}$ de sommes, sur les $\Gamma$-orbites, de fonctions propres du laplacien sur~$X$.

Nous montrons que ces s\'eries de Poincar\'e continuent \`a converger et \`a d\'efinir des fonctions $L^2$ non nulles apr\`es n'importe quelle petite d\'eformation de~$\Gamma$, pour une classe importante de groupes~$\Gamma$.
En d'autres termes, l'ensemble infini de valeurs propres que nous construisons est stable par petites d\'eformations de~$\Gamma$.
Ceci contraste avec la situation riemannienne classique o\`u le spectre discret non nul d'une surface de Riemann compacte varie de mani\`ere non constante sur son espace de Teichm\"uller.

Les fonctions propres que nous construisons sont en fait communes \`a toute l'alg\`ebre commutative des op\'erateurs diff\'erentiels invariants sur~$X_{\Gamma}$.
\end{altabstract}

\subjclass{22E40, 22E46, 58J50 (primary); 11F72, 53C35 (secondary)}

\keywords{Laplacian, invariant differential operator, discrete spectrum, pseudo-Riemannian manifold, reductive symmetric space, Clifford--Klein form, locally symmetric space, properly discontinuous action, discrete series representation}
\altkeywords{Laplacien, op\'erateurs diff\'erentiels invariants, spectre discret, vari\'et\'e pseudo-riemannienne, espace sym\'etrique r\'eductif, forme de Clifford--Klein, espace localement sym\'etrique, action propre, s\'eries discr\`etes}

\thanks{T.K. was partially supported by Grant-in-Aid for Scientific Research (B) (22340026) of the JSPS}

\maketitle
\tableofcontents

\mainmatter

\chapter{Introduction}\label{sec:intro}

The spectral properties of the Laplacian have been much investigated both on Riemannian locally symmetric spaces $\Gamma\backslash G/K$ and on reductive symmetric spaces~$G/H$.
These are all special cases of pseudo-Riemannian locally symmetric spaces $\Gamma\backslash G/H$, for which the Laplacian continues to exist and be worthy of study.
The aim of this paper is to set up a framework for spectral theory in this general setting and to prove the first results on the discrete spectrum of such spaces under a rank condition on $G/H$ (which makes them non-Riemannian if $G$ is noncompact).
In particular, we construct $L^2$-eigenfunctions for an infinite set of eigenvalues on a large class of spaces (not necessarily compact or of finite volume) and prove some deformation results that have no analogue in the classical Riemannian setting.
More precisely, we work not only with the Laplacian, but with the whole commutative algebra of ``intrinsic'' differential operators on $\Gamma\backslash G/H$, which includes the Laplacian.
Before describing our results in more detail, we first recall the definitions of the main objects.

\section{The main objects}\label{subsec:objects}

A \emph{pseudo-Riemannian metric} on a manifold~$M$ is a smooth, nondegenerate, symmetric bilinear tensor~$g$ of signature $(p,q)$ for some $p,q\in\N$.
As in the Riemannian case (\ie $q=0$), the metric~$g$ induces a second-order differential operator
\begin{equation}\label{eqn:defLaplacian}
\square_M =  \operatorname{div}\,\operatorname{grad}
\end{equation}
called the \emph{Laplacian} or \emph{Laplace--Beltrami operator}.
For instance, for
$$(M,g) = \R^{p,q} := \big(\R^{p+q},\mathrm{d}x_1^2+\dots+\mathrm{d}x_p^2-\mathrm{d}x_{p+1}^2-\dots-\mathrm{d}x_{p+q}^2\big)$$
the Laplacian is
$$\square_{\R^{p,q}} = \frac{\partial^2}{\partial x_1^2} + \dots + \frac{\partial^2}{\partial x_p^2} - \frac{\partial^2}{\partial x_{p+1}^2} - \dots - \frac{\partial^2}{\partial x_{p+q}^2}\,.$$
In general, $\square_M$ is elliptic if $g$ is Riemannian, hyperbolic if $g$ is Lorentzian (\ie $q=1$), and none of these otherwise.
The \emph{discrete spectrum} of~$\square_M$ is its set of eigenvalues corresponding to $L^2$-eigenfunctions:
\begin{equation}\label{eqn:speclaplacian}
\Spec_d(\square_M) := \left\{ t\in\C :\ \exists f\in L^2(M),\ f\neq 0,\ \square_M f=tf\right\} ,
\end{equation}
where $L^2(M)$ is the Hilbert space of square-integrable functions on~$M$ with respect to the Radon measure induced by the pseudo-Riemannian structure.

A \emph{reductive symmetric space} is a homogeneous space $X=G/H$ where $G$ is a real reductive Lie group and $H$ an open subgroup of the group of fixed points of~$G$ under some involutive automorphism~$\sigma$.
The manifold~$X$ naturally carries a pseudo-Riemannian metric, induced by the Killing form of the Lie algebra $\g$ of~$G$ when $G$ is semisimple; therefore, $X$ has a Laplacian~$\square_X$.
Alternatively, $\square_X$ is induced by the Casimir element of the enveloping algebra~$U(\g)$, acting on $C^{\infty}(X)$ by differentiation (see Section~\ref{subsec:Laplacian}).
Let $\D(X)$ be the $\C$-algebra of differential operators on~$X$ that are invariant under the natural $G$-action
$$g\cdot D = \ell_g^{\ast}\circ D\circ (\ell_g^{\ast})^{-1} = \left(f\longmapsto D\big(f^{g^{-1}}\big)^g\right),$$
where we set $\ell_g^{\ast}(f)=f^g:=f(g\,\cdot)$.
The Laplacian~$\square_X$ belongs to~$\D(X)$ and, since $X$ is a symmetric space, $\D(X)$ is commutative (see Section~\ref{subsec:DGH}); we shall consider eigenfunctions for~$\square_X$ that are in fact joint eigenfunctions for~$\D(X)$.

A \emph{locally symmetric space} is a quotient $X_{\Gamma}=\Gamma\backslash X$ of a reductive symmetric space $X=G/H$ by a discrete subgroup $\Gamma$ of~$G$ acting properly discontinuously and freely.
Such a quotient is also called a \emph{Clifford--Klein form of~$X$}.
The proper discontinuity of the action of~$\Gamma$ ensures that $X_{\Gamma}$ is Hausdorff, and it is in fact a manifold since the action is free.
It is locally modeled on~$X$ (it is a complete $(G,X)$-manifold in the sense of Ehresmann and Thurston), hence inherits a pseudo-Riemannian structure from~$X$ and has a Laplacian~$\square_{X_{\Gamma}}$.
Any operator $D\in\D(X)$ induces a differential operator~$D_{\Gamma}$ on~$X_{\Gamma}$ such that the following diagram commutes, where $p_{\Gamma} : X\rightarrow X_{\Gamma}$ is the natural projection.
$$\xymatrix{
C^\infty(X) \ar[r]^D & C^{\infty}(X)\\
C^\infty(X_{\Gamma}) \ar[u]^{p_{\Gamma}^{\ast}} \ar[r]^{D_{\Gamma}} & C^{\infty}(X_{\Gamma}) \ar[u]_{p_{\Gamma}^{\ast}}
}$$
In particular, note that
$$\square_{X_{\Gamma}} = (\square_X)_{_{\Gamma}}.$$
The \emph{discrete spectrum} $\Spec_d(X_{\Gamma})$ of~$X_{\Gamma}$ is defined to be the set of $\C$-algebra homomorphisms $\chi_{\lambda} : \D(X)\rightarrow\C$ such that the space $L^2(X_{\Gamma},\M_{\lambda})$ of weak solutions $f\in L^2(X_{\Gamma})$ to the system
$$D_{\Gamma} f = \chi_{\lambda}(D) f \quad\quad\mathrm{for\ all}\ D\in\D(X) \eqno{(\M_{\lambda})}$$
is nonzero.
(The notation~$\chi_{\lambda}$ will be explained in Section~\ref{subsec:DGH}.)
It~is~the~set of joint eigenvalues for the commutative algebra $\D(X_{\Gamma}):=\{ D_{\Gamma} : D\in\nolinebreak\D(X)\}$, which we think of as the algebra of ``intrinsic'' differential operators on~$X_{\Gamma}$.
The discrete spectrum $\Spec_d(X_{\Gamma})$ refines the discrete spectrum of the Laplacian~$\square_{X_{\Gamma}}$ from \eqref{eqn:speclaplacian} (see Remark~\ref{rem:L2Mlambda}).

\section{The main problems}\label{subsec:program}

Let $X_{\Gamma}=\Gamma\backslash X$ be a locally symmetric space.
We consider the following problems (see \cite{kk10}):

\smallskip\noindent
\textbf{Problem A:} To construct joint $L^2$-eigenfunctions on~$X_{\Gamma}$ corresponding to $\Spec_d(X_{\Gamma})$.

\smallskip\noindent
\textbf{Problem B:} To understand the behavior of $\Spec_d(X_{\Gamma})$ under small deformations of~$\Gamma$ inside~$G$.

\smallskip

By a small deformation we mean a homomorphism close enough to the natural inclusion in the compact-open topology on $\Hom(\Gamma, G)$.

Problems A and~B have been studied extensively in the following two~cases.
\begin{itemize}
  \item Assume that $H$ is compact.
  Then $X$ is Riemannian and the Laplacian~$\square_X$ is elliptic.
  If $X_{\Gamma}$ is compact, then the discrete spectrum of~$\square_{X_{\Gamma}}$ is infinite.
  If furthermore $\Gamma$ is irreducible, then Weil's local rigidity theorem \cite{wei62} states that nontrivial deformations exist only when $X$ is the hyperbolic plane $\HH^2=\SL_2(\R)/\SO(2)$, in which case compact Clifford--Klein forms have an interesting deformation space \emph{modulo} conjugation, namely their Teichm\"uller space.
Viewed as a ``function'' on the Teichm\"uller space, the discrete spectrum varies analytically \cite{bc90} and nonconstantly (Fact~\ref{fact:Teich} below).
On the other hand, for noncompact~$X_{\Gamma}$ the discrete spectrum $\Spec_d(X_{\Gamma})$ may be considerably different depending on whether $\Gamma$ is arithmetic or not (see Selberg \cite{sel54}, Phillips--Sarnak \cite{ps85a,ps85b}, Wolpert \cite{wol94}, etc.).
  \smallskip
  \item Assume that $\Gamma$ is trivial.
  Then the group~$G$ naturally acts on $L^2(X_{\Gamma})=L^2(X)$ and so representation-theoretic methods may be used.
  Spectral analysis on the reductive symmetric space~$X$ with respect to $\D(X)$ is essentially equivalent to finding a Plancherel-type theorem for the irreducible decomposition of the regular representation of~$G$ on $L^2(X)$: see van den Ban--Schlichtkrull \cite{bs05}, Delorme \cite{del98}, and Oshima \cite{osh88a}, as a far-reaching generalization of Harish-Chandra's earlier work \cite{har76} on the regular representation $L^2(G)$ for group manifolds.
  Flensted-Jensen~\cite{fle80} and Matsuki--Oshima~\cite{mo84} showed that $\Spec_d(X)\neq\emptyset$ if and only if the condition $\rank G/H=\rank K/K\cap H$ is satisfied (see Section~\ref{subsec:Lambda+}), in which case they gave an explicit description of $\Spec_d(X)$ (Fact~\ref{fact:decompVlambda}).
  The rest of the spectrum (tempered representations for~$X$, see~\cite{ber88}) is constructed from the discrete spectrum of smaller symmetric spaces by induction.
\end{itemize}

On the other hand, Problems A and~B have not been much studied when $H$ is noncompact, $\Gamma$ is nontrivial, and $\Gamma$ acts properly discontinuously on $X=G/H$, except in the group manifold case when $X_{\Gamma}$ identifies with ${}^\backprime{}\Gamma\backslash{}^\backprime{}G$ for some reductive Lie group~${}^\backprime{}G$ and some discrete subgroup ${}^\backprime{}\Gamma$.
Here we give the first results that do not restrict to this case.
The fact that $H$ is noncompact and $\Gamma$ nontrivial implies new difficulties from several perspectives:
\begin{enumerate}
  \item Analysis: the Laplacian on~$X_{\Gamma}$ is not an elliptic operator anymore;
  \item Geometry: an arbitrary discrete subgroup~$\Gamma$ of~$G$ does not necessarily act properly discontinuously on~$X$;
  \item Representation theory: a discrete subgroup~$\Gamma$ of~$G$ acting properly discontinuously on~$X$ always has infinite covolume in~$G$; moreover, $G$ does not act on $L^2(X_{\Gamma})$ and $L^2(X_{\Gamma})\neq L^2(\Gamma\backslash G)^H$ since $H$ is noncompact.
\end{enumerate}
In particular, point~(1) makes Problem~A nontrivial: we do not know \emph{a priori} whether or not $\Spec_d(X_{\Gamma})\neq\emptyset$, even for compact~$X_{\Gamma}$.

Point~(2) creates some underlying difficulty to Problem~B: we need to consider Clifford--Klein forms~$X_{\Gamma}$ for which the proper discontinuity of the action of~$\Gamma$ on~$X$ is preserved under small deformations of $\Gamma$ in~$G$.
Not all Clifford--Klein forms~$X_{\Gamma}$ have this property (see Example~\ref{ex:propernonstable}), but a large class does (see Example~\ref{ex:deform} and subsequent comments).
The study of small deformations of Clifford--Klein forms in the general setting of reductive homogeneous spaces was initiated in \cite{kob98}; we refer to \cite{con12} for a recent survey in the case of compact Clifford--Klein forms.
An interesting aspect of the case of noncompact~$H$ is that there are more examples where nontrivial deformations of compact Clifford--Klein forms exist than for compact~$H$ (see Sections \ref{subsec:exdeform} and~\ref{subsec:exinfinitevol}).

\section{One approach: constructing generalized Poincar\'e series}

In this paper we investigate Problems A and~B under the assumption \eqref{eqn:rank} that $X$ admits a \emph{maximal compact subsymmetric space of full rank}.
This case is somehow orthogonal to the case of Riemannian symmetric spaces of the noncompact type, where compact subsymmetric spaces are reduced to points.
Assuming that $G$ is noncompact, the group~$H$ is thus noncompact and $X$ non-Riemannian.

By \cite{fle80,mo84}, the assumption \eqref{eqn:rank} is equivalent to the fact that $\Spec_d(X)$ is nonempty.
Our idea is then to construct joint eigenfunctions on~$X_{\Gamma}$ as \emph{generalized Poincar\'e series}
\begin{equation}\label{eqn:phiGamma}
\varphi^{\Gamma} :\ \Gamma x \longmapsto \sum_{\gamma\in\Gamma}\ \varphi(\gamma\!\cdot\!x),
\end{equation}
where the $\varphi$ are well-behaved joint eigenfunctions on~$X$.
The convergence and nonvanishing of the series are nontrivial since the behavior of~$\varphi$ needs to be controlled \emph{in relation to the distribution of $\Gamma$-orbits in the non-Riemannian space~$X$}, for which not much is known since $\Gamma$ is \emph{not} a lattice in~$G$ (see Remark~\ref{rem:lattice}).
From a representation-theoretic viewpoint, we build on Flensted-Jensen's discrete series representations \cite{fle80} for~$X$, whose underlying $(\g,K)$-modules are isomorphic to certain Zuckerman--Vogan derived functor modules $A_{\q}(\lambda)$.
The summation process \eqref{eqn:phiGamma} is different from that of \cite{tw89}: see Remark~\ref{rem:TongWang}.

Our approach enables us to address Problem~A for a large class of Clifford--Klein forms $X_{\Gamma}$ of~$X$, constructing eigenfunctions on~$X_{\Gamma}$ for an explicit, \emph{infinite} set of joint eigenvalues contained in $\Spec_d(X)$.
In particular, this proves that the discrete spectrum $\Spec_d(X_{\Gamma})$ is nonempty.

We also address Problem~B for a large class of Clifford--Klein forms $X_{\Gamma}$.
We prove that the infinite subset of $\Spec_d(X_{\Gamma})$ that we construct is \emph{stable under any small deformation} of $\Gamma$ in~$G$, by establishing that the generalized Poincar\'e series \eqref{eqn:phiGamma} still converges after such a small deformation.
This is achieved by carefully controlling the analytic parameters and using recent results in the deformation theory of proper actions on homogeneous spaces.

One special example to which our results apply is the aforementioned classical quotients $\Gamma\backslash G$, regarded as $\Gamma\times\{ e\} \backslash (G\times G)/\Diag(G)$ where $\Diag(G)$ is the diagonal of $G\times G$.
Our geometric and analytic estimates in this case imply that all discrete series representations of~$G$ with sufficiently regular parameter appear in the regular representation $L^2(\Gamma\backslash G)$, \emph{without replacing $\Gamma$ by a deep enough finite-index subgroup} (Proposition~\ref{prop:intertwining}).
When $\Gamma$ is arithmetic, this improves the non-vanishing results of the classical Poincar\'e series that were known earlier from the asymptotic multiplicity formulas of DeGeorge--Wallach \cite{dw78}, Clozel \cite{clo86}, and Rohlfs--Speh \cite{rs87} or the theta-lifting (see Kazhdan \cite{kaz77}, Borel--Wallach \cite{bw00}, Li \cite{li92}) in automorphic forms; these results required passing to a congruence subgroup that depended on the discrete series representation.
Our approach does not depend on the Arthur--Selberg trace formula or the theta-lifting.
We refer to Remark~\ref{rem:discseries} for more details.

We introduce three main ingredients:
\begin{enumerate}
  \item Uniform analytic estimates for eigenfunctions on~$X$, including their asymptotic behavior at infinity (Proposition~\ref{prop:asym}) and the local behavior near the origin of specific eigenfunctions (Proposition~\ref{prop:psilambda});%
  \item A quantitative understanding of proper actions on reductive homogeneous spaces (notion of \emph{sharpness} --- Definition~\ref{def:sharp});
  \item Counting estimates for points of a given $\Gamma$-orbit in~$X$, both in large ``pseudo-balls'' (Lemma~\ref{lem:growthfornu}) and near the origin (Proposition~\ref{prop:sharp}).
\end{enumerate}
In~(1), our estimates are uniform in the spectral parameter and refine results of Flensted-Jensen \cite{fle80} and Matsuki--Oshima \cite{mo84}.
In~(2), the quantitative approach to properness that we develop builds on the qualitative interpretation of Benoist \cite{ben96} and Kobayashi \cite{kob89,kob96} in terms of a Cartan decomposition $G=KAK$.
In~(3), we relate the natural ``pseudo-distance from the origin'' in the non-Riemannian space~$X$ to the distance from the origin in the Riemannian symmetric space $G/K$ of~$G$ in order to use the growth rate of~$\Gamma$, the Kazhdan--Margulis lemma, and the sharpness constants of~(2).
  Our counting results may be compared to those obtained by Eskin--McMullen \cite{em93} in a different setting, where $\Gamma$ is a lattice in~$G$ (see Remark~\ref{rem:lattice}).

We now state precise results, not on our construction of joint eigenfunctions (for this we refer to Propositions \ref{prop:Vlambda} and~\ref{prop:nonzero}), but on the corresponding eigenvalues, \ie on the discrete spectrum of our locally symmetric spaces.
These results were partially announced in~\cite{kk10}.
Before we state them in full generality, we illustrate them with two simple examples of rank one (see Chapters \ref{sec:exAdS} and~\ref{sec:ex} for more details); in these two examples, the commutative $\C$-algebra $\D(X)$ is generated by the Laplacian~$\square_X$ and therefore $\Spec_d(X_{\Gamma})$ identifies with $\Spec_d(\square_{X_{\Gamma}})$ for any Clifford--Klein form~$X_{\Gamma}$.

\section{Two examples}\label{subsec:introex}

Our first example is the 3-dimensional \emph{anti-de Sitter space} $X=\AdS^3=\SO(2,2)_0/\SO(1,2)_0$, which can be realized as the quadric of $\R^4$ of equation $Q=1$, endowed with the Lorentzian metric induced by $-Q$, where
$$Q(x) := x_1^2 + x_2^2 - x_3^2 - x_4^2.$$
It is a Lorentzian analogue of the real hyperbolic space~$\HH^3$, being a model space for all Lorentzian $3$-manifolds of constant sectional curvature~$-1$ (or \emph{anti-de Sitter $3$-manifolds}).
The Laplacian~$\square_{\AdS^3}$ is a hyperbolic operator of signature $(++-)$; it is given explicitly by
$$\square_{\AdS^3} f \,=\, \square_{\R^{2,2}}\Big(x\longmapsto f\Big({\textstyle\frac{x}{\sqrt{Q(x)}}}\Big)\Big)$$
for all $f\in C^{\infty}(\AdS^3)$, where $f(x/\sqrt{Q(x)})$ is defined on the neighborhood $\{ Q(x)>0\}$ of the quadric $\AdS^3$ in~$\R^4$.
It is equal to $4$ times the Casimir operator of $\g=\so(2,2)$ with respect to the Killing form.
We construct eigenfunctions of the Laplacian on all compact anti-de Sitter $3$-manifolds, for an infinite set of eigenvalues, and prove that this infinite set of eigenvalues is stable under any small deformation of the anti-de Sitter structure.

\begin{theorem}\label{thm:SL2}
The discrete spectrum of any compact anti-de Sitter $3$-manifold is infinite.
Explicitly, if $M=\Gamma\backslash\AdS^3$ with $-\mathrm{I}\notin\Gamma$, then
\begin{equation}\label{eqn:AdS3}
\Spec_d(\square_M) \supset \big\{ \ell(\ell-2):\ \ell\in\N,\ \ell\geq\ell_0\big\}
\end{equation}
for some integer~$\ell_0$; moreover, \eqref{eqn:AdS3} still holds (with the same~$\ell_0$) after any small deformation of the anti-de Sitter structure on~$M$.
\end{theorem}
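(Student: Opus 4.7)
The plan is to apply the generalized Poincar\'e series construction outlined in the introduction to the specific group pair $(G,H)=(\SO(2,2)_0,\SO(1,2)_0)$, and then exploit the very concrete geometry of $\AdS^3$ (and the fact that it is a group manifold for $\SL_2(\R)$) to make every step quantitative and deformation-stable.

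First, I would verify the rank condition: $\SO(2,2)_0$ has a maximal compact subgroup $K\simeq(\SO(2)\times\SO(2))/\{\pm\mathrm{I}\}$ and $K\cap H\simeq\SO(2)$, so $\rank G/H=1=\rank K/K\cap H$. Hence \cite{fle80,mo84} gives a nonempty $\Spec_d(\AdS^3)$. A direct computation with the Casimir shows the discrete spectrum consists of the eigenvalues $\ell(\ell-2)$ for integers $\ell\geq \ell_1$ (for some small $\ell_1$), with explicit Flensted-Jensen eigenfunctions $\varphi_\ell\in C^\infty(\AdS^3)\cap L^2(\AdS^3)$. These are the joint eigenfunctions I will symmetrize.

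Next, for each large~$\ell$ I would form the generalized Poincar\'e series
\[
\varphi_\ell^\Gamma(\Gamma x) \,:=\, \sum_{\gamma\in\Gamma}\varphi_\ell(\gamma\cdot x)
\]
and argue convergence on compact sets as follows. Combine the uniform rapid decay estimate for $\varphi_\ell$ along the pseudo-distance in~$X$ (Proposition~\ref{prop:asym}) with an orbit-counting estimate for $\Gamma\cdot x$ inside pseudo-balls of~$X$ (Lemma~\ref{lem:growthfornu}). For the latter one needs that the $\Gamma$-action on~$\AdS^3$ is \emph{sharp} in the sense of Definition~\ref{def:sharp}: this is exactly the setting of the Kulkarni--Raymond description of compact $\AdS^3$-manifolds as quotients by graphs $\{(j(\gamma),\rho(\gamma)):\gamma\in\pi_1(\Sigma_g)\}$ of pairs of representations into $\SL_2(\R)$ with $\rho$ strictly dominated by a Fuchsian~$j$. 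Sharpness then converts the pseudo-distance estimate on~$X$ into a Riemannian distance estimate in $G/K$, where the growth rate of~$\Gamma$ is controlled. Choosing $\ell$ large enough that the decay exponent beats this growth exponent gives absolute and locally uniform convergence, hence a smooth joint eigenfunction on~$M$ with eigenvalue $\ell(\ell-2)$.

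For nonvanishing, which is the real content, I would fix a point~$x_0$ whose $\Gamma$-orbit meets a fundamental set in a controlled way, and use the sharp local lower bound on $\varphi_\ell(x_0)$ provided by Proposition~\ref{prop:psilambda} together with the counting estimate near the origin (Proposition~\ref{prop:sharp}) to show that the identity term dominates the tail: for $\ell\geq\ell_0$ with $\ell_0$ explicit in terms of the sharpness constants of~$\Gamma$, the inequality $|\varphi_\ell(x_0)|>\sum_{\gamma\neq e}|\varphi_\ell(\gamma\cdot x_0)|$ holds, so $\varphi_\ell^\Gamma\not\equiv 0$. The $L^2$-property on the compact manifold~$M$ is then automatic from smoothness. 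This is the main obstacle, because the eigenfunction $\varphi_\ell$ is highly oscillatory transversally to its support direction and a crude triangle inequality is wasteful; one must keep the dependence on~$\ell$ in both bounds quantitative.

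Finally, deformation-stability follows once one observes that for compact $\AdS^3$-manifolds, sharpness is an \emph{open} property in $\Hom(\pi_1(\Sigma_g),G)$: any small deformation of $(j,\rho)$ still satisfies the strict domination condition and admits uniform sharpness constants on a neighbourhood. Therefore all the quantitative estimates above go through with constants varying continuously, and the same threshold~$\ell_0$ (possibly after shrinking the neighbourhood) works simultaneously for every small deformation, proving the last assertion of the theorem.
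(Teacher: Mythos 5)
Your sketch matches the paper's proof of Theorem~\ref{thm:SL2} essentially step for step: generalized Poincar\'e series, convergence from sharpness plus the decay bound of Proposition~\ref{prop:asym} and the counting estimate of Lemma~\ref{lem:growthfornu}, nonvanishing from the near-origin estimate of Proposition~\ref{prop:psilambda} together with the orbit-normalization of Propositions~\ref{prop:sharp} and~\ref{prop:KM}, and deformation stability from openness of the sharpness constants. One caveat worth noting: ``strict domination by a Fuchsian~$j$'' and its openness are not consequences of the Kulkarni--Raymond theorem itself but are the nontrivial cited inputs Facts~\ref{fact:CLip} and~\ref{fact:CLipcont} (equivalently Facts~\ref{fact:AdS} and~\ref{fact:AdSdeform}); treating those as black boxes, your argument is exactly the paper's Theorem~\ref{thm:SL2precise} and Corollary~\ref{cor:AdS3deform}.
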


\smallskip
\noindent
Here $-\mathrm{I}\in\SO(2,2)_0$ is the nontrivial element of the center of $\SO(2,2)_0$, acting on $\AdS^3=\{ x\in\R^4 : Q(x)=1\}$ by the antipodal map $x\mapsto -x$. If $-\mathrm{I}\in\Gamma$, then half of the spectrum survives:
$$\Spec_d(\square_M) \supset \big\{ \ell(\ell-2):\ \ell\in 2\N,\ \ell\geq\ell_0\big\} $$
for some~$\ell_0$.
We actually prove that \eqref{eqn:AdS3} holds (for some explicit~$\ell_0$) for any complete anti-de Sitter $3$-manifold $M=\Gamma\backslash\AdS^3$ with $\Gamma$ finitely generated (Theorem~\ref{thm:SL2precise}).
The stability of eigenvalues under small deformations in Theorem~\ref{thm:SL2} contrasts with the situation in the Riemannian case:

\begin{fact}[\normalfont see {\cite[Th.\,5.14]{wol94}}]\label{fact:Teich}
No nonzero eigenvalue of the Laplacian on a compact Riemann surface is constant on its Teichm\"uller space.
\end{fact}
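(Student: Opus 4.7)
The plan is to argue by contradiction using the Selberg trace formula combined with first-variation formulas for eigenvalues. Eigenvalues of the Laplacian on a closed hyperbolic surface $\Sigma=\Gamma\backslash\HH^2$ of genus $g\geq 2$ organize into real-analytic branches $\lambda:\mathcal{T}_g\to\R$ by \cite{bc90}; assume toward a contradiction that one such branch with $\lambda\not\equiv 0$ is identically constant, and write $\lambda=\tfrac{1}{4}+r^2$.

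The first ingredient I would use is the Selberg trace formula: for an admissible even Paley--Wiener test function $h$ with Fourier transform $\hat h$,
$$\sum_n h(r_n) \;=\; \frac{|\chi(\Sigma)|}{2}\int_\R r\,h(r)\tanh(\pi r)\,dr \;+\; \sum_{[\gamma]}\frac{\ell(\gamma_0)}{2\sinh(\ell(\gamma)/2)}\,\hat h(\ell(\gamma)),$$
where $\lambda_n=\tfrac14+r_n^2$ and the geometric sum runs over nontrivial hyperbolic conjugacy classes of~$\Gamma$ with underlying primitives~$\gamma_0$. The identity contribution is topologically rigid by Gauss--Bonnet, hence constant on~$\mathcal{T}_g$.

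I would then localize $h$ by shrinking its spectral support around the value~$r$, so that the spectral side is asymptotically dominated by the contribution of the putative constant branch $\lambda=\tfrac14+r^2$. Real-analyticity of all branches ensures that variable branches cross the level $r$ only on a nowhere-dense analytic subset of~$\mathcal{T}_g$, so their contributions are negligible on a residual set; combined with constancy of the identity term, this would force the geometric side to be constant there as well. Varying $\hat h$ now extracts constancy of the primitive length spectrum, contradicting the classical fact that the length of any simple closed geodesic varies nontrivially on~$\mathcal{T}_g$ (e.g.\ via pinching, where $\ell\to 0$).

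The main obstacle is precisely the localization: eigenvalue branches may cross the putative constant branch at various points of~$\mathcal{T}_g$, so the clean contradiction above requires care in handling branch crossings and possible cancellations. The robust way to bypass this is to combine the trace formula with a Wolpert-type first-variation formula of the form
$$\frac{d\lambda}{d\tau_c} \;=\; \int_c \Psi(\phi)\,ds$$
for the eigenvalue under Fenchel--Nielsen twist along a simple closed geodesic~$c$, where $\Psi(\phi)$ is an explicit quadratic expression in the normalized eigenfunction~$\phi$ and its tangential and normal derivatives along~$c$. Showing that simultaneous vanishing of these integrals over every simple closed geodesic~$c$ forces an overdetermined condition on~$\phi$ incompatible with $\lambda\neq 0$ is the heart of the argument, and would complete the contradiction.
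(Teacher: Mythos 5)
The Fact is cited in the paper, not proved there: it refers to Wolpert's Theorem 5.14 in \cite{wol94}, and the paper uses it as a black box. So the question is only whether your sketch constitutes a proof, and it does not --- you have in effect said so yourself: your final sentence declares that showing the first-variation integrals cannot all vanish is ``the heart of the argument, and would complete the contradiction.'' That heart is left undone, so what you have written is an outline of a strategy, not a proof.

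Beyond the explicit gap, the trace-formula half has problems you did not flag. To apply Selberg's trace formula one typically works with $h$ in a Paley--Wiener class, i.e.\ with $\hat h$ compactly supported; such test functions cannot be made to concentrate in an arbitrarily small window around a single value $r$, so the ``localization'' step conflicts with admissibility from the outset, independently of the branch-crossing issue. Even granting a localization, constancy of the geometric side only for a restricted subfamily of test functions does not give constancy of the primitive length spectrum --- the claimed ``varying $\hat h$ now extracts constancy of the primitive length spectrum'' needs a genuine argument and cannot go through after the support of $\hat h$ has been constrained. Finally, even if one had constancy of the length spectrum, the intended contradiction (``length of any simple closed geodesic varies, e.g.\ via pinching'') is correct in spirit but bypasses the real difficulty of the theorem: Wolpert's actual proof goes by tracking the eigenfunction under degeneration and by perturbation theory near a pinching geodesic, not by inverting the trace formula. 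The fallback using the Fenchel--Nielsen first-variation formula $\frac{d\lambda}{d\tau_c}=\int_c\Psi(\phi)\,ds$ is the right family of tools, but the overdetermination claim --- that simultaneous vanishing over every simple closed geodesic forces $\lambda=0$ --- is precisely the nontrivial content of the theorem and must be established, not asserted.
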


\noindent
As we shall recall in Chapter~\ref{sec:exAdS}, any compact anti-de Sitter $3$-manifold~$M$ is a circle bundle over some closed hyperbolic surface~$S$ (up to a finite covering); the deformation space of~$M$ contains the Teichm\"uller space of~$S$, and its dimension is actually twice as large.
We shall also prove the existence of an infinite stable spectrum for a large class of noncompact complete anti-de Sitter $3$-manifolds (Corollary~\ref{cor:AdS3deform}).

Our second example is the 3-dimensional complex manifold
$$X = \SU(2,2)/\U(1,2) \simeq \SO(2,4)_0/\U(1,2),$$
which can be realized as the open subset of~$\PP^3\C$ of equation $h>0$, where
$$h(z) := |z_1|^2 + |z_2|^2 - |z_3|^2 - |z_4|^2$$
on~$\C^4$.
The space~$X$ is naturally endowed with an indefinite Hermitian structure of signature $(2,1)$ induced by~$-h$.
The imaginary part of $-h$ endows $X$ with a symplectic structure, making~$X$ into an indefinite K\"ahler manifold.  
The real part of $-h$ gives rise to a pseudo-Riemannian metric of signature~$(4,2)$.
The Laplacian $\square_X$ has signature ($++++-\,-)$ and is given by the following commutative diagram:
$$\xymatrix{
C^\infty(\C^4_{_{h>0}}) \ar[d]_{2h\,\square_{\C^{2,2}}} & \ar[l]_{\quad\pi^{\ast}} C^{\infty}(X) \ar[d]^{\square_X}\\
C^\infty(\C^4_{_{h>0}}) & \ar[l]_{\quad\pi^{\ast}} C^{\infty}(X),
}$$
where
$$\C^4_{_{h>0}} := \{ z\in\C^4 : h(z)>0\} ,$$
where $\pi : \C^4_{_{h>0}}\!\!\rightarrow X$ is the natural projection, and where
$$\square_{\C^{2,2}} := - \frac{\partial^2}{\partial z_1\partial\overline{z}_1} - \frac{\partial^2}{\partial z_2\partial\overline{z}_2} + \frac{\partial^2}{\partial z_3\partial\overline{z}_3} + \frac{\partial^2}{\partial z_4\partial\overline{z}_4}$$
on~$\C^4$.
It is $8$ times the Casimir operator of $\g=\su(2,2)$ with respect to the Killing form.
A natural way to construct Clifford--Klein forms of~$X$ is to notice that $X$ fibers over the quaternionic hyperbolic space $\HH^1_{\mathbf{H}}=\Sp(1,1)/\Sp(1)\times\Sp(1)$, with compact fiber:
$$\begin{matrix}
   \{ z\in\C^4 : h(z) = 1\}
   & \underset{\text{fiber }\U(1)}{\overset{\pi}{\longlongrightarrow}}
   & X = \big\{ [z]\in\PP^3\C : h(z)>0\big\}
\\[.5ex]
   \text{\rotatebox{-90}{$\!\!\!\!\simeq\quad$}}
   & & \rightset{\text{fiber }\Sp(1)/\U(1)}
\\[2ex]
   \!\!\!\big\{ u\in\mathbf{H}^2 : |u_1|^2 - |u_2|^2 = 1 \big\} 
   & \underset{\text{fiber }\Sp(1)}{\longlongrightarrow}
   & \HH^1_{\mathbf{H}} = \big\{ [u]\in\PP^1\mathbf{H} : |u_1|^2 - |u_2|^2 > 0\big\} ,
\end{matrix}$$
where $\mathbf{H}$ is the ring of quaternions and $\PP^1\mathbf{H}$ the quotient of $\mathbf{H}^2\smallsetminus\{ 0\}$ by the diagonal action of $\mathbf{H}\smallsetminus\{ 0\}$ on the right.
The isometry group $\Sp(1,1)$ of the Riemannian symmetric space~$\HH^1_{\mathbf{H}}$ acts transitively on~$X$, and this action is proper since the fiber $\Sp(1)/\U(1)\simeq\mathbb{S}^2$ is compact.
Any torsion-free discrete subgroup~$\Gamma$ of $\Sp(1,1)$ therefore acts properly discontinuously and freely on~$X$; we say that the corresponding Clifford--Klein form~$X_{\Gamma}$ is \emph{standard} (see Definition~\ref{def:standard}).

\begin{theorem}\label{thm:P3}
The discrete spectrum of any standard Clifford--Klein form $X_{\Gamma}$ of $X=\SU(2,2)/\U(1,2)$ is infinite.
Explicitly, for $\Gamma\subset\Sp(1,1)$ there is an integer~$\ell_0$, independent of~$\Gamma$, such that
\begin{equation}\label{eqn:P3}
\Spec_d(\square_{X_{\Gamma}}) \supset \big\{ 2(\ell-2)(\ell+1):\ \ell\in\N,\, \ell\geq\ell_0\big\} ;
\end{equation}
moreover, \eqref{eqn:P3} still holds after any small deformation of~$\Gamma$ in~$\SU(2,2)$.
\end{theorem}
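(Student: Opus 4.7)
The plan is to apply the three ingredients highlighted in the introduction (uniform analytic estimates, sharpness of proper actions, counting of $\Gamma$-orbits) to the rank-one reductive symmetric space $X=\SU(2,2)/\U(1,2)$. The rank condition $\rank G/H = \rank K/(K\cap H)$ is immediate here, so by Flensted--Jensen and Matsuki--Oshima the set $\Spec_d(X)$ is nonempty and its joint eigenvalues are parameterized by a lattice cone $\Lambda^+$. Since $\D(X)$ is generated by $\square_X$ alone, each $\lambda\in\Lambda^+$ produces a single Casimir eigenvalue, and a direct root-data computation carried out on a distinguished one-parameter family $\lambda_\ell\in\Lambda^+$ (indexed by $\ell\in\N$) gives exactly the numbers $2(\ell-2)(\ell+1)$ appearing in~\eqref{eqn:P3}.

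For such an $\ell$, I would construct the $L^2$-eigenfunction on $X_\Gamma$ as the Poincar\'e series
\[
\varphi^\Gamma_{\lambda_\ell}(\Gamma x) := \sum_{\gamma\in\Gamma}\varphi_{\lambda_\ell}(\gamma\cdot x),
\]
where $\varphi_{\lambda_\ell}\in V_{\lambda_\ell}$ is the Flensted--Jensen discrete-series vector whose behavior near the origin is controlled by Proposition~\ref{prop:psilambda} and whose asymptotic decay along the pseudo-radial direction on~$X$ is controlled by Proposition~\ref{prop:asym}. Standard Clifford--Klein forms of~$X$ come from torsion-free discrete $\Gamma\subset\Sp(1,1)\subset\SU(2,2)$; the fibration $X\to\HH^1_{\mathbf{H}}$ with compact fiber $\Sp(1)/\U(1)\simeq\mathbb{S}^2$ says that $\Sp(1,1)$ acts on~$X$ properly with \emph{uniform} positive sharpness constants in the sense of Definition~\ref{def:sharp}. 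Feeding this into Lemma~\ref{lem:growthfornu} and Proposition~\ref{prop:sharp} I obtain a count of $\Gamma$-orbit points in pseudo-balls that depends only on the sharpness constants and on the exponential growth rate of $\Gamma$ in $\Sp(1,1)$ (itself bounded by the volume growth of $\Sp(1,1)$, hence universal). Convergence of $\varphi^\Gamma_{\lambda_\ell}$ in $L^2(X_\Gamma)$ then holds as soon as $\ell$ is large enough to dominate this count, and nonvanishing is obtained by comparing the peak contribution of the trivial orbit, isolated by Proposition~\ref{prop:psilambda}, with the sum of the remaining terms. This yields a threshold $\ell_0$ independent of~$\Gamma$ and establishes~\eqref{eqn:P3}.

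The deformation statement is proved by re-examining each of the previous estimates for a homomorphism $\rho:\Gamma\to\SU(2,2)$ close to the inclusion. The essential geometric input is that sharpness of a proper action is an \emph{open} property in $\Hom(\Gamma,\SU(2,2))$ at a standard representation, and the sharpness constants vary continuously; hence $\rho(\Gamma)$ still acts properly on~$X$ with constants close to the original ones. Because the analytic bounds on~$\varphi_{\lambda_\ell}$ are intrinsic to~$X$ (they are insensitive to~$\Gamma$), and because the counting input depends only on the sharpness constants together with the $\Sp(1,1)$-volume growth, both of which are stable, the same threshold $\ell_0$ continues to ensure convergence and nonvanishing after deformation.

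The hardest point, I expect, is the \emph{uniformity in the spectral parameter} $\lambda$: one must control the dependence on~$\lambda_\ell$ of both the asymptotic decay of $\varphi_{\lambda_\ell}$ and its local behavior near the origin, precisely enough that the convergence criterion can be expressed purely through the sharpness constants and the growth of~$\Gamma$. Without such uniformity one could not assert that \emph{every} sufficiently large $\ell$ survives, nor that $\ell_0$ may be chosen independently of $\Gamma$ and of the deformation parameter; sharpening the estimates of Flensted--Jensen and Matsuki--Oshima to this uniform form is where the main analytic work is concentrated.
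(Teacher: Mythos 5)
Your proposal follows the paper's own strategy: verify the rank condition, invoke the Flensted--Jensen parametrization of $\Spec_d(X)$, compute the Casimir eigenvalues $2(\ell-2)(\ell+1)$, construct Poincar\'e series from the Flensted--Jensen vectors $\psi_{\lambda_\ell}$, and feed the two uniform analytic estimates (Propositions~\ref{prop:asym} and~\ref{prop:psilambda}) into the counting bounds of Lemma~\ref{lem:growthfornu} with the $(c,0)$-sharpness of $\Sp(1,1)$ coming from Example~\ref{ex:standardthetastable}. The deformation statement is correctly attributed to openness of sharpness (the paper's Lemma~\ref{lem:munudeform}, via \cite{kas12}). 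You also correctly locate the main analytic difficulty in making the decay and near-origin estimates uniform in~$\lambda$.

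There is one genuine gap in the $\Gamma$-independence of~$\ell_0$. You justify the uniformity solely through the universal bound on growth rates (bounding $\delta_\Gamma$ by $2\Vert\rho_a\Vert$), and that does suffice for convergence and square-integrability of the series. But the nonvanishing argument (isolating the contribution of $\Gamma\cap X_c$ and dominating the remaining terms) requires a lower bound on $r_\Gamma := \inf\{\Vert\nu(\gamma\cdot x_0)\Vert :\, \gamma\cdot x_0\notin X_c\}$, because the threshold for $\ell$ in Lemma~\ref{lem:nonzeroprecise} blows up as $r_\Gamma\to 0$; Proposition~\ref{prop:sharp} only gives $r_{g^{-1}\Gamma g}>0$ for some conjugate, not a lower bound uniform in~$\Gamma$. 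The paper obtains the uniform lower bound via a Kazhdan--Margulis argument (Proposition~\ref{prop:KM}) applied inside the rank-one reductive subgroup, giving a constant $r_L>0$ independent of~$\Gamma$, which is then transferred to $\nu$ through Lemma~\ref{lem:munu} using the conjugation-invariant first sharpness constant. Without this step you cannot assert that $\ell_0$ may be chosen independently of~$\Gamma$; the rest of your argument is sound.
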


\smallskip
\noindent
We will see in Section~\ref{subsec:3cpx} that there exist interesting small deformations of standard Clifford--Klein forms of $X=\SU(2,2)/\U(1,2)$, both compact and noncompact.
We will compute explicit eigenfunctions.
We refer to \cite{kob09} for further global analysis on~$X$ in connection with branching laws of unitary representations with respect to the restriction $\SU(2,2)\downarrow\Sp(1,1)$.

\section{General results for standard Clifford--Klein forms}\label{subsec:introstand}

We now state our results in the general setting of reductive symmetric spaces $X=G/H$, as defined in Section~\ref{subsec:objects}.
For simplicity we shall assume $G$ to be linear throughout the paper.

An important class of Clifford--Klein forms~$X_{\Gamma}$ of~$X$ that we consider is the \emph{standard} ones.

\begin{definition}\label{def:standard}
A Clifford--Klein form $X_{\Gamma}$ of~$X$ is \emph{standard} if $\Gamma$ is contained in some reductive subgroup~$L$ of~$G$ acting properly on~$X$.
\end{definition}

This generalizes the notion introduced above for $X=\SU(2,2)/\U(1,2)$.
When $L$ acts cocompactly on~$X$, we can obtain compact (\resp finite-volume noncompact) standard Clifford--Klein forms~$X_{\Gamma}$ by taking~$\Gamma$ to be a uniform (\resp nonuniform) lattice in~$L$.
An open conjecture \cite[Conj.\,3.3.10]{ky05} states that any reductive homogeneous space $G/H$ admitting compact Clifford--Klein forms should admit standard ones.

Our first main result in this general setting is the existence of an \emph{infinite discrete spectrum} for all standard Clifford--Klein forms of~$X$ when $\Spec_d(X)\neq\emptyset$.

\begin{theorem}\label{thm:universal}
Let $X=G/H$ be a reductive symmetric space with $\Spec_d(X)\neq\emptyset$, and $L$ a reductive subgroup of~$G$ acting properly on~$X$.
Then $\#\Spec_d(X_{\Gamma})=+\infty$ for any standard Clifford--Klein form $X_{\Gamma}$ with $\Gamma\subset L$.
Moreover, if $L$ is simple ({\resp}semisimple), then there is an infinite subset of $\Spec_d(X)$ that is contained in $\Spec_d(X_{\Gamma})$ for any ({\resp}any torsion-free) $\Gamma\subset L$.
\end{theorem}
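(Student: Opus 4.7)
The strategy is to realize elements of $\Spec_d(X_{\Gamma})$ via the generalized Poincar\'e series \eqref{eqn:phiGamma} associated to the Flensted-Jensen discrete series eigenfunctions on~$X$. Since $\Spec_d(X)\neq\emptyset$, the Matsuki--Oshima description (Fact~\ref{fact:decompVlambda}) supplies an infinite set $\Lambda^+$ of joint eigenvalues $\chi_{\lambda}$ of $\D(X)$, and for each $\lambda\in\Lambda^+$ an explicit family of real-analytic $L^2$-joint eigenfunctions $\varphi_{\lambda}$ on~$X$. The plan is to show that for all $\lambda\in\Lambda^+$ sufficiently regular, the series $\varphi_{\lambda}^{\Gamma}$ converges absolutely and locally uniformly, defines a nonzero element of $L^2(X_{\Gamma})$, and is annihilated by $D_{\Gamma}-\chi_{\lambda}(D)$ for every $D\in\D(X)$. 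Since distinct $\lambda$ yield distinct characters $\chi_{\lambda}$, we will then obtain infinitely many elements in $\Spec_d(X_{\Gamma})$.

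For \emph{convergence}, I would combine the three ingredients listed in the introduction. First, Proposition~\ref{prop:asym} provides uniform (in~$\lambda$) upper bounds on $|\varphi_{\lambda}|$ in terms of the ``pseudo-distance from the origin'' on~$X$. Second, the hypothesis that $\Gamma\subset L$ with $L$ reductive acting properly on~$X$ means that the $L$-action on~$X$ is \emph{sharp} in the sense of Definition~\ref{def:sharp}: there are constants $c,C>0$ such that the pseudo-distance of $\gamma\!\cdot\! x_0$ dominates $c\cdot d_{G/K}(\gamma K, K)-C$ for $\gamma\in\Gamma$. Third, Lemma~\ref{lem:growthfornu} bounds the number of $\gamma\in\Gamma$ with $d_{G/K}(\gamma K, K)\leq R$ in terms of the growth of~$\Gamma$ in~$L$. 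Putting these together reduces the convergence of $\sum_{\gamma\in\Gamma}|\varphi_{\lambda}(\gamma\!\cdot\! x)|$ to a comparison between an exponential decay rate controlled by the ``depth'' of $\lambda\in\Lambda^+$ and an exponential growth rate controlled by the sharpness constants and the critical exponent of~$\Gamma$. For $\lambda$ deep enough in~$\Lambda^+$, decay dominates; the result is an $L^2$-function on~$X_{\Gamma}$ that is a joint eigenfunction for $\D(X_{\Gamma})$ by commutation of each $D_{\Gamma}$ with the summation.

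For \emph{non-vanishing}, I would exploit the local behavior of $\varphi_{\lambda}$ near the base point (Proposition~\ref{prop:psilambda}): after suitable normalization, $\varphi_{\lambda}$ is concentrated near the origin while all tails $\varphi_{\lambda}(\gamma\!\cdot\! x)$ with $\gamma\neq e$ are pushed far out by sharpness, so Proposition~\ref{prop:sharp} yields a controlled lower bound showing that $\varphi_{\lambda}^{\Gamma}$ is not identically zero. The infiniteness of $\Spec_d(X_{\Gamma})$ then follows by letting $\lambda$ range over a cofinal subset of $\Lambda^+$, since the $\chi_{\lambda}$ are pairwise distinct.

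For the refined statement (the existence of an infinite subset of $\Spec_d(X)$ contained in $\Spec_d(X_{\Gamma})$ for \emph{every} $\Gamma\subset L$), the point is to make the threshold ``$\lambda$ deep enough'' independent of~$\Gamma$. The sharpness constants of the proper action depend only on~$L$, not on~$\Gamma$. The remaining $\Gamma$-dependence enters through the growth rate and the injectivity radius. When $L$ is semisimple, the Kazhdan--Margulis lemma gives a uniform lower bound on the injectivity radius of any torsion-free discrete $\Gamma\subset L$, while the growth of $\Gamma$ in $L$ is bounded above by the volume growth of~$L$; this feeds into Lemma~\ref{lem:growthfornu} and Proposition~\ref{prop:sharp} uniformly. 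When $L$ is simple, the same uniformity holds without torsion-freeness because any discrete subgroup of a noncompact simple Lie group has at most finite kernel in its action, which can be absorbed into the constants. The main obstacle, and the point where real work is required, is precisely this extraction of uniform-in-$\lambda$ (and uniform-in-$\Gamma$) estimates from Propositions~\ref{prop:asym} and~\ref{prop:psilambda}; once these are in hand, the convergence/non-vanishing argument goes through simultaneously for all admissible~$\Gamma$.
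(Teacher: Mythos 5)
Your proposal follows essentially the same strategy as the paper: Flensted-Jensen generalized Poincar\'e series, with convergence from the decay estimate (Proposition~\ref{prop:asym}) against sharpness (Definition~\ref{def:sharp} plus Proposition~\ref{prop:sharpnessproperties} for the conjugation to a $\theta$-stable $L$) and orbit counting (Lemma~\ref{lem:growthfornu}), non-vanishing from the near-origin estimate (Proposition~\ref{prop:psilambda}) plus the geometric Proposition~\ref{prop:sharp} and the Kazhdan--Margulis uniformity (Proposition~\ref{prop:KM}), and the whole thing packaged through Theorem~\ref{thm:precise} and Proposition~\ref{prop:nonzero}. The structure is right.

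One point is stated imprecisely in a way that obscures the actual mechanism. For the simple versus semisimple distinction you write that ``any discrete subgroup of a noncompact simple Lie group has at most finite kernel in its action, which can be absorbed into the constants.'' The issue is not a cardinality that gets absorbed into a multiplicative constant. What the paper actually needs, after the Kazhdan--Margulis conjugation, is the inclusion $\Gamma\cap L_c\subset Z(G_s)$ (Theorem~\ref{thm:precise}.(2), Remark~\ref{rem:conditionsGL}.(b)): the elements of $\Gamma$ that stay near the origin must land in $Z(G_s)$ because the non-vanishing argument (Lemma~\ref{lem:nonzeroprecise} via Lemma~\ref{lem:FJcenter}) requires the Flensted-Jensen function $\psi_\lambda$, for $\lambda$ in the sublattice $2\rho_c-\rho+\Lambda^{\Gamma\cap Z(G_s)}$, to evaluate to exactly $1$ on all of $\Gamma\cdot x_0\cap X_c$, so that the ``near-origin'' contributions add up coherently rather than merely being finitely many. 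For torsion-free $\Gamma$ the intersection with $L_c$ is trivial; for $L$ simple the condition is handled through the reduction to $\overline{G}=G/G_cG_H$ in Section~\ref{subsec:proofs} together with the flexibility of the sublattice $\Lambda^J$. Saying this is about ``finite kernel in its action'' conflates $\Gamma\cap G_H$ (the actual kernel of the action on $X$) with $\Gamma\cap L_c$, which is the quantity that matters, and misses that the key ingredient is the lattice restriction that makes $\psi_\lambda$ equal $1$ on central elements, not a bound on a count.
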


We wish to emphasize that when $L$ is semisimple, the infinite subset of the discrete spectrum that we find is \emph{universal}, in the sense that it does not depend on $\Gamma\subset L$.
A universal spectrum does not exist in the Riemannian case (see Fact~\ref{fact:Teich}).
Our proof is constructive; we shall explicitly describe an infinite subset of $\Spec_d(X_{\Gamma})\cap\Spec_d(X)$, independent of $\Gamma\subset L$, in terms of the geometry of~$X$ and of some quantitative estimate of the proper discontinuity of $L$ acting on~$X$ (see Theorem~\ref{thm:precise}).

For $\Gamma=\{ e\}$, the existence of an infinite discrete spectrum was established by Flensted-Jensen \cite{fle80}.
As mentioned above, by \cite{fle80,mo84}, the condition $\Spec_d(X)\neq\emptyset$ is equivalent to the condition $\rank G/H=\rank K/K\cap H$ (see Section~\ref{subsec:Lambda+}), or in other words to the existence of a maximal compact subsymmetric space of~$X$ of full rank.

Our second main result concerns the \emph{stability} of the discrete spectrum of standard compact Clifford--Klein forms~$X_{\Gamma}$ of~$X$ under small deformations of~$\Gamma$ in~$G$.
The set $\Hom(\Gamma,G)$ of group homomorphisms from~$\Gamma$ to~$G$ is endowed with the compact-open topology.
In the following definition, we assume that the group $\varphi(\Gamma)$ acts properly discontinuously and freely on~$X$ for all $\varphi\in\Hom(\Gamma,G)$ in some neighborhood $\mathcal{U}_0$ of the natural inclusion of $\Gamma$ in~$G$ (we shall call this property ``stability for proper discontinuity'').
Under this assumption, $X_{\varphi(\Gamma)}=\varphi(\Gamma)\backslash X$ is a manifold for all $\varphi\in\mathcal{U}_0$ and we can consider the discrete spectrum $\Spec_d(X_{\varphi(\Gamma)})$; recall that it is contained in the set of $\C$-algebra homomorphisms from $\D(X)$ to~$\C$.

\begin{definition}\label{def:stablespec}
We say that $\lambda\in\Spec_d(X_{\Gamma})$ is \emph{stable under small deformations} if there exists a neighborhood $\mathcal{U}\subset\mathcal{U}_0\subset\Hom(\Gamma,G)$ of the natural inclusion such that $\lambda\in\Spec_d(X_{\varphi(\Gamma)})$ for all $\varphi\in\mathcal{U}$.

We say that $X_{\Gamma}$ has an \emph{infinite stable discrete spectrum} if there exists an infinite subset of $\Spec_d(X_{\Gamma})$ that is contained in $\Spec_d(X_{\varphi(\Gamma)})$ for all $\varphi$ in some neighborhood $\mathcal{U}\subset\mathcal{U}_0\subset\Hom(\Gamma,G)$ of the natural inclusion.
\end{definition}

We address the existence of an infinite stable discrete spectrum for standard compact Clifford--Klein forms~$X_{\Gamma}$, where $\Gamma$ is a uniform lattice in some reductive subgroup $L$ of~$G$.
First observe that if $L$ has real rank $\geq 2$ and $\Gamma$ is irreducible, then $\Gamma$ is locally rigid in~$G$ by Margulis's superrigidity theorem \cite[Cor.\,IX.5.9]{mar91}, \textit{i.e.}\ all small deformations of~$\Gamma$ in~$G$ are obtained by conjugation; consequently $\Spec_d(X_{\varphi(\Gamma)})=\Spec_d(X_{\Gamma})$ for all small deformations~$\varphi$, and thus $X_{\Gamma}$ has an infinite stable discrete spectrum by Theorem~\ref{thm:universal}.
Consider the more interesting case when $L$ has real rank~$1$.
Then nontrivial deformations of $\Gamma$ inside~$G$ may exist (see Section~\ref{subsec:exdeform}).
By \cite{kas12}, all compact Clifford--Klein forms~$X_{\Gamma}$ with $\Gamma\subset L$ have the stability property for proper discontinuity; more generally, so do all Clifford--Klein forms~$X_{\Gamma}$ with $\Gamma$ convex cocompact in~$L$.
We prove the existence of an infinite stable discrete spectrum when $\Spec_d(X)\neq\emptyset$.

\begin{theorem}\label{thm:deform}
Let $X=G/H$ be a reductive symmetric space with $\Spec_d(X)\neq\emptyset$, and $L$ a reductive subgroup of~$G$ of real rank~$1$ acting properly on~$X$.
Then $X_{\Gamma}$ has an infinite stable discrete spectrum for any uniform lattice~$\Gamma$ of~$L$, and more generally for any convex cocompact subgroup~$\Gamma$ of~$L$.
\end{theorem}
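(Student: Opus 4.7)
The plan is to build on the proof of Theorem~\ref{thm:universal} by realizing the infinite subset of $\Spec_d(X_{\Gamma})$ as a family of generalized Poincar\'e series
\begin{equation*}
\varphi^{\Gamma}_{\lambda}(\Gamma x) \,=\, \sum_{\gamma\in\Gamma}\ \psi_{\lambda}(\gamma\!\cdot\!x),
\end{equation*}
where $\psi_{\lambda}$ runs over Flensted-Jensen-type joint $L^2$-eigenfunctions on~$X$ (Proposition~\ref{prop:psilambda}) with joint eigenvalue $\chi_{\lambda}$ ranging over an explicit infinite subset of $\Spec_d(X)$. The problem is then to show that, after replacing the natural inclusion by any nearby homomorphism $\varphi:\Gamma\to G$, the deformed series $\sum_{\gamma}\psi_{\lambda}(\varphi(\gamma)\!\cdot\!x)$ still converges and defines a nonzero $L^2$-function on $X_{\varphi(\Gamma)}$, for the same infinite set of parameters~$\lambda$.

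First, the hypothesis that $\Gamma$ is a uniform lattice, or more generally convex cocompact, in the rank-one reductive subgroup~$L$ ensures by \cite{kas12} stability for proper discontinuity: there is a neighborhood $\mathcal{U}_0$ of the inclusion in $\Hom(\Gamma,G)$ such that for all $\varphi\in\mathcal{U}_0$ the group $\varphi(\Gamma)$ acts properly discontinuously and freely on~$X$, so that $X_{\varphi(\Gamma)}$ is a manifold and $\Spec_d(X_{\varphi(\Gamma)})$ is well defined. Next, I would upgrade this qualitative stability to a uniform quantitative sharpness statement in the sense of ingredient~(2) of the introduction: since $L$ has real rank one, the Cartan projection of $\varphi(\gamma)$ in~$G$ stays comparable to that of $\gamma$ in $L$ with constants depending continuously on~$\varphi$, hence bounded on a sufficiently small neighborhood $\mathcal{U}\subset\mathcal{U}_0$. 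Combined with the sharpness of the $L$-action on~$X$, this gives a lower bound on the pseudo-distance from the origin $\mu_X(\varphi(\gamma)\cdot x_0)$ which is linear in the word length of~$\gamma$, with slope and constants independent of $\varphi\in\mathcal{U}$.

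Plugging this uniform sharpness into the counting estimates of ingredient~(3) and the uniform asymptotic bounds for $\psi_{\lambda}$ of Proposition~\ref{prop:asym} (in which the dependence on the spectral parameter~$\lambda$ is explicit), I would obtain a bound of the form
\begin{equation*}
\sum_{\gamma\in\Gamma} |\psi_{\lambda}(\varphi(\gamma)\cdot x)| \,\leq\, F_{\lambda}(x),
\end{equation*}
where $F_{\lambda}\in L^2(X_{\varphi(\Gamma)})$ with norm controlled uniformly in $\varphi\in\mathcal{U}$. This yields convergence of the Poincar\'e series and an $L^2$-bound on $\varphi^{\varphi(\Gamma)}_{\lambda}$ that is uniform in $\varphi\in\mathcal{U}$. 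For the nonvanishing, I would exploit the fact that for $\lambda$ sufficiently regular the eigenfunction $\psi_{\lambda}$ is concentrated near the origin (Proposition~\ref{prop:psilambda}), so that the term $\gamma=e$ dominates the sum at $x_0$: the difference $\varphi^{\varphi(\Gamma)}_{\lambda}(x_0)-\psi_{\lambda}(x_0)$ is majorized by a tail whose uniform estimate shows it to be negligible compared to $\psi_{\lambda}(x_0)$ when $\lambda$ lies far enough in the Weyl chamber.

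The main obstacle is coordinating the two uniformities with the choice of the infinite spectral family: the threshold $\lambda\in\Lambda^+$ beyond which the identity term dominates must be chosen in terms of the uniform sharpness and counting constants, not of the individual $\varphi$. The rank-one hypothesis on~$L$ is crucial here because it is what makes the sharpness constants, and hence the threshold, stable on $\mathcal{U}$. Once such a threshold $\lambda_0$ is fixed, the set $\{\chi_{\lambda}:\lambda\in\Lambda^+,\ \lambda\geq\lambda_0\}$ gives the required infinite stable subset of $\Spec_d(X_{\Gamma})$, yielding Theorem~\ref{thm:deform}.
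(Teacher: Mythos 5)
Your overall strategy matches the paper's: construct generalized Poincar\'e series from the Flensted-Jensen functions $\psi_\lambda$, use the stability for proper discontinuity from~\cite{kas12}, upgrade to quantitative uniform sharpness of $\varphi(\Gamma)$ on a neighborhood $\mathcal{U}$, combine with the counting estimates and the uniform asymptotics of Proposition~\ref{prop:asym}, and then prove nonvanishing by showing the tail is small. This is indeed how Theorem~\ref{thm:deform} is proved in the paper, via Theorem~\ref{thm:precisedeform} and Proposition~\ref{prop:nonzero}.(3), with Lemma~\ref{lem:munudeform} supplying the uniform sharpness constants.

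However, the nonvanishing step as you state it has a genuine gap. You say that ``the term $\gamma=e$ dominates the sum at $x_0$'' because $\psi_\lambda$ is concentrated near the origin. But the estimate \eqref{eqn:FJglobal} from Proposition~\ref{prop:psilambda} only gives decay $|\psi_\lambda(x)|\leq\cosh(q'\|\nu(x)\|)^{-d(\lambda+\rho)}$; on the compact subsymmetric space $X_c=K\cdot x_0$ where $\|\nu\|\equiv 0$, this yields no decay whatsoever, and $|\psi_\lambda|$ can equal $1$ there for all $\lambda$. If some $\gamma\in\Gamma$ other than $e$ satisfies $\gamma\cdot x_0\in X_c$ (equivalently $\gamma\in KH$, which is \emph{not} ruled out by $\Gamma\cap K=\{e\}$ or by torsion-freeness), then $\psi_\lambda(\gamma\cdot x_0)$ has magnitude~$1$ but, a priori, an arbitrary phase depending on~$\lambda$; these terms could cancel the $\gamma=e$ contribution, and regularity of $\lambda$ does not help. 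The paper resolves this by first conjugating $\Gamma$ (Proposition~\ref{prop:sharp}, using the Kazhdan--Margulis theorem via Proposition~\ref{prop:KM}) so that $\Gamma\cdot x_0\cap X_c\subset Z(G_s)\cdot x_0$, then imposing the integrality condition $\lambda\in 2\rho_c-\rho+\Lambda^{\Gamma\cap Z(G_s)}$ which forces $\psi_\lambda\equiv 1$ on $Z(G_s)\cdot x_0$ (Lemma~\ref{lem:FJcenter}), so all such terms contribute $+1$ and cannot cancel. This conjugation is why the operator $S_\Gamma$ must be considered on $G$-translates $g\cdot L^2(X,\M_\lambda)_K$ rather than on $L^2(X,\M_\lambda)_K$ itself (Section~\ref{subsec:transl}), and one also needs Remark~\ref{rem:deformcenter} to ensure the condition $\varphi(\Gamma\cap Z(G_s))\subset Z(G_s)$ persists under small deformations. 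Additionally, you need a uniform lower bound on $\inf\{\|\nu(x)\|: x\in\varphi(\Gamma)\cdot x_0,\ x\notin X_c\}$, which enters the final threshold for $d(\lambda)$ (see Lemma~\ref{lem:nonzeroprecise}); this quantity is what the second conclusion of Lemma~\ref{lem:munudeform} controls under deformation, and your sketch does not address how to bound it.
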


We recall that a discrete subgroup~$\Gamma$ of~$L$ is said to be \emph{convex cocompact} if it acts cocompactly on some nonempty convex subset of the Riemannian symmetric space of~$L$.
Convex cocompact groups include uniform lattices, but also discrete groups of infinite covolume such as Schottky groups, or for instance quasi-Fuchsian embeddings of surface groups for $L=\PSL_2(\C)$.

Let us emphasize that the small deformations of~$\Gamma$ that we consider in Theorem~\ref{thm:deform} are arbitrary inside~$G$; in particular, in the interesting cases $\Gamma$ does not remain inside a conjugate of~$L$.
A description of an infinite stable discrete spectrum as in Theorem~\ref{thm:deform} will be given in Theorem~\ref{thm:precisedeform}.

In addition to this infinite stable discrete spectrum, standard Clifford--Klein forms~$X_{\Gamma}$ may also have infinitely many eigenvalues that vary under small deformations (see Remark~\ref{rem:negativespec}).
Note that an explicit description of the full discrete spectrum is not known even in the Riemannian case.

\section{General results for sharp Clifford--Klein forms}\label{subsec:resultssharp}

The class of \emph{standard} Clifford--Klein forms that we have just considered is itself contained in a larger class of Clifford--Klein forms, namely those that we call \emph{sharp}.
Let us define this notion (see Sections \ref{subsec:sharp} and~\ref{subsec:exsharp} for more details and examples).

Let $G=K\overline{A_+}K$ be a Cartan decomposition of~$G$, where $K$ is a maximal compact subgroup of~$G$ and $\overline{A_+}$ a closed Weyl chamber in a maximal split abelian subgroup of~$G$.
Any element $g\in G$ may be written as $g=k_1ak_2$ for some $k_1,k_2\in K$ and a unique $a\in\overline{A_+}$; setting $\mu(g)=\log a$ defines a continuous, proper, and surjective map $\mu : G\rightarrow\log\overline{A_+}\subset\aaa:=\mathrm{Lie}(A)$, called the \textit{Cartan projection} associated with the Cartan decomposition $G=K\overline{A_+}K$ (see Example~\ref{ex:SLn} for $G=\SL_n(\R)$).
Let $\Vert\cdot\Vert$ be a norm on~$\aaa$.
We say that a discrete subgroup~$\Gamma$ of~$G$ is \textit{sharp} for $X=G/H$ if there are constants $c>0$ and $C\geq 0$ such that
\begin{equation}\label{eqn:sharpnessintro}
d_{\aaa}(\mu(\gamma),\mu(H)) \geq c\,\Vert\mu(\gamma)\Vert - C
\end{equation}
for all $\gamma\in\Gamma$, where $d_{\aaa}$ is the metric on~$\aaa$ induced by the norm~$\Vert\cdot\nolinebreak\Vert$.
This means that the set~$\mu(\Gamma)$ ``goes away \emph{linearly} from~$\mu(H)$ at infinity''.
This notion does not depend on the choice of the Cartan decomposition $G=K\overline{A_+}K$ nor of the norm~$\Vert\cdot\Vert$.
By the properness criterion of Benoist \cite{ben96} and Kobayashi \cite{kob96}, any sharp discrete subgroup~$\Gamma$ of~$G$ acts properly discontinuously on~$X$ (see Section~\ref{subsec:exsharp}); sharpness should be thought of as a form of \emph{strong proper discontinuity}.
When $\Gamma$ is sharp, we say that the corresponding Clifford--Klein form~$X_{\Gamma}$ is sharp too.

Examples of sharp Clifford--Klein forms are plentiful, as explained in Section~\ref{subsec:exsharp}.
For instance, all \emph{standard} Clifford--Klein forms are sharp.
Also, all known examples of \emph{compact} Clifford--Klein forms of reductive homogeneous spaces are sharp, even when they are nonstandard.
We conjecture that all compact Clifford--Klein forms of reductive homogeneous spaces should be sharp (Conjecture~\ref{conj:sharp}).

We generalize Theorem~\ref{thm:universal} from the standard to the sharp case and prove the following.

\begin{theorem}\label{thm:infinitespec}
Let $X=G/H$ be a reductive symmetric space with $\Spec_d(X)\neq\emptyset$.
Then $\Spec_d(X_{\Gamma})$ is infinite for any sharp Clifford--Klein form~$X_{\Gamma}$ of~$X$.
\end{theorem}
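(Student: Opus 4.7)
The strategy is to construct, for infinitely many parameters $\lambda$ in the Flensted-Jensen parameter set $\Lambda^+$, a nonzero joint eigenfunction of $\D(X_{\Gamma})$ in $L^2(X_{\Gamma})$ with eigenvalue character $\chi_{\lambda}$, via the generalized Poincar\'e series \eqref{eqn:phiGamma} applied to the Flensted-Jensen discrete series eigenfunctions $\psi_{\lambda}$ on~$X$. Since the characters $\chi_{\lambda}$ are pairwise distinct as $\lambda$ ranges over $\Lambda^+$, this yields $\#\Spec_d(X_{\Gamma})=+\infty$.

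First I would set $\psi_{\lambda}^{\Gamma}(\Gamma\!\cdot\! x):=\sum_{\gamma\in\Gamma}\psi_{\lambda}(\gamma\!\cdot\! x)$ and prove absolute convergence in $L^2(X_{\Gamma})$ via the sharpness estimate. By Proposition~\ref{prop:asym}, the function $|\psi_{\lambda}|$ decays exponentially in the Cartan-projection distance $d_{\aaa}(\mu(g),\mu(H))$, with a rate $\alpha(\lambda)$ that grows linearly as $\lambda$ is pushed deep into $\Lambda^+$. The sharpness condition \eqref{eqn:sharpnessintro} then converts this into an exponential bound $|\psi_{\lambda}(\gamma\!\cdot\! x)|\lesssim e^{-c\alpha(\lambda)\Vert\mu(\gamma)\Vert}$, uniform on compact sets of~$X$. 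The counting Lemma~\ref{lem:growthfornu} controls $\#\{\gamma\in\Gamma:\Vert\mu(\gamma)\Vert\leq R\}$ by the Riemannian orbit-counting function of~$\Gamma$ in $G/K$, which is at most exponential with rate equal to the critical exponent of~$\Gamma$. Choosing $\lambda\in\Lambda^+$ sufficiently regular that $c\alpha(\lambda)$ strictly exceeds this critical exponent gives absolute convergence of the series on compact sets, and a standard integration over a fundamental domain promotes this to an $L^2(X_{\Gamma})$-bound. By construction $\psi_{\lambda}^{\Gamma}$ is then a joint eigenfunction of $\D(X_{\Gamma})$ with character $\chi_{\lambda}$.

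For nonvanishing, Proposition~\ref{prop:psilambda} provides a lower bound on $|\psi_{\lambda}|$ near the base point whose strength can be estimated as a function of~$\lambda$. Combined with Proposition~\ref{prop:sharp} (a Kazhdan--Margulis-type statement saying that, for a sharp~$\Gamma$, only a controlled number of elements $\gamma\in\Gamma\smallsetminus\{e\}$ can bring a fixed compact neighborhood of the base point near itself), one isolates the $\gamma=e$ contribution in $\psi_{\lambda}^{\Gamma}$ and verifies, upon integration against a suitable test function supported in a small neighborhood, that it dominates the sum of the remaining contributions once $\lambda$ is taken deep enough in $\Lambda^+$. Hence $\psi_{\lambda}^{\Gamma}\not\equiv 0$ for all but finitely many $\lambda$ in an infinite subset of $\Lambda^+$, producing infinitely many distinct characters in $\Spec_d(X_{\Gamma})$.

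The main obstacle is the uniform-in-$\lambda$ calibration of the analytic and geometric estimates. On the analytic side one must extract from the $A_{\q}(\lambda)$-realization of the Flensted-Jensen modules a pointwise decay rate depending linearly and explicitly on the regularity of~$\lambda$; on the geometric side, the Cartan-projection inequality \eqref{eqn:sharpnessintro} must be translated into a genuine pseudo-distance bound on~$X$ that is compatible with the behaviour of $\psi_{\lambda}$ at infinity. The linearity of \eqref{eqn:sharpnessintro} in $\Vert\mu(\gamma)\Vert$ is precisely what makes the decay of $\psi_{\lambda}$ beat the exponential growth of $\Gamma$ once $\lambda$ is regular enough, so the balance between (i), (ii), (iii) in the list of main ingredients is the crux of the argument; once it is achieved, the infinite discrete spectrum falls out immediately.
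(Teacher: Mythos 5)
Your proposal is correct and follows essentially the paper's route: generalized Poincar\'e series built from Flensted-Jensen functions $\psi_{\lambda}$, convergence and $L^2$-membership via sharpness combined with orbit-counting (Proposition~\ref{prop:Vlambda}), and nonvanishing via the refined estimate of Proposition~\ref{prop:psilambda} near the origin. Two points to note: Proposition~\ref{prop:psilambda} does not give a lower bound for $|\psi_{\lambda}|$ near $x_0$; it gives the exact value $\psi_{\lambda}(x_0)=1$ together with an \emph{upper} bound $|\psi_{\lambda}(x)|\leq\cosh(q'\Vert\nu(x)\Vert)^{-d(\lambda+\rho)}$, which is what controls the off-identity terms, and the argument requires the integrality condition $\lambda\in 2\rho_c-\rho+\Lambda^{\Gamma\cap Z(G_s)}$ so that the central contributions all equal $+1$ rather than potentially cancelling roots of unity. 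Also, the conjugation step isolating the dominant terms is Proposition~\ref{prop:sharp} (a direct argument ensuring, after conjugating~$\Gamma$, that only central elements keep $x_0$ in the compact subsymmetric space~$X_c$); the Kazhdan--Margulis input (Proposition~\ref{prop:KM}) is reserved for the uniform version in Theorem~\ref{thm:precise}.(2), and the paper evaluates $\psi_{\lambda}^{\Gamma}$ directly at $x_0$ rather than integrating against a test function, which the uniform convergence makes legitimate.
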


We give an explicit infinite subset of $\Spec_d(X_{\Gamma})$ contained in $\Spec_d(X)$ (see Theorem~\ref{thm:precise}), in terms of the geometry of~$X$, of the ``sharpness constants'' $c,C$ from \eqref{eqn:sharpnessintro}, and of a ``pseudo-distance'' from the origin $x_0=eH$ of $X=G/H$ to the other points of its $\Gamma$-orbit in~$X$.

Recall that on a Riemannian symmetric space all eigenfunctions of the Laplacian are analytic by the elliptic regularity theorem (see \cite[Th.\,3.4.4]{kkk86} for instance).
Here $X$ is non-Riemannian, hence eigenfunctions are not automatically analytic.
We still obtain some regularity result (see Section~\ref{subsec:regular}).

\section{Another approach in certain standard cases}\label{subsec:kk2}

The approach described in this paper is based on the existence of discrete series representation for the reductive symmetric space~$X$ --- a phenomenon specific to the non-Riemannian case, and equivalent to the condition \eqref{eqn:rank}.
It is not the only possible approach for constructing joint eigenfunctions on Clifford--Klein forms $X_{\Gamma}$.
When $\Gamma$ is contained in some reductive subgroup~$L$ of~$G$ acting properly \emph{and transitively} on~$X$, it is possible to construct other eigenfunctions by using the spectral analysis of the Riemannian symmetric space of~$L$ and the restriction to~$L$ of irreducible unitary representations of~$G$ (branching laws for $G\downarrow L$).
More precisely, if $X$ is irreducible and spherical as an $L$-homogeneous space (but does not necessarily satisfy \eqref{eqn:rank}), then it is possible to show that $\Spec_d(X_{\Gamma})\smallsetminus\Spec_d(X)$ is infinite for any uniform lattice $\Gamma$ of~$L$: details will be given in \cite{kk12}.
The following issues are also treated there in some standard cases:
\begin{itemize}
  \item Extension of the Laplacian~$\square_{X_{\Gamma}}$ to a self-adjoint operator on $L^2(X_{\Gamma})$;
  \item Inclusion of analytic functions as a dense subspace of $L^2(X_{\Gamma},\M_{\lambda})$;
  \item Infinite multiplicity of joint eigenvalues for $\D(X_{\Gamma})$;
  \item Relations with branching laws of unitary representations.
\end{itemize}

\section{Organization of the paper}

The paper is divided into four parts.

Part~\ref{part1} is a complement to the introduction.
In Chapter~\ref{sec:listex} we give an overview of various types of examples that our main theorems cover.
In Chapter~\ref{sec:theorems} we introduce some basic notation and give more precise statements of the theorems by means of the Harish-Chandra isomorphism for the ring of invariant differential operators; in particular, we describe an explicit infinite set of eigenvalues, which in the standard case of Theorem~\ref{thm:deform} is both universal and stable under small deformations.

Part~\ref{part2} is devoted to the proof that for all $K$-finite $L^2$-eigenfunct\-ions $\varphi$ on~$X$ with sufficiently regular spectral parameter, the generalized Poincar\'e series \eqref{eqn:phiGamma} converges and yields an $L^2$-eigenfunction on~$X_{\Gamma}$.
The proof is carried out in Chapter~\ref{sec:averaging}, based on both geometric and analytic estimates.
The geometric estimates are established in Chapter~\ref{sec:geometry}, where we quantify proper discontinuity through the notion of sharpness and count points of $\Gamma$-orbits in the non-Riemannian symmetric space~$X$ when $\Gamma$ is a sharp discrete subgroup of~$G$.
The analytic estimates are given in Chapter~\ref{sec:Vlambda}, where we reinterpret some asymptotic estimates of Oshima in terms of the regularity of the spectral parameter and of a ``pseudo-distance from the origin'' in~$X$.

Part~\ref{part3} establishes that, as soon as the spectral parameter~$\lambda$ is regular enough and satisfies some integrality and positivity condition, the generalized Poincar\'e series \eqref{eqn:phiGamma} is nonzero for some good choice of~$\varphi$; this completes the proof of the results stated in Chapters \ref{sec:intro} to~\ref{sec:theorems}.
The functions~$\varphi$ that we consider are $G$-translates of some $K$-finite $L^2$-eigenfunctions $\psi_{\lambda}$ on~$X$ introduced by Flensted-Jensen.
The proof is given in Chapter~\ref{sec:nonzero}, and prepared in Chapter~\ref{sec:FJ}, where we give a finer analytic estimate for~$\psi_{\lambda}$ that controls its behavior, not only at infinity, but also near the origin $x_0:=eH$ of $X=G/H$.
To deduce the nonvanishing of the series \eqref{eqn:phiGamma}, it is then enough to control how the $\Gamma$-orbit through~$x_0$ approaches~$x_0$: this is done in Chapter~\ref{sec:nonzero}, after conjugating $\Gamma$ by some appropriate element of~$G$; for uniformity for standard~$\Gamma$, we use the Kazhdan--Margulis theorem.
We complete the proof of the main theorems in Section~\ref{subsec:proofs}.

Finally, Part~\ref{part4} provides a detailed discussion of some examples, designed to illustrate the general theory in a more concrete way.

\section*{Notation}

In the whole paper, we use the notation $\R_+=(0,+\infty)$ and $\R_{\geq 0}=[0,+\infty)$, as well as $\N_+=\Z\cap\R_+$ and $\N=\Z\cap\R_{\geq 0}$.

\part{Precise description of the results}\label{part1}

\chapter{Lists of examples to which the results apply}\label{sec:listex}

There is a variety of locally symmetric spaces $X_{\Gamma}=\Gamma\backslash G/H$ to which Theorems \ref{thm:universal}, \ref{thm:deform}, and~\ref{thm:infinitespec} can be applied.
The aim of this chapter is to provide a brief overview, with an emphasis on compact $X_{\Gamma}$ in the first three sections.
Some of the examples mentioned here will be analyzed in more detail in Chapters \ref{sec:exAdS} and~\ref{sec:ex}.

\section{Symmetric spaces with standard compact Clifford--Klein forms}\label{subsec:exstandard}

We recall the following general construction from \cite{kob89}.
Assume that there exists a reductive subgroup~$L$ of~$G$ acting properly and cocompactly on~$X$.
Then standard compact Clifford--Klein forms $X_{\Gamma}=\Gamma\backslash X$ can be obtained by taking $\Gamma$ to be any torsion-free uniform lattice in~$L$.
Likewise, standard Clifford--Klein forms $X_{\Gamma}$ that are noncompact but of finite volume can be obtained by taking $\Gamma$ to be any torsion-free nonuniform lattice in~$L$.
Uniform lattices of~$L$ always exist and nonuniform lattices exist for semisimple~$L$, by work of Borel--Harish-Chandra, Mostow--Tamagawa, and Borel \cite{bor63}; they all admit torsion-free subgroups of finite index by the Selberg lemma \cite[Lem.\,8]{sel60}.

Here is a list, taken from \cite[Cor.\,3.3.7]{ky05}, of some triples $(G,H,L)$ where $G$ is a simple Lie group,  $X=G/H$ is a reductive symmetric space, and $L$ is a reductive subgroup of~$G$ acting properly and cocompactly on~$X$, with the additional assumption here that $\Spec_d(X)\neq\emptyset$ (so that Theorem~\ref{thm:universal} applies).
We denote by $m$ and~$n$ any integers $\geq 1$ with $m$ even.

\medskip

\begin{center}
\begin{tabular}{|p{1cm}|p{2.5cm}|p{2.5cm}|p{2.5cm}|}
\hline
& \centering $G$ & \centering $H$ & \centering $L$\tabularnewline
\hline
\centering (i) & \centering $\SO(2,2n)$ & \centering $\SO(1,2n)$ & \centering$\U(1,n)$\tabularnewline
\centering (ii) & \centering $\SO(2,2m)$ & \centering $\U(1,m)$ & \centering $\SO(1,2m)$\tabularnewline
\centering (iii) & \centering $\SO(4,4n)$ & \centering $\SO(3,4n)$ & \centering $\Sp(1,n)$\tabularnewline
\centering (iv) & \centering $\SU(2,2n)$ & \centering $\U(1,2n)$ & \centering $\Sp(1,n)$\tabularnewline
\centering (v) & \centering $\SO(8,8)$ & \centering $\SO(7,8)$ & \centering $\Spin(1,8)$\tabularnewline
\hline
\end{tabular}
\end{center}

\smallskip

\begin{center}
\textsc{Table~2.1}
\end{center}

\section{Group manifolds with interesting standard compact Clifford--Klein forms}\label{subsec:listexgroup}

Any reductive group~${}^\backprime{}G$ may be regarded as a homogeneous space under the action of ${}^\backprime{}G\times\!{}^\backprime{}G$ by left and right multiplication; in this way, it identifies with the symmetric space $X=({}^{\backprime}G\times\!{}^{\backprime}G)/\Diag({}^{\backprime}G)$, where $\Diag({}^{\backprime}G)$ denotes the diagonal of ${}^{\backprime}G\times {}^{\backprime}G$.
The condition $\Spec_d(X)\neq\emptyset$, or in other words $\rank G/H=\rank K/K\cap H$ (see Section~\ref{subsec:Lambda+}), is equivalent to the condition
\begin{equation}\label{eqn:rankforgroups}
\rank{}^{\backprime}G=\rank{}^{\backprime}K,
\end{equation}
where ${}^{\backprime}K$ is any maximal compact subgroup of~${}^{\backprime}G$; for ${}^{\backprime}G$ simple, this condition is satisfied if and only if the Lie algebra of~${}^{\backprime}G$ belongs to the following list, where $n$, $p$, and~$q$ are any integers $\geq 1$:
\begin{eqnarray}\label{eqn:listrank}
& \so(p,2q),\, \su(p,q),\, \ssp(p,q),\, \ssp(n,\R),\, \so^{\ast}(2n),& \\
& \e_{6(2)},\, \e_{6(-14)},\, \e_{7(7)},\, \e_{7(-5)},\, \e_{7(-25)},\, \e_{8(-24)},\, \f_{4(4)},\, \f_{4(-20)},\, \g_{2(2)}.& \nonumber
\end{eqnarray}
Standard Clifford--Klein forms~$X_{\Gamma}$ of $X=({}^{\backprime}G\times\!{}^{\backprime}G)/\Diag({}^{\backprime}G)$ can always be obtained by taking $\Gamma$ of the form ${}^{\backprime}\Gamma\times\{ e\} $ or $\{ e\} \times{}^{\backprime}\Gamma$, where ${}^{\backprime}\Gamma$ is a discrete subgroup of~${}^{\backprime}G$.
Then $X_{\Gamma}$ identifies with a usual quotient ${}^{\backprime}\Gamma\backslash{}^{\backprime}G$ or ${}^{\backprime}G/{}^{\backprime}\Gamma$ of~${}^{\backprime}G$ by a discrete subgroup on one side; in particular, $X_{\Gamma}$ has finite volume (\resp is compact) if and only if ${}^{\backprime}\Gamma$ is a lattice (\resp a uniform lattice) in~${}^{\backprime}G$.
Theorem~\ref{thm:universal} applies to such~$X_{\Gamma}$.

It is worth noting that for certain specific groups~${}^{\backprime}G$ of real rank $\geq 2$, there is another (more general) type of standard compact Clifford--Klein forms of~$X$, namely double quotients ${}^{\backprime}\Gamma_1\backslash{}^{\backprime}G/{}^{\backprime}\Gamma_2$ where ${}^{\backprime}\Gamma_1$ and~${}^{\backprime}\Gamma_2$ are discrete subgroups of~${}^{\backprime}G$ \cite{kob93}.
This happens when there exist two reductive subgroups ${}^{\backprime}G_1$ and ${}^{\backprime}G_2$ of~${}^{\backprime}G$ such that ${}^{\backprime}G_1$ acts properly and cocompactly on ${}^{\backprime}G/{}^{\backprime}G_2$.
In this case, the group $L:={}^{\backprime}G_1\times{}^{\backprime}G_2$ acts properly and cocompactly on $X=({}^{\backprime}G\times\!{}^{\backprime}G)/\Diag({}^{\backprime}G)$, and standard Clifford--Klein forms~$X_{\Gamma}$ can be obtained by taking~$\Gamma$ of the form $\Gamma={}^{\backprime}\Gamma_1\times{}^{\backprime}\Gamma_2\subset L$, where ${}^{\backprime}\Gamma_i$ is a discrete subgroup of~${}^{\backprime}G_i$.
Such a Clifford--Klein form~$X_{\Gamma}$ identifies with the double quotient ${}^{\backprime}\Gamma_1\backslash{}^{\backprime}G/{}^{\backprime}\Gamma_2$; it has finite volume (\resp is compact) if and only if ${}^{\backprime}\Gamma_i$ is a lattice (\resp a uniform lattice) in~${}^{\backprime}G_i$ for all $i\in\{ 1,2\}$.
We would like to emphasize that this ``exotic'' $X_{\Gamma}$ is locally modeled on the group manifold~${}^{\backprime}G$ and not on the homogeneous space ${}^{\backprime}G/{}^{\backprime}G_2$.
The following table, obtained from \cite[Cor.\,3.3.7]{ky05}, gives some triples $({}^{\backprime}G,{}^{\backprime}G_1,{}^{\backprime}G_2)$ such that ${}^{\backprime}G$ satisfies the rank condition \eqref{eqn:rankforgroups} and ${}^{\backprime}G_1$ acts properly and cocompactly on ${}^{\backprime}G/{}^{\backprime}G_2$; Theorem~\ref{thm:universal} applies to the corresponding double quotients ${}^{\backprime}\Gamma_1\backslash{}^{\backprime}G/{}^{\backprime}\Gamma_2$.
Here $n$ is any integer $\geq 1$; it does not need to be even in Example~(ii), in contrast with Example~(ii) of Table~2.1.
We note that neither $({}^{\backprime}G,{}^{\backprime}G_1)$ nor $({}^{\backprime}G,{}^{\backprime}G_2)$ has to be a symmetric pair, and that ${}^{\backprime}G_1$ and~${}^{\backprime}G_2$ play symmetric roles.

\medskip

\begin{center}
\begin{tabular}{|p{1cm}|p{4.6cm}|p{2.7cm}|p{2.9cm}|}
\hline
& \centering ${}^{\backprime}G$ & \centering ${}^{\backprime}G_1$ & \centering ${}^{\backprime}G_2$\tabularnewline
\hline
\centering (i) & \centering ${}^{\backprime}G$ with Lie algebra in \eqref{eqn:listrank} & \centering ${}^{\backprime}G$ & \centering $\{ e\} $\tabularnewline
\centering (ii) & \centering $\SO(2,2n)$ & \centering $\SO(1,2n)$ & \centering $\U(1,n)$\tabularnewline
\centering (iii) & \centering $\SO(4,4n)$ & \centering $\SO(3,4n)$ & \centering $\Sp(1,n)$\tabularnewline
\centering (iv) & \centering $\SU(2,2n)$ & \centering $\U(1,2n)$ & \centering $\Sp(1,n)$\tabularnewline
\centering (v) & \centering $\SO(8,8)$ & \centering $\SO(7,8)$ & \centering $\Spin(1,8)$\tabularnewline
\centering (vi) & \centering $\SO(4,4)$ & \centering $\SO(4,3)$ & \centering $\Spin(4,1)$\tabularnewline
\centering (vii) & \centering $\SO(4,4)$ & \centering $\Spin(4,3)$ & \centering $\SO(4,1)\times\SO(3)$\tabularnewline
\centering (viii) & \centering $\SO(4,3)$ & \centering $\mathrm{G}_{2(2)}$ & \centering $\SO(4,1)\times\SO(2)$\tabularnewline
\centering (ix) & \centering $\SO^{\ast}(8)$ & \centering $\U(3,1)$ & \centering $\Spin(1,6)$\tabularnewline
\centering (x) & \centering $\SO^{\ast}(8)$ & \centering $\SO^{\ast}(6)\times\SO^{\ast}(2)$ & \centering $\Spin(1,6)$ \tabularnewline
\hline
\end{tabular}
\end{center}

\smallskip

\begin{center}
\textsc{Table~2.2}
\end{center}

\section[Nontrivial deformations of standard compact forms]{Symmetric spaces with nontrivial deformations of standard compact Clifford--Klein forms}\label{subsec:exdeform}

Theorem~\ref{thm:deform} applies to all the examples in Table~2.1.
However, this theorem is relevant only for standard Clifford--Klein forms~$X_{\Gamma}$ such that $\Gamma$ admits \emph{nontrivial} small deformations inside~$G$, \ie deformations that are not obtained by conjugation.
Such deformations do not always exist when $X_{\Gamma}$ is compact.
We now point out a few examples where they do exist.

Consider Example~(i) of Table~2.1, where $X=\SO(2,2n)/\SO(1,2n)$ is the $(2n+1)$-dimensional \emph{anti-de Sitter space} $\AdS^{2n+1}$.
The group $L=\U(1,n)$ has a nontrivial center $Z(L)$, isomorphic to $\U(1)$.
For certain uniform lattices $\Gamma$ of~$L$, small nontrivial deformations of $\Gamma$ inside $G=\SO(2,2n)$ can be obtained by considering homomorphisms of the form $\gamma\mapsto\gamma\psi(\gamma)$ with $\psi\in\nolinebreak\Hom(\Gamma,Z(L))$ (see \cite{kob98}).
By \cite{rag65} and~\cite{wei64}, any small deformation of $\Gamma$ inside~$G$ is actually of this form, up to conjugation.
The Clifford--Klein forms corresponding to these nontrivial deformations remain standard, but the existence of a stable discrete spectrum given by Theorem~\ref{thm:deform} is not obvious even in this case.
We examine this example in more detail in Section~\ref{subsec:AdS}.

Consider Example~(ii) of Table~2.1, where $X=\SO(2,2m)/\U(1,m)$ has the additional structure of an indefinite K\"ahler manifold (see Section~\ref{subsec:3cpx}).
Here it is actually possible to deform certain standard compact Clifford--Klein forms of~$X$ into \emph{nonstandard} ones.
Indeed, using a \emph{bending} construction due to Johnson--Millson \cite{jm87}, one can obtain small \emph{Zariski-dense} deformations inside $G=\SO(2,2m)$ of certain arithmetic uniform lattices~$\Gamma$ of $L=\SO(1,2m)$ (see \cite[\S\,6]{kas12}): this yields a continuous family of compact Clifford--Klein forms~$X_{\Gamma}$ with $\Gamma$ Zariski-dense in~$G$.
(Recall that a group is said to be Zariski-dense in~$G$ if it is not contained in any proper algebraic subgroup of~$G$.)
Here the $\C$-algebra $\D(X)$ is a polynomial ring in $[\frac{m+1}{2}]$ generators; we discuss the discrete spectrum of~$X_{\Gamma}$ in Section~\ref{subsec:3cpx}.

Finally, consider the ``exotic'' standard compact Clifford--Klein forms\linebreak ${}^{\backprime}\Gamma_1\backslash {}^{\backprime}G/{}^\backprime \Gamma_2$ discussed in Section~\ref{subsec:listexgroup}, for which some examples are given in Table~2.2.
Here is an analog of Theorem~\ref{thm:deform} in this setting (see Proposition~\ref{prop:deformexotic} below for noncompact Clifford--Klein forms): the novelty is the stability of the discrete spectrum, whereas the fact that the quotient remains a manifold under small deformations (\ie stability for proper discontinuity, in the sense of Section~\ref{subsec:introstand}) is a direct consequence of \cite{kas12}.
We refer to Section~\ref{subsec:proofs} for a proof.

\begin{proposition}\label{prop:deformexoticcompact}
Let ${}^{\backprime}G$ be a reductive linear Lie group and let ${}^{\backprime}G_1$ and~${}^{\backprime}G_2$ be two reductive subgroups of~${}^{\backprime}G$ such that ${}^{\backprime}G_1$ acts properly on ${}^{\backprime}G/{}^{\backprime}G_2$.
Any standard Clifford--Klein form
$${}^{\backprime}\Gamma_1\backslash{}^{\backprime}G/\,{}^{\backprime}\Gamma_2 \simeq ({}^{\backprime}\Gamma_1\!\times\!{}^{\backprime}\Gamma_2)\backslash ({}^{\backprime}G\times\!{}^{\backprime}G)/\Diag({}^{\backprime}G),$$
where ${}^{\backprime}\Gamma_i$ is an irreducible uniform lattice of~${}^{\backprime}G_i$ for all $i\in\{ 1,2\}$, remains a manifold after any small deformation of ${}^{\backprime}\Gamma_1\!\times\!{}^{\backprime}\Gamma_2$ inside ${}^{\backprime}G\times\!{}^{\backprime}G$, and it has an infinite stable discrete spectrum if \eqref{eqn:rankforgroups} is satisfied.
\end{proposition}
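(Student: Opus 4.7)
The plan is to reduce the statement to a direct application of Theorem~\ref{thm:infinitespec} (in its refined form, Theorem~\ref{thm:precise}) combined with a uniform sharpness estimate obtained from \cite{kas12}.

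First I would set up the framework of Section~\ref{subsec:introstand} by writing $G := {}^{\backprime}G \times {}^{\backprime}G$, $H := \Diag({}^{\backprime}G)$, $L := {}^{\backprime}G_1 \times {}^{\backprime}G_2$, and $\Gamma := {}^{\backprime}\Gamma_1 \times {}^{\backprime}\Gamma_2 \subset L$. Then $X = G/H$ identifies with the group manifold ${}^{\backprime}G$, and $L$ acts on $X$ by $({}^{\backprime}g_1, {}^{\backprime}g_2) \cdot {}^{\backprime}g = {}^{\backprime}g_1 {}^{\backprime}g \,{}^{\backprime}g_2^{-1}$; this action is proper by the hypothesis that ${}^{\backprime}G_1$ acts properly on ${}^{\backprime}G/{}^{\backprime}G_2$, using the symmetric form of the Benoist--Kobayashi criterion. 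A short computation with the Cartan decomposition of a product gives $\rank G/H = \rank {}^{\backprime}G$ and $\rank K/(K\cap H) = \rank {}^{\backprime}K$, so the condition $\rank G/H = \rank K/(K\cap H)$ is exactly \eqref{eqn:rankforgroups}, which by Flensted-Jensen--Matsuki--Oshima guarantees $\Spec_d(X) \neq \emptyset$. Since each ${}^{\backprime}\Gamma_i$ is a uniform lattice in ${}^{\backprime}G_i$, the product $\Gamma$ is a uniform lattice in $L$, and $X_\Gamma$ is a compact standard Clifford--Klein form.

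Second I would establish stability for proper discontinuity together with uniformity of the sharpness constants. The input is \cite{kas12}: there exists a neighborhood $\mathcal{U}_0 \subset \Hom(\Gamma, G)$ of the natural inclusion such that for every $\varphi \in \mathcal{U}_0$ the subgroup $\varphi(\Gamma)$ of $G$ is sharp for $X$ in the sense of \eqref{eqn:sharpnessintro}, with sharpness constants $c_0 > 0$ and $C_0 \geq 0$ that can be chosen independent of $\varphi$ on $\mathcal{U}_0$; after shrinking $\mathcal{U}_0$, the action of $\varphi(\Gamma)$ on $X$ is also free. This already proves the first assertion, that $X_{\varphi(\Gamma)}$ remains a manifold. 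Finally I would apply Theorem~\ref{thm:precise}: it produces an explicit infinite subset $\Lambda_0 \subset \Spec_d(X)$ which is contained in $\Spec_d(X_{\Gamma'})$ for every sharp discrete subgroup $\Gamma' \subset G$ with sharpness constants bounded by $(c_0, C_0)$, and $\Lambda_0$ depends only on $(c_0, C_0)$ and on the geometry of~$X$, not on $\Gamma'$. Applied to $\Gamma' = \varphi(\Gamma)$ for $\varphi \in \mathcal{U}_0$, the same infinite set $\Lambda_0$ lies in $\Spec_d(X_{\varphi(\Gamma)})$ for all $\varphi \in \mathcal{U}_0$, yielding the infinite stable discrete spectrum.

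The main obstacle is the second step: one needs the uniformity of the sharpness constants on a full neighborhood of the inclusion, not merely the qualitative persistence of proper discontinuity. Extracting this from \cite{kas12} requires a careful analysis of the Cartan projection of ${}^{\backprime}G \times {}^{\backprime}G$ relative to $\mu(\Diag({}^{\backprime}G))$, and the delicate point is that an arbitrary small deformation of $\Gamma$ inside ${}^{\backprime}G \times {}^{\backprime}G$ need not preserve the product structure ${}^{\backprime}\Gamma_1 \times {}^{\backprime}\Gamma_2$; the irreducibility assumption on each ${}^{\backprime}\Gamma_i$ is what prevents degenerate deformations into smaller subgroups and ultimately makes the uniform lower bound on $d_\aaa(\mu(\varphi(\gamma)), \mu(H))$ go through.
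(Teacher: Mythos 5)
Your high-level strategy — set up $G={}^{\backprime}G\times{}^{\backprime}G$, $H=\Diag({}^{\backprime}G)$, $L={}^{\backprime}G_1\times{}^{\backprime}G_2$, check the rank condition, establish uniform sharpness under small deformation, then feed the uniform constants into the explicit spectral estimate — is the same as the paper's. However, there are two concrete gaps in how you carry out the second step.

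First, you attribute the universality of the eigenvalue set to Theorem~\ref{thm:precise} alone, asserting that the constant $R$ depends only on the sharpness constants $(c_0,C_0)$ and the geometry of~$X$. That is not what Theorem~\ref{thm:precise}.(1) says: looking at the explicit formula \eqref{eqn:finalcst}, the constant $R$ also depends on $r_{\Gamma}:=\inf\{\Vert\nu(x)\Vert : x\in\Gamma\cdot x_0,\ x\notin X_c\}$ (which controls how closely the orbit of the origin approaches the compact subsymmetric space $X_c$, entering through Proposition~\ref{prop:psilambda}) and on $\#(\Gamma\cap K)$. To obtain an infinite set of eigenvalues that is stable under all small deformations, you therefore need \emph{both} uniform sharpness constants \emph{and} a uniform lower bound on $r_{\varphi(\Gamma)}$; the latter is exactly the additional conclusion $\Vert\nu(\varphi(\gamma))\Vert\geq r-\varepsilon$ built into Lemma~\ref{lem:muexotic}, and the paper's proof plugs both into Lemma~\ref{lem:nonzeroprecise} following the scheme of Proposition~\ref{prop:nonzero}.(3). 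Your argument is missing the control of $r_{\varphi(\Gamma)}$ and of $\#(\varphi(\Gamma)\cap K)$ (the latter handled via Remark~\ref{rem:deformcenter}).

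Second, your appeal to \cite{kas12} for uniform sharpness is not sufficient as stated. The quantitative estimate \cite[Th.\,1.4]{kas12} applies when the relevant subgroup is convex cocompact in a \emph{rank-one} reductive group; here the ${}^{\backprime}\Gamma_i$ are irreducible uniform lattices of ${}^{\backprime}G_i$, which may have higher real rank. In the higher-rank case the paper uses a completely different mechanism: Margulis superrigidity together with local rigidity (Raghunathan, Weil) forces any small deformation of an irreducible higher-rank uniform lattice inside $G$ to be by conjugation, and then the Cartan-projection perturbation is controlled by the strong triangle inequalities \eqref{eqn:rightstrongtriangineq} and \eqref{eqn:leftstrongtriangineq} rather than by \cite{kas12}. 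Your remark that ``irreducibility prevents degenerate deformations'' points in the right direction but does not identify local rigidity as the tool, nor does it separate the rank-one and higher-rank cases as Lemma~\ref{lem:muexotic}'s proof does. Finally, a small deformation $\varphi$ of ${}^{\backprime}\Gamma_1\times{}^{\backprime}\Gamma_2$ need not preserve the product structure, and the proof of Lemma~\ref{lem:muexotic} handles this via the two coordinate projections $\pi_i:\aaa\to{}^{\backprime}\aaa$ and inequalities \eqref{eqn:muphi1}--\eqref{eqn:muphi2}; this non-product issue is real and should not be elided.
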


\noindent
In Examples (ii), (vii), and~(viii) of Table~2.2, certain standard compact Clifford--Klein forms ${}^{\backprime}\Gamma_1\backslash{}^{\backprime}G/{}^{\backprime}\Gamma_2$ admit small nonstandard deformations obtained by bending, similarly to Example~(ii) of Table~2.1 above.
In Example~(i) of Table~2.2, there exist standard compact Clifford--Klein forms ${}^{\backprime}\Gamma_1\backslash{}^{\backprime}G$ with nonstandard small deformations if and only if ${}^{\backprime}G$ has a simple factor that is locally isomorphic to $\SO(1,2n)$ or $\SU(1,n)$ \cite[Th.\,A]{kob98}.

\section{Clifford--Klein forms of infinite volume}\label{subsec:exinfinitevol}

Most examples of Clifford--Klein forms that we have given in Sections \ref{subsec:exstandard} to~\ref{subsec:exdeform} were compact.
However, Theorems \ref{thm:universal}, \ref{thm:deform}, and~\ref{thm:infinitespec} do not require any compactness assumption.
In particular, in Theorems \ref{thm:universal} and~\ref{thm:deform} on the existence of an infinite (universal or stable) spectrum for standard Clifford--Klein forms, we remark that
\begin{itemize}
  \item the reductive group~$L$ does not need to act cocompactly on~$X$ (it could be quite ``small'', for instance locally isomorphic to $\SL_2(\R)$),
  \item the discrete group~$\Gamma$ does not need to be cocompact (nor of finite covolume) in~$L$.
\end{itemize}
Also, in Theorem~\ref{thm:infinitespec}, the sharp Clifford--Klein form~$X_{\Gamma}$ does not need to be compact (nor of finite volume).
Therefore, our theorems apply to much wider settings than those of Tables 2.1 and~2.2; we now discuss some examples.

Firstly, as soon as $\rank_{\R}H<\rank_{\R}G$, there exist infinite \emph{cyclic} discrete subgroups~$\Gamma$ of~$G$ that are sharp for $X=G/H$ \cite{kob89}; Theorem~\ref{thm:infinitespec} applies to the corresponding Clifford--Klein forms~$X_{\Gamma}$.
Even in this case, the existence of an infinite discrete spectrum for~$X_{\Gamma}$ is new.

Secondly, for many~$X$ there exist discrete subgroups $\Gamma$ of~$G$ that are \emph{nonvirtually abelian} (\ie with no abelian subgroup of finite index) and sharp for~$X$; we can again apply Theorem~\ref{thm:infinitespec}.
This is for instance the case for $X=\SO(p+1,q)/\SO(p,q)$ whenever $0<p<q-1$ or $p=q-1$ is odd \cite{ben96}.
Recently, Okuda \cite{oku13} gave a complete list of reductive symmetric spaces $X=G/H$ with $G$ simple that admit Clifford--Klein forms~$X_{\Gamma}$ with $\Gamma$ nonvirtually abelian.
For such symmetric spaces, there always exist interesting sharp examples:
\begin{enumerate}
  \item on the one hand, sharp Clifford--Klein forms~$X_{\Gamma}$ such that $\Gamma$ is a free group, Zariski-dense in~$G$ \cite[Th.\,1.1]{ben96};
  \item on the other hand, standard Clifford--Klein forms~$X_{\Gamma}$ with $\Gamma\subset L$ for some subgroup $L$ of~$G$ isomorphic to $\SL_2(\R)$ or $\PSL_2(\R)$ \cite{oku13}.
\end{enumerate}
In case~(1), the group~$\Gamma$ is in some sense ``as large as possible'', in contrast with case~(2), where it is contained in a proper algebraic subgroup $L$ of~$G$.
In case~(2), we can take~$\Gamma$ to be a surface group embedded in~$L$, therefore admitting nontrivial deformations inside~$L$.
Theorem~\ref{thm:infinitespec} applies to case~(1) and Theorems \ref{thm:universal} and~\ref{thm:deform} to case~(2).

Thirdly, for group manifolds $X=({}^{\backprime}G\times\!{}^{\backprime}G)/\Diag({}^{\backprime}G)$ there are many examples of standard Clifford--Klein forms of infinite volume that admit nontrivial deformations.
As in Section~\ref{subsec:listexgroup}, we can take a pair of reductive subgroups ${}^{\backprime}G_1,{}^{\backprime}G_2$ of~${}^{\backprime}G$ such that ${}^{\backprime}G_1$ acts properly on ${}^{\backprime}G/{}^{\backprime}G_2$, but now we do not require anymore that this action be cocompact.
We consider $X_{\Gamma}={}^{\backprime}\Gamma_1\backslash{}^{\backprime}G/{}^{\backprime}\Gamma_2$ where ${}^{\backprime}\Gamma_i$ is a discrete subgroup of ${}^{\backprime}G_i$ (not necessarily cocompact) and we deform~${}^{\backprime}\Gamma$ inside ${}^{\backprime}G\times\!{}^{\backprime}G$.
Here is an analog of Theorem~\ref{thm:deform} that applies in this setting; we refer to Section~\ref{subsec:proofs} for a proof.

\begin{proposition}\label{prop:deformexotic}
Let ${}^{\backprime}G$ be a reductive linear Lie group satisfying \eqref{eqn:rankforgroups} and let ${}^{\backprime}G_1$ and~${}^{\backprime}G_2$ be two reductive subgroups of~${}^{\backprime}G$ such that ${}^{\backprime}G_1$ acts properly on ${}^{\backprime}G/{}^{\backprime}G_2$.
Consider a standard Clifford--Klein form
$${}^{\backprime}\Gamma_1\backslash{}^{\backprime}G/{}^{\backprime}\Gamma_2 \,\simeq\, ({}^{\backprime}\Gamma_1\!\times\!{}^{\backprime}\Gamma_2)\backslash ({}^{\backprime}G\times\!{}^{\backprime}G)/\Diag({}^{\backprime}G),$$
where ${}^{\backprime}\Gamma_i$ is a discrete subgroup of~${}^{\backprime}G_i$ for all~$i$.
\begin{enumerate}
  \item If ${}^{\backprime}G_1$ has real rank~$1$ and ${}^{\backprime}\Gamma_1$ is convex cocompact in~${}^{\backprime}G_1$, then there exists an infinite subset~$I$ of $\Spec_d({}^{\backprime}\Gamma_1\backslash{}^{\backprime}G/{}^{\backprime}\Gamma_2)$ and a neighborhood ${}^{\backprime}\mathcal{U}\subset\Hom({}^{\backprime}\Gamma_1,{}^{\backprime}G\times Z_{{}^{\backprime}\!G}({}^{\backprime}\Gamma_2))$ of the natural inclusion such that ${}^{\backprime}\varphi({}^{\backprime}\Gamma_1)\backslash{}^{\backprime}G/{}^{\backprime}\Gamma_2$ is a manifold and $I\subset\Spec_d({}^{\backprime}\varphi({}^{\backprime}\Gamma_1)\backslash{}^{\backprime}G/{}^{\backprime}\Gamma_2)$ for all ${}^{\backprime}\varphi\in{}^{\backprime}\mathcal{U}$.
  \item If ${}^{\backprime}G_i$ has real rank~$1$ and ${}^{\backprime}\Gamma_i$ is convex cocompact in~${}^{\backprime}G_i$ for all $i\in\nolinebreak\{ 1,2\}$, then the standard Clifford--Klein form ${}^{\backprime}\Gamma_1\backslash{}^{\backprime}G/{}^{\backprime}\Gamma_2$ remains a manifold after any small deformation of ${}^{\backprime}\Gamma_1\times{}^{\backprime}\Gamma_2$ inside ${}^{\backprime}G\times\!{}^{\backprime}G$ and it has an infinite stable discrete spectrum in the sense of Definition~\ref{def:stablespec}.
\end{enumerate}
\end{proposition}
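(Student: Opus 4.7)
The plan is to realize the biquotient ${}^{\backprime}\Gamma_1\backslash{}^{\backprime}G/{}^{\backprime}\Gamma_2$ as a Clifford--Klein form of the symmetric space $X=({}^{\backprime}G\times{}^{\backprime}G)/\Diag({}^{\backprime}G)$, with $\Gamma={}^{\backprime}\Gamma_1\times{}^{\backprime}\Gamma_2$ contained in the reductive subgroup $L={}^{\backprime}G_1\times{}^{\backprime}G_2$ of $G={}^{\backprime}G\times{}^{\backprime}G$, and to apply the Poincar\'e-series machinery of Parts \ref{part2} and~\ref{part3} directly rather than quoting Theorem~\ref{thm:deform} as a black box (the latter would require $L$ itself to have real rank~$1$, which generally fails here). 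First I would check the input hypotheses of that machinery: the rank condition \eqref{eqn:rankforgroups} for ${}^{\backprime}G$ is equivalent to $\rank G/H=\rank K/(K\cap H)$ with $H=\Diag({}^{\backprime}G)$, hence to $\Spec_d(X)\neq\emptyset$ by Flensted-Jensen and Matsuki--Oshima; and the proper action of $L$ on $X$ is, via the Benoist--Kobayashi Cartan-projection criterion, a direct translation of the assumed properness of ${}^{\backprime}G_1$ on ${}^{\backprime}G/{}^{\backprime}G_2$.

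Next I would set up the deformation spaces in the two cases. In~(1), a homomorphism ${}^{\backprime}\varphi:{}^{\backprime}\Gamma_1\to{}^{\backprime}G\times Z_{{}^{\backprime}G}({}^{\backprime}\Gamma_2)$ extends to a homomorphism $\Phi:{}^{\backprime}\Gamma_1\times{}^{\backprime}\Gamma_2\to G$, $\Phi(\gamma_1,\gamma_2)={}^{\backprime}\varphi(\gamma_1)\cdot(e,\gamma_2)$, the two pieces commuting precisely because of the centralizer condition; in~(2) one deforms ${}^{\backprime}\Gamma_1\times{}^{\backprime}\Gamma_2$ arbitrarily inside $G$. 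Stability of proper discontinuity then reduces to stability of convex cocompactness of each ${}^{\backprime}\Gamma_i$ inside the rank-one group ${}^{\backprime}G_i$, which is provided by \cite{kas12}; this also supplies sharpness of the deformed image $\Phi(\Gamma)$ in $G$ for $X$ in the sense of \eqref{eqn:sharpnessintro}, with constants $c,C$ that can be chosen uniformly on a neighborhood $\mathcal{U}$ of the natural inclusion.

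Having sharpness uniformly in the deformation parameter in hand, I would then run the construction of Chapters \ref{sec:averaging} and~\ref{sec:nonzero}. For each sufficiently regular integral spectral parameter $\lambda$ satisfying the positivity assumption, form the generalized Poincar\'e series \eqref{eqn:phiGamma} with $\varphi$ a carefully chosen $G$-translate of Flensted-Jensen's $K$-finite generator $\psi_\lambda$ of a discrete series for $X$. Convergence follows from the asymptotic decay of $\psi_\lambda$ (Proposition~\ref{prop:asym}) combined with the orbital-counting estimate Lemma~\ref{lem:growthfornu} for sharp~$\Gamma$; nonvanishing follows from the sharper local estimate near $x_0$ (Proposition~\ref{prop:psilambda}) together with Kazhdan--Margulis control of how $\Gamma\cdot x_0$ approaches $x_0$ after a well-chosen conjugation. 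Because all analytic thresholds depend only on the sharpness constants $c,C$ and on the geometry of $X$, the resulting infinite subset of $\Spec_d(X)$ embeds in $\Spec_d(\Phi(\Gamma)\backslash X)$ uniformly for $\Phi\in\mathcal{U}$, yielding the desired infinite stable spectrum.

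The main obstacle I expect is the uniform sharpness estimate for $\Phi(\Gamma)$ in $G$, since $\Gamma$ generically has rank~$2$ and sits in the product of two rank-one groups: one must show that $d_\aaa\bigl(\mu(\Phi(\gamma_1,\gamma_2)),\mu(\Diag({}^{\backprime}G))\bigr)$ grows linearly in $\Vert\mu(\Phi(\gamma_1,\gamma_2))\Vert$, uniformly in $\Phi\in\mathcal{U}$. Each individual $\Vert\mu({}^{\backprime}\varphi_i({}^{\backprime}\gamma_i))\Vert$ is controlled by the rank-one convex cocompact theory and \cite{kas12}, but keeping the pair $(\mu({}^{\backprime}\varphi_1({}^{\backprime}\gamma_1)),\mu({}^{\backprime}\varphi_2({}^{\backprime}\gamma_2)))$ away from the diagonal $\Diag(\log{}^{\backprime}\overline{A_+})$ uses the proper action of ${}^{\backprime}G_1$ on ${}^{\backprime}G/{}^{\backprime}G_2$ in a quantitative form and is the key new input; once it is in place, the analytic machinery of the paper applies verbatim.
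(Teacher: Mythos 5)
Your plan is the same as the paper's: realize the biquotient as a Clifford--Klein form of $X=({}^{\backprime}G\times{}^{\backprime}G)/\Diag({}^{\backprime}G)$, reduce stability of the discrete spectrum to stability of sharpness of $\Gamma={}^{\backprime}\Gamma_1\times{}^{\backprime}\Gamma_2$ under deformation, and then run the Poincar\'e-series machinery (convergence via Proposition~\ref{prop:asym} and Lemma~\ref{lem:growthfornu}, nonvanishing via Propositions~\ref{prop:psilambda}, \ref{prop:sharp}, \ref{prop:KM} and Lemma~\ref{lem:nonzeroprecise}), exactly as in the proof of Proposition~\ref{prop:nonzero}.(3). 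You also correctly note that one cannot simply quote Theorem~\ref{thm:deform} because $L={}^{\backprime}G_1\times{}^{\backprime}G_2$ has real rank~$2$ in general.

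However, you flag as ``the main obstacle'' the uniform sharpness estimate for the deformed group and then stop without proving it, while that is precisely the content of the key Lemma~\ref{lem:muexotic} in the paper. Moreover, your description of what is needed is somewhat off. You want ``the pair $(\mu({}^{\backprime}\varphi_1({}^{\backprime}\gamma_1)),\mu({}^{\backprime}\varphi_2({}^{\backprime}\gamma_2)))$ to stay away from $\Diag(\log{}^{\backprime}\overline{A_+})$''; but in case~(2) a small deformation $\varphi\in\Hom(\Gamma,G)$ need not respect the product structure at all, so $\mu(\varphi({}^{\backprime}\gamma_1,{}^{\backprime}\gamma_2))$ is not such a pair. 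What is actually proved in Lemma~\ref{lem:muexotic} is an approximation estimate
$$\Vert\mu(\varphi(\gamma))-\mu(\gamma)\Vert \leq 2\varepsilon'\,\Vert\mu(\gamma)\Vert + 2\sqrt{2}\,\varepsilon' \quad\text{for all }\gamma\in\Gamma,$$
and this is obtained by writing $\gamma=({}^{\backprime}\gamma_1,e)(e,{}^{\backprime}\gamma_2)$, applying \cite[Th.\,1.4]{kas12} separately to $\varphi({}^{\backprime}\gamma_1,e)$ and $\varphi(e,{}^{\backprime}\gamma_2)$ (each controlled by the real-rank-one convex cocompactness of ${}^{\backprime}\Gamma_i$), and combining the two bounds through the strong triangle inequalities \eqref{eqn:rightstrongtriangineq}--\eqref{eqn:leftstrongtriangineq} applied factor by factor in $\aaa={}^{\backprime}\aaa\oplus{}^{\backprime}\aaa$. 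No further quantitative input about the proper action of ${}^{\backprime}G_1$ on ${}^{\backprime}G/{}^{\backprime}G_2$ is required at that stage: that proper action only enters through the $(c,C)$-sharpness of the undeformed $\Gamma$, and it is the approximation estimate above that transports sharpness (with constants $c-\varepsilon$, $C+\varepsilon$) to $\varphi(\Gamma)$. In case~(1), the centralizer condition ensures that ${}^{\backprime}\varphi$ together with the fixed ${}^{\backprime}\Gamma_2$ yields a genuine homomorphism on $\Gamma$, and the same two-step estimate applies, with ${}^{\backprime}\Gamma_2$ undeformed. Finally, the paper also treats the case where ${}^{\backprime}G$ has compact factors or is disconnected by passing to $\overline{G}=G/G_cG_H$ as in Section~\ref{subsec:proofs}; your write-up should at least acknowledge this reduction to make the argument complete.
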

\chapter{Quantitative versions of the results}\label{sec:theorems}

In this chapter, we give some quantitative estimates of Theorems \ref{thm:universal}, \ref{thm:deform}, and \ref{thm:infinitespec} (Section~\ref{subsec:infiniteset}) and discuss the regularity of our eigenfunctions (Section~\ref{subsec:regular}).
We first fix some notation that will be used throughout the paper and recall some useful classical facts (Sections \ref{subsec:DGH} to~\ref{subsec:Lambda+}).

\section{Invariant differential operators}\label{subsec:DGH}

\emph{In the whole paper, $G$ denotes a real reductive linear Lie group and $H$ an open subgroup of the group of fixed points of~$G$ under some involutive automorphism~$\sigma$.}
We denote their respective Lie algebras by $\g$ and~$\h$.
Without loss of generality, we may and will assume that $G$ is connected; indeed, we only need to consider the discrete spectrum of one connected component of $X=G/H$.

In this paragraph, we recall some classical results on the structure of the algebra $\D(X)$ of $G$-invariant differential operators on~$X$.
We refer the reader to \cite[Ch.\,II]{hel00} for proofs and more details.

Let $U(\g_{\C})$ be the enveloping algebra of the complexified Lie algebra $\g_{\C}:=\g\otimes_{\R}\C$ and $U(\g_{\C})^H$ the subalgebra of $\Ad_G(H)$-invariant elements (it contains in particular the center $Z(\g_{\C})$ of $U(\g_{\C})$). 
Recall that $U(\g_{\C})$ acts on $C^{\infty}(G)$ by differentiation on the right, with
$$\big((Y_1\cdots Y_m)\cdot f\big)(g) = \frac{\mathrm{d}}{\mathrm{d}t_1}\Big|_{t_1=0}\,\cdots\ \frac{\mathrm{d}}{\mathrm{d}t_m}\Big|_{t_m=0}\ f\big(g\exp(t_1Y_1)\cdots\exp(t_mY_m)\big)$$
for all $Y_1,\dots,Y_m\in\g$, all $f\in C^{\infty}(G)$, and all $g\in G$.
This gives an isomorphism between $U(\g_{\C})$ and the ring of left-invariant differential operators on~$G$.
By identifying the set of smooth functions on~$X$ with the set of right-$H$-invariant smooth functions on~$G$, we obtain a $\C$-algebra homomorphism
$$p :\ U(\g_{\C})^H \,\longrightarrow\, \D(X).$$
This homomorphism is surjective, with kernel $U(\g_{\C})\h_{\C}\cap\nolinebreak U(\g_{\C})^H$ \cite[Ch.\,II, Th.\,4.6]{hel00}, hence it induces an algebra isomorphism
\begin{equation}\label{eqn:DX}
U(\g_{\C})^H/U(\g_{\C})\h_{\C} \cap U(\g_{\C})^H \,\overset{\sim}\longrightarrow\, \D(X).
\end{equation}

Let $\g=\h+\q$ be the decomposition of~$\g$ into eigenspaces of~$\mathrm{d}\sigma$, with respective eigenvalues $+1$ and~$-1$.
\emph{In the whole paper, we fix a maximal semisimple abelian subspace~$\jj$ of $\sqrt{-1}\,\q$.}
The integer
\begin{equation}\label{eqn:defrank}
\rank G/H := \dim_{\R} \jj
\end{equation}
does not depend on the choice of~$\jj$.  
Geometrically, if $x_0$ denotes the image of $H$ in $X=G/H$, then $\exp(\sqrt{-1}\,\jj)\cdot x_0$ is a maximal flat totally geodesic submanifold of~$X$, where ``flat'' means that the induced pseudo-Riemannian metric is nondegenerate and that the curvature tensor vanishes (see \cite[Ch.\,XI, \S\,4]{kn69}).
Let $W$ be the Weyl group of the restricted root system $\Sigma(\g_{\C},\jj_{\C})$ of~$\jj_{\C}$ in~$\g_{\C}$, and let $S(\jj_{\C})^W$ be the subalgebra of $W$-invariant elements in the symmetric algebra~$S(\jj_{\C})$ of~$\jj_{\C}$.
The important fact that we will use is the following.

\begin{fact}\label{fact:3.1}
The algebra~$\D(X)$ of $G$-invariant differential operators on~$X$ is a polynomial algebra in $r:=\rank G/H$ generators.
It naturally identifies with $S(\jj_{\C})^W$, and the set of $\C$-algebra homomorphisms from $\D(X)$ to~$\C$ identifies with $\jj_{\C}^{\ast}/W$, where $\jj_{\C}^{\ast}$ is the dual vector space of~$\jj_{\C}$.
\end{fact}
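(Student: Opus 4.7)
The plan is to invoke the Harish--Chandra isomorphism for reductive symmetric spaces, producing a canonical $\C$-algebra isomorphism $\D(X) \simeq S(\jj_\C)^W$; granted this, the three assertions follow at once, the first from Chevalley's theorem on invariants of finite reflection groups, the last from the Hilbert Nullstellensatz. The starting point is the identification already recalled in~\eqref{eqn:DX}.

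First I would introduce the restricted root space decomposition of $\g_\C$ relative to $\jj_\C$. Writing $\Sigma := \Sigma(\g_\C, \jj_\C)$ and fixing a positive system~$\Sigma^+$, set $\n_+ := \bigoplus_{\alpha \in \Sigma^+}(\g_\C)_\alpha$. A suitable Iwasawa-type decomposition gives $\g_\C = \h_\C + \jj_\C + \n_+$, and the PBW theorem then yields
$$U(\g_\C) \,=\, U(\jj_\C) \,\oplus\, \bigl(\h_\C\,U(\g_\C) + U(\g_\C)\,\n_+\bigr),$$
so the associated projection $\pi : U(\g_\C) \to U(\jj_\C) = S(\jj_\C)$ is well-defined. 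Composing with the standard $\rho$-shift (where $\rho$ is the half-sum of positive restricted roots counted with multiplicities), the restriction of~$\pi$ to $U(\g_\C)^H$ descends via~\eqref{eqn:DX} to a $\C$-algebra homomorphism $\gamma : \D(X) \to S(\jj_\C)$; injectivity and surjectivity onto the $W$-invariants are then a standard filtration argument comparing principal symbols of $G$-invariant differential operators with $W$-invariant polynomials on~$\jj_\C^\ast$. The full construction is carried out in \cite[Ch.\,II, \S5]{hel00}.

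The main obstacle in this scheme is the $W$-equivariance of the image of~$\gamma$: this is the truly symmetric-space-specific point, and it is established by realizing each simple reflection $s_\alpha$ via an $\ssl_2$-triple attached to the restricted root~$\alpha$, using the involution~$\sigma$ to control the interaction with $\h_\C$. Once $\gamma$ is identified as an isomorphism onto $S(\jj_\C)^W$, Chevalley's theorem shows that $S(\jj_\C)^W$ is a polynomial $\C$-algebra in $r = \dim_\C \jj_\C = \rank G/H$ generators, giving the first two assertions. For the last one, the Nullstellensatz identifies $\mathrm{Hom}_{\C\text{-alg}}(S(\jj_\C)^W,\C)$ with the maximal spectrum of $S(\jj_\C)^W$; the surjection $\jj_\C^\ast \to \mathrm{Hom}_{\C\text{-alg}}(S(\jj_\C)^W,\C)$ sending $\lambda$ to evaluation at~$\lambda$ has fibres exactly the $W$-orbits, yielding the desired identification with $\jj_\C^\ast/W$.
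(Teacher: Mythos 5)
Your strategy---descend the $\rho$-shifted Iwasawa projection to a Harish--Chandra isomorphism $\D(X)\simeq S(\jj_\C)^W$, then invoke Chevalley's theorem for the polynomial structure and the Nullstellensatz for the character space---is the same route the paper takes, and those two final steps are fine. There is, however, a step you slide past: the Helgason result you cite produces the isomorphism
$$U(\g_{\C})^{\h_{\C}}/U(\g_{\C})\h_{\C}\cap U(\g_{\C})^{\h_{\C}}\ \overset{\sim}{\longrightarrow}\ S(\jj_{\C})^W,$$
\ie it concerns invariants under the Lie algebra $\h_\C$, whereas \eqref{eqn:DX} realizes $\D(X)$ as the corresponding quotient of $U(\g_\C)^H$. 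When $H$ is connected these coincide, but in this paper $H$ is only required to be an \emph{open} subgroup of $G^\sigma$, so $U(\g_\C)^H$ can be a proper subalgebra of $U(\g_\C)^{\h_\C}$, and it is not immediate that the restriction of $p''$ to this smaller algebra still surjects onto $S(\jj_\C)^W$.

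The paper closes this gap with a short remark proving $\D(X)\simeq\D(G/H_0)$, where $H_0$ is the identity component: one uses that the quotient field of $\D(G/H_0)$ coincides with that of $p(Z(\g_\C))$, that $H$ acts trivially on $Z(\g_\C)$, and that $p$ is $H$-equivariant, to conclude that the component group of $H$ acts trivially on $\D(G/H_0)$. You should include some such reduction to the connected case (or restrict your claim to connected $H$, noting the general case separately) before asserting that your map lands on all of $S(\jj_\C)^W$.
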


Let us explicit these identifications.
Let $\Sigma^+(\g_{\C},\jj_{\C})$ be a system of positive roots in $\Sigma(\g_{\C},\jj_{\C})$ and let
$$\n_{\C} = \bigoplus_{\alpha\in\Sigma^+(\g_{\C},\jj_{\C})} (\g_{\C})_{\alpha}$$
be the sum of the corresponding root spaces, where
$$(\g_{\C})_{\alpha} := \{ Y\in\g_{\C},\ [T,Y]=\alpha(T)Y\ \forall T\!\in\jj\} .$$
The complexified Iwasawa decomposition $\g_{\C}=\h_{\C}+\jj_{\C}+\n_{\C}$ holds, implying that $U(\g_{\C})$ is the direct sum of $U(\jj_{\C})\simeq S(\jj_{\C})$ and $\h_{\C} U(\g_{\C})+U(\g_{\C})\n_{\C}$.
Let $p' : U(\g_{\C})\rightarrow S(\jj_{\C})$ be the projection onto~$S(\jj_{\C})$ with respect to this direct sum and let $p'' : U(\g_{\C})\rightarrow S(\jj_{\C})$ be the ``shifted projection'' given by
$$\langle p''(u),\lambda\rangle = \langle p'(u),\lambda-\rho\rangle$$
for all $\lambda\in\jj_{\C}^{\ast}$, where
$$\rho := \frac{1}{2} \sum_{\alpha\in\Sigma^+(\g_{\C},\jj_{\C})} \dim_{\C} (\g_{\C})_\alpha\,\alpha \in \jj_{\C}^{\ast}$$
is half the sum of the elements of $\Sigma^+(\g_{\C},\jj_{\C})$, counted with root multiplicities.
The restriction of~$p''$ to $U(\g_{\C})^H$ is independent of the choice of $\Sigma^+(\g_{\C},\jj_{\C})$ and induces an isomorphism
$$U(\g_{\C})^{\h_{\C}}/U(\g_{\C})\h_{\C} \cap U(\g_{\C})^{\h_{\C}} \,\overset{\sim}\longrightarrow\, S(\jj_{\C})^W$$
\cite[Ch.\,II, Th.\,5.17]{hel00}.
If $H$ is connected, then $U(\g_{\C})^{\h_{\C}}=U(\g_{\C})^H$ and, using \eqref{eqn:DX} above, we obtain the following commutative diagram.
$$\xymatrix@C=1mm{
\D(X) & \ar[l]_p U(\g_{\C})^H \ar[d] \ar[r]^{p''} & S(\jj_{\C})^W\\
& \ar[ul]_{\sim} U(\g_{\C})^H/U(\g_{\C})\h_{\C} \cap U(\g_{\C})^H \ar[ur]^{\sim} &
}$$
Thus we have a $\C$-algebra isomorphism $\Psi: \D(X)\!\overset{\scriptscriptstyle\sim\,}{\rightarrow}S(\jj_{\C})^W$ (\emph{Harish-Chandra isomorphism}).
In the general case when $H$ is not necessarily connected, we still have an isomorphism $\Psi : \D(X)\!\overset{\scriptscriptstyle\sim\,}{\rightarrow}S(\jj_{\C})^W$ by the following remark.

\begin{remark}
The $\C$-algebra $\D(X)$ is isomorphic to $\D(G/H_0)$, where $H_0$ denotes the identity component of~$H$.
\end{remark}

\begin{proof}
There is a natural injective algebra homomorphism $\D(X)\hookrightarrow\D(G/H_0)$ induced by the natural projection $G/H_0\rightarrow X$.
To see that this homomorphism is surjective, it is sufficient to see that $H$ acts trivially on $\D(G/H_0)$.
This follows from the fact that the quotient field of $\D(G/H_0)$ is isomorphic to that of $p(Z(\g_{\C}))$ \cite[Ch.\,III, Th.\,3.16]{hel00} (where $p : U(\g_{\C})^{H_0}\rightarrow\D(G/H_0)$ is given by the diagram above for~$H_0$) and from the fact that $H$ acts trivially on~$Z(\g_{\C})$ and $p$ is $H$-equivariant.
\end{proof}

By the Harish-Chandra isomorphism $\Psi : \D(X)\!\overset{\scriptscriptstyle\sim\,}{\rightarrow}S(\jj_{\C})^W$, the $\C$-algebra $\D(X)$ is a commutative algebra generated by $r:=\dim_{\R}\jj=\rank G/H$ homogeneous, algebraically independent differential operators $D_1,\dots,D_r$.
If we identify $S(\jj_{\C})$ with the ring of polynomial functions on~$\jj_{\C}^{\ast}$, then any homomorphism from $\D(X)$ to~$\C$ is of the form 
$$\chi_{\lambda} :\, D \longmapsto \langle\Psi(D),\lambda\rangle$$
for some $\lambda\in\jj_{\C}^{\ast}$, and $\chi_{\lambda}=\chi_{\lambda'}$ if and only if $\lambda'\in W\cdot\lambda$.
By construction, any $D\in\D(X)$ acts on the constant functions on~$X$ by multiplication by the scalar~$\chi_{\rho}(D)$.
\emph{From now on, we identify the set of $\C$-algebra homomorphisms from $\D(X)$ to~$\C$ with $\jj_{\C}^{\ast}/W$; in particular, we see $\Spec_d(X)$ (or $\Spec_d(X_{\Gamma})$ for any Clifford--Klein form~$X_{\Gamma}$) as a subset of $\jj_{\C}^{\ast}/W$:}
$$\Spec_d(X_{\Gamma}) = \big\{ \lambda\in\jj_{\C}^{\ast}/W :\, L^2(X_{\Gamma},\M_{\lambda})\neq\{ 0\} \big\} ,$$
where $L^2(X_{\Gamma},\M_{\lambda})$ is the space of weak solutions $f\in L^2(X_{\Gamma})$ to the system
$$D_{\Gamma} f = \chi_{\lambda}(D)f \quad\quad\mathrm{for\ all}\ D\in\D(X)
\eqno{(\M_{\lambda})}.$$

\begin{remark}\label{rem:L2Mlambda}
When $r=\rank G/H>1$, the space $L^2(X_{\Gamma},\M_{\lambda})$ is in general strictly contained in the space of $L^2$-eigenfunctions of the Laplacian~$\square_{X_{\Gamma}}$ (details will be given in \cite{kk12}).
\end{remark}

\section{The Laplacian}\label{subsec:Laplacian}

\emph{In the whole paper, we fix a Cartan involution~$\theta$ of~$G$ commuting with~$\sigma$ and let $K=G^{\theta}$ be the corresponding maximal compact subgroup of~$G$, with Lie algebra~$\kk$.}
Let $\g=\kk+\p$ be the corresponding Cartan decomposition, \ie the decomposition of~$\g$ into eigenspaces of $\mathrm{d}\theta$ with respective eigenvalues $+1$ and $-1$.
We fix a $G$-invariant nondegenerate symmetric bilinear form $B$ on~$\g$ with the following properties: $B$ is positive definite on $\p$, negative definite on~$\kk$, and $\p$ and~$\kk$ are orthogonal for~$B$.
If $G$ is semisimple, we can take $B$ to be the Killing form $\kappa$ of~$\g$.

On the one hand, since the involution~$\sigma$ commutes with the Cartan involution~$\theta$, the form~$B$ is nondegenerate on $\h\times\h$, and induces an $H$-invariant nondegenerate symmetric bilinear form on $\g/\h$.
By identifying the tangent space $T_{x_0}(G/H)$ at $x_0=eH\in G/H$ with $\g/\h$ and using left translations, we obtain a $G$-invariant pseudo-Riemannian structure on $X=G/H$.
We then define the Laplacian $\square_X$ as in \eqref{eqn:defLaplacian} with respect to this pseudo-Riemannian structure.

On the other hand, the form~$B$ defines an isomorphism $\g^{\ast}\simeq\g$, yielding a canonical element in $(\g\otimes\g)^G$ corresponding to the identity under the isomorphism $(\g^{\ast}\otimes\g)^G\simeq\Hom_G(\g,\g)$.
This element projects to the Casimir element of $U(\g_{\C})$, which lies in the center $Z(\g_{\C})$.
It gives a differential operator of order two on~$X$, the \emph{Casimir operator}, whose actions by differentiation on the left and on the right coincide.
Since $X$ is a symmetric space, the Casimir operator on~$X$ coincides with~$\square_X$.
(We refer to \cite[Ch.\,II, Exer.\,A.4]{hel00} for the case when $H$ is a maximal compact subgroup of~$G$; a proof for the general case goes similarly.)

We now explicit the eigenvalues of~$\square_X$.
For this we note that $B$ is nondegenerate on any $\theta$-stable subspace of~$\g$.
In particular, if $\jj$ is $\theta$-stable (which will always be the case below), then $B$ induces a nondegenerate $W$-invariant bilinear form $(\cdot,\cdot)$ on~$\jj^{\ast}$, which we extend to a complex bilinear form $(\cdot,\cdot)$ on~$\jj_{\C}^{\ast}$.

\begin{fact}\label{fact:eigenvalues}
If $f\in C^{\infty}(X)$ satisfies $(\M_{\lambda})$ for some $\lambda\in\jj_{\C}^{\ast}$, then
$$\square_X f = \big((\lambda,\lambda) - (\rho,\rho)\big)\,f.$$
\end{fact}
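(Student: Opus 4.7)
The plan is to reduce the statement to an explicit evaluation of the Harish-Chandra isomorphism $\Psi : \D(X)\overset{\sim}{\rightarrow}S(\jj_{\C})^W$ on~$\square_X$. Once we show that $\Psi(\square_X)$, viewed as a polynomial on $\jj_{\C}^{\ast}$, equals $\mu\mapsto (\mu,\mu)-(\rho,\rho)$, the conclusion is immediate: any~$f$ satisfying $(\M_{\lambda})$ satisfies in particular $\square_X f=\chi_{\lambda}(\square_X)f=\langle \Psi(\square_X),\lambda\rangle f$. Recall from Section~\ref{subsec:Laplacian} that $\square_X=p(C)$, where $C\in Z(\g_{\C})$ is the Casimir element attached to~$B$; thus, by the commutative diagram defining~$\Psi$ in Section~\ref{subsec:DGH}, our task reduces to computing the shifted projection $p''(C)\in S(\jj_{\C})^W$, or equivalently the unshifted projection $p'(C)\in S(\jj_{\C})$ evaluated at $\lambda-\rho$.

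To compute $p'(C)$, I would choose a basis of~$\g_{\C}$ adapted to the Iwasawa decomposition $\g_{\C}=\h_{\C}+\jj_{\C}+\n_{\C}$: a basis $\{T_i\}$ of~$\jj_{\C}$ with $B$-dual basis $\{T^i\}$; for each $\alpha\in\Sigma^+(\g_{\C},\jj_{\C})$, a basis $\{E_{\alpha,k}\}$ of $(\g_{\C})_{\alpha}\subset\n_{\C}$ together with its $B$-dual basis $\{F_{\alpha,k}\}$ in $(\g_{\C})_{-\alpha}$; and a basis of the centralizer $\mathfrak{m}_{\C}$ of~$\jj_{\C}$ in~$\h_{\C}$ with its $B$-dual inside~$\mathfrak{m}_{\C}$ (possible because $\sigma$-invariance and $B$-orthogonality force $\jj_{\C}$ and $\mathfrak{m}_{\C}$ to be separately $B$-nondegenerate inside the $0$-weight space). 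Writing $C$ as the sum of the associated dual-pair products and reducing modulo $\h_{\C}\,U(\g_{\C})+U(\g_{\C})\,\n_{\C}$, the terms in $\mathfrak{m}_{\C}$ vanish ($\mathfrak{m}_{\C}\subset\h_{\C}$), the terms $F_{\alpha,k}E_{\alpha,k}$ vanish (ending in $\n_{\C}$), and the terms $E_{\alpha,k}F_{\alpha,k}=F_{\alpha,k}E_{\alpha,k}+[E_{\alpha,k},F_{\alpha,k}]$ contribute only the $\jj_{\C}$-component of $[E_{\alpha,k},F_{\alpha,k}]$, which equals $B(E_{\alpha,k},F_{\alpha,k})H_{\alpha}=H_{\alpha}$, where $H_{\alpha}\in\jj_{\C}$ denotes the $B$-dual of~$\alpha$. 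Summing over $k$ and $\alpha\in\Sigma^+$ and using the definition of~$\rho$ yields
$$p'(C) \,=\, \sum_i T_iT^i + 2H_{\rho} \in S(\jj_{\C}),$$
which as a polynomial on $\jj_{\C}^{\ast}$ is $\mu\mapsto (\mu,\mu)+2(\rho,\mu)$.

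Evaluating this polynomial at $\mu=\lambda-\rho$ finally gives $(\lambda-\rho,\lambda-\rho)+2(\rho,\lambda-\rho)=(\lambda,\lambda)-(\rho,\rho)$, which is the eigenvalue we are after. The only nontrivial step is the Iwasawa-basis bookkeeping in the second paragraph; the main subtlety is that one must use $(\g_{\C})_{\alpha}\cap\h_{\C}=\{0\}$ for $\alpha\neq 0$ (a consequence of $\sigma$ swapping $(\g_{\C})_{\alpha}$ with $(\g_{\C})_{-\alpha}$) so that the $F_{\alpha,k}$ fall cleanly outside~$\h_{\C}$ when reordered. Beyond this, the argument is the symmetric-space analogue of Harish-Chandra's classical Casimir computation and presents no serious obstacle.
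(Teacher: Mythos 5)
Your proof is correct and follows the first route the paper itself indicates, namely reading off the eigenvalue directly from the description of the Harish-Chandra isomorphism; you simply carry out in full the standard Casimir computation $p'(C)=\sum_i T_iT^i+2H_{\rho}$ that the paper treats as known. The bookkeeping and the final $\rho$-shift evaluation are all accurate.
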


\noindent
Indeed, this follows from the above description of the Harish-Chandra isomorphism; one can also use \cite[Ch.\,II, Cor.\,5.20]{hel00} and the fact that $\D(X)\simeq\D(X^d)$, where $X^d$ is a Riemannian symmetric space of the noncompact type with the same complexification as~$X$ (see Section~\ref{subsec:realforms}).

\section{Some further basic notation}\label{subsec:Lambda+}

We now fix some additional notation that will be used throughout the paper.

We first recall that the connected reductive group~$G$ is the almost product of its connected center $Z(G)_0$ and of its commutator subgroup~$G_s$, which is semisimple.
The group~$G_s$ itself is the almost product of finitely many (nontrivial) connected simple normal subgroups, called the \emph{simple factors} of~$G$.
The connected center $Z(G)_0$ is isomorphic to $\R^a\times(\mathbb{S}^1)^b$ for some integers $a,b\in\N$.
Recall that $G$ admits a unique maximal compact normal subgroup~$G_c$, which is generated by the compact simple factors of~$G$, by the center $Z(G_s)$ of~$G_s$, and by the compact part of $Z(G)_0$.
The group~$G$ is said to have \emph{no compact factor} if $G_c=Z(G_s)$.

Flensted-Jensen \cite{fle80} and Matsuki--Oshima \cite{mo84} proved that $\Spec_d(X)$ is nonempty if and only if
\begin{equation}\label{eqn:rank}
\rank G/H = \rank K/H\cap K,
\end{equation}
where the rank is defined as in~\eqref{eqn:defrank}.
This is equivalent to the fact that $X$ admits a maximal compact subsymmetric space of full rank, namely $K/H\cap\nolinebreak K$.
Under the rank condition~\eqref{eqn:rank}, we may and do assume that \emph{the maximal abelian subspace $\jj$ of Section~\ref{subsec:DGH} is contained in $\sqrt{-1}(\kk\cap\q)$}.
Then $\jj$ is $\theta$-stable, all restricted roots $\alpha\in\Sigma(\g_{\C},\jj_{\C})$ take real values on~$\jj$, and the $W$-invariant bilinear form $(\cdot,\cdot)$ on~$\jj^{\ast}$ from Section~\ref{subsec:Laplacian} is positive definite.

\emph{We fix once and for all a positive system $\Sigma^+(\kk_{\C},\jj_{\C})$ of restricted roots of~$\jj_{\C}$ in~$\kk_{\C}$, which we will keep until the end of the paper;} we denote by~$\rho_c$ half the sum of the elements of $\Sigma^+(\kk_{\C},\jj_{\C})$, counted with root multiplicities.
We now introduce some notation $\Lambda_+$, $\Lambda$, and~$\Lambda^J$ that will be used throughout the paper.
We start by extending $\jj$ to a maximal abelian subspace $\widetilde{\jj}$ of $\sqrt{-1}\,\kk$.
Let $\Delta^+(\kk_{\C},\widetilde{\jj}_{\C})$ be a positive system of roots of $\widetilde{\jj}_{\C}$ in~$\kk_{\C}$ such that the restriction map $\alpha\mapsto\alpha|_{\jj_{\C}}$ sends $\Delta^+(\kk_{\C},\widetilde{\jj}_{\C})$ to $\Sigma^+(\kk_{\C},\jj_{\C})\cup\{0\}$.
We identify the set of irreducible finite-dimensional representations of~$\kk_{\C}$ with the set of dominant integral weights with respect to the positive system $\Delta^+(\kk_{\C},\widetilde{\jj}_{\C})$.
As a subset, we denote by
\begin{equation}\label{eqn:Lambda+}
\Lambda_+ \equiv \Lambda_+(K/H\cap K)
\end{equation}
the set of irreducible representations of~$K$ with nonzero $(H\cap K)$-fixed vectors; it is the support of the regular representation of $K$ on $L^2(K/H\cap K)$ by Frobenius reciprocity.
 
\begin{remark}\label{rem:Lmdj}
By definition, $\Lambda_+$ is a set of dominant integral elements in the dual of $\widetilde{\jj}=\jj+(\widetilde{\jj}\cap\h_{\C})$.
However, we can regard it as a subset of~$\jj^{\ast}$ by the Cartan--Helgason theorem \cite[Th.\,3.3.1.1]{war72}.
\end{remark}

\noindent
We set
\begin{equation}\label{eqn:Lambda}
\Lambda := \Z\text{-span}(\Lambda_+)\ \subset \jj^{\ast}.
\end{equation}
For any finite subgroup $J$ of the center $Z(K)$ of~$K$, let $\widehat{K/J}$ be the set of (highest weights of) irreducible representations of~$K$ that factor through $K/J$ and let
\begin{equation}\label{eqn:LambdaJ}
\Lambda^J := \Z\text{-span}\big(\Lambda_+\cap\widehat{K/J}\big).
\end{equation}
We note that the $\Z$-module $\Lambda^J$ has finite index in~$\Lambda$.
Indeed, if $J$ has cardinal~$m$, then $\Lambda^J$ contains $m\Lambda=\{ m\lambda : \lambda\in\Lambda\}$ since $(m\lambda)(z)=\lambda(z^m)=\nolinebreak 1$ for all $\lambda\in\Lambda_+$ and $z\in J$.
If $J\subset J'$, then $\Lambda^J\supset\Lambda^{J'}$; in particular, for any discrete subgroup~$\Gamma$ of~$G$ we have
\begin{equation}\label{eqn:inclusionLambda+}
\Lambda \supset \Lambda^{\Gamma\cap Z(G_s)} \supset \Lambda^{Z(G_s)},
\end{equation}
where, as before, $Z(G_s)$ is the center of the commutator subgroup $G_s$ of~$G$.

\begin{remark}\label{rem:LambdaJ}
If $J\subset H$, then $\Lambda=\Lambda^J$.
In particular, if $Z(G_s)\subset H$, then $\Lambda^{\Gamma\cap Z(G_s)}=\Lambda$ for any subgroup $\Gamma$ of~$G$.
\end{remark}

\noindent
Indeed, if $J\subset H$, then $J$ acts trivially on $K/H\cap K$, hence the regular representation of~$K$ on $L^2(K/H\cap K)$ factors through $K/J$.

\medskip

Any choice of a positive system $\Sigma^+(\g_{\C},\jj_{\C})$ of restricted roots of~$\jj_{\C}$ in~$\g_{\C}$ containing $\Sigma^+(\kk_{\C},\jj_{\C})$ will determine:
\begin{enumerate}
  \item a basis $\{ \alpha_1,\dots,\alpha_r\} $ of $\Sigma(\g_{\C},\jj_{\C})$,
  \item a positive Weyl chamber
  $$\jj^{\ast}_+ := \big\{ \lambda \in \Hom_{\R}(\jj,\R) :\, (\lambda,\alpha) > 0\text{ for all $\alpha\in\Sigma^+(\g_{\C},\jj_{\C})$}\big\} ,$$
  with closure~$\overline{\jj^{\ast}_+}$ in~$\jj^{\ast}$,
  \item an element $\rho\in\jj^{\ast}_+$, defined as half the sum of the elements of $\Sigma^+(\g_{\C},\jj_{\C})$, counted with root multiplicities,
  \item a function $d : \overline{\jj^{\ast}_+}\rightarrow\R_+$ measuring the ``weighted distance'' from~$\lambda$ to the walls of~$\jj^{\ast}_+$, given by
$$d(\lambda) := \min_{1\leq i\leq r} \frac{(\lambda,\alpha_i)}{(\alpha_i,\alpha_i)} \geq 0.$$
\end{enumerate}
The function~$d$ does not depend on the choice of the $W$-invariant inner product $(\cdot,\cdot)$ that we made in Section~\ref{subsec:Laplacian}; we extend it as a $W$-invariant function on~$\jj^{\ast}$.
We note that any element of~$\jj^{\ast}$ enters the positive Weyl chamber~$\jj^{\ast}_+$ if we add $t\rho$ for some sufficiently large $t>0$; conversely, $d(\lambda)$ measures to which extent $\lambda-t\rho$ remains in~$\jj^{\ast}_+$ for $\lambda\in\jj^{\ast}_+$:
 
\begin{observation}\label{obs:dwall}
For all $\lambda\in\overline{\jj^{\ast}_+}$,
$$\lambda- \frac{d(\lambda)}{m_{\rho}}\,\rho \,\in\, \overline{\jj^{\ast}_+},$$
where we set
\begin{equation}\label{eqn:rhomax}
m_{\rho} := \max_{1\leq i\le r} \frac{(\rho,\alpha_i)}{(\alpha_i, \alpha_i)}.
\end{equation}
\end{observation}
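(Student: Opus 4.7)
The plan is straightforward: reduce the containment $\lambda - \frac{d(\lambda)}{m_{\rho}}\rho \in \overline{\jj^{\ast}_+}$ to checking the sign of the inner product against each simple root, then apply the definitions of $d(\lambda)$ and $m_{\rho}$ directly.

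First I would observe that since every positive restricted root $\alpha \in \Sigma^+(\g_{\C},\jj_{\C})$ is a non-negative $\Z$-linear combination of the simple roots $\alpha_1,\dots,\alpha_r$, an element $\mu\in\jj^{\ast}$ lies in $\overline{\jj^{\ast}_+}$ as soon as $(\mu,\alpha_j)\geq 0$ for each $j\in\{1,\dots,r\}$. So it suffices, setting $\mu:=\lambda-\frac{d(\lambda)}{m_{\rho}}\rho$, to verify these $r$ inequalities.

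Next I would unpack the two definitions in play. From $d(\lambda)=\min_i (\lambda,\alpha_i)/(\alpha_i,\alpha_i)$ one immediately has
\[
(\lambda,\alpha_j) \,\geq\, d(\lambda)\,(\alpha_j,\alpha_j) \quad \text{for each } j,
\]
while from $m_{\rho}=\max_i(\rho,\alpha_i)/(\alpha_i,\alpha_i)$ one has
\[
(\rho,\alpha_j) \,\leq\, m_{\rho}\,(\alpha_j,\alpha_j) \quad \text{for each } j.
\]
Since $\lambda\in\overline{\jj^{\ast}_+}$ we have $d(\lambda)\geq 0$, and since $\rho\in\jj^{\ast}_+$ the quantities $(\rho,\alpha_j)$ are all non-negative, so multiplying the second inequality by $d(\lambda)/m_{\rho}\geq 0$ preserves its direction. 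Combining,
\[
(\mu,\alpha_j) \,=\, (\lambda,\alpha_j) - \frac{d(\lambda)}{m_{\rho}}(\rho,\alpha_j) \,\geq\, d(\lambda)(\alpha_j,\alpha_j) - \frac{d(\lambda)}{m_{\rho}}\,m_{\rho}(\alpha_j,\alpha_j) \,=\, 0,
\]
which is exactly what was needed.

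I do not anticipate any real obstacle: the statement is essentially a quantitative bookkeeping exercise encoding, in one inequality, the fact that one can subtract a controlled multiple of $\rho$ from any dominant $\lambda$ while remaining dominant, with the control measured by the function $d$. The only two things to keep straight are (i) the sign of $d(\lambda)$, which relies on $\lambda\in\overline{\jj^{\ast}_+}$, and (ii) that $m_{\rho}>0$ (which holds because $\rho$ is a sum with positive coefficients of positive roots, so some $(\rho,\alpha_i)>0$), making the fraction $d(\lambda)/m_{\rho}$ well defined.
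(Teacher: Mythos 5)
Your proof is correct and matches the paper's argument exactly: both reduce to checking $(\lambda - \frac{d(\lambda)}{m_\rho}\rho, \alpha_i) \geq 0$ for each simple root $\alpha_i$ and then apply the defining inequalities $d(\lambda) \leq (\lambda,\alpha_i)/(\alpha_i,\alpha_i)$ and $m_\rho \geq (\rho,\alpha_i)/(\alpha_i,\alpha_i)$. The paper's proof is a one-line version of the same computation, normalizing by $(\alpha_i,\alpha_i)$.
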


\begin{proof}
For any simple root $\alpha_i$ ($1\leq i\leq r$),
$$\frac{\big(\lambda- \frac{d(\lambda)}{m_{\rho}}\rho, \alpha_i\big)}{(\alpha_i, \alpha_i)} \,\geq\, d(\lambda) -\frac{d(\lambda)}{m_{\rho}}\,m_{\rho} \,=\, 0.\qedhere$$
\end{proof}

\noindent
We note that if $\jj_{\C}$ is a Cartan subalgebra of~$\g_{\C}$, then $d(\rho)=m_{\rho}=1/2$.

\section{Precise statements of the main theorems}\label{subsec:infiniteset}

With the above notation, here is a more precise statement of Theorems \ref{thm:universal} and~\ref{thm:infinitespec} on the existence of an infinite discrete spectrum, which is ``universal'' for standard Clifford--Klein forms.
We choose a positive system $\Sigma^+(\g_{\C},\jj_{\C})$ containing the fixed positive system $\Sigma^+(\kk_{\C},\jj_{\C})$ of Section~\ref{subsec:Lambda+}; this determines a positive Weyl chamber~$\jj^{\ast}_+$ and an element $\rho\in\jj^{\ast}_+$.

\begin{theorem}\label{thm:precise}
Suppose that $G$ is connected, that $H$ does not contain any simple factor of~$G$, and that the rank condition \eqref{eqn:rank} holds.
\begin{enumerate}
  \item For any sharp Clifford--Klein form~$X_{\Gamma}$ with $\Gamma\cap G_c\subset Z(G_s)$, there is a constant $R\geq 0$ such that
  $$\left\{ \lambda\in\jj^{\ast}_+\cap\big(2\rho_c-\rho+\Lambda^{\Gamma\cap Z(G_s)}\big) :\ d(\lambda)>R\right\} \subset \Spec_d(X_{\Gamma}).$$
  \item The constant~$R$ can be taken uniformly for standard Clifford--Klein forms: given any reductive subgroup~$L$ of~$G$, with a compact center and acting properly on~$X$, there is a constant $R>0$ such that
  $$\left\{ \lambda\in\jj^{\ast}_+\cap\big(2\rho_c-\rho+\Lambda^{\Gamma\cap Z(G_s)}\big) :\ d(\lambda)>R\right\} \subset \Spec_d(X_{\Gamma})$$
  for all discrete subgroups~$\Gamma$ of~$L$ with $\Gamma\cap L_c\subset Z(G_s)$ (this includes all torsion-free discrete subgroups $\Gamma$ of~$L$); in particular, by \eqref{eqn:inclusionLambda+},
  $$\left\{ \lambda\in\jj^{\ast}_+\cap\big(2\rho_c-\rho+\Lambda^{Z(G_s)}\big) :\ d(\lambda)>R\right\} \subset \Spec_d(X_{\Gamma})$$
  for all such~$\Gamma$.
\end{enumerate}
\end{theorem}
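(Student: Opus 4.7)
\medskip\noindent\textbf{Proof plan for Theorem~\ref{thm:precise}.} The plan is to realize each candidate eigenvalue $\lambda$ as the spectral parameter of an explicit $L^2$-eigenfunction on $X_\Gamma$, obtained as a generalized Poincar\'e series \eqref{eqn:phiGamma} whose seed is a $K$-finite joint eigenfunction of $\D(X)$ on $X$. The integrality condition $\lambda\in 2\rho_c-\rho+\Lambda^{\Gamma\cap Z(G_s)}$ is designed precisely so that Flensted-Jensen's construction produces a nonzero $K$-finite discrete series eigenfunction $\psi_\lambda$ on $X$ whose $\Gamma$-translates are well-defined as functions on~$X_\Gamma$ (the coset $2\rho_c-\rho$ encodes the $\rho$-shift in the Harish-Chandra parameter and the $K$-type, and $\Lambda^{\Gamma\cap Z(G_s)}$ ensures that the central character of the ambient $K$-representation descends through the points of $\Gamma$ lying in $Z(G_s)$). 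With this choice, once we show that $\varphi^\Gamma$ converges, lies in $L^2(X_\Gamma)$, is nonzero, and is a joint $\chi_\lambda$-eigenfunction of $\D(X_\Gamma)$, the inclusion $\lambda\in\Spec_d(X_\Gamma)$ follows.

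The first key step is convergence and the $L^2$-bound. The input is the uniform analytic estimate (Proposition~\ref{prop:asym} in the paper) for the decay of $\psi_\lambda$ and its $G$-translates in terms of a pseudo-distance from the origin of $X$, with an exponential rate controlled by $d(\lambda)$. The geometric input is sharpness of $\Gamma$ in the sense of \eqref{eqn:sharpnessintro}, which linearly bounds $d_\aaa(\mu(\gamma),\mu(H))$ from below by $\|\mu(\gamma)\|$. Combined with the counting estimate for $\Gamma$-orbits in pseudo-balls (Lemma~\ref{lem:growthfornu}), sharpness converts the $KAK$-decay of $\psi_\lambda$ into an estimate of the form $\sum_{\gamma\in\Gamma}|\psi_\lambda(\gamma x)|\leq C\sum_\gamma e^{-c\,d(\lambda)\|\mu(\gamma)\|}$, which converges as soon as $c\,d(\lambda)$ exceeds the exponential growth rate of $\Gamma$. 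The threshold $R$ in the theorem is exactly the quantity for which this wins; it depends only on the sharpness constants $c,C$ of $\Gamma$ and on the growth rate of $\Gamma$ in the Riemannian symmetric space $G/K$. This step also yields an $L^2$-norm bound and the commutation of $D_\Gamma$ with the sum, giving the eigenvalue equation.

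The second key step is nonvanishing. Here the plan is to use the refined estimate of $\psi_\lambda$ near the origin (Proposition~\ref{prop:psilambda}) showing that $\psi_\lambda$ has a definite peak at $x_0$. The tail $\sum_{\gamma\neq e}\psi_\lambda(\gamma\cdot x)$ must be made small near a well-chosen point, and for this we conjugate $\Gamma$ by an element of $G$ so that the $\Gamma$-orbit of the base point stays away from the origin; the Kazhdan--Margulis lemma provides a uniform lower bound on how close nontrivial orbit points can be in $G/K$, which via the comparison between the Riemannian distance in $G/K$ and the pseudo-distance in $X$ (step~(3) of the introduction) translates into a uniform lower bound in $X$. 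Increasing $R$ if necessary, the decay rate $d(\lambda)$ dominates, so $\varphi^\Gamma\neq 0$ at the chosen point.

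The last step is the uniformity statement (2). The point is that when $\Gamma\subset L$ and $L$ acts properly on $X$, the sharpness constants $(c,C)$ can be chosen depending only on the pair $(L,H)$ and not on $\Gamma$: properness of $L$ acting on $X$ is equivalent, by the Benoist--Kobayashi criterion, to a linear separation between $\mu(L)$ and $\mu(H)$, and $\mu(\Gamma)\subset\mu(L)$ inherits this separation verbatim. The exponential growth rate of $\Gamma$ in $G/K$ is bounded uniformly by that of $L$ in $G/K$, and the Kazhdan--Margulis injectivity radius depends only on $L$ (and the assumption $\Gamma\cap L_c\subset Z(G_s)$ is exactly what is needed for torsion in $L_c$ not to spoil the local argument while keeping the lattice $\Lambda^{\Gamma\cap Z(G_s)}$ consistent). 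Hence all the quantitative inputs are uniform across $\Gamma\subset L$, yielding a single threshold $R$. The expected main obstacle is the convergence/nonvanishing balancing act in the first two steps: the decay provided by $d(\lambda)$ must be controlled \emph{uniformly} in $\lambda$ on pseudo-balls and \emph{locally} near $x_0$ with the same effective constants, which is why the careful analytic estimates of Chapter~\ref{sec:Vlambda} and Chapter~\ref{sec:FJ} (as foreshadowed in the introduction) are essential and, in particular, must be sharpened from the pointwise estimates of Flensted-Jensen \cite{fle80} and Matsuki--Oshima \cite{mo84} into uniform ones in the spectral parameter.
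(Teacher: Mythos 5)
Your proposal is correct and takes essentially the same approach as the paper: (i) seed the Poincar\'e series with Flensted-Jensen's $K$-finite discrete series eigenfunctions, (ii) prove convergence and $L^2$-membership by combining the uniform decay of Proposition~\ref{prop:asym} with sharpness and the orbit-counting Lemma~\ref{lem:growthfornu}, and (iii) prove nonvanishing by the near-origin estimate of Proposition~\ref{prop:psilambda}, a conjugation of $\Gamma$ to keep its orbit away from $X_c=K\cdot x_0$ except at $Z(G_s)$, and (for uniformity over $\Gamma\subset L$) the Kazhdan--Margulis theorem applied to a $\theta$-stable conjugate of~$L$.

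One small attributional correction: the ``move the orbit off $X_c$'' step in part~(1) is \emph{not} Kazhdan--Margulis but a separate structural argument (the paper's Proposition~\ref{prop:sharp} via Lemma~\ref{lem:GcH}), which shows that for any $\gamma\notin G_cG_H$ the analytic set $\{g : g^{-1}\gamma g\in KH\}$ has positive codimension, so a generic conjugate works; the hypotheses ``$H$ contains no simple factor'' and ``$\Gamma\cap G_c\subset Z(G_s)$'' then guarantee $g^{-1}\Gamma g\cdot x_0\cap X_c\subset Z(G_s)\cdot x_0$. Kazhdan--Margulis enters only in part~(2), applied to~$L$ (not~$G$), to make the resulting distance $r_\Gamma$ uniform over all discrete $\Gamma\subset L$; and the growth bound in part~(2) is made uniform not by comparing to the growth of $L$ but by the universal volume estimate (Observation~\ref{obs:limsup}) bounding $\delta_\Gamma\le 2\Vert\rho_a\Vert$. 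These adjustments do not change the structure of the argument, so your plan matches the paper's proof.
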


As in Section~\ref{subsec:Lambda+}, we denote by $G_c$ (\resp by~$L_c$) the maximal compact normal subgroup of~$G$ (\resp of~$L$), and by $Z(G_s)$ the center of the semisimple part of~$G$.
The $\Z$-modules $\Lambda^{\Gamma\cap Z(G_s)}$ and $\Lambda^{Z(G_s)}$ have been defined in \eqref{eqn:LambdaJ} and the term ``sharp'' in Section~\ref{subsec:resultssharp}.

We note that the technical assumptions of Theorem~\ref{thm:precise} are not very restrictive:

\begin{remarks}\label{rem:conditionsGL}
\begin{enumerate}
  \item[(a)] The assumption $\Gamma\cap G_c\subset Z(G_s)$ is automatically satisfied if $G$ has no compact factor (\ie if $G_c=Z(G_s)$) or if $\Gamma$ is torsion-free.
  This assumption will be removed in Section~\ref{subsec:proofs} in order to prove the theorems and propositions of Chapters \ref{sec:intro} and~\ref{sec:listex}.
  \item[(b)] The assumption $\Gamma\cap L_c\subset Z(G_s)$ is automatically satisfied if $\Gamma$ is torsion-free, or if $L$ has no compact factor and $Z(L)\subset Z(G_s)$.
  We note that for $\Gamma\subset L$, the condition $\Gamma\cap L_c\subset Z(G_s)$ is stronger than $\Gamma\cap G_c\subset Z(G_s)$.
\end{enumerate}
\end{remarks}

Constants~$R$ as in Theorem~\ref{thm:precise}.(1) and~(2) can be expressed in terms of the geometry of~$X$, of the sharpness constants $(c,C)$ of~$\Gamma$, and of a ``pseudo-distance'' from the origin $x_0=eH$ of $X=G/H$ to the other points of its $\Gamma$-orbit in~$X$: see \eqref{eqn:finalcst}, \eqref{eqn:finalcstconj}, and \eqref{eqn:finalcstL}.

We note that our choice of a positive system $\Sigma^+(\g_{\C},\jj_{\C})$ containing $\Sigma^+(\kk_{\C},\jj_{\C})$ could affect the lattice condition $\lambda\in 2\rho_c-\rho+\Lambda^{\Gamma\cap Z(G_s)}$, since $\rho$ depends on this choice.
All elements~$\lambda$ satisfying one of these lattice conditions appear in the discrete spectrum.
We refer to \eqref{eqn:Z1to1} for a geometric meaning of the choice of $\Sigma^+(\g_{\C},\jj_{\C})$.

\begin{remark}
In Theorem~\ref{thm:precise}.(1), we can take $R=0$ if $\Gamma=\{e\}$.  
This is the ``$C=0$'' conjecture of \cite{fle80} on the precise condition of the parameter~$\lambda$ for the square integrability of certain joint eigenfunctions on~$X$; this conjecture was proved affirmatively in \cite{mo84}, and the main ingredient is Fact~\ref{fact:oshima} that we also use below.
\end{remark}

The following theorem gives a description of an infinite \emph{stable} discrete spectrum as in Theorem~\ref{thm:deform}: it states that the constant~$R$ of Theorem~\ref{thm:precise}.(2) is stable under small deformations.

\begin{theorem}\label{thm:precisedeform}
Assume that $G$ is connected, that $H$ does not contain any simple factor of~$G$, and that the rank condition \eqref{eqn:rank} holds.
For any reductive subgroup~$L$ of~$G$ of real rank~$1$ and any convex cocompact subgroup~$\Gamma$ of~$L$ (in particular, any uniform lattice~$\Gamma$ of~$L$) with $\Gamma\cap G_c\subset Z(G_s)$, there are a constant $R>0$ and a neighborhood $\mathcal{U}\subset\Hom(\Gamma,G)$ of the natural inclusion such that $X_{\varphi(\Gamma)}=\varphi(\Gamma)\backslash X$ is a Clifford--Klein form of~$X$ for all $\varphi\in\mathcal{U}$ and
$$\{ \lambda\in\jj^{\ast}_+\cap\big(2\rho_c-\rho+\Lambda^{\Gamma\cap Z(G_s)}\big) :\ d(\lambda)>R\big\} \subset \Spec_d(X_{\varphi(\Gamma)}).$$
In particular, for all $\varphi\in\mathcal{U}$,
$$\{ \lambda\in\jj^{\ast}_+\cap\big(2\rho_c-\rho+\Lambda^{Z(G_s)}\big) :\ d(\lambda)>R\big\} \subset \Spec_d(X_{\varphi(\Gamma)}).$$
If $\Gamma\cap L_c\subset Z(G_s)$ (for instance if $\Gamma$ is torsion-free or if $L$ is simple with $Z(L)\subset Z(G_s)$), then we may take the same~$R$ (independent of~$\Gamma$) as in Theorem~\ref{thm:precise}.(2), up to replacing $\mathcal{U}$ by some smaller neighborhood.
\end{theorem}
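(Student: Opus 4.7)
\medskip
\noindent
\textbf{Proof proposal.}
The plan is to reduce Theorem~\ref{thm:precisedeform} to Theorem~\ref{thm:precise}.(1) (and, under the additional hypothesis $\Gamma\cap L_c\subset Z(G_s)$, to Theorem~\ref{thm:precise}.(2)), by showing that all the quantitative data on which the constant~$R$ depends can be controlled uniformly in a neighborhood~$\mathcal{U}$ of the natural inclusion.  Recall that, according to the indications after the statement of Theorem~\ref{thm:precise}, the constant~$R$ is determined by three ingredients: (i)~the geometry of $X=G/H$, which is fixed; (ii)~the sharpness constants $(c,C)$ in~\eqref{eqn:sharpnessintro}; and (iii)~a bound on how closely the $\Gamma$-orbit of the base point $x_0=eH$ approaches~$x_0$ in~$X$.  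Thus what must be shown is that, for $\varphi$ in a suitable neighborhood~$\mathcal{U}$ of the inclusion, $\varphi(\Gamma)$ is a sharp discrete subgroup with sharpness constants and orbit-separation estimates that can be chosen uniformly.

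First I would invoke the main result of \cite{kas12}: for $L$ of real rank~$1$ and $\Gamma\subset L$ convex cocompact, there is a neighborhood $\mathcal{U}_0\subset\Hom(\Gamma,G)$ of the inclusion such that for every $\varphi\in\mathcal{U}_0$, the image $\varphi(\Gamma)$ is discrete, torsion-free, and acts properly discontinuously on~$X$, so that $X_{\varphi(\Gamma)}$ is a Clifford--Klein form.  The key step is to upgrade this qualitative statement to a \emph{quantitative} one: after possibly shrinking to $\mathcal{U}\subset\mathcal{U}_0$, one can find constants $c'>0$ and $C'\geq 0$ such that
$$d_{\aaa}\bigl(\mu(\varphi(\gamma)),\mu(H)\bigr) \;\geq\; c'\,\Vert\mu(\varphi(\gamma))\Vert - C'
\quad\text{for all } \gamma\in\Gamma,\ \varphi\in\mathcal{U}.$$
The proofs in \cite{kas12} in fact produce this: the argument uses the Benoist--Kobayashi properness criterion together with the control of the Cartan projection $\mu\circ\varphi$ in terms of the word-length of~$\Gamma$ and the convex cocompact structure, which varies continuously with $\varphi$ by the Ehresmann--Thurston principle applied to the convex core.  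This yields uniform sharpness of $\varphi(\Gamma)$ on~$\mathcal{U}$.

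Next I would control ingredient~(iii), the orbit-separation estimate near~$x_0$.  As outlined in the introduction (point~(3) on counting), this is established via the Kazhdan--Margulis lemma combined with the growth rate of~$\Gamma$ and the sharpness constants of~(2).  For convex cocompact $\Gamma\subset L$ of real rank~$1$, the critical exponent and the Margulis constant both depend continuously on~$\varphi\in\mathcal{U}$, so (after possibly shrinking $\mathcal{U}$) the quantitative bound supplied by Proposition~\ref{prop:sharp} becomes uniform.  Combining this with the uniform sharpness from the previous step and plugging into the estimate \eqref{eqn:finalcst} (respectively \eqref{eqn:finalcstconj} or \eqref{eqn:finalcstL}) behind Theorem~\ref{thm:precise} gives a common constant~$R$ such that, for every $\varphi\in\mathcal{U}$, the Poincar\'e series \eqref{eqn:phiGamma} converges and defines a nonzero $L^2$-eigenfunction on $X_{\varphi(\Gamma)}$ for every $\lambda\in\jj^*_+$ with $d(\lambda)>R$ satisfying the required integrality condition.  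To match the lattice condition, note that $Z(G_s)$ is a discrete (in fact finite) subgroup of~$G$ and that $\Gamma\cap Z(G_s)$ is a finite central subgroup of~$\Gamma$; for $\varphi$ close enough to the inclusion, $\varphi$ restricts to the identity on $\Gamma\cap Z(G_s)$, so $\varphi(\Gamma)\cap Z(G_s)\supset\Gamma\cap Z(G_s)$, hence $\Lambda^{\varphi(\Gamma)\cap Z(G_s)}\subset\Lambda^{\Gamma\cap Z(G_s)}$, and in particular the universal inclusion in $\Lambda^{Z(G_s)}$ of \eqref{eqn:inclusionLambda+} remains valid.  Finally, when $\Gamma\cap L_c\subset Z(G_s)$, Theorem~\ref{thm:precise}.(2) already provides an $R$ depending only on $L$ and not on~$\Gamma$; since the sharpness and orbit estimates derived above are dominated by those of~$L$ acting on~$X$, shrinking~$\mathcal{U}$ preserves this uniform choice of~$R$.

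The main obstacle is the quantitative uniform sharpness in the second step: whereas \cite{kas12} provides the qualitative stability of proper discontinuity, the precise statement needed here is an \emph{effective} version with explicit dependence of $(c',C')$ on $\varphi$ in a neighborhood.  The rank-one hypothesis on~$L$ is essential, because it allows one to translate the convex cocompactness of $\Gamma$ in~$L$ into a linear divergence of $\mu\circ\varphi(\gamma)$ away from $\mu(H)$ that is stable under continuous perturbation.  Once this uniformity is secured, the remainder of the argument is a direct reapplication of the machinery assembled in Part~\ref{part2} and Part~\ref{part3}.
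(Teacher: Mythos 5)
Your overall strategy coincides with the paper's: Theorem~\ref{thm:precisedeform} is deduced from the nonvanishing machinery (in the paper, Proposition~\ref{prop:nonzero}.(3), which re-runs the proof of Theorem~\ref{thm:precise}), and the crux is exactly what you identify, namely an \emph{effective} stability of sharpness under small deformation. This is supplied in the paper by Lemma~\ref{lem:munudeform}, which is itself a corollary of~\cite[Th.\,1.4]{kas12} and does precisely what you describe: shrink $\mathcal{U}$ so that for all $\varphi\in\mathcal{U}$ the group $\varphi(\Gamma)$ is discrete, $(c-\varepsilon,C+\varepsilon)$-sharp, and satisfies $\Vert\nu(\varphi(\gamma))\Vert\geq r-\varepsilon$ for $\gamma\in\Gamma\smallsetminus Z(G_s)$. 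You attribute the mechanism to an Ehresmann--Thurston argument on the convex core, whereas \cite[Th.\,1.4]{kas12} works directly with the Cartan projection, but the upshot is the same, and the paper fixes a single conjugating element $g$ (from Proposition~\ref{prop:sharp}) once and for all and applies Lemma~\ref{lem:munudeform} to $g^{-1}\Gamma g\subset g^{-1}Lg$, exactly as your scheme requires.

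There is, however, a genuine gap at the very end, in matching the lattice condition. You derive that $\varphi$ fixes $\Gamma\cap Z(G_s)$ pointwise, so $\varphi(\Gamma)\cap Z(G_s)\supset\Gamma\cap Z(G_s)$, and conclude $\Lambda^{\varphi(\Gamma)\cap Z(G_s)}\subset\Lambda^{\Gamma\cap Z(G_s)}$. But that inclusion runs the \emph{wrong} way for what the theorem asserts. Applying Lemma~\ref{lem:nonzeroprecise} to the deformed group yields spectrum for $\lambda$ with $\mu_{\lambda}\in\Lambda^{g^{-1}\varphi(\Gamma)g\cap Z(G_s)}=\Lambda^{\varphi(\Gamma)\cap Z(G_s)}$; to conclude that every $\lambda$ with $\mu_{\lambda}\in\Lambda^{\Gamma\cap Z(G_s)}$ qualifies, you need the \emph{reverse} inclusion $\Lambda^{\Gamma\cap Z(G_s)}\subset\Lambda^{\varphi(\Gamma)\cap Z(G_s)}$, equivalently $\varphi(\Gamma)\cap Z(G_s)\subset\Gamma\cap Z(G_s)$. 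Your fallback ``in particular the universal inclusion in $\Lambda^{Z(G_s)}$ remains valid'' therefore only establishes the second (weaker) displayed inclusion in the theorem, not the first. The missing inclusion $\varphi(\Gamma)\cap Z(G_s)\subset\Gamma\cap Z(G_s)$ is in fact true, but it is \emph{not} a soft consequence of $\varphi$ being close to the inclusion: you must feed back the orbit-separation bound from the Kazhdan--Margulis step. Concretely, if $\varphi(\gamma)\in Z(G_s)$ then $\Vert\nu(g^{-1}\varphi(\gamma)g)\Vert=0$, and since Lemma~\ref{lem:munudeform} guarantees $\Vert\nu(g^{-1}\varphi(\gamma)g)\Vert\geq r_{g^{-1}\Gamma g}-\varepsilon>0$ for all $\gamma\in\Gamma\smallsetminus Z(G_s)$, one gets $\gamma\in\Gamma\cap Z(G_s)$, hence (using that $\varphi$ fixes $\Gamma\cap Z(G_s)$, via Remark~\ref{rem:deformcenter}) $\varphi(\gamma)=\gamma\in\Gamma\cap Z(G_s)$. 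You have assembled all the necessary estimates for this, but the step where they must be combined to control $\varphi(\Gamma)\cap Z(G_s)$ is exactly the one missing from your write-up.
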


Theorems \ref{thm:precise} and~\ref{thm:precisedeform} will be proved in Chapter~\ref{sec:nonzero}.

\begin{remark}
Our proofs depend on the rank condition \eqref{eqn:rank}.
It is plausible that for a general locally symmetric space, no nonzero eigenvalue is stable under nontrivial small deformations unless \eqref{eqn:rank} is satisfied.
This is corroborated by Fact~\ref{fact:Teich} (in the Riemannian case, \eqref{eqn:rank} is not satisfied).
It is also plausible that there should be no ``universal spectrum'' as in Theorems \ref{thm:universal} and~\ref{thm:precise} unless \eqref{eqn:rank} is satisfied.
\end{remark}

\section{Regularity of the generalized Poincar\'e series}\label{subsec:regular}

As explained in the introduction, Theorems \ref{thm:precise} and~\ref{thm:precisedeform} are proved by constructing generalized Poincar\'e series.
Consider the action of $G$ on $L^2(X,\M_{\lambda})$ by left translation
\begin{equation}\label{eqn:gvarphi}
g\cdot\varphi := \varphi(g^{-1}\,\cdot\,)
\end{equation}
and let $L^2(X,\M_{\lambda})_K$ be the subspace of $K$-finite functions in $L^2(X,\M_{\lambda})$.
We prove that for any $\lambda\in\jj^{\ast}_+$ with $d(\lambda)$ large enough, the operator
$$S_{\Gamma} : L^2(X,\M_{\lambda})_K \longrightarrow L^2(X_{\Gamma},\M_{\lambda})$$
mapping $\varphi$ to
$$\varphi^{\Gamma} := \Big(\Gamma x \longmapsto \sum_{\gamma\in\Gamma} (\gamma\cdot\varphi)(x)\Big)$$
is well-defined (Proposition~\ref{prop:Vlambda}.(1)).
We actually prove that $S_{\Gamma}$ is well-defined on\linebreak $g\!\cdot\!L^2(X,\M_{\lambda})_K$ for any $g\in G$ and $\lambda\in\jj^{\ast}_+$ with $d(\lambda)$ large enough, and that there exists $g\in G$ such that for any $\lambda\in\jj^{\ast}_+\cap (2\rho_c-\rho+\Lambda^{\Gamma\cap Z(G_s)})$ with $d(\lambda)$ large enough, $S_{\Gamma}$ is nonzero on $g\!\cdot\!L^2(X,\M_{\lambda})_K$ (Proposition~\ref{prop:nonzero} and Remark~\ref{rem:translprecise}).

By using the fact that $L^2(X,\M_{\lambda})_K$ is stable under the action of $\g$ by differentiation, we obtain the following regularity result for the image of~$S_{\Gamma}$ (Proposition~\ref{prop:Vlambda}.(2)).

\begin{theorem}\label{thm:regular}
Assume that $G$ is connected and that the rank condition \eqref{eqn:rank} holds.
Let $X_{\Gamma}$ be a sharp Clifford--Klein form with $\Gamma\cap G_c\subset Z(G_s)$ and let $R>0$ be the corresponding constant given by Theorem~\ref{thm:precise}.
For any $\lambda\in\jj^{\ast}_+$ with $d(\lambda)>R$ and any $g\in G$, the image of $g\!\cdot\!L^2(X,\M_{\lambda})_K$ under the summation operator~$S_{\Gamma}$ is contained in $L^p(X_{\Gamma})$ for all $1\leq p\leq\infty$, and in $C^m(X_{\Gamma})$ whenever $d(\lambda)>(m+1)R$.
\end{theorem}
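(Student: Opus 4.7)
The plan is to bootstrap from the $L^2$-convergence established by Proposition~\ref{prop:Vlambda}.(1) to the stronger conclusions about $L^p$-membership and $C^m$-regularity, combining (a)~uniform pointwise decay for $K$-finite eigenfunctions on~$X$, (b)~sharpness-based counting of $\Gamma$-orbits, and (c)~stability of $L^2(X,\M_\lambda)_K$ under the action of~$U(\g_\C)$ coming from the left $G$-representation~\eqref{eqn:gvarphi}.

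First I would establish the $L^\infty$-bound. The asymptotic estimates of Proposition~\ref{prop:asym} control $|\varphi(y)|$ for $\varphi\in g\cdot L^2(X,\M_\lambda)_K$ by a decreasing function of the ``pseudo-distance from the origin'' of Chapter~\ref{sec:Vlambda}, with decay rate governed by $d(\lambda)$. The counting of sharp orbits in Lemma~\ref{lem:growthfornu} then shows that for $d(\lambda)>R$ the pointwise sum $\sum_{\gamma\in\Gamma}|\varphi(\gamma\cdot x)|$ is bounded uniformly in $x\in X$; hence $\varphi^\Gamma\in L^\infty(X_\Gamma)$. A standard unfolding over a fundamental domain for~$\Gamma$ gives $\int_{X_\Gamma}|\varphi^\Gamma|\leq\int_X|\varphi|$, the right-hand side being finite since the same pointwise estimate forces $\varphi\in L^1(X)$. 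Combined with the $L^\infty$-bound, H\"older interpolation yields $\varphi^\Gamma\in L^p(X_\Gamma)$ for every $1\leq p\leq\infty$.

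For the $C^m$-statement, $\Gamma$ acts on $X$ by isometries, so the projection $p_\Gamma\colon X\to X_\Gamma$ is a local diffeomorphism, and derivatives of $\varphi^\Gamma$ at $\Gamma x$ coincide with derivatives of the lift $\sum_\gamma\varphi(\gamma\cdot)$ at~$x$. Termwise differentiation produces new series whose summands are derivatives of $\varphi$ transported by the isometries~$dL_\gamma$; equivalently, up to bounded normalization factors, these are Poincar\'e series of functions of the form $U\cdot\varphi$ with $U\in U(\g_\C)$ of degree $\leq m$, where $\cdot$ is the differential of~\eqref{eqn:gvarphi}. Since this $U(\g_\C)$-action preserves $g\cdot L^2(X,\M_\lambda)_K$, the new test vectors remain $K$-finite $L^2$-eigenfunctions with the same spectral parameter~$\lambda$, but with a larger $K$-type.

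The main obstacle, and the source of the factor $(m+1)$, is to quantify how the uniform pointwise bound used above deteriorates when the $K$-type is enlarged. The estimate of Proposition~\ref{prop:asym} depends not only on $d(\lambda)$ but also on the highest weight of the $K$-type of the test vector, and each application of an element of~$\g$ via~\eqref{eqn:gvarphi} shifts that highest weight by at most one fundamental step. Tracking this dependence through the asymptotic estimates of Chapter~\ref{sec:Vlambda}, the threshold $d(\lambda)>R$ for uniform convergence of $\sum_\gamma|\varphi(\gamma\cdot x)|$ is replaced by $d(\lambda)>(j+1)R$ for uniform convergence of all derivative series of order~$j$; taking $j=m$ justifies termwise differentiation of $\varphi^\Gamma$ and gives $\varphi^\Gamma\in C^m(X_\Gamma)$.
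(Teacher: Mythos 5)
Your treatment of the $L^p$-statement follows the paper's route --- Lemma~\ref{lem:conv} for the $L^\infty$-bound (sharp orbit-counting against the uniform decay of Proposition~\ref{prop:asym}), Lemma~\ref{lem:L1} for $L^1$ via unfolding, and H\"older interpolation for the rest --- and is correct.

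Your argument for the $C^m$-statement, however, contains a genuine gap in identifying the source of the factor $(m+1)$. You assert that termwise differentiation of $\varphi^\Gamma$ produces Poincar\'e series of functions of the form $U\cdot\varphi$ ``up to bounded normalization factors,'' and you attribute the threshold degradation to the $K$-type of the derived test vector weakening the estimate of Proposition~\ref{prop:asym}. Neither claim is correct. First, the decay rate $q\,d(\lambda)$ in Proposition~\ref{prop:asym} depends only on $\lambda$, not on the $K$-type of the eigenfunction, and by Fact~\ref{fact:stab} each $Y\cdot\varphi$ with $Y\in\g$ remains in the \emph{same} space $L^2(X,\M_\lambda)_K$ --- so the decay rate does not deteriorate at all when you differentiate. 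Second, and more fundamentally, differentiating $\gamma\cdot\varphi$ by $u\in U_m(\g_\C)$ via \eqref{eqn:gdiffleft2} yields
$$D(\gamma\cdot\varphi)(x)=\bigl(\Ad(\gamma^{-1})(u)\cdot\varphi\bigr)(\gamma^{-1}\cdot x),$$
and the transported coefficient $\Ad(\gamma^{-1})(u)\in\g^{\otimes m}$ is \emph{not} bounded in $\gamma$: its norm is of order $\Vert\Ad(\gamma)\Vert_{\End(\g)}^m$, and $\log\Vert\Ad(\gamma)\Vert_{\End(\g)}\leq e_G\,\Vert\mu(\gamma)\Vert$ with $e_G=\max_{\alpha\in\Sigma(\g,\aaa)}\Vert\alpha\Vert$, so these coefficients grow exponentially as $\gamma$ ranges over $\Gamma$. (The action of $\Gamma$ on $X$ preserves only the indefinite metric, not any Riemannian one, so ``transport by $dL_\gamma$'' does not preserve norms.) It is exactly this exponential growth that must be absorbed by extra decay in $d(\lambda)$, producing the condition $d(\lambda)>(\delta_\Gamma+e_G\,m)/(qc)$ and hence the factor $(m+1)$. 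The paper makes the bound uniform by observing that $\ell(Y):=\sup_{x\in X}|(Y\cdot\varphi)(x)|\,e^{q\,d(\lambda)\Vert\nu(x)\Vert}$ defines a seminorm on $\g^{\otimes m}$, hence satisfies $\ell(Y)\leq c_m\Vert Y\Vert_{\g^{\otimes m}}$; this sublinearity argument and the $\Ad$-norm estimate are the two ingredients missing from your proposal.
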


In particular, if we take~$m$ to be the maximum degree of the generators $D_1,\dots,D_r$ of the $\C$-algebra $\D(X)$, then for $f\in S_{\Gamma}(g\!\cdot\!L^2(X,\M_{\lambda})_K)$ we have
$$(D_j)_{\Gamma}\,f = \chi_\lambda(D_j)f$$
for all $1\leq j\leq r$ in the sense of functions, not only in the sense of distributions.
For certain standard Clifford--Klein forms~$X_{\Gamma}$, it is actually possible to prove that the image of $L^2(X,\M_{\lambda})_K$ under the summation operator~$S_{\Gamma}$ consists of analytic functions (see \cite{kk12}).

\part{Construction of generalized Poincar\'e series}\label{part2}

\chapter[Sharpness and counting]{Sharpness and counting in non-Riemannian symmetric spaces}\label{sec:geometry}

In this chapter we examine in detail the new notion of \emph{sharpness}, which we have introduced in Section~\ref{subsec:resultssharp}.
We then establish some counting results for the orbits of sharp discrete groups~$\Gamma$ in the non-Riemannian symmetric space $X=G/H$ (Lemma~\ref{lem:growthfornu} and Corollary~\ref{cor:growthinX}).
We note that these groups~$\Gamma$ can never be lattices of~$G$: they have to be much ``smaller'' (Remark~\ref{rem:lattice}).

Counting is developed here in the perspective of spectral theory: our results will be useful, together with the analytic estimates of Chapter~\ref{sec:Vlambda}, to prove the convergence of the generalized Poincar\'e series \eqref{eqn:phiGamma}.
However, the counting results we obtain might also have some interest of their own.

We first introduce some notation and briefly recall the notions of Cartan and polar projections for noncompact, reductive~$G$.

\section{Preliminaries: Cartan and polar projections}\label{subsec:munu}

We keep the notation of Chapter~\ref{sec:theorems}.
In particular, $\theta$ is the Cartan involution and $\g=\kk+\p$ the Cartan decomposition introduced in Section~\ref{subsec:Laplacian}.
Let $\aaa$ be a maximal abelian subspace of~$\p$ and let $A=\exp\aaa$ be the corresponding connected subgroup of~$G$.
We consider the logarithm $\log : A\overset{\scriptscriptstyle\sim\,}{\rightarrow}\aaa$, which is the inverse of $\exp : \aaa\overset{\scriptscriptstyle\sim\,}{\rightarrow} A$.
We choose a system $\Sigma^+(\g,\aaa)$ of positive restricted roots and let $\overline{\aaa_+}$ and $\overline{A_+}=\exp\overline{\aaa_+}$ denote the corresponding closed positive Weyl chambers in $\aaa$ and~$A$, respectively.
The Cartan decomposition $G=K\overline{A_+}K$ holds \cite{hel01}: any $g\in G$ may be written as $g=k_ga_gk'_g$ for some $k_g,k'_g\in K$ and a unique $a_g\in\overline{A_+}$.
Setting $\mu(g)=\log a_g$ defines a map
$$\mu : G \longrightarrow \overline{\aaa_+}:=\log\overline{A_+},$$
called the \textit{Cartan projection} associated with the Cartan decomposition $G=K\overline{A_+}K$.
This map is continuous, proper, surjective, and bi-$K$-invariant; we will still denote by~$\mu$ the induced map on the Riemannian symmetric space $G/K$ of~$G$.

\begin{example}\label{ex:SLn}
For $G=\SL_n(\R)$ and $\theta=(g\mapsto\,^t\!g^{-1})$, we have $K=\SO(n)$.
We can take $A$ to be the group of diagonal matrices in~$\SL_n(\R)$ with positive entries and its subset $\overline{A_+}$ to consist of matrices with entries in nonincreasing order; then the Cartan decomposition $G=K\overline{A_+}K$ follows from the polar decomposition in $\SL_n(\R)$ and from the reduction of symmetric matrices.
We have $\mu(g)=(\frac{1}{2}\log t_i)_{1\leq i\leq n}$ where $t_i$ is the $i$-th eigenvalue of~$^t\!gg$.
\end{example}

\noindent
The $G$-invariant symmetric bilinear form~$B$ of Section~\ref{subsec:Laplacian} restricts to a $K$-invariant inner product on~$\p$, which defines a Euclidean norm $\Vert\cdot\Vert$ on~$\aaa$ and a $G$-invariant Riemannian metric $d_{G/K}$ on $G/K$.
The norm of the Cartan projection~$\mu$ admits the following geometric interpretation in terms of distances in the Riemannian symmetric space $G/K$:
\begin{equation}\label{eqn:mudistance}
\Vert\mu(g)\Vert = d_{G/K}(y_0,g\cdot y_0)
\end{equation}
for all $g\in G$, where $y_0$ denotes the image of~$K$ in $G/K$.
Using the triangular inequality and the fact that $G$ acts by isometries on $G/K$, we obtain that
\begin{equation}\label{eqn:triangineq}
\Vert\mu(gg')\Vert \leq \Vert\mu(g)\Vert + \Vert\mu(g')\Vert
\end{equation}
for all $g,g'\in G$.
In fact, the following stronger inequalities hold, which can be proved in a geometric way (see \cite[Lem.\,2.3]{kas08}):
\begin{eqnarray}
\Vert\mu(gg')-\mu(g)\Vert & \leq & \Vert\mu(g')\Vert,\label{eqn:rightstrongtriangineq}\\
\Vert\mu(gg')-\mu(g')\Vert & \leq & \Vert\mu(g)\Vert\label{eqn:leftstrongtriangineq}.
\end{eqnarray}

On the other hand, recall that the group~$H$ is an open subgroup of the set of fixed points of~$G$ under the involution~$\sigma$.
Let $\g=\h+\q$ be the decomposition of~$\g$ into eigenspaces of $\mathrm{d}\sigma$ as in Section~\ref{subsec:DGH}.
Since $\theta$ commutes with~$\sigma$, the following decomposition holds:
$$\g = (\kk\cap\h) + (\kk\cap\q) + (\p\cap\h) + (\p\cap\q).$$
Let $\bb$ be a maximal abelian subspace of $\p\cap\q$ and let $B:=\exp(\bb)$.
We choose a system $\Sigma^+(\g^{\sigma\theta},\bb)$ of positive restricted roots of~$\bb$ in the subspace~$\g^{\sigma\theta}$ of fixed points of~$\g$ under~$\mathrm{d}(\sigma\theta)$, and let $\overline{\bb_+}$ be the corresponding closed positive Weyl chamber and $\overline{B_+}:=\exp\overline{\bb_+}$.
Then the \emph{polar decomposition} (or \emph{generalized Cartan decomposition}) $G=K\overline{B_+}H$ holds \cite[Prop.\,7.1.3]{sch84}: any $g\in G$ may be written as $g=k_gb_gh_g$ for some $k_g\in K$, $h_g\in H$, and a unique $b_g\in\overline{B_+}$.
We refer to Chapters \ref{sec:exAdS} and~\ref{sec:ex} for examples.
Since all maximal abelian subspaces of~$\p$ are conjugate under the adjoint action of~$K$, we may (and will) assume that $\aaa$ contains~$\bb$.
As above, we define a projection
\begin{equation}\label{eqn:nu}
\nu : G \longrightarrow \overline{\bb_+} \ \subset \aaa
\end{equation}
corresponding to the polar decomposition $G=K\overline{B_+}H$.
It is continuous, surjective, and right-$H$-invariant; we will still denote by~$\nu$ the induced map on~$X$.
Geometrically, $\Vert\nu(x)\Vert$ can be interpreted as some kind of ``pseudo-distance'' from the origin $x_0=eH$ of $X=G/H$ to $x\in X$: in order to go from $x_0$ to $x$ in~$X$, one can first travel along the flat sector $\overline{B_+}\!\cdot\!x_0$, then along some (compact) $K$-orbit; $\Vert\nu(x)\Vert$ measures how far one must go in $\overline{B_+}\!\cdot\!x_0$.
The set of points $x\in X$ such that $\nu(x)=0$ is the maximal compact subsymmetric space $X_c:=K\!\cdot\!x_0\simeq K/H\cap K$.

We note that for any $b\in B$ there is some $w\in W(G,A)$ such that $\mu(b)=w\cdot\nu(b)$, hence
\begin{equation}\label{eqn:munuB}
\Vert\mu(b)\Vert=\Vert\nu(b)\Vert.
\end{equation}

\section{Sharpness}\label{subsec:sharp}

We now turn to the new notion of \emph{sharpness}, which quantifies proper discontinuity.
We first recall that not all discrete subgroups~$\Gamma$ of~$G$ can act properly discontinuously on $X=G/H$ since $H$ is noncompact.
A criterion for proper discontinuity was established by Benoist \cite[Cor.\,5.2]{ben96} and Kobayashi \cite[Th.\,1.1]{kob96}, in terms of the Cartan projection~$\mu$.
This criterion states that a closed subgroup~$\Gamma$ of~$G$ acts properly on $X=G/H$ if and only if the set $\mu(\Gamma)\cap (\mu(H)+\mathcal{C})$ is bounded for any compact subset~$\mathcal{C}$ of~$\aaa$; equivalently, if and only if $\mu(\Gamma)$ ``goes away from~$\mu(H)$ at infinity''.

In this paper, we introduce the following stronger condition.

\begin{definition}\label{def:sharp}
A subgroup~$\Gamma$ of~$G$ is said to be \emph{sharp for~$X$} if there are constants $c\in (0,1]$ and $C\geq 0$ such that
\begin{equation}\label{eqn:sharp}
d_{\aaa}(\mu(\gamma),\mu(H)) \geq c\,\Vert\mu(\gamma)\Vert - C
\end{equation}
for all $\gamma\in\Gamma$, where $d_{\aaa}$ is the metric on~$\aaa$ induced by the Euclidean norm~$\Vert\cdot\nolinebreak\Vert$.
If \eqref{eqn:sharp} is satisfied, we say that $\Gamma$ is $(c,C)$\emph{-sharp}.
\end{definition}

\begin{figure}[ht!]
\labellist
\small\hair 2pt
\pinlabel $\frac{C}{1-c^2}$ [r] at 0 302
\pinlabel $\arcsin(c)$ [r] at 685 320
\pinlabel $\overline{\aaa_+}$ [r] at 540 535
\pinlabel $\color{red}\mu(H)$ [r] at 650 285
\pinlabel $\color{blue}\mu(\Gamma)$ [r] at 553 122

\pinlabel $\color{blue}\cdot$ [r] at 70 295
\pinlabel $\color{blue}\cdot$ [r] at 120 320
\pinlabel $\color{blue}\cdot$ [r] at 130 290
\pinlabel $\color{blue}\cdot$ [r] at 170 300
\pinlabel $\color{blue}\cdot$ [r] at 190 330
\pinlabel $\color{blue}\cdot$ [r] at 210 295
\pinlabel $\color{blue}\cdot$ [r] at 220 360
\pinlabel $\color{blue}\cdot$ [r] at 260 320
\pinlabel $\color{blue}\cdot$ [r] at 280 345
\pinlabel $\color{blue}\cdot$ [r] at 310 315
\pinlabel $\color{blue}\cdot$ [r] at 350 345
\pinlabel $\color{blue}\cdot$ [r] at 370 330
\pinlabel $\color{blue}\cdot$ [r] at 380 420
\pinlabel $\color{blue}\cdot$ [r] at 400 355
\pinlabel $\color{blue}\cdot$ [r] at 410 385
\pinlabel $\color{blue}\cdot$ [r] at 430 400
\pinlabel $\color{blue}\cdot$ [r] at 450 375
\pinlabel $\color{blue}\cdot$ [r] at 460 440
\pinlabel $\color{blue}\cdot$ [r] at 490 415
\pinlabel $\color{blue}\cdot$ [r] at 500 380
\pinlabel $\color{blue}\cdot$ [r] at 530 400
\pinlabel $\color{blue}\cdot$ [r] at 540 355
\pinlabel $\color{blue}\cdot$ [r] at 550 430

\pinlabel $\color{blue}\cdot$ [r] at 100 275
\pinlabel $\color{blue}\cdot$ [r] at 150 250
\pinlabel $\color{blue}\cdot$ [r] at 160 230
\pinlabel $\color{blue}\cdot$ [r] at 180 270
\pinlabel $\color{blue}\cdot$ [r] at 200 245
\pinlabel $\color{blue}\cdot$ [r] at 230 270
\pinlabel $\color{blue}\cdot$ [r] at 240 220
\pinlabel $\color{blue}\cdot$ [r] at 270 245
\pinlabel $\color{blue}\cdot$ [r] at 290 220
\pinlabel $\color{blue}\cdot$ [r] at 300 245
\pinlabel $\color{blue}\cdot$ [r] at 330 200
\pinlabel $\color{blue}\cdot$ [r] at 350 240
\pinlabel $\color{blue}\cdot$ [r] at 370 190
\pinlabel $\color{blue}\cdot$ [r] at 380 210
\pinlabel $\color{blue}\cdot$ [r] at 390 155
\pinlabel $\color{blue}\cdot$ [r] at 420 195
\pinlabel $\color{blue}\cdot$ [r] at 440 160
\pinlabel $\color{blue}\cdot$ [r] at 470 120
\pinlabel $\color{blue}\cdot$ [r] at 490 180
\pinlabel $\color{blue}\cdot$ [r] at 510 140
\pinlabel $\color{blue}\cdot$ [r] at 540 160
\endlabellist
\centering
\includegraphics[scale=0.35]{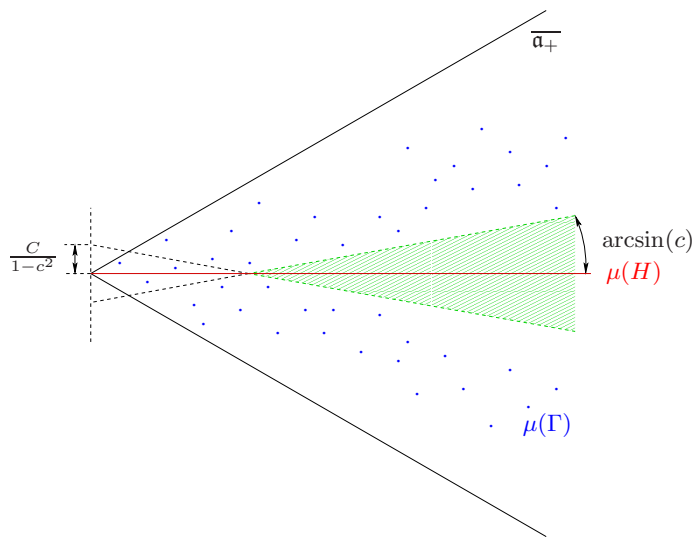}
\caption{The Cartan projection of a $(c,C)$-sharp group~$\Gamma$}
\label{fig1}
\end{figure}

We note that this definition makes sense in the more general context of a homogeneous space $X=G/H$ where $G$ is a reductive group and $H$ a closed subgroup of~$G$.

If $\Gamma$ is sharp for~$X$, then $\mu(\Gamma)$ ``goes away from~$\mu(H)$ at infinity'' with a speed that is \emph{at least linear}.
Indeed, consider the open cone
$$\mathfrak{C}(c) := \big\{ Y\in\overline{\aaa_+} : d_{\aaa}(Y,\mu(H)) < c\,\Vert Y\Vert\big\} $$
of angle $\arcsin(c)$ around~$\mu(H)$.
If $\Gamma$ is $(c,C)$-sharp with $c\in (0,1)$, then the set $\mu(\Gamma)$ is contained in the $\frac{C}{\sqrt{1-c^2}}$-neighborhood of $\overline{\aaa_+}\smallsetminus\mathfrak{C}(c)$; in other words, it does not meet the shaded region in Figure~1.

In particular, if $\Gamma$ is sharp for~$X$ and closed in~$G$, then the action of~$\Gamma$ on~$X$ is proper by the properness criterion.
The bigger $c$ is, the ``more proper'' the action is; the critical case is therefore when $c$ gets close to~$0$.
For $\Gamma$ discrete and sharp, we will equivalently say that the Clifford--Klein form $X_{\Gamma}=\Gamma\backslash X$ is sharp.

The following two properties will be useful.

\begin{proposition}\label{prop:sharpnessproperties}
\begin{enumerate}
  \item If a subgroup~$\Gamma$ of~$G$ is $(c,C)$-sharp for~$X$, then any conjugate of~$\Gamma$ is $(c,C')$-sharp for some $C'\geq 0$.
  \item Any reductive subgroup~$L$ of~$G$ acting properly on~$X$ admits a conjugate that is $(c,0)$-sharp for some $c>0$.
\end{enumerate}
\end{proposition}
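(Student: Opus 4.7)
The plan is to prove (1) by a direct computation using the strong triangle inequalities already established, and then to prove (2) by reducing to a $\theta$-stable position, at which point both $\mu(L)$ and $\mu(H)$ become finite unions of closed cones with apex at the origin, and a sphere-compactness argument converts proper discontinuity into the required linear estimate with $C=0$.

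For part (1), fix $g\in G$ and $\gamma\in\Gamma$. Applying \eqref{eqn:rightstrongtriangineq} then \eqref{eqn:leftstrongtriangineq} gives
\[
\Vert\mu(g\gamma g^{-1})-\mu(\gamma)\Vert \,\leq\, \Vert\mu(g)\Vert + \Vert\mu(g^{-1})\Vert \,=\, 2\Vert\mu(g)\Vert,
\]
since $\mu(g^{-1})$ lies in the Weyl-group orbit of $-\mu(g)$ and the norm $\Vert\cdot\Vert$ is Weyl-invariant. Combining this with the $(c,C)$-sharpness of $\Gamma$ and with the two inequalities
\[
d_{\aaa}(\mu(g\gamma g^{-1}),\mu(H)) \geq d_{\aaa}(\mu(\gamma),\mu(H)) - 2\Vert\mu(g)\Vert, \quad \Vert\mu(\gamma)\Vert \geq \Vert\mu(g\gamma g^{-1})\Vert - 2\Vert\mu(g)\Vert,
\]
one obtains that $g\Gamma g^{-1}$ is $(c,C')$-sharp with $C'=C+2(1+c)\Vert\mu(g)\Vert$.

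For part (2), I would first invoke Mostow's theorem to conjugate $L$ into a $\theta$-stable position. After this conjugation, $L\cap K$ is maximal compact in $L$, and a maximal split torus of $L$ may be arranged to lie in $A$. Writing $\aaa_L:=\log(A\cap L)$ and using the Cartan decomposition of $L$ relative to $L\cap K$, one sees that
\[
\mu(L) \,=\, (W\cdot\aaa_L)\cap\overline{\aaa_+}
\]
is a finite union of closed convex polyhedral cones with apex at $0$. Similarly, $H$ is reductive (being open in the fixed points of the involution $\sigma$), and $\mu(H)$ is a finite union of such cones.

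Next I would use the Benoist--Kobayashi properness criterion recalled in Section~\ref{subsec:sharp}: properness of the $L$-action on $X$ means that $\mu(L)\cap(\mu(H)+\mathcal{C})$ is bounded for every compact $\mathcal{C}\subset\aaa$. Since both $\mu(L)$ and $\mu(H)$ are unions of cones with apex at~$0$, this forces $\mu(L)\cap\mu(H)=\{0\}$: any nonzero common point~$Y$ would generate a ray $\R_{\geq 0}\cdot Y \subset \mu(L)\cap\mu(H)$, contradicting even the case $\mathcal{C}=\{0\}$. Finally I would set
\[
c \,:=\, \min\bigl\{d_{\aaa}(Y,\mu(H)) \,:\, Y\in\mu(L),\ \Vert Y\Vert=1\bigr\}.
\]
The minimum is attained since $\mu(L)\cap\{\Vert Y\Vert=1\}$ is compact and $\mu(H)$ is closed; it is strictly positive by the preceding paragraph; and it is bounded above by~$1$ because $0\in\mu(H)$. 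By homogeneity (both cones are stable under positive scaling) the estimate $d_{\aaa}(Y,\mu(H))\geq c\Vert Y\Vert$ then holds for every $Y\in\mu(L)$, so the conjugate of~$L$ we have constructed is $(c,0)$-sharp.

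The main conceptual obstacle is the passage to a $\theta$-stable conjugate of~$L$, but this is a standard consequence of Mostow's theorem on maximal compact subgroups; once this is done, the argument is a compactness-of-angles observation between two explicit finite unions of cones in $\overline{\aaa_+}$. The small technical point to verify carefully is that $\mu(L)$ is genuinely a union of \emph{closed} cones (so that the infimum defining~$c$ is attained), which follows from the description $\mu(L)=(W\cdot\aaa_L)\cap\overline{\aaa_+}$ as a finite union of closed linear subspaces intersected with a closed chamber.
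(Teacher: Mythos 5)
Your proof is correct and follows essentially the same path as the paper's. For part~(1) the paper deduces the claim from Lemma~\ref{lem:prmu} (itself a consequence of the same strong triangle inequalities \eqref{eqn:rightstrongtriangineq}--\eqref{eqn:leftstrongtriangineq} you use), and for part~(2) the paper likewise conjugates $L$ to a $\theta$-stable position, identifies $\mu(L)$ as $\overline{\aaa_+}\cap W\cdot\aaa_L$ (Example~\ref{ex:standardthetastable}), invokes the properness criterion to get $\mu(L)\cap\mu(H)=\{0\}$, and then takes $c$ to be the sine of the minimal angle between the two cones, which is the same quantity as your compactness-of-the-unit-sphere minimum.
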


Proposition~\ref{prop:sharpnessproperties}.(1) is an immediate consequence of the following inequality, which will be used several times in the paper.

\begin{lemma}\label{lem:prmu}
For any $g,g',g''\in G$,
$$d_{\aaa}\left(\mu(g'gg''),\mu(H)\right) \geq d_{\aaa}(\mu(g),\mu(H)) - \Vert\mu(g')\Vert - \Vert\mu(g'')\Vert.$$
\end{lemma}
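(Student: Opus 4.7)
The plan is to combine the two refined triangle inequalities \eqref{eqn:rightstrongtriangineq} and \eqref{eqn:leftstrongtriangineq} from Section~\ref{subsec:munu} in order to control how far the Cartan projection $\mu(g'gg'')$ can drift from $\mu(g)$, and then apply the ordinary triangle inequality for $d_{\aaa}$ to transfer this control to the distance from $\mu(H)$.

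More precisely, I would first bound $\Vert\mu(g'gg'')-\mu(g)\Vert$. Applying \eqref{eqn:rightstrongtriangineq} to the product $gg''$ gives
$$\Vert\mu(gg'')-\mu(g)\Vert \leq \Vert\mu(g'')\Vert,$$
and applying \eqref{eqn:leftstrongtriangineq} to the product $g'(gg'')$ gives
$$\Vert\mu(g'gg'')-\mu(gg'')\Vert \leq \Vert\mu(g')\Vert.$$
By the triangle inequality in the Euclidean space $\aaa$, adding these two yields
$$\Vert\mu(g'gg'')-\mu(g)\Vert \leq \Vert\mu(g')\Vert + \Vert\mu(g'')\Vert.$$

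Then I would conclude using the usual triangle inequality for the distance $d_{\aaa}$ to the set $\mu(H)\subset\aaa$:
$$d_{\aaa}(\mu(g),\mu(H)) \leq d_{\aaa}(\mu(g'gg''),\mu(H)) + \Vert\mu(g'gg'')-\mu(g)\Vert,$$
which, combined with the above, gives exactly the claimed inequality. There is essentially no obstacle here, since both auxiliary estimates are already recorded in the paper (\eqref{eqn:rightstrongtriangineq}--\eqref{eqn:leftstrongtriangineq}); the lemma is a formal consequence of them together with the triangle inequality, so the only thing to check is that the two refined triangle inequalities are applied on the correct side (left multiplication for $g'$, right multiplication for $g''$).
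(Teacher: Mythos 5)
Your proof is correct and follows essentially the same route as the paper: both bound $\Vert\mu(g'gg'')-\mu(g)\Vert$ by $\Vert\mu(g')\Vert+\Vert\mu(g'')\Vert$ via \eqref{eqn:rightstrongtriangineq} and \eqref{eqn:leftstrongtriangineq}, and then transfer this to the distance from $\mu(H)$ by a triangle inequality. The only difference is cosmetic: the paper works pointwise with an arbitrary $h\in H$ and then takes the infimum implicitly, while you invoke the $1$-Lipschitz property of $d_{\aaa}(\cdot,\mu(H))$ directly.
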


\begin{proof}
For all $h\in H$, by \eqref{eqn:rightstrongtriangineq} and~\eqref{eqn:leftstrongtriangineq} we have
\begin{eqnarray*}
d_{\aaa}(\mu(g),\mu(H)) & \leq & \Vert\mu(g) - \mu(h)\Vert\\
& \leq & \Vert\mu(g)-\mu(g'gg'')\Vert + \Vert\mu(g'gg'')-\mu(h)\Vert\\
& \leq & \Vert\mu(g')\Vert + \Vert\mu(g'')\Vert + \Vert\mu(g'gg'')-\mu(h)\Vert.\qedhere
\end{eqnarray*}
\end{proof}

We will explain why Proposition~\ref{prop:sharpnessproperties}.(2) is true in Section~\ref{subsec:exsharp}.
We refer to Section~\ref{subsec:exsharp} for a list of examples of sharp Clifford--Klein forms and to Section~\ref{subsec:sharpdeform} for a discussion of how sharpness behaves under small deformations.

We note that $d_{\aaa}(\mu(\gamma),\mu(H))\leq\Vert\mu(\gamma)\Vert$ always holds, since $d_{\aaa}(\mu(\gamma),\mu(H))$ is the norm of the projection of $\mu(\gamma)$ to the orthogonal of~$\mu(H)$ in~$\aaa$; this is why we restrict to $c\leq 1$ in Definition~\ref{def:sharp}.

\section{Counting in the reductive symmetric space~$X$}\label{subsec:count}

In order to prove the convergence of the generalized Poincar\'e series \eqref{eqn:phiGamma}, we will need to understand the growth rate of~$\Gamma$ \emph{with respect to the norm of~$\nu$}.
Given the above geometric interpretation of~$\Vert\nu\Vert$ as a ``pseudo-distance from the origin'' in the reductive symmetric space~$X$, this means estimating the number of points of any given $\Gamma$-orbit in the ``pseudo-ball''
\begin{equation}\label{eqn:pseudoball}
B_X(R) := \{ x\in X : \Vert\nu(x)\Vert < R\}
\end{equation}
as $R$ tends to infinity.
We note that the closure of $B_X(R)$ is compact for all $R>0$, which implies the following (by definition of proper discontinuity).

\begin{remark}\label{rem:Gammapseudoball}
Let $\Gamma$ be a discrete subgroup of~$G$ acting properly discontinuously on~$X$.
For any $x\in X$, the set of elements $\gamma\in\Gamma$ with $\gamma\cdot x\in B_X(R)$ is finite.
\end{remark}

\noindent
In the case when $\Gamma$ is sharp for~$X$, we establish exponential bounds for the growth of $\Gamma$-orbits in~$X$: here are the precise estimates that we will need for our theorems (a proof will be given in Section~\ref{subsec:proofgrowth}).

\begin{lemma}\label{lem:growthfornu}
Let $c\in (0,1]$ and $C\geq 0$.
\begin{enumerate}
  \item For any discrete subgroup~$\Gamma$ of~$G$ that is $(c,C)$-sharp for~$X$ and any $\varepsilon>0$, there is a constant $c_{\varepsilon}(\Gamma)>0$ such that for any $R>0$ and any $x=g\cdot x_0\in X$ (where $g\in G$),
  $$\#\big\{ \gamma\in\Gamma :\ \Vert\nu(\gamma\!\cdot\!x)\Vert <R\big\} \leq c_{\varepsilon}(\Gamma)\,e^{(\delta_{\Gamma}+\varepsilon)(R+\Vert\mu(g)\Vert)/c}.$$
  \item (Removing the dependence in~$x$)\\
  For any discrete subgroup~$\Gamma$ of~$G$ that is $(c,C)$-sharp for~$X$ and any $\varepsilon>0$, there is a constant $c'_{\varepsilon}(\Gamma)>0$ such that for any $R>0$ and any $x\in X$,
  $$\#\big\{ \gamma\in\Gamma :\ \Vert\nu(\gamma\!\cdot\!x)\Vert<R\big\} \leq c'_{\varepsilon}(\Gamma)\,e^{2(\delta_{\Gamma}+\varepsilon)R/c}.$$
  \item (Controlling the dependence in~$\Gamma$, allowing for dependence in~$x$)\\
  There is a constant $c_G>0$ depending only on~$G$ such that for any discrete subgroup~$\Gamma$ of~$G$ that is $(c,C)$-sharp for~$X$, any $R>0$, and any $x=g\cdot x_0\in X$ (where $g\in G$),
  $$\#\big\{ \gamma\in\Gamma :\ \Vert\nu(\gamma\!\cdot\!x)\Vert<R\big\} \leq \#(\Gamma\cap K) \cdot c_G\,e^{2\Vert\rho_a\Vert (R+C+\Vert\mu(g)\Vert)/c}.$$
  \item (Controlling the dependence in~$\Gamma$ and removing the dependence in~$x$)\\
  There is a constant $c_G>0$ depending only on~$G$ such that for any discrete subgroup~$\Gamma$ of~$G$ that is $(c,C)$-sharp for~$X$, any $R>0$, and any $x\in X$,
  $$\#\big\{ \gamma\in\Gamma :\ \Vert\nu(\gamma\!\cdot\!x)\Vert<R\big\} \leq \#(\Gamma\cap K) \cdot c_G\,e^{4\Vert\rho_a\Vert (R+C)/c}.$$
\end{enumerate}
\end{lemma}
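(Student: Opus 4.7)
The plan is to reduce the counting problem for $\Gamma$-orbits in $X$ measured by the polar projection $\nu$ to a counting problem in the Riemannian symmetric space $G/K$ measured by the Cartan projection $\mu$, and then to invoke standard growth estimates in $G/K$. The key geometric input is the auxiliary inequality
$$d_\aaa\bigl(\mu(g_0),\mu(H)\bigr) \;\leq\; \|\nu(g_0)\|\qquad\text{for all }g_0\in G.$$
This follows by writing the polar decomposition $g_0=k\exp(\nu(g_0))\,h$ with $k\in K$, $h\in H$, using bi-$K$-invariance of $\mu$ to get $\mu(g_0)=\mu(\exp(\nu(g_0))\,h)$, and applying the strong triangle inequality \eqref{eqn:leftstrongtriangineq}: $\|\mu(g_0)-\mu(h)\|\le\|\mu(\exp\nu(g_0))\|=\|\nu(g_0)\|$, so that $d_\aaa(\mu(g_0),\mu(H))\le\|\mu(g_0)-\mu(h)\|\le\|\nu(g_0)\|$.

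Given $x=g\cdot x_0$ and $\gamma\in\Gamma$, I would apply this with $g_0=\gamma g$; combining with Lemma~\ref{lem:prmu} and the sharpness condition \eqref{eqn:sharp} yields
$$c\|\mu(\gamma)\| - C \;\leq\; d_\aaa(\mu(\gamma),\mu(H)) \;\leq\; d_\aaa(\mu(\gamma g),\mu(H))+\|\mu(g)\| \;\leq\; \|\nu(\gamma\cdot x)\|+\|\mu(g)\|,$$
so any $\gamma$ with $\|\nu(\gamma\cdot x)\|<R$ satisfies $\|\mu(\gamma)\|<(R+C+\|\mu(g)\|)/c$. Each of the four bounds is thus reduced to counting $\gamma\in\Gamma$ with $\|\mu(\gamma)\|=d_{G/K}(y_0,\gamma y_0)<R'$, for $R'=(R+C+\|\mu(g)\|)/c$.

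For (1), by definition of the critical exponent $\delta_\Gamma$ of $\Gamma$ in $G/K$, for every $\varepsilon>0$ there is $c_\varepsilon(\Gamma)>0$ with $\#\{\gamma:\|\mu(\gamma)\|<R'\}\leq c_\varepsilon(\Gamma)e^{(\delta_\Gamma+\varepsilon)R'}$; applying this with the above $R'$ and absorbing $e^{(\delta_\Gamma+\varepsilon)C/c}$ into the constant gives the claim. For (3), one combines the volume estimate $\operatorname{vol}(B_{G/K}(R'))\leq c_G\,e^{2\|\rho_a\|R'}$ with a Kazhdan--Margulis type lower bound: there is a neighborhood $U$ of $e$ in $G$, depending only on $G$, such that for any discrete subgroup $\Gamma\subset G$ the translates $\gamma Uy_0$ cover $\Gamma\cdot y_0$ with multiplicity at most $\#(\Gamma\cap K)$, producing both the universal constant $c_G$ and the torsion factor $\#(\Gamma\cap K)$. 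Parts (2) and (4), which must be uniform in $x\in X$, follow by an orbit-shift trick: if $\Gamma x\cap B_X(R)\neq\emptyset$ (otherwise the count is zero), choose $\gamma_0\in\Gamma$ with $\|\nu(\gamma_0 x)\|<R$ minimal, set $y:=\gamma_0 x$, and observe that $\gamma\mapsto\gamma\gamma_0^{-1}$ is a bijection of $\Gamma$ taking the set $\{\gamma:\|\nu(\gamma x)\|<R\}$ onto $\{\delta:\|\nu(\delta y)\|<R\}$. Since any $y\in X$ has the form $y=k_0\exp(\nu(y))\cdot x_0$ for some $k_0\in K$, the representative $g':=k_0\exp(\nu(y))$ satisfies $\|\mu(g')\|=\|\nu(y)\|<R$, and applying (1) or (3) with $g'$ in place of $g$ produces the $2(R+C)/c$-type exponent.

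The main obstacle is the $\Gamma$-uniformity underlying (3)--(4). The critical exponent $\delta_\Gamma$ is a soft invariant with no explicit dependence on $\Gamma$, but in (3)--(4) the Kazhdan--Margulis neighborhood $U$ must be chosen once and for all in $G$, independently of the discrete subgroup, and must be controlled with respect to the compact direction $K$; this is precisely where the torsion correction $\#(\Gamma\cap K)$ appears, absorbing the possibility that $\Gamma$ meets $K$ in an arbitrarily large finite subgroup fixing $y_0$. Once the universal packing estimate is in place, absorbing the additive constant $C$ into the exponent via the form $(R+C)/c$ and simplifying with $c\le 1$ is routine.
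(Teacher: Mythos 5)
Your proof is correct. Parts (1) and (3) follow the paper's own route: the chain $c\|\mu(\gamma)\|-C \le d_{\aaa}(\mu(\gamma),\mu(H)) \le d_{\aaa}(\mu(\gamma g),\mu(H))+\|\mu(g)\| \le \|\nu(\gamma\cdot x)\|+\|\mu(g)\|$ is exactly the combination of Lemma~\ref{lem:prmu}, Lemma~\ref{lem:munu}, and sharpness used in the paper, followed by the critical exponent in (1) and Observation~\ref{obs:limsup} in (3). For parts (2) and (4), however, your \emph{orbit-shift} argument is a genuinely different and slightly cleaner implementation: you replace $x$ by a translate $y=\gamma_0\cdot x$ with $\|\nu(y)\|<R$, note the count is unchanged under $\gamma\mapsto\gamma\gamma_0^{-1}$, write $y=g'\cdot x_0$ with $g'=k_0\exp(\nu(y))\in K\overline{B_+}$ so that $\|\mu(g')\|=\|\nu(y)\|<R$, and feed $g'$ straight back into (1) and (3). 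The paper instead introduces the pseudo-Riemannian Dirichlet domain $\mathcal{D}_X$ (Definition-Lemma~\ref{def-lem:funddomain}), reduces to $x\in\mathcal{D}_X$, and proves a separate intermediate bound $\|\nu(\gamma\cdot x)\|\geq\frac{c}{2}\|\mu(\gamma)\|-C$ (Lemma~\ref{lem:uniformnu}) via a case split on $\|\mu(g)\|\gtrless\frac{c}{2}\|\mu(\gamma)\|$. Your route skips that lemma by exploiting the free upper bound $\|\nu(y)\|<R$ (vacuous when the set being counted is empty), at the cost of not producing the Dirichlet domain $\mathcal{D}_X$, which the paper also wants for its own sake; the exponents agree, and your constant $c'_\varepsilon(\Gamma)$ is even marginally smaller.

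One imprecision in your sketch of (3): there is no neighborhood $U$ of $e$ in $G$, depending only on $G$, such that for every discrete $\Gamma$ the translates $\gamma U$ overlap only with multiplicity $\#(\Gamma\cap K)$ — cyclic groups $\langle\exp(\varepsilon Y)\rangle$ with $Y\in\aaa\smallsetminus\{0\}$ and $\varepsilon$ small rule out any fixed $U$. The Kazhdan--Margulis theorem gives such a neighborhood only after conjugating $\Gamma$ (it is Proposition~\ref{prop:KM}, used later in Chapter~\ref{sec:nonzero}, not here). What the paper's Observation~\ref{obs:limsup} actually uses is a packing with a $\Gamma$-dependent radius $t$ (the distance from $y_0$ to the boundary of the Riemannian Dirichlet domain $\mathcal{D}_{G/K}$), and the factor $\#(\Gamma\cap K)$ enters separately as the order of the stabilizer of $y_0$ in $\Gamma$, converting the orbit count into the group-element count.
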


\smallskip

As before, $x_0$ is the image of~$H$ in $X=G/H$ and $\rho_a\in\aaa$ is half the sum of the elements of $\Sigma^+(\g,\aaa)$, counted with root multiplicities.
We denote by
\begin{equation}\label{eqn:critexp}
\delta_{\Gamma} := \limsup_{R\rightarrow +\infty} \left(\frac{1}{R} \log\#\big(\Gamma\!\cdot\!y_0\cap B_{G/K}(R)\big)\right)
\end{equation}
the \emph{critical exponent} of~$\Gamma$, which measures the growth rate of the $\Gamma$-orbits in the Riemannian symmetric space $G/K$ of~$G$.
Here
$$B_{G/K}(R) := \{ y\in G/K : \Vert\mu(y)\Vert < R\} $$
is the ball of radius~$R$ centered at $y_0=eK\in G/K$ for the Riemannian metric~$d_{G/K}$ (see \eqref{eqn:mudistance}).
Recall that the classical Poincar\'e series $\sum_{\gamma\in\Gamma} e^{-s\Vert\mu(\gamma)\Vert}$ converges for $s>\delta_{\Gamma}$ and diverges for $s<\delta_{\Gamma}$, and that if $G$ has real rank~$1$, then $\delta_{\Gamma}$ is the Hausdorff dimension of the limit set of~$\Gamma$ in the boundary at infinity of~$G/K$ \cite{pat76,sul79,cor90}.

In~$X$, consider the ``pseudo-ball'' $B_X(R)$ of radius~$R$ centered at~$x_0$, as in \eqref{eqn:pseudoball}.
For all $x=g\,\cdot\,x_0\in X$ (where $g\in G$), the stabilizer of~$x$ in~$\Gamma$ is $\Gamma\cap gHg^{-1}$, hence
\begin{equation}\label{eqn:linkcounting}
\#\big\{ \gamma\in\Gamma :\ \Vert\nu(\gamma\!\cdot\!x)\Vert<R\big\} = \#(\Gamma\cap gHg^{-1}) \cdot \#\big(\Gamma\!\cdot\!x\cap B_X(R)\big).
\end{equation}
Therefore, Lemma~\ref{lem:growthfornu} gives the following counting result for $\Gamma$-orbits in~$X$.

\begin{corollary}\label{cor:growthinX}
For any discrete subgroup~$\Gamma$ of~$G$ that is $(c,C)$-sharp for~$X$ and any $x\in X$,
$$\limsup_{R\rightarrow +\infty} \left(\frac{1}{R} \log\#\big(\Gamma\!\cdot\!x\cap B_X(R)\big)\right) \leq \frac{\delta_{\Gamma}}{c}\,;$$
if moreover $\Gamma\cap K=\{ e\}$ (for instance if $\Gamma$ is torsion-free), then
$$\#\big(\Gamma\!\cdot\!x_0\cap B_X(R)\big) \leq c_G\,e^{2\Vert\rho_a\Vert (R+C)/c}$$
and for all $x\in X$,
$$\#\big(\Gamma\!\cdot\!x\cap B_X(R)\big) \leq c_G\,e^{4\Vert\rho_a\Vert (R+C)/c}.$$
\end{corollary}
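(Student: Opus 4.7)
The plan is to derive the corollary directly from the preceding Lemma~\ref{lem:growthfornu} by using the factorization~\eqref{eqn:linkcounting}. Write $x = g\cdot x_0$ for some $g\in G$. The stabilizer of $x$ in $\Gamma$ is $\Gamma\cap gHg^{-1}$, which is finite because $\Gamma$ acts properly discontinuously on $X$ (a consequence of sharpness together with the Benoist--Kobayashi criterion). Identity \eqref{eqn:linkcounting} then reads
$$\#\big\{ \gamma\in\Gamma : \Vert\nu(\gamma\cdot x)\Vert < R\big\} \,=\, \#(\Gamma\cap gHg^{-1}) \cdot \#\big(\Gamma\cdot x\cap B_X(R)\big),$$
and since the stabilizer contains the identity element, $\#(\Gamma\cap gHg^{-1})\geq 1$. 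Hence in every case,
$$\#\big(\Gamma\cdot x\cap B_X(R)\big) \,\leq\, \#\big\{ \gamma\in\Gamma : \Vert\nu(\gamma\cdot x)\Vert < R\big\}.$$

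For the first (asymptotic) estimate, I would fix $\varepsilon>0$ and apply Lemma~\ref{lem:growthfornu}.(1): the right-hand side above is bounded by $c_{\varepsilon}(\Gamma)\,e^{(\delta_{\Gamma}+\varepsilon)(R+\Vert\mu(g)\Vert)/c}$. Here the quantity $\Vert\mu(g)\Vert$ is a fixed constant (depending on $x$ but not on $R$), so taking $\frac{1}{R}\log$ and letting $R\to+\infty$ produces a limsup bounded by $(\delta_{\Gamma}+\varepsilon)/c$. Since $\varepsilon>0$ is arbitrary, the desired bound $\delta_{\Gamma}/c$ follows.

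For the second estimate, I would use Lemma~\ref{lem:growthfornu}.(3) in the special case $x=x_0$: then one may take $g=e$, so $\Vert\mu(g)\Vert=0$, and the hypothesis $\Gamma\cap K=\{e\}$ makes $\#(\Gamma\cap K)=1$, yielding the stated bound $c_G\,e^{2\Vert\rho_a\Vert (R+C)/c}$. For the third estimate, I would instead invoke Lemma~\ref{lem:growthfornu}.(4), which already absorbs the dependence in $x$ at the cost of a factor of~$2$ in the exponent, and again $\#(\Gamma\cap K)=1$ removes the remaining factor.

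There is no real obstacle here: the corollary is a repackaging of Lemma~\ref{lem:growthfornu}, and the only conceptual point is the observation that the stabilizer factor in \eqref{eqn:linkcounting} can only decrease the count, so the bounds on $\#\{\gamma\in\Gamma : \Vert\nu(\gamma\cdot x)\Vert<R\}$ transfer verbatim to bounds on $\#(\Gamma\cdot x\cap B_X(R))$. The substantive work—the sharpness estimate controlling $\Vert\mu(\gamma)\Vert$ in terms of $\Vert\nu(\gamma\cdot x)\Vert$, and the resulting exponential growth bound involving $\delta_{\Gamma}$ or $\Vert\rho_a\Vert$—has already been carried out in Lemma~\ref{lem:growthfornu}.
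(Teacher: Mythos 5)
Your argument is correct and is exactly the intended derivation: the paper presents Corollary~\ref{cor:growthinX} as an immediate consequence of Lemma~\ref{lem:growthfornu} combined with the identity~\eqref{eqn:linkcounting}, and your observation that the stabilizer factor is $\geq 1$ so the bounds transfer is the whole point. Your choices of which part of the lemma to invoke for each of the three estimates (parts~(1), (3), and~(4), with $g=e$ in the second) also match what the paper's statement is designed around.
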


\begin{remark}\label{rem:lattice}
In our setting $\Gamma$ can never be a lattice in~$G$ because it acts properly discontinuously on $X=G/H$ and $H$ is noncompact.
(In fact $\Gamma$ has to be quite ``small'': the cohomological dimension of any torsion-free finite-index subgroup of~$\Gamma$ has to be $\leq\dim(G/K)-\dim(H/H\cap K)$, see \cite{kob89}.)
Corollary~\ref{cor:growthinX} can be compared with the following results on lattices of~$G$.
\begin{enumerate}[(a)]
  \item Let $\Gamma$ be an irreducible lattice of~$G$ such that $\Gamma\cap H$ is a lattice of~$H$.
  Here is a precise counting result, due to Eskin--McMullen \cite{em93}, for the $\Gamma$-orbit through the origin~$x_0$: for any sequence $(B_n)_{n\in\N}$ of ``well-rounded'' subsets of~$X$,
  $$\#\big(\Gamma\!\cdot\!x_0\cap B_n\big) \underset{\scriptscriptstyle n\rightarrow +\infty}{\sim} \frac{\vol((\Gamma\cap H)\backslash H)}{\vol(\Gamma\backslash G)} \cdot \vol_X(B_n).$$
  In particular (see Lemma~\ref{lem:triangineqnu} and \eqref{eqn:weightKBH}, \eqref{eqn:estimdelta} below), there is a constant $C>0$, independent of~$\Gamma$, such that
  $$\#\big(\Gamma\!\cdot\!x_0\cap B_X(R)\big) \underset{\scriptscriptstyle R\rightarrow +\infty}{\sim} C \cdot \frac{\vol((\Gamma\cap H)\backslash H)}{\vol(\Gamma\backslash G)} \cdot e^{2\Vert\rho_b\Vert R}.$$
  \item Let $\Gamma$ be a lattice of~$G$.
  The $\Gamma$-orbit through an arbitrary point $x\in X$ can be dense in~$X$, in which case $\#(\Gamma\!\cdot\!x\cap B_X(R))$ is infinite.
  For instance, this is generically the case for $X=\SL_3(\R)/\SO(2,1)$ and $\Gamma=\SL_3(\Z)$: see Margulis's proof \cite{mar87} of the Oppenheim conjecture.
\end{enumerate}
\end{remark}

Here we denote by $\Vert\rho_b\Vert$ the norm of half the sum of the elements of a positive system $\Sigma^+(\g,\bb)$ of restricted roots of~$\bb$ in~$\g$; this norm does not depend on the choice of $\Sigma^+(\g,\bb)$.
We note that $\Vert\rho_b\Vert\leq\Vert\rho_a\Vert$ (see Remark~\ref{rem:rhoab}).

It would be interesting to obtain a precise counting result in our setting, in terms of the sharpness constants and of the critical exponent of~$\Gamma$.
We observe that the following lower bound holds.

\begin{remark}
Let $\Gamma$ be a discrete subgroup of~$G$ whose Zariski closure in~$G$ is semisimple or contained in a semisimple group of real rank~$1$.
For any $\varepsilon>0$ there is a constant $c_{\varepsilon}(\Gamma)\in (0,1]$ such that for any $x=g\!\cdot\!x_0\in X$ (where $g\in G$) and any $R>0$,
$$\#\big(\Gamma\!\cdot\!x\cap B_X(R)\big) \geq \frac{c_{\varepsilon}(\Gamma)}{\#(\Gamma\cap gHg^{-1})}\ e^{(\delta_{\Gamma}-\varepsilon)(R-\Vert\mu(g)\Vert)}$$
(with the convention $1/\!+\!\infty=0$).
If $\Gamma$ is $(c,C)$-sharp, then
$$\#(\Gamma\cap gHg^{-1}) \leq c_{\varepsilon}(\Gamma)^{-1}\ e^{(\delta_{\Gamma}+\varepsilon)\frac{2\,\Vert\mu(g)\Vert+C}{c}} < +\infty.$$
\end{remark}

\noindent
Indeed, the first formula is a consequence of \eqref{eqn:linkcounting}, of the inequality $\Vert\nu\Vert\leq\Vert\mu\Vert$ (Lemma~\ref{lem:munu}), and of the fact that the critical exponent, defined as a limsup, is in fact a limit \cite{rob02,qui02}.
The bound on $\#(\Gamma\cap gHg^{-1})$ for sharp~$\Gamma$ comes from the fact that if $\gamma\in gHg^{-1}$, then $d_{\aaa}(\mu(\gamma),\mu(H))\leq 2\,\Vert\mu(g)\Vert$ by \eqref{eqn:rightstrongtriangineq} and \eqref{eqn:leftstrongtriangineq}, hence $\Vert\mu(\gamma)\Vert\leq\frac{2\,\Vert\mu(g)\Vert+C}{c}$ by $(c,C)$-sharpness.

\section{Examples of sharp groups}\label{subsec:exsharp}

Before we prove Lemma~\ref{lem:growthfornu} (in Section~\ref{subsec:proofgrowth}), we first give some examples of sharp Clifford--Klein forms to illustrate and motivate this notion.
We begin with an important example (which holds in the more general context of a homogeneous space $X=G/H$ where $G$ is a reductive group and $H$ a closed subgroup of~$G$).

\begin{example}\label{ex:standardsharp}
All standard Clifford--Klein forms of~$X$ are sharp.
\end{example}

The notion of ``standard'' was defined in the introduction (Definition~\ref{def:standard}).
To understand why Example~\ref{ex:standardsharp} is true, here is a more precise statement.

\begin{example}\label{ex:standardthetastable}
Let $L$ be a reductive subgroup of~$G$ acting properly on~$X$.
If $L$ is stable under the Cartan involution~$\theta$, then the set $\mu(L)$ is the intersection of $\overline{\aaa_+}$ with a finite union of subspaces of~$\aaa$, which meet $\mu(H)$ only in~$0$.
Let $c$ be the sine of the minimal angle between $\mu(L)$ and~$\mu(H)$.
Then any Clifford--Klein form~$X_{\Gamma}$ with $\Gamma\subset L$ is $(c,0)$-sharp.
\end{example}

\begin{proof}[Proof of Example~\ref{ex:standardthetastable}]
If $L$ is stable under the Cartan involution~$\theta$, then $K\cap L$ is a maximal compact subgroup of~$L$ and there is an element $k\in K$ such that $kAk^{-1}\cap L$ is a maximal split abelian subgroup of~$L$ and the Cartan decomposition
$$L = (K\cap L)(kAk^{-1}\cap L)(K\cap L)$$
holds.
The set $\mu(L)=\mu(A\cap k^{-1}Lk)=\overline{\aaa_+}\cap W\cdot(\aaa\cap\Ad(k^{-1})(\mathrm{Lie}(L)))$ is the intersection of $\overline{\aaa_+}$ with a finite union of subspaces of~$\aaa$; it meets~$\mu(H)$ only in~$0$ by the properness criterion \cite[Th.\,4.1]{kob89}.
By definition of sharpness, $L$ is $(c,0)$-sharp for~$X$, and so is any subgroup $\Gamma\subset L$.
\end{proof}

This explains why Proposition~\ref{prop:sharpnessproperties}.(2) is true.

\begin{proof}[Proof of Proposition~\ref{prop:sharpnessproperties}.(2)]
The fact that any reductive subgroup~$L$ of~$G$ acting properly on~$X$ admits a conjugate that is $(c,0)$-sharp for some $c>0$ follows from Example~\ref{ex:standardthetastable} and from the fact that any reductive subgroup~$L$ of~$G$ admits a conjugate in~$G$ that is $\theta$-stable.
\end{proof}

\begin{proof}[Proof of Example~\ref{ex:standardsharp}]
The fact that all standard Clifford--Klein forms of~$X$ are sharp follows from Proposition~\ref{prop:sharpnessproperties}.(1) and~(2).
\end{proof}

Additional evidence that sharpness is a fundamental concept is given by the fact that all known examples of compact Clifford--Klein forms of reductive homogeneous spaces are sharp, even when they are nonstandard.
We conjecture that they should all be.

\begin{conjecture}\label{conj:sharp}
Let $G$ be a reductive linear Lie group and $H$ a reductive subgroup of~$G$.
Any compact Clifford--Klein form of $X=G/H$ is sharp.
\end{conjecture}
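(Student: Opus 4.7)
The plan is to argue by contradiction, exploiting the cocompactness of $\Gamma\backslash X$ in order to upgrade the qualitative sublinear information coming from the Benoist--Kobayashi properness criterion into the linear bound demanded by Definition~\ref{def:sharp}. Suppose that some compact Clifford--Klein form $X_\Gamma$ of $X=G/H$ is not sharp; negating the definition, there is a sequence $(\gamma_n)\subset\Gamma$ with $\|\mu(\gamma_n)\|\to+\infty$ and $d_{\aaa}(\mu(\gamma_n),\mu(H))/\|\mu(\gamma_n)\|\to 0$. After conjugating $H$ to be $\theta$-stable, so that $\mu(H)$ is the intersection of $\overline{\aaa_+}$ with a finite union of linear subspaces as in Example~\ref{ex:standardthetastable}, and normalizing $Y_n:=\mu(\gamma_n)/\|\mu(\gamma_n)\|$ on the unit sphere of $\overline{\aaa_+}$, we may pass to a subsequence with $Y_n\to Y_\infty\in\mu(H)$ of unit norm.

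The second step transports this asymptotic datum inside $\Gamma$ using cocompactness. Fix a compact set $\mathcal{F}\subset G$ with $G=\Gamma\mathcal{F}H$. For a divergent sequence $t_n\to+\infty$, write $\exp(t_nY_n)=\widetilde{\gamma}_nf_n\widetilde{h}_n$ with $\widetilde{\gamma}_n\in\Gamma$, $f_n\in\mathcal{F}$, $\widetilde{h}_n\in H$. Since $\mu(\exp(t_nY_n))=t_nY_n$, applying the coarse triangle inequalities \eqref{eqn:rightstrongtriangineq}--\eqref{eqn:leftstrongtriangineq} together with Lemma~\ref{lem:prmu} converts the divergence of $\exp(t_nY_n)$ along the direction $Y_\infty\in\mu(H)$ into the statement that $\widetilde{\gamma}_n\widetilde{h}_n$ is itself a divergent sequence with asymptotic direction~$Y_\infty$, modulo a uniformly bounded error controlled by $\sup_{f\in\mathcal{F}}\|\mu(f)\|$. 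This is precisely the mechanism by which cocompactness saturates $\mu(\Gamma)$ towards~$\mu(H)$ along directions prescribed by $\overline{\aaa_+}$.

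The third step is to extract a contradiction with the proper discontinuity of the action of $\Gamma$ on $X$. Introduce the limit cone $\mathcal{L}(\Gamma)\subset\overline{\aaa_+}$, defined as the set of accumulation directions of $\mu(\gamma)/\|\mu(\gamma)\|$ as $\|\mu(\gamma)\|\to\infty$; the failure of sharpness forces $Y_\infty\in\mathcal{L}(\Gamma)\cap\mu(H)$ with $Y_\infty\neq 0$. If $\Gamma$ is Zariski-dense in~$G$, Benoist's theorem makes $\mathcal{L}(\Gamma)$ a convex cone with nonempty interior, and the saturation produced by cocompactness in the previous step upgrades $Y_\infty$ to a neighborhood of directions in $\mathcal{L}(\Gamma)$. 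Choosing $Y'\in\mathrm{int}\,\mathcal{L}(\Gamma)$ close to~$Y_\infty$ and applying Lemma~\ref{lem:prmu} then produces an unbounded sequence of $\mu(\gamma)$ lying within bounded distance of~$\mu(H)$, contradicting the Benoist--Kobayashi properness criterion.

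The main obstacle is the non--Zariski-dense case. In general one passes to the real algebraic Zariski closure $\overline{\Gamma}^Z$ of~$\Gamma$ in~$G$, and wishes to reduce to the standard framework of Example~\ref{ex:standardthetastable} and Proposition~\ref{prop:sharpnessproperties}.(2), where sharpness has already been established. However, whether $\overline{\Gamma}^Z$ is reductive in~$G$ when $\Gamma\backslash G/H$ is compact is itself open: it is a close relative of the conjecture \cite[Conj.\,3.3.10]{ky05} that every compact Clifford--Klein form of a reductive homogeneous space should in fact be standard. I expect this algebraic-structural step to be the genuine difficulty; once it is in place, the remaining angle estimate between $\mu(\overline{\Gamma}^Z)$ and $\mu(H)$ on the unit sphere of $\overline{\aaa_+}$ is immediate from the compactness of the sphere and the cone description of Example~\ref{ex:standardthetastable}, and transfers sharpness from $\overline{\Gamma}^Z$ to~$\Gamma$ via Proposition~\ref{prop:sharpnessproperties}.(1).
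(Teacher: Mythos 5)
This is an open conjecture in the paper, not a theorem: the text states it precisely because a proof is not available, and the only cases where it is known are the particular ones recorded in Examples~\ref{ex:standardsharp}, \ref{ex:deform}, \ref{ex:AdS} and~\ref{ex:Grk1}. So there is no proof of the paper's to compare your proposal against, and the right standard to hold yourself to is: does the argument close?

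It does not. You correctly extract a direction $Y_\infty\in\mu(H)$ on the unit sphere from a putative non-sharp sequence, but the two steps that are supposed to reach a contradiction with proper discontinuity both have genuine gaps. In the second step, writing $\exp(t_nY_n)=\widetilde{\gamma}_nf_n\widetilde{h}_n$ via $G=\Gamma\mathcal{F}H$ and invoking \eqref{eqn:rightstrongtriangineq}--\eqref{eqn:leftstrongtriangineq} does not by itself force $\widetilde{\gamma}_n$ to be unbounded, let alone to diverge in the direction $Y_\infty$: the product $\widetilde{\gamma}_n\widetilde{h}_n$ diverges, but the divergence can be absorbed entirely by~$\widetilde{h}_n$ (for instance if $Y_\infty$ comes from an $H$ that already contains a large chunk of~$A$). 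The sentence about cocompactness ``saturating $\mu(\Gamma)$ towards~$\mu(H)$'' is a gesture rather than an estimate. In the third step, the claimed contradiction is with the Benoist--Kobayashi criterion, but that criterion is a \emph{drift} condition ($\mu(\Gamma)\cap(\mu(H)+\mathcal{C})$ bounded for each compact~$\mathcal{C}$), which is strictly weaker than the \emph{conical} avoidance that sharpness asserts. Non-sharpness gives $d_{\aaa}(\mu(\gamma_n),\mu(H))=o(\Vert\mu(\gamma_n)\Vert)$, which is fully compatible with $d_{\aaa}(\mu(\gamma_n),\mu(H))\to+\infty$; having $Y_\infty\in\mathcal{L}(\Gamma)\cap\mu(H)$ with $Y_\infty\neq 0$ is therefore \emph{not} in contradiction with proper discontinuity, and choosing $Y'\in\mathrm{int}\,\mathcal{L}(\Gamma)$ near $Y_\infty$ does not manufacture elements of $\mu(\Gamma)$ at bounded distance from $\mu(H)$ --- $Y'$ need not lie in $\mu(H)$, and even directions in $\mu(H)$ can be approached while drifting away. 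This gap between drift and conical avoidance is precisely the content of the conjecture, and nothing in the proposal bridges it.

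You have honestly identified the third and deepest obstruction yourself: controlling the Zariski closure of~$\Gamma$. Reducing to a reductive closure and then applying the $\theta$-stable cone picture of Example~\ref{ex:standardthetastable} together with Proposition~\ref{prop:sharpnessproperties} would indeed finish, but establishing that reduction is essentially the standardness conjecture \cite[Conj.\,3.3.10]{ky05}, itself open. In short, what you have is a plausible programme whose first two reductions need real estimates that are currently missing, and whose final reduction is known to be at least as hard as the target; none of this is a criticism of the intuition, which is the right one, but it should be framed as a strategy, not a proof.
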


The following particular case of Conjecture~\ref{conj:sharp} was proved in \cite{kas12}.

\begin{example}[{\cite[Th.\,1.1]{kas12}}]\label{ex:deform}
Let $X=G/H$, where $G$ is a reductive linear Lie group and $H$ a reductive subgroup of~$G$.
Let $\Gamma$ be a uniform lattice in some reductive subgroup~$L$ of~$G$ of real rank~$1$.
Any small deformation of the standard Clifford--Klein form~$X_{\Gamma}$ is sharp.
\end{example}

In other words, there exists a neighborhood $\mathcal{U}\subset\Hom(\Gamma,G)$ of the natural inclusion such that the group $\varphi(\Gamma)$ is discrete in~$G$ and sharp for~$X$ for all $\varphi\in\nolinebreak\mathcal{U}$.
More precisely, if $\Gamma$ is $(c,C)$-sharp, then for any $\varepsilon>0$ there is a neighborhood $\mathcal{U}_{\varepsilon}\subset\Hom(\Gamma,G)$ of the natural inclusion such that $\varphi(\Gamma)$ is $(c-\varepsilon,C+\varepsilon)$-sharp for all $\varphi\in\mathcal{U}_{\varepsilon}$ (and even $(c-\varepsilon,C)$-sharp if $C>0$ or $\Gamma\cap K=\{ e\}$, for instance if $\Gamma$ is torsion-free).
This holds more generally whenever $\Gamma$ is a convex cocompact subgroup of~$L$, \ie a discrete subgroup acting cocompactly on some nonempty convex subset of the Riemannian symmetric space of~$L$.

In the special case of $X=\AdS^3=\SO(2,2)_0/\SO(1,2)_0$, sharpness was proved in \cite{kasPhD} for \emph{all} compact Clifford--Klein forms, even for those that are not deformations of standard ones (such forms exist by \cite{sal00}).

\begin{example}[{\cite[Th.\,5.1.1]{kasPhD}}]\label{ex:AdS}
All compact Clifford--Klein forms of $X=\AdS^3$ are sharp.
\end{example}

As we will see in Section~\ref{subsec:groupmfd}, this is a special case of the following recent result.

\begin{example}[{\cite{ggkw}}]\label{ex:Grk1}
Let ${}^{\backprime}G$ be a real semisimple linear Lie group of real rank~$1$.
All compact Clifford--Klein forms of $X=({}^{\backprime}G\times\!{}^{\backprime}G)/\Diag({}^{\backprime}G)$ are sharp.
\end{example}

We note that there exist Clifford--Klein forms~$X_{\Gamma}$ with $\Gamma$ infinitely generated that are not sharp (see \cite{gk12}).
Also, not all sharp Clifford--Klein forms remain sharp under small deformations; it can happen that the action actually stops being properly discontinuous.

\begin{example}\label{ex:propernonstable}
Let $X=({}^{\backprime}G\times\!{}^{\backprime}G)/\Diag({}^{\backprime}G)$ and $\Gamma={}^{\backprime}\Gamma\times\{ e\}$, where ${}^{\backprime}G$ is a real semisimple linear Lie group of real rank~$1$ and ${}^{\backprime}\Gamma$ a discrete subgroup of~${}^{\backprime}G$ containing a nontrivial unipotent element ${}^{\backprime}\gamma_u$ (for instance a nonuniform lattice of~${}^{\backprime}G$).
For any neighborhood $\mathcal{U}\subset\Hom(\Gamma,{}^{\backprime}G\times\!{}^{\backprime}G)$, there is an element $\varphi\in\mathcal{U}$ such that the group $\varphi(\Gamma)$ does not act properly discontinuously on~$X$.
\end{example}

The idea is to obtain a contradiction with the properness criterion of Benoist and Kobayashi for some $\varphi$ such that the first projection of $\varphi({}^{\backprime}\gamma_u,e)$ to~${}^{\backprime}G$ is unipotent and the second projection is hyperbolic (see \cite{gk12}).

\section{Link between the Cartan and polar projections}\label{subsec:propertiesmunu}

In order to prove Lemma~\ref{lem:growthfornu}, we will use the following link between the Cartan projection~$\mu$ (on which the notion of sharpness is built) and the polar projection~$\nu$ (on which our counting is based).

\begin{lemma}\label{lem:munu}
For any $g\in G$,
$$d_{\aaa}(\mu(g),\mu(H)) \leq \Vert\nu(g)\Vert \leq \Vert\mu(g)\Vert\,.$$
\end{lemma}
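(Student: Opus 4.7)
The plan is to exploit the polar decomposition $g = k_g b_g h_g$ with $k_g \in K$, $h_g \in H$, and $b_g = \exp(\nu(g))$. Since $b_g \in B \subset A$, formula~\eqref{eqn:munuB} yields $\Vert\mu(b_g)\Vert = \Vert\nu(g)\Vert$; combined with the bi-$K$-invariance of~$\mu$, this gives the key identity
$$\Vert\mu(gh_g^{-1})\Vert \,=\, \Vert\mu(k_g b_g)\Vert \,=\, \Vert\mu(b_g)\Vert \,=\, \Vert\nu(g)\Vert,$$
which drives both inequalities.

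For the first inequality, I would apply the sharp triangle inequality~\eqref{eqn:leftstrongtriangineq} to the factorization $g = (gh_g^{-1})\cdot h_g$, obtaining
$$\Vert\mu(g) - \mu(h_g)\Vert \,\leq\, \Vert\mu(gh_g^{-1})\Vert \,=\, \Vert\nu(g)\Vert.$$
Since $\mu(h_g) \in \mu(H)$, this immediately yields $d_{\aaa}(\mu(g),\mu(H)) \leq \Vert\nu(g)\Vert$.

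For the second inequality, my strategy is to interpret both sides as distances in the Riemannian symmetric space~$G/K$ using~\eqref{eqn:mudistance}. On the one hand, $\Vert\mu(g)\Vert = d_{G/K}(y_0, g\cdot y_0) \geq d_{G/K}(y_0, gH\cdot y_0)$, since $g\cdot y_0 \in gH\cdot y_0$. On the other hand, using bi-$K$-invariance we have $d_{G/K}(y_0, gH\cdot y_0) = d_{G/K}(y_0, b_g H\cdot y_0)$, and it suffices to show that this equals~$\Vert\nu(g)\Vert$. Now, $H\cdot y_0$ is a totally geodesic submanifold of~$G/K$ (the Riemannian symmetric space of~$H$); because $\sigma$ commutes with~$\theta$, one has the orthogonal decomposition $\p = (\h\cap\p)\oplus(\q\cap\p)$ with $\bb\subset\q\cap\p$, so the vector $\nu(g)\in\bb$ is normal to $H\cdot y_0$ at~$y_0$. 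Since $G/K$ has nonpositive sectional curvature, the geodesic $t\mapsto\exp(t\nu(g))\cdot y_0$ realizes the distance from $b_g\cdot y_0$ to $H\cdot y_0$; hence $d_{G/K}(b_g\cdot y_0, H\cdot y_0) = \Vert\nu(g)\Vert$, and translating by the isometry~$b_g$ gives $d_{G/K}(y_0, b_gH\cdot y_0) = \Vert\nu(g)\Vert$ as desired.

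I expect the main obstacle to be this last step, which amounts to the variational statement $\Vert\mu(bh)\Vert \geq \Vert\log b\Vert$ for all $b\in B$ and $h\in H$, and which relies on the orthogonality of the flat $\exp(\bb)\cdot y_0$ to the totally geodesic submanifold $H\cdot y_0$ together with the $\mathrm{CAT}(0)$ geometry of~$G/K$. The first inequality, by contrast, is a direct algebraic consequence of the sharp triangle inequality~\eqref{eqn:leftstrongtriangineq} combined with~\eqref{eqn:munuB}.
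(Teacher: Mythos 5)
Your proof of the first inequality coincides with the paper's: both invoke the sharp triangle inequality~\eqref{eqn:leftstrongtriangineq} together with $\Vert\mu(b_g)\Vert = \Vert\nu(g)\Vert$ from~\eqref{eqn:munuB}; your factorization $g = (gh_g^{-1})\cdot h_g$ and the paper's $\mu(g) = \mu(b_g h_g)$ are the same computation written slightly differently.

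For the second inequality you take a genuinely different and more geometric route. The paper argues purely algebraically: since $\sigma(h_g) = h_g$, $\sigma(b_g) = b_g^{-1}$, and $\sigma(K) = K$, one has $\mu(g\sigma(g)^{-1}) = \mu(b_g^2) = 2\mu(b_g)$, and then the plain triangle inequality~\eqref{eqn:triangineq} plus $\Vert\mu(\sigma(g)^{-1})\Vert = \Vert\mu(g)\Vert$ give $2\Vert\nu(g)\Vert \leq 2\Vert\mu(g)\Vert$. You instead interpret $\Vert\mu(g)\Vert$ and $\Vert\nu(g)\Vert$ as Riemannian distances in $G/K$ and invoke the nearest-point projection onto the totally geodesic submanifold $H\cdot y_0$: since $\bb \subset \q\cap\p$ is orthogonal to $\h\cap\p$ and $G/K$ is $\mathrm{CAT}(0)$, the point $y_0$ is the projection of $b_g^{\pm 1}\cdot y_0$ onto $H\cdot y_0$, whence $d_{G/K}(y_0, gH\cdot y_0) = \Vert\nu(g)\Vert \leq d_{G/K}(y_0, g\cdot y_0) = \Vert\mu(g)\Vert$. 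This is correct, and it is instructive: it makes explicit the geometric content that the paper only exploits later (in the proof of Lemma~\ref{lem:GcH}, the paper derives exactly this projection fact \emph{from} Lemma~\ref{lem:munu}; you run the logic in the opposite direction). The paper's algebraic $\sigma$-trick is shorter and avoids any appeal to comparison geometry, while your approach explains conceptually why $\Vert\nu\Vert$ deserves to be called a pseudo-distance.

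One small inaccuracy to fix: ``translating by the isometry $b_g$'' does not turn $d_{G/K}(b_g\cdot y_0, H\cdot y_0)$ into $d_{G/K}(y_0, b_gH\cdot y_0)$ — left translation by $b_g^{-1}$ gives $d_{G/K}(y_0, b_g^{-1}H\cdot y_0)$, with $b_g^{-1}$ rather than $b_g$. To finish you should either observe that the same orthogonality argument applies equally to $-\nu(g)$, or use that the Cartan involution $\theta$ fixes $y_0$, preserves $H\cdot y_0$ (since $\sigma$ and $\theta$ commute), and sends $b_g$ to $b_g^{-1}$, so the two distances agree. This is a presentational point, not a gap.
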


\begin{proof}
For $g\in G$, write $g=kbh$, where $k\in K$, $b\in\overline{B_+}$, and $h\in H$.
Since $H$ is fixed by~$\sigma$, since $K$ is globally preserved by~$\sigma$ (because $\sigma$ and~$\theta$ commute), and since $\sigma(b)=b^{-1}\in B\subset A$, we have
$$\mu(g\sigma(g)^{-1}) = \mu(b\sigma(b)^{-1}) = \mu(b^2) = 2\,\mu(b).$$
Using~\eqref{eqn:triangineq} and the fact that $\Vert\mu(b)\Vert=\Vert\nu(b)\Vert=\Vert\nu(g)\Vert$ by \eqref{eqn:munuB}, we obtain
\begin{equation}\label{eqn:munusigma}
2\,\Vert\nu(g)\Vert = \Vert\mu(g\sigma(g)^{-1})\Vert \leq \Vert\mu(g)\Vert + \Vert\mu(\sigma(g)^{-1})\Vert.
\end{equation}
Since $\sigma(K)=K$ and $\sigma(A)=A$ (because $\aaa=(\aaa\cap\h)+\bb$), we have $\Vert\mu(\sigma(g)^{-1})\Vert=\Vert\mu(g)\Vert$, which implies $\Vert\nu(g)\Vert\leq\Vert\mu(g)\Vert$.
On the other hand, by \eqref{eqn:leftstrongtriangineq} and \eqref{eqn:munuB},
\begin{eqnarray*}
d_{\aaa}(\mu(g),\mu(H)) & \leq & \Vert\mu(g)-\mu(h)\Vert\\
& = & \Vert\mu(bh)-\mu(h)\Vert\\
& \leq & \Vert\mu(b)\Vert = \Vert\nu(b)\Vert = \Vert\nu(g)\Vert.\qedhere
\end{eqnarray*}
\end{proof}

The following lemma implies, together with \eqref{eqn:weightKBH} below, that for any sequence $(R_n)\in\R_+^{\N}$ tending to infinity, the sequence $(B_X(R_n))_{n\in\N}$ of ``pseudo-balls'' of radius~$R_n$ centered at the origin (see \eqref{eqn:pseudoball}) is ``well-rounded'' in the sense of Eskin--McMullen \cite{em93}: for any $\varepsilon>0$ there is a neighborhood $\mathcal{U}$ of $e$ in~$G$ such that
$$\vol_X\big(\mathcal{U}\cdot \partial B_X(R_n)\big) \leq \varepsilon \, \vol_X\big(B_X(R_n)\big).$$

\begin{lemma}\label{lem:triangineqnu}
For any $g,g'\in G$,
$$\Vert\nu(g')\Vert - \Vert\mu(g)\Vert \leq \Vert\nu(gg')\Vert \leq \Vert\nu(g')\Vert + \Vert\mu(g)\Vert.$$
\end{lemma}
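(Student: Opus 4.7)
The plan is to reduce both inequalities to the Cartan triangle inequality \eqref{eqn:triangineq} by means of the identity
\[
2\,\Vert\nu(g)\Vert = \Vert\mu(g\sigma(g)^{-1})\Vert
\]
already established in the proof of Lemma~\ref{lem:munu} (from writing $g=kbh$ with $b\in\overline{B_+}$ and using $\sigma(b)=b^{-1}$, $\sigma(K)=K$, $\sigma(H)=H$, together with $\Vert\mu(b)\Vert=\Vert\nu(b)\Vert=\Vert\nu(g)\Vert$).

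First I would prove the upper bound $\Vert\nu(gg')\Vert\leq \Vert\nu(g')\Vert+\Vert\mu(g)\Vert$. Applying the identity to $gg'$ and inserting $\sigma(g')^{-1}\sigma(g')$ yields
\[
2\,\Vert\nu(gg')\Vert \,=\, \Vert\mu\bigl(g\cdot g'\sigma(g')^{-1}\cdot\sigma(g)^{-1}\bigr)\Vert.
\]
Two applications of the triangle inequality \eqref{eqn:triangineq} bound the right-hand side by $\Vert\mu(g)\Vert+\Vert\mu(g'\sigma(g')^{-1})\Vert+\Vert\mu(\sigma(g)^{-1})\Vert$. The middle term equals $2\,\Vert\nu(g')\Vert$ by the same identity, and the last term equals $\Vert\mu(g)\Vert$ because $\sigma$ preserves both $K$ and $A$ (as used already in \eqref{eqn:munusigma}), so $\Vert\mu(\sigma(g)^{-1})\Vert=\Vert\mu(g)\Vert$. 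Dividing by~$2$ gives the desired bound.

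For the lower bound, I would simply apply the upper bound just proved with the substitution $g\rightsquigarrow g^{-1}$ and $g'\rightsquigarrow gg'$: this gives
\[
\Vert\nu(g')\Vert = \Vert\nu(g^{-1}(gg'))\Vert \leq \Vert\mu(g^{-1})\Vert + \Vert\nu(gg')\Vert,
\]
and since $\Vert\mu(g^{-1})\Vert=\Vert\mu(g)\Vert$ (Cartan projection is invariant under inversion, as $g^{-1}=k'^{-1}a_g^{-1}k^{-1}$ and $a_g^{-1}\in A$ is $W$-equivalent to an element of $\overline{A_+}$ of the same norm), rearrangement gives the claim.

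There is no real obstacle here: the argument is an entirely formal consequence of the identity $2\Vert\nu(\cdot)\Vert=\Vert\mu((\cdot)\sigma(\cdot)^{-1})\Vert$ together with the already-known subadditivity of~$\Vert\mu\Vert$. The only minor subtlety is keeping track of the involution-invariance of $K$ and $A$ so that $\Vert\mu\circ\sigma\Vert=\Vert\mu\Vert$, but this is exactly what was verified in the proof of Lemma~\ref{lem:munu}.
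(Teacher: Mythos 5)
Your proof is correct, but it takes a different algebraic route than the paper's. The paper writes $g'=kbh$ in polar form, invokes right-$H$-invariance of~$\nu$ to get $\Vert\nu(gg')\Vert=\Vert\nu(gkb)\Vert$, bounds this by $\Vert\mu(gkb)\Vert$ using the inequality $\Vert\nu\Vert\leq\Vert\mu\Vert$ from Lemma~\ref{lem:munu}, applies \eqref{eqn:triangineq} once, and identifies $\Vert\mu(kb)\Vert=\Vert\nu(g')\Vert$ via \eqref{eqn:munuB}. You instead work entirely through the $\sigma$-twist identity $2\Vert\nu(\cdot)\Vert=\Vert\mu((\cdot)\sigma(\cdot)^{-1})\Vert$ extracted from the proof of Lemma~\ref{lem:munu}, applying it to both $gg'$ and $g'$, together with two uses of \eqref{eqn:triangineq} and the fact that $\sigma$ preserves $K$ and $A$. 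Both routes recycle the same circle of ideas from the preceding lemma, and both finish the lower bound by the identical substitution $(g,g')\rightsquigarrow(g^{-1},gg')$. Your version has the small advantage of not needing the polar decomposition $g'=kbh$ or formula \eqref{eqn:munuB} explicitly; the paper's version is slightly shorter because it only needs \eqref{eqn:triangineq} once rather than twice. One small wording quibble: nothing actually needs to be ``inserted'' in your first display — expanding $\sigma(gg')^{-1}=\sigma(g')^{-1}\sigma(g)^{-1}$ already produces the grouping you want.
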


\begin{proof}
Let $g,g'\in G$.
Write $g'=kbh$ with $k\in K$, $b\in\overline{B_+}$, and $h\in H$.
By Lemma~\ref{lem:munu} and \eqref{eqn:triangineq},
$$\Vert\nu(gg')\Vert = \Vert\nu(gkb)\Vert \leq \Vert\mu(gkb)\Vert \leq \Vert\mu(g)\Vert + \Vert\mu(kb)\Vert.$$
But $\Vert\mu(kb)\Vert=\Vert\nu(kb)\Vert=\Vert\nu(g)\Vert$ by \eqref{eqn:munuB}, hence $\Vert\nu(gg')\Vert\leq\Vert\nu(g')\Vert+\Vert\mu(g)\Vert$.
Applying this inequality to $(g^{-1},gg')$ instead of $(g,g')$, we obtain $\Vert\nu(gg')\Vert\geq\Vert\nu(g')\Vert-\Vert\mu(g)\Vert$.
\end{proof}

\section{Proof of the counting estimates}\label{subsec:proofgrowth}

We now use Lemmas \ref{lem:prmu} and~\ref{lem:munu}, together with the classical growth theory for discrete isometry groups in the Riemannian symmetric space $G/K$, to prove Lemma~\ref{lem:growthfornu}.

\begin{proof}[Proof of Lemma~\ref{lem:growthfornu}.(1)]
By Lemmas \ref{lem:prmu} and~\ref{lem:munu}, for all $g\in G$ and $\gamma\in\Gamma$ we have
$$\Vert\nu(\gamma g)\Vert \,\geq\, d_{\aaa}(\mu(\gamma g),\mu(H)) \,\geq\, d_{\aaa}(\mu(\gamma),\mu(H)) - \Vert\mu(g)\Vert.$$
Using the sharpness assumption, we obtain that for all $g\in G$,
\begin{equation}\label{eqn:ineqmunu}
\Vert\nu(\gamma g)\Vert \,\geq\, c\,\Vert\mu(\gamma)\Vert - C - \Vert\mu(g)\Vert,
\end{equation}
hence
$$\#\big\{ \gamma\in\Gamma : \Vert\nu(\gamma g)\Vert < R\big\} \leq \#\Big\{ \gamma\in\Gamma : \Vert\mu(\gamma)\Vert < \frac{R+C+\Vert\mu(g)\Vert}{c}\Big\} .$$
We conclude using the definition \eqref{eqn:critexp} of the critical exponent~$\delta_{\Gamma}$.
\end{proof}

The proof of Lemma~\ref{lem:growthfornu}.(3) follows rigorously the same idea, using the following classical observation (where $y_0=eK\in G/K$ as before).

\begin{observation}\label{obs:limsup}
There is a constant $c_G\geq 1$ depending only on~$G$ such that for any discrete subgroup~$\Gamma$ of~$G$ and any $R>0$,
$$\#\big(\Gamma\!\cdot\!y_0\cap B_{G/K}(R)\big) \,\leq\, c_G\,e^{2\,\Vert\rho_a\Vert R}.$$
In particular, $\delta_{\Gamma} \leq 2\,\Vert\rho_a\Vert$ and
$$\#\big\{ \gamma\in\Gamma : \Vert\mu(\gamma)\Vert <R\big\} \,\leq\, c_G\,e^{2\,\Vert\rho_a\Vert R} \cdot \#(\Gamma\cap K).$$
\end{observation}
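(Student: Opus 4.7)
The proof plan separates into two parts, and the second bound in the observation on $\#\{\gamma\in\Gamma : \|\mu(\gamma)\|<R\}$ is immediate from the first: the map $\gamma\mapsto\gamma\cdot y_0$ has fibres of cardinality $\#(\Gamma\cap K)$ (which is finite by discreteness, since $K$ is compact), and $\gamma\cdot y_0\in B_{G/K}(R)$ iff $\|\mu(\gamma)\|<R$ by~\eqref{eqn:mudistance}, so
$$\#\{\gamma\in\Gamma:\|\mu(\gamma)\|<R\} \,=\, \#(\Gamma\cap K)\cdot\#(\Gamma\cdot y_0\cap B_{G/K}(R)).$$
The bound $\delta_\Gamma\leq 2\|\rho_a\|$ is then a direct consequence of the first displayed inequality and the definition~\eqref{eqn:critexp} of~$\delta_\Gamma$.

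For the main inequality, the plan is to combine a classical volume bound with a packing argument. First I would estimate the volume of $B_{G/K}(R)$ itself. Using the Cartan decomposition $G=K\overline{A_+}K$ and the standard formula for Haar measure in these coordinates, the Jacobian factor equals $\prod_{\alpha\in\Sigma^+(\g,\aaa)}\sinh(\alpha(H))^{m_\alpha}$ on $\overline{\aaa_+}$; since $2\rho_a=\sum_\alpha m_\alpha\alpha$, this Jacobian is bounded by $e^{2\rho_a(H)}\leq e^{2\|\rho_a\|\cdot\|H\|}$. Integrating over $\overline{\aaa_+}\cap\{\|H\|<R\}$ (a set of Euclidean volume polynomial in~$R$) yields a bound
$$\vol_{G/K}\bigl(B_{G/K}(R)\bigr)\,\leq\, c_0\,e^{2\|\rho_a\|R}$$
for a constant $c_0$ depending only on $G$ and the chosen normalization of the metric.

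Next I would fix, once and for all, a symmetric relatively compact open neighborhood $W$ of $e$ in~$G$; write $v_W:=\vol_{G/K}(W\!\cdot\!y_0)>0$ and $r_W:=\mathrm{diam}(W\cdot y_0)$. For every orbit point $\gamma\cdot y_0\in\Gamma\cdot y_0\cap B_{G/K}(R)$ the $G$-translate $\gamma W\cdot y_0$ has volume $v_W$ and sits inside $B_{G/K}(R+r_W)$. Writing $N:=\#(\Gamma\cdot y_0\cap B_{G/K}(R))$ and summing volumes yields
$$N\,v_W \,\leq\, \int_{B_{G/K}(R+r_W)} m(z)\,dz,$$
where $m(z)$ is the multiplicity with which $z$ is covered by the sets $\gamma W\!\cdot\!y_0$. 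Bounding $m(z)$ uniformly in $\Gamma$ is the main obstacle, but it is standard: if $W$ is chosen inside a Zassenhaus--Kazhdan--Margulis neighborhood of~$e$ (whose size depends only on~$G$), then any two orbit points giving overlapping translates lie within distance $r_W$ of one another, and the resulting short elements of $g^{-1}\Gamma g$ for $g\cdot y_0=z$ generate a virtually nilpotent subgroup whose orbit through $y_0$ meets $B_{G/K}(z,r_W)$ in a uniformly bounded number of points---a bound $M_G$ depending only on $G$ (and on~$W$).

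Combining, one obtains $N\,v_W\leq M_G\,c_0\,e^{2\|\rho_a\|(R+r_W)}$, so that
$$\#(\Gamma\cdot y_0\cap B_{G/K}(R)) \,\leq\, c_G\,e^{2\|\rho_a\|R}$$
with $c_G:=M_G\,c_0\,e^{2\|\rho_a\|r_W}/v_W$ depending only on $G$. The hard step is the uniform covering-multiplicity estimate via Kazhdan--Margulis; the rest is bookkeeping with the explicit Haar-measure Jacobian on~$\overline{A_+}$.
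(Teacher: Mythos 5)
Your argument takes a genuinely different route from the paper's. The paper works with the Dirichlet domain $\mathcal{D}_{G/K}$ centred at~$y_0$: taking $t>0$ to be the distance from $y_0$ to $\partial\mathcal{D}_{G/K}$, the translates $\gamma\cdot B_{G/K}(t)$ for distinct points $\gamma\cdot y_0\in B_{G/K}(R)$ are \emph{pairwise disjoint} and lie in $B_{G/K}(R+t)$, giving $\#(\Gamma\cdot y_0\cap B_{G/K}(R))\cdot\vol B_{G/K}(t)\leq\vol B_{G/K}(R+t)$ with no covering multiplicity to control; the Helgason volume asymptotics then finish the proof. You instead cover by overlapping translates $\gamma W\cdot y_0$ of a \emph{fixed} neighbourhood~$W$ and try to bound the covering multiplicity by a constant~$M_G$ depending only on~$G$. (Your explicit Jacobian estimate $\prod_\alpha\sinh(\alpha(H))^{m_\alpha}\leq e^{2\langle\rho_a,H\rangle}$ is fine, and is a self-contained form of the volume bound the paper cites.)

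The gap is the multiplicity bound~$M_G$. The Zassenhaus/Kazhdan--Margulis lemma gives that $\langle\Gamma\cap W\rangle$ lies in a connected nilpotent subgroup for $W$ small enough, but this does not yield a $\Gamma$-independent bound on the number of orbit points of such a subgroup inside a fixed ball. Take $\Gamma_\varepsilon=\langle\exp(\varepsilon H)\rangle$ with $H\in\aaa$, $\Vert H\Vert=1$: it is discrete, cyclic (hence nilpotent), satisfies $\Gamma_\varepsilon\cap K=\{e\}$, and its orbit through~$y_0$ meets any ball of radius~$r_W$ in roughly $2r_W/\varepsilon$ points, unbounded as $\varepsilon\to 0$. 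So $M_G$ cannot be taken uniform in~$\Gamma$, and the packing does not close. (The same family shows the asserted $\Gamma$-uniformity of~$c_G$ is delicate: the paper's disjoint-ball argument gives the ratio $\vol B_{G/K}(R+t)/\vol B_{G/K}(t)$, which also degenerates as $t\to 0$ along $\Gamma_\varepsilon$; in the paper, a $\Gamma$-independent bound is ultimately secured via the Kazhdan--Margulis conjugation of Proposition~\ref{prop:KM}, which enforces a lower bound on the injectivity radius.) To repair your argument, use balls of radius~$t$ (the $\Gamma$-dependent injectivity radius at~$y_0$), so that the translates are disjoint by construction --- i.e.\ the paper's proof --- and track the resulting dependence on~$t$.
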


\begin{proof}
Let
\begin{equation}\label{eqn:RiemDirichlet}
\mathcal{D}_{G/K} = \big\{ y\in G/K\ :\ d_{G/K}(y,y_0)\leq d_{G/K}(y,\gamma\cdot y_0) \quad\forall\gamma\in\Gamma\big\} 
\end{equation}
be the Dirichlet domain centered at~$y_0$, and let $t>0$ be the distance from~$y_0$ to the boundary of~$\mathcal{D}_{G/K}$.
For all $R>0$ and all $\gamma\in\Gamma$ with $\gamma\cdot y_0\in B_{G/K}(R)$,
$$\gamma\cdot B_{G/K}(t) \subset B_{G/K}(R+t)$$
since $G$ acts on $G/K$ by isometries.
Moreover, by definition of~$t$, the balls $\gamma\cdot B_{G/K}(t)$ and $\gamma'\cdot B_{G/K}(t)$ (for $\gamma,\gamma'\in\Gamma$) do not intersect if $\gamma\cdot y_0\neq\gamma'\cdot y_0$.
Therefore,
$$\#\big(\Gamma\!\cdot\!y_0\cap B_{G/K}(R)\big) \!\!\cdot \vol\,B_{G/K}(t) \,\leq\, \vol\,B_{G/K}(R+t).$$
Observation~\ref{obs:limsup} is then a consequence of the following volume estimate (see \cite[Ch.\,I, Th.\,5.8]{hel00}): there is a constant~$c'_G$ (depending only on~$G$) such that
$$\vol\,B_{G/K}(R') \underset{\scriptscriptstyle R'\rightarrow +\infty}{\sim} c'_G\,e^{2\Vert\rho_a\Vert R'}.\qedhere$$
\end{proof}

We now turn to Lemma~\ref{lem:growthfornu}.(2) and~(4).
It is sufficient to give a proof for $x$ in some fundamental domain of~$X$ for the action of~$\Gamma$.
We consider the following particular fundamental domain.

\begin{def-lem}[A pseudo-Riemannian Dirichlet domain]\label{def-lem:funddomain}
Let $\Gamma$ be a discrete subgroup of~$G$ acting properly discontinuously on~$X$.
The set
$$\mathcal{D}_X = \{ x\in X :\quad \Vert\nu(x)\Vert\leq\Vert\nu(\gamma\cdot x)\Vert \quad\forall\gamma\in\Gamma\} $$
is well-defined; it is a fundamental domain of~$X$ for the action of~$\Gamma$.
\end{def-lem}

\begin{proof}
By Remark~\ref{rem:Gammapseudoball}, for any given $x\in X$ there are only finitely many elements $\gamma\in\Gamma$ such that $\Vert\nu(\gamma\!\cdot\!x)\Vert\leq\Vert\nu(x)\Vert$; in particular, there is an element $\gamma_0\in\Gamma$ such that $\Vert\nu(\gamma_0\!\cdot\!x)\Vert\leq\Vert\nu(\gamma\!\cdot\!x)\Vert$ for all $\gamma\in\Gamma$.
Thus $\mathcal{D}_X$ is well-defined and $\Gamma\cdot\mathcal{D}_X=X$.
To see that $\mathcal{D}_X$ is actually a fundamental domain (which is not needed in our proof of Lemma~~\ref{lem:growthfornu}, where we only use $\Gamma\cdot\mathcal{D}_X=X$), it is sufficient to see that for any $\gamma$ in the countable group~$\Gamma$, the set
$$\mathcal{H}_{\gamma} := \{ x\in X :\quad \Vert\nu(x)\Vert = \Vert\nu(\gamma\cdot x)\Vert\} $$
has measure~$0$ in~$X$.
But \eqref{eqn:mudistance} and \eqref{eqn:munusigma} imply that for any $g\in G$,
$$2\,\Vert\nu(g)\Vert = \Vert\mu(g\sigma(g)^{-1})\Vert = d_{G/K}\big(y_0,g\sigma(g)^{-1}\cdot y_0\big).$$
Therefore the function $\Vert\nu\Vert^2$ is analytic on~$G$, hence on $X=G/H$.
Since $x\mapsto\Vert\nu(x)\Vert^2-\nolinebreak\Vert\nu(\gamma\cdot\nolinebreak x)\Vert^2$ is not constant on~$X$, the set~$\mathcal{H}_{\gamma}$ has measure~$0$.
\end{proof}

The fundamental domain~$\mathcal{D}_X$ is an analogue, in the pseudo-Riemannian space $X=G/H$, of the classical Dirichlet domain~$\mathcal{D}_{G/K}$ of \eqref{eqn:RiemDirichlet}.
Indeed, by \eqref{eqn:mudistance} and the $G$-invariance of the metric $d_{G/K}$,
$$\mathcal{D}_{G/K} = \big\{ y\in G/K\ :\ \Vert\mu(y)\Vert \leq \Vert\mu(\gamma\cdot y)\Vert \quad\forall\gamma\in\Gamma\big\} .$$
The distance to the origin $\Vert\mu\Vert$ in $G/K$ is replaced by the ``pseudo-distance to the origin'' $\Vert\nu\Vert$ in~$X$.

The proof of Lemma~\ref{lem:growthfornu}.(2) and~(4) is now similar to that of Lemma~\ref{lem:growthfornu}.(1) and~(3): we just replace \eqref{eqn:ineqmunu} by the following inequality.

\begin{lemma}\label{lem:uniformnu}
Let $\Gamma$ be a discrete subgroup of~$G$ that is $(c,C)$-sharp for~$X$.
For any $\gamma\in\Gamma$ and $x\in\mathcal{D}_X$,
$$\Vert\nu(\gamma\!\cdot\!x)\Vert \,\geq\, \frac{c}{2}\,\Vert\mu(\gamma)\Vert - C.$$
\end{lemma}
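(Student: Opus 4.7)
The plan is to combine the sharpness inequality, the bridge between the Cartan and polar projections from Lemma~\ref{lem:munu}, the sub-additivity inequality of Lemma~\ref{lem:prmu}, and finally the defining property of the Dirichlet-type domain~$\mathcal{D}_X$ to extract the factor~$1/2$.

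First, I would pick a convenient lift of $x\in\mathcal{D}_X$. Using the polar decomposition $G=K\overline{B_+}H$, any $x\in X$ can be written $x=kb\cdot x_0$ with $k\in K$ and $b\in\overline{B_+}$. Setting $g:=kb$, one has $\Vert\nu(g)\Vert=\Vert\nu(x)\Vert$ and, by $K$-invariance of~$\mu$ together with \eqref{eqn:munuB}, $\Vert\mu(g)\Vert=\Vert\mu(b)\Vert=\Vert\nu(b)\Vert=\Vert\nu(x)\Vert$. So this ``optimal'' lift satisfies the key identity
\[
\Vert\mu(g)\Vert=\Vert\nu(x)\Vert.
\]

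Next, I would estimate $\Vert\nu(\gamma\cdot x)\Vert=\Vert\nu(\gamma g)\Vert$ from below. Lemma~\ref{lem:munu} gives $\Vert\nu(\gamma g)\Vert\ge d_{\aaa}(\mu(\gamma g),\mu(H))$, and Lemma~\ref{lem:prmu} (applied with $g'=e$, $g''=g$) yields $d_{\aaa}(\mu(\gamma g),\mu(H))\ge d_{\aaa}(\mu(\gamma),\mu(H))-\Vert\mu(g)\Vert$. Feeding in the $(c,C)$-sharpness of~$\Gamma$, we arrive at
\[
\Vert\nu(\gamma\cdot x)\Vert\;\ge\;c\,\Vert\mu(\gamma)\Vert-C-\Vert\mu(g)\Vert\;=\;c\,\Vert\mu(\gamma)\Vert-C-\Vert\nu(x)\Vert.
\]

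Now I would invoke the defining property of~$\mathcal{D}_X$: since $x\in\mathcal{D}_X$, we have $\Vert\nu(x)\Vert\le\Vert\nu(\gamma\cdot x)\Vert$. Substituting this into the right-hand side of the previous inequality and rearranging,
\[
2\,\Vert\nu(\gamma\cdot x)\Vert\;\ge\;c\,\Vert\mu(\gamma)\Vert-C,
\]
which gives $\Vert\nu(\gamma\cdot x)\Vert\ge\frac{c}{2}\Vert\mu(\gamma)\Vert-\frac{C}{2}\ge\frac{c}{2}\Vert\mu(\gamma)\Vert-C$, as desired. There is no serious obstacle here: the only subtlety is to choose the representative $g=kb$ so that $\Vert\mu(g)\Vert$ coincides with $\Vert\nu(x)\Vert$ (rather than using an arbitrary lift, which would give a useless bound via $\Vert\nu(g)\Vert\le\Vert\mu(g)\Vert$ going in the wrong direction), after which the Dirichlet-domain inequality absorbs the unwanted $\Vert\nu(x)\Vert$ term at the cost of the factor~$2$.
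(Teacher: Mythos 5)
Your proof is correct and uses the same ingredients as the paper's (the lift $g=kb\in K\overline{B_+}$ with $\Vert\mu(g)\Vert=\Vert\nu(x)\Vert$, Lemmas~\ref{lem:prmu} and~\ref{lem:munu}, sharpness, and the defining inequality of~$\mathcal{D}_X$). The only difference is organizational: the paper splits into two cases according to whether $\Vert\mu(g)\Vert\geq\frac{c}{2}\Vert\mu(\gamma)\Vert$ or not, whereas you combine the sharpness chain with the Dirichlet inequality in a single step, which even yields the slightly sharper constant $C/2$ in place of $C$.
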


\begin{proof}
Let $\gamma\in\Gamma$ and $x\in\mathcal{D}_X$.
There is an element $g\in K\overline{B_+}\subset G$ such that $x=g\!\cdot\!x_0$.
If $\Vert\mu(g)\Vert\geq\frac{c}{2}\,\Vert\mu(\gamma)\Vert$, then, using the definition of~$\mathcal{D}_X$ and the fact that $g\in\nolinebreak K\overline{B_+}$, together with \eqref{eqn:munuB}, we have
$$\Vert\nu(\gamma g)\Vert \,\geq\, \Vert\nu(g)\Vert \,=\, \Vert\mu(g)\Vert \,\geq\, \frac{c}{2}\,\Vert\mu(\gamma)\Vert.$$
If $\Vert\mu(g)\Vert\leq\frac{c}{2}\,\Vert\mu(\gamma)\Vert$, then, using Lemmas \ref{lem:prmu} and~\ref{lem:munu} together with the sharpness of~$\Gamma$, we obtain
\begin{eqnarray*}
\Vert\nu(\gamma g)\Vert & \geq & d_{\aaa}(\mu(\gamma g),\mu(H))\\
& \geq & d_{\aaa}(\mu(\gamma),\mu(H)) - \Vert\mu(g)\Vert\\
& \geq & \frac{c}{2}\,\Vert\mu(\gamma)\Vert - C.\qedhere
\end{eqnarray*}
\end{proof}

\section{Sharpness and deformation}\label{subsec:sharpdeform}

We conclude this chapter by examining the behavior of the sharpness constants under small deformations in the standard case.
The two results below are easy corollaries of \cite[Th.\,1.4]{kas12} (see Example~\ref{ex:deform}).

\begin{lemma}\label{lem:munudeform}
Let $\Gamma$ be a convex cocompact subgroup (for instance a uniform lattice) of some reductive subgroup~$L$ of~$G$ of real rank~$1$ acting properly on the reductive symmetric space~$X$.
Assume that $\Gamma$ is $(c,C)$-sharp for~$X$ and that $\Vert\nu(\gamma)\Vert\geq r$ for all $\gamma\in\Gamma\smallsetminus Z(G_s)$.
For any $\varepsilon>0$ there is a neighborhood $\mathcal{U}_{\varepsilon}\subset\Hom(\Gamma,G)$ of the natural inclusion such that for any $\varphi\in\mathcal{U}_{\varepsilon}$, the group $\varphi(\Gamma)$ is discrete in~$G$ and $(c-\varepsilon,C+\varepsilon)$-sharp for~$X$, with $\Vert\nu(\varphi(\gamma))\Vert\geq r-\varepsilon$ for all $\gamma\in\Gamma\smallsetminus Z(G_s)$.
\end{lemma}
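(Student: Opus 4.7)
The starting point is Example \ref{ex:deform}, which provides a neighborhood $\mathcal{U}_0 \subset \Hom(\Gamma,G)$ of the natural inclusion such that for every $\varphi \in \mathcal{U}_0$, the image $\varphi(\Gamma)$ is discrete in~$G$ and $(c-\varepsilon, C+\varepsilon)$-sharp for~$X$. This yields the first two conclusions of the lemma, so it only remains to establish the bound $\Vert\nu(\varphi(\gamma))\Vert \geq r - \varepsilon$ for every $\gamma \in \Gamma \smallsetminus Z(G_s)$.

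My strategy is to split $\Gamma \smallsetminus Z(G_s)$ according to the size of $\Vert\mu(\gamma)\Vert$. Since $\Gamma$ is discrete in~$G$ and $\mu$ is proper, the set $\Gamma_M := \{\gamma \in \Gamma : \Vert\mu(\gamma)\Vert \leq M\}$ is finite for every $M > 0$. For each of the finitely many elements $\gamma \in \Gamma_M \smallsetminus Z(G_s)$, the map $\varphi \mapsto \nu(\varphi(\gamma))$ is continuous on $\Hom(\Gamma,G)$ in the compact-open topology, so I can choose a neighborhood $\mathcal{U}_\gamma$ of the inclusion on which $\Vert\nu(\varphi(\gamma))\Vert \geq \Vert\nu(\gamma)\Vert - \varepsilon \geq r - \varepsilon$. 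Intersecting these neighborhoods over the finite set $\Gamma_M \smallsetminus Z(G_s)$ handles all small elements at once.

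To handle $\gamma$ with $\Vert\mu(\gamma)\Vert > M$, I would combine the sharpness of $\varphi(\Gamma)$ with Lemma~\ref{lem:munu} to write
$$\Vert\nu(\varphi(\gamma))\Vert \,\geq\, d_{\aaa}\bigl(\mu(\varphi(\gamma)), \mu(H)\bigr) \,\geq\, (c-\varepsilon)\,\Vert\mu(\varphi(\gamma))\Vert - (C+\varepsilon).$$
What is then needed is a uniform lower bound of the form $\Vert\mu(\varphi(\gamma))\Vert \geq A\,\Vert\mu(\gamma)\Vert - B$ with $A>0$ and with constants uniform for $\varphi$ in some (possibly shrunk) neighborhood of the inclusion. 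In the convex cocompact rank-one setting, this quasi-isometric stability is precisely the input provided by \cite{kas12} that underlies Example~\ref{ex:deform}: the orbit map $\gamma \mapsto \varphi(\gamma)\!\cdot\!y_0$ into $G/K$ is a quasi-isometric embedding whose constants depend continuously on~$\varphi$. Choosing $M$ sufficiently large, depending on $r$, $c$, $\varepsilon$, $C$, and these constants, then forces $(c-\varepsilon)(AM-B) - (C+\varepsilon) > r$, which takes care of the large elements; the intersection of $\mathcal{U}_0$ with the finitely many $\mathcal{U}_\gamma$ furnishes the desired neighborhood $\mathcal{U}_\varepsilon$.

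The main obstacle is precisely this last uniform comparison between $\Vert\mu(\varphi(\gamma))\Vert$ and $\Vert\mu(\gamma)\Vert$: sharpness of $\varphi(\Gamma)$ alone does not rule out the possibility that, for some $\gamma$ with $\Vert\mu(\gamma)\Vert$ large, the element $\varphi(\gamma)$ is nevertheless short in~$G$, which would destroy the $\nu$-bound. The convex cocompactness of~$\Gamma$ in the rank-one reductive subgroup~$L$ is exactly what rules this out, as it upgrades the orbit map to a quasi-isometric embedding whose constants remain under control under any small deformation inside~$G$.
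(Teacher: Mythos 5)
Your proposal is correct and takes essentially the same approach as the paper. The one organizational difference: the paper invokes the precise form of \cite[Th.\,1.4]{kas12}, namely $\Vert\mu(\varphi(\gamma))-\mu(\gamma)\Vert \leq \varepsilon'\,\Vert\mu(\gamma)\Vert + \varepsilon'$ on a neighborhood $\mathcal{W}_{\varepsilon'}$, and derives both conclusions (sharpness of $\varphi(\Gamma)$ and the linear lower bound on $\Vert\nu(\varphi(\gamma))\Vert$) from the single chain
$$\Vert\nu(\varphi(\gamma))\Vert \,\geq\, d_{\aaa}(\mu(\gamma),\mu(H)) - \Vert\mu(\varphi(\gamma))-\mu(\gamma)\Vert \,\geq\, (c-\varepsilon')\,\Vert\mu(\gamma)\Vert - (C+\varepsilon'),$$
which uses the sharpness of the \emph{original} $\Gamma$ plus the perturbation bound, and only afterwards converts $\Vert\mu(\gamma)\Vert$ to $\Vert\mu(\varphi(\gamma))\Vert$ to read off $(c-\varepsilon,C+\varepsilon)$-sharpness. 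You instead obtain the sharpness of $\varphi(\Gamma)$ separately via Example~\ref{ex:deform} and then re-extract the quasi-isometric lower bound $\Vert\mu(\varphi(\gamma))\Vert \geq A\Vert\mu(\gamma)\Vert - B$ from the same underlying result; this is a second, slightly indirect appeal to \cite{kas12}. Both routes give the same finite exceptional set (your $\Gamma_M\smallsetminus Z(G_s)$ is the paper's $F\smallsetminus Z(G_s)$) and the same continuity argument for small elements, so the content is the same — the paper's version is just a bit more economical.
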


As in Section~\ref{subsec:Lambda+}, we denote by $Z(G_s)$ the center of the commutator subgroup of~$G$.

\begin{proof}
Fix $\varepsilon>0$ and let $\varepsilon'>0$ be small enough so that
$$\frac{c-\varepsilon'}{1+\varepsilon'} \,\geq\, c - \varepsilon \quad\quad\mathrm{and}\quad\quad \varepsilon' + \frac{\varepsilon'}{1+\varepsilon'} \,\leq\, \varepsilon.$$
By \cite[Th.\,1.4]{kas12}, there is a neighborhood $\mathcal{W}_{\varepsilon'}\subset\Hom(\Gamma,G)$ of the natural inclusion such that for any $\varphi\in\mathcal{W}_{\varepsilon'}$, the group $\varphi(\Gamma)$ is discrete in~$G$ and
$$\Vert\mu(\varphi(\gamma))-\mu(\gamma)\Vert \leq \varepsilon'\,\Vert\mu(\gamma)\Vert + \varepsilon'$$
for all $\gamma\in\Gamma$ (and even $\Vert\mu(\varphi(\gamma))-\mu(\gamma)\Vert\leq\varepsilon'\,\Vert\mu(\gamma)\Vert$ for all $\gamma\in\Gamma\smallsetminus K$).
By Lemma~\ref{lem:munu},
\begin{eqnarray*}
\Vert\nu(\varphi(\gamma))\Vert & \geq & d_{\aaa}(\mu(\varphi(\gamma)),\mu(H))\\
& \geq & d_{\aaa}(\mu(\gamma),\mu(H)) - \Vert\mu(\varphi(\gamma))-\mu(\gamma)\Vert\\
& \geq & (c-\varepsilon')\,\Vert\mu(\gamma)\Vert - (C+\varepsilon')\\
& \geq & \frac{c-\varepsilon'}{1+\varepsilon'}\,\Vert\mu(\varphi(\gamma))\Vert - \Big(C + \varepsilon' + \frac{\varepsilon'}{1+\varepsilon'}\Big)
\end{eqnarray*}
for all $\varphi\in\mathcal{W}_{\varepsilon'}$ and $\gamma\in\Gamma$; in particular, $\varphi(\Gamma)$ is $(c-\varepsilon,C+\varepsilon)$-sharp for~$X$.
Since $\Gamma$ is discrete in~$G$ and $\mu$ is a proper map, the set
$$F := \Big\{ \gamma\in\Gamma : \Vert\mu(\gamma)\Vert < \frac{r+C+\varepsilon'}{c-\varepsilon'}\Big\} $$
is finite.
For any $\varphi\in\mathcal{W}_{\varepsilon'}$ and $\gamma\in\Gamma\smallsetminus F$ we have
$$\Vert\nu(\varphi(\gamma))\Vert \,\geq\, (c-\varepsilon')\,\Vert\mu(\gamma)\Vert - (C+\varepsilon') \,\geq\, r.$$
Let $\mathcal{U}_{\varepsilon}$ be the set of elements $\varphi\in\mathcal{W}_{\varepsilon'}$ such that $\Vert\nu(\varphi(\gamma))\Vert\geq r-\varepsilon$ for all $\gamma\in F\smallsetminus Z(G_s)$.
Then $\mathcal{U}_{\varepsilon}$ is a neighborhood of the natural inclusion since $\nu$ is continuous and $F$ finite, and $\mathcal{U}_{\varepsilon}$ satisfies the conclusions of Lemma~\ref{lem:munudeform}.
\end{proof}

\begin{lemma}\label{lem:muexotic}
Suppose that $G={}^{\backprime}G\times\!{}^{\backprime}G$ for some reductive linear group~${}^{\backprime}G$ and let $X=({}^{\backprime}G\times\!{}^{\backprime}G)/\Diag({}^{\backprime}G)$.
Let ${}^{\backprime}G_1$ and~${}^{\backprime}G_2$ be reductive subgroups of~${}^{\backprime}G$ and let $\Gamma={}^{\backprime}\Gamma_1\times{}^{\backprime}\Gamma_2$ for some discrete subgroups ${}^{\backprime}\Gamma_1$ of~${}^{\backprime}G_1$ and ${}^{\backprime}\Gamma_2$ of~${}^{\backprime}G_2$.
Assume that $\Gamma$ is $(c,C)$-sharp for~$X$ and that $\Vert\nu(\gamma)\Vert\geq r$ for all $\gamma\in\Gamma\smallsetminus Z(G_s)$.
\begin{enumerate}
  \item Suppose that for all $i\in\{ 1,2\}$, the group~${}^{\backprime}\Gamma_i$ is
  \begin{itemize}
    \item either an irreducible uniform lattice of~${}^{\backprime}G_i$
    \item or, more generally, a convex cocompact subgroup of~${}^{\backprime}G_i$ if ${}^{\backprime}G_i$ has real rank~$1$.
  \end{itemize}
  Then for any $\varepsilon>\nolinebreak 0$ there is a neighborhood $\mathcal{U}_{\varepsilon}\subset\Hom(\Gamma,G)$ of the natural inclusion such that for any $\varphi\in\mathcal{U}_{\varepsilon}$, the group $\varphi(\Gamma)$ is discrete in~$G$ and $(c-\varepsilon,C+\varepsilon)$-sharp for~$X$, with $\Vert\nu(\varphi(\gamma))\Vert\geq r-\varepsilon$ for all $\gamma\in\Gamma\smallsetminus Z(G_s)$.
  \item Suppose that ${}^{\backprime}G_1$ has real rank~$1$ and that ${}^{\backprime}\Gamma_1$ is convex cocompact in~${}^{\backprime}G_1$.
  Then for any $\varepsilon>0$ there is a neighborhood ${}^{\backprime}\mathcal{U}_{\varepsilon}\subset\linebreak\Hom({}^{\backprime}\Gamma_1,{}^{\backprime}G\times Z_{{}^{\backprime}G}({}^{\backprime}\Gamma_2))$ of the natural inclusion such that for any ${}^{\backprime}\varphi\in{}^{\backprime}\mathcal{U}_{\varepsilon}$, the group ${}^{\backprime}\varphi({}^{\backprime}\Gamma_1){}^{\backprime}\Gamma_2$ is discrete in~$G$ and $(c-\varepsilon,C+\varepsilon)$-sharp for~$X$, with $\Vert\nu(\varphi(\gamma))\Vert\geq r-\varepsilon$ for all $\gamma\in\Gamma\smallsetminus Z(G_s)$.
\end{enumerate}
\end{lemma}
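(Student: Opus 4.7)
The argument parallels the proof of Lemma~\ref{lem:munudeform} very closely, with the only new ingredient being the control of the Cartan projection under deformations of a product group.

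Fix $\varepsilon>0$ and choose $\varepsilon'>0$ so that $\frac{c-\varepsilon'}{1+\varepsilon'}\geq c-\varepsilon$ and $\varepsilon'+\frac{\varepsilon'}{1+\varepsilon'}\leq\varepsilon$. The first step is to produce a neighborhood $\mathcal{W}_{\varepsilon'}\subset\Hom(\Gamma,G)$ of the natural inclusion such that for every $\varphi\in\mathcal{W}_{\varepsilon'}$, the group $\varphi(\Gamma)$ is discrete in~$G$ and
$$\Vert\mu(\varphi(\gamma))-\mu(\gamma)\Vert\,\leq\,\varepsilon'\,\Vert\mu(\gamma)\Vert+\varepsilon'\qquad\text{for all }\gamma\in\Gamma.$$
For Part~(1), when some ${}^{\backprime}G_i$ has real rank $\geq 2$ the irreducible uniform lattice ${}^{\backprime}\Gamma_i$ is locally rigid by Margulis superrigidity, so its deformations reduce to conjugations and the estimate on that factor is immediate; when ${}^{\backprime}G_i$ has real rank~$1$, the convex cocompactness hypothesis is precisely what is required by \cite[Th.\,1.4]{kas12}. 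One then exploits the block-diagonal structure of the Cartan projection $\mu$ on $G={}^{\backprime}G\times{}^{\backprime}G$ (which decomposes as the sum of Cartan projections on the two factors, once we choose a compatible maximal compact and Cartan subspace) to combine these coordinate-wise estimates into the required bound for $\Gamma={}^{\backprime}\Gamma_1\times{}^{\backprime}\Gamma_2$ under an arbitrary deformation $\varphi$ into $G$.

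The second step repeats verbatim the chain of inequalities from Lemma~\ref{lem:munudeform}, using Lemma~\ref{lem:munu} and the $(c,C)$-sharpness of~$\Gamma$:
\begin{align*}
\Vert\nu(\varphi(\gamma))\Vert
&\geq d_{\aaa}(\mu(\varphi(\gamma)),\mu(H))\\
&\geq d_{\aaa}(\mu(\gamma),\mu(H))-\Vert\mu(\varphi(\gamma))-\mu(\gamma)\Vert\\
&\geq (c-\varepsilon')\Vert\mu(\gamma)\Vert-(C+\varepsilon')\\
&\geq \tfrac{c-\varepsilon'}{1+\varepsilon'}\Vert\mu(\varphi(\gamma))\Vert-\bigl(C+\varepsilon'+\tfrac{\varepsilon'}{1+\varepsilon'}\bigr),
\end{align*}
which yields $(c-\varepsilon,C+\varepsilon)$-sharpness of $\varphi(\Gamma)$. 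For the third step, the properness of $\mu$ and discreteness of $\Gamma$ make the set $F:=\bigl\{\gamma\in\Gamma:\Vert\mu(\gamma)\Vert<(r+C+\varepsilon')/(c-\varepsilon')\bigr\}$ finite; for $\gamma\in\Gamma\smallsetminus F$ the estimates above give $\Vert\nu(\varphi(\gamma))\Vert\geq r$, while on the finite set $F\smallsetminus Z(G_s)$ we invoke the continuity of $\nu$ together with the assumption $\Vert\nu(\gamma)\Vert\geq r$ to shrink $\mathcal{W}_{\varepsilon'}$ to a neighborhood $\mathcal{U}_{\varepsilon}$ on which $\Vert\nu(\varphi(\gamma))\Vert\geq r-\varepsilon$ for these finitely many elements.

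Part~(2) follows the same three steps but applied only to deformations ${}^{\backprime}\varphi\in\Hom({}^{\backprime}\Gamma_1,{}^{\backprime}G\times Z_{{}^{\backprime}G}({}^{\backprime}\Gamma_2))$: the centralizer condition guarantees that ${}^{\backprime}\varphi({}^{\backprime}\Gamma_1)$ commutes with ${}^{\backprime}\Gamma_2$ inside~$G$, so ${}^{\backprime}\varphi({}^{\backprime}\Gamma_1){}^{\backprime}\Gamma_2$ is indeed a subgroup. Since only the ${}^{\backprime}\Gamma_1$-factor is deformed and ${}^{\backprime}\Gamma_1$ is convex cocompact in the real-rank-$1$ group ${}^{\backprime}G_1$, \cite[Th.\,1.4]{kas12} applies directly and the subsequent steps are unchanged.

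\textbf{Main obstacle.} The one genuinely new point compared with Lemma~\ref{lem:munudeform} is Step~1: an arbitrary homomorphism $\varphi:{}^{\backprime}\Gamma_1\times{}^{\backprime}\Gamma_2\to{}^{\backprime}G\times{}^{\backprime}G$ need not split as a product of homomorphisms, so one cannot naively apply \cite[Th.\,1.4]{kas12} factor by factor. However, because ${}^{\backprime}\Gamma_1\times\{e\}$ and $\{e\}\times{}^{\backprime}\Gamma_2$ commute inside~$G$, so do their images under~$\varphi$; this forces the two ``coordinate-projected'' restrictions of $\varphi$ to be small deformations of the original inclusions (once $\varphi$ is close to the identity), and the Cartan projection estimate then lifts from each factor to $\Gamma$ via the block-diagonal form of~$\mu$. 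Carrying out this decomposition precisely and carefully tracking the additive contributions of the two factors is the only serious bookkeeping required.
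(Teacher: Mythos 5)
Your overall strategy matches the paper's: apply \cite[Th.\,1.4]{kas12} (or Raghunathan--Weil local rigidity in higher rank) to the restrictions of~$\varphi$ to ${}^{\backprime}\Gamma_1\times\{e\}$ and $\{e\}\times{}^{\backprime}\Gamma_2$ separately, then combine the per-factor Cartan projection estimates and finish with the Lemma~\ref{lem:munudeform} chain of inequalities. You also correctly identify the one new difficulty: a homomorphism $\varphi\colon{}^{\backprime}\Gamma_1\times{}^{\backprime}\Gamma_2\to{}^{\backprime}G\times{}^{\backprime}G$ need not split as a product.

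However, the resolution you sketch for that difficulty has a gap. You claim the per-factor estimates ``lift to~$\Gamma$ via the block-diagonal form of~$\mu$,'' helped by the commutativity of the images. But the block-diagonal form of~$\mu$ only decomposes $\mu(\gamma_1,\gamma_2)=\mu(\gamma_1,e)+\mu(e,\gamma_2)$ for the \emph{undeformed}~$\gamma$; the deformed element $\varphi(\gamma_1,\gamma_2)$ does \emph{not} lie in any block-diagonal position, and commutativity of the two images does nothing here. The missing ingredient is the factorization $\varphi(\gamma_1,\gamma_2)=\varphi(\gamma_1,e)\,\varphi(e,\gamma_2)$ (which uses only that $\varphi$ is a homomorphism and $\Gamma$ is a direct product, not commutativity of images), combined with the strong triangle inequalities \eqref{eqn:rightstrongtriangineq}--\eqref{eqn:leftstrongtriangineq}: applying \eqref{eqn:rightstrongtriangineq} to this product, and then projecting with $\pi_1,\pi_2\colon\aaa\to{}^{\backprime}\aaa$ (where $\pi_1(\mu(e,\gamma_2))=0$ and $\pi_1(\mu(\gamma_1,\gamma_2))=\pi_1(\mu(\gamma_1,e))$) is what lets you bound $\Vert\pi_i(\mu(\varphi(\gamma))-\mu(\gamma))\Vert$ by a sum of the two factor estimates. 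This introduces the extra factors $2$ and $\sqrt{2}$ in the final bound (the paper obtains $\Vert\mu(\varphi(\gamma))-\mu(\gamma)\Vert\le 2\varepsilon'\Vert\mu(\gamma)\Vert+2\sqrt{2}\,\varepsilon'$), so your initial choice of~$\varepsilon'$, which assumes the Lemma~\ref{lem:munudeform} estimate verbatim, must be tightened accordingly. Once these two points are fixed, the rest of your argument, including the handling of Part~(2) via the centralizer condition, is correct and aligns with the paper.
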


Here $Z_{{}^{\backprime}G}({}^{\backprime}\Gamma_2)$ denotes the centralizer of ${}^{\backprime}\Gamma_2$ in~${}^{\backprime}G$.

\begin{proof}
Fix $\varepsilon>0$ and let $\varepsilon'>0$ be small enough so that
$$\frac{c-2\varepsilon'}{1+2\varepsilon'} \,\geq\, c - \varepsilon \quad\quad\mathrm{and}\quad\quad 2\sqrt{2}\,\varepsilon' + \frac{2\sqrt{2}\,\varepsilon'}{1+2\varepsilon'} \,\leq\, \varepsilon.$$
By \cite[Th.\,1.4]{kas12}, if ${}^{\backprime}G_1$ (resp.~${}^{\backprime}G_2$) has real rank~$1$ and ${}^{\backprime}\Gamma_1$ (resp.~${}^{\backprime}\Gamma_2$) is convex cocompact in~${}^{\backprime}G_1$ (\resp in~${}^{\backprime}G_2$), then there is a neighborhood $\mathcal{W}_{1,\varepsilon'}\subset\Hom(\Gamma,G)$ (\resp $\mathcal{W}_{2,\varepsilon'}\subset\Hom(\Gamma,G)$) of the natural inclusion such that for any $\varphi\in\mathcal{W}_{1,\varepsilon'}$ (\resp $\varphi\in\mathcal{W}_{2,\varepsilon'}$), the group $\varphi({}^{\backprime}\Gamma_1\times\{ e\} )$ (\resp $\varphi(\{ e\} \times{}^{\backprime}\Gamma_2)$) is discrete in~$G$ and
\begin{equation}\label{eqn:muphi1}
\Vert\mu(\varphi({}^{\backprime}\gamma_1,e)) - \mu({}^{\backprime}\gamma_1,e)\Vert \leq \varepsilon'\,\Vert\mu({}^{\backprime}\gamma_1,e)\Vert + \varepsilon'
\end{equation}
for all ${}^{\backprime}\gamma_1\in{}^{\backprime}\Gamma_1$ (resp.
\begin{equation}\label{eqn:muphi2}
\Vert\mu(\varphi(e,{}^{\backprime}\gamma_2)) - \mu(e,{}^{\backprime}\gamma_2)\Vert \leq \varepsilon'\,\Vert\mu(e,{}^{\backprime}\gamma_2)\Vert + \varepsilon'
\end{equation}
for all ${}^{\backprime}\gamma_2\in{}^{\backprime}\Gamma_2$).
If ${}^{\backprime}G_1$ (resp.~${}^{\backprime}G_2$) has real rank $\geq 2$ and ${}^{\backprime}\Gamma_1$ (resp.~${}^{\backprime}\Gamma_2$) is an irreducible lattice in~${}^{\backprime}G_1$ (\resp in~${}^{\backprime}G_2$), then ${}^{\backprime}\Gamma_1$ (resp.~${}^{\backprime}\Gamma_2$) is locally rigid in~$G$ \cite{rag65,wei64}, and so a similar neighborhood $\mathcal{W}_{1,\varepsilon'}\subset\Hom(\Gamma,G)$ (\resp $\mathcal{W}_{2,\varepsilon'}\subset\Hom(\Gamma,G)$) of the natural inclusion exists by \eqref{eqn:rightstrongtriangineq} and \eqref{eqn:leftstrongtriangineq}.
Since $\Gamma$ is discrete in~$G$ and $\mu$ is a proper map, the set
$$F := \Big\{ \gamma\in\Gamma : \Vert\mu(\gamma)\Vert < \frac{r+C+2\sqrt{2}\,\varepsilon'}{c-2\varepsilon'}\Big\} $$
is finite.
In the setting of~\emph{(1)}, we let $\mathcal{U}_{\varepsilon}$ be the set of elements $\varphi\in\mathcal{W}_{1,\varepsilon'}\cap\mathcal{W}_{2,\varepsilon'}$ such that $\Vert\nu(\varphi(\gamma))\Vert\geq r-\varepsilon$ for all $\gamma\in F\smallsetminus Z(G_s)$; then $\mathcal{U}_{\varepsilon}\subset\Hom(\Gamma,G)$ is a neighborhood of the natural inclusion and any $\varphi\in\mathcal{U}_{\varepsilon}$ satisfies \eqref{eqn:muphi1} and~\eqref{eqn:muphi2}.
In the setting of~\emph{(2)}, we set
$${}^{\backprime}\mathcal{W}_{\varepsilon'} := \big\{ \varphi\circ i_1 :\ \varphi\in\mathcal{W}_{1,\varepsilon'},\ \varphi|_{\{ e\} \times{}^{\backprime}\Gamma_2}=\mathrm{id}_{\{ e\} \times{}^{\backprime}\Gamma_2}\big\},$$
where $i_1 : {}^{\backprime}\Gamma_1\hookrightarrow{}^{\backprime}\Gamma_1\times\{ e\} $ is the natural inclusion, and we let ${}^{\backprime}\mathcal{U}_{\varepsilon}$ be the set of elements ${}^{\backprime}\varphi\in{}^{\backprime}\mathcal{W}_{\varepsilon'}$ such that $\Vert\nu({}^{\backprime}\varphi({}^{\backprime}\gamma_1){}^{\backprime}\gamma_2)\Vert\geq r-\varepsilon$ for all $\gamma=({}^{\backprime}\gamma_1,{}^{\backprime}\gamma_2)\in F\smallsetminus\nolinebreak Z(G_s)$; then ${}^{\backprime}\mathcal{U}_{\varepsilon}\subset\Hom({}^{\backprime}\Gamma_1,{}^{\backprime}G\times Z_{{}^{\backprime}G}({}^{\backprime}\Gamma_2))$ is a neighborhood of the natural inclusion and for any ${}^{\backprime}\varphi\in{}^{\backprime}\mathcal{U}_{\varepsilon}$, the homomorphism $\varphi:=(({}^{\backprime}\gamma_1,{}^{\backprime}\gamma_2)\mapsto\nolinebreak{}^{\backprime}\varphi({}^{\backprime}\gamma_1){}^{\backprime}\gamma_2)$ satisfies \eqref{eqn:muphi1} and~\eqref{eqn:muphi2}.

We now consider $\varphi\in\Hom(\Gamma,G)$ satisfying \eqref{eqn:muphi1} and~\eqref{eqn:muphi2} and prove that the group~$\varphi(\Gamma)$ is discrete in~$G$ and $(c-\varepsilon,C+\varepsilon)$-sharp for~$X$, with $\Vert\nu(\varphi(\gamma))\Vert\geq r-\varepsilon$ for all $\gamma\in\Gamma\smallsetminus Z(G_s)$.
We note that $\aaa={}^{\backprime}\aaa+\!{}^{\backprime}\aaa$, where ${}^{\backprime}\aaa$ is a maximal split abelian subspace of~${}^{\backprime}\g$; for $i\in\{ 1,2\}$, let $\pi_i : \aaa\rightarrow{}^{\backprime}\aaa$ be the projection onto the $i$-th factor.
Then
\begin{eqnarray*}
& & \big\Vert\pi_1\big(\mu(\varphi({}^{\backprime}\gamma_1,{}^{\backprime}\gamma_2)) - \mu({}^{\backprime}\gamma_1,{}^{\backprime}\gamma_2)\big)\big\Vert\ =\ \big\Vert\pi_1\big(\mu(\varphi({}^{\backprime}\gamma_1,{}^{\backprime}\gamma_2)) - \mu({}^{\backprime}\gamma_1,e)\big)\big\Vert\\
& \leq & \big\Vert\pi_1\big(\mu(\varphi({}^{\backprime}\gamma_1,{}^{\backprime}\gamma_2)) - \mu(\varphi({}^{\backprime}\gamma_1,e))\big)\big\Vert + \big\Vert\pi_1\big(\mu(\varphi({}^{\backprime}\gamma_1,e)) - \mu({}^{\backprime}\gamma_1,e)\big)\big\Vert,
\end{eqnarray*}
where
\begin{eqnarray*}
\big\Vert\pi_1\big(\mu(\varphi({}^{\backprime}\gamma_1,{}^{\backprime}\gamma_2)) - \mu(\varphi({}^{\backprime}\gamma_1,e))\big)\big\Vert & \leq & \big\Vert\pi_1\big(\mu(\varphi(e,{}^{\backprime}\gamma_2)\big)\big\Vert\\
& = & \big\Vert\pi_1\big(\mu(\varphi(e,{}^{\backprime}\gamma_2)) - \mu(e,{}^{\backprime}\gamma_2)\big)\big\Vert\\
& \leq & \Vert\mu(\varphi(e,{}^{\backprime}\gamma_2)) - \mu(e,{}^{\backprime}\gamma_2)\Vert\\
& \leq & \varepsilon'\,\Vert\mu(e,{}^{\backprime}\gamma_2)\Vert + \varepsilon'
\end{eqnarray*}
(using \eqref{eqn:rightstrongtriangineq} applied to~${}^{\backprime}G$ and \eqref{eqn:muphi2}) and
\begin{eqnarray*}
\big\Vert\pi_1\big(\mu(\varphi({}^{\backprime}\gamma_1,e)) - \mu({}^{\backprime}\gamma_1,e)\big)\big\Vert & \leq & \Vert\mu(\varphi({}^{\backprime}\gamma_1,e)) - \mu({}^{\backprime}\gamma_1,e)\Vert\\
& \leq &\varepsilon'\,\Vert\mu({}^{\backprime}\gamma_1,e)\Vert + \varepsilon'
\end{eqnarray*}
(using \eqref{eqn:muphi1}).
Therefore,
\begin{eqnarray*}
\big\Vert\pi_1\big(\mu(\varphi({}^{\backprime}\gamma_1,{}^{\backprime}\gamma_2)) - \mu({}^{\backprime}\gamma_1,{}^{\backprime}\gamma_2)\big)\big\Vert & \leq & \varepsilon'\,\big(\Vert\mu({}^{\backprime}\gamma_1,e)\Vert + \Vert\mu(e,{}^{\backprime}\gamma_2)\Vert\big) + 2 \varepsilon'\\
& \leq & \sqrt{2}\varepsilon'\,\Vert\mu({}^{\backprime}\gamma_1,{}^{\backprime}\gamma_2)\Vert + 2 \varepsilon'.
\end{eqnarray*}
Similarly,
$$\big\Vert\pi_2\big(\mu(\varphi({}^{\backprime}\gamma_1,{}^{\backprime}\gamma_2)) - \mu({}^{\backprime}\gamma_1,{}^{\backprime}\gamma_2)\big)\big\Vert \leq \sqrt{2}\varepsilon'\,\Vert\mu({}^{\backprime}\gamma_1,{}^{\backprime}\gamma_2)\Vert + 2 \varepsilon'.$$
Thus
$$\Vert\mu(\varphi(\gamma)) - \mu(\gamma)\Vert \leq 2 \varepsilon'\,\Vert\mu(\gamma)\Vert + 2\sqrt{2}\varepsilon'$$
for all $\gamma\in\Gamma$.
Using the fact that $\Gamma$ is discrete in~$G$ and $\mu$ is a proper~map, we obtain that $\varphi(\Gamma)$ is discrete in~$G$.
We conclude as in the proof of Lemma~\ref{lem:munudeform}.
\end{proof}
\chapter[Asymptotic estimates for eigenfunctions on~$X$]{Asymptotic estimates for eigenfunctions on symmetric~spaces}\label{sec:Vlambda}

Under the rank condition \eqref{eqn:rank}, Flensted-Jensen \cite{fle80} proved that the space $L^2(X,\M_{\lambda})_K$ of $K$-finite elements in $L^2(X,\M_{\lambda})$ is nonzero for infinitely many joint eigenvalues~$\lambda$, by an explicit construction based on some duality principle and the Poisson transform.
Then, applying deep results of microlocal analysis and hyperfunction theory \cite{kkmoot78}, Oshima and Matsuki \cite{mo84,osh88b} gave a detailed analysis of the asymptotic behavior at infinity of these eigenfunctions.
In this chapter, we reformulate their estimates as follows, in terms of
\begin{itemize}
  \item the ``weighted distance'' $d(\lambda)$ of the spectral parameter~$\lambda$ to the walls of~$\jj^{\ast}$ (which measures the regularity of~$\lambda$),
  \item the ``pseudo-distance from the origin'' $\Vert\nu(x)\Vert$ of $x\in X$ (which measures how $x$ goes to infinity).
\end{itemize}

\begin{proposition}\label{prop:asym}
Under the rank condition \eqref{eqn:rank}, there is a constant $q>\nolinebreak 0$ such that for all $\lambda\in\jj^{\ast}$ and $\varphi\in L^2(X,\M_{\lambda})_K$, the function
$$x \longmapsto \varphi(x) \cdot e^{q\,d(\lambda)\Vert\nu(x)\Vert}$$
is bounded on~$X$; in particular, $\varphi\in L^1(X)$ if $d(\lambda)>2\Vert\rho_b\Vert/q$.
\end{proposition}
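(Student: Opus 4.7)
I would reduce, via the polar decomposition $G=K\overline{B_+}H$ and the $K$-finiteness of~$\varphi$, to an estimate along the flat $\exp(\overline{\bb_+})\cdot x_0$, and then invoke the asymptotic expansion of Oshima and Matsuki \cite{mo84,osh88b} for $K$-finite $L^2$ joint eigenfunctions on $X$. Since $\chi_{\lambda}$ and $d$ are both $W$-invariant, we may assume $\lambda\in\overline{\jj_+^{\ast}}$. Writing $x=kb\cdot x_0$ with $k\in K$ and $b\in\overline{B_+}$, one has $\Vert\nu(x)\Vert=\Vert\log b\Vert$, and the left $K$-translates of $\varphi$ span a finite-dimensional subspace of $L^2(X,\M_{\lambda})_K$, on which all norms are equivalent. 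It therefore suffices to bound $|\varphi(\exp(H)\cdot x_0)|$ uniformly for $H\in\overline{\bb_+}$.

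\emph{Asymptotic expansion and $L^2$-selection.} For $H$ in the open chamber and $t\to+\infty$, Oshima--Matsuki provide an expansion
$$\varphi(\exp(tH)\cdot x_0)\;\sim\;\sum_{w,\,n}\,c_{w,n}(H)\,e^{\langle w\lambda-\rho-\xi_n,\,tH\rangle},$$
where $w$ ranges over a finite subset of the Weyl group and $\xi_n$ over nonnegative integral combinations of simple restricted roots of $(\g,\bb)$. The volume density of $X$ grows like $e^{2\rho_b(tH)}$ along the flat, so square-integrability forces every surviving exponent to satisfy $\mathrm{Re}\,\langle w\lambda-\rho-\xi_n,\,H\rangle<-\rho_b(H)$. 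Combined with Observation~\ref{obs:dwall}, which gives $\lambda-(d(\lambda)/m_{\rho})\,\rho\in\overline{\jj_+^{\ast}}$, and with the fact that the restriction $\jj^{\ast}\to\bb^{\ast}$ maps $\rho$ to $\rho_b$ modulo weights that vanish on~$\bb$, a direct root-system computation produces a constant $q>0$, depending only on $G/H$, such that
$$\mathrm{Re}\,\langle w\lambda-\rho-\xi_n,\,H\rangle\;\leq\;-\rho_b(H)-q\,d(\lambda)\,\Vert H\Vert$$
for every surviving exponent and every $H\in\overline{\bb_+}$. Absorbing the subexponential prefactors into a constant $M(\varphi,\lambda)$ yields $|\varphi(x)|\leq M\,e^{-q\,d(\lambda)\Vert\nu(x)\Vert}$, which is the first claim. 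The $L^1$ statement then follows by integrating against the pseudo-Riemannian volume $\prod_{\alpha\in\Sigma^+(\g,\bb)}|\sinh\alpha(H)|^{m_\alpha}\,dH\,dk$ on $K\exp(\overline{\bb_+})\cdot x_0$: by Cauchy--Schwarz $\rho_b(H)\leq\Vert\rho_b\Vert\cdot\Vert H\Vert$, so the integral converges whenever $q\,d(\lambda)>2\Vert\rho_b\Vert$.

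\emph{Main obstacle.} The delicate point is extracting a \emph{universal} constant $q$ (independent of $\lambda$) from the $L^2$-selected exponents. Observation~\ref{obs:dwall} is tailor-made for this purpose, since it converts the regularity of $\lambda$ measured with respect to $\Sigma^+(\g,\jj)$ into a linear lower bound involving $\rho$; but exploiting it requires comparing the two distinct root systems $\Sigma(\g,\jj)$ and $\Sigma(\g,\bb)$ ---which govern respectively the definition of $d(\lambda)$ and the exponential decay along the flat--- mediated by the compact root system $\Sigma(\kk,\jj)$. Carrying out this comparison simple factor by simple factor, and verifying that each surviving exponent $w\lambda-\rho-\xi_n$ in the $L^2$-expansion inherits a decay rate linear in $d(\lambda)$ with a uniform proportionality constant, is the principal technical content of the argument.
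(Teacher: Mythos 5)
Your overall structure matches the paper's: reduce via the polar decomposition $G=K\overline{B_+}H$ and $K$-finiteness, invoke Oshima--Matsuki asymptotics along the flat, and use Observation~\ref{obs:dwall} to extract a constant linear in $d(\lambda)$. However, there is a genuine gap in your ``$L^2$-selection'' step, and it is not a minor technical one.

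You derive the constraint on the exponents from the soft condition that $\varphi$ is square-integrable, obtaining a \emph{strict} inequality $\mathrm{Re}\,\langle w\lambda-\rho-\xi_n,H\rangle < -\rho_b(H)$. But a strict inequality holding for each fixed $\lambda$ gives no quantitative gap, let alone one uniform in $\lambda$ and linear in $d(\lambda)$. What you actually need to know is that the Weyl-group elements $w$ contributing to the expansion range over a set \emph{independent of $\lambda$}, determined by the closed $H^d$-orbit $Z$; this $\lambda$-independence cannot be recovered from square-integrability alone. The paper obtains it from Oshima's theorem (Fact~\ref{fact:oshima}), which is a hard result from the microlocal analysis of the boundary value problem: it states outright that the exponents are indexed by the fixed finite set ${}^+W(Z)$, yielding the estimate in~\eqref{eqn:oshima}. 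Once this set is pinned down, the uniform constant is obtained in Lemma~\ref{lem:Yd} by writing $\lambda=\big(\lambda-\tfrac{d(\lambda)}{m_\rho}\rho\big)+\tfrac{d(\lambda)}{m_\rho}\rho$ (using Observation~\ref{obs:dwall}), noting that the first summand pairs nonpositively with $-w^{-1}\widetilde Y$ for $w\in{}^+W$, and bounding the second summand by $-q_1 q_2 / m_\rho$ per unit length of $Y$. So the ``direct root-system computation'' you defer to is precisely this, and it requires the $\lambda$-independent set ${}^+W$ as input --- which your $L^2$ argument never produces. Replacing your $L^2$-selection paragraph with a direct appeal to Fact~\ref{fact:oshima} (after decomposing $\V_\lambda$ into the $\V_{Z,\lambda}$ via Fact~\ref{fact:decompVlambda}) closes the gap and aligns with the paper's proof; the concluding $L^1$ deduction via the density~\eqref{eqn:weightKBH} and Cauchy--Schwarz is correct as you wrote it.
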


We refer to Section \ref{subsec:Lambda+} (\resp \ref{subsec:munu}) for the definition of $d : \jj^{\ast}\rightarrow\R_{\geq 0}$ (\resp $\nu : X\rightarrow\overline{\bb_+}$).
As in Remark~\ref{rem:lattice}, we denote by $\Vert\rho_b\Vert$ the norm of half the sum of the elements of a positive system $\Sigma^+(\g,\bb)$ of restricted roots of~$\bb$ in~$\g$; this norm does not depend on the choice~of~$\Sigma^+(\g,\bb)$.

As we shall see, the constant~$q$ is computable in terms of some root system (see \eqref{eqn:q} in the proof of Lemma~\ref{lem:Yd}).

The proof of Proposition~\ref{prop:asym} will be given in Section~\ref{subsec:asym}.
For the reader's convenience, we first give a brief review of the Poisson transform on Riemannian symmetric spaces of the noncompact type (Section~\ref{subsec:Poisson}), of the Flensted-Jensen duality (Section~\ref{subsec:realforms}), and of the construction of discrete series representations (Section~\ref{subsec:discreteseries}).
The material of these three sections is not new, but we will need it later.
Often analysis on reductive symmetric spaces requires a rather large amount of notation; here we try to keep it minimal for our purpose.

In the whole chapter, we denote by $\mathcal{A}$ the sheaf of real analytic functions and by $\mathcal{B}$ the sheaf of hyperfunctions; we refer to \cite{kkk86} for an introduction to hyperfunctions.

\section{Poisson transform in Riemannian symmetric spaces}\label{subsec:Poisson}

Let $X^d=G^d/K^d$ be a Riemannian symmetric space of the noncompact type, where $G^d$ is a connected reductive linear Lie group and $K^d$ a maximal compact subgroup of~$G^d$.
Let $P^d$ be a minimal parabolic subgroup of~$G^d$.
We give a brief overview of the theory of the Poisson transform as an intertwining operator between hyperfunctions on $G^d/P^d$ and eigenfunctions on~$X^d$ (see \cite{hel00, kkmoot78} for details).
The notation~$G^d$ is used to avoid confusion since the results of this paragraph will not be applied to~$G$ but to another real form of~$G_{\C}$.

Let $\jj$ be a maximal split abelian subalgebra of $\g^d:=\mathrm{Lie}(G^d)$ such that the Cartan decomposition $G^d=K^d(\exp\jj)K^d$ holds.
Since all minimal parabolic subgroups of~$G^d$ are conjugate, we may assume that $P^d$ contains $\exp\jj$ and has the Langlands decomposition $P^d=M^d(\exp\jj)N^d$, where $M^d=K^d\cap P^d$ is the centralizer of~$\exp\jj$ in~$K^d$ and $N^d$ is the unipotent radical of~$P^d$.
The Iwasawa decomposition $G^d=K^d(\exp\jj)N^d$ holds.
Let $\zeta : G^d\rightarrow\jj$ be the corresponding Iwasawa projection, defined by
$$g \in K^d (\exp\zeta(g)) N^d$$
for all $g\in G^d$.
For $\lambda\in\jj_{\C}^{\ast}$ we define functions $\xi_{\lambda},\xi_{\lambda}^{\vee}\in\mathcal{A}(G^d)$ by
\begin{equation}\label{eqn:xinu}
\xi_{\lambda}(g) := e^{\langle\lambda,\zeta(g)\rangle} \quad\quad\mathrm{and}\quad\quad \xi_{\lambda}^{\vee}(g) := \xi_{\lambda}(g^{-1})
\end{equation}
for $g\in G^d$.
Since $\xi_{\lambda}$ is left-$K^d$-invariant, $\xi_{\lambda}^{\vee}$ induces a function on~$X^d$, which we still denote by~$\xi_{\lambda}^{\vee}$.

We choose a positive system $\Sigma^+(\g_{\C},\jj_{\C})$, defining positive Weyl chambers $\jj_+$ in~$\jj$ and $\jj^{\ast}_+$ in~$\jj^{\ast}$.
Let $\rho$ be half the sum of the elements of $\Sigma^+(\g_{\C},\jj_{\C})$, counted with root multiplicities.
For $\lambda\in\jj_{\C}^{\ast}$, the function~$\xi_{\lambda}$ is a character of~$P^d$.
Let $\mathcal{B}(G^d/P^d,\mathcal{L}_\lambda)$ be the hyperfunction-valued normalized principal series representation of~$G^d$ associated with the character $\xi_{-\lambda}$ of~$P^d$: by definition, $\mathcal{B}(G^d/P^d,\mathcal{L}_{\lambda})$ is the set of hyperfunctions $f\in\mathcal{B}(G^d)$ such that
$$f(\,\cdot\,p) = \xi_{-\lambda+ \rho}(p^{-1})f \, (=f\,\xi_{\lambda-\rho}(p))$$
for all $p\in P^d$.
Here we use the character $\xi_{-\lambda}$ and not~$\xi_{\lambda}$, following the usual convention in harmonic analysis on symmetric spaces (see \cite{bs05,del98,fle80,hel00,mo84}) rather than in the representation theory of reductive groups (see \cite{kna86,war72}).
Setting
$$\mathcal{A}(G^d/P^d,\mathcal{L}_{-\lambda}) := \mathcal{A}(G^d) \cap \mathcal{B}(G^d/P^d,\mathcal{L}_{-\lambda}),$$
there is a natural $G^d$-invariant bilinear form
$$\langle\,\cdot\,,\cdot\,\rangle\ :\ \mathcal{B}(G^d/P^d,\mathcal{L}_\lambda) \times \mathcal{A}(G^d/P^d,\mathcal{L}_{-\lambda}) \longrightarrow \C$$
given by the integration over $G^d/P^d$.
We note that $\xi_{-\lambda-\rho}\in\mathcal{A}(G^d/P^d,\mathcal{L}_{-\lambda})$, hence the left translate~$\xi_{-\lambda-\rho}(g^{-1}\,\cdot\,)$ also belongs to $\mathcal{A}(G^d/P^d,\mathcal{L}_{-\lambda})$ for all $g\in\nolinebreak G^d$.
Since $\xi_{-\lambda-\rho}$ is left-$K^d$-invariant, we obtain a $G^d$-intertwining operator (\emph{Poisson transform})
$$\mathcal{P}_{\lambda} : \mathcal{B}(G^d/P^d,\mathcal{L}_\lambda) \longrightarrow \mathcal{A}(X^d)$$
given by
$$(\mathcal{P}_{\lambda} f)(g) := \langle f,\xi_{-\lambda-\rho}(g^{-1}\,\cdot\,)\rangle\,.$$
It follows directly from the definition of the Harish-Chandra isomorphism in Section~\ref{subsec:DGH} that for all $f\in\mathcal{B}(G^d/P^d,\mathcal{L}_\lambda)$, the function $\mathcal{P}_{\lambda}f\in\mathcal{A}(X^d)$ satisfies the system $(\M_{\lambda})$, defined similarly to Section~\ref{subsec:DGH}.
For $\operatorname{Re}\lambda\in\nolinebreak\overline{\jj^{\ast}_+}$, the Helgason conjecture (proved in \cite{kkmoot78}) asserts that the Poisson transform
$$\mathcal{P}_{\lambda} : \mathcal{B}(G^d/P^d,\mathcal{L}_{\lambda}) \longrightarrow \mathcal{A}(G^d/K^d,\M_{\lambda})$$
is actually a bijection.

\begin{example}
Assume that $G^d$ has real rank~$1$.
Then $G^d/P^d$ identifies with the boundary at infinity of~$X^d$.
The function $\xi_{\lambda}^{\vee}$ is the exponential of some multiple of the Busemann function associated with the geodesic ray $(\exp\jj_+) K^d$ in $X^d=G^d/K^d$; its level sets are the horospheres centered at $eP^d\in G^d/P^d$.
For $\lambda=\rho$, the Poisson operator~$\mathcal{P}_{\lambda}$ identifies the set of continuous functions on $G^d/P^d$ with the set of harmonic functions on~$X^d$ admitting a continuous extension to $\overline{X^d}=X^d\cup G^d/P^d$.
(See Section~\ref{subsec:PoissonH3} for the case $G^d=\SL_2(\C)$.)
\end{example}

\section{Real forms of~$G_{\C}/H_{\C}$ and the Flensted-Jensen duality}\label{subsec:realforms}

We now come back to the setting of Chapters \ref{sec:intro} to~\ref{sec:geometry}, where $G$ is a connected reductive linear Lie group and $H$ an open subgroup of the group of fixed points of~$G$ under some involutive automorphism~$\sigma$.
Let $G_{\C}$ be a connected Lie group containing~$G$ with Lie algebra $\g_{\C}:=\g\otimes_{\R}\C$, and let $H_{\C}$ be the connected subgroup of~$G_{\C}$ with Lie algebra $\h_{\C}:=\h\otimes_{\R}\C$.
We consider three different real forms of the complex symmetric space $X_{\C}=G_{\C}/H_{\C}$: our original pseudo-Riemannian symmetric space $X=G/H$, a Riemannian symmetric space $X_U=G_U/H_U$ of the compact type, and a Riemannian symmetric space $X^d=G^d/K^d$ of the noncompact type.
They are constructed as follows.
Let $\g=\h+\q$ be the decomposition of~$\g$ into eigenspaces of~$\mathrm{d}\sigma$ as in Section~\ref{subsec:DGH}, and let $\g=\kk+\p$ be the Cartan decomposition associated with the Cartan involution $\theta$ of~$G$ of Section~\ref{subsec:Laplacian}, which commutes with~$\sigma$.
The maps $\mathrm{d}\sigma$ and $\mathrm{d}\theta$ extend to automorphisms of the complex Lie algebra~$\g_{\C}$, for which we use the same letters.
We set
\begin{eqnarray*}
\g^d & := & \g^{\sigma\theta} + \sqrt{-1}\,\g^{-\sigma\theta} = (\h\cap\kk + \q\cap\p) + \sqrt{-1}\,(\h\cap\p + \q\cap\kk),\\
\kk^d = \h_U & := & \h\cap\kk + \sqrt{-1}\,(\h\cap\p),\\
\g_U & := & \kk + \sqrt{-1}\,\p,
\end{eqnarray*}
and let $G^d$ (\resp $K^d=H_U$, \resp $G_U$) be the connected subgroup of~$G_{\C}$ with Lie algebra $\g^d$ (\resp $\kk^d=\h_U$, resp.~$\g_U$).
We note that $K^d=H_U$ is the compact real form of~$H_{\C}$.
For instance, for the anti-de Sitter space $X=\AdS^{2n+1}=\SO(2,2n)_0/\SO(1,2n)_0$, we have $X_U=\SO(2n+2)/\SO(2n+1)=\mathbb{S}^{2n+1}$ and $X^d=\SO(1,2n+1)_0/\SO(2n+1)=\HH^{2n+1}$ (see Section~\ref{subsec:AdS}).

Let $H^d$ be the connected subgroup of~$G_{\C}$ with Lie algebra
$$\h^d := \h\cap\kk + \sqrt{-1}\,(\q\cap\kk).$$
We note that $K^d\cap H^d=(H\cap K)_0$ and that $H^d/K^d\cap H^d$ and $K/H\cap K$ are two Riemannian symmetric spaces with the same complexification --- the first one of the noncompact type, the second one of the compact type.
This will be used in Chapter~\ref{sec:FJ}.

For any $\h^d$-module~$V$ over~$\C$, the action of~$\h^d$ on~$V$ extends $\C$-linearly to an action of $\kk_{\C}=\h^d\otimes_{\R}\C$, and the set~$V_{\h^d}$ of $\h^d$-finite vectors is equal
 to the set~$V_{\kk_{\C}}$ of $\kk_{\C}$-finite vectors.
We define the set~$V_K$ of $K$-finite vectors of~$V$ to consist of vectors $v\in V_{\h^d}=V_{\kk_{\C}}$ such that the action of $\kk\subset\kk_{\C}$ on the $\C$-span of $\kk\cdot v$ lifts to an action of~$K$.
Then $V_K$ is a $K$-module contained in $V_{\h^d}$.

\begin{remark}\label{rem:Kfinite}
In the definition of~$V_K$, we do not assume that the group $K$ acts on~$V$.  
In the situation below, neither $V$ nor $V_{\h^d}=V_{\kk_{\C}}$ can be acted on by the group~$K$.  
\end{remark}

The Lie algebra~$\g^d$ (hence its subalgebra~$\h^d$) acts on $\mathcal{A}(X^d)$ by differentiation on the left:
\begin{equation}\label{eqn:gdiffleft}
(Y\cdot\varphi)(x) = \frac{\mathrm{d}}{\mathrm{d}t}\Big|_{t=0} \varphi\big(\exp(-tY)\cdot x\big)
\end{equation}
for all $Y\in\g^d$, all $\varphi\in\mathcal{A}(X^d)$, and all $x\in X^d$.
Since the system $(\M_{\lambda})$ is $G^d$-invariant, its space of solutions $\mathcal{A}(X^d,\M_{\lambda})$ is a $\g^d$-submodule of $\mathcal{A}(X^d)$ for $\lambda\in\jj_{\C}^{\ast}$; thus we can define $K$-modules $\mathcal{A}(X^d,\M_{\lambda})_K\subset\mathcal{A}(X^d)_K$.
By using holomorphic continuation, Flensted-Jensen \cite{fle80} constructed an injective homomorphism
\begin{alignat}{3}\label{eqn:FJisom}
\eta : \hphantom{i} &\quad\mathcal{A}(X)_K
        &&\ \longhookrightarrow\quad
        &&\;\;\;\mathcal{A}(X^d)_K
\\
        &\hphantom{iii}\quad\cup && &&\hphantom{iii}\quad\cup
\notag
\\
        &\mathcal{A}(X,\M_{\lambda})_K 
        &&\ \longhookrightarrow\quad
        &&\mathcal{A}(X^d,\M_{\lambda})_K
\notag
\end{alignat}
for all $\lambda\in\jj_{\C}^{\ast}$.  
For the reader's convenience, we now recall the construction of~$\eta$ in the case when $G_{\C}$ is simply connected.

Assume that $G_{\C}$ is simply connected.
Then the set of fixed points of~$G_{\C}$ under any involutive automorphism is connected \cite[Th.\,3.4]{bor61}.
We can extend $\sigma$ and~$\theta$ to holomorphic automorphisms of~$G_{\C}$, for which we use the same letters $\sigma$ and~$\theta$.  
The complex conjugation of $\g_{\C}=\g+\sqrt{-1}\,\g$ with respect to the real form~$\g$ lifts to an anti-holomorphic involution~$\tau$ of~$G_{\C}$, such that $G=G_{\C}^{\tau}$.
Since $\sigma$, $\theta$, and~$\tau$ commute, the composition of any of them gives involutive automorphisms of~$G_{\C}$.
We have
$$H_{\C} = G_{\C}^{\sigma}, \quad G^d = G_{\C}^{\tau\sigma\theta}, \quad K^d = H_U = H_{\C} \cap G^d, \quad\mathrm{and}\quad G_U = G_{\C}^{\tau\theta}.$$
Moreover, setting $K_{\C} = G_{\C}^{\theta}$, we have $H^d=(K_{\C} \cap G^d)_0$ and the following inclusions hold:
\begin{alignat}{5}\label{eqn:KGH}
     &\,K                                      
     &&\,\,\subset\,\, 
     &&\,G
     &&\,\supset\,\, 
     &&\,H\nonumber
\\
     &\,\,\text{\rotatebox{90}{$\supset$}}
     &&         
     &&\,\,\text{\rotatebox{90}{$\supset$}}
     && 
     &&\,\,\text{\rotatebox{90}{$\supset$}}\nonumber
\\
     &K_{\C}
     &&\,\,\subset\,\,
     &&G_{\C} 
     &&\,\supset \,\,
     &&H_{\C}
\\
     &\,\,\text{\rotatebox{90}{$\subset$}}
     && 
     &&\,\,\text{\rotatebox{90}{$\subset$}}
     && 
     &&\,\,\text{\rotatebox{90}{$\subset$}}\nonumber
\\ 
     &\,H^d  
     &&\,\, \subset\,\, 
     && \,G^d
     &&\,\supset\,\, 
     &&\,K^d.\nonumber
\end{alignat}
The restriction of~$\sigma$ to~$G^d$ is a Cartan involution of~$G^d$, and the corresponding Cartan decomposition $\g^d=\kk^d+\p^d$ is obtained as the intersection of~$\g^d$ with the direct sum decomposition $\g_{\C}=\h_{\C}+\q_{\C}$.
The restriction of~$\theta$ to~$G^d$ is an involution of~$G^d$, and the corresponding decomposition $\g^d=\h^d+\q^d$ of~$\g^d$ (into eigenspaces of~$\mathrm{d}\theta$ with respective eigenvalues $+1$ and~$-1$) is obtained as the intersection of~$\g^d$ with the complexified Cartan decomposition $\g_{\C}=\kk_{\C}+\p_{\C}$.
Let $\bb$ be the maximal semisimple abelian subspace of $\p\cap\q$ from Section~\ref{subsec:munu}.
Since $\p^d\cap\q^d=\p\cap\q$, we may regard $B=\exp\bb$ as a subgroup of~$G^d$, and the polar decomposition $G^d=H^d\overline{B_+}K^d$ holds similarly to the polar decomposition $G=K\overline{B_+}H$ of Section~\ref{subsec:munu}.
Any function $f\in\mathcal{A}(X)_K$ extends uniquely to a function $f_{\C} : K_{\C}\overline{B_+}H_{\C}/H_{\C}\rightarrow\C$ such that $k\mapsto f_{\C}(kbH_{\C})$ is holomorphic on~$K_{\C}$ for any $b\in\overline{B_+}$; by letting $\eta(f)$ be the restriction of~$f_{\C}$ to~$X^d$, we get the injective homomorphism \eqref{eqn:FJisom}, which is actually bijective.
The homomorphism~$\eta$ respects the left action of $U(\g_{\C})$ (\cite[Th.\,2.5]{fle80}).

We now return to the general case, where $G_{\C}$ is not necessarily simply connected.
Any $G$-invariant (\resp $G_U$-invariant, \resp $G^d$-invariant) differential operator on $X=G/H$ (\resp $X_U=G_U/H_U$, \resp $X^d=G^d/K^d$) extends holomorphically to $X_{\C}=G_{\C}/H_{\C}$, hence we have canonical $\C$-algebra isomorphisms
$$\D(X) \simeq \D(X_U) \simeq \D(X^d).$$
Therefore, for $\lambda\in\jj_{\C}^{\ast}$, a function $f\in\mathcal{A}(X)$ satisfies $(\M_{\lambda})$ if and only if $\eta(f)\in\mathcal{A}(X^d)$ does.

\section{Discrete series representations}\label{subsec:discreteseries}

We continue in the setting of Section~\ref{subsec:realforms} and now assume that the rank condition \eqref{eqn:rank} is satisfied.
In this section we summarize Flensted-Jensen's construction of discrete series representations $\V_{Z,\lambda}$ using his duality \eqref{eqn:FJisom}.
Recall that the regular representation of~$G$ on~$L^2(X)$ is unitary; an irreducible unitary representation~$\pi$ of~$G$ is said to be a \emph{discrete series representation} for~$X$ if there exists a nonzero continuous $G$-intertwining operator from $\pi$ to $L^2(X)$ or, equivalently, if $\pi$ can be realized as a closed $G$-invariant subspace of $L^2(X)$.
By a little abuse of notation, we shall also call the underlying $(\g,K)$-module~$\pi_K$ a discrete series representation.
It should be noted that discrete series representations for $X=G/H$ may be different from Harish-Chandra's discrete series representations for the group manifold~$G$ if $H$ is noncompact, because $L^2(X)\neq L^2(G)^H$.

We shall parameterize the discrete series representations for~$X$ by the spectral parameter~$\lambda$ and some finite set~$\ZZ$ defined as follows.
Let $\mathcal{P}^d$ be the set of minimal parabolic subalgebras of~$\g^d$, on which $G^d$ acts transitively by the adjoint action.
There are only finitely many $H^d$-orbits in~$\mathcal{P}^d$; a combinatorial description was given by Matsuki \cite{mat79}.
We set
\begin{equation}\label{eqn:Z}
\ZZ := \{ \text{closed $H^d$-orbits in $\mathcal{P}^d$}\}.
\end{equation}
Here is a description of the finite set~$\ZZ$.
Consider the maximal semisimple abelian subspace $\jj$ of $\sqrt{-1}(\q\cap\kk)$ from Chapter~\ref{sec:theorems}.
The rank condition~\eqref{eqn:rank} is equivalent to the fact that $\jj$ is maximal abelian in $\p^d=\q\cap\p+\sqrt{-1}(\q\cap\kk)$.
Thus $\jj$ is a maximal split abelian subalgebra of~$\g^d$ and the notation fits with that of Section~\ref{subsec:Poisson}.
All restricted roots of~$\jj$ in~$\g^d$ take real values on~$\jj$ and there is a natural bijection $\Sigma(\g^d,\jj)\simeq\Sigma(\g_{\C},\jj_{\C})$.
Note that $\jj$ is actually contained in~$\h^d$; there is a natural bijection $\Sigma(\h^d,\jj)\simeq\Sigma(\kk_{\C},\jj_{\C})$.
As in Section~\ref{subsec:DGH}, let $W$ be the Weyl group of the restricted root system $\Sigma(\g^d,\jj)$, and let $W_{H \cap K}$ be that of $\Sigma(\h^d, \jj)$.
Any choice of a positive system $\Sigma^+(\g^d,\jj)\simeq\Sigma^+(\g_{\C},\jj_{\C})$ defines a point in~$\mathcal{P}^d$ and the $H^d$-orbit through this point is closed.
Conversely, any closed $H^d$-orbit in~$\mathcal{P}^d$ is obtained in this way.
Recall that in Section~\ref{subsec:Lambda+} we have fixed once and for all a positive system $\Sigma^+(\kk_{\C},\jj_{\C})\simeq \Sigma^+(\h^d,\jj)$.
Since any two such positive systems are conjugate by $H^d$, we obtain a one-to-one correspondence
\begin{equation}\label{eqn:Z1to1}
\big\{ \text{positive systems $\Sigma^+(\g^d,\jj)$ containing $\Sigma^+(\h^d,\jj)$}\big\} \simeq \ZZ.
\end{equation}

Here is another description of the finite set~$\ZZ$.
We fix a positive system $\Sigma^+(\g^d,\jj)$ containing $\Sigma^+(\h^d,\jj)$; this defines a minimal parabolic subgroup~$P^d$ of~$G^d$.
The subspace~$\p^d$ in the Cartan decomposition $\g^d=\kk^d+\p^d$ should not be confused with the Lie algebra of~$P^d$.
The subset
\begin{equation}\label{eqn:Wcoset}
W(H^d,G^d):= \big\{w \in W :\ w(\Sigma^+(\g^d,\jj)) \cap \Sigma(\h^d,\jj) = \Sigma^+(\h^d,\jj)\big\}.  
\end{equation}
of the Weyl group~$W$ gives a complete set of representatives of the left coset space $W_{H \cap K}\backslash W$.
Clearly, $e\in W(H^d,G^d)$.  
We identify $\mathcal{P}^d$ with $G^d/P^d$.
Then, by \eqref{eqn:Z1to1}, the other closed $H^d$-orbits in $G^d/P^d$ are of the form 
\begin{equation}\label{eqn:Zw}
Z=H^d w P^d \quad \text{for $w \in W(H^d,G^d) \hphantom{i}(\simeq W_{H \cap K}\backslash W)$.}
\end{equation}
Thus we have a one-to-one correspondence
\begin{equation}\label{eqn:zw}
\ZZ \simeq W(H^d,G^d).  
\end{equation}

\begin{remark}
We have given two equivalent combinatorial descriptions of the finite set~$\ZZ$ in \eqref{eqn:Z1to1} and \eqref{eqn:zw}.  
The latter one \eqref{eqn:zw} depends on a fixed choice of a positive system $\Sigma^+(\g^d,\jj)$; it is convenient to treat different closed orbits~$Z$ simultaneously (\emph{e.g.}\ in Fact~\ref{fact:decompVlambda} below).
We shall use the former one \eqref{eqn:Z1to1} when we give an estimate of the asymptotic behavior of individual discrete series representations for a fixed $Z\in\ZZ$ (\emph{e.g.}\ in the proof of Proposition~\ref{prop:asym} in Section~\ref{subsec:asym}, or in Chapter~\ref{sec:FJ}).
\end{remark}

We now recall from \cite{fle80} how to construct, for any $Z\in\ZZ$ and infinitely many $\lambda\in\jj_{\C}^{\ast}$, a subspace $\V_{Z,\lambda}$ of $L^2(X,\M_{\lambda})_K$ that will be a discrete series representation for~$X$.
For $Z\in\ZZ$ and $\lambda\in\jj_{\C}^{\ast}$, we define a $\g^d$-submodule
$$\mathcal{B}_Z(G^d/P^d,\mathcal{L}_\lambda) := \big\{ f\in\mathcal{B}(G^d/P^d,\mathcal{L}_\lambda) : \operatorname{supp} f \subset Z \big\} $$
of the principal series representation $\mathcal{B}(G^d/P^d,\mathcal{L}_\lambda)$ of Section~\ref{subsec:Poisson}.
Similar\-ly to the definition of $\mathcal{A}(G^d/K^d,\M_{\lambda})_K$, we can define the set $\mathcal{B}_Z(G^d/P^d,\mathcal{L}_\lambda)_K$ of $K$-finite elements in $\mathcal{B}_Z(G^d/P^d,\mathcal{L}_\lambda)$ even though the group $K$ does not act on ${\mathcal{B}}_Z(G^d/P^d,{\mathcal{L}}_{\lambda})$ (see Remark~\ref{rem:Kfinite}).
For $\operatorname{Re}\lambda\in\overline{\jj^{\ast}_+}$, we then have the following commutative diagram, where $\mathcal{P}_{\lambda}$ is the Poisson transform of Section~\ref{subsec:Poisson}.
\begin{align*}
&
  \begin{matrix}
     \mathcal{B}(G^d/P^d,\mathcal{L}_\lambda)
     &\underset{\mathcal{P}_{\lambda}}{\overset{\sim}{\longrightarrow}}
     &\mathcal{A}(G^d/K^d,\M_\lambda)
   \\
     \cup && \cup
   \\[.5ex]
     \mathcal{B}_Z(G^d/P^d,\mathcal{L}_\lambda)_K
     &\longrightarrow
     &\mathcal{A}(G^d/K^d,\M_\lambda)_K
     &\overset{\eta}{\longhookleftarrow}
     &\mathcal{A}(X,\M_\lambda)_K.
  \end{matrix}
\end{align*}
We set
\begin{equation}\label{eqn:Vzlmd}
\V_{Z,\lambda} := \eta^{-1}\Big(\mathcal{P}_{\lambda}\big(\mathcal{B}_Z(G^d/P^d,\mathcal{L}_\lambda)_K\big)\Big).  
\end{equation}
Since $\mathcal{B}_Z(G^d/P^d,\mathcal{L}_{\lambda})_K$ is a $(\g,K)$-module, $\V_{Z,\lambda}$ is a $(\g,K)$-submodule of\ $\mathcal{A}(X,\M_{\lambda})_K$, where $\g$ acts by differentiation on the left, similarly to~\eqref{eqn:gdiffleft}.
We recall that the space $\V_{\lambda}:=L^2(X,\M_{\lambda})_K$ depends only on the image of~$\lambda$ in $\jj_{\C}^{\ast}/W$, hence we may assume $\operatorname{Re}\lambda\in\overline{\jj^{\ast}_+}$ without loss of generality.
The following fact (which includes the ``$C=0$'' conjecture \cite{fle80} and the irreducibility conjecture) is a consequence of the work of Flensted-Jensen \cite{fle80}, Matsuki--Oshima \cite{mo84}, and Vogan~\cite{vog88}.
See also \cite[Th.\,16.1]{bs05}.

\begin{fact}\label{fact:decompVlambda}
Let $\lambda\in\jj_{\C}^{\ast}$ satisfy $\operatorname{Re}\lambda\in\overline{\jj^{\ast}_+}$.
\begin{itemize}
  \item For any $Z\in\ZZ$, the space~$\V_{Z,\lambda}$ constructed above is contained in $\V_{\lambda}:=L^2(X,\M_{\lambda})_K$; it is either zero or irreducible as a $(\g,K)$-module.
  Moreover,
  $$\V_{\lambda} \,=\, \bigoplus_{Z\in\ZZ}\, \V_{Z,\lambda}.$$
  \item Let $Z\in\ZZ$ correspond to $w\in W(H^d,G^d)$ \emph{via} \eqref{eqn:Zw}.
  \begin{itemize}
    \item If $\V_{Z, \lambda}$ is nonzero, then $\lambda\in\jj^{\ast}_+$ and
    \begin{equation}\label{eqn:muwinLambda}
    \mu_{\lambda}^w := w(\lambda+\rho)-2\rho_c
    \end{equation}
    belongs to the $\Z$-module~$\Lambda$ defined in \eqref{eqn:Lambda}.
    \item Conversely, if $\lambda\in\jj^{\ast}_+$ and if the stronger integrality condition
    \begin{equation}\label{eqn:lmdint}
    \mu_{\lambda}^w \in \Lambda_+
    \end{equation}
    holds, where $\Lambda_+$ is defined in \eqref{eqn:Lambda+}, then $\V_{Z,\lambda}$ is nonzero.
  \end{itemize}
\end{itemize}
\end{fact}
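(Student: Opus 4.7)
The approach is to transfer the problem to the Riemannian dual $X^d = G^d/K^d$ through the Flensted-Jensen duality $\eta$ of~\eqref{eqn:FJisom} and exploit the Helgason conjecture on $X^d$ proved in~\cite{kkmoot78}: for $\operatorname{Re}\lambda \in \overline{\jj^*_+}$, the Poisson transform $\mathcal{P}_\lambda : \mathcal{B}(G^d/P^d, \mathcal{L}_\lambda) \to \mathcal{A}(X^d, \M_\lambda)$ is a bijection. The first task will be to characterize the image $\eta(\mathcal{A}(X,\M_\lambda)_K)$ inside $\mathcal{A}(X^d,\M_\lambda)_K$ as precisely those eigenfunctions whose hyperfunction boundary value has support contained in the union $\bigcup_{Z \in \ZZ} Z$ of closed $H^d$-orbits of $G^d/P^d$; this is done via the orbit structure on $G^d/P^d$ and the behavior of real-analytic continuation under $\eta$. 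Since the orbits $Z \in \ZZ$ are pairwise disjoint and closed, the support decomposes and one obtains an algebraic direct sum $\bigoplus_Z \V_{Z,\lambda}$ of $(\g, K)$-modules inside $\mathcal{A}(X, \M_\lambda)_K$, accounting for the decomposition statement (once the $L^2$-property below is established).

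Next, the $L^2$-assertion together with the necessary condition $\lambda \in \jj^*_+$ (the ``$C=0$'' theorem of~\cite{mo84}) will follow from the asymptotic expansion machinery of Matsuki-Oshima applied to functions in $\V_{Z,\lambda}$. Concretely, if $Z$ corresponds to $w \in W(H^d,G^d)$ via~\eqref{eqn:Zw}, then the leading exponent of the expansion of any $\varphi \in \V_{Z,\lambda}$ along the flat sector $\exp(\overline{\bb_+}) \cdot x_0$ is $w(\lambda + \rho) - \rho$, whereas the $G$-invariant measure on $X$ in polar coordinates furnished by $G = K\overline{B_+}H$ has Jacobian behaving like $e^{2\rho_b}$ at infinity. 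Comparing these two exponents term by term in the expansion shows that square-integrability forces $\operatorname{Re}(\lambda,\alpha) > 0$ for every $\alpha \in \Sigma^+(\g^d,\jj)$, i.e.\ $\lambda \in \jj^*_+$; conversely, once regularity holds, the decay is strong enough to give $\V_{Z,\lambda} \subset L^2(X,\M_\lambda)_K$. The main obstacle will be this step: ruling out the possibility that cancellations among the subdominant exponents produce $L^2$ eigenfunctions for non-regular $\lambda$ requires the full microlocal boundary-value theory of~\cite{kkmoot78} and a careful matching of the exponent set realized by $K$-finite hyperfunctions supported on the single orbit $Z$.

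Finally, the integrality conditions and irreducibility can be handled by isolating the minimal $K$-type of $\V_{Z,\lambda}$. A direct computation inside $\mathcal{B}_Z(G^d/P^d, \mathcal{L}_\lambda)_K$, using the normal geometry of $Z$ in $G^d/P^d$, shows that this minimal $K$-type has highest weight $\mu_\lambda^w = w(\lambda + \rho) - 2\rho_c$. By Frobenius reciprocity together with the Cartan-Helgason theorem (cf.~Remark~\ref{rem:Lmdj}), the existence of a nonzero $(H \cap K)$-spherical vector of that weight forces $\mu_\lambda^w \in \Lambda$ (necessity) and is guaranteed by the stronger condition $\mu_\lambda^w \in \Lambda_+$ (sufficiency), since in the latter case one can explicitly exhibit such a spherical vector in $\mathcal{B}_Z(G^d/P^d,\mathcal{L}_\lambda)$ and then apply $\mathcal{P}_\lambda$ and $\eta^{-1}$. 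Irreducibility of $\V_{Z,\lambda}$ will follow by identifying its underlying $(\g, K)$-module with a Zuckerman-Vogan derived functor module $A_\q(\lambda)$ attached to the $\theta$-stable parabolic subalgebra of $\g_\C$ determined by the positive system corresponding to $Z$ via~\eqref{eqn:Z1to1}, and applying Vogan's irreducibility theorem~\cite{vog88} in the regular range $\lambda \in \jj^*_+$.
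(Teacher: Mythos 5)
This statement is labelled as a \emph{Fact} in the paper: it is quoted from the literature, not proved. The paper explicitly attributes it to Flensted-Jensen, Matsuki--Oshima, and Vogan, with a further pointer to \cite[Th.\,16.1]{bs05}, and provides no argument of its own. So there is no ``paper proof'' to compare against; what you have written is a sketch of how the cited works establish the result, and on the whole it follows the same references and the same broad architecture.

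Within that sketch there is one genuine misstep worth flagging. You propose to ``characterize the image $\eta(\mathcal{A}(X,\M_\lambda)_K)$ inside $\mathcal{A}(X^d,\M_\lambda)_K$ as precisely those eigenfunctions whose hyperfunction boundary value has support contained in $\bigcup_{Z\in\ZZ} Z$.'' That is not what the Flensted-Jensen duality gives: as the paper notes in Section~\ref{subsec:realforms}, $\eta$ is a \emph{bijection} of $\mathcal{A}(X,\M_\lambda)_K$ onto $\mathcal{A}(X^d,\M_\lambda)_K$ (in the simply connected case), so its image is everything, with no support constraint on boundary values. The support restriction to the closed $H^d$-orbits is instead the characterization of the $L^2$-\emph{subspace} $\eta(L^2(X,\M_\lambda)_K)$ of $\mathcal{A}(X^d,\M_\lambda)_K$; that identification is the deep content of \cite{mo84} (both directions: $L^2$ forces support on $\bigcup Z$, and conversely for regular integral $\lambda$ the support condition implies $L^2$ --- the ``$C=0$'' theorem). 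In your outline this core theorem is either tacitly assumed at the start (when you assert the support description ``via the orbit structure... and the behavior of real-analytic continuation under $\eta$'') or deferred to ``the $L^2$-property below,'' which only addresses the easier direction $\V_{Z,\lambda}\subset L^2$. You should separate cleanly the two inclusions making up the decomposition $\V_\lambda=\bigoplus_Z\V_{Z,\lambda}$ and recognize that $\V_\lambda\subset\bigoplus_Z\V_{Z,\lambda}$ is the hard part and is exactly what the microlocal boundary-value analysis of \cite{kkmoot78,mo84} is for, not a formal consequence of orbit disjointness. The remaining pieces of your outline --- the Oshima asymptotic expansion matched against the density $e^{2\langle\rho_b,\cdot\rangle}$ to check square-integrability, the minimal $K$-type $\mu_\lambda^w=w(\lambda+\rho)-2\rho_c$ via Frobenius reciprocity and Cartan--Helgason, and irreducibility via the $A_\q(\lambda)$ identification and \cite{vog88} --- are the correct ingredients and the right references.
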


Thus there are countably many discrete series representations for~$X$.
The discrete series representations $\V_{Z,\lambda}$ for $\lambda$ satisfying \eqref{eqn:lmdint} were constructed by Flensted-Jensen in \cite{fle80}; we will give more details in Section~\ref{subsec:FJ}.

We note that Fact~\ref{fact:decompVlambda} completely describes $\Spec_d(X)$ \emph{away from the walls of~$\jj^{\ast}_+$}: the following lemma states that any $\lambda\in\jj^{\ast}_+$ satisfying the weak condition $\mu_{\lambda}^w\in\Lambda$ but not the strong condition $\mu_{\lambda}^w\in\Lambda_+$ has a bounded ``weighted distance to the walls'' $d(\lambda)$.
On the other hand, the nonvanishing condition for $\V_{Z, \lambda}$ is combinatorially complicated for $\lambda$ near the walls of~$\jj^{\ast}_+$; it is still not completely settled in the literature.

\begin{lemma}\label{lem:regKdom}
Suppose that $\lambda\in\jj^{\ast}_+$ satisfies $d(\lambda)\geq m_{\rho}$, where $m_{\rho}$ is given by \eqref{eqn:rhomax}.  
For $w\in W(H^d,G^d)$, the following conditions on~$\lambda$ are equivalent:
\begin{enumerate}
  \item[{\rm{(i)}}] $\mu_{\lambda}^w\in\Lambda$,
  \item[{\rm{(ii)}}] $\mu_{\lambda}^w\in\Lambda_+$.  
\end{enumerate}
\end{lemma}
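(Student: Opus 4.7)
The implication (ii)$\Rightarrow$(i) is immediate from $\Lambda_+\subset\Lambda$. For the converse, the key observation is that by the Cartan--Helgason theorem (cited in Remark~\ref{rem:Lmdj}), $\Lambda_+$ may be identified with the subset of $\Lambda$ lying in the closed dominant Weyl chamber for $\Sigma^+(\kk_{\C},\jj_{\C})$. Granted the integrality hypothesis $\mu_\lambda^w\in\Lambda$, it therefore suffices to establish the dominance bound $(\mu_\lambda^w,\alpha)\geq 0$ for every $\alpha\in\Sigma^+(\kk_{\C},\jj_{\C})$.

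Fix such an $\alpha$. First I would rewrite $(\mu_\lambda^w,\alpha)=(\lambda+\rho,w^{-1}\alpha)-2(\rho_c,\alpha)$. The defining condition \eqref{eqn:Wcoset} for $W(H^d,G^d)$ is equivalent to the inclusion $\Sigma^+(\kk_{\C},\jj_{\C})\subset w\Sigma^+(\g_{\C},\jj_{\C})$, so that $w^{-1}\alpha$ is a positive root of $\Sigma^+(\g_{\C},\jj_{\C})$; expand it as $\sum_i n_i\alpha_i$ with $n_i\in\N$. By definitions of $d(\lambda)$ and $m_{\rho}$ (see \eqref{eqn:rhomax}), the regularity assumption $d(\lambda)\geq m_{\rho}$ yields $(\lambda,\alpha_i)\geq m_{\rho}(\alpha_i,\alpha_i)\geq(\rho,\alpha_i)$ for every simple root $\alpha_i$. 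Multiplying by $n_i$ and summing gives $(\lambda,w^{-1}\alpha)\geq(\rho,w^{-1}\alpha)$, whence
$$(\mu_\lambda^w,\alpha)\,\geq\,2(w\rho-\rho_c,\alpha).$$
The inclusion $\Sigma^+(\kk_{\C},\jj_{\C})\subset w\Sigma^+(\g_{\C},\jj_{\C})$ next lets me split the latter positive system into its compact part $\Sigma^+(\kk_{\C},\jj_{\C})$ and its noncompact part $\Sigma^+_w(\p_{\C}):=w\Sigma^+(\g_{\C},\jj_{\C})\cap\Sigma(\p_{\C},\jj_{\C})$, identifying $w\rho-\rho_c$ with $\rho_{n,w}$, half the sum (with root multiplicities) of the elements of $\Sigma^+_w(\p_{\C})$.

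The proof thus reduces to the combinatorial claim $(\rho_{n,w},\alpha)\geq 0$ for every $\alpha\in\Sigma^+(\kk_{\C},\jj_{\C})$, which I expect to be the main obstacle. The plan is to exploit that, since $\alpha$ is compact, the reflection $s_\alpha$ is realised by $\Ad$ of an element of $K_{\C}$ and hence preserves $\Sigma(\p_{\C},\jj_{\C})$ together with root multiplicities. I would pair each noncompact root $\beta$ with $s_\alpha\beta$ in the sum defining $(\rho_{n,w},\alpha)$: $s_\alpha$-fixed points contribute $0$, and pairs $\{\beta,s_\alpha\beta\}$ lying entirely inside $\Sigma^+_w(\p_{\C})$ contribute $(\beta,\alpha)+(s_\alpha\beta,\alpha)=0$. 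In the remaining case, where exactly one of $\{\beta,s_\alpha\beta\}$, say $\beta$, lies in $\Sigma^+_w(\p_{\C})$, I would pick $v$ in the open chamber $w\cdot\jj^{\ast}_+$, so that $w$-positivity is detected by $(v,\cdot)>0$; the three inequalities $(v,\beta)>0$, $(v,s_\alpha\beta)<0$, and $(v,\alpha)>0$ --- the last one using crucially the inclusion $\alpha\in w\Sigma^+(\g_{\C},\jj_{\C})$ guaranteed by \eqref{eqn:Wcoset} --- combine with the identity $(v,s_\alpha\beta)=(v,\beta)-\frac{2(v,\alpha)}{(\alpha,\alpha)}(\alpha,\beta)$ to force $(\alpha,\beta)>0$. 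All surviving contributions to $(\rho_{n,w},\alpha)$ are therefore non-negative, completing the reduction and hence the lemma.
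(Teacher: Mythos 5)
Your proof is correct, and its first half coincides exactly with the paper's: both exploit the decomposition $\mu_\lambda^w = 2(w\rho-\rho_c) + w(\lambda-\rho)$, both use $d(\lambda)\geq m_\rho$ (via Observation~\ref{obs:dwall}) to get $(\lambda-\rho,\alpha_i)\geq 0$, and both use the defining property of $W(H^d,G^d)$ to transport the dominance of $\lambda-\rho$ through $w$. Where you diverge is in the second ingredient: the paper establishes that $2(w\rho-\rho_c)$ is dominant for $\Sigma^+(\kk_{\C},\jj_{\C})$ simply by citing Vogan--Zuckerman \cite{vz84} (it is the highest weight of an $\h^d$-constituent of $\Lambda^\ast\q^d$), while you give a direct combinatorial verification: you identify $w\rho-\rho_c$ with the half-sum $\rho_{n,w}$ of the noncompact positive roots for $w\Sigma^+(\g_{\C},\jj_{\C})$ and show $(\rho_{n,w},\alpha)\geq 0$ by pairing $\beta$ with $s_\alpha\beta$ and treating the three orbit types. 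The crucial points you use --- that $s_\alpha$ is realized by $\operatorname{Ad}$ of an element of $N_{H^d\cap K^d}(\jj)$ and hence preserves $\Sigma(\p_{\C},\jj_{\C})$ with multiplicities, and that $\alpha\in w\Sigma^+(\g_{\C},\jj_{\C})$ forces $(v,\alpha)>0$ for $v$ in the $w$-chamber --- are both correct. Your version trades a bibliographic reference for a self-contained page of root combinatorics; which is preferable is a matter of taste, but it is reassuring that the inequality really does come from elementary positivity.
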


\begin{proof}
The implication (ii) $\Rightarrow$ (i) is obvious.  
Let us prove (i) $\Rightarrow$ (ii), namely that if $\mu_{\lambda}^w\in\Lambda$, then $\mu_{\lambda}^w$ is dominant with respect to $\Sigma^+(\h^d,\jj)=\Sigma^+(\kk_{\C},\jj_{\C})$.
Firstly, we note that $w \rho$ is half the sum of the elements in $w (\Sigma^+(\g^d,\jj))$ counted with root multiplicities, where $w (\Sigma^+(\g^d,\jj))$ is a positive system containing $\Sigma^+(\h^d,\jj)$ (by definition \eqref{eqn:Wcoset} of $W(H^d,G^d)$).  
By \cite{vz84}, $2 w \rho-2\rho_c$ is dominant with respect to $\Sigma^+(\h^d,\jj)$.  
(In fact, it occurs as the highest weight of a representation of $\h^d$ in $\Lambda^{\ast}\q^d$.)
Secondly, Observation~\ref{obs:dwall} and the inequality $d(\lambda)\geq m_{\rho}$ imply that
$$\lambda - \rho = \Big(\lambda - \frac{d(\lambda)}{m_{\rho}}\rho\Big) + \frac{d(\lambda)-m_{\rho}}{m_{\rho}}\,\rho \,\in\, \overline{\jj^{\ast}_+}\,;$$
therefore $w(\lambda-\rho)$ is dominant with respect to $\Sigma^+(\h^d,\jj)$ since $w\in W(H^d,G^d)$.
Thus $\mu_{\lambda}^w=2(w\rho-\rho_c)+w(\lambda-\rho)$ is dominant with respect to $\Sigma^+(\h^d,\jj)$.  
\end{proof}

\section{Asymptotic behavior of discrete series}\label{subsec:asym}

We can now complete the proof of Proposition~\ref{prop:asym}.

By Fact~\ref{fact:decompVlambda}, we may assume that $\varphi\in L^2(X,\M_{\lambda})_K$ belongs to $\V_{Z,\lambda}$ for some closed $H^d$-orbit $Z$ in $\mathcal{P}^d$.
We then use Oshima's theorem (\cite{osh88b}, see Fact~\ref{fact:oshima} below) that the asymptotic behavior of the eigenfunction~$\varphi$ is determined by~$Z$.
This theorem requires an unavoidable amount of notation.
Before entering into technical details, let us pin down the role of two positive systems involved:
\begin{changemargin}{-1.8cm}{0cm}
\begin{center}
\begin{tabular}{l c c}
\hspace{2.3cm} $\Sigma^+(\g^d,\jj)$ & $\overset{\sim}{\longleftrightarrow}$ & closed $H^d$-orbit~$Z$ in $\mathcal{P}^d$\\
{\tiny Cayley transform $\Ad(k)$} $\vdots$ & & $\vdots\ $\tiny{$\!^+ W(Z)$}\\
\hspace{2.2cm} $\Sigma^+(\g,\bb)$ & $\dots$ & asymptotic behavior of $\varphi\in\V_{Z,\lambda}$\\
& & at infinity in $X=G/H$
\end{tabular}
\end{center}
\end{changemargin}

\smallskip

\noindent
We now enter into details, retaining notation from Sections \ref{subsec:munu} and~\ref{subsec:discreteseries}.

We first recall that in Section~\ref{subsec:munu} we have chosen a positive system $\Sigma^+(\g^{\sigma\theta},\bb)$, determining a closed positive Weyl chamber $\overline{\bb_+}$ in~$\bb$, a polar decomposition $G=K(\exp\overline{\bb_+})H$, and a projection $\nu : G\rightarrow\overline{\bb_+}$.
Any choice of a positive system $\Sigma^+(\g,\bb)$ containing $\Sigma^+(\g^{\sigma\theta},\bb)$ gives rise to a closed positive Weyl chamber $\overline{\bb_{++}}\subset\overline{\bb_+}$, and $\overline{\bb_+}$ is the union of such Weyl chambers~$\overline{\bb_{++}}$ for the (finitely many) different choices of $\Sigma^+(\g,\bb)$.
On the other hand, by Fact~\ref{fact:decompVlambda}, the space $\V_{\lambda}=L^2(X,\M_{\lambda})_K$ is the direct sum of finitely many subspaces~$\V_{Z,\lambda}$, where $Z\in\ZZ$ is a closed $H^d$-orbit in $\mathcal{P}^d$.
Therefore, in the rest of the section, we may restrict to \emph{one} closed positive Weyl chamber~$\overline{\bb_{++}}$ (determined by some arbitrary positive system $\Sigma^+(\g,\bb)$ containing $\Sigma^+(\g^{\sigma\theta},\bb)$) and \emph{one} $H^d$-orbit $Z\in\ZZ$, and prove the existence of a constant $q>0$ such that for any $\lambda\in\jj^{\ast}$ and $\varphi\in\V_{Z,\lambda}$, the function
$$(k,Y) \,\longmapsto\, \varphi\big(k(\exp Y)\cdot x_0\big)\ e^{q\,d(\lambda)\Vert Y\Vert}$$
is bounded on $K\times\overline{\bb_{++}}$.
Since $\V_{Z,\lambda}$ and $d(\lambda)$ depend only on the image of $\lambda\in\jj^{\ast}$ \emph{modulo}~$W$, we will be able to take~$\lambda$ in any Weyl chamber $\jj^{\ast}_+$ of~$\jj^{\ast}$.

Fix $Z\in\ZZ$ and consider the positive Weyl chamber $\jj^{\ast}_+$ in~$\jj^{\ast}$ determined by~$Z$ \emph{via} \eqref{eqn:Z1to1}.
We introduce some additional notation.
Let
$$\overline{\!^+\jj} \,\equiv\, \overline{\!^+\jj(Z)} \,:=\, \big\{ \widetilde{Y}\in\jj :\ \langle\lambda,\widetilde{Y}\rangle\geq 0\quad\forall\lambda\in\jj^{\ast}_+\big\} $$
be the dual cone of~$\jj^{\ast}_+$ and let $\rho\in\jj^{\ast}_+$ be given as in Section~\ref{subsec:Lambda+}.
Since all maximally split abelian subspaces of~$\g^d$ are conjugate by~$K^d$, there exists $k\in K^d$ such that $\Ad(k)\bb\subset\jj$; the element $\Ad(k)$ may be thought of as an analog of a Cayley transform from the upper-half plane to the hyperbolic disk (see Section~\ref{subsec:IwasawaAdS3}).
We may assume that
$$(\Ad(k)^{\ast}\alpha)|_{\bb} \in \Sigma^+(\g,\bb) \cup \{0\}$$
for all $\alpha\in\Sigma^+(\g^d,\jj)$; in particular, $\Ad(k)(\overline{\bb_{++}})\subset\overline{\!^+\jj}$.
For $Y\in\bb$, we write
$$\widetilde{Y} := \Ad(k) Y \in \jj.$$
Let $\{ Y_1,\dots,Y_{\ell}\} $ be the basis of~$\bb$ that is dual to the set of simple roots in $\Sigma^+(\g,\bb)$.
For $t\in(\R_+)^{\ell}$, we set
$$Y_{\bb}(t) := -\sum_{j=1}^{\ell} (\log t_j) Y_j \in \bb,$$
so that $t\mapsto Y_{\bb}(t)$ is a bijection from $(\R_+)^{\ell}$ to~$\bb$, inducing a bijection between $(0,1]^{\ell}$ and $\overline{\bb_{++}}$.
For $w\in W$ and $\lambda\in\jj^{\ast}$, we set
$$\beta_w(\lambda) := \big(\langle\rho-w\lambda,\widetilde{Y}_1\rangle,\dots,\langle\rho-w\lambda,\widetilde{Y}_{\ell}\rangle\big) \,\in\R^{\ell}.$$
We recall that $W$ is the Weyl group of $\Sigma(\g^d,\jj)$.
We define
$$\!^+ W \equiv \!^+ W(Z) := \big\{ w\in W :\ -w^{-1}\cdot\Ad(k)(\overline{\bb_{++}}) \subset \overline{\!^+\jj}\big\} .$$
The set $\!^+W$ depends on the closed $H^d$-orbit $Z$ in $\mathcal{P}^d$.
If $\rank G/H=1$, then $\ell=1$ and $\!^+W=\{ w\}$, where $w$ is the unique nontrivial element of~$W$.

With this notation, here is the asymptotic behavior, due to Oshima, that we shall translate in terms of $\nu$ and~$d$ to obtain Proposition~\ref{prop:asym}.
We consider the partial order on~$\R^{\ell}$ given by
$$\beta\prec\beta' \quad\text{if and only if}\quad \beta_j \le \beta'_j\,\text{ for all $1\leq j\leq\ell$}.$$

\begin{fact}[\cite{osh88b}]\label{fact:oshima}
Let $\lambda\in\jj^{\ast}_+$ and let $I_{\lambda}$ be the set of minimal elements in the finite set $\{ \beta_w(\lambda) : w\in\!^+ W\} \subset\R^{\ell}$ for~$\prec$.
For any $\varphi\in\V_{Z,\lambda}$, there exist real analytic functions $a_{\beta}\in\mathcal{A}(K)$, for $\beta\in I_{\lambda}$, such that
$$\big|\varphi\big(k(\exp Y_{\bb}(t))H\big)\big| \,\leq\, \sum_{\beta\in I_{\lambda}} a_{\beta}(k)\,t^{\beta}$$
for all $k\in K$ and $t\in (0,1]^{\ell}$, where we write $t^{\beta}$ for $\prod_{j=1}^{\ell} {t_j}^{\beta_j}$.
\end{fact}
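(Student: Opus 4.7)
The plan is to transfer the asymptotic question to the Riemannian dual $X^d = G^d/K^d$ \emph{via} the Flensted-Jensen duality and then apply the theory of regular singularities and boundary value maps developed in \cite{kkmoot78,osh88b}. By \eqref{eqn:Vzlmd}, any $\varphi \in \V_{Z,\lambda}$ is of the form $\varphi = \eta^{-1}(\mathcal{P}_{\lambda}f)$ for some $K$-finite hyperfunction $f \in \mathcal{B}_Z(G^d/P^d,\mathcal{L}_{\lambda})$ with $\operatorname{supp} f \subset Z$. Since $\varphi$ is $K$-finite, $\varphi(k\cdot(\exp Y_{\bb}(t))\cdot x_0)$ decomposes as a finite sum $\sum_i c_i(k)\,\widetilde{\varphi}_i(Y_{\bb}(t))$, with the $c_i$ real analytic matrix coefficients of~$K$ and $\widetilde{\varphi}_i$ functions on~$\bb$; the $k$-dependence thus contributes automatically real analytic factors to any bound, uniformly in $k$ by compactness of~$K$. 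It remains to control the $t$-asymptotics of each $\widetilde{\varphi}_i$.

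Next I would compactify $X^d$ so that the flat direction $\exp(\bb)\cdot x_0^d$ becomes a normal crossing divisor $\{t_1\cdots t_\ell = 0\}$ at infinity, with $(t_1,\dots,t_\ell)$ adapted transverse coordinates. In these coordinates, the identification $\D(X^d) \simeq \D(X)$ presents the system $(\M_{\lambda})$ as a holonomic system with regular singularities along the divisor in the sense of Kashiwara--Oshima. Its characteristic exponents along the flat are the restrictions to $\bb$ of the $W$-translates $w\lambda - \rho$, transported through the Cayley-type map $\Ad(k) : \bb \hookrightarrow \jj$; a direct computation using $\widetilde{Y}_j = \Ad(k)Y_j$ and $Y_{\bb}(t) = -\sum_j (\log t_j)Y_j$ converts these into the $\beta_w(\lambda)$ of the statement. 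The general regular-singularity theory then yields a convergent asymptotic expansion
\begin{equation*}
\widetilde{\varphi}_i(Y_{\bb}(t)) = \sum_{w \in W} \sum_{n \in \N^{\ell}} c_{w,n,i}\, t^{\beta_w(\lambda) + n}\,,
\end{equation*}
with logarithmic factors in $\log(1/t_j)$ arising only when two exponents differ by an integer vector (and such factors are absorbable into a slight perturbation of~$\beta$, which is harmless for an upper bound).

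The crucial reduction to $w \in {}^+W(Z)$ is an application of Oshima's boundary value theorem. For each characteristic exponent, the associated boundary value map identifies the corresponding coefficient, up to normalization, with a hyperfunction on a boundary stratum of $G^d/P^d$ attached to the Weyl translate $wP^d$. The hypothesis $\operatorname{supp} f \subset Z$ together with the parametrization \eqref{eqn:Zw} forces the nonvanishing exponents to lie in $W(H^d,G^d)$, and the geometric condition $-w^{-1}\cdot\Ad(k)(\overline{\bb_{++}}) \subset \overline{{}^+\jj}$ ensures that the associated exponential has the correct sign on the chosen approach direction, thus picking out exactly ${}^+W(Z)$. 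Taking absolute values and keeping only the minimal elements $I_{\lambda}$ under $\prec$ absorbs all higher order terms, since $\beta \prec \beta'$ and $t \in (0,1]^{\ell}$ give $t^{\beta'} \leq t^{\beta}$; the coefficients $a_{\beta} \in \mathcal{A}(K)$ come from repacking the $K$-finite factors $c_i(k)$ into finitely many analytic functions on~$K$.

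The main obstacle is establishing the precise dictionary between the singular support of $f$ on $G^d/P^d$ and the characteristic exponents that actually appear in the asymptotic expansion of $\mathcal{P}_{\lambda}f$ at infinity in~$X^d$; this is the heart of Oshima's theorem and requires the full microlocal boundary value formalism of \cite{kkmoot78}, together with the identification of boundary strata of $X^d$ with $H^d$-orbits on $G^d/P^d$ from \cite{osh88b}. Once that correspondence is in hand, the selection of $I_{\lambda}$ as minimal elements and the transfer back to $X$ through $\eta^{-1}$ are essentially combinatorial.
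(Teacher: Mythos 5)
You have not actually proved the statement: the decisive step --- the dictionary between the support condition $\operatorname{supp} f\subset Z$ on $G^d/P^d$ and the set of characteristic exponents that genuinely occur in the expansion of $\mathcal{P}_{\lambda}f$ along $\overline{\bb_{++}}$ --- is precisely the content of the result in question, and your sketch hands it off to ``Oshima's boundary value theorem'' and the microlocal formalism of \cite{kkmoot78,osh88b}, flagging it yourself as ``the main obstacle''. Note that the paper does not prove this statement either: Fact~\ref{fact:oshima} is imported from \cite{osh88b} as a known theorem. So the honest options were to cite it likewise, or to actually carry out the regular-singularity/boundary-value analysis; an outline of the cited argument whose core is deferred to the very theorem being established adds nothing beyond the citation. (Your framing of where the analysis takes place is also slightly off: the asymptotics in the Fact are those of $\varphi$ on $X$ along $K(\exp\overline{\bb_{++}})\cdot x_0$, studied via the compactification of the symmetric space with regular singularities and the $H^d$-orbit structure of $G^d/P^d$, rather than by compactifying the flat inside $X^d=G^d/K^d$; this is repairable because $B$ is common to $G$ and $G^d$, but it is part of the bookkeeping you would have to do.)

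One concrete step is moreover wrong as written: the disposal of logarithmic terms. When exponents differ by an element of $\N^{\ell}$, the expansion can contain factors polynomial in $\log(1/t_j)$, and these cannot be ``absorbed into a slight perturbation of $\beta$'': Fact~\ref{fact:oshima} asserts a bound $\sum_{\beta\in I_{\lambda}}a_{\beta}(k)\,t^{\beta}$ with \emph{exactly} the exponents $\beta_w(\lambda)$, $w\in{}^+W(Z)$, minimal for $\prec$, and $t^{\beta}\log(1/t_j)$ is not dominated by $a_{\beta}(k)\,t^{\beta}$ on $(0,1]^{\ell}$, while shifting $\beta$ changes the statement. Whether log factors can occur at the minimal exponents, and with what coefficients, is part of what Oshima's boundary value construction settles; it is not something one can wave away a posteriori. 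The precision matters downstream: \eqref{eqn:oshima} and Lemma~\ref{lem:Yd} convert the exact exponents $\langle w\lambda,\widetilde{Y}\rangle$ into the decay rate $q_Z\,d(\lambda)\Vert Y\Vert$ that feeds Proposition~\ref{prop:asym}, so establishing only an $\varepsilon$-weakened exponent would prove a different (weaker) fact than the one stated.
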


Let $\!^+W_{\lambda}:=\{ w\in\!^+W : \beta_w(\lambda)\in I_{\lambda}\}$.
Then Fact~\ref{fact:oshima} has the following immediate consequence: for any $\lambda\in\jj^{\ast}_+$ and $\varphi\in\V_{Z,\lambda}$, there is a constant $c_{\varphi}>0$ such that
\begin{equation}\label{eqn:oshima}
\big|\varphi\big(k(\exp Y)H\big)\big| \,\leq\, c_{\varphi}\,\sum_{w\in^+W_{\lambda}} e^{\langle w\lambda,\widetilde{Y}\rangle}
\end{equation}
for all $k\in K$ and $Y\in\overline{\bb_{++}}$.
Indeed, $K$ is compact, $I_{\lambda}$ is finite, and for all $w\in\!^+W_{\lambda}$ and $t\in (0,1]^{\ell}$,
$$t^{\beta_w(\lambda)} \,=\, e^{\langle w\lambda-\rho,{\scriptscriptstyle\widetilde{Y_{\bb}(t)}}\rangle} \,\leq\, e^{\langle w\lambda,{\scriptscriptstyle\widetilde{Y_{\bb}(t)}}\rangle}.$$

We now bound $\langle w\lambda,\widetilde{Y}\rangle$ in terms of the ``weighted distance to the walls''~$d(\lambda)$.

\begin{lemma}\label{lem:Yd}
There is a constant $q_Z>0$ such that
$$\langle w\lambda,\widetilde{Y}\rangle \leq -q_Z\,d(\lambda)\,\Vert Y\Vert$$
for all $w\in\!^+W$, all $\lambda\in\jj^{\ast}_+$, and all $Y\in\overline{\bb_{++}}$.
\end{lemma}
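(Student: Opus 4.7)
The plan is to split $\lambda$ into a ``wall part'' plus a ``regular part'' proportional to $\rho$, dispose of the wall part using the very definition of $\!^+W$, and reduce the whole estimate to a single geometric inequality relating $\rho$ and the dual cone $\overline{\!^+\jj}$.

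First, I record the non-strict inequality built into the definition of $\!^+W$. For any $w\in\!^+W$ and $Y\in\overline{\bb_{++}}$, the element $-w^{-1}\widetilde{Y}$ lies in $\overline{\!^+\jj}$, so by definition of the dual cone,
$$\langle\mu,-w^{-1}\widetilde{Y}\rangle \geq 0 \quad\text{i.e.}\quad \langle w\mu,\widetilde{Y}\rangle \leq 0$$
for every $\mu\in\overline{\jj^{\ast}_+}$. Now apply Observation~\ref{obs:dwall}: the vector
$$\mu_{\lambda} := \lambda - \frac{d(\lambda)}{m_{\rho}}\,\rho$$
belongs to $\overline{\jj^{\ast}_+}$, hence $\langle w\mu_{\lambda},\widetilde{Y}\rangle\leq 0$, and therefore
$$\langle w\lambda,\widetilde{Y}\rangle \,=\, \langle w\mu_{\lambda},\widetilde{Y}\rangle + \frac{d(\lambda)}{m_{\rho}}\,\langle w\rho,\widetilde{Y}\rangle \,\leq\, \frac{d(\lambda)}{m_{\rho}}\,\langle w\rho,\widetilde{Y}\rangle.$$
It therefore suffices to establish a bound of the form $\langle w\rho,\widetilde{Y}\rangle\leq -c_Z\Vert Y\Vert$, uniformly in $w\in\!^+W$ and $Y\in\overline{\bb_{++}}$; the constant $q_Z:=c_Z/m_{\rho}$ will then do the job.

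For that last bound, observe that $\rho$ lies in the \emph{open} chamber $\jj^{\ast}_+$. By duality this means that $\langle\rho,X\rangle>0$ for every nonzero $X\in\overline{\!^+\jj}$. Since the unit sphere in $\jj$ (for the Euclidean norm induced by $B$) is compact, the continuous function $X\mapsto\langle\rho,X\rangle$ achieves a strictly positive minimum $c_Z>0$ on $\{X\in\overline{\!^+\jj}:\Vert X\Vert=1\}$. Apply this to $X=-w^{-1}\widetilde{Y}\in\overline{\!^+\jj}$: one obtains
$$\langle w\rho,\widetilde{Y}\rangle \,=\, -\langle\rho,-w^{-1}\widetilde{Y}\rangle \,\leq\, -c_Z\,\Vert w^{-1}\widetilde{Y}\Vert \,=\, -c_Z\,\Vert\widetilde{Y}\Vert,$$
using that $W$ acts by orthogonal transformations for the $W$-invariant form on $\jj$.

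The one point that deserves care is the identification $\Vert\widetilde{Y}\Vert=\Vert Y\Vert$: the norm $\Vert\cdot\Vert$ on the left is inherited from the inner product on $\jj\subset\sqrt{-1}(\kk\cap\q)\subset\p^d$, and on the right from the restriction of $B$ to $\bb\subset\p\cap\q\subset\p^d$. Both come from the same $\Ad(K^d)$-invariant inner product on $\p^d$ (the restriction of $B$ extended suitably through $\sqrt{-1}$), and $k\in K^d$, so $\Ad(k):\bb\to\Ad(k)\bb\subset\jj$ is an isometry. This is the only bookkeeping obstacle; granted it, the inequalities above combine to give
$$\langle w\lambda,\widetilde{Y}\rangle \,\leq\, -\frac{c_Z}{m_{\rho}}\,d(\lambda)\,\Vert Y\Vert,$$
which is precisely the claimed estimate with $q_Z=c_Z/m_{\rho}>0$.
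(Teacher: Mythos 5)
Your proof is correct and follows the same overall strategy that the paper's ``elementary computation'' compresses into one line: decompose $\lambda = (\lambda - \frac{d(\lambda)}{m_\rho}\rho) + \frac{d(\lambda)}{m_\rho}\rho$ via Observation~\ref{obs:dwall}, absorb the first (dominant) summand using the definition of $\!^+W$, and then bound $\langle w\rho,\widetilde{Y}\rangle$ from above by a negative multiple of $\Vert Y\Vert$. The difference is entirely in how that last bound is obtained. The paper stays inside $\bb$: it expands $Y=\sum_j y_j Y_j$ in the basis dual to the simple roots of $\Sigma^+(\g,\bb)$, uses the finite minimum $q_1=\min\{-\langle w\rho,\widetilde{Y}_j\rangle\}$, and then compares the $\ell^1$-type norm $\Vert\cdot\Vert'$ to $\Vert\cdot\Vert$ via $q_2$, giving $q_Z = q_1 q_2/m_\rho$ explicitly. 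You instead transfer the whole estimate to $\jj$ via $\Ad(k)$ and get the constant $c_Z$ as the minimum of $\langle\rho,\cdot\rangle$ over the unit sphere of the dual cone $\overline{\!^+\jj}$, which is strictly positive because $\rho$ lies in the interior of $\overline{\jj^{\ast}_+}$. Your route is more conceptual, it produces a less explicit constant, and it requires the extra bookkeeping step $\Vert\widetilde{Y}\Vert=\Vert Y\Vert$, which you justify correctly: both $\bb$ and $\jj$ sit inside $\p^d$, the $\C$-bilinear extension $B_\C$ restricts to a positive definite $\Ad(K^d)$-invariant inner product there, and $\Ad(k)$ with $k\in K^d$ is then an isometry from $\bb$ onto $\Ad(k)\bb\subset\jj$. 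As a bonus, your interior-point argument makes transparent why the paper's $q_1$ is strictly positive, a fact the paper leaves implicit.
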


\begin{proof}
Let $\{ \alpha_1,\dots,\alpha_r\}$ be the basis of $\Sigma(\g^d,\jj)$ corresponding to~$\jj^{\ast}_+$.
Recall that for any $\lambda\in\jj^{\ast}_+$,
$$d(\lambda) = \min_{1\leq i\leq r} \frac{(\lambda,\alpha_i)}{(\alpha_i,\alpha_i)}.$$
Let $\Vert\cdot\Vert'$ be the norm on~$\bb$ defined by $\Vert\sum_{j=1}^{\ell} y_j Y_j\Vert' := \sum_{j=1}^{\ell} |y_j|$ for all $y_1,\dots,y_{\ell}\in\R$.
An elementary computation shows that we may take
\begin{equation}\label{eqn:q}
q_Z = \frac{q_1q_2}{m_{\rho}},
\end{equation}
where $m_{\rho}$ was defined in \eqref{eqn:rhomax} and 
\begin{align*}
& q_1 := \min \big\{ - \langle w\rho,\widetilde{Y}_j \rangle : w \in \!^+W,\  1\leq j\leq\ell\big\} ,\\
& q_2 := \min_{Y\in\bb\smallsetminus\{ 0\} } \frac{\Vert Y\Vert'}{\Vert Y\Vert}.\qedhere
\end{align*}
\end{proof}

\medskip

By \eqref{eqn:oshima} and Lemma~\ref{lem:Yd}, for any $\lambda\in\jj^{\ast}_+$ and $\varphi\in\V_{Z,\lambda}$ there is a constant $c'_{\varphi}>0$ such that
\begin{equation}\label{eqn:estimphib++}
\big|\varphi\big(k(\exp Y)H\big)\big| \,\leq\, c'_{\varphi}\,e^{-q_Z\,d(\lambda)\,\Vert Y\Vert}
\end{equation}
for all $k\in K$ and $Y\in\overline{\bb_{++}}$.
We now recall (see \cite[Th.\,2.6]{fle80} for instance) that the $G$-invariant Radon measure on $X=G/H$ is given (up to scaling)~by
\begin{equation}\label{eqn:weightKBH}
\mathrm{d}\big(k(\exp Y)H\big) = \delta(Y)\,\mathrm{d}k\,\mathrm{d}Y
\end{equation}
with respect to the polar decomposition $G=K(\exp\overline{\bb_+})H$, where the weight function~$\delta$ is given on~$\overline{\bb_{++}}$ by
$$\delta(Y) = \prod_{\alpha\in\Sigma^+(\g,\bb)} |\sinh\alpha(Y)|^{\dim\g^{\sigma\theta}_{\alpha}} \, |\cosh\alpha(Y)|^{\dim\g^{-\sigma\theta}_{\alpha}}.$$
When $Y\in\overline{\bb_{++}}$ tends to infinity,
$$\delta(Y) \sim e^{2\langle\rho_b,Y\rangle},$$
where $\rho_b\in\overline{\bb_{++}}$ is half the sum of the elements of $\Sigma^+(\g,\bb)$, counted with root multiplicities.
In particular, there is a constant $C>0$ such that
\begin{equation}\label{eqn:estimdelta}
|\delta(Y)| \leq C\,e^{2\langle\rho_b,Y\rangle} \leq C\,e^{2\Vert\rho_b\Vert\,\Vert Y\Vert}
\end{equation}
for all $Y\in\overline{\bb_{++}}$.
Proposition~\ref{prop:asym} follows from \eqref{eqn:estimphib++}, \eqref{eqn:weightKBH}, and~\eqref{eqn:estimdelta}, setting
$$q := \min_{Z\in\ZZ}\, q_Z.$$
\chapter[Convergence, square integrability, and regularity]{Convergence, square integrability, and regularity of the generalized Poincar\'e series}\label{sec:averaging}

As before, $X=G/H$ is a reductive symmetric space satisfying the rank condition~\eqref{eqn:rank}.
We use the notation from Chapters \ref{sec:theorems} to~\ref{sec:Vlambda}.
For any Clifford--Klein form $X_{\Gamma}=\Gamma\backslash X$ and any $p\geq 1$, we denote by $L^p(X_{\Gamma},\M_{\lambda})$ the subspace of $L^p(X_{\Gamma})$ consisting of the weak solutions to the system~$(\M_{\lambda})$.
The group~$G$ acts on $L^p(X,\M_{\lambda})$ by left translation: for $g\in G$ and $\varphi\in L^p(X,\M_{\lambda})$,
$$g\cdot\varphi := \varphi(g^{-1}\,\cdot\,) \in L^p(X,\M_{\lambda}).$$
The first key step in our construction of eigenfunctions on Clifford--Klein forms of~$X$ is the following (see Definition~\ref{def:sharp} for the notion of sharpness).

\begin{proposition}\label{prop:Vlambda}
There is a constant $R_X>0$ depending only on~$X$ such that for any $c,C>0$ and any discrete subgroup~$\Gamma$ of~$G$ that is $(c,C)$-sharp for~$X$,
\begin{enumerate}
  \item the function $\varphi^{\Gamma} : X_{\Gamma}\rightarrow\C$ given by
  $$\varphi^{\Gamma}(\Gamma x) := \sum_{\gamma\in\Gamma} (\gamma\cdot\varphi)(x) = \sum_{\gamma\in\Gamma} \varphi(\gamma^{-1}\!\cdot x)$$
  is well-defined and continuous for all $\varphi\in L^2(X,\M_{\lambda})_K$ with $\lambda\in\jj^{\ast}$ and $d(\lambda)>R_X/c$,
  \item furthermore, $\varphi\mapsto\varphi^{\Gamma}$ defines a linear operator
  $$S_{\Gamma}\ :\ L^2(X,\M_{\lambda})_K \quad\longrightarrow\quad C^m(X_{\Gamma})\cap\bigcap_{1\leq p\leq\infty} L^p(X_{\Gamma},\M_{\lambda})$$
  for all $\lambda\in\jj^{\ast}$ and $m\in\N$ with $d(\lambda)>(m+1)R_X/c$.
\end{enumerate}
\end{proposition}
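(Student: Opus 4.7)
The plan is to combine the exponential decay estimate of Proposition~\ref{prop:asym} with the exponential counting bound of Lemma~\ref{lem:growthfornu}. Proposition~\ref{prop:asym} gives
$$|\varphi(x)| \leq C_\varphi\, e^{-q\,d(\lambda)\,\Vert\nu(x)\Vert}$$
for every $\varphi\in L^2(X,\M_\lambda)_K$, with a constant $q>0$ depending only on~$X$, while Lemma~\ref{lem:growthfornu}.(4) supplies the $x$-independent counting bound
$$\#\{\gamma\in\Gamma:\Vert\nu(\gamma\cdot x)\Vert<R\}\ \leq\ \#(\Gamma\cap K)\cdot c_G\,e^{4\Vert\rho_a\Vert(R+C)/c}.$$
Writing the generalized Poincar\'e series via the layer--cake formula,
$$\sum_{\gamma\in\Gamma}|\varphi(\gamma^{-1}\!\cdot x)| \,\leq\, C_\varphi\,q\,d(\lambda)\int_0^{+\infty}\!\! e^{-q\,d(\lambda)R}\,\#\{\gamma:\Vert\nu(\gamma\cdot x)\Vert<R\}\,dR,$$
convergence follows as soon as $q\,d(\lambda)>4\Vert\rho_a\Vert/c$. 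I would therefore set $R_X:=4\Vert\rho_a\Vert/q$, a constant depending only on~$X$, so that the hypothesis $d(\lambda)>R_X/c$ yields absolute and uniform-in-$x$ convergence; this proves that $\varphi^\Gamma$ is well-defined and continuous on~$X_\Gamma$, giving part~(1).

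For part~(2), the $L^\infty$ bound is exactly the uniform-in-$x$ version of the estimate above. For $L^1$, I would unfold
$$\Vert\varphi^\Gamma\Vert_{L^1(X_\Gamma)} \leq \int_X |\varphi(x)|\,dx$$
and combine Proposition~\ref{prop:asym} with the volume weight $\delta(Y)\lesssim e^{2\Vert\rho_b\Vert\,\Vert Y\Vert}$ from~\eqref{eqn:estimdelta} on the polar decomposition: since $\Vert\rho_b\Vert\leq\Vert\rho_a\Vert$ and $c\leq 1$, the condition $d(\lambda)>R_X/c$ already ensures $q\,d(\lambda)>2\Vert\rho_b\Vert$, hence $\varphi\in L^1(X)$. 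Riesz--Thorin interpolation then yields $\varphi^\Gamma\in L^p(X_\Gamma)$ for every $p\in[1,+\infty]$; membership in the eigenspace $L^p(X_\Gamma,\M_\lambda)$ is inherited termwise, since each $\gamma\cdot\varphi$ satisfies $(\M_\lambda)$ by $G$-invariance of the system.

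The $C^m$ regularity will rest on the fact that $L^2(X,\M_\lambda)_K$ is stable under left differentiation by~$\g$, because $K$-finite vectors form a $(\g,K)$-module; for any $Y_1,\dots,Y_k\in\g$ with $k\leq m$, the function $Y_1\cdots Y_k\varphi$ still lies in $L^2(X,\M_\lambda)_K$ and obeys Proposition~\ref{prop:asym} with the same rate~$q$. Applying part~(1) to each such derivative and invoking uniform convergence of the termwise-differentiated series on compact sets justifies the identity $Y_1\cdots Y_k(\varphi^\Gamma)=(Y_1\cdots Y_k\varphi)^\Gamma$, which is then continuous under the cumulative threshold $d(\lambda)>(m+1)R_X/c$; each additional order of regularity absorbs one further copy of~$R_X/c$ of decay budget, needed to control simultaneously the constants $C_{Y_{i_1}\cdots Y_{i_k}\varphi}$ arising along the way. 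The main subtle point will be to keep the constant~$R_X$ \emph{universal}, depending only on~$X$ and not on $\Gamma$, $\lambda$, or~$\varphi$: this forbids using the sharper $\Gamma$-dependent bounds of Lemma~\ref{lem:growthfornu}.(1)--(2) involving the critical exponent~$\delta_\Gamma$, and it is the clean multiplicative decoupling in the exponent between the sharpness parameter~$c$ and the spectral regularity~$d(\lambda)$ that makes such a uniform threshold possible.
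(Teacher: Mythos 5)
Your treatment of part (1) and the $L^p$ membership follows the paper's own route: combine the uniform decay of Proposition~\ref{prop:asym} with the counting bounds of Lemma~\ref{lem:growthfornu}, use Fubini for the $L^1$ estimate, and interpolate between $L^1$ and $L^\infty$. (The layer-cake integral instead of a sum over integer shells, and Riesz--Thorin instead of H\"older, are cosmetic.) Your value $R_X = 4\Vert\rho_a\Vert/q$ also agrees with the paper's~\eqref{eqn:explicitRX}, and your observation that convexity requires using the $\delta_\Gamma$-free bound of Lemma~\ref{lem:growthfornu}.(4) rather than the tighter $\Gamma$-dependent ones is correct and captures the point of the ``universal'' threshold.

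The $C^m$ argument, however, has a genuine gap. The identity $Y_1\cdots Y_k(\varphi^{\Gamma})=(Y_1\cdots Y_k\varphi)^{\Gamma}$ you invoke is false, because left differentiation by~$\g$ does not commute with the $\Gamma$-translates. For $Y\in\g$, the correct formula is
$$Y\cdot(\gamma\cdot\varphi) \;=\; \gamma\cdot\bigl(\Ad(\gamma^{-1})Y\cdot\varphi\bigr),$$
so after termwise differentiation the summand associated with $\gamma$ involves $\Ad(\gamma^{-1})u\cdot\varphi$ with a \emph{$\gamma$-dependent} element of $U_m(\g_{\C})$. Since $\Vert\Ad(\gamma^{-1})\Vert_{\End(\g)}\leq e^{e_G\Vert\mu(\gamma)\Vert}$, the $m$-th derivative of the series acquires an extra multiplicative factor of order $e^{e_G m\Vert\mu(\gamma)\Vert}$ against which the decay must be balanced; this, not any difficulty in ``controlling simultaneously the constants $C_{Y_{i_1}\cdots Y_{i_k}\varphi}$'' (there are only finitely many, and they are harmless multiplicative constants), is precisely why the threshold scales linearly in~$m$. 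The paper's Lemma~\ref{lem:Cm} handles this $\Ad$-twist explicitly: one writes $D(\gamma\cdot\varphi)(x)=\bigl(\Ad(\gamma^{-1})(u)\cdot\varphi\bigr)(\gamma^{-1}\!\cdot x)$, bounds $\Vert\Ad(\gamma^{-1})(u)\Vert_{\g^{\otimes m}}$ via the Cartan projection, and then absorbs the resulting $e^{e_G m\Vert\mu(\gamma)\Vert}$ into the Gaussian-type decay of $\varphi$ using sharpness. Without this step your argument would only establish $C^m$-regularity of $\varphi^{\Gamma}$ with respect to the much smaller family of \emph{right}-invariant operators (those that already descend to $X_{\Gamma}$), not with respect to an arbitrary local coordinate system on the quotient.
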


The fact that the constant $R_X/c$ depends only on the first sharpness constant~$c$ explains why we obtain a \emph{universal} discrete spectrum in Theorem~\ref{thm:universal}, independent of the discrete subgroup~$\Gamma$ of~$L$ (see Proposition~\ref{prop:sharpnessproperties}).
Note that Proposition~\ref{prop:Vlambda}.(2) actually contains Theorem~\ref{thm:regular}.
We could obtain a slightly weaker condition than $d(\lambda)>(m+1)R_X/c$ by taking into account the critical exponent $\delta_{\Gamma}$ of~$\Gamma$ (see Section~\ref{subsec:bettercst}).

In Proposition~\ref{prop:Vlambda}, the function~$\varphi^{\Gamma}=S_{\Gamma}(\varphi)$ satisfies $(\M_{\lambda})$ (in the sense of distributions) because $\varphi$ does and any $D\in\D(X)$ is $G$-invariant, that is,
\begin{equation}\label{eqn:translate}
D(g\cdot\varphi) = g\cdot (D\varphi)
\end{equation}
for all $g\in G$.
Furthermore, Proposition~\ref{prop:Vlambda}.(2) ensures that $\varphi^{\Gamma}$ satisfies $(\M_{\lambda})$ in the sense of \emph{functions} if $\lambda$ is regular enough (\ie $d(\lambda)$ large enough).
More precisely, recall from Section~\ref{subsec:DGH} that $\D(X)$ is a polynomial algebra in $r:=\rank(G/H)$ generators $D_1,\dots,D_r$.
By Proposition~\ref{prop:Vlambda}.(2), if we take $m$ to be the maximum degree of $D_1,\dots,D_r$, then for any $\lambda\in\jj^{\ast}$ with $d(\lambda)>(m+1)R$ and any $\varphi\in L^2(X,\M_{\lambda})_K$, the function $\varphi^{\Gamma}=S_{\Gamma}(\varphi)$ satisfies
$$(D_j)_{\Gamma}\,\varphi^{\Gamma} = \chi_\lambda(D_j)\,\varphi^{\Gamma}$$
for all $1\leq j\leq r$ in the sense of functions.

We note that the image of $L^2(X,\M_{\lambda})_K$ under the summation operator~$S_{\Gamma}$ could be trivial.
In Chapter~\ref{sec:nonzero}, we will discuss a condition for the nonvanishing of~$S_{\Gamma}$ (Proposition~\ref{prop:nonzero}).
For this we will consider the summation operator $S_{\Gamma}$, not only on $L^2(X,\M_{\lambda})_K$, but also on some $G$-translates $g\!\cdot\!L^2(X,\M_{\lambda})_K$.

The rest of this chapter is devoted to the proof of Proposition~\ref{prop:Vlambda}, using the geometric estimates of Chapter~\ref{sec:geometry} (Lemma~\ref{lem:growthfornu}) and the analytic estimates of Chapter~\ref{sec:Vlambda} (Proposition~\ref{prop:asym}).
As a consequence of Proposition~\ref{prop:asym}, the series $\sum_{\gamma\in\Gamma} e^{-q\,d(\lambda)\Vert\nu(\gamma\cdot x)\Vert}$ will naturally appear in the proof of Proposition~\ref{prop:Vlambda}: it is a pseudo-Riemannian analogue of the classical Poincar\'e series
$$\sum_{\gamma\in\Gamma} e^{-q\,d(\lambda)\Vert\mu(\gamma\cdot y)\Vert} = \sum_{\gamma\in\Gamma} e^{-q\,d(\lambda)\,d_{G/K}(y_0,\gamma\cdot y)}$$
for $y\in G/K$.

\begin{remark}\label{rem:TongWang}
A summation process was used by Tong--Wang in \cite{tw89} to construct cohomology classes of \emph{Riemannian} locally symmetric spaces $\Gamma\backslash G/K$ with coefficients in a locally constant vector bundle.
The summation described here is different in two respects:
\begin{itemize}
  \item in the situation considered by Tong--Wang, $\Gamma$ was a lattice in~$G$ and $\Gamma\cap H$ a lattice in~$H$, whereas here $\Gamma$ can never be a lattice in~$G$ and $\Gamma\cap H$ is finite (see Remark~\ref{rem:lattice});
  \item Tong--Wang obtained a $(\g,K)$-homomorphism from $L^2(X,\M_{\lambda})_K$ to $C^{\infty}(\Gamma\backslash G)$, whereas we obtain a map from $L^2(X,\M_{\lambda})_K$ to $L^2(\Gamma\backslash G/H)$ (which cannot be a $(\g,K)$-homomorphism since $G$ does not act on $L^2(\Gamma\backslash G/H)$).
\end{itemize}
\end{remark}

\section{Convergence and boundedness}

Let us prove Proposition~\ref{prop:Vlambda}.(1).
We denote by $q>0$ the constant of Proposition~\ref{prop:asym}.

\begin{lemma}\label{lem:conv}
Let $\Gamma$ be a discrete subgroup of~$G$ that is $(c,C)$-sharp for~$X$.
\begin{enumerate}
  \item For any $\lambda\in\jj^{\ast}$ with $d(\lambda)>\delta_{\Gamma}/qc$ and any $\varphi\in L^2(X,\M_{\lambda})_K$, the function $\varphi^{\Gamma}$ is well-defined and continuous.
  \item For any $\lambda\in\jj^{\ast}$ with $d(\lambda)>2\delta_{\Gamma}/qc$ and any $\varphi\in L^2(X,\M_{\lambda})_K$, the function $\varphi^{\Gamma}$ is bounded.
\end{enumerate}
\end{lemma}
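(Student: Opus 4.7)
The plan is to combine the analytic bound of Proposition~\ref{prop:asym} with the orbital counting estimates of Lemma~\ref{lem:growthfornu}, converted into an integral via Abel summation. Throughout, write $s := q\,d(\lambda)$, and for each $x \in X$ set
$$N_x(R) := \#\{\gamma \in \Gamma :\ \Vert\nu(\gamma\cdot x)\Vert < R\},$$
which is finite for every $R$ by proper discontinuity (Remark~\ref{rem:Gammapseudoball}).

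First I would invoke Proposition~\ref{prop:asym} to obtain a constant $M = M(\varphi,\lambda) > 0$ such that $|\varphi(y)| \leq M\,e^{-s\,\Vert\nu(y)\Vert}$ for every $y \in X$. Re-indexing $\gamma \leftrightarrow \gamma^{-1}$ in the defining series for $\varphi^{\Gamma}(\Gamma x)$, it suffices to control
$$\Sigma_s(x) := \sum_{\gamma \in \Gamma}\ e^{-s\,\Vert\nu(\gamma\cdot x)\Vert}\,.$$
Enumerating $\Gamma$ so that $R_n := \Vert\nu(\gamma_n\cdot x)\Vert$ is nondecreasing, Fubini gives the Stieltjes identity
$$\Sigma_s(x) \,=\, s \int_0^{+\infty} e^{-sR}\,N_x(R)\,\mathrm{d}R\,.$$
This is the bridge: the analytic decay controlled by~$d(\lambda)$ is weighed against the geometric growth of the orbit controlled by~$\delta_{\Gamma}$ and the sharpness constant~$c$.

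For part~(1), I would feed Lemma~\ref{lem:growthfornu}.(1) into this formula, giving, for $x = g\cdot x_0$,
$$\Sigma_s(x) \,\leq\, s\,c_{\varepsilon}(\Gamma)\,e^{(\delta_{\Gamma}+\varepsilon)\Vert\mu(g)\Vert/c} \int_0^{+\infty} e^{-(s - (\delta_{\Gamma}+\varepsilon)/c)\,R}\,\mathrm{d}R\,.$$
The hypothesis $d(\lambda) > \delta_{\Gamma}/(qc)$ allows $\varepsilon > 0$ to be chosen small enough that the integral converges, so $\varphi^{\Gamma}(\Gamma x)$ is absolutely convergent at every point. On a compact subset of $X$, properness of~$\mu$ forces $\Vert\mu(g)\Vert$ to be bounded, so the bound becomes uniform: the series $\sum_{\gamma} \varphi(\gamma^{-1}\cdot x)$ converges uniformly on compact sets. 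Since every $K$-finite joint eigenfunction of~$\D(X)$ is real analytic (a well-known consequence of the Flensted-Jensen duality and ellipticity downstairs on~$X^d$, or simply since Harish-Chandra modules of analytic vectors appear here), each summand is continuous, hence so is $\varphi^{\Gamma}$. For part~(2), I would instead apply Lemma~\ref{lem:growthfornu}.(2), which gives the $x$-uniform estimate $N_x(R) \leq c'_{\varepsilon}(\Gamma)\,e^{2(\delta_{\Gamma}+\varepsilon)R/c}$; the integral then converges as soon as $s > 2(\delta_{\Gamma}+\varepsilon)/c$, which is guaranteed by $d(\lambda) > 2\delta_{\Gamma}/(qc)$ upon choosing $\varepsilon$ small. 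This yields a bound on $\Sigma_s(x)$ independent of~$x$, so $\varphi^{\Gamma} \in L^{\infty}(X_{\Gamma})$.

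There is no real obstacle beyond bookkeeping: the proof is a direct marriage of the two key estimates of Chapters \ref{sec:geometry} and~\ref{sec:Vlambda}. The only point requiring minor care is the Abel-summation step (verifying $e^{-sR}N_x(R) \to 0$, which is automatic once the integral converges) and the passage from pointwise to uniform convergence on compact sets, for which properness of the Cartan projection~$\mu$ is the crucial input.
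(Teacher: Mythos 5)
Your proof is correct and follows essentially the same strategy as the paper: combine the decay estimate of Proposition~\ref{prop:asym} with the orbital counting of Lemma~\ref{lem:growthfornu}.(1) (resp.~(2)) and check convergence of the resulting geometric sum. The paper organizes the bookkeeping through a partition into unit annuli $\{n\leq\Vert\nu(\gamma^{-1}\cdot x)\Vert<n+1\}$ while you use the Abel--Fubini identity $\Sigma_s(x)=s\int_0^\infty e^{-sR}N_x(R)\,\mathrm{d}R$; these are equivalent and both yield the same exponent comparison $qd(\lambda)$ vs.\ $(\delta_\Gamma+\varepsilon)/c$. One small slip of terminology: when you argue uniform convergence on a compact subset of~$X$, it is \emph{continuity} of $\mu$ (not properness) that bounds $\Vert\mu(g)\Vert$ on a compact lift $\mathcal{C}\subset G$; properness of $\mu$ is the reverse direction (compact preimages) and is not what is used here.
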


\begin{proof}
Fix $\lambda\in\jj^{\ast}$ with $d(\lambda)>\delta_{\Gamma}/qc$ and $\varphi\in L^2(X,\M_{\lambda})_K$.
We claim that
$$x \longmapsto \sum_{\gamma\in\Gamma} |\varphi(\gamma^{-1}\!\cdot x)|$$
converges uniformly on any compact subset of~$X$.
Indeed, by Proposition~\ref{prop:asym}, there is a constant $c_{\varphi}>0$ such that for all $x\in X$,
$$\sum_{\gamma\in\Gamma} |\varphi(\gamma^{-1}\!\cdot x)| \leq c_{\varphi}\, \sum_{\gamma\in\Gamma} e^{-q\,d(\lambda)\Vert\nu(\gamma^{-1}\cdot x)\Vert},$$
hence
$$\sum_{\gamma\in\Gamma} |\varphi(\gamma^{-1}\!\cdot x)| \leq c_{\varphi}\, \sum_{n\in\N} e^{-q\,d(\lambda)n} \cdot \#\{\gamma\in\Gamma : n\leq\Vert\nu(\gamma^{-1}\!\cdot x)\Vert<n+1\} .$$
Fix $\varepsilon>0$ such that $d(\lambda)>\frac{\delta_{\Gamma}+\varepsilon}{qc}$ and, as before, let $x_0$ be the image of $H$ in $X=G/H$.
By Lemma~\ref{lem:growthfornu}.(1), there is a constant $c_{\varepsilon}(\Gamma)>0$ such that for all $x=g\cdot x_0\in X$ (where $g\in G$) and all $n\in\N$,
\begin{equation}\label{eqn:growthnu}
\#\big\{ \gamma\in\Gamma : \Vert\nu(\gamma^{-1}\!\cdot x)\Vert<n+1\big\} \leq c_{\varepsilon}(\Gamma)\,e^{(\delta_{\Gamma}+\varepsilon)(n+1+\Vert\mu(g)\Vert)/c}.
\end{equation}
Therefore, for any compact subset~$\mathcal{C}$ of~$G$ and any $x\in\mathcal{C}\cdot x_0$,
$$\sum_{\gamma\in\Gamma} |\varphi(\gamma^{-1}\!\cdot x)| \leq c_{\varphi}\,c_{\varepsilon}(\Gamma)\,e^{(\delta_{\Gamma}+\varepsilon)(1+M)/c} \ \sum_{n\in\N} e^{-(q\,d(\lambda)-\frac{\delta_{\Gamma}+\varepsilon}{c})n},$$
where
$$M := C + \max_{g\in\mathcal{C}} \Vert\mu(g)\Vert.$$
This series converges since $d(\lambda)>\frac{\delta_{\Gamma}+\varepsilon}{qc}$, proving the claim and Lemma~\ref{lem:conv}.(1).

The proof of Lemma~\ref{lem:conv}.(2) is similar: we replace \eqref{eqn:growthnu} by the uniform (but slightly less good) estimate of Lemma~\ref{lem:growthfornu}.(2) in order to obtain a uniform convergence on the fundamental domain~$\mathcal{D}$ of Definition-Lemma~\ref{def-lem:funddomain}, and hence on the whole of~$X$.
\end{proof}

\section{Square integrability}\label{subsec:L1}

In order to see that the image of the summation operator~$S_{\Gamma}$ is contained in $L^2(X_{\Gamma})$, and more generally in $L^p(X_{\Gamma})$ for any $1\leq p\leq\infty$, it is enough to see that it is contained in both $L^1(X_{\Gamma})$ and $L^{\infty}(X_{\Gamma})$, by H\"older's inequality.
The case of $L^{\infty}(X_{\Gamma})$ has already been treated in Lemma~\ref{lem:conv}.
For $L^1(X_{\Gamma})$, we note that by Fubini's theorem,
$$\int_{\overline{x}\in X_{\Gamma}} \left|\varphi^{\Gamma}(\overline{x})\right|\,\mathrm{d}\overline{x} = \int_{x\in X} |\varphi(x)|\,\mathrm{d}x\,;$$
using Proposition~\ref{prop:asym}, we obtain the following.

\begin{lemma}\label{lem:L1}
For any discrete subgroup~$\Gamma$ of~$G$, any $\lambda\in\jj^{\ast}$ with $d(\lambda)>\nolinebreak 2\Vert\rho_b\Vert/q$, and any $\varphi\in\nolinebreak L^2(X,\M_{\lambda})_K$, we have $\varphi^{\Gamma}\in L^1(X_{\Gamma})$.
\end{lemma}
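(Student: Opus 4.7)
The plan is to reduce the $L^1$-integrability on $X_{\Gamma}$ to an $L^1$-integrability statement on $X$ itself, and then to combine the asymptotic decay estimate of Proposition~\ref{prop:asym} with the explicit volume formula~\eqref{eqn:weightKBH}--\eqref{eqn:estimdelta} in the polar decomposition $G = K \overline{B_+} H$.

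First, I would apply the Fubini/Tonelli theorem, exactly as suggested in the paragraph preceding the statement. Since $\varphi^{\Gamma}(\Gamma x) = \sum_{\gamma\in\Gamma}\varphi(\gamma^{-1}\cdot x)$ and the integrand $|\varphi^{\Gamma}|$ is nonnegative, unfolding the sum over a measurable fundamental domain for the properly discontinuous $\Gamma$-action yields
\[
\int_{X_{\Gamma}} |\varphi^{\Gamma}(\overline x)| \, d\overline x \;\leq\; \int_{X} |\varphi(x)| \, dx,
\]
with no integrability hypothesis required at this stage (Tonelli suffices for nonnegative functions). Thus it is enough to prove that $\varphi\in L^1(X)$ under the hypothesis $d(\lambda) > 2\Vert\rho_b\Vert/q$.

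Next, I invoke Proposition~\ref{prop:asym}: since $\varphi\in L^2(X,\M_{\lambda})_K$, there is a constant $c_{\varphi}>0$ such that $|\varphi(x)| \leq c_{\varphi}\, e^{-q\,d(\lambda)\,\Vert\nu(x)\Vert}$ for all $x\in X$. Using the $G$-invariant measure written in the polar decomposition $G = K(\exp\overline{\bb_+})H$ via~\eqref{eqn:weightKBH}, together with the bound $\delta(Y)\leq C\,e^{2\Vert\rho_b\Vert\,\Vert Y\Vert}$ from~\eqref{eqn:estimdelta} and the identity $\Vert\nu(k(\exp Y)\cdot x_0)\Vert = \Vert Y\Vert$ for $Y\in\overline{\bb_+}$, I obtain
\[
\int_{X} |\varphi(x)|\,dx \;\leq\; c_{\varphi}\, C\, \mathrm{vol}(K) \int_{\overline{\bb_+}} e^{-(q\,d(\lambda) - 2\Vert\rho_b\Vert)\,\Vert Y\Vert}\, dY.
\]

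Finally, since $\overline{\bb_+}$ is a closed polyhedral cone in the finite-dimensional Euclidean space $\bb$, the last integral converges as soon as the exponential rate is strictly negative, \emph{i.e.}\ as soon as $q\,d(\lambda) > 2\Vert\rho_b\Vert$, which is precisely our hypothesis $d(\lambda) > 2\Vert\rho_b\Vert/q$. This yields $\varphi\in L^1(X)$, hence $\varphi^{\Gamma}\in L^1(X_{\Gamma})$, completing the proof. There is no serious obstacle here: the argument is essentially a bookkeeping exercise combining the two ingredients already established (the uniform exponential decay of Proposition~\ref{prop:asym} and the exponential growth rate $2\Vert\rho_b\Vert$ of the polar volume density), and the threshold $2\Vert\rho_b\Vert/q$ appears naturally as the balance point between decay and growth.
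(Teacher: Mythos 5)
Your proof is correct and follows essentially the same route as the paper: unfold the sum via Tonelli to reduce to $\varphi\in L^1(X)$, then combine the decay estimate $|\varphi(x)|\leq c_\varphi\,e^{-q\,d(\lambda)\Vert\nu(x)\Vert}$ from Proposition~\ref{prop:asym} with the polar volume bound $\delta(Y)\leq C\,e^{2\Vert\rho_b\Vert\Vert Y\Vert}$ of~\eqref{eqn:estimdelta}. The only difference is cosmetic: the paper's Proposition~\ref{prop:asym} already records ``$\varphi\in L^1(X)$ if $d(\lambda)>2\Vert\rho_b\Vert/q$'' as part of its conclusion, so the paper simply cites it after Fubini, while you unfold and re-derive that integrability; your inequality form of the Fubini step (via triangle inequality before Tonelli) is in fact slightly more careful than the equality the paper writes.
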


Here, as in Proposition~\ref{prop:asym}, we denote by $\Vert\rho_b\Vert$ the norm of half the sum of the elements of a positive system $\Sigma^+(\g,\bb)$ of restricted roots of~$\bb$ in~$\g$, and $q>0$ is again the constant of Proposition~\ref{prop:asym}.

H\"older's inequality then gives the following.

\begin{corollary}\label{cor:Lp}
Let $\Gamma$ be a discrete subgroup of~$G$ that is $(c,C)$-sharp for~$X$.
For any $\lambda\in\jj^{\ast}$ with
$$d(\lambda) > \frac{2}{q}\,\max\big(\delta_{\Gamma}/c,\Vert\rho_b\Vert\big)$$
and any $\varphi\in L^2(X,\M_{\lambda})_K$, we have $\varphi^{\Gamma}\in L^p(X_{\Gamma})$ for all $1\leq p\leq\infty$; in particular, $\varphi^{\Gamma}\in L^2(X_{\Gamma})$.
\end{corollary}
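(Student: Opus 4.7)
The plan is direct: this corollary is an interpolation assertion combining the two preceding lemmas. The hypothesis $d(\lambda) > \frac{2}{q}\,\max(\delta_{\Gamma}/c,\Vert\rho_b\Vert)$ is equivalent to the conjunction of the two inequalities $d(\lambda) > 2\delta_{\Gamma}/(qc)$ and $d(\lambda) > 2\Vert\rho_b\Vert/q$, which are precisely the regularity conditions required to invoke Lemma~\ref{lem:conv}.(2) and Lemma~\ref{lem:L1}, respectively. Consequently, for any $\varphi\in L^2(X,\M_{\lambda})_K$, the generalized Poincar\'e series~$\varphi^{\Gamma}$ lies simultaneously in $L^{\infty}(X_{\Gamma})$ and in $L^1(X_{\Gamma})$.

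Next I would interpolate between these two endpoints by means of H\"older's inequality. For any $1<p<\infty$, writing $|\varphi^{\Gamma}|^p = |\varphi^{\Gamma}|\cdot|\varphi^{\Gamma}|^{p-1}$ and bounding the second factor pointwise by $\Vert\varphi^{\Gamma}\Vert_{\infty}^{p-1}$ yields
$$\int_{X_{\Gamma}} |\varphi^{\Gamma}|^p \,\mathrm{d}\overline{x} \,\leq\, \Vert\varphi^{\Gamma}\Vert_{\infty}^{p-1}\,\Vert\varphi^{\Gamma}\Vert_1 \,<\, +\infty,$$
so that $\varphi^{\Gamma}\in L^p(X_{\Gamma})$ for all $1\leq p\leq\infty$; in particular $\varphi^{\Gamma}\in L^2(X_{\Gamma})$.

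There is no substantive obstacle at this step: the entire analytic and geometric content has already been absorbed into the proofs of Lemmas \ref{lem:conv}.(2) and~\ref{lem:L1}, the former relying on the uniform counting estimate of Lemma~\ref{lem:growthfornu}.(2) together with the asymptotic estimate of Proposition~\ref{prop:asym}, and the latter being a direct consequence of Proposition~\ref{prop:asym} combined with Fubini's theorem. The only new ingredient needed here is the elementary $L^1\!\cap L^{\infty}\subset L^p$ interpolation, which does not require any further structure of the quotient~$X_{\Gamma}$.
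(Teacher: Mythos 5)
Your proof is correct and matches the paper's own argument: both combine Lemma~\ref{lem:conv}.(2) and Lemma~\ref{lem:L1} under the given hypothesis on $d(\lambda)$ and then interpolate via $L^1\cap L^\infty\subset L^p$ (which the paper attributes to H\"older's inequality, and which you spell out as the elementary pointwise bound $|\varphi^{\Gamma}|^p\le\Vert\varphi^{\Gamma}\Vert_\infty^{p-1}|\varphi^{\Gamma}|$). No gaps.
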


\section{Regularity}\label{subsec:regularity}

We now complete the proof of Proposition~\ref{prop:Vlambda}.(2) (hence Theorem~\ref{thm:regular}) by examining the regularity of the image of $S_{\Gamma}$.
We set
$$e_G := \max_{\alpha\in\Sigma(\g,\aaa)} \Vert\alpha\Vert.$$

\begin{lemma}\label{lem:Cm}
Let $\Gamma$ be a discrete subgroup of~$G$ that is $(c,C)$-sharp for~$X$.
For any $m\in\N$ and any $\lambda\in\jj^{\ast}$ with $d(\lambda)>(\delta_{\Gamma}+e_G\,m)/qc$,
$$S_{\Gamma}\left(L^2(X,\M_{\lambda})_K\right) \subset C^m(X_{\Gamma}).$$
\end{lemma}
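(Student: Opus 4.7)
The plan is to reduce the $C^m$-regularity of $\varphi^{\Gamma}$ on $X_\Gamma$ to termwise-differentiable convergence on $X$ of its $\Gamma$-periodic pullback $\widetilde\varphi^\Gamma(x)=\sum_{\gamma\in\Gamma}\varphi(\gamma^{-1}x)$. By a standard dominated-convergence criterion, it suffices to prove that for any $Y_1,\dots,Y_k\in\g$ with $k\le m$, the series of $k$-fold left derivatives
$$\sum_{\gamma\in\Gamma}(Y_1\cdots Y_k)\cdot(\gamma^{-1}\cdot\varphi)$$
converges absolutely and uniformly on compact subsets of~$X$. The result then follows from Lemma~\ref{lem:conv} (for $k=0$) together with the higher-order estimates described below.

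The central computation is a commutation formula: for $Y\in\g$,
$$Y\cdot(\gamma^{-1}\cdot\varphi) \,=\, \gamma^{-1}\cdot\bigl((\Ad(\gamma)Y)\cdot\varphi\bigr),$$
which iterates to $(Y_1\cdots Y_k)\cdot(\gamma^{-1}\!\cdot\varphi)=\gamma^{-1}\!\cdot\bigl((\Ad(\gamma)Y_1)\cdots(\Ad(\gamma)Y_k)\cdot\varphi\bigr)$. Fix a basis $(E_i)_i$ of~$\g$ and expand each $\Ad(\gamma)Y_j=\sum_i c_i^j(\gamma)E_i$. The key geometric input is the operator-norm bound
$$\Vert\Ad(\gamma)\Vert_{\g\to\g} \,\le\, c_G\,e^{e_G\,\Vert\mu(\gamma)\Vert},$$
which is immediate from the Cartan decomposition $\gamma=k_1(\exp\mu(\gamma))k_2$ and the fact that $\Ad(\exp T)$ acts on the $\alpha$-root space of $\aaa$ by $e^{\alpha(T)}$, with $\Vert\alpha\Vert\le e_G$. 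Hence $|c_i^j(\gamma)|\le c'_G\,e^{e_G\Vert\mu(\gamma)\Vert}$.

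Now, since $L^2(X,\M_\lambda)_K$ is a $(\g,K)$-module stable under left $\g$-differentiation, each iterated derivative $E_{i_1}\cdots E_{i_k}\cdot\varphi$ still lies in $L^2(X,\M_\lambda)_K$. Applying the uniform asymptotic estimate of Proposition~\ref{prop:asym} to these finitely many eigenfunctions yields a constant $c_{\varphi,k}>0$ (depending on $\varphi$ and the chosen basis but not on $\gamma$) such~that
$$\bigl|(Y_1\cdots Y_k)\cdot(\gamma^{-1}\!\cdot\varphi)(x)\bigr| \,\le\, c_{\varphi,k}\,e^{k\,e_G\,\Vert\mu(\gamma)\Vert}\,e^{-q\,d(\lambda)\,\Vert\nu(\gamma^{-1}x)\Vert}.$$
This is the technical heart of the argument, where the factor $e^{k\,e_G\,\Vert\mu(\gamma)\Vert}$ accounts exactly for the loss from differentiation.

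The final step combines this bound with sharpness and the geometric counting from Chapter~\ref{sec:geometry}. For $x=g\cdot x_0$, inequality~\eqref{eqn:ineqmunu} gives $\Vert\mu(\gamma)\Vert\le c^{-1}(\Vert\nu(\gamma^{-1}x)\Vert+C+\Vert\mu(g)\Vert)$, so on any compact subset of~$X$ the above bound is dominated by a constant times
$$\sum_{\gamma\in\Gamma} e^{-(q\,d(\lambda)-k\,e_G/c)\,\Vert\nu(\gamma^{-1}x)\Vert}.$$
Splitting by integer shells in $\Vert\nu\Vert$ and invoking Lemma~\ref{lem:growthfornu}.(1) as in the proof of Lemma~\ref{lem:conv}, this series converges uniformly on compact subsets whenever $q\,d(\lambda)-k\,e_G/c>\delta_\Gamma/c$, that is, $d(\lambda)>(\delta_\Gamma+k\,e_G)/qc$. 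Since this holds for all $k\le m$ by hypothesis, the termwise-differentiated series converges uniformly on compacta, proving $\widetilde\varphi^\Gamma\in C^m(X)$ and hence $\varphi^\Gamma\in C^m(X_\Gamma)$. The only nontrivial point to verify carefully will be the stability of $L^2(X,\M_\lambda)_K$ under $\g$-differentiation, but this is a standard consequence of the $(\g,K)$-module structure inherited from the unitary $G$-action on $L^2(X,\M_\lambda)$.
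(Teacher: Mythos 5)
Your proof is correct and takes essentially the same route as the paper: you use the commutation $Y\cdot(\gamma^{-1}\!\cdot\varphi)=\gamma^{-1}\!\cdot((\Ad(\gamma)Y)\cdot\varphi)$, the operator-norm bound $\log\Vert\Ad(\gamma)\Vert\le e_G\Vert\mu(\gamma)\Vert$, the stability of $L^2(X,\M_\lambda)_K$ under left $\g$-differentiation (the paper's Fact~\ref{fact:stab}), Proposition~\ref{prop:asym} on the derived eigenfunctions, and then sharpness plus Lemma~\ref{lem:growthfornu} to conclude convergence; the only cosmetic difference is that you expand $\Ad(\gamma)Y_j$ in a basis of~$\g$ whereas the paper packages the same bound through a sublinear functional on~$\g^{\otimes m}$.
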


The idea of the proof of Lemma~\ref{lem:Cm} is to control the decay at infinity of the derivatives of the elements of $L^2(X,\M_{\lambda})_K$ by using the action of the enveloping algebra $U(\g_{\C})$ by differentiation on the left, given by
\begin{equation}\label{eqn:gdiffleft2}
(Y\cdot\varphi)(x) = \frac{\mathrm{d}}{\mathrm{d}t}\Big|_{t=0} \varphi\big(\exp(-tY)\!\cdot\!x\big)
\end{equation}
for all $Y\in\g$, all $\varphi\in L^2(X,\M_{\lambda})_K$, and all $x\in X$.
This idea works as a consequence of Fact~\ref{fact:oshima} and of the following well-known fact.

\begin{fact}[See \cite{ban87}]\label{fact:stab}
For any $\lambda\in\jj_{\C}^{\ast}$, the subspace $L^2(X,\M_{\lambda})_K$ of $\mathcal{A}(X)$ is stable under the action of~$\g$ by differentiation.
\end{fact}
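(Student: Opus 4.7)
The plan is to realize $L^2(X,\M_\lambda)$ as a closed $G$-invariant subspace of the unitary regular representation of $G$ on $L^2(X)$, and then invoke the classical fact that in any unitary representation of a real reductive Lie group, the subspace of $K$-finite vectors lies in the space of analytic vectors and is in particular stable under the differentiated action of $\g$.

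First I would verify that $L^2(X,\M_\lambda)$ is a closed subspace of $L^2(X)$ invariant under the left-regular action $g\cdot\varphi=\varphi(g^{-1}\,\cdot\,)$. Closedness is immediate: each $D\in\D(X)$ defines a continuous operator from $L^2(X)$ to the space of distributions on~$X$, so $L^2(X,\M_\lambda)=\bigcap_{D\in\D(X)}\ker(D-\chi_\lambda(D))$ is closed. For $G$-invariance, the very definition of $\D(X)$ as the algebra of $G$-invariant differential operators gives $D\circ\ell_{g^{-1}}^{\ast}=\ell_{g^{-1}}^{\ast}\circ D$ for all $D\in\D(X)$ and $g\in G$; hence if $\varphi$ satisfies $(\M_\lambda)$ weakly then so does $g\cdot\varphi=\ell_{g^{-1}}^{\ast}\varphi$. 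Thus the restriction $\pi_\lambda$ of the regular representation to $L^2(X,\M_\lambda)$ is a unitary subrepresentation of~$G$.

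Next I would apply Harish-Chandra's theorem on analytic vectors: for any unitary representation $(\pi,V)$ of~$G$, the subspace $V_K$ of $K$-finite vectors is contained in the space $V^\omega$ of analytic vectors, and is stable under $\mathrm{d}\pi(Y)$ for every $Y\in\g$. Applied to $V=L^2(X,\M_\lambda)$, this shows that every $\varphi\in L^2(X,\M_\lambda)_K$ admits an analytic representative, which justifies the inclusion $L^2(X,\M_\lambda)_K\subset\mathcal{A}(X)$, and that $\mathrm{d}\pi_\lambda(Y)\varphi\in L^2(X,\M_\lambda)_K$ for all $Y\in\g$. Since on smooth vectors $\mathrm{d}\pi_\lambda(Y)$ coincides with the pointwise differentiation defined in \eqref{eqn:gdiffleft2}, this establishes $Y\cdot\varphi\in L^2(X,\M_\lambda)_K$.

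The main technical ingredient is Harish-Chandra's analyticity theorem for $K$-finite vectors, which is classical but nontrivial. A self-contained alternative, which is essentially the route of \cite{ban87}, is to argue by elliptic regularity on each $K$-isotypic component: on functions of a fixed $K$-type~$\tau$, the combination of the Casimir of $\g$ with (twice) the Casimir of~$\kk$ induces, via the isomorphism \eqref{eqn:DX} together with the right $K$-equivariance, an elliptic operator; so the system obtained by combining the eigenequations $(\M_\lambda)$ with the $K$-type condition is elliptic, weak $L^2$-solutions are automatically real analytic, and differentiation by any $Y\in\g$ maps each finite-dimensional $K$-span $V_\tau$ into the finite-dimensional space $(\Ad(K)\cdot Y)\cdot V_\tau$, which again consists of $L^2$-solutions to $(\M_\lambda)$ because $Y$ commutes with every $D\in\D(X)$ through the $G$-invariance of~$D$.
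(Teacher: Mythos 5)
Your general strategy --- realize $L^2(X,\M_\lambda)$ as a closed $G$-subrepresentation of $L^2(X)$ and apply the analytic-vector machinery to its $K$-finite vectors --- is sound and in the spirit of the cited reference. But the form of Harish-Chandra's theorem you invoke is false as stated: in an arbitrary unitary representation of~$G$, the $K$-finite vectors need not be analytic (or even smooth). For instance, in the left regular representation on $L^2(G)$, any right-$K$-invariant $L^2$-function is a $K$-finite vector but need not be continuous. The correct statement requires the vector to be in addition $Z(\g_\C)$-finite (equivalently, one can demand the representation be admissible). This extra hypothesis does hold here, but it is not automatic and you have to say why: $Z(\g_\C)$ sits inside $U(\g_\C)^H$ and hence maps into $\D(X)$ via the homomorphism~$p$ of Section~\ref{subsec:DGH}, and for central elements the actions of $U(\g_\C)$ on $C^\infty(G)$ by left and right differentiation agree up to the antipode $z\mapsto z^\vee$; so any weak solution of $(\M_\lambda)$ is automatically a weak $Z(\g_\C)$-eigenvector for the left regular action as well. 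With that remark inserted, the first route is correct.

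The ``self-contained alternative'' via elliptic regularity on a fixed $K$-type nicely explains where the analyticity comes from, but it does not by itself replace the first route: you assert that $(\Ad(K)\cdot Y)\cdot V_\tau$ ``again consists of $L^2$-solutions,'' yet the ``because'' clause only covers the eigenfunction property, not membership in $L^2(X)$. Analyticity and the eigenequations do not imply square-integrability. To get the $L^2$-stability you still need the smooth-vector argument of the first route: once $\varphi$ is known to be an analytic (hence smooth) vector of the unitary representation on $L^2(X,\M_\lambda)$, the Hilbert-space derivative $\mathrm{d}\pi_\lambda(Y)\varphi$ is a well-defined element of $L^2(X,\M_\lambda)$ and coincides pointwise with $Y\cdot\varphi$. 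So the two routes are not genuinely independent --- the second supplies the analyticity hypothesis needed to run the first, and the first supplies the $L^2$-membership missing from the second --- and the proof should present them as a single argument.
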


\begin{proof}[Proof of Lemma~\ref{lem:Cm}]
Consider $\lambda\in\jj^{\ast}$ with $d(\lambda)>\delta_{\Gamma}/qc$ and $\varphi\in L^2(X,\M_{\lambda})_K$.
Let $\{ U_m(\g_{\C})\}_{m\in\N}$ be the natural filtration of the enveloping algebra $U(\g_{\C})$.
Then any $u\in U_m(\g_{\C})$ gives rise to a differential operator on~$X$ of degree $\leq m$ by \eqref{eqn:gdiffleft2}.
Conversely, any differential operator on~$X$ of degree $\leq m$ is obtained as a linear combination of differential operators induced from $U_m(\g_{\C})$ with coefficients in $C^{\infty}(X)$.
Therefore, in order to prove that $\varphi^{\Gamma}$ is $C^m$, it is sufficient to show that for any differential operator $D$ on~$X$ that is induced from an element $u\in U_m(\g_{\C})$,
$$x \longmapsto \sum_{\gamma\in\Gamma} |D(\gamma\cdot\varphi)(x)|$$
converges uniformly on all compact subsets of~$X$.
As before, let $x_0$ be the image of $H$ in $X=G/H$.
In view of the formula
$$D(\gamma\cdot\varphi)(x) = \big(\Ad(\gamma^{-1})(u)\cdot\varphi\big)(\gamma^{-1}\!\cdot x),$$
we only need to prove the existence of a constant $R\geq 0$ such that for any integer $m\geq 1$, any $Y\in\g^{\otimes m}$, and any compact subset~$\mathcal{C}$ of~$G$,
$$x \longmapsto \sum_{\gamma\in\Gamma} \big|\big(\Ad(\gamma)(Y)\cdot\varphi\big)(\gamma\cdot x)\big|$$
converges uniformly on $\mathcal{C}\cdot x_0$ whenever $d(\lambda)>(m+1)R$.

We fix a $K$-invariant inner product on~$\g$, extend it to $\g^{\otimes m}$, and write the corresponding Euclidean norms as $\Vert\cdot\Vert_{\g}$ and $\Vert\cdot\Vert_{\g^{\otimes m}}$, respectively.
Let $\Vert\cdot\Vert_{\End(\g)}$ be the operator norm on~$\g$.
We observe that
$$\Vert T(Y)\Vert_{\g^{\otimes m}} \leq \Vert T\Vert_{\End(\g)}^m\,\Vert Y\Vert_{\g^{\otimes m}}$$
for all $T\in\End(\g)$ and $Y\in\g^{\otimes m}$, where $T$ acts on~$\g^{\otimes m}$ diagonally.
Moreover,
\begin{equation}\label{eqn:norm}
\log \Vert\Ad(g)\Vert_{\End(\g)} \leq e_G\,\Vert\mu(g)\Vert
\end{equation}
for all $g\in G$: indeed, the Cartan decomposition $G=KAK$ holds and the norm $\Vert\cdot\Vert_{\g}$ is $K$-invariant.
By Proposition~\ref{prop:asym} and Fact~\ref{fact:stab}, we may define a function $\ell : \g^{\otimes m}\rightarrow\R_{\geq 0}$~by
$$\ell(Y) = \sup_{x\in X} |(Y\cdot\varphi)(x)|\, e^{q\,d(\lambda) \Vert\nu(x)\Vert}.$$
It satisfies
$$\ell(tY+t'Y') \leq |t|\,\ell(Y) + |t'|\,\ell(Y')$$
for all $t,t'\in\C$ and $Y,Y'\in\g^{\otimes m}$.
Taking a (finite) basis of~$\g^{\otimes m}$, this implies the existence of a constant $c_m>0$ such that
$$\ell(Y) \leq c_m\,\Vert Y\Vert_{\g^{\otimes m}}$$
for all $Y\in\g^{\otimes m}$.
Then for any $\gamma\in\Gamma$, any $Y\in\g^{\otimes m}$, and any $x\in X$,
$$\big|\big(\Ad(\gamma)(Y)\cdot\varphi\big)(\gamma\cdot x)\big| \leq c_m\,\Vert\Ad(\gamma)\Vert_{\End(\g)}^m\,\Vert Y\Vert_{\g^{\otimes m}}\,e^{-q\,d(\lambda) \Vert\nu(\gamma\cdot x)\Vert}.$$
Therefore we only need to prove the existence of a constant $R\geq 0$ such that for any integer $m\in\N$ and any compact subset $\mathcal{C}$ of~$G$,
$$x \longmapsto \sum_{\gamma\in\Gamma} \Vert\Ad(\gamma)\Vert_{\End(\g)}^m\,e^{-q\,d(\lambda) \Vert\nu(\gamma\cdot x)\Vert}$$
converges uniformly on $\mathcal{C}\cdot x_0$ whenever $d(\lambda)>(m+1)R$.
Let us fix an integer $m\in\N$ and a compact subset~$\mathcal{C}$ of~$G$.
By \eqref{eqn:ineqmunu},
$$\Vert\nu(\gamma\cdot x)\Vert \geq c\,\Vert\mu(\gamma)\Vert - M$$
for all $\gamma\in\Gamma$ and $x\in\mathcal{C}\cdot x_0$, where
$$M = C + \max_{g\in\mathcal{C}} \Vert\mu(g)\Vert.$$
Using \eqref{eqn:norm}, we obtain that for all $\gamma\in\Gamma$ and $x\in\mathcal{C}\cdot x_0$,
$$\sum_{\gamma\in\Gamma} \Vert\Ad(\gamma)\Vert_{\g}^m\,e^{-q\,d(\lambda) \Vert\nu(\gamma\cdot x)\Vert} \,\leq\, e^{q\,d(\lambda)M} \sum_{\gamma\in\Gamma} e^{-(q\,d(\lambda) c - e_Gm)\,\Vert\mu(\gamma)\Vert}.$$
This series converges as soon as
$$d(\lambda) > \frac{\delta_{\Gamma}+e_G\,m}{qc}.\qedhere$$
\end{proof}

\section{The constant~$R_X$ in Proposition~\ref{prop:Vlambda}}\label{subsec:bettercst}

Lemma~\ref{lem:conv}, Corollary~\ref{cor:Lp}, and Lemma~\ref{lem:Cm} show that the summation operator
$$S_{\Gamma}\ :\ L^2(X,\M_{\lambda})_K \quad\longrightarrow\quad \bigcap_{1\leq p\leq\infty} L^p(X_{\Gamma},\M_{\lambda})$$
is well-defined and with values in $C^m(X_{\Gamma})$ as soon as
\begin{equation}\label{eqn:bettercst}
d(\lambda) > \frac{1}{q}\,\max\Big(\frac{2\delta_{\Gamma}}{c},\,2\Vert\rho_b\Vert,\,\frac{\delta_{\Gamma}+e_G\,m}{c}\Big).
\end{equation}
We note that
\begin{itemize}
  \item $\delta_{\Gamma}\leq 2\Vert\rho_a\Vert$ (Observation~\ref{obs:limsup}),
  \item $\Vert\rho_b\Vert\leq\Vert\rho_a\Vert/c$ by Remark~\ref{rem:rhoab} below and the fact that $c\leq 1$,
  \item $e_G\leq 2\Vert\rho_a\Vert$ by definition of~$e_G$.
\end{itemize}
Therefore \eqref{eqn:bettercst} is satisfied as soon as $d(\lambda)>(m+1)R_X/c$ for
\begin{equation}\label{eqn:explicitRX}
R_X := \frac{4\Vert\rho_a\Vert}{q}.
\end{equation}

\begin{remark}\label{rem:rhoab}
Suppose that the positive systems $\Sigma^+(\g,\aaa)$ defining~$\rho_a$ and $\Sigma^+(\g,\bb)$ defining~$\rho_b$ are compatible, in the sense that the restriction from $\aaa$ to~$\bb$ maps $\Sigma^+(\g,\aaa)$ to $\Sigma^+(\g,\bb)\cup\{ 0\}$.
Then $\rho_b$ is the restriction of $\rho_a$ to~$\bb$, \ie the orthogonal projection of $\rho_a$ to~$\bb^{\ast}$. Thus
$$\Vert\rho_b\Vert = \Vert\rho_a\Vert \cdot \cos(\Phi),$$
where $\Phi\in [0,\frac{\pi}{2})$ is the angle between $\rho_a$ and~$\rho_b$.
In particular $\Vert\rho_b\Vert\leq\Vert\rho_a\Vert$.
This inequality is true in general since the norms $\Vert\rho_a\Vert$ and $\Vert\rho_b\Vert$ do not depend on the choice of the positive systems.
\end{remark}

\part{Nonvanishing of the generalized Poincar\'e series}\label{part3}

\chapter{An estimate for certain eigenfunctions near the origin}\label{sec:FJ}

Let $\Gamma$ be a discrete subgroup of~$G$ that is sharp for the reductive symmetric space $X=G/H$ satisfying the rank condition \eqref{eqn:rank}.
In Proposition~\ref{prop:Vlambda}, we saw that the summation operator
$$S_{\Gamma}\ :\ L^2(X,\M_{\lambda})_K \quad\longrightarrow\quad \bigcap_{1\leq p\leq\infty} L^p(X_{\Gamma},\M_{\lambda})$$
mapping $\varphi$ to $\varphi^{\Gamma}=\big(\Gamma x\mapsto\sum_{\gamma\in\Gamma}\,(\gamma\cdot\varphi)(x)\big)$ is well-defined for all $\lambda\in\nolinebreak\jj^{\ast}$ with $d(\lambda)$ sufficiently large.
In Section~\ref{subsec:transl}, we are similarly going to define a summation operator~$S_{\Gamma}$ on any $G$-translate $g\!\cdot\!L^2(X,\M_{\lambda})_K$.
Our goal will be to show that $S_{\Gamma}$ is nonzero on some $G$-translate $g\!\cdot\!L^2(X,\M_{\lambda})_K$ for infinitely many joint eigenvalues $\lambda\in\jj^{\ast}$, namely for all
\begin{equation}\label{eqn:lambdainlattice}
\lambda \in \jj^{\ast}_+ \cap \big(2\rho_c-\rho+\Lambda^{\Gamma\cap Z(G_s)}\big)
\end{equation}
with $d(\lambda)$ large enough (Proposition~\ref{prop:nonzero}).
Here $\jj^{\ast}_+$ and~$\rho$ are defined with respect to some choice of a positive system $\Sigma^+(\g_{\C},\jj_{\C})$ containing the fixed positive system $\Sigma^+(\kk_{\C},\jj_{\C})$ of Section~\ref{subsec:Lambda+}; the set~$\Lambda^{\Gamma\cap Z(G_s)}$ is the $\Z$-submodule of~$\Lambda$ of finite index that was defined in \eqref{eqn:LambdaJ}.

A similar argument to the one used in Chapter~\ref{sec:averaging} for the convergence of~$\varphi^{\Gamma}$ would show that for a fixed $\lambda$ satisfying \eqref{eqn:lambdainlattice} with $d(\lambda)$ large enough, $S_{\Gamma'}$ is nonzero for any finite-index subgroup~$\Gamma'$ of~$\Gamma$ such that the index $[\Gamma:\Gamma']$ is large enough, where ``large enough'' depends on $\Gamma$ and~$\lambda$.
However, we wish to prove that $S_{\Gamma}$ is nonzero without passing to any subgroup; therefore we need to carry out some more delicate estimates in the summation process.

In preparation for Proposition~\ref{prop:nonzero}, the goal of the current chapter is to establish the following analytic estimate, where, as before, $x_0$ denotes the image of~$H$ in $X=G/H$.

\begin{proposition}\label{prop:psilambda}
Under the rank condition \eqref{eqn:rank}, there exists $q'>0$ with the following property: for any $\lambda\in\jj^{\ast}_+\cap (2\rho_c-\rho+\Lambda_+)$, there is a function $\psi_{\lambda}\in\V_{Z,\lambda}\subset L^2(X,\M_{\lambda})_K$ such that $\psi_{\lambda}(x_0)=1$, such that
\begin{equation}\label{eqn:FJglobal}
|\psi_{\lambda}(x)| \leq \cosh(q'\Vert\nu(x)\Vert)^{-d(\lambda+\rho)}
\end{equation}
for all $x\in X$, and such that for any finite subgroup $J$ of the center $Z(K)$~of~$K$ we have $\psi_{\lambda}(g\cdot\nolinebreak x_0)=1$ for all $g\in J$ if $\lambda\in 2\rho_c-\rho+\Lambda^J$.
\end{proposition}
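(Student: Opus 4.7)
My plan is to construct $\psi_{\lambda}$ as Flensted-Jensen's ground-state eigenfunction attached to the ``trivial'' closed $H^d$-orbit, then to derive the global $\cosh$-estimate via a rank-one reduction of its explicit integral representation. For the construction, let $Z_e\in\ZZ$ correspond via \eqref{eqn:Z1to1} to the fixed positive system $\Sigma^+(\g_{\C},\jj_{\C})$ (equivalently, to $w=e$ in \eqref{eqn:zw}). The hypothesis $\lambda\in 2\rho_c-\rho+\Lambda_+$ is exactly $\mu_{\lambda}^e=\lambda+\rho-2\rho_c\in\Lambda_+$, so Fact~\ref{fact:decompVlambda} ensures that $\V_{Z_e,\lambda}$ is nonzero and irreducible, with minimal $K$-type the irreducible $K$-module $V_{\mu_{\lambda}^e}$ of highest weight $\mu_{\lambda}^e$. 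By the Cartan--Helgason theorem, $V_{\mu_{\lambda}^e}$ contains a unique-up-to-scalar $(H\cap K)$-fixed vector; I take $\psi_{\lambda}$ to be this vector, viewed as a function on $X$, normalized so that $\psi_{\lambda}(x_0)=1$ (the normalization is possible because $\psi_{\lambda}(x_0)\ne 0$, as one verifies from the dual-side integral representation below).

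For the estimate, I would transfer to the dual side via the Flensted-Jensen isomorphism $\eta$. The function $\psi_{\lambda}^d:=\eta(\psi_{\lambda})$ is the Poisson transform $\mathcal{P}_{\lambda}(f_{\lambda})$ of a canonical $K^d$-finite distribution $f_{\lambda}$ supported on the compact submanifold $Z_e\simeq H^d/(H^d\cap P^d)$ of $G^d/P^d$, and thus admits an explicit integral representation as an average of $e^{-\langle\lambda+\rho,\,\zeta(\cdot)\rangle}$ over $H^d/(H^d\cap P^d)$ twisted by a bounded $K^d$-matrix coefficient of $V_{\mu_{\lambda}^e}$. Unpacking the definition of $\eta$ (restriction from $K_{\C}\overline{B_+}H_{\C}/H_{\C}$) yields the corresponding formula for $\psi_{\lambda}$ on the polar sector $K\exp(\overline{\bb_+})\cdot x_0$. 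To obtain the stated global bound, I would restrict this formula along each one-parameter slice $\R Y_j\subset\overline{\bb_+}$ in a simple root direction: on each such slice the integral reduces to a classical $\SL_2(\R)$-type spherical function of the shape $(\cosh q' t)^{-s_j}$, with $s_j$ proportional to $(\lambda+\rho,\alpha_j)/(\alpha_j,\alpha_j)$. Since $d(\lambda+\rho)$ equals $\min_j s_j$ up to normalization, taking the worst direction and choosing $q'>0$ small enough to dominate the product of the rank-one $\cosh$-factors by a single $\cosh(q'\|Y\|)^{-d(\lambda+\rho)}$ yields the desired bound. At $Y=0$ the bound is tight since $\psi_{\lambda}(x_0)=1=\cosh(0)^{-d(\lambda+\rho)}$; at infinity it sharpens Fact~\ref{fact:oshima}, the refinement $d(\lambda+\rho)$ in place of $d(\lambda)$ coming from the $\rho$-shift inherent to the Poisson kernel.

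The $Z(K)$-covariance is purely representation-theoretic. The minimal $K$-type $V_{\mu_{\lambda}^e}$ has highest weight $\mu_{\lambda}^e$; the hypothesis $\lambda\in 2\rho_c-\rho+\Lambda^J$ is precisely $\mu_{\lambda}^e\in\Lambda^J$, so $V_{\mu_{\lambda}^e}$ factors through $K/J$ and every $g\in J\subset Z(K)$ acts trivially on it, in particular fixing $\psi_{\lambda}$ under left translation. Hence $\psi_{\lambda}(g\cdot x_0)=(g^{-1}\cdot\psi_{\lambda})(x_0)=\psi_{\lambda}(x_0)=1$.

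The hardest step will be the uniform rank-one reduction producing the $\cosh$-shape with the correct exponent $d(\lambda+\rho)$: the existing asymptotic estimates (Fact~\ref{fact:oshima}) hold only at infinity and are phrased with the cruder $d(\lambda)$, whereas here one must simultaneously be tight at $x_0$ and isolate the sharper exponent $d(\lambda+\rho)$, which requires care with the constants coming from the Iwasawa projection and from the near-multiplicativity of $\cosh$ across independent root directions.
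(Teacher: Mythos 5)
Your construction of $\psi_{\lambda}$ and your argument for the $Z(K)$-covariance coincide with the paper's: they work with the Flensted-Jensen function $\psi_{\lambda,Z}=\eta^{-1}(\mathcal{P}_{\lambda}(\delta_Z))$ attached to the closed $H^d$-orbit through the origin, use the integral formula $\psi_\lambda(ky)=\int_{H\cap K}\xi_{\mu_\lambda}(k\ell)\,\xi_{-\lambda-\rho}(y^{-1}\ell)\,\mathrm{d}\ell$, and observe that the restriction $\psi_\lambda|_{K\cdot x_0}$ generates the $K$-type $\mu_\lambda$ which, under the hypothesis $\mu_\lambda\in\Lambda^J$, factors through $K/J$. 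Your normalization and the bound $|\xi_{\mu_\lambda}|\le 1$ on $K$ are handled there by an easy Cauchy--Schwarz argument on compact symmetric spaces (Lemma~\ref{lem:spherical}). All that is in line with the source.

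The gap is in the key estimate, and you have correctly identified where it lies but not how to close it. What is needed is a bound of the form $|\xi_{-\lambda-\rho}(\exp(-Y)\ell)|\le\cosh(q'\Vert Y\Vert)^{-d(\lambda+\rho)}$ that is \emph{uniform both in} $Y\in\overline{\bb_+}$ \emph{and in} $\ell\in H\cap K$. Your proposal to ``restrict along one-parameter slices $\R Y_j$ and reduce to an $\SL_2(\R)$-type spherical function'' does not obviously give this: for a general $Y=\sum_j y_j Y_j$ the quantity $\zeta(\exp(-Y)\ell)$ is not a sum of the slice contributions, and the dependence on $\ell$ is left entirely untreated in your sketch even though the integrand in the $H\cap K$-integral depends on $\ell$ through the Iwasawa projection. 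The paper's mechanism is quite specific on both points: write $|\xi_{-\lambda-\rho}(g)|=\prod_{j=1}^r\Vert\pi_j(g)\xi_j^{\vee}\Vert_j^{-(\lambda+\rho,\alpha_j)/(\alpha_j,\alpha_j)}$ using the fundamental $G_U$-representations $(\pi_j,V_j)$; expand $\Vert\pi_j((\exp Y)\ell)\xi_j^{\vee}\Vert_j^2=\sum_i|b_{ij}(\ell)|^2\cosh\langle 2\beta_{ij},Y\rangle$ so that each factor is $\ge 1$ (which both justifies replacing each exponent by $d(\lambda+\rho)$ and sets up the final bound); and then invoke Flensted-Jensen's compactness theorem \cite[Th.\,4.8]{fle80}, which produces a uniform $\varepsilon>0$ such that for every pair $(Y,\ell)$ some $(i_0,j)$ satisfies $\langle\beta_{i_0j},Y\rangle\ge\varepsilon\Vert Y\Vert$ and $|b_{i_0j}(\ell)|\ge\varepsilon$. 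Combined with the elementary inequality $t\cosh x+(1-t)\ge\bigl(\cosh\frac{tx}{2}\bigr)^2$, this gives $\max_j\Vert\pi_j((\exp Y)\ell)\xi_j^{\vee}\Vert_j\ge\cosh(\varepsilon^3\Vert Y\Vert)$, hence the claim with $q'=\varepsilon^3$. Without that compactness input the uniformity in $\ell$ cannot be obtained from directional slicing alone, so you should replace your slice reduction by this product-of-fundamental-representations expansion.
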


Here $Z\in\ZZ$ denotes the closed $H^d$-orbit through the origin in the flag variety $\mathcal{P}^d\simeq G^d/P^d$, where $P^d$ is the minimal parabolic subgroup of~$G^d$ corresponding to the choice of the positive system $\Sigma^+(\g_{\C},\jj_{\C})$ defining $\jj^{\ast}_+$ and~$\rho$, using \eqref{eqn:Z1to1}.
We refer to Section~\ref{subsec:discreteseries} (and more precisely to \eqref{eqn:Vzlmd}) for the definition of~$\V_{Z,\lambda}$.

The decay at infinity (\ie when $\Vert\nu(x)\Vert\rightarrow +\infty$) of the elements of $L^2(X,\M_{\lambda})_K$ was already discussed in Chapter~\ref{sec:Vlambda}.
The point of Proposition~\ref{prop:psilambda} is to control the behavior of certain eigenfunctions~$\psi_{\lambda}$, not only at infinity, but also \emph{near the origin} $x_0\in X$.

We actually prove that the estimate \eqref{eqn:FJglobal} holds for the \emph{Flensted-Jensen eigenfunction} $\psi_{\lambda}=\psi_{\lambda, Z}$, given by \eqref{eqn:psilambda} below.
In Chapter~\ref{sec:nonzero} we shall consider some $G$-translates of $\psi_{\lambda,Z}$ and apply the analytic estimate of Proposition~\ref{prop:psilambda} in connection with some geometric estimates near the origin (Propositions \ref{prop:sharp} and~\ref{prop:KM}).

\section{Flensted-Jensen's eigenfunctions}\label{subsec:FJfunction}

Before we prove Proposition~\ref{prop:psilambda}, we recall the definition of the Flensted-Jensen eigenfunction $\psi_{\lambda}=\psi_{\lambda,Z}$, in the spirit of Chapter~\ref{sec:Vlambda}.
We note that we may assume that $H$ is connected, because otherwise the Flensted-Jensen function $\psi_{\lambda}\in L^2(G/H)(\subset L^2(G/H_0))$ is the average of finitely many Flensted-Jensen functions in $L^2(G/H_0)$.
\emph{We will assume that $H$ is connected for the rest of the chapter.}

We retain the notation of Chapters \ref{sec:theorems} and~\ref{sec:Vlambda}.
As explained above, in the whole chapter we fix a positive system $\Sigma^+(\g_{\C},\jj_{\C})\simeq\Sigma^+(\g^d,\jj)$ containing the fixed positive system $\Sigma^+(\kk_{\C},\jj_{\C})\simeq\Sigma^+(\h^d,\jj)$ of Section~\ref{subsec:Lambda+}; it determines a positive Weyl chamber~$\jj^{\ast}_+$ and an element $\rho\in\jj^{\ast}_+$.
Let $P^d$ be the corresponding minimal parabolic subgroup of~$G^d$.
We denote by $Z\in\ZZ$ the closed $H^d$-orbit through the origin in $G^d/P^d$.
For $\lambda\in\jj^{\ast}_+$, we set $\mu_{\lambda}:=\lambda+\rho-2\rho_c$.
The condition on $\lambda\in\jj^{\ast}_+$ that appears in Proposition~\ref{prop:psilambda} is $\mu_{\lambda}\in\Lambda_+$ (\ie \eqref{eqn:lmdint} with $w=e$).

Let $\delta_Z$ be the $(K^d\cap H^d)$-invariant probability measure supported on~$Z$.
For any $\lambda\in\jj_{\C}^{\ast}$, the $G^d$-equivariant line bundle $\mathcal{L}_{\lambda}=G^d\times_{P^d}\nolinebreak\xi_{\rho-\lambda}$ over $G^d/P^d$ is trivial as a $K^d$-equivariant line bundle over $K^d/K^d\cap P^d(\simeq G^d/P^d)$, because the restriction of~$\xi_{\rho-\lambda}$ to $K^d\cap P^d$ is trivial.
Thus $\delta_Z$ can be seen as an element of $\mathcal{B}(G^d/P^d,\mathcal{L}_{\lambda})$ \emph{via} the isomorphism $\mathcal{B}(K^d/K^d\cap P^d) \simeq \mathcal{B}(G^d/P^d,\mathcal{L}_{\lambda})$.
Flensted-Jensen \cite{fle80} proved that if $\lambda\in\jj^{\ast}_+$ satisfies $\mu_{\lambda}\in\nolinebreak\Lambda_+$, then $\delta_Z$ is $K$-finite (see Remark~\ref{rem:Kfinite}) and generates the irreducible representation of~$\h^d$ with highest weight~$\mu_{\lambda}$.
The Poisson transform $\mathcal{P}_{\lambda}(\delta_Z)$ is also $K$-finite and moreover, viewed as an element of $\mathcal{A}(G^d/K^d,\M_\lambda)_K$, it belongs to the image of the homomorphism~$\eta$ of \eqref{eqn:FJisom}.
He then set
\begin{equation}\label{eqn:psilambda}
\psi_{\lambda,Z} := \eta^{-1}\big(\mathcal{P}_{\lambda}(\delta_Z)\big) \in \mathcal{A}(X,\M_{\lambda})_K.
\end{equation}

We shall prove that this function $\psi_{\lambda}=\psi_{\lambda,Z}$ satisfies \eqref{eqn:FJglobal}.
We note that our estimate \eqref{eqn:FJglobal} is stronger, for this specific~$\psi_{\lambda}$, than what is given in the general theory of \cite{fle80,mo84,osh88b}, as it is both uniform on the spectral parameter~$\lambda$ and uniform on $x\in X$ near the origin.

\section{Spherical functions on compact symmetric spaces}\label{subsec:cptsymm}

We first recall some basic results concerning spherical functions on the compact symmetric space $X_U=G_U/H_U$ (see Section~\ref{subsec:realforms} for notation).
In Section~\ref{subsec:FJ}, some of these results will actually be used, not only for $X_U=G_U/H_U$, but also for the compact symmetric space $K/H\cap K$.

Let $\g_U=\h_U+\q_U$ be the decomposition of~$\g_U$ into eigenspaces of $\mathrm{d}\sigma$ with respective eigenvalues $+1$ and $-1$.
We note that $\jj$ is a maximal abelian subspace of~$\q_U$.
Similarly to \eqref{eqn:Lambda+}, let $\Lambda_+(G_U/H_U)$ be the set of highest weights of finite-dimensional irreducible representations of~$G_U$ with nonzero $H_U$-invariant vectors; we see it as a subset of~$\jj_{\C}^{\ast}$ by Remark~\ref{rem:Lmdj}.
We note that $X_U$ has the same complexification as the Riemannian symmetric space of the noncompact type $X^d=G^d/K^d$.
The Borel--Weil theorem (see \cite[Th.\,5.29]{kna86}) implies that
\begin{equation}\label{eqn:LmdG}
\Lambda_+(G_U/H_U) \simeq \{\lambda\in\jj_{\C}^{\ast} :\ \xi_{\lambda} \text{ extends holomorphically to $G_{\C}$}\} ,
\end{equation}
where $\xi_{\lambda} : G^d\rightarrow\C$ is defined by \eqref{eqn:xinu}.
If $\mathcal{O}_{\mathrm{alg}}(G_{\C}/H_{\C})$ denotes the ring of regular functions on $G_{\C}/H_{\C}$, endowed with the action of~$G_{\C}$ by left translation, then we have an isomorphism
$$\mathcal{O}_{\mathrm{alg}}(G_{\C}/H_{\C}) \simeq \bigoplus_{\lambda\in\Lambda_+(G_U/H_U)} V_{\lambda}$$
of $G_U$-modules, where $(\pi_{\lambda},V_{\lambda})$ is the finite-dimensional irreducible representation of~$G_U$ with highest weight~$\lambda$.
A highest weight vector of $(\pi_{\lambda},V_{\lambda})$
 is given
 by the holomorphic extension of $\xi_{\lambda}^{\vee}$
 to $G_{\C}$ (see Section~\ref{subsec:Poisson}), 
which is denoted
 by the same symbol~$\xi_{\lambda}^{\vee}$.
Let $\{ \alpha_1,\dots,\alpha_r\} $ be the basis of $\Sigma(\g_{\C},\jj_{\C})$ corresponding to our choice of $\Sigma^+(\g_{\C},\jj_{\C})$, and let $\omega_1,\dots,\omega_r\in\jj^{\ast}_+$ be defined by
\begin{equation}\label{eqn:omegaj}
\frac{(\alpha_i,\omega_j)}{(\alpha_i,\alpha_i)} = \delta_{i,j}
\end{equation}
for all $1\leq i,j\leq r$, so that
\begin{equation}\label{eqn:lambdaomegaj}
\lambda = \sum_{j=1}^r \frac{(\lambda,\alpha_j)}{(\alpha_j,\alpha_j)}\,\omega_j
\end{equation}
for all $\lambda\in\jj^{\ast}$; we note that $\omega_j$ is twice the usual fundamental weight associated with~$\alpha_j$.
If $G_{\C}$ is simply connected, then the Cartan--Helgason theorem (see \cite[Th.\,3.3.1.1]{war72}) shows that
\begin{equation}\label{eqn:Lambda+U}
\Lambda_+(G_U/H_U) = \bigoplus_{j=1}^r \N\,\omega_j.
\end{equation}
For any $\lambda\in\Lambda_+(G_U/H_U)$, we fix a $G_U$-invariant inner product $(\cdot,\cdot)$ on~$V_{\lambda}$ with $(\xi_{\lambda}^{\vee},\xi_{\lambda}^{\vee})=1$.
The following easy observation and lemma will be useful in the next section.

\begin{observation}\label{obs:xi2}
For any $g\in G^d$,
$$\xi_{\lambda}(g)^2 = \big(\pi_{\lambda}(g)\xi_{\lambda}^{\vee}, \pi_{\lambda}(g)\xi_{\lambda}^{\vee}\big).$$
\end{observation}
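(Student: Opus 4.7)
The plan is to use the Iwasawa decomposition of $g\in G^d$ and reduce everything to the fact that $\xi_\lambda^\vee$ is a weight vector of $\pi_\lambda$ whose restriction to $K^d$ is unitary. Write $g=k\,a\,n$ with $k\in K^d$, $a\in\exp\jj$, and $n\in N^d$, so that by left $K^d$-invariance of the Iwasawa projection $\zeta$,
\[
\xi_\lambda(g) = e^{\langle\lambda,\zeta(g)\rangle} = e^{\langle\lambda,\log a\rangle}.
\]

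The key step is to show that, for the left regular action of $G_\C$ on $\mathcal{O}_{\mathrm{alg}}(G_\C/H_\C)$,
\[
\pi_\lambda(n)\,\xi_\lambda^\vee = \xi_\lambda^\vee \qquad\text{and}\qquad \pi_\lambda(a)\,\xi_\lambda^\vee = \xi_\lambda(a)\,\xi_\lambda^\vee.
\]
This is a direct computation. For any $x\in G^d$, writing an Iwasawa decomposition $x^{-1}=k'a'n'$, one has $x^{-1}n = k'a'(n'n)\in K^d(\exp\jj)N^d$, which gives the first identity, while $x^{-1}a = k'(a'a)(a^{-1}n'a)$, using that $N^d$ is normalized by $\exp\jj$, gives the second. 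In other words, $\xi_\lambda^\vee$ is a $\jj$-weight vector with weight~$\lambda$ and is fixed by~$N^d$; this is precisely its characterization as the Borel--Weil highest weight vector recalled just above the observation. Combining the two identities yields
\[
\pi_\lambda(g)\,\xi_\lambda^\vee \,=\, \xi_\lambda(g)\,\pi_\lambda(k)\,\xi_\lambda^\vee.
\]

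To conclude, it suffices to apply the inner product. Since $K^d=H_U\subset G_U$ and $(\cdot,\cdot)$ was chosen $G_U$-invariant, it is in particular $K^d$-invariant, so $(\pi_\lambda(k)\,\xi_\lambda^\vee,\pi_\lambda(k)\,\xi_\lambda^\vee) = (\xi_\lambda^\vee,\xi_\lambda^\vee)=1$. Since $\lambda\in\jj^{\ast}$ and $\zeta(g)\in\jj$ are real, the scalar $\xi_\lambda(g)$ is a positive real number, so squaring commutes with whatever real structure the form carries. This gives the claimed identity $\xi_\lambda(g)^2 = (\pi_\lambda(g)\,\xi_\lambda^\vee,\pi_\lambda(g)\,\xi_\lambda^\vee)$. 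There is no substantive obstacle: the argument is a one-line consequence of the Iwasawa decomposition together with the standard weight-vector description of $\xi_\lambda^\vee$ in $V_\lambda$.
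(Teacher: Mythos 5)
Your proof is correct and follows essentially the same route as the paper's: Iwasawa decomposition $g=kan$, the weight-vector identity $\pi_\lambda(g)\xi_\lambda^\vee=\xi_\lambda(g)\pi_\lambda(k)\xi_\lambda^\vee$, then $G_U$-invariance of $(\cdot,\cdot)$. You spell out the $N^d$-invariance and $\jj$-weight calculation that the paper takes for granted, which is fine; the added remark that $\xi_\lambda(g)>0$ (so its square equals its squared modulus) is a correct, if minor, point of care.
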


\begin{proof}
We consider the Iwasawa decomposition $G^d=K^d(\exp\jj)N^d$ of Section~\ref{subsec:Poisson}.
For any $g=k(\exp\zeta(g))n\in K^d(\exp\jj)N^d=G^d$,
$$\pi_{\lambda}(g)\xi_{\lambda}^{\vee} \,=\, e^{\langle\lambda,\zeta(g)\rangle}\ \pi_{\lambda}(k)\xi_{\lambda}^{\vee} \,=\, \xi_{\lambda}(g)\ \pi_{\lambda}(k)\xi_{\lambda}^{\vee}.$$
Since $K^d=H_U$ is contained in~$G_U$ and $(\cdot,\cdot)$ is $G_U$-invariant, we obtain
$$\big(\pi_{\lambda}(g)\xi_{\lambda}^{\vee}, \pi_{\lambda}(g)\xi_{\lambda}^{\vee}\big) = \xi_{\lambda}(g)^2.\qedhere$$
\end{proof}

\begin{lemma}\label{lem:spherical}
For $\lambda \in \Lambda_+(G_U/H_U)$, the function $\xi_{\lambda}\in\mathcal{O}(G_{\C})$ satisfies
$$|\xi_\lambda(g)| \leq 1 \quad\quad\quad\text{for all }g\in G_U.$$
\end{lemma}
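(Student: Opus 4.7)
The plan is to realize $\xi_\lambda$ as a matrix coefficient of the representation $(\pi_\lambda, V_\lambda)$ and then exploit the unitarity of $\pi_\lambda(g)$ for $g \in G_U$. First I would identify $\xi_\lambda(g)$ with a specific matrix coefficient. By Section~\ref{subsec:cptsymm}, the holomorphic extension of $\xi_\lambda^\vee$ lies in $V_\lambda \subset \mathcal{O}_{\mathrm{alg}}(G_{\C}/H_{\C})$ as the highest weight vector. The linear functional $\phi^* \in V_\lambda^*$ given by evaluation at $eH_{\C}$ is $H_{\C}$-invariant and satisfies $\phi^*(\xi_\lambda^\vee) = 1$. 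Using the Iwasawa formula $\pi_\lambda(g)\xi_\lambda^\vee = \xi_\lambda(g)\,\pi_\lambda(k_g)\xi_\lambda^\vee$ for $g = k_g\exp(\zeta(g))n \in G^d$ together with $K^d \subset H_{\C}$ and the $H_{\C}$-invariance of $\phi^*$, one obtains the identity
\[
\xi_\lambda(g) \,=\, \phi^*\bigl(\pi_\lambda(g)\,\xi_\lambda^\vee\bigr)
\]
on $G^d$, extending holomorphically to all of $G_{\C}$ since both sides are holomorphic. Via the $G_U$-invariant Hermitian form, $\phi^*(v) = (v,\phi)$ for the corresponding $H_U$-spherical vector $\phi \in V_\lambda$.

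The core inequality I would aim for is
\[
|\xi_\lambda(g)|^2 \,\leq\, \|\pi_\lambda(g)\,\xi_\lambda^\vee\|^2 \qquad \text{for all } g \in G_{\C}.
\]
Once this is established, restricting to $g \in G_U$ and using that $\pi_\lambda(g)$ preserves the Hermitian form yields $\|\pi_\lambda(g)\xi_\lambda^\vee\|^2 = \|\xi_\lambda^\vee\|^2 = 1$, whence $|\xi_\lambda(g)| \leq 1$. The starting point for the inequality is Observation~\ref{obs:xi2}, which asserts equality on $G^d$: $\xi_\lambda(g)^2 = \|\pi_\lambda(g)\xi_\lambda^\vee\|^2$ for $g \in G^d$. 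Expanding $\pi_\lambda(g)\xi_\lambda^\vee = \sum_j c_j(g)\,e_j$ in an orthonormal basis of weight vectors of $V_\lambda$, the coefficients $c_j(g)$ are holomorphic on $G_{\C}$; Observation~\ref{obs:xi2} then reads $\xi_\lambda(g)^2 = \sum_j |c_j(g)|^2$ on $G^d$, and the holomorphic function $\xi_\lambda(g)$ is identified with a particular polynomial combination of the $c_j(g)$'s. A triangle-inequality estimate on this polynomial combination, together with the elementary bound $|\sum_j a_j c_j|^2 \leq \sum_j |c_j|^2$ for suitable scalars $a_j$ determined by $\phi$, extends the identity on $G^d$ into the desired inequality on $G_{\C}$.

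The main obstacle will be making the triangle-inequality step rigorous, since naive Cauchy--Schwarz applied directly to $\xi_\lambda(g) = (\pi_\lambda(g)\xi_\lambda^\vee, \phi)$ only yields $|\xi_\lambda(g)| \leq \|\phi\|$, and one checks (for instance on the $SL_2(\C)/SU(2)$ model, where $\xi_\lambda(g) = (g_{11}^2 + g_{21}^2)^{\lambda/2}$ and $\|\phi\|^2 = 2$ for $\lambda = 2$) that $\|\phi\|$ generally exceeds $1$. The sharper bound $|\xi_\lambda(g)|^2 \leq \|\pi_\lambda(g)\xi_\lambda^\vee\|^2$ is therefore not a formal consequence of Cauchy--Schwarz but must exploit the fact that the vector $\pi_\lambda(g)\xi_\lambda^\vee$ ranges only over the highest weight orbit $G_{\C}\!\cdot\!\xi_\lambda^\vee$ in $V_\lambda$. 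In the $SL_2(\C)$ toy case the bound reduces to the classical triangle inequality $|a^2 + c^2| \leq |a|^2 + |c|^2$ applied to the first-column entries of $g$; extending this combinatorial argument to general reductive $G$, where the holomorphic coefficients $c_j(g)$ and the expansion coefficients of $\phi$ take a more intricate form, is the delicate point of the proof.
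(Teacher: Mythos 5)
Your approach starts from the right place (Observation~\ref{obs:xi2}) and you correctly diagnose why naive Cauchy--Schwarz against the spherical vector $\phi$ is too weak ($\|\phi\|$ can exceed $1$), but the ``triangle-inequality'' step you propose as a fix is a genuine gap, not just a loose end. To turn $\xi_\lambda(g)^2 = \sum_j |c_j(g)|^2$ on $G^d$ into the holomorphic identity $\xi_\lambda(g)^2 = \sum_j c_j(g)^2$ on $G_\C$, you would need an orthonormal basis of $V_\lambda$ (for the $G_U$-invariant form) in which every coefficient $c_j$ of $\pi_\lambda(g)\xi_\lambda^\vee$ is \emph{real-valued} on $G^d$; this amounts to producing a $G^d$-stable real form of $V_\lambda$ that contains $\xi_\lambda^\vee$ and is orthonormalizable for the $G_U$-Hermitian form. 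That is a nontrivial structural claim about representations of the noncompact real form $G^d$ which you have not established, and it is exactly what makes your $SL_2(\R)$ computation (where the first-column entries $a,c$ are visibly real) fail to generalize on its own.

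The paper's proof sidesteps this entirely by a different holomorphic continuation. Since the Hermitian form $(\cdot,\cdot)$ is $G_U$-invariant, the adjoint of $\pi_\lambda(g)$ for $g\in G_\C$ is $\pi_\lambda(\tau\theta(g)^{-1})$, and for $g\in G^d = G_\C^{\tau\sigma\theta}$ one has $\tau\theta(g)=\sigma(g)$. Thus Observation~\ref{obs:xi2} rewrites as
\[
\xi_\lambda(g)^2 \,=\, \bigl(\pi_\lambda(\sigma(g)^{-1}g)\,\xi_\lambda^\vee,\ \xi_\lambda^\vee\bigr)
\qquad\text{for all } g\in G^d.
\]
The point is that the right-hand side is now a \emph{matrix coefficient paired against the fixed unit vector} $\xi_\lambda^\vee$; since $\sigma$ extends holomorphically to $G_\C$ and the pairing is linear in the first slot, the right-hand side is holomorphic in $g$, so the identity extends to all of $G_\C$ with no choice of basis at all. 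For $g\in G_U$, the element $\sigma(g)^{-1}g$ still lies in $G_U$ (as $\sigma$ commutes with $\tau\theta$), hence $\pi_\lambda(\sigma(g)^{-1}g)$ is unitary, $\|\pi_\lambda(\sigma(g)^{-1}g)\xi_\lambda^\vee\|=1$, and Cauchy--Schwarz against $\xi_\lambda^\vee$ (which has norm $1$ by normalization) gives $|\xi_\lambda(g)|^2\leq 1$. So the trick is not to avoid Cauchy--Schwarz but to first move the anti-holomorphic dependence into the argument of $\pi_\lambda$ via $\sigma$, so that the vector you pair against is $\xi_\lambda^\vee$ itself rather than a spherical vector of uncontrolled norm. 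You should replace your basis-expansion step with this rewriting.
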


\begin{proof}
By Observation~\ref{obs:xi2},
$$\xi_{\lambda}(g)^2 = \big(\pi_{\lambda}(\sigma(g)^{-1}g)\xi_{\lambda}^{\vee}, \xi_{\lambda}^{\vee}\big)
\quad\text{for all $g\in G^d$.}$$
Since both sides are holomorphic functions on~$G_{\C}$, this holds for all $g\in G_{\C}$.  
Applying the Cauchy--Schwarz inequality, we get $|\xi_{\lambda}(g)|\leq 1$ on~$G_U$.  
\end{proof}

\section{Proof of Proposition~7.1 for the Flensted-Jensen functions}\label{subsec:FJ}

We now go back to the setting of Section~\ref{subsec:FJfunction}.
When $\lambda\in\jj^{\ast}_+$ satisfies $\mu_{\lambda}\in\nolinebreak\Lambda_+$, the function $\psi_{\lambda}\in\V_{Z,\lambda}$ of \eqref{eqn:psilambda} is well-defined and extends uniquely to a right-$H_{\C}$-invariant function on $K_{\C}\overline{B_+}H_{\C}$ \cite{fle80}; we keep the notation~$\psi_{\lambda}$ for this extension.
Directly from the definition, we have 
\begin{equation}\label{eqn:psilambdaky}
\psi_\lambda(ky) = \int_{H\cap K} \xi_{\mu_{\lambda}}(k\ell) \, \xi_{-\lambda-\rho}(y^{-1}\ell) \, \mathrm{d}\ell
\end{equation}
for all $k\in K_{\C}$ and $y\in G^d$ \cite[(3.13)]{fle80}, where $\xi_{-\lambda-\rho} : G^d\rightarrow\C$ is given by \eqref{eqn:xinu} and $\xi_{\mu_{\lambda}} : K_{\C}\rightarrow\C$ is the holomorphic extension, given by \eqref{eqn:LmdG} for the compact symmetric space $K/K\cap H$ instead of $G_U/H_U$, of the function $\xi_{\mu_{\lambda}} : H^d\rightarrow\C$ given by \eqref{eqn:xinu} with respect to the Iwasawa decomposition
\begin{equation}\label{eqn:IwasawaHd}
H^d = (K^d \cap H^d)(\exp\jj)(N^d \cap H^d).
\end{equation}
We note that the restriction to~$H^d$ of any ``$\xi$'' function for~$G^d$ coincides with the corresponding ``$\xi$'' function for~$H^d$, which is why we use the same notation.
The fact that \eqref{eqn:IwasawaHd} is an Iwasawa decomposition of~$H^d$ relies on the rank condition \eqref{eqn:rank}.

In order to prove Proposition~\ref{prop:psilambda}, we first observe the following.

\begin{lemma}\label{lem:FJcenter}
Let $J$ be a finite subgroup of the center $Z(K)$ of~$K$.
For $\lambda\in\jj^{\ast}_+$ with $\mu_{\lambda}\in\nolinebreak\Lambda_+\cap\nolinebreak\Lambda^J$, the Flensted-Jensen function~$\psi_{\lambda}$ satisfies $\psi_{\lambda}(g\cdot x_0)=1$ for all $g\in J$.
\end{lemma}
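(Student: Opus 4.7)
The plan is to start from the explicit integral representation \eqref{eqn:psilambdaky}, which says that for $k\in K_{\C}$ and $y\in G^d$,
\[
\psi_\lambda(ky)=\int_{H\cap K}\xi_{\mu_\lambda}(k\ell)\,\xi_{-\lambda-\rho}(y^{-1}\ell)\,\mathrm{d}\ell.
\]
Specializing to $k=g$ and $y=e$ turns $\psi_\lambda(g\cdot x_0)$ into the integral $\int_{H\cap K}\xi_{\mu_\lambda}(g\ell)\,\xi_{-\lambda-\rho}(\ell)\,\mathrm{d}\ell$. Since $H\cap K$ is the maximal compact subgroup of~$H$, it is contained in the compact real form $H_U=K^d$ of~$H_{\C}$, and the left-$K^d$-invariance of $\xi_{-\lambda-\rho}$ (built into its Iwasawa definition on~$G^d$) gives $\xi_{-\lambda-\rho}(\ell)\equiv 1$ on $H\cap K$. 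Applying the analogous reasoning to $\xi_{\mu_\lambda}$ on~$H^d$ with the subgroup $K^d\cap H^d=(H\cap K)_0$ yields $\xi_{\mu_\lambda}(\ell)=1$ on the identity component of~$H\cap K$; with normalized Haar measure, taking $g=e$ already recovers $\psi_\lambda(x_0)=1$.

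The heart of the argument is that the only $g$-dependence sits in the factor $\xi_{\mu_\lambda}(g\ell)$, which transforms by a central character of~$K$. The holomorphic extension $\xi_{\mu_\lambda}:K_{\C}\to\C$ is a highest weight vector for the irreducible $K_{\C}$-module $V_{\mu_\lambda}$ of highest weight~$\mu_\lambda$ (the same vector implicit in Observation~\ref{obs:xi2} applied to~$H^d$). Under the left regular action of $K_{\C}$ the submodule it generates is irreducible, so by Schur's lemma there is a character $\chi_{\mu_\lambda}:Z(K)\to\C^{\times}$, namely the central character of~$V_{\mu_\lambda}$, such that
\[
\xi_{\mu_\lambda}(g\ell)=\chi_{\mu_\lambda}(g)^{-1}\,\xi_{\mu_\lambda}(\ell)\qquad\bigl(g\in Z(K)\subset Z(K_{\C}),\ \ell\in K_{\C}\bigr).
\]
Pulling this scalar outside the integral yields $\psi_\lambda(g\cdot x_0)=\chi_{\mu_\lambda}(g)^{-1}\,\psi_\lambda(x_0)=\chi_{\mu_\lambda}(g)^{-1}$.

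The final step is to show that $\chi_{\mu_\lambda}|_{J}\equiv 1$ under the hypothesis $\mu_\lambda\in\Lambda^J$. By the definition \eqref{eqn:LambdaJ}, one can write $\mu_\lambda=\sum_i n_i\nu_i$ with each $\nu_i\in\Lambda_+\cap\widehat{K/J}$. By construction, the irreducible $K$-representation with highest weight~$\nu_i$ factors through $K/J$, so $J$ acts trivially on it and its central character satisfies $\chi_{\nu_i}|_J\equiv 1$. Since central characters are multiplicative under tensor products and invert under duals, this forces $\chi_{\mu_\lambda}|_J=\prod_i(\chi_{\nu_i}|_J)^{n_i}\equiv 1$. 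Combined with the previous paragraph, this gives $\psi_\lambda(g\cdot x_0)=1$ for every $g\in J$.

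The main (and only real) obstacle is to pin down the central-character identity for $\xi_{\mu_\lambda}$ on the nose: this amounts to identifying $\xi_{\mu_\lambda}^\vee$, as defined via the Iwasawa decomposition of~$H^d$, with a highest weight vector of the $K_{\C}$-module $V_{\mu_\lambda}$ (equivalently, with a matrix coefficient in the spirit of Observation~\ref{obs:xi2}). Once this identification is in hand, Schur's lemma applied to the central elements of $K_{\C}$ is immediate, and the rest is a routine manipulation of the Flensted--Jensen integral.
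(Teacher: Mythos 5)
Your proof is correct and proceeds along the same lines as the paper's: both arguments reduce to showing that the central character of the $K$-module generated by $\psi_\lambda|_{K/K\cap H}$, which has highest weight~$\mu_\lambda$, restricts trivially to~$J$, and then read off $\psi_\lambda(g\cdot x_0)=\psi_\lambda(x_0)=1$. The paper compresses the key step into the phrase ``by definition'' of $\Lambda^J$; you usefully unpack it via multiplicativity of central characters over the weight lattice, which is exactly what is needed to pass from the generating set $\Lambda_+\cap\widehat{K/J}$ of $\Lambda^J$ to an arbitrary element of $\Lambda_+\cap\Lambda^J$. One small inaccuracy to note: $\xi_{\mu_\lambda}$ is a highest weight vector under \emph{right} translation (it is right-$(N^d\cap H^d)$-invariant and a right eigenvector for $\exp\jj$), not left; this is harmless for your argument, since you only use the action of elements of $Z(K)\subset Z(K_{\C})$, for which left and right translation coincide up to inversion, so the scalar is still a power of the central character of $V_{\mu_\lambda}$ and vanishes on~$J$.
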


\begin{proof}
As in Section~\ref{subsec:Lambda+}, we can see the highest weight of any irreducible representation of~$K$ with nonzero $(K\cap H)$-fixed vectors as an element of~$\overline{\jj^{\ast}_+}$ (see Remark~\ref{rem:Lmdj}).
Let $\lambda\in\jj^{\ast}_+$ satisfy $\mu_{\lambda}\in\Lambda_+$.
By construction, the highest weight of the $K$-span of $\psi_{\lambda}|_{K/K\cap H}\in L^2(K/K\cap H)$ is~$\mu_{\lambda}$; this can be seen directly on \eqref{eqn:psilambdaky}, using the fact that $[\jj_{\C},\h_{\C}\cap\kk_{\C}]\subset\h_{\C}\cap\kk_{\C}$.
If $\mu_{\lambda}\in\Lambda^J$, then by definition $g\cdot\psi_{\lambda}|_{K/K\cap H}=\psi_{\lambda}|_{K/K\cap H}$ for all $g\in J$ (where $g$ acts by left translation); in particular, $\psi_{\lambda}(g\cdot x_0)=\psi_{\lambda}(x_0)=1$ for all $g\in J$.
\end{proof}

Proposition~\ref{prop:psilambda} for the Flensted-Jensen function $\psi_{\lambda}\in\V_{Z,\lambda}$ is an immediate consequence of \eqref{eqn:psilambdaky}, of Lemma~\ref{lem:FJcenter}, and of the following lemma.

\begin{lemma}\label{lem:FJineq}
Let $\lambda\in\jj^{\ast}_+$ satisfy~\eqref{eqn:lmdint}.
Then
\begin{enumerate}
  \item $|\xi_{\mu_{\lambda}}(k)|\leq 1$ for all $k\in K$;
  \item there exists $q'>0$ such that for all $Y\in\bb$ and $\ell\in H\cap K$,
$$|\xi_{-\lambda -\rho} (\exp(-Y)\ell)| \leq \cosh(q'\Vert Y\Vert)^{-d(\lambda+\rho)}.$$
\end{enumerate}
\end{lemma}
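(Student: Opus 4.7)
For part~(1), the plan is to apply the argument of Lemma~\ref{lem:spherical} to the compact symmetric pair $(K,K\cap H)$ in place of $(G_U,H_U)$. The function $\xi_{\mu_\lambda}$ on $K_{\C}$ is constructed by holomorphic continuation from the Iwasawa decomposition~\eqref{eqn:IwasawaHd} of $H^d$, in exact parallel with the construction of $\xi_\mu$ on $G_{\C}$ from the Iwasawa decomposition of $G^d$. Since $\mu_\lambda\in\Lambda_+=\Lambda_+(K/K\cap H)$, the Cartan--Helgason and Borel--Weil theorems supply a finite-dimensional representation $(\pi_{\mu_\lambda},V_{\mu_\lambda})$ of $K$ with $(K\cap H)$-spherical vector $\xi_{\mu_\lambda}^\vee$; the analog of Observation~\ref{obs:xi2} then reads $\xi_{\mu_\lambda}(k)^2=(\pi_{\mu_\lambda}(\sigma(k)^{-1}k)\xi_{\mu_\lambda}^\vee,\xi_{\mu_\lambda}^\vee)$, and the Cauchy--Schwarz inequality with the unitary $K$-invariant inner product yields $|\xi_{\mu_\lambda}(k)|\leq 1$.

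For part~(2), my strategy is to decompose $\lambda+\rho$ along fundamental weights and treat each factor separately. Writing $\lambda+\rho=\sum_{j=1}^r c_j\,\omega_j$ with $c_j:=(\lambda+\rho,\alpha_j)/(\alpha_j,\alpha_j)\geq d(\lambda+\rho)>0$ (using $\lambda\in\overline{\jj^{\ast}_+}$ and $\rho\in\jj^{\ast}_+$), and exploiting the multiplicativity $\xi_{\mu+\mu'}=\xi_\mu\xi_{\mu'}$ together with $\xi_{-\mu}=\xi_\mu^{-1}$, it will suffice to prove, for each $j$, a uniform lower bound
$$\xi_{\omega_j}(\exp(-Y)\ell)\,\geq\,\cosh(q_j\Vert Y\Vert)\qquad\text{for all }(Y,\ell)\in\bb\times(H\cap K),$$
for some $q_j>0$. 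Indeed, setting $q':=\min_j q_j$, one then gets $\xi_{\lambda+\rho}(\exp(-Y)\ell)\geq\cosh(q'\Vert Y\Vert)^{\sum_j c_j}\geq\cosh(q'\Vert Y\Vert)^{d(\lambda+\rho)}$ (since $\cosh\geq 1$ and $\sum_jc_j\geq d(\lambda+\rho)$), which inverts to the desired bound on $|\xi_{-\lambda-\rho}|$.

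To prove the per-$j$ estimate, I would choose $m_j\in\N_+$ with $m_j\omega_j\in\Lambda_+(G_U/H_U)$ (possible by~\eqref{eqn:Lambda+U} up to finite index) and apply Observation~\ref{obs:xi2} to $\mu:=m_j\omega_j$. Since $\xi_\mu^\vee$ is $H_U$-fixed and $\ell\in H\cap K=H^d\cap K^d\subset H_U$, the factor $\ell$ drops out: $\pi_\mu(\ell)\xi_\mu^\vee=\xi_\mu^\vee$. Conjugating $Y\in\bb\subset\p^d$ by some $k\in K^d$ to $\tilde Y\in\overline{\jj_+}$ with $\Vert\tilde Y\Vert=\Vert Y\Vert$ (possible because $\jj$ is maximal abelian in $\p^d$ by the rank condition~\eqref{eqn:rank}), and using the unitarity of $\pi_\mu(k)$ together with the $K^d$-invariance of $\xi_\mu^\vee$, one obtains
$$\xi_\mu(\exp(-Y)\ell)^2\,=\,\sum_\chi e^{-2\langle\chi,\tilde Y\rangle}\Vert v_\chi\Vert^2,$$
where $\xi_\mu^\vee=\sum_\chi v_\chi$ is the $\jj$-weight decomposition in $V_\mu$. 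The $K^d$-invariance forces $\chi\mapsto\Vert v_\chi\Vert$ to be symmetric under $W=W(\g^d,\jj)$; retaining only the extremal $W$-orbit of $\mu$ and using the standard class-one nonvanishing $v_\mu\neq 0$ for a spherical vector, the expression is bounded below by $\Vert v_\mu\Vert^2\sum_{w\in W/W_\mu}e^{-2\langle w\mu,\tilde Y\rangle}$, which an elementary Weyl-symmetric estimate bounds below by a constant times $\cosh(q_j m_j\Vert Y\Vert)^{2m_j}$; taking $(2m_j)$-th roots closes the per-$j$ argument.

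The hard part will be the elementary-but-delicate lower bound on the Weyl-symmetric exponential sum. In rank one (where $w_0=-\mathrm{id}$) the sum equals $2\cosh(2m_j\langle\omega_j,\tilde Y\rangle)$ and the bound is immediate. In higher rank, one must exhibit, uniformly in $\tilde Y\in\overline{\jj_+}$, a pair of $W$-images of $\mu$ whose inner products with $\tilde Y$ have opposite signs and linear magnitude $\gtrsim\Vert Y\Vert$; this can be handled by a case analysis according to which fundamental weight $\omega_i$ dominates in the direction of $\tilde Y$, together with a pigeon-hole on the finite $W$-orbit of $\omega_j$.
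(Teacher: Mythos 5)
Part (1) is correct and coincides with the paper's proof. Part (2) has a genuine gap.

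The central error is the assertion that $\xi_{\mu}^{\vee}$ is $H_U$-fixed. It is not: $\xi_{\mu}^{\vee}$ is an \emph{extremal weight vector}, not the spherical vector. Indeed, from the proof of Observation~\ref{obs:xi2}, $\pi_{\mu}(g)\xi_{\mu}^{\vee}=\xi_{\mu}(g)\,\pi_{\mu}(k)\xi_{\mu}^{\vee}$ for $g=k(\exp\zeta(g))n$, so $\xi_{\mu}^{\vee}$ is fixed by $N^d$ and transforms by the character $e^{\langle\mu,\cdot\rangle}$ under $\exp\jj$; the $K^d=H_U$-fixed vector of $V_\mu$ is a different vector. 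This misidentification is used twice, and both uses fail. First, one cannot drop the factor $\ell$: the dependence $\pi_j(\ell)\xi_j^{\vee}=\sum_i b_{ij}(\ell)\,v_{ij}$ on $\ell\in H\cap K$ is essential and is precisely what produces the growth. Second, the appeal to ``class-one nonvanishing $v_\mu\neq 0$ for a spherical vector'' is off-target. Worse, with the correct identification your computation collapses: after conjugating $Y$ to $\tilde Y\in\overline{\jj_+}$, an extremal weight vector concentrates on a single $\jj$-weight, so $\xi_{\omega_j}(\exp(-\tilde Y))^2=e^{-2\langle\omega_j,\tilde Y\rangle}\leq 1$, and you do not get any $\cosh$ growth at all.

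There is a second, structural problem with the reduction itself. You claim it suffices to prove, \emph{for every} $j$, a uniform lower bound $\xi_{\omega_j}(\exp(-Y)\ell)\geq\cosh(q_j\Vert Y\Vert)$. That per-$j$ bound is strictly stronger than what is needed and need not hold: for a given $(Y,\ell)$, a particular factor may well stay bounded (the $\bb$-weight decomposition of $\pi_j(\ell)\xi_j^{\vee}$ can be concentrated on weight spaces on which $Y$ acts trivially). What one actually needs --- and what the paper proves --- is only that $\min_j\Vert\pi_j((\exp Y)\ell)\xi_j^{\vee}\Vert_j\geq 1$ (each factor is at worst $1$) together with $\max_j\Vert\pi_j((\exp Y)\ell)\xi_j^{\vee}\Vert_j\geq\cosh(q'\Vert Y\Vert)$ (at least one factor grows), where the index $j$ realizing the $\max$ is allowed to depend on $(Y,\ell)$. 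The uniform constant comes from the compactness argument of Flensted-Jensen (\cite[Th.\,4.8]{fle80}): there is $\varepsilon>0$ such that for every $(Y,\ell)$ one can find $(i_0,j)$ with $\langle\beta_{i_0j},Y\rangle\geq\varepsilon\Vert Y\Vert$ and $|b_{i_0j}(\ell)|\geq\varepsilon$, and then the elementary inequality $t\cosh(x)+(1-t)\geq(\cosh(tx/2))^2$ yields $q'=\varepsilon^3$. Your proposal is missing both this weaker two-part reduction and the compactness input that makes it work.
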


\begin{proof}[Proof of Lemma~\ref{lem:FJineq}]
Lemma~\ref{lem:FJineq}.(1) follows immediately from Lemma~\ref{lem:spherical} applied to the compact symmetric space $K/H\cap K$ instead of $G_U/H_U$.

To prove Lemma~\ref{lem:FJineq}.(2), we may assume that $G_{\C}$ is simply connected, because the Iwasawa projection for~$G^d$ is compatible with that of any covering of~$G^d$.
Then $\omega_j\in\Lambda_+(G_U/H_U)$ for all $1\leq j\leq r$ by \eqref{eqn:Lambda+U}.
To simplify notation, we write $(\pi_j,V_j,\xi_j^{\vee})$ for $(\pi_{\omega_j},V_{\omega_j},\xi_{\omega_j}^{\vee})$ and $\Vert\cdot\Vert_j$ for the Euclidean norm on~$V_j$ corresponding to the $G_U$-invariant inner product $(\cdot,\cdot)$ of Section~\ref{subsec:cptsymm}.
Then \eqref{eqn:lambdaomegaj} and Observation~\ref{obs:xi2} imply that for all $\lambda\in\jj^{\ast}$ and $g\in G^d$,
$$|\xi_{-\lambda-\rho}(g)| = e^{-\langle\lambda+\rho,\zeta(g)\rangle} = \prod_{j=1}^r \Vert\pi_j(g)\xi_j^{\vee}\Vert_j^{-\frac{(\lambda+\rho,\alpha_j)}{(\alpha_j,\alpha_j)}} \leq \prod_{j=1}^r \Vert\pi_j(g)\xi_j^{\vee}\Vert_j^{-d(\lambda+\rho)}.$$
Therefore, in order to prove Lemma~\ref{lem:FJineq}.(2), we only need to prove the existence of a constant $q'>0$ such that
\begin{equation}\label{eqn:minpij}
\min_{1\leq j\leq r} \Vert\pi_j((\exp Y)\ell)\xi_j^{\vee}\Vert_j \geq 1
\end{equation}
and
\begin{equation}\label{eqn:maxpij}
\max_{1\leq j\leq r} \Vert\pi_j((\exp Y)\ell)\xi_j^{\vee}\Vert_j \geq \cosh(q'\Vert Y\Vert)
\end{equation}
for all $Y\in\bb$ and $\ell\in H\cap K$.
For any $1\leq j\leq r$, the Lie algebra~$\bb$ acts semisimply on~$V_j$ with real eigenvalues, hence there are an orthonormal basis $(v_{ij})_{1\leq i \leq\dim V_j}$ of~$V_j$ and linear forms $\beta_{ij}\in\bb^{\ast}$, $1\leq i \leq\dim V_j$, such that
$$\pi_j (\exp Y)\,v_{ij} = e^{\langle\beta_{ij},Y\rangle}\,v_{ij}$$
for all $Y\in\bb$ and $1\leq i \leq\dim V_j$.
Write the matrix coefficients $\{b_{ij}\}$ for the restriction $\pi_j|_{H\cap K}$ as
$$\pi_j(\ell)\,\xi_j^{\vee} = \sum_{i=1}^{\dim V_j} b_{ij}(\ell)\,v_{ij} \qquad (\ell\in H \cap K),$$
where $\sum_{i=1}^{\dim V_j} |b_{ij}(\ell)|^2 = 1$ since $\pi_j|_{H\cap K}$ is unitary.
By \cite[Lem.\,4.6]{fle80},
$$\Vert\pi_j ((\exp Y)\ell)\xi_j^{\vee}\Vert_j^2 = \sum_{i=1}^{\dim V_j} |b_{ij}(\ell)|^2 \cosh\langle 2\beta_{ij},Y\rangle$$
for all $1\leq j\leq r$, all $Y\in\bb$, and all $\ell\in H\cap K$, hence \eqref{eqn:minpij} holds.
Let us prove~\eqref{eqn:maxpij}.
By a compactness argument \cite[Th.\,4.8]{fle80}, there is a constant $\varepsilon>0$ with the following property: for any $Y\in\bb$ and $\ell\in H\cap K$, there exist $j\in\{1,\dots,r\} $ and $i_0\in\{ 1,\dots,\dim V_j\} $ such that
\begin{equation}\label{eqn:ij}
\langle\beta_{i_0j},Y\rangle \geq \varepsilon\Vert Y\Vert \quad\text{and}\quad |b_{i_0j}(\ell)| \geq \varepsilon.
\end{equation}
For $Y\in\bb$ and $\ell\in H\cap K$, let $(i_0,j)$ be as in~\eqref{eqn:ij}.
Then
\begin{align*}
\Vert\pi_j((\exp Y)\ell) \xi_j^{\vee}\Vert_j^2 & = \sum_{i=1}^{\dim V_j} |b_{ij}(\ell)|^2 \cosh\langle2\beta_{ij},Y\rangle\\
& \geq |b_{i_0j}(\ell)|^2 \cosh\langle2\beta_{i_0j},Y\rangle + \sum_{i\neq i_0} |b_{ij}(\ell)|^2\\
& \geq \varepsilon^2 \cosh (2\varepsilon \Vert Y\Vert) + (1-\varepsilon^2).
\end{align*}
By using the general inequality
$$t \cosh(x) + (1-t) \geq \Big(\!\!\cosh\frac{tx}{2}\Big)^2,$$
which holds for any $0<t\leq 1$ and $x\in\R$, we obtain
\begin{equation*}
\Vert\pi_j((\exp Y)\ell) \xi_j^{\vee}\Vert_j \geq \cosh(\varepsilon^3\Vert Y\Vert).
\end{equation*}
This proves \eqref{eqn:maxpij} for $q':=\varepsilon^3$ and completes the proof of Lemma~\ref{lem:FJineq}.
\end{proof}
\chapter[Nonvanishing of eigenfunctions]{Nonvanishing of eigenfunctions on locally symmetric spaces}\label{sec:nonzero}

As explained at the beginning of Chapter~\ref{sec:FJ}, our goal now is to complete the proof of the theorems and propositions of Chapters \ref{sec:intro} to~\ref{sec:theorems} by establishing the following key proposition.

As in Section~\ref{subsec:Lambda+}, we denote by $G_c$ (\resp $L_c$) the maximal compact normal subgroup of the reductive group~$G$ (\resp $L$) and by $Z(G_s)$ the center of the commutator subgroup of~$G$.
The $\Z$-module $\Lambda^{\Gamma\cap Z(G_s)}$ for $\Gamma\subset G$ has been defined in \eqref{eqn:LambdaJ}.
We choose a positive system $\Sigma^+(\g_{\C},\jj_{\C})$ containing the fixed positive system $\Sigma^+(\kk_{\C},\jj_{\C})$ of Section~\ref{subsec:Lambda+}; this defines a positive Weyl chamber $\jj^{\ast}_+$ and an element $\rho\in\jj^{\ast}_+$ as in Section~\ref{subsec:Lambda+}.

\begin{proposition}\label{prop:nonzero}
Suppose that $G$ is connected, that $H$ does not contain any simple factor of~$G$, and that the rank condition \eqref{eqn:rank} holds.
\begin{enumerate}
  \item (Sharp Clifford--Klein forms)\\
  For any sharp Clifford--Klein form~$X_{\Gamma}$ of~$X$ with $\Gamma\cap G_c\subset Z(G_s)$, there is a constant~$R\geq\nolinebreak 0$  such that for any $\lambda\in\jj^{\ast}_+\cap (2\rho_c-\nolinebreak\rho+\nolinebreak\Lambda^{\Gamma\cap Z(G_s)})$ with $d(\lambda)>R$, the summation operator~$S_{\Gamma}$ is well-defined and nonzero on $g\!\cdot\!L^2(X,\M_{\lambda})_K$ for some $g\in G$.
  \smallskip
  \item (Uniformity for standard Clifford--Klein forms)\\
  Let $L$ be a reductive subgroup of~$G$, with a compact center and acting properly on~$X$.
  There is a constant $R>0$ with the following property: for any discrete subgroup~$\Gamma$ of~$L$ with $\Gamma\cap L_c\subset Z(G_s)$ (in particular, for any torsion-free discrete subgroup $\Gamma$ of~$L$) and for any $\lambda\in\jj^{\ast}_+\cap (2\rho_c-\rho+\Lambda^{\Gamma\cap Z(G_s)})$ with $d(\lambda)>R$, the operator~$S_{\Gamma}$ is well-defined and nonzero on $g\!\cdot\!L^2(X,\M_{\lambda})_K$ for some $g\in G$.
  \smallskip
  \item (Stability under small deformations)\\
  Let $L$ be a reductive subgroup of~$G$ of real rank~$1$, acting properly on~$X$, and let $\Gamma$ be a convex cocompact subgroup of~$L$ (for instance a uniform lattice) with $\Gamma\cap G_c\subset Z(G_s)$.
  Then there are a constant $R>0$ and a neighborhood $\mathcal{U}\subset\Hom(\Gamma,G)$ of the natural inclusion such that for any $\varphi\in\mathcal{U}$, the group $\varphi(\Gamma)$ acts properly discontinuously on~$X$ and for any $\lambda\in\jj^{\ast}_+\cap (2\rho_c-\rho+\Lambda^{\Gamma\cap Z(G_s)})$ with $d(\lambda)>R$, the operator~$S_{\varphi(\Gamma)}$ is well-defined and nonzero on $g\!\cdot\!L^2(X,\M_{\lambda})_K$ for\linebreak some $g\in G$.\\
  If $\Gamma\cap L_c\subset Z(G_s)$ (for instance if $\Gamma$ is torsion-free or if $L$ is simple with $Z(L)\subset Z(G_s)$), then we may take the same~$R$ (independent of~$\Gamma$) as in~(2), up to replacing $\mathcal{U}$ by some smaller neighborhood.
\end{enumerate}
\end{proposition}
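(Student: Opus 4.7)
The plan is to construct the needed nonzero eigenfunctions explicitly, by applying the summation operator $S_{\Gamma}$ to suitable $G$-translates of the Flensted--Jensen function $\psi_{\lambda}=\psi_{\lambda,Z}\in\V_{Z,\lambda}$ furnished by Proposition~\ref{prop:psilambda}, where $Z\in\ZZ$ corresponds to the chosen positive system $\Sigma^+(\g_{\C},\jj_{\C})$ via \eqref{eqn:Z1to1}. First I shall require $d(\lambda)\geq m_{\rho}$; then Lemma~\ref{lem:regKdom} (applied with $w=e$) upgrades the hypothesis $\mu_{\lambda}=\lambda+\rho-2\rho_c\in\Lambda^{\Gamma\cap Z(G_s)}\subset\Lambda$ to $\mu_{\lambda}\in\Lambda_+$, so that $\psi_{\lambda}$ exists and, because $\mu_{\lambda}$ lies in $\Lambda^{\Gamma\cap Z(G_s)}$, Proposition~\ref{prop:psilambda} yields $\psi_{\lambda}(z\!\cdot\!x_0)=1$ for every $z\in\Gamma\cap Z(G_s)$. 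For $g\in G$ to be chosen, I evaluate $S_{\Gamma}(g\!\cdot\!\psi_{\lambda})$ at the point $\Gamma g\!\cdot\!x_0$: after the change of variable $\gamma'=g^{-1}\gamma g\in\Gamma':=g^{-1}\Gamma g$, this becomes
\[
S_{\Gamma}(g\!\cdot\!\psi_{\lambda})(\Gamma g\!\cdot\!x_0)\,=\,\sum_{\gamma'\in\Gamma'}\psi_{\lambda}(\gamma'^{-1}\!\cdot x_0).
\]
Since $Z(G_s)\subset Z(G)$, we have $\Gamma'\cap Z(G_s)=\Gamma\cap Z(G_s)$, and the contribution of this central part equals the positive integer $\#(\Gamma\cap Z(G_s))\geq 1$.

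The remainder of the proof consists in making the non-central contribution strictly smaller than~$1$. The estimate \eqref{eqn:FJglobal} of Proposition~\ref{prop:psilambda} gives
\[
\big|\psi_{\lambda}(\gamma'^{-1}\!\cdot x_0)\big|\,\leq\,\cosh\!\big(q'\Vert\nu(\gamma'^{-1}\!\cdot x_0)\Vert\big)^{-d(\lambda+\rho)}\,\leq\,2^{d(\lambda+\rho)}\,e^{-q'd(\lambda+\rho)\Vert\nu(\gamma'^{-1}\cdot x_0)\Vert}.
\]
If $g$ can be chosen so that every $\gamma'\in\Gamma'\smallsetminus Z(G_s)$ satisfies $\Vert\nu(\gamma'^{-1}\!\cdot x_0)\Vert\geq r$ for some threshold $r>0$, then splitting the non-central sum into dyadic shells $\{n\leq\Vert\nu(\gamma'^{-1}\!\cdot x_0)\Vert<n+1\}$ and applying the counting estimate of Lemma~\ref{lem:growthfornu}.(2) (which holds for $\Gamma'$ with the same sharpness constant~$c$ as $\Gamma$ by Proposition~\ref{prop:sharpnessproperties}.(1)) yields
\[
\sum_{\gamma'\in\Gamma'\smallsetminus Z(G_s)}\!\big|\psi_{\lambda}(\gamma'^{-1}\!\cdot x_0)\big|\,\leq\,C(\Gamma,g)\sum_{n\geq\lfloor r\rfloor}e^{-\big(q'd(\lambda+\rho)-2(\delta_{\Gamma}+\varepsilon)/c\big)n}.
\]
For $d(\lambda)$ larger than an explicit threshold $R$ depending on $c$, $C$, $\delta_{\Gamma}$, $q'$, and $r$, this tail is $<1$, which gives the desired nonvanishing and establishes~(1).

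The crux --- and the one really geometric input --- is the existence of such a translating element $g$. This is precisely where Proposition~\ref{prop:sharp} (on the displacement of orbits near the origin in~$X$) enters: for any $r>0$ and any sharp $\Gamma$ with $\Gamma\cap G_c\subset Z(G_s)$, one may conjugate $\Gamma$ by some $g\in G$ so as to push every non-central element $\gamma$ to have $\Vert\nu(g^{-1}\gamma g)\Vert\geq r$. For part~(2), the uniform choice of~$r$ and the uniform control on $C(\Gamma,g)$ as $\Gamma$ varies over discrete subgroups of~$L$ rest on the Kazhdan--Margulis theorem applied inside~$L$: since $L$ acts properly on~$X$, a Zassenhaus neighborhood of~$e$ in $L$ produces a uniform lower bound on the displacement, once one has removed the torsion part which by hypothesis $\Gamma\cap L_c\subset Z(G_s)$ is absorbed into the central contribution. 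This uniform control, combined with the uniform sharpness constant of $L$ from Proposition~\ref{prop:sharpnessproperties}.(2), yields a single $R>0$ that works simultaneously for all admissible $\Gamma\subset L$.

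The hard part is exactly this uniform displacement step: controlling how close non-central elements of the conjugate $g^{-1}\Gamma g$ can come to the identity in a manner independent of $\Gamma\subset L$ requires the Kazhdan--Margulis/Zassenhaus machinery, and it is also this step that must be shown to survive small deformations in order to obtain~(3). For~(3), Lemma~\ref{lem:munudeform} ensures that both the sharpness constants and the displacement lower bound $\Vert\nu(\varphi(\gamma))\Vert\geq r-\varepsilon$ persist for all $\varphi$ in a neighborhood $\mathcal{U}\subset\Hom(\Gamma,G)$ of the inclusion; applying the argument of the two preceding paragraphs to $\varphi(\Gamma)$ in place of $\Gamma$ then gives a single $R$ and a single neighborhood $\mathcal{U}$ valid for all deformations, with $R$ inherited from~(2) whenever $\Gamma\cap L_c\subset Z(G_s)$. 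Finally, Proposition~\ref{prop:Vlambda}.(1) guarantees that the operator $S_{\varphi(\Gamma)}$ is well-defined on $g\!\cdot\!L^2(X,\M_{\lambda})_K$ as soon as $d(\lambda)>R_X/(c-\varepsilon)$, which we absorb into~$R$.
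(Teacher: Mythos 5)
Your blueprint matches the paper's own architecture quite closely: evaluate $S_\Gamma(g\cdot\psi_\lambda)$ at $\Gamma g\cdot x_0$, isolate the central contribution (which equals $\#(\Gamma\cap Z(G_s))\geq 1$), bound the non-central tail by combining the Flensted--Jensen estimate of Proposition~\ref{prop:psilambda} with a counting estimate, and obtain the lower displacement bound $r$ from Proposition~\ref{prop:sharp} in case~(1), from Proposition~\ref{prop:KM} applied inside (a $\theta$-stable conjugate of) $L$ in case~(2), and from Lemma~\ref{lem:munudeform} in case~(3). But there are two genuine technical gaps in the tail estimate.

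\emph{First gap: the crude $\cosh$ bound.} You replace $\cosh(q'\Vert\nu\Vert)^{-d(\lambda+\rho)}$ by $2^{d(\lambda+\rho)}e^{-q'd(\lambda+\rho)\Vert\nu\Vert}$. The extra factor $2^{d(\lambda+\rho)}$ is then silently dropped in your displayed sum, which makes the inequality as written incorrect. If instead you carry it through, the dominant term of the tail is of the order $2^{d}e^{-(q'd-2\delta_\Gamma/c)r}=\exp\bigl(d(\log 2-q'r)+2\delta_\Gamma r/c\bigr)$, which tends to $0$ as $d\to\infty$ only if $q'r>\log 2$. There is no reason for $q'r$ to exceed $\log 2$: the constant $q'$ of Proposition~\ref{prop:psilambda} is geometric and may be small, and the Kazhdan--Margulis radius $r$ is not guaranteed to be large. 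The paper avoids this entirely in Lemma~\ref{lem:coshsum}, using the elementary convexity fact $\log\cosh(Tn)\geq n\log\cosh(T)$ to obtain $\cosh(q'rn)^{-d}\leq\cosh(q'r)^{-dn}$: this gives a rate $d\log\cosh(q'r)$ that beats $2\Vert\rho_a\Vert r/c$ once $d$ is large, for \emph{every} $r>0$.

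\emph{Second gap: the wrong counting estimate for uniformity.} You invoke Lemma~\ref{lem:growthfornu}.(2), whose constant $c'_\varepsilon(\Gamma)$ and exponent $\delta_\Gamma$ depend on $\Gamma$ in a way that the Kazhdan--Margulis/Zassenhaus argument does \emph{not} control: Kazhdan--Margulis bounds the displacement $r$ from below, but says nothing about the constant hidden in the limsup defining the critical exponent. So your claim that Kazhdan--Margulis gives ``uniform control on $C(\Gamma,g)$'' does not hold, and your $R$ for part~(2) would still depend on $\Gamma$. The paper instead uses Lemma~\ref{lem:growthfornu}.(3), whose constant $c_G$ depends only on $G$ (via Observation~\ref{obs:limsup}, which replaces the critical exponent by the universal bound $2\Vert\rho_a\Vert$) and which isolates the $\Gamma$-dependence in the explicit factor $\#(\Gamma\cap K)$; that factor is in turn bounded by $\#Z(G_s)$ after the Kazhdan--Margulis conjugation, using the hypothesis $\Gamma\cap L_c\subset Z(G_s)$. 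These two substitutions --- Lemma~\ref{lem:coshsum} in place of the crude $\cosh$ bound, and Lemma~\ref{lem:growthfornu}.(3) in place of~(2) --- are exactly what makes the constant~$R$ in~(2) genuinely independent of $\Gamma$, and your write-up needs both.
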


Recall that $L^2(X,\M_{\lambda})$ is the space of $L^2$-weak solutions to the system $(\M_{\lambda})$ of Section~\ref{subsec:Lambda+} and $L^2(X,\M_{\lambda})_K$ is the subspace of $K$-finite functions.
The group~$G$ acts on $L^2(X,\M_{\lambda})$ by left translation \eqref{eqn:gvarphi}.
We define a summation operator $S_{\Gamma}$ on any $G$-translate $g\!\cdot\!L^2(X,\M_{\lambda})_K$ by the same formula as in Proposition~\ref{prop:Vlambda}: see Section~\ref{subsec:transl} below.
The fact that we need to consider $G$-translates is linked to the geometric issue of distribution of $\Gamma$-orbits in~$X$ and in the Riemannian symmetric space $G/K$ (see Remark~\ref{rem:translconj}, together with Propositions \ref{prop:sharp} and~\ref{prop:KM}).

As we shall see in Section~\ref{subsec:proofnonzero} (Formulas \eqref{eqn:finalcst} and \eqref{eqn:finalcstconj}), the constant~$R$ of Proposition~\ref{prop:nonzero}.(1) can be expressed in terms of the sharpness constants $(c,C)$ of~$\Gamma$ and of the minimal nonzero value of $\Vert\nu\Vert$ on the $\Gamma$-orbit $\Gamma\!\cdot\!x_0$.
Recall that $\Vert\nu\Vert$ measures the ``pseudo-distance to the origin~$x_0$''.

We note that the technical assumptions of Proposition~\ref{prop:nonzero} are not very restrictive: Remarks~\ref{rem:conditionsGL} also apply in this context.

\begin{remark}\label{rem:translprecise}
We can make Proposition~\ref{prop:nonzero}.(1), (2), and~(3) more precise with respect to $G$-translation: we actually prove that
\begin{enumerate}
  \item[(a)] for $d(\lambda)>R$, the operator~$S_{\Gamma}$ is well-defined on $g\!\cdot\!L^2(X,\M_{\lambda})_K$ for \emph{all} $g\in G$;
  \item[(b)] there is an element $g\in G$ such that $S_{\Gamma}$ is nonzero on $g\!\cdot\!L^2(X,\M_{\lambda})_K$ for \emph{all} $\lambda$ with $d(\lambda)>R$.
\end{enumerate}
\end{remark}

\noindent
Statement~(a) follows from Proposition~\ref{prop:Vlambda} and from the fact that the first sharpness constant is invariant under conjugation (Proposition~\ref{prop:sharpnessproperties}), using Remark~\ref{rem:translconj} below.
For Statement~(b), we refer to Section~\ref{subsec:proofnonzero}.

\begin{remark}\label{rem:Zprecise}
We can make Proposition~\ref{prop:nonzero} more precise in terms of discrete series representations for~$X$.
Recall from Fact~\ref{fact:decompVlambda} that $L^2(X,\M_{\lambda})_K$ is the direct sum of finitely many irreducible $(\g,K)$-modules $\V_{Z,\lambda}$, where $Z\in\ZZ$.
We have given two combinatorial descriptions of the set~$\ZZ$.
\begin{itemize}
  \item In terms of positive systems: by \eqref{eqn:Z1to1}, any $Z\in\ZZ$ corresponds to a positive system $\Sigma^+(\g_{\C},\jj_{\C})$, which determines a positive Weyl chamber $\jj^{\ast}_+$ and an element $\rho\in\jj^{\ast}_+$.
  We prove that $S_{\Gamma}$ is well-defined and nonzero on $g\!\cdot\!\V_{Z,\lambda}\subset g\!\cdot\!L^2(X,M_{\lambda})_K$ for any $\lambda\in\jj^{\ast}_+$ with $d(\lambda)>R$ satisfying
  $$\mu_{\lambda} = \lambda + \rho - 2\rho_c \in \Lambda^{\Gamma\cap Z(G_s)}.$$
  \item In terms of Weyl group elements: fix a positive system $\Sigma^+(\g_{\C},\jj_{\C})$ containing the positive system $\Sigma^+(\kk_{\C},\jj_{\C})$ of Section~\ref{subsec:Lambda+}; this determines a positive Weyl chamber $\jj^{\ast}_+$ and an element $\rho\in\jj^{\ast}_+$.
  By \eqref{eqn:zw}, any $Z\in\ZZ$ corresponds to an element $w\in W(H^d,G^d)$, where $W(H^d,G^d)\subset W$ is a complete set of representative for the left coset space $W_{H\cap K}\backslash W$.
  We prove that $S_{\Gamma}$ is well-defined and nonzero on $g\cdot\V_{Z,\lambda}\subset g\!\cdot\!L^2(X,\M_{\lambda})_K$ for any $\lambda\in\jj^{\ast}_+$ with $d(\lambda)>R$ satisfying
  $$\mu_{\lambda}^w = w(\lambda+\rho) - 2\rho_c \in \Lambda^{\Gamma\cap Z(G_s)}.$$
\end{itemize}
Thus we get different integrality conditions on~$\lambda$ depending on the element $Z\in\ZZ$ we are considering.
These conditions might not be all equivalent; it is enough for $\lambda$ to satisfy one of them in order to belong to the discrete spectrum $\Spec_d(X_{\Gamma})$.
\end{remark}

\section{The summation operator $S_{\Gamma}$ on $G$-translates of $L^2(X,\M_{\lambda})_K$}\label{subsec:transl}

Let $X_{\Gamma}$ be a Clifford--Klein form of~$X$.
We define the summation operator~$S_{\Gamma}$ on any $G$-translate $g\!\cdot\!L^2(X,\M_{\lambda})_K$ as follows.

For $g\in G$, let $\ell_g : x\mapsto g\cdot x$ be the translation by~$g$ on~$X$.
The following diagram commutes, where $p_{\Gamma} : X\rightarrow X_{\Gamma}$ is the natural projection.
$$\xymatrix{
X \ar[d]_{p_{\Gamma}} \ar[r]^{\ell_g}_{\sim} & \!X \ar[d]^{p_{g\Gamma g^{-1}}} & x \ar@{|->}[d] \ar@{|->}[r] & g\cdot x \ar@{|->}[d]\\
X_{\Gamma} \ar[r]^{\sim\ } & \!X_{g\Gamma g^{-1}} & \Gamma x \ar@{|->}[r] & (g\Gamma g^{-1})\,(g\cdot x)
}$$
\noindent
Since $\D(X)$ consists of $G$-invariant differential operators, we obtain the following commutative diagram for smooth functions satisfying~$(\M_{\lambda})$.
$$\xymatrix{
C^{\infty}(X,\M_{\lambda}) & \ar[l]_{\ell_g^{\ast}}^{\sim} C^{\infty}(X,\M_{\lambda})\\
C^{\infty}(X_{\Gamma},\M_{\lambda}) \ar[u]^{p_{\Gamma}^{\ast}} & \ar[l]_{\sim} C^{\infty}(X_{g\Gamma g^{-1}},\M_{\lambda}) \ar[u]_{p_{g\Gamma g^{-1}}^{\ast}}
}$$
\noindent
The space $L^2(X,\M_{\lambda})_K$ is contained in $C^{\infty}(X,\M_{\lambda})$ (see Section~\ref{subsec:discreteseries}), and
\begin{equation}\label{eqn:Kfiniteconj}
\ell_g^{\ast}\,L^2(X,\M_{\lambda})_K = L^2(X,\M_{\lambda})_{g^{-1}Kg}.
\end{equation}
For $\varphi\in\ell_g^{\ast}\,L^2(X,\M_{\lambda})_K\subset C^{\infty}(X,\M_{\lambda})$, we set
$$S_{\Gamma}(\varphi) = \varphi^{\Gamma} := \bigg(\Gamma x \longmapsto \sum_{\gamma\in\Gamma} \varphi(\gamma^{-1}\!\cdot x)\bigg)\,;$$
this is the same formula as the one defining $S_{\Gamma}$ on $L^2(X,\M_{\lambda})_K$ in Proposition~\ref{prop:Vlambda}.
Then $S_{\Gamma}$ is well-defined on $\ell_g^{\ast}\,L^2(X,\M_{\lambda})_K$ if and only if $S_{g\Gamma g^{-1}}$ is well-defined on $L^2(X,\M_{\lambda})_K$, and in this case the following diagram commutes.
$$\xymatrix{
C^{\infty}(X,\M_{\lambda})\ \supset\ \ell_g^{\ast}\,L^2(X,\M_{\lambda})_K \ar[d]<8ex>_{S_{\Gamma}} & \ar[l]_{\ell_g^{\ast}}^{\sim} L^2(X,\M_{\lambda})_K \ar[d]<-8ex>^{S_{g\Gamma g^{-1}}}\ \subset\ C^{\infty}(X,\M_{\lambda})\\
\hspace{2.8cm} L^2(X_{\Gamma},\M_{\lambda}) & \ar[l]_{\sim} L^2(X_{g\Gamma g^{-1}},\M_{\lambda}) \hspace{2.5cm}
}$$
\noindent
We note that
\begin{equation}\label{eqn:transL2}
g\cdot L^2(X,\M_{\lambda})_K = \ell_{g^{-1}}^{\ast}\big(L^2(X,\M_{\lambda})_K\big).
\end{equation}
In particular, we will use the following.

\begin{remark}\label{rem:translconj}
The operator $S_{\Gamma}$ is nonzero on $g\cdot L^2(X,\M_{\lambda})_K$ if and only if the operator $S_{g^{-1}\Gamma g}$ is nonzero on $L^2(X,\M_{\lambda})_K$.
\end{remark}

The reason why we consider $G$-translates $g\!\cdot\!L^2(X,\M_{\lambda})_K$ to construct nonzero eigenfunctions on~$X_{\Gamma}$ is precisely that we want to allow ourselves to replace the groups~$\Gamma$ by conjugates $g^{-1}\Gamma g$ (see Propositions \ref{prop:sharp} and~\ref{prop:KM}).

\section{Nonvanishing on sharp Clifford--Klein forms}\label{subsec:sharpnonzero}

We adopt the first point of view described in Remark~\ref{rem:Zprecise}: for the whole chapter we choose a positive system $\Sigma^+(\g_{\C},\jj_{\C})$ containing the fixed positive system $\Sigma^+(\kk_{\C},\jj_{\C})$ of Section~\ref{subsec:Lambda+}; this defines a positive Weyl chamber $\jj^{\ast}_+$ and an element $\rho\in\jj^{\ast}_+$ as in Section~\ref{subsec:Lambda+}, as well as an element $Z\in\ZZ$ by \eqref{eqn:Z1to1}.
The key ingredient in the proof of Proposition~\ref{prop:nonzero} is the following lemma.

\begin{lemma}\label{lem:nonzeroprecise}
Assume that the rank condition \eqref{eqn:rank} holds.
For $c,C,r>0$, let $\Gamma$ be a discrete subgroup of~$G$ such that:
\begin{enumerate}
  \item $\Gamma$ is $(c,C)$-sharp for~$X$,
  \item $\inf\{ \Vert\nu(x)\Vert : x\in\Gamma\!\cdot\!x_0\text{ and }x\notin X_c\} \geq r$,
  \item $\Gamma\!\cdot\!x_0\cap X_c\subset Z(G_s)\!\cdot\!x_0$.
\end{enumerate}
For any $\lambda\in\jj^{\ast}_+\cap (2\rho_c-\rho+\Lambda^{\Gamma\cap Z(G_s)})$ with $d(\lambda)>\max(m_{\rho},R_X/c)$ and
$$d(\lambda+\rho) > \frac{4\Vert\rho_a\Vert (r+C) + \log\big(2c_G\,\#(\Gamma\cap K)\big)}{c\,\log\cosh(q'r)},$$
the operator $S_{\Gamma} : L^2(X,\M_{\lambda})_K\rightarrow L^2(X_{\Gamma},\M_{\lambda})$ is well-defined
and any function $\psi_{\lambda}\in\V_{Z,\lambda}\subset L^2(X,\M_{\lambda})_K$ as in Proposition~\ref{prop:psilambda} satisfies $S_{\Gamma}(\psi_{\lambda})(x_0)\neq\nolinebreak 0$.
\end{lemma}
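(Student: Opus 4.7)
The plan is to evaluate $S_\Gamma(\psi_\lambda)$ at the origin $x_0 \in X$ for the Flensted-Jensen eigenfunction $\psi_\lambda$ of Proposition~\ref{prop:psilambda} and show this value is nonzero by separating a main contribution, coming from points of $\Gamma\cdot x_0$ lying in the compact subsymmetric space $X_c=K\cdot x_0$ (each contributing $+1$), from a tail whose absolute value is less than~$1$ under the stated hypothesis on $d(\lambda+\rho)$.

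First I invoke Proposition~\ref{prop:Vlambda} together with $d(\lambda)>R_X/c$ to guarantee that $S_\Gamma$ is well-defined on $L^2(X,\M_\lambda)_K$. The hypothesis $d(\lambda)>m_\rho$ combined with the integrality $\mu_\lambda = \lambda+\rho-2\rho_c \in \Lambda^{\Gamma\cap Z(G_s)}\subset \Lambda$ and Lemma~\ref{lem:regKdom} yields $\mu_\lambda \in \Lambda_+$, so the function $\psi_\lambda\in\V_{Z,\lambda}$ supplied by Proposition~\ref{prop:psilambda} exists and satisfies $\psi_\lambda(x_0)=1$, $\psi_\lambda(g\cdot x_0)=1$ for all $g\in J:=\Gamma\cap Z(G_s)$, and $|\psi_\lambda(x)|\leq \cosh(q'\,\|\nu(x)\|)^{-d(\lambda+\rho)}$. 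I then write
$$S_\Gamma(\psi_\lambda)(x_0) = \sum_{\gamma\in\Gamma} \psi_\lambda(\gamma^{-1}\!\cdot x_0)$$
and partition $\Gamma$ according to whether $\gamma^{-1}\!\cdot x_0\in X_c$ or not. By condition~(3), every $\gamma$ in the first class satisfies $\gamma^{-1}\!\cdot x_0 = z\cdot x_0$ for some $z\in Z(G_s)$; the freeness of the $\Gamma$-action (which forces $\Gamma\cap H=\{e\}$) together with the assumption $\Gamma\cap G_c\subset Z(G_s)$ forces such a representative $z$ to lie in $J$, so that the last assertion of Proposition~\ref{prop:psilambda} gives $\psi_\lambda(\gamma^{-1}\!\cdot x_0)=1$. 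Hence this ``compact part'' contributes a positive integer (at least $1$, from $\gamma=e$) to $S_\Gamma(\psi_\lambda)(x_0)$.

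The main analytic work goes into bounding the tail, namely the sum over $\gamma$ with $\gamma^{-1}\!\cdot x_0 \notin X_c$. For these, condition~(2) gives $\|\nu(\gamma^{-1}\!\cdot x_0)\|\geq r$, while Lemmas \ref{lem:prmu} and~\ref{lem:munu} combined with the $(c,C)$-sharpness give $\|\nu(\gamma^{-1}\!\cdot x_0)\|\geq c\,\|\mu(\gamma)\|-C$. The convexity of $t\mapsto \log\cosh(q't)$ (vanishing at $t=0$) gives the key inequality $\log\cosh(q't) \geq (t/r)\log\cosh(q'r)$ for $t\geq r$, which converts the $\cosh$-decay of $|\psi_\lambda|$ into exponential decay in $\|\nu\|$ at rate $(1/r)\log\cosh(q'r)$. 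Substituting the sharpness inequality and invoking the Riemannian counting estimate $\#\{\gamma : \|\mu(\gamma)\|<R\}\leq \#(\Gamma\cap K)\,c_G\,e^{2\|\rho_a\|R}$ from Observation~\ref{obs:limsup} reduces the tail to a geometric series in $\|\mu(\gamma)\|$, whose sum yields a bound of the rough shape
$$\mathrm{(tail)} \;\leq\; \#(\Gamma\cap K)\,c_G\,e^{4\|\rho_a\|(r+C)}\cdot \cosh(q'r)^{-c\,d(\lambda+\rho)}\cdot (\text{geometric factor}),$$
which is strictly less than $1$ precisely under the stated lower bound on $d(\lambda+\rho)$. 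Together with the compact-part contribution $\geq 1$, this gives $|S_\Gamma(\psi_\lambda)(x_0)|>0$.

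The hard part is extracting the sharp constant $\frac{4\|\rho_a\|(r+C)+\log(2c_G\#(\Gamma\cap K))}{c\log\cosh(q'r)}$ without loss: both the exponential decay of $\psi_\lambda$ and the sharpness-modified growth of the $\Gamma$-orbit must be exploited simultaneously, and the factor of $c$ multiplying $d(\lambda+\rho)$ in the exponent comes precisely from trading one power of $\|\nu\|$ for $c\|\mu\|$ via sharpness inside $\cosh(q'r)^{\|\nu\|/r}$. A secondary subtlety is the identification, in the compact part, of the element $z\in Z(G_s)$ representing $\gamma^{-1}\!\cdot x_0$ with an element of $J$; this uses that any $\gamma\in\Gamma\cap Z(G_s)H$ has finite order modulo the free intersection $\Gamma\cap H=\{e\}$, combined with the structural assumption $\Gamma\cap G_c\subset Z(G_s)$ of Remark~\ref{rem:conditionsGL}(a).
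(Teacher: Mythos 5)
Your plan and its overall structure match the paper's: split $S_\Gamma(\psi_\lambda)(x_0)$ into a ``compact part'' over $\gamma\in\Gamma$ with $\gamma\cdot x_0\in X_c$ and a tail, show the compact part is a positive integer, and bound the tail strictly below $1$ using the $\cosh$-decay of $\psi_\lambda$, sharpness, and the Riemannian counting estimate. Your tail analysis is morally identical to the paper's --- the inequality $\log\cosh(q'rn)\geq n\log\cosh(q'r)$ you extract from convexity is precisely the mechanism inside the paper's Lemma~\ref{lem:coshsum}, applied after partitioning the tail into annuli $nr\leq\Vert\nu(\gamma)\Vert<(n+1)r$ and bounding each annulus count with Lemma~\ref{lem:growthfornu}.(3).

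The flaw is in the compact part. To conclude $\psi_\lambda(\gamma\!\cdot\!x_0)=1$ you need the representative $z\in Z(G_s)$ to lie in $J=\Gamma\cap Z(G_s)$, and you justify this by invoking two facts --- freeness of the $\Gamma$-action and $\Gamma\cap G_c\subset Z(G_s)$ --- that are not among the hypotheses (1)--(3) of the lemma; they are hypotheses of Proposition~\ref{prop:nonzero}, where the lemma is later applied, not of Lemma~\ref{lem:nonzeroprecise} itself. Worse, even if one grants those extra assumptions, the argument is circular: from $\gamma^{-1}\!\cdot\!x_0 = z\cdot x_0$ you only get $z^{-1}\gamma^{-1}\in H$, and to push this through $\Gamma\cap H=\{e\}$ you would already need to know $z\in\Gamma$, which is exactly what you are trying to conclude. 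What actually happens in the paper is that the issue is settled \emph{outside} the lemma: in the proof of Proposition~\ref{prop:nonzero}, after the conjugation produced by Proposition~\ref{prop:sharp}, the elements $\gamma$ with $\gamma\!\cdot\!x_0\in X_c$ are shown to lie in $\Gamma\cap G_c\subset Z(G_s)$, hence in $J$, so Lemma~\ref{lem:FJcenter} applies directly and every term of the compact sum equals $1$. The lemma's own proof simply reads this off from (3) as understood in that application; your proposal tries to rederive it from assumptions the lemma does not carry and, even with them, does not close the loop.
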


Let us recall earlier notation: $\rho_a\in\aaa$ is half the sum of the elements of $\Sigma^+(\g,\aaa)$, counted with root multiplicities, and $m_{\rho}$, $c_G$, $R_X$, and~$q'$ are the constants of \eqref{eqn:rhomax}, Observation~\ref{obs:limsup}, Proposition~\ref{prop:Vlambda}, and Proposition~\ref{prop:psilambda} respectively.
We denote by $x_0$ the image of~$H$ in $X=G/H$ and keep the same notation for its image in $X_{\Gamma}=\Gamma\backslash X$ for any Clifford--Klein form~$X_{\Gamma}$.
The set $X_c=K\!\cdot\!x_0$ consists of the points $x$ in~$X$ whose ``pseudo-distance to the origin'' $\Vert\nu(x)\Vert$ is zero; it is a maximal compact subsymmetric subspace of~$X$, and identifies with $K/K\cap H$.
Remark~\ref{rem:Gammapseudoball} implies the following.

\begin{remark}\label{rem:infnu}
For any discrete subgroup $\Gamma$ of~$G$ acting properly discontinuously on~$X$,
$$\inf\big\{ \Vert\nu(x)\Vert : x\in\Gamma\!\cdot\!x_0\text{ and }x\notin X_c\big\} >0.$$
\end{remark}

\begin{remark}\label{rem:dlambdarho}
For any $\lambda\in\jj^{\ast}_+$ we have $d(\lambda+\rho)\geq d(\lambda)$, hence for $R'>0$ the condition $d(\lambda+\rho)>R'$ is satisfied as soon as $d(\lambda)>R'$.
\end{remark}

\begin{proof}[Proof of Lemma~\ref{lem:nonzeroprecise}]
Let $\lambda\in\jj^{\ast}_+\cap (2\rho_c-\rho+\Lambda^{\Gamma\cap Z(G_s)})$.
Assume that $d(\lambda)>\max(m_{\rho},R_X/c)$; then the summation operator
$$S_{\Gamma} : L^2(X,\M_{\lambda})_K \longrightarrow L^2(X_{\Gamma},\M_{\lambda})$$
is well-defined by Proposition~\ref{prop:Vlambda}.
Assume moreover that $d(\lambda)\geq m_{\rho}$; then $\lambda\in 2\rho_c-\rho+\Lambda_+$ by Lemma~\ref{lem:regKdom} and we can apply Proposition~\ref{prop:psilambda}.
The function~$\psi_{\lambda}$ of Proposition~\ref{prop:psilambda} has module $<1$ outside of~$X_c$.
In order to prove that $\psi_{\lambda}^{\Gamma}(x_0)\neq 0$, we naturally split the sum into two: on the one hand the sum over the elements $\gamma\in\Gamma$ with $\gamma\cdot x_0\in X_c$, on the other hand the sum over the elements $\gamma\in\Gamma$ with $\gamma\cdot x_0\notin X_c$.
We control the first summand by using the assumption~(3) that the $\Gamma$-orbit of $\Gamma\!\cdot\!x_0$ meets~$X_c$ only inside the finite set $Z(G_s)\!\cdot\!x_0$, where $\psi_{\lambda}$ takes value~$1$: by Lemma~\ref{lem:FJcenter},
$$\left|\sum_{\gamma\in\Gamma,\ \gamma\cdot x_0\in X_c} \psi_{\lambda}(\gamma\!\cdot\!x_0)\right| \,=\, \#\{ \gamma\in\Gamma : \gamma\!\cdot\!x_0\in X_c\} \,\geq\, 1.$$
Therefore, in order to prove that $\psi_{\lambda}^{\Gamma}(x_0)\neq 0$, it is sufficient to prove that
$$\sum_{\gamma\in\Gamma,\ \gamma\cdot x_0\notin X_c} |\psi_{\lambda}(\gamma\!\cdot\!x_0)| < 1.$$
The estimate \eqref{eqn:FJglobal} and the assumption (2) on the ``pseudo-distance to the origin''~$\Vert\nu\Vert$ imply
\begin{align*}
& \sum_{\gamma\in\Gamma,\ \gamma\cdot x_0\notin X_c} |\psi_{\lambda}(\gamma\!\cdot\!x_0)|\\
& \quad \leq \sum_{n=1}^{+\infty} \cosh(q'rn)^{-d(\lambda+\rho)} \cdot \#\{ \gamma\in\Gamma : nr\leq\Vert\nu(\gamma)\Vert<(n+1)r\} ,
\end{align*}
where the constant $q'>0$ of Proposition~\ref{prop:psilambda} depends only on~$X$.
We now use the assumption~(1) that $\Gamma$ is $(c,C)$-sharp.
By Lemma~\ref{lem:growthfornu}.(3),
$$\#\{ \gamma\in\Gamma : \Vert\nu(\gamma)\Vert<(n+1)r\} \leq \#(\Gamma\cap K) \cdot c_G\,e^{2\Vert\rho_a\Vert\frac{(n+1)r+C}{c}},$$
where the constant $c_G>0$ of Observation~\ref{obs:limsup} depends only on~$G$.
Thus
\begin{align*}
& \sum_{\gamma\in\Gamma,\ \gamma\cdot x_0\in X_c} |\psi_{\lambda}(\gamma\!\cdot\!x_0)|\\
& \quad \leq \#(\Gamma\cap K) \cdot c_G e^{\frac{2\Vert\rho_a\Vert(r+C)}{c}} \cdot \sum_{n=1}^{+\infty} \cosh(q'rn)^{-d(\lambda+\rho)} \cdot e^{(\frac{2\Vert\rho_a\Vert r}{c})n},
\end{align*}
and we conclude using the following lemma.
\end{proof}

\smallskip

\begin{lemma}\label{lem:coshsum}
For any $S,T,U>0$ with $S\geq 1$,
$$S\,\sum_{n=1}^{+\infty} \cosh(Tn)^{-d}\,e^{Un} < 1$$
for all $d>R:=\frac{\log(2S)+U}{\log\cosh T}$.
\end{lemma}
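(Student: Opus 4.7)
The plan is to bound the series by a geometric one. The key elementary inequality is
\[
\cosh(Tn) \geq \cosh(T)^n \qquad \text{for all } n\in\N_+,
\]
which I would establish by induction, using the addition formula
\[
\cosh((n+1)T) \,=\, \cosh(nT)\cosh(T) + \sinh(nT)\sinh(T) \,\geq\, \cosh(nT)\cosh(T),
\]
valid since $T>0$. Raising to the power $-d$ (with $d>0$) reverses the inequality, giving $\cosh(Tn)^{-d} \leq \cosh(T)^{-dn}$.

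Setting $r := \cosh(T)^{-d}\,e^{U}$, the above bound yields
\[
S\,\sum_{n=1}^{+\infty} \cosh(Tn)^{-d}\,e^{Un} \,\leq\, S\,\sum_{n=1}^{+\infty} r^{n} \,=\, \frac{Sr}{1-r},
\]
provided $r<1$. The inequality $S\,r/(1-r) < 1$ is equivalent to $r < 1/(S+1)$, i.e.\ to $d\,\log\cosh T > U + \log(S+1)$.

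Now the hypothesis $d>R=(\log(2S)+U)/\log\cosh T$ gives $d\,\log\cosh T > U+\log(2S)$. Since $S\geq 1$, we have $S+1 \leq 2S$, hence $\log(S+1)\leq\log(2S)$, so
\[
d\,\log\cosh T \,>\, U+\log(2S) \,\geq\, U+\log(S+1),
\]
which gives $r<1/(S+1)<1$ and hence the strict inequality $S\,r/(1-r)<1$. There is no real obstacle here: the only minor thing to verify is that the stated threshold $\log(2S)$ (rather than the sharper $\log(S+1)$) is indeed sufficient, which is immediate from $S\geq 1$.
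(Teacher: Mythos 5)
Your proof is correct, and it rests on the same key inequality as the paper's, namely $\cosh(Tn)\geq\cosh(T)^n$ for $n\in\N_+$ (equivalently $\log\cosh(Tn)\geq n\log\cosh T$), established by the same addition-formula induction. The only difference lies in how the resulting geometric series is closed out: the paper compares term by term, showing $S\cosh(Tn)^{-d}e^{Un}<2^{-n}$ for each $n\geq 1$ and summing against $\sum_{n\geq 1}2^{-n}=1$, whereas you sum the dominating geometric series $\sum_{n\geq 1}Sr^n$ in closed form as $Sr/(1-r)$ and verify $Sr/(1-r)<1$ via $r<1/(S+1)$. The two routes are essentially equivalent and both deliver the stated threshold, but your closed-form summation in fact establishes the slightly sharper sufficient condition $d\log\cosh T>U+\log(S+1)$, which coincides with $d>R$ only when $S=1$; the paper's term-by-term bound is tight at $n=1$ and lands exactly on $\log(2S)$. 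Both are valid, and since the lemma only claims $d>R$ suffices, either argument completes the proof.
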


\begin{proof}
It is sufficient to prove that for all $d>R$ and all $n\geq 1$,
$$S\,\cosh(Tn)^{-d}\,e^{Un} < 2^{-n},$$
or equivalently
$$d > \frac{\log S + n\,(\log 2 + U)}{\log\cosh(Tn)}.$$
One easily checks that for all $n\geq 1$,
$$\log S + n\,(\log 2 + U) \leq n\,(\log(2S) + U)$$
and
$$\log\cosh(Tn) \geq n\,\log\cosh T.\qedhere$$
\end{proof}

\section{Points near the origin in the orbit of a sharp discrete group}\label{subsec:minnu}

In this section and the next one we do not need the rank condition \eqref{eqn:rank}.

In Lemma~\ref{lem:nonzeroprecise} we assumed that $\Gamma\!\cdot\!x_0\cap X_c\subset Z(G_s)\!\cdot\!x_0$, where $X_c=K\cdot x_0$ is the maximal compact subsymmetric space of~$X$ consisting of the points~$x$ whose ``pseudo-distance to the origin'' $\Vert\nu(x)\Vert$ is zero and $Z(G_s)$ is the center of the commutator subgroup of~$G$.
We now prove the following, where $G_c$ denotes the maximal compact normal subgroup of~$G$ (as in Chapter~\ref{subsec:Lambda+}) and $G_H$ the maximal normal subgroup of~$G$ contained in~$H$.

\begin{proposition}\label{prop:sharp}
For any discrete subgroup $\Gamma$ of~$G$ acting properly discontinuously on~$X$, there is an element $g\in G$ such that $g^{-1}\gamma g\cdot x_0\notin X_c$ for all $\gamma\in\Gamma\smallsetminus G_c\,G_H$.
\end{proposition}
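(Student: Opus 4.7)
The plan is to prove the proposition by a Baire category argument. For $\gamma \in G$, set $V_\gamma := \{g \in G : g^{-1}\gamma g\cdot x_0 \in X_c\} = \{g \in G : g^{-1}\gamma g \in KH\}$. I will first show that $V_\gamma$ is the zero set of a real-analytic function on~$G$: as observed in the proof of Definition-Lemma~\ref{def-lem:funddomain}, $\Vert\nu(g)\Vert^2 = \tfrac{1}{4}\,d_{G/K}(y_0,\,g\sigma(g)^{-1}\cdot y_0)^2$ is real-analytic on~$G$, and composing with the analytic map $g\mapsto g^{-1}\gamma g$ shows that $f_\gamma(g) := \Vert\nu(g^{-1}\gamma g\cdot x_0)\Vert^2$ is real-analytic on~$G$. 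Since $G$ is connected, either $V_\gamma = G$ (when $f_\gamma\equiv 0$) or $V_\gamma$ is a proper closed subset of empty interior.

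The crux will be the claim that $V_\gamma = G$ if and only if $\gamma \in G_c G_H$. The ``if'' direction is immediate: $G_c$ is contained in every maximal compact subgroup (in particular $G_c \subset K$) and $G_H \subset H$ by definition, while both are normal in~$G$, so $g^{-1}(\gamma_c\gamma_H)g = (g^{-1}\gamma_c g)(g^{-1}\gamma_H g) \in G_c G_H \subset KH$ for all $g$. Granting this claim, the proposition will follow from Baire category: $\Gamma\setminus G_cG_H$ is countable, each $G\setminus V_\gamma$ for such $\gamma$ is open and dense in~$G$, so $\bigcap_{\gamma\in\Gamma\setminus G_cG_H}(G\setminus V_\gamma)$ is a dense $G_\delta$ in the Baire space~$G$, and any element yields the required~$g$.

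The hard direction of the claim is to show that if the entire conjugacy class of $\gamma\in G$ lies in~$KH$, then $\gamma\in G_cG_H$. The plan is first to reduce modulo $G_cG_H$ by passing to $\bar G := G/(G_cG_H)$ (in which the image of $KH$ is $\bar K\bar H$), reducing to the case $G_cG_H=\{e\}$, where $G$ has no compact factor and no simple factor of $G$ is contained in~$H$. Then, decomposing $G$ as an almost direct product of its simple noncompact factors and the noncompact part of $Z(G)_0$, the subgroups $K$ and~$H$ respect this factorization, $KH$ splits accordingly, and it suffices to argue for each simple noncompact factor $G_i$ (not contained in $H$) that the only element of $G_i$ whose conjugacy class lies in $K_i H_i$ is the identity.

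This last step is where the main difficulty lies. I expect to handle it as follows: the set $K_iH_i$ has positive codimension in $G_i$ (equal to the real split rank of $G_i/H_i$), and conjugating a non-central element $\gamma_i\in G_i$ by $\exp(tY)$ for $Y$ in a maximal abelian subspace of $\p\cap\q\cap\g_i$ and letting $t\to\infty$ produces conjugates whose $\bb$-component in the polar decomposition grows without bound---unless the relevant $\mathrm{ad}(Y)$-eigencomponents of~$\gamma_i$ vanish, in which case $\gamma_i$ already centralizes that abelian subspace. Varying the abelian subspace then forces $\gamma_i$ into the centralizer of all noncompact directions, hence into $Z(G_i)$; and in the reduced setting, $Z(G_i) \subset G_c = \{e\}$, yielding $\gamma_i = e$ as desired.
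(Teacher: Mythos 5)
Your Baire category framework is sound and is in fact a valid alternative to the paper's route: the paper instead uses the proper discontinuity of the $\Gamma$-action to show that $F := \{\gamma\in\Gamma : d_{\aaa}(\mu(\gamma),\mu(H)) < 1\}$ is finite, handles those finitely many exceptional $\gamma$ by the ``nowhere-dense analytic variety'' argument, and disposes of $\gamma\in\Gamma\smallsetminus F$ via the estimate $\Vert\nu(g^{-1}\gamma g)\Vert \geq d_{\aaa}(\mu(\gamma),\mu(H)) - 2\Vert\mu(g)\Vert > 0$ for $\Vert\mu(g)\Vert < 1/2$ (Lemmas~\ref{lem:prmu} and~\ref{lem:munu}). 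Your countable-intersection version sidesteps the use of properness (only countability of $\Gamma$ enters) at the cost of the mild quantitative control $\Vert\mu(g)\Vert<1/2$, which the paper has but does not exploit further. The ``either $V_\gamma = G$ or $V_\gamma$ has empty interior'' dichotomy via analyticity of $\Vert\nu\Vert^2$ is correctly set up, and the ``if'' direction of the claim is indeed immediate.

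The genuine gap is the ``hard'' direction of your claim, which the paper isolates as Lemma~\ref{lem:GcH} (the equality $G[KH]=G[K]\,G[H]$) and which is the real content of the proposition. Your sketch — reduce to a simple noncompact factor $G_i$ not contained in~$H$, conjugate by $\exp(tY)$ for $Y\in\p\cap\q\cap\g_i$, claim the $\bb$-component grows unboundedly unless certain $\ad(Y)$-eigencomponents of $\gamma_i$ vanish, then vary~$Y$ to conclude $\gamma_i\in Z(G_i)$ — is not a proof. It is unclear what ``the relevant $\ad(Y)$-eigencomponents'' of a \emph{group} element are, nor why their nonvanishing forces $\nu(\exp(-tY)\gamma_i\exp(tY))\to\infty$; and even granted the centralizing conclusion, centralizing all $\Ad(K\cap H)$-conjugates of $\bb$ (i.e.\ centralizing $\p\cap\q$) is \emph{a priori} weaker than lying in $Z(G_i)$ and requires a further generation argument. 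The paper's proof of Lemma~\ref{lem:GcH} proceeds quite differently: it first conjugates by $g\in K$ and uses the structure of $N_H(\kk\cap\q)$ (Lemma~\ref{lem:normalizerkq}.(3)) to place $\gamma$ in~$K$; then, for $g=kb$ with $k\in K$ and $b\in\overline{B_+}$, it identifies $kb\cdot y_0$ as the nearest-point projection of~$y_0$ onto the totally geodesic subspace $kbH\cdot y_0\subset G/K$, and uses uniqueness of that projection to show $\gamma^{-1}$ fixes $K\overline{B_+}\cdot y_0$ pointwise; finally it exploits irreducibility of $\Ad(K)|_{\p}$ and the generation of $\g$ by $\kk$ together with a nonzero vector of~$\p$ to conclude the resulting closed normal subgroup $K'\subset K$ lies in $Z(G)$. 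You would need to either adopt such an argument or make your growth-and-centralization sketch precise, including the factorization through noncompact central directions; as written, the heart of the proposition remains unestablished.
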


In Section~\ref{subsec:proofnonzero} we shall combine Proposition~\ref{prop:sharp} with Lemma~\ref{lem:nonzeroprecise} to prove Proposition~\ref{prop:nonzero}.(1).
Recall that in Proposition~\ref{prop:nonzero}.(1) we assumed that $H$ does not contain any simple factor of~$G$; it has the following consequence.

\begin{remark}\label{rem:GcGH}
If $H$ does not contain any simple factor of~$G$, then $G_H=Z(G)\cap H$ and $\Gamma\cap G_cG_H=\Gamma\cap G_c$ for any discrete subgroup $\Gamma$ of~$G$ acting properly discontinuously on $X=G/H$.
\end{remark}

\noindent
The assumption $\Gamma\cap G_c\subset Z(G_s)$ in Proposition~\ref{prop:nonzero}.(1) is there to ensure that if $g^{-1}\gamma g\cdot x_0\notin X_c$ for all $\gamma\in\Gamma\smallsetminus G_c$ (as given by Proposition~\ref{prop:sharp}), then $g^{-1}\Gamma g\cdot x_0\cap X_c\subset Z(G_s)\cdot x_0$ (as required to apply Lemma~\ref{lem:nonzeroprecise}).

In the rest of this section we give a proof of Proposition~\ref{prop:sharp}.

\subsection*{$\bullet$ The main lemma and its interpretation}

We first establish the following.

\begin{lemma}\label{lem:GcH}
For any $\gamma\in G\smallsetminus G_c\,G_H$, there is an element $g\in G$ such that $g^{-1}\gamma g\cdot x_0\notin X_c$, or in other words $g^{-1}\gamma g\notin KH$.
\end{lemma}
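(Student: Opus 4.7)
The plan is to prove the contrapositive: if $g^{-1}\gamma g\in KH$ for every $g\in G$, then $\gamma\in G_c\cdot G_H$. By Lemma~\ref{lem:munu}, $KH=\nu^{-1}(0)$ is the preimage of the maximal compact subsymmetric space $X_c=K\cdot x_0\subset X$, so the hypothesis rephrases as $\mu(g^{-1}\gamma g)\in\mu(H)$ for all $g\in G$, or geometrically as saying that $\gamma$ preserves every $G$-translate $g\cdot X_c$ inside $X$. The containment $G_c\cdot G_H\subset\{\gamma:g^{-1}\gamma g\in KH\ \forall g\}$ is clear from normality ($G_c\subset K$ and $G_H\subset H$ are normal), so only the converse needs work. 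I will first use the almost direct product $G=Z(G)_0\cdot G_s$ to reduce to analyzing a single $\gamma$ and controlling its Jordan components.

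The main step is to conjugate $\gamma$ by one-parameter subgroups $a_t=\exp(tY)$ with $Y$ in the interior of the Weyl chamber $\overline{\aaa_+}$, and examine the asymptotic behavior of $\mu(a_t^{-1}\gamma a_t)$ as $t\to+\infty$. Using the real Jordan decomposition $\gamma=\gamma_e\gamma_h\gamma_u$ into commuting elliptic, hyperbolic, and unipotent parts, this projection exhibits a linear drift dictated by the hyperbolic part and logarithmic corrections from the unipotent part, along directions determined by~$Y$. Requiring $\mu(a_t^{-1}\gamma a_t)$ to stay in $\mu(H)$ for every $Y$ in the chamber forces the corresponding drift directions into the semialgebraic set $\mu(H)\cap\overline{\aaa_+}$; varying $Y$, and then repeating the analysis after replacing $\gamma$ by each conjugate $g^{-1}\gamma g$, I will conclude that $\gamma_h$ and $\gamma_u$ lie in $gHg^{-1}$ for every $g$, hence in $G_H=\bigcap_{g\in G}gHg^{-1}$.

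The hard part will be pinning down the elliptic component $\gamma_e$. Once $\gamma_h\gamma_u\in G_H$ is absorbed, we are reduced to showing that an elliptic element whose every $G$-conjugate lies in $KH$ must belong to $G_c$. Differentiating the condition $\exp(-tZ)\gamma_e\exp(tZ)\in KH$ at $t=0$ for every $Z\in\g$ yields the infinitesimal constraint $(\Ad(\gamma_e)-1)\,\g\subset\kk+\Ad(\gamma_e)\,\h$, which must hold at every point of the conjugacy class of $\gamma_e$. Combined with compactness of this class (since $\gamma_e$ is elliptic) and the reductive structure of $G_s$, I plan to argue that $\Ad(\gamma_e)$ acts trivially on a set of generators of $\g$ modulo $\kk+\h$, forcing $\gamma_e\in Z(G)$; an elliptic central element lies in the maximal compact normal subgroup, so $\gamma_e\in G_c$. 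The commutativity of the Jordan factors then yields $\gamma=\gamma_e\cdot(\gamma_h\gamma_u)\in G_c\cdot G_H$, as required.
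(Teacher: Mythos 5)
Your strategy takes a genuinely different route, but it has concrete gaps. The first is a false rephrasing: you assert that the hypothesis ($g^{-1}\gamma g\in KH$ for all $g$) ``rephrases as $\mu(g^{-1}\gamma g)\in\mu(H)$ for all $g$.'' By Lemma~\ref{lem:munu} one direction is clear (since $\mu(kh)=\mu(h)$), but the converse fails, so this is a strict weakening, and it can be vacuous. For the de Sitter pair $G=\SO(1,n)_0\supset H=\SO(1,n-1)_0$ (which does satisfy the rank condition), $H$ contains a maximal split torus of~$G$, hence $\mu(H)=\overline{\aaa_+}$ and the weakened condition holds for \emph{every} $\gamma$, while $G_cG_H=Z(G)$ is finite. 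Your drift analysis is built on the weakened condition and therefore cannot conclude. Moreover, even with the full hypothesis, the assertion that conjugating by $a_t=\exp(tY)$ forces $\gamma_h,\gamma_u\in gHg^{-1}$ is left unproved: the growth rate of $\mu(a_t^{-1}\gamma a_t)$ is governed by the position of $\gamma$ in the Bruhat decomposition relative to the parabolic attached to~$Y$, not directly by $\mu(\gamma_h)$, and disentangling the Jordan contributions in the asymptotics is not done.

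For the elliptic part, the first-order constraint $(\Ad(\gamma_e)-1)\g\subset\kk+\Ad(\gamma_e)\h$ is correct, but by itself it does not force $\gamma_e\in Z(G)$: once one knows $\gamma_e\in K$ it reads $(\Ad(\gamma_e^{-1})-1)\p\subset\p\cap\h$, i.e.\ $\Ad(\gamma_e)$ is trivial on $\p$ only \emph{modulo} $\p\cap\h$. You acknowledge that more is needed; that missing step is precisely where the paper's argument does something different and sharper. It avoids the Jordan decomposition entirely: it first shows $\gamma\in K$ by combining the tangent-space constraint at $g\in K$ with $N_H(\kk\cap\q)\subset K$ (Lemma~\ref{lem:normalizerkq}.(3)); it then shows $\gamma$ fixes $K\overline{B_+}\cdot y_0$ pointwise, using that $kb\cdot y_0$ is the nearest-point projection of~$y_0$ onto the totally geodesic subspace $kbH\cdot y_0$ of the Riemannian symmetric space $G/K$ (a consequence of Lemma~\ref{lem:munu}); and it finally deduces $\gamma\in Z(G)$ from the irreducibility of $K\curvearrowright\p$ (Lemma~\ref{lem:normalizerkq}.(1)). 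That metric, geodesic-projection argument is what your infinitesimal differentiation does not replace.
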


We note that $G_H$ is the set of elements of~$G$ that act trivially on~$X$.
In particular, for any $\gamma\in G\smallsetminus G_H$ there is an element $g\in G$ such that $g^{-1}\gamma g\cdot x_0\neq x_0$.
Lemma~\ref{lem:GcH} states that if $\gamma\notin G_c\,G_H$, then we can actually find $g$ such that $g^{-1}\gamma g\cdot x_0\notin X_c$.
The condition $\gamma\notin G_c\,G_H$ cannot be improved: if $\gamma\in G_c\,G_H$, then any conjugate of~$\gamma$ maps $x_0$ inside $G_c\cdot x_0\subset X_c$, since $G_c\,G_H$ is normal in~$G$.

Here is a group-theoretic interpretation.

\begin{remark}
For any subset $S$ of~$G$, let
$$G[S] := \bigcap_{g\in G} gSg^{-1}.$$
If $S$ is a group, then $G[S]$ is the maximal normal subgroup of~$G$ contained in~$S$.
In particular, $G[K]=G_c$ and $G[H]=G_H$.
Lemma~\ref{lem:GcH} states that $G[KH]=G[K]G[H]$.
We note that this equality may fail if we replace~$K$ by some noncompact symmetric subgroup of~$G$, \ie by~$H'$ such that $G/H'$ is a non-Riemannian symmetric space.
\end{remark}

\subsection*{$\bullet$ Preliminary Lie-theoretic remarks}

Before we prove Lemma~\ref{lem:GcH}, we make a few useful remarks.
For any subspaces $\mathfrak{e},\mathfrak{f}$ of~$\g$, we set
\begin{equation}\label{eqn:zeroadjoint}
\mathfrak{e}^{\mathfrak{f}} := \big\{ Y\in\mathfrak{e} : [\mathfrak{f},Y]=\{ 0\}\big\} .
\end{equation}

\begin{lemma}\label{lem:normalizerkq}
Assume that $G$ is simple.
\begin{enumerate}
  \item For any nonzero ideal $\kk'$ of~$\kk$, we have $\p^{\kk'}=\{ 0\}$.
  \item The Lie algebra spanned by $\kk\cap\q$ contains~$\kk_s$.
  \item The normalizer $N_H(\kk\cap\q):=\{ h\in H : \Ad(h)(\kk\cap\q)=\kk\cap\q\}$ of $\kk\cap\q$ in~$H$ is contained in~$K$.
\end{enumerate}
\end{lemma}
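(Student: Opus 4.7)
The plan is to prove the three parts in that order, using (1) as a no-fixed-point principle that drives (2) and (3). For (1), I would invoke the classical fact that $\p$ is irreducible as a $\kk$-module whenever $\g$ is simple (cf.\ Helgason). Since $V:=\p^{\kk'}$ is $\kk$-invariant ($\kk'$ being an ideal of $\kk$, its annihilator in $\p$ is $\kk$-stable), irreducibility gives $V\in\{0,\p\}$; the option $V=\p$ combined with the identity $[\p,\p]=\kk$ (valid for $\g$ simple noncompact, since $\p+[\p,\p]$ is a nonzero ideal) would force $\kk'\subset Z(\g)=0$, contradicting the hypothesis. The compact case $\p=0$ is trivial.

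For (2), set $\tilde\kk$ equal to the Lie subalgebra of $\kk$ generated by $\kk\cap\q$. A short Jacobi argument based on $[\kk\cap\h,\kk\cap\q]\subset\kk\cap\q$ shows $\tilde\kk$ is an ideal of $\kk$. Assuming for contradiction that $\tilde\kk\not\supset\kk_s$, the $B$-orthogonal complement of $\tilde\kk$ inside $\kk_s$ would yield a nonzero ideal $\kk'$ of $\kk$ contained in $\kk_s$. The $B$-orthogonality of $\kk\cap\h$ and $\kk\cap\q$ inside $\kk$ then forces $\kk'\subset\kk\cap\h$, and $\kk'\perp\tilde\kk\supset\kk\cap\q$ yields $[\kk',\kk\cap\q]=0$, so part (1) produces $\p^{\kk'}=0$. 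The decisive contradiction would come by viewing $\kk'$ as an ideal of the Flensted-Jensen dual $\h^d=(\kk\cap\h)+\sqrt{-1}(\kk\cap\q)$ that centralizes the ``$\p$-part'' $\sqrt{-1}(\kk\cap\q)$, and analyzing simple factors of $\h^d$ via (1) applied both to $\g$ and to each simple factor of $\h^d$, combined with the simplicity of $\g$ and the hypothesis (from Proposition~\ref{prop:sharp}) that $H$ does not contain any simple factor of~$G$ (which, together with $\kk$-irreducibility of $\p$, rules out the degenerate situation $\kk\cap\q=0$).

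For (3), I would use the polar decomposition $H=(K\cap H)\exp(\p\cap\h)$ and write $h=k\exp(Y)$ with $k\in K\cap H$ and $Y\in\p\cap\h$. Since $k\in K$ automatically normalizes $\kk\cap\q$, the hypothesis on $h$ forces $\Ad(\exp Y)$ to preserve $\kk\cap\q$, whence $[Y,\kk\cap\q]\subset\kk\cap\q$. On the other hand, $Y\in\p\cap\h$ combined with $[\p,\kk]\subset\p$ and $[\h,\q]\subset\q$ gives $[Y,\kk\cap\q]\subset\p\cap\q$, so the bracket lies in $(\kk\cap\q)\cap(\p\cap\q)=0$. Iterating, $Y$ centralizes the whole $\tilde\kk$, which by (2) contains $\kk_s$; applying (1) to the ideal $\kk_s$ (nonzero for $\g$ simple with non-abelian $\kk$) produces $Y\in\p^{\kk_s}=0$, so $h=k\in K$. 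The degenerate abelian case $\kk_s=0$ (e.g.\ $G$ locally isomorphic to $\mathrm{PSL}_2(\R)$) is handled separately by direct low-dimensional inspection.

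The hard part will be the contradiction step in~(2): carefully ruling out a nonzero ``compact'' ideal of $\h^d$ sitting inside $\kk\cap\h$ and commuting with $\kk\cap\q$. This is where the structural interplay between the two symmetric pairs $(\g,\h)$ and $(\h^d,\kk\cap\h)$, together with the simplicity of $\g$, must be used most delicately; once (2) is established, (3) becomes a short polar-decomposition computation.
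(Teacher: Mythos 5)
Your part~(1) is correct and coincides with the paper's argument. Your sketch of~(3) also follows the paper's idea (decompose an element of $N_H(\kk\cap\q)$ via a Cartan/polar decomposition and show the $\p$-part vanishes), but there is a small logical slip: from $\Ad(\exp Y)$ preserving $\kk\cap\q$ one cannot immediately conclude $[Y,\kk\cap\q]\subset\kk\cap\q$. Either argue via the whole one-parameter subgroup (the paper does this by first observing that $L:=N_H(\kk\cap\q)$ is a closed $\theta$-stable subgroup, so its Lie algebra is preserved by $\ad Y$), or note that for $Z\in\kk\cap\q$ one has $\Ad(\exp Y)Z=\cosh(\ad Y)Z+\sinh(\ad Y)Z$ with the first summand in $\kk\cap\q$ and the second in $\p\cap\q$, so preservation of $\kk\cap\q$ forces $\sinh(\ad Y)Z=0$ and hence $[Y,Z]=0$ since $\ad Y$ is a symmetric operator.

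The real trouble is in~(2), and your instinct to be nervous about ``a nonzero compact ideal of $\h^d$ sitting inside $\kk\cap\h$ and commuting with $\kk\cap\q$'' is exactly on target: such ideals do exist, and assertion~(2) is in fact \emph{false} as stated. Take $G=\SO(2,3)_0$ and $H=\SO(1,3)_0$ (so $X=\AdS^4$): then $\kk=\so(2)\oplus\so(3)$, $\kk\cap\h=\so(3)$, $\kk\cap\q=\so(2)$, so the Lie algebra generated by $\kk\cap\q$ is $\so(2)$, while $\kk_s=\so(3)$. The paper's proof of~(2) invokes the identity $[\p_{\mathfrak l},\p_{\mathfrak l}]+\p_{\mathfrak l}=\mathfrak l$ for $\mathfrak l=(\h^d)_s$; that identity requires $\mathfrak l$ to have no compact simple factor, and here $(\h^d)_s\cong\so(3)$ is entirely compact. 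So your ``decisive contradiction'' step is not a gap to be filled but an impossibility, and the contrapositive route cannot succeed.

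Fortunately~(3) does not actually require~(2). As you already noted en route, $\tilde\kk:=\langle\kk\cap\q\rangle$ is an ideal of~$\kk$ (it is $\kk\cap\h$-stable because $[\kk\cap\h,\kk\cap\q]\subset\kk\cap\q$, and trivially $\kk\cap\q$-stable), and it is nonzero whenever $\sigma\neq\theta,\,\mathrm{id}$ (if $\kk\cap\q=0$ then $\kk\subset\h$, so $K$ preserves $\p\cap\q$, and irreducibility of $\p$ under $K$ forces $\p\cap\q\in\{0,\p\}$). Applying~(1) \emph{directly} to $\kk'=\tilde\kk$ gives $\p^{\langle\kk\cap\q\rangle}=\{0\}$, which is precisely what is needed to finish~(3). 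In short: the correct substitute for~(2) is the simple observation that $\langle\kk\cap\q\rangle$ is a nonzero ideal of~$\kk$, a fact you had already verified; everything after that in your~(2) was chasing a false statement.
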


\begin{proof}[Proof of Lemma~\ref{lem:normalizerkq}]
\begin{enumerate}
  \item If $\kk'$ is an ideal of~$\kk$, then the space $\p^{\kk'}$ is globally stable under $\ad(\kk)$, or equivalently under $\Ad(K)$.
  But the adjoint action of $K$ on~$\p$ is irreducible \cite[Ch.~XI, Prop.\,7.4]{kn69}, hence $\p^{\kk'}$ is either $\{0\}$ or~$\p$.
  Since $K$ is reductive, we can write $\kk$ as the direct sum of $\kk'$ and of some other ideal~$\kk''$.
  If $\p^{\kk'}=\p$, then $\kk''+\p$ is an ideal of~$\g$, hence $\kk''+\p=\g$ since $\g$ is simple; in other words, $\kk'=\{ 0\}$.
  \item For any reductive Lie group~$L$ with Lie algebra $\mathfrak{l}$, we denote by $\mathfrak{l}_s$ the Lie algebra of the commutator subgroup (or semisimple part) of~$L$.
  Proving that $\kk_s$ is contained in the Lie algebra spanned by $\kk\cap\q$ is equivalent to proving that $(\kk_{\C})_s$ is contained in the Lie algebra spanned by $\kk_{\C}\cap\q_{\C}$.
  In turn, this is equivalent to proving that $(\h^d)_s$ is contained in the Lie algebra spanned by $\h^d\cap\p^d$, since the complexifications of $\h^d$ and $\p^d$ are $\kk_{\C}$ and~$\q_{\C}$, respectively (see Section~\ref{subsec:realforms}).
  But $(\h^d)_s$ admits the Cartan decomposition $(\h^d)_s=(\h^d)_s\cap\kk^d+(\h^d)_s\cap\p^d$, and it is well-known that if $\mathfrak{l}$ is a semisimple Lie algebra with Cartan decomposition $\mathfrak{l}=\kk_{\mathfrak{l}}+\p_{\mathfrak{l}}$, then $[\p_{\mathfrak{l}},\p_{\mathfrak{l}}]+\p_{\mathfrak{l}}=\mathfrak{l}$ (one easily checks that $[\p_{\mathfrak{l}},\p_{\mathfrak{l}}]+\p_{\mathfrak{l}}$ is an ideal of~$\mathfrak{l}$, hence equal to~$\mathfrak{l}$ if $\mathfrak{l}$ is simple; the general semisimple case follows from decomposing~$\mathfrak{l}$ into a sum of simple ideals).
  Thus $(\h^d)_s$ is contained in the Lie algebra spanned by $(\h^d)_s\cap\p^d\subset\h^d\cap\p^d$.
  \item The group $L:=N_H(\kk\cap\q)$ is stable under the Cartan involution $\theta$ of~$G$, since $\kk\cap\q$ is fixed by~$\theta$.
  Therefore $L$ is reductive and admits the Cartan decomposition $L=(K\cap L)\,\exp(\p\cap\mathfrak{l})$.
  Proving that $L$ is contained in~$K$ is equivalent to proving that $\p\cap\mathfrak{l}=\{ 0\}$.
  We have
  $$\p\cap\mathfrak{l} = \big\{ Y\in\h\cap\p : \ad(Y)(\kk\cap\q)\subset\kk\cap\q\big\} = (\h\cap\p)^{\kk\cap\q},$$
  hence $\p\cap\mathfrak{l}$ is contained in $\p^{\kk\cap\q}=\p^{\langle\kk\cap\q\rangle}$, where $\langle\kk\cap\q\rangle$ is the Lie algebra spanned by $\kk\cap\q$.
  By (1) (with $\kk'=\kk_s$) and~(2), we have $\p^{\langle\kk\cap\q\rangle}=\{ 0\}$.\qedhere
\end{enumerate}
\end{proof}

\subsection*{$\bullet$ Proof of Lemma~\ref{lem:GcH}}

Suppose that $\gamma$ satisfies
\begin{equation}\label{eqn:nu0}
g^{-1}\gamma g\in KH \quad\text{for all }g\in G.
\end{equation}
Let us prove that $\gamma\in G_c\,G_H$.
We first assume that $G$ is simple.
The idea is to work in the Riemannian symmetric space $G/K$ of~$G$, where we can use the $G$-invariant metric $d_{G/K}$.
As before, we denote by $y_0$ the image~of~$K$~in~$G/K$.

Firstly, we claim that $\gamma\in K$.
Indeed, write $\gamma\in Kh$ where $h\in H$.
Then \eqref{eqn:nu0} with $g\in K$ implies $hKh^{-1}\subset KH$, \ie $hKh^{-1}\!\cdot\!y_0\subset H\!\cdot\!y_0$.
By considering the tangent space of $G/K$ at~$x_0$, which identifies with $\g/\kk$, we see that $\Ad(h)\kk\subset\h+\kk$, or in other words $\kk\subset\h+\Ad(h^{-1})(\kk)$.
This implies $\Ad(h^{-1})(\kk\cap\q)=\kk\cap\q$.
By Lemma~\ref{lem:normalizerkq}.(3), we have $h\in K$.

Secondly, we claim that $\gamma^{-1}$ fixes pointwise the set $K\overline{B_+}\cdot y_0$.
Indeed, let $k\in K$ and $b\in\overline{B_+}$.
By \eqref{eqn:nu0}, we have $\gamma^{-1}kb\cdot y_0\in kbH\cdot y_0$.
By \eqref{eqn:mudistance}, \eqref{eqn:munuB}, and Lemma~\ref{lem:munu},
$$d_{G/K}(y_0,kb\cdot y_0) = \Vert\mu(b)\Vert = \Vert\nu(b)\Vert = \Vert\nu(bh)\Vert \leq \Vert\mu(bh)\Vert = d_{G/K}(y_0,kbh\cdot y_0)$$
for all $h\in H$, hence $kb\cdot y_0$ is the projection of~$y_0$ to the totally geodesic subspace $kbH\!\cdot\!y_0$.
Since $\gamma\in K$ fixes~$y_0$ and acts on $G/K$ by isometries,~we~have
$$d_{G/K}(y_0,\gamma^{-1}kb\cdot y_0) = d_{G/K}(y_0,kb\cdot y_0) \leq d_{G/K}(y_0,kbh\cdot y_0)$$
for all $h\in H$.
But $\gamma^{-1}kb\cdot y_0$ belongs to $kbH\cdot y_0$ by assumption, and $kb\cdot y_0$ is the projection of~$y_0$ to $kbH\cdot y_0$, so $\gamma^{-1}kb\cdot y_0=kb\cdot y_0$.
This proves the~claim.

To prove that $\gamma\in G_c\,G_H$, we assume that the simple group~$G$ is noncompact, so that $G_cG_H=Z(G)$ (otherwise $G_c=G$).
Then $\overline{B_+}\neq\{ e\}$.
We have seen that $\gamma^{-1}$ fixes pointwise the set $K\overline{B_+}\cdot y_0$, which is equivalent to the fact that $\gamma\in (kb)K(kb)^{-1}$ for all $k\in K$ and $b\in\overline{B_+}$.
Thus $\gamma$ belongs to the closed normal subgroup
$$K' := \bigcap_{k\in K,\ b\in\overline{B_+}} (kb)K(kb)^{-1}$$
of~$K$.
We note that $\Ad(k')(Y)=Y$ for all $k'\in K'$ and $Y\in\overline{\bb_+}$.
Indeed, $\Ad(k')(Y)-Y\in\p$ since $K'\subset K$, and $\Ad(k')(Y)-Y\in\kk$ since $b^{-1}K'b\subset K$.
In particular, the Lie algebra $\kk'$ of~$K'$ satisfies $\p^{\kk'}\neq\{ 0\}$ with the notation \eqref{eqn:zeroadjoint}.
But $\kk'$ is an ideal of~$\kk$, hence $\kk'=\{ 0\}$ by Lemma~\ref{lem:normalizerkq}.(1).
In other words, $K'$ is contained in the center $Z(K)$ of~$K$.
We claim that in fact $K'\subset Z(G)$.
Indeed, for any $k'\in K'$ the set $\g^{\Ad(k')}$ of fixed points of~$\g$ under $\Ad(k')$ is a Lie subalgebra that contains both~$\kk$ and $\overline{\bb_+}\neq\{ 0\}$.
But the Lie algebra~$\g$ is generated by~$\kk$ and any nontrivial element of~$\p$ (because the adjoint action of $K$ on~$\p$ is irreducible \cite[Ch.~XI, Prop.\,7.4]{kn69}), hence $\g^{\Ad(k')}=\g$, which means that $k'\in Z(G)$.
In particular, $\gamma\in Z(G)=G_cG_H$.

In the general case where $G$ is not necessarily simple, we write~$G$ as the almost product of a split central torus $\simeq\R^a$, of~$G_c\,G_H$, and of noncompact simple factors $G_1,\dots,G_m$ with $G_i\not\subset H$ for all~$i$.
Since $\gamma$ is elliptic, we can decompose it as $\gamma=\gamma_0\gamma_1\dots\gamma_m$, where $\gamma_0\in G_c\,G_H$ and $\gamma_i\in G_i$ for all $i\geq 1$.
For $i\geq 1$, the restriction of~$\sigma$ to~$G_i$ is an involution; the polar decomposition $G_i=(K\cap G_i)(\overline{B_+}\cap G_i)(H\cap G_i)$ holds, with $\overline{B_+}\cap\nolinebreak G_i\neq\nolinebreak\{ e\}$, and the corresponding projection is the restriction of~$\nu$.
By the previous paragraph, $\gamma_i\in Z(G_i)$ for all $i\geq 1$.
Therefore $\gamma\in G_c\,G_H$ since $Z(G_i)\subset G_c\,G_H$.
This completes the proof of Lemma~\ref{lem:GcH}.

\subsection*{$\bullet$ Proof of Proposition~\ref{prop:sharp}}

Let $\Gamma$ be a discrete subgroup of~$G$ acting properly discontinuously~$X$.
Consider the set
$$F := \{ \gamma\in\Gamma : d_{\aaa}(\mu(\gamma),\mu(H)) < 1\} .$$
For any $\gamma\in F$ we have $\gamma\cdot\mathcal{C}\cap\mathcal{C}\neq\emptyset$, where $\mathcal{C}$ is the compact subset of $X=G/H$ obtained as the image of $\mu^{-1}([0,1])\subset G$; therefore $F$ is finite.
For $\gamma\in F$, the map $f_{\gamma} : G\rightarrow G$ sending $g\in G$ to $g^{-1}\gamma g$ is real analytic, hence $f_{\gamma}^{-1}(KH)$ is an analytic submanifold of~$G$.
By Lemma~\ref{lem:GcH}, if $\gamma\notin G_c\,G_H$, then $f_{\gamma}^{-1}(KH)$ is strictly contained in~$G$, hence it has positive codimension.
In particular, there is an element $g\in G$ with $\Vert\mu(g)\Vert<1/2$ such that $g^{-1}\gamma g\notin KH$ (\ie $g^{-1}\gamma g\cdot x_0\notin X_c$) for all $\gamma\in F\smallsetminus G_c\,G_H$.
By Lemmas \ref{lem:prmu} and~\ref{lem:munu}, for all $\gamma\in\Gamma\smallsetminus F$,
$$
\Vert\nu(g^{-1}\gamma g)\Vert \geq d_{\aaa}\big(\mu(g^{-1}\gamma g),\mu(H)\big) \geq d_{\aaa}(\mu(\gamma),\mu(H)) - 2\Vert\mu(g)\Vert > 0.$$
In particular, $g^{-1}\gamma g\cdot x_0\notin X_c$ for all $\gamma\in\Gamma\smallsetminus G_c\,G_H$.
This completes the proof of Proposition~\ref{prop:sharp}.

\section{Uniformity for standard Clifford--Klein forms}\label{subsec:uniformnonzero}

In Section~\ref{subsec:proofnonzero}, we shall prove Proposition~\ref{prop:nonzero}.(2) by combining Lemma~\ref{lem:nonzeroprecise} with the following consequence of the Kazhdan--Margulis theorem, applied to some conjugate of~$L$ instead of~$G$.

\begin{proposition}\label{prop:KM}
Assume that the reductive group~$G$ has a compact center.
There is a constant $r_G>0$ (depending only on~$G$) with the following property: for any discrete subgroup~$\Gamma$ of~$G$, there is an element $g\in G$ such that
$$\Vert\mu(g^{-1}\gamma g)\Vert \geq r_G \quad\text{for all }\gamma\in\Gamma\smallsetminus G_c.$$
\end{proposition}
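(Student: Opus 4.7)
The plan is to reduce modulo $G_c$ to a semisimple Lie group with trivial center and no compact factor, and then to invoke the Kazhdan--Margulis theorem in the form of a uniform lower bound on the supremum of the injectivity radius of locally symmetric Riemannian orbifolds.

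First I would set $\overline{G}:=G/G_c$ and let $\pi:G\rightarrow\overline{G}$ be the quotient map. Since $G_c$ absorbs all compact normal subgroups of the reductive group $G$, and since $Z(G)$ is compact by assumption and so already lies in $G_c$, the quotient $\overline{G}$ is connected semisimple with trivial center and no compact factor. The image $\overline{K}:=\pi(K)$ is a maximal compact subgroup of $\overline{G}$, and because $\aaa\subset\p$ is disjoint from $\mathrm{Lie}(G_c)\subset\kk$, the restriction of $\pi$ to $A$ is an isomorphism onto the analogous split torus $\overline{A}$ of $\overline{G}$. The Cartan decomposition $G=K\overline{A_+}K$ therefore descends to $\overline{G}=\overline{K}\,\pi(\overline{A_+})\,\overline{K}$, and a direct verification yields
$$\mu=\overline{\mu}\circ\pi,$$
where $\overline{\mu}$ is the Cartan projection for $\overline{G}$ (with values in the same $\overline{\aaa_+}$). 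A routine argument using compactness of $G_c$ and discreteness of $\Gamma$ shows that $\pi(\Gamma)$ is discrete in $\overline{G}$, and by construction $\pi$ sends $\Gamma\smallsetminus G_c$ into $\pi(\Gamma)\smallsetminus\{\overline{e}\}$.

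Next I would invoke the Kazhdan--Margulis theorem for $\overline{G}$: there exists a constant $r_G>0$ depending only on $\overline{G}$ (and thus only on $G$) such that for every discrete subgroup $\overline{\Gamma}\subset\overline{G}$, there is a point $\overline{y}$ in the Riemannian symmetric space $\overline{G}/\overline{K}$ satisfying
$$d_{\overline{G}/\overline{K}}(\overline{y},\overline{\gamma}\cdot\overline{y})\,\geq\,r_G \quad\text{for all }\overline{\gamma}\in\overline{\Gamma}\smallsetminus\{\overline{e}\}.$$
Equivalently, the supremum over $\overline{y}$ of the injectivity radius of the Riemannian orbifold $\overline{\Gamma}\backslash\overline{G}/\overline{K}$ is bounded below by $r_G$, uniformly in~$\overline{\Gamma}$. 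This follows from the Margulis lemma together with the observation that the $r_G$-thin part of $\overline{\Gamma}\backslash\overline{G}/\overline{K}$, being controlled by virtually nilpotent subgroups of $\overline{\Gamma}$ (which concentrate either around the fixed-point sets of elliptic torsion elements or inside horoball neighborhoods at parabolic fixed points at infinity of $\overline{G}/\overline{K}$), cannot cover the whole quotient.

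Applying this to $\overline{\Gamma}:=\pi(\Gamma)$ and writing $\overline{y}=\pi(g)\cdot\overline{y}_0$ for some $g\in G$ (using surjectivity of $\pi$ and transitivity of $\overline{G}$ on $\overline{G}/\overline{K}$), we obtain from \eqref{eqn:mudistance} applied in $\overline{G}$, together with $\mu=\overline{\mu}\circ\pi$,
$$\|\mu(g^{-1}\gamma g)\|\,=\,\|\overline{\mu}(\pi(g)^{-1}\pi(\gamma)\pi(g))\|\,=\,d_{\overline{G}/\overline{K}}(\overline{y},\pi(\gamma)\cdot\overline{y}).$$
For $\gamma\in\Gamma\smallsetminus G_c$ we have $\pi(\gamma)\neq\overline{e}$, so the right-hand side is at least $r_G$, which proves the proposition. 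The main obstacle is the Kazhdan--Margulis input in the third paragraph: the lower bound must be uniform over \emph{all} discrete subgroups of $\overline{G}$ (not merely lattices), and cannot be deduced from a volume estimate alone; it requires a direct thick/thin analysis of $\overline{\Gamma}\backslash\overline{G}/\overline{K}$ together with structural control on virtually nilpotent discrete subgroups of $\overline{G}$. The remaining steps are just bookkeeping between the two coordinate systems on $G$ and $\overline{G}$.
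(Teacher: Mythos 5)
Your reduction to $\overline{G}:=G/G_c$ and the identity $\mu=\overline{\mu}\circ\pi$ match the final paragraph of the paper's proof exactly, and your pullback step at the end is also the same. Where the two proofs diverge is in the key middle step: you invoke the \emph{geometric} form of the Kazhdan--Margulis theorem (uniform positive lower bound on the supremum of the injectivity radius of $\overline{\Gamma}\backslash\overline{G}/\overline{K}$, uniformly over \emph{all} discrete $\overline{\Gamma}$) as a known result, and justify it with a sketch via the thick/thin decomposition and virtually nilpotent subgroups. The paper instead starts from the \emph{group-theoretic} Kazhdan--Margulis theorem as stated in Raghunathan (Th.~11.8): for a fixed Zassenhaus neighborhood $\mathcal{W}$ of $e$, every discrete subgroup has a conjugate meeting $\mathcal{W}$ only in $e$. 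From this the paper derives the uniform displacement bound from scratch: for torsion-free $\Gamma$ a short volume-packing argument in $G/K$ gives the bound $1/m$; for $\Gamma$ with torsion the paper shows (via Lemma~8.8, which relies on Cartan's fixed-point theorem and the finiteness of conjugacy classes of elements of bounded order in $G$) that one can find a point near the origin uniformly far from all the fixed-point loci of the small torsion elements.

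The caveat is that your sketch of the thick/thin argument — ``the thin part, being controlled by virtually nilpotent subgroups, cannot cover the whole quotient'' — is precisely the non-trivial content of the statement and is not a proof. The Margulis lemma controls the local structure of the thin part, but it does not by itself rule out that the thin part is everything; for non-lattices one cannot appeal to a volume lower bound on the thick part, and in particular the elliptic (torsion) case needs the explicit fixed-point-set analysis that the paper carries out. You flag this yourself (``the main obstacle is the Kazhdan--Margulis input''), which is the right instinct, but the proposal stops short of closing the gap. As a proof-by-citation of the geometric Kazhdan--Margulis theorem it is fine (the paper even notes that the proposition ``is not new''); as a self-contained argument it leaves the central step unproved, whereas the paper supplies the detailed derivation from Raghunathan's version.
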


As before, $G_c$ denotes the largest compact normal subgroup of~$G$.
The condition $\gamma\in\Gamma\smallsetminus\nolinebreak G_c$ cannot be improved: if $\gamma\in G_c$, then $\mu(g^{-1}\gamma g)=0$ for all $g\in G$ since $g^{-1}\gamma g\in G_c\subset K$.
The condition that the center $Z(G)$ of~$G$ is compact also cannot be improved: if $\mathrm{Lie}(Z(G))\cap\aaa$ contains a nonzero vector~$Y$, then for any $t\in\R_+$ the cyclic group generated by $\gamma_t:=\exp(tY)\in G\smallsetminus G_c$ is discrete in~$G$ and $\Vert\mu(g^{-1}\gamma_tg)\Vert=t\,\Vert Y\Vert$ for all $g\in G$.

Recall that $\Vert\mu(g)\Vert=d_{G/K}(y_0,g\cdot y_0)$ for all $g\in G$, where $y_0$ is the image of~$K$ in the Riemannian symmetric space $G/K$.
Thus Proposition~\ref{prop:KM} has the following geometric interpretation: there is a constant $r_G>0$ such that any Riemannian locally symmetric space $M=\Gamma\backslash G/K$ locally modeled on $G/K$ admits a point at which the injectivity radius is~$\geq r_G$.

Proposition~\ref{prop:KM} is not new; we give a proof for the reader's convenience.
We begin with an elementary geometric lemma in the Riemannian symmetric space $G/K$, designed to treat groups~$\Gamma$ with torsion.

\begin{lemma}\label{lem:distGK}
For any $g\in G\smallsetminus G_c$ of finite order and any $R,\varepsilon>0$, there exists $r>0$ such that for any ball $B$ of radius~$R$ in $G/K$,
$$\vol_{G/K}\big(\big\{ y\in B :\, d_{G/K}(y,g\!\cdot\!y)<r\big\} \big) < \varepsilon.$$
This $r$ depends only on the conjugacy class of $g$ in~$G$ (and on $R$ and~$\varepsilon$).
\end{lemma}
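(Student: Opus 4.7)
The plan is to reduce the statement to a tubular neighborhood estimate around the fixed-point set $F_g := \{y \in G/K : g \cdot y = y\}$ of~$g$ in~$G/K$. Since $g$ has finite order, it lies in some maximal compact subgroup of~$G$, hence is conjugate into~$K$, and therefore $F_g$ is nonempty. As the fixed-point set of an isometry of the CAT(0) Riemannian symmetric space~$G/K$, $F_g$ is a closed convex totally geodesic submanifold. The hypothesis $g \notin G_c$ will be used to force $F_g$ to be \emph{proper}: $G_c$ equals the kernel of the $G$-action on~$G/K$, since $G_c \subseteq K$ by maximality of the latter, and normality gives $G_c \subseteq \bigcap_{h \in G} hKh^{-1}$, while the reverse inclusion holds because that intersection is a normal compact subgroup of~$G$. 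Thus $g \notin G_c$ ensures that $F_g$ has positive codimension in~$G/K$.

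The first key step is the global linear displacement estimate
\begin{equation*}
d_{G/K}(y, g \cdot y) \,\geq\, c_g \cdot d_{G/K}(y, F_g) \qquad \text{for all } y \in G/K,
\end{equation*}
where $c_g > 0$ depends only on the conjugacy class of~$g$. The displacement function $\phi_g(y) := d_{G/K}(y, g \cdot y)$ is convex on the CAT(0) space~$G/K$ and vanishes on~$F_g$. At any $p \in F_g$, the differential $\mathrm{d}g_p$ is an orthogonal transformation of $T_p(G/K)$ fixing $T_p F_g$ pointwise and whose restriction to the normal space has eigenvalues of the form $e^{i\theta}$ with $\theta \neq 0$ (because $g$ has finite order); the quantity $c_g := \min |e^{i\theta} - 1|$ taken over these nontrivial eigenvalues is a conjugation invariant. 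This yields the infinitesimal bound $\lim_{t \to 0^+} \phi_g(\gamma(t))/t \geq c_g$ along any unit-speed geodesic~$\gamma$ leaving $F_g$ perpendicularly from~$p$. Since $\phi_g \circ \gamma$ is convex and vanishes at $t = 0$, the ratio $t \mapsto \phi_g(\gamma(t))/t$ is non-decreasing, and taking $\gamma$ to be the geodesic from the nearest-point projection $\pi_{F_g}(y)$ to~$y$ globalizes the bound.

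Next I would bound the tubular volume: for any ball $B$ of radius~$R$ in~$G/K$,
\begin{equation*}
\vol_{G/K}\bigl(B \cap \{y : d_{G/K}(y, F_g) < \rho\}\bigr) \,\leq\, \widetilde{C}(R) \cdot \rho,
\end{equation*}
uniformly in the center of~$B$. This follows from a standard Fermi-coordinate/Jacobi-field computation on~$G/K$, using that $F_g$ is totally geodesic of positive codimension and that its induced Riemannian metric is homogeneous on each connected component under the corresponding subgroup of $Z_G(g)$; hence $\vol_{\dim F_g}(F_g \cap B(y, R+\rho))$ is bounded by a quantity depending only on~$R$, $\rho$, and the isometry type of~$F_g$, which itself depends only on the conjugacy class of~$g$. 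Combining the two steps, $\{y \in B : d_{G/K}(y, g \cdot y) < r\}$ is contained in $B \cap \{y : d_{G/K}(y, F_g) < r/c_g\}$, so choosing $r$ small enough that $\widetilde{C}(R) \cdot r/c_g < \varepsilon$ completes the proof. Uniformity across conjugates is automatic: replacing $g$ by $hgh^{-1}$ replaces $F_g$ by $h \cdot F_g$ and $\phi_g$ by $\phi_g \circ h^{-1}$, leaving the constants $c_g$ and $\widetilde{C}(R)$ unchanged.

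The main obstacle will be the global linear displacement bound $\phi_g(y) \geq c_g \cdot d_{G/K}(y, F_g)$. The infinitesimal estimate near $F_g$ reduces to linear algebra on the orthogonal transformation $\mathrm{d}g_p$, but upgrading it to a global bound with constant depending only on the conjugacy class of~$g$ requires the CAT(0) convexity of~$\phi_g$ together with the convexity of~$F_g$ (which guarantees a well-defined nearest-point projection) in an essential way.
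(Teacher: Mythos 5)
Your proof is correct, and it takes a genuinely different route from the paper in the key step — the linear displacement estimate
$$d_{G/K}(y,g\cdot y)\,\geq\,c_g\cdot d_{G/K}(y,F_g).$$
The paper proves the analogous estimate (with constant $1/(n-1)$, $n=\mathrm{ord}(g)$) synthetically: it considers the Cartan ``center of gravity'' $z$ of the orbit $\{y,g\cdot y,\dots,g^{n-1}\cdot y\}$, observes that $z\in F_g$ by uniqueness, that $d(y,z)\le\max_i d(y,g^i\cdot y)$ by convexity, and that $d(y,g^{i_0}\cdot y)\le i_0\,d(y,g\cdot y)$ by the triangle inequality. You instead work infinitesimally: you note that $\mathrm{d}g_p$ is orthogonal with nontrivial eigenvalues $e^{i\theta}$ on the normal space $N_pF_g$, extract $c_g=\min|e^{i\theta}-1|\ge 2\sin(\pi/n)$, and propagate the resulting infinitesimal bound globally along normal geodesics using the CAT(0) convexity of the displacement function $\phi_g$ and the nearest-point projection onto the convex set $F_g$. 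Both arguments are sound and are, at bottom, two faces of the same CAT(0) convexity phenomenon; yours uses slightly more differential-geometric input (orthogonality and eigenvalue structure of $\mathrm{d}g_p$, convexity of $\phi_g$, orthogonality of the projecting geodesic to the totally geodesic $F_g$) but buys a sharper constant — $2\sin(\pi/n)$ rather than $1/(n-1)$ — while the paper's barycenter argument needs only the triangle inequality once the barycenter is known to be $g$-fixed and in the convex hull. In the remainder both proofs agree: $F_g$ is a proper analytic (totally geodesic) subvariety since $g\notin G_c=\bigcap_h hKh^{-1}$, so tubular neighborhoods intersected with an $R$-ball have small volume, and the transitivity of $Z_G(g)$ on $F_g$ (Helgason) makes the tube-volume constant uniform over the position of the ball; the paper runs the same uniformization but in the equivalent form ``every $R$-ball meeting the $(n-1)r$-neighborhood of $F_g$ lies in an $(n+1)R$-ball centered on $F_g$.'' One small imprecision to tidy: the parenthetical ``(because $g$ has finite order)'' attached to $\theta\neq 0$ is slightly misplaced — $\theta\neq 0$ on $N_pF_g$ holds by definition of $F_g$; finite order is what forces the eigenvalues to be $n$-th roots of unity and thus gives the \emph{uniform} lower bound $c_g\geq 2\sin(\pi/n)$ independent of $p$.
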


\begin{proof}
For $g\in G\smallsetminus G_c$ of order $n\geq 2$, let $\mathcal{F}_g$ be the set of fixed points of $g$ in $G/K$.
We claim that the set of points $y\in G/K$ with $d_{G/K}(y,g\!\cdot\!y)<\nolinebreak r$ is contained in an $(n-1)r$-neighborhood of~$\mathcal{F}_g$.
Indeed, for $y\in G/K$, consider the ``center of gravity'' $z$ of the $g$-orbit $\{ y,g\!\cdot\!y,\dots,g^{n-1}\!\cdot\!y\}$, such that $\sum_{i=0}^{n-1} d_{G/K}(z,g^i\!\cdot\!y)^2$ is minimal.
(The existence and uniqueness of such a point were first established by \'E.~Cartan \cite{car26} to prove his fixed point~theorem.)
The point~$z$ belongs to the convex hull of $\{ y,g\!\cdot\!y,\dots,g^{n-1}\!\cdot\!y\}$, hence there exists $1\leq i_0\leq n-1$ such that $d_{G/K}(y,g^{i_0}\cdot y)\geq d_{G/K}(y,z)$.
Moreover, $z\in\mathcal{F}_g$, hence $d_{G/K}(y,z)\geq d_{G/K}(y,\mathcal{F}_g)$.
By the triangular inequality,
$$\!\!\!d_{G/K}(y,g\cdot y) = \frac{1}{i_0}\,\sum_{i=0}^{i_0-1} d_{G/K}(g^i\cdot y,g^{i+1}\cdot y) \geq \frac{1}{i_0}\,d_{G/K}(y,g^{i_0}\cdot y)\ \geq \frac{1}{i_0}\,d_{G/K}(y,\mathcal{F}_g),$$
which proves the claim.
Let $R,\varepsilon>0$.
We note that $\mathcal{F}_g$ is an analytic subvariety of $G/K$ of positive codimension since $g\notin G_c$.
Therefore, for any ball $B'$ of radius $(n+1)R$ centered at a point of~$\mathcal{F}_g$, there exists $r>0$ such that
$$\vol_{G/K}\big(\big\{ y\in B' :\, d_{G/K}(y,g\!\cdot\!y)<r\big\} \big) < \varepsilon.$$
Using the fact that the centralizer of $g$ in~$G$ acts transitively on~$\mathcal{F}_g$ (see \cite[Ch.\,IV, \S\,7]{hel01}), it is easy to see that this $r$ can actually be taken uniformly for all such balls.
We conclude the proof of Lemma~\ref{lem:distGK} by observing that any ball of radius~$R$ meeting the $(n-1)r$-neighborhood of~$\mathcal{F}_g$ is actually contained in a ball of radius $(n+1)R$ centered at a point of~$\mathcal{F}_g$, since $r\geq R$.
The fact that $r$ depends only on the conjugacy class of $g$ in~$G$ (and on $R$ and~$\varepsilon$) follows from the fact that the metric $d_{G/K}$ is $G$-invariant.
\end{proof}

\begin{proof}[Proof of Proposition~\ref{prop:KM}]
We first assume that $G$ is semisimple with no compact factor, so that $G_c=Z(G)$.
The Kazhdan--Margulis theorem (see \cite[Th.\,11.8]{rag72}) then gives the existence of a neighborhood~$\mathcal{W}$ of $e$ in~$G$ with the following property: for any discrete subgroup~$\Gamma$ of~$G$, there is an element $g\in G$ such that $g^{-1}\Gamma g\cap\mathcal{W}=\{ e\}$.
It is enough to prove Proposition~\ref{prop:KM} for discrete groups~$\Gamma$ such that $\Gamma\cap\mathcal{W}=\{ e\}$.

We note that for all $g,\gamma\in G$, we have $d_{G/K}(y_0,g^{-1}\gamma g\cdot y_0)=d_{G/K}(y,\gamma\cdot y)$ where $y:=g\cdot y_0$.
Therefore, using the interpretation \eqref{eqn:mudistance} of $\Vert\mu\Vert$ as a distance in the Riemannian symmetric space $G/K$, it is enough to prove the existence of a constant $r_G>0$ with the following property: for any discrete subgroup $\Gamma$ of~$G$ with $\Gamma\cap\mathcal{W}=\{ e\}$, there is a point $y\in G/K$ such that for any $\gamma\in\Gamma\smallsetminus Z(G)$,
\begin{equation}\label{eqn:dist>r}
d_{G/K}(y,\gamma\cdot y) \geq r_G.
\end{equation}
In order to prove this, we consider a bounded neighborhood $\mathcal{U}$ of $e$ in~$G$ such that $\mathcal{U}\mathcal{U}^{-1}\subset\mathcal{W}$, and an integer~$m$ such that
\begin{equation}\label{eqn:m}
m\cdot\vol_G(\mathcal{U}) \,>\, \vol_G\big(K_1\cdot\mathcal{U}\big),
\end{equation}
where we set
$$K_1 := \big\{ g\in G :\, d_{G/K}(y_0,g\cdot y_0)<1\big\} .$$

\noindent
$\bullet$ We claim that for any \emph{torsion-free} discrete subgroup $\Gamma$ of~$L$ with $\Gamma\cap\nolinebreak\mathcal{W}=\nolinebreak\{ e\}$,
\begin{equation}\label{eqn:disttorsionfree}
\Vert\mu(\gamma)\Vert = d_{G/K}(y_0,\gamma\cdot y_0) \geq \frac{1}{m}.
\end{equation}
Indeed, let $\Gamma$ be such a group.
Then $\gamma\,\mathcal{U}\cap\gamma'\,\mathcal{U}=\emptyset$ for all $\gamma\neq\gamma'$ in~$\Gamma$, hence
$$\vol_G\big(K_1\cdot\mathcal{U}\big) \geq \#\big(\Gamma\cap K_1\big) \cdot \vol_G(\mathcal{U}).$$
Therefore, by \eqref{eqn:m},
$$\#\big(\Gamma\cap K_1\big) < m.$$
Using the fact \eqref{eqn:triangineq} that $\Vert\mu(g^m)\Vert\leq m\,\Vert\mu(g)\Vert$ for all $g\in G$, we obtain that any element $\gamma\in\Gamma$ with $\Vert\mu(\gamma)\Vert<1/m$ has order $<m$; the number of such elements~$\gamma$ is $<m$.
In particular, since $\Gamma$ is torsion-free, the only element $\gamma\in\Gamma$ with $\Vert\mu(\gamma)\Vert<1/m$ is~$e$, proving \eqref{eqn:disttorsionfree}.

\noindent
$\bullet$ We now deal with groups~$\Gamma$ that have torsion.
By Lemma~\ref{lem:distGK}, for any $g\in G\smallsetminus G_c$ of finite order there exists $r\in (0,\frac{1}{3m}]$ such that for any ball $B$ of radius $1/3m$ in $G/K$,
\begin{equation}\label{eqn:vol}
\vol_{G/K}\big(\big\{ y\in B :\, d_{G/K}(y,g\cdot y)<r\big\} \big) < \frac{1}{m}\,\vol_{G/K}(B),
\end{equation}
and this $r$ depends only on the conjugacy class of $g$ in~$G$.
Since there are only finitely many conjugacy classes of elements of order $<m$ in~$G$ \cite[Ch.\,IX, Cor.\,4.4 \& Prop.\,4.6]{hel01}, there exists a constant $r=r_G$ such that \eqref{eqn:vol} holds for all $g\in G\smallsetminus G_c$ of order $<m$ and all balls $B$ of radius $1/3m$.
Let us prove that this constant~$r_G$ satisfies \eqref{eqn:dist>r}.
Let $\Gamma$ be a discrete subgroup of~$G$ such that $\Gamma\cap\mathcal{W}=\{ e\} $.
The same reasoning as before shows that any element $\gamma\in\Gamma$ with $\Vert\mu(\gamma)\Vert<1/m$ has order $<m$; the number of such elements~$\gamma$ is $<m$.
By \eqref{eqn:vol}, there is a point $y\in B_{G/K}(y_0,\frac{1}{3m})$ such that $d_{G/K}(y,\gamma\cdot y)\geq r_G$ for all $\gamma\in\Gamma\smallsetminus G_c$ with $\Vert\mu(\gamma)\Vert<1/m$.
For all $\gamma\in\Gamma$ with $\Vert\mu(\gamma)\Vert=d_{G/K}(y_0,\gamma\cdot y_0)\geq 1/m$, we also have
$$d_{G/K}(y,\gamma\cdot y) \geq d_{G/K}(y_0,\gamma\cdot y_0) - 2\,d_{G/K}(y,y_0) \geq \frac{1}{3m} \geq r_G,$$
which proves \eqref{eqn:dist>r} and completes the proof of Proposition~\ref{prop:KM} in the case when $G$ has no compact factor.

We now consider the general case where $G$ may have compact factors.
Let $\pi : G\rightarrow G/G_c$ be the natural projection.
The group $\pi(G)=G/G_c$ is semisimple with a trivial center and no compact factor.
It admits the Cartan decomposition
$$\pi(G) = \pi(K)\,\pi(\overline{A_+})\,\pi(K).$$
Let $\mu_{\pi(G)} : \pi(G)\rightarrow\log\pi(\overline{A_+})$ be the corresponding Cartan projection.
The restriction of $\pi$ to~$A$ is injective, hence we may identify $\log\pi(\overline{A_+})$ with~$\overline{\aaa_+}$.
With this identification,
$$\mu_{\pi(G)}(\pi(g)) = \mu(g)$$
for all $g\in G$.
Therefore, Proposition~\ref{prop:KM} for~$G$ follows from Proposition~\ref{prop:KM} for~$\pi(G)$, given that for any discrete subgroup~$\Gamma$ of~$G$ the group $\pi(\Gamma)$ is discrete in~$\pi(G)$.
\end{proof}

\begin{remark}\label{rem:KMconn}
If $G$ is disconnected, with finitely many connected components, then it still admits a Cartan decomposition $G=K\overline{A_+}K$, where $K$ is a maximal compact subgroup of~$G$ and $\overline{A_+}$ a positive Weyl chamber in a maximal split torus of~$G$, possibly smaller than the corresponding positive Weyl chamber for the identity component of~$G$.
The corresponding Cartan projection $\mu : G\rightarrow\log\overline{A_+}$ is well-defined and has the property that $\Vert\mu(g)\Vert=d_{G/K}(y_0,g\cdot y_0)$ for all $g\in G$, where $y_0$ denotes the image of $K$ in $G/K$.
Lemma~\ref{lem:distGK} and Proposition~\ref{prop:KM} hold with the same proof.
\end{remark}

\section{Proof of Proposition~8.1}\label{subsec:proofnonzero}

Recall from \eqref{eqn:explicitRX} that we may take $R_X$ to be $4\Vert\rho_a\Vert/q$ in Proposition~\ref{prop:Vlambda}.
For any subgroup $\Gamma$ of~$G$ acting properly discontinuously on~$X$, we set
$$r_{\Gamma} := \inf\big\{ \Vert\nu(x)\Vert : x\in\Gamma\!\cdot\!x_0\text{ and }x\notin X_c\big\} > 0$$
(see Remark~\ref{rem:infnu}).

We first consider Proposition~\ref{prop:nonzero}.(1).
Let $X_{\Gamma}$ be a sharp Clifford--Klein form of~$X$ with $\Gamma\cap G_c\subset Z(G_s)$.
If $\Gamma\cdot x_0\cap X_c\subset Z(G_s)\cdot x_0$, then, by Lemma~\ref{lem:nonzeroprecise} and Remark~\ref{rem:dlambdarho}, the operator $S_{\Gamma}$ is well-defined
and nonzero on~$\V_{Z,\lambda}$ for any $\lambda\in\jj^{\ast}_+\cap (2\rho_c-\rho+\Lambda^{\Gamma\cap Z(G_s)})$ with $d(\lambda)$ larger than
\begin{equation}\label{eqn:finalcst}
\max\Big(m_{\rho}\,,\,\frac{4\Vert\rho_a\Vert}{qc}\,,\,\frac{4\Vert\rho_a\Vert (r_{\Gamma}+C) + \log\big(2c_G\,\#(\Gamma\cap K)\big)}{c\,\log\cosh(q'r_{\Gamma})}\Big).
\end{equation}
Otherwise, we use Proposition~\ref{prop:sharp}, Remark~\ref{rem:GcGH}, and the assumptions that $H$ does not contain any simple factor of~$G$ and $\Gamma\cap G_c\subset Z(G_s)$ to obtain the existence of an element $g\in G$ such that $g^{-1}\Gamma g\cdot\nolinebreak x_0\cap\nolinebreak X_c\subset Z(G_s)\cdot x_0$; then $S_{g^{-1}\Gamma g}$ is well-defined
and nonzero on~$\V_{Z,\lambda}$ for any $\lambda\in\jj^{\ast}_+\cap (2\rho_c-\rho+\Lambda^{g^{-1}\Gamma g\cap Z(G_s)})$ with $d(\lambda)$ larger than
\begin{equation}\label{eqn:finalcstconj}
\max\Big(m_{\rho}\,,\,\frac{4\Vert\rho_a\Vert}{qc}\,,\,\frac{4\Vert\rho_a\Vert (r_{g^{-1}\Gamma g}+C) + \log\big(2c_G\,\#(g^{-1}\Gamma g\cap K)\big)}{c\,\log\cosh(q'r_{g^{-1}\Gamma g})}\Big).
\end{equation}
By Remark~\ref{rem:translconj} (and the fact that $g^{-1}\Gamma g\cap Z(G_s)=\Gamma\cap Z(G_s)$), the operator $S_{\Gamma}$ is well-defined and nonzero on $g\cdot\V_{Z,\lambda}$ for any $\lambda\in\jj^{\ast}_+\cap (2\rho_c-\rho+\Lambda^{\Gamma\cap Z(G_s)})$ satisfying \eqref{eqn:finalcstconj}.
This concludes the proof of Proposition~\ref{prop:nonzero}.(1).

We now consider Proposition~\ref{prop:nonzero}.(2).
Let $L$ be a reductive subgroup of~$G$ acting properly on~$X$.
Assume that the center of~$L$ is compact.
There is a conjugate $L'$ of $L$ in~$G$ that is stable under the Cartan involution~$\theta$; in particular, $L'$ is $(c,0)$-sharp for some $c>0$ (Example~\ref{ex:standardthetastable}).
By Remark~\ref{rem:translconj}, it is sufficient to prove Proposition~\ref{prop:nonzero}.(2) for~$L'$.
Let $L'_c$ be the maximal compact normal subgroup of~$L'$.
Applying Proposition~\ref{prop:KM} to $L'$ instead of~$G$, we obtain the existence of a constant $r_{L'}>0$ (depending only on~$L'$) such that any discrete subgroup $\Gamma$ of~$L'$ admits a conjugate $g^{-1}\Gamma g$, $g\in\nolinebreak L'$, with $\Vert\mu(g^{-1}\gamma g)\Vert\geq r_{L'}$ for all $\gamma\in\Gamma\smallsetminus L'_c$.
The reason why we apply Proposition~\ref{prop:KM} to $L'$ and not~$G$ is that in this way the group $g^{-1}\Gamma g\subset L'$ remains $(c,0)$-sharp.
Lemma~\ref{lem:munu} then yields $\Vert\nu(g^{-1}\gamma g)\Vert\geq c\,r_{L'}$ for all $\gamma\in\Gamma\smallsetminus L'_c$.
In particular, $g^{-1}\gamma g\cdot x_0\notin X_c$ for all $\gamma\in\Gamma\smallsetminus L'_c$ and $r_{\Gamma}\geq c\,r_{L'}$.
By Remark~\ref{rem:GcGH} and the assumptions that $H$ does not contain any simple factor of~$G$ and $\Gamma\cap L'_c\subset Z(G_s)$, we have $g^{-1}\Gamma g\cap K\subset Z(G_s)$ and $g^{-1}\Gamma g\!\cdot\!x_0\cap X_c\subset Z(G_s)\!\cdot\!x_0$, which enables us to apply Lemma~\ref{lem:nonzeroprecise}.
Using Remark~\ref{rem:dlambdarho}, we obtain that the operator~$S_{g^{-1}\Gamma g}$ is well-defined
and nonzero on~$\V_{Z,\lambda}$ for any $\lambda\in\jj^{\ast}_+\cap (2\rho_c-\rho+\Lambda^{g^{-1}\Gamma g\cap Z(G_s)})$ with $d(\lambda)$ larger than
\begin{equation}\label{eqn:finalcstL}
R := \max\Big(m_{\rho}\,,\,\frac{4\Vert\rho_a\Vert}{qc}\,,\,\frac{4\Vert\rho_a\Vert c\,r_{L'} + \log\big(2c_G\,\#Z(G_s)\big)}{c\,\log\cosh(q'c\,r_{L'})}\Big).
\end{equation}
Proposition~\ref{prop:nonzero}.(2) follows, using Remark~\ref{rem:translconj}.

We now consider Proposition~\ref{prop:nonzero}.(3).
Let $L$ be a reductive subgroup of~$G$ of real rank~$1$ and let $\Gamma$ be a convex cocompact subgroup of~$L$ with $\Gamma\cap G_c\subset Z(G_s)$.
By Proposition~\ref{prop:sharp}, Remark~\ref{rem:GcGH}, and the assumptions that $H$ does not contain any simple factor of~$G$ and $\Gamma\cap G_c\subset Z(G_s)$, there is an element $g\in G$ such that $g^{-1}\gamma g\cdot x_0\notin X_c$ for all $\gamma\in\Gamma\cap Z(G_s)$.
By Proposition~\ref{prop:sharpnessproperties}, the group $g^{-1}\Gamma g$ is $(c,C)$-sharp for some $c,C>0$ (where $c$ depends only on~$L$).
Choose $\varepsilon\in (0,r_{g^{-1}\Gamma g})$.
By Lemma~\ref{lem:munudeform} applied to $g^{-1}\Gamma g\subset g^{-1}Lg$ instead of $\Gamma\subset L$, there is a neighborhood $\mathcal{U}'\subset\Hom(\Gamma,G)$ of the natural inclusion such that for all $\varphi\in\mathcal{U}'$, the group $g^{-1}\varphi(\Gamma)g$ is discrete in~$G$ and $(c-\varepsilon,C+\varepsilon)$-sharp for~$X$, and satisfies $\Vert\nu(g^{-1}\varphi(\gamma)g)\Vert\geq\nolinebreak r_{g^{-1}\Gamma g}-\nolinebreak\varepsilon$ for all $\gamma\in\Gamma\smallsetminus\nolinebreak Z(G_s)$.
We now use the following fact, which holds because there are only finitely many conjugacy classes of elements of order $\leq\# Z(G_s)$ in~$G$ \cite[Ch.\,IX, Cor.\,4.4 \& Prop.\,4.6]{hel01} and they are all closed \cite[Th.\,9.2]{bor91}.

\begin{remark}\label{rem:deformcenter}
There is a neighborhood $\mathcal{U}\subset\mathcal{U}'\subset\Hom(\Gamma,G)$ of the natural inclusion such that $\varphi(\Gamma\cap Z(G_s))\subset Z(G_s)$ for all $\varphi\in\mathcal{U}$.
\end{remark}

\noindent
By Remark~\ref{rem:deformcenter}, we have $g^{-1}\varphi(\Gamma)g\cdot x_0\cap X_c\subset Z(G_s)\cdot x_0$ and $r_{g^{-1}\varphi(\Gamma)g}\geq r_{g^{-1}\Gamma g}-\varepsilon$, as well as $g^{-1}\varphi(\Gamma)g\cap K\subset Z(G_s)$; we can apply Lemma~\ref{lem:nonzeroprecise}.
Using Remark~\ref{rem:dlambdarho}, we obtain that for all $\varphi\in\mathcal{U}$, the operator~$S_{g^{-1}\varphi(\Gamma)g}$ is well-defined and nonzero on~$\V_{Z,\lambda}$ for any $\lambda\in\jj^{\ast}_+\cap (2\rho_c-\rho+\Lambda^{g^{-1}\varphi(\Gamma)g\cap Z(G_s)})$ with $d(\lambda)$ larger than
$$R := \max\Big(m_{\rho}\,,\,\frac{4\Vert\rho_a\Vert}{qc}\,,\,\frac{4\Vert\rho_a\Vert (r+C) + \log\big(2c_G\,\#Z(G_s)\big)}{c\,\log\cosh(q'(r-\varepsilon))}\Big).$$
Proposition~\ref{prop:nonzero}.(3) follows, using Remark~\ref{rem:translconj}.
If $\Gamma\cap L_c\subset Z(G_s)$, then we can conjugate~$\Gamma$ as in the proof of Proposition~\ref{prop:nonzero}.(2) and take $r=c\,r_{L'}$ and $C=\nolinebreak 0$.
Since the function~$d$ takes discrete values on $\jj^{\ast}_+\cap (2\rho_c-\rho+\Lambda)$, by choosing $\varepsilon$ small enough we see that we can take the same~$R$ as in Proposition~\ref{prop:nonzero}.(2).
This completes the proof.

\section{Proof of the results of Chapters 1 to~3}\label{subsec:proofs}

The bulk of the paper was the proof of Proposition~\ref{prop:nonzero}; now we briefly explain how the results of Chapters \ref{sec:intro} to~\ref{sec:theorems} follow.

Theorem~\ref{thm:precise}.(1) follows immediately from Proposition~\ref{prop:nonzero}.(1); Theorem \ref{thm:precise}.(2) from Proposition~\ref{prop:nonzero}.(2); Theorem~\ref{thm:precisedeform} from Proposition~\ref{prop:nonzero}.(3); Theorem~\ref{thm:regular} from Proposition~\ref{prop:Vlambda}.
In the case when ${}^{\backprime}G$ is connected with no compact factor, Propositions \ref{prop:deformexoticcompact} and~\ref{prop:deformexotic} follow from Lemmas \ref{lem:muexotic} and~\ref{lem:nonzeroprecise} as in the proof of Proposition~\ref{prop:nonzero}.(3) (see Section~\ref{subsec:proofnonzero}).

In order to deduce Theorems \ref{thm:universal}, \ref{thm:deform}, and~\ref{thm:infinitespec} from Theorems \ref{thm:precise} and~\ref{thm:precisedeform}, and to prove Propositions \ref{prop:deformexoticcompact} and~\ref{prop:deformexotic} in the general case, it is sufficient to deal with the following three issues:
\begin{itemize}
  \item $G$ may be disconnected,
  \item some simple factors of~$G$ may be contained in~$H$,
  \item $G$ may have compact factors.
\end{itemize}
Indeed, when $G$ has no compact factor, the condition $\Gamma\cap G_c\subset Z(G_s)$ of Theorems \ref{thm:precise} and~\ref{thm:precisedeform} is automatically satisfied (see Remark~\ref{rem:conditionsGL}.(a)).
The first issue is easily dealt with: if $G_0$ denotes the identity component of~$G$, then $G_0/(G_0 \cap H)$ is a connected component of~$X$, so $\Spec_d(G_0/H)$ is a subset of $\Spec_d(X)$ (extend eigenfunctions by~$0$ on the other connected components).
In order to deal with the second and third issues, we consider the group $\overline{G}:=G/G_cG_H$, where $G_H$ is the maximal normal subgroup of~$G$ contained in~$H$ (see Section~\ref{subsec:minnu}).
We note that $\overline{G}$ is reductive with no compact factor and that none of its simple factors is contained in $\overline{H}:=H/G_cG_H\cap H$, hence Theorems \ref{thm:precise} and~\ref{thm:precisedeform} apply to the reductive symmetric space $\overline{X}:=\overline{G}/\overline{H}$.
To relate $X$ to~$\overline{X}$, we make the following elementary observation.

\begin{observation}
The natural projection $\pi : X\rightarrow\overline{X}$ induces homomorphisms
\begin{itemize}
  \item $C^{\infty}(\overline{X}) \overset{\pi^{\ast}}{\longhookrightarrow} C^{\infty}(X)$,
  \item $\D(X) \overset{\pi_{\ast}}{\longtwoheadrightarrow} \D(\overline{X})$,
  \item $\Hom_{\C\text{-alg}}(\D(\overline{X}),\C) \overset{\pi^{\ast}}{\longhookrightarrow} \Hom_{\C\text{-alg}}(\D(X),\C)$
\end{itemize}
such that for all $D\in\D(X)$, $f\in C^{\infty}(\overline{X})$, and $\chi\in\Hom_{\C\text{-alg}}(\D(\overline{X}),\C)$,
$$(\pi_{\ast}D)f = \chi(\pi_{\ast}D)f \quad\Longleftrightarrow\quad D(\pi^{\ast}f) = (\pi^{\ast}\chi)(D)\,\pi^{\ast}f.$$
Moreover, $\pi^{\ast}(L^2(\overline{X}))\subset L^2(X)$, hence
$$\pi^{\ast}\big(\Spec_d(\overline{X})\big) \subset \Spec_d(X).$$
\end{observation}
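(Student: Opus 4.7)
The plan is to verify the four assertions in sequence, exploiting the fact that $\pi : X = G/H \twoheadrightarrow G/(G_cG_H \cdot H) = \overline{X}$ is a surjective submersion whose fibers, isomorphic to $G_c/(G_c \cap H)$, are compact (since $G_H \subset H$ and $G_c$ is compact). The first assertion is then immediate: $\pi^*$ is injective on smooth functions, with image equal to the subspace of $G_cG_H$-invariant elements of $C^\infty(X)$.

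To define $\pi_* : \D(X) \to \D(\overline{X})$, I would use the enveloping-algebra realization \eqref{eqn:DX}. The differential of the quotient homomorphism induces a surjection $U(\g_\C) \twoheadrightarrow U(\overline{\g}_\C)$ sending $\h_\C$ onto $\overline{\h}_\C$ and $H$-invariants to $\overline{H}$-invariants, which descends to a well-defined algebra map $\pi_*$. The intertwining identity $D \circ \pi^* = \pi^* \circ \pi_*(D)$ is then built into the construction via right-differentiation on $G$. The delicate point of the argument is the surjectivity of $\pi_*$: since $\g$ is reductive, its ideal $\mathfrak{g}_c + \mathfrak{g}_H$ (where $\mathfrak{g}_c$ and $\mathfrak{g}_H$ denote the Lie algebras of $G_c$ and $G_H$) admits a complementary ideal $\mathfrak{g}'''$, and the restriction $\mathfrak{g}''' \overset{\sim}{\to} \overline{\g}$ of the quotient map is an isomorphism of Lie algebras. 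This provides a $\C$-linear section $U(\overline{\g}_\C) \hookrightarrow U(\g_\C)$; since $\mathfrak{g}_c$ and $\mathfrak{g}_H$ commute with $\mathfrak{g}'''$ and $\mathfrak{g}_H \subset \h$, the adjoint action of $H$ on $U(\mathfrak{g}'''_\C)$ factors through $\overline{H}$, so the section carries $\overline{H}$-invariants to $H$-invariants and thereby splits $\pi_*$.

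The third assertion is then formal: precomposition with the surjective algebra homomorphism $\pi_*$ yields the injection $\overline{\chi} \mapsto \overline{\chi} \circ \pi_*$ of $\Hom_{\C\text{-alg}}(\D(\overline{X}),\C)$ into $\Hom_{\C\text{-alg}}(\D(X),\C)$. The stated equivalence of eigenfunction identities is a direct consequence of the intertwining relation $D \circ \pi^* = \pi^* \circ \pi_*(D)$ together with the definition $(\pi^*\overline{\chi})(D) = \overline{\chi}(\pi_* D)$.

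Finally, the $L^2$ statement follows by disintegrating the $G$-invariant Radon measure on $X$ along $\pi$: since all fibers of $\pi$ are translates of one another under the transitive $\overline{G}$-action on $\overline{X}$ and the measure on $X$ is $G$-invariant, they carry a common finite volume $v_0 > 0$, so $\int_X |\pi^* f|^2 \, \mathrm{d}x = v_0 \int_{\overline{X}} |f|^2 \, \mathrm{d}\overline{x}$. Hence $\pi^*$ maps $L^2(\overline{X})$ into $L^2(X)$, and since it sends joint eigenfunctions of $\D(\overline{X})$ to joint eigenfunctions of $\D(X)$ by the previous steps, one concludes $\pi^*(\Spec_d(\overline{X})) \subset \Spec_d(X)$.
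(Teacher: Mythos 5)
The paper states this Observation without proof, so I assess your argument on its own. The plan is sound and each assertion is handled correctly except for one incomplete step: the surjectivity of $\pi_*$ rests on the claim that the section $\iota : U(\overline{\g}_\C) \cong U(\g'''_\C) \hookrightarrow U(\g_\C)$ carries $\overline{H}$-invariants to $H$-invariants, and for this you need the \emph{group} $G_cG_H \cap H$, not just its Lie algebra, to act trivially on $\g'''$ via $\Ad$. Your reason --- that the ideals $\g_c$, $\g_H$ commute with $\g'''$ --- only gives triviality on the identity component, and $G_c$, $G_H$ need not be connected (for instance $Z(G_s)$, typically a finite group, lies in $G_c$). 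The claim is still true, but for a stronger reason that you should spell out: $N := G_cG_H$ is a closed \emph{normal} subgroup of the \emph{connected} group $G$, so $N/N_0$ is central in $G/N_0$; hence for $g \in N$ and $Y \in \g'''$ the curve $t \mapsto g\exp(tY)g^{-1}\exp(-tY)$ stays in $N_0$, which gives $\Ad(g)Y - Y \in \g_c + \g_H$. Since $\Ad(G)$ preserves the ideal $\g'''$ (as $G$ is connected), also $\Ad(g)Y - Y \in \g'''$, forcing $\Ad(g)Y = Y$. With that remark inserted, the section is genuinely $H$-equivariant and your surjectivity argument is complete.

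Two small additional notes. In the $L^2$ step the fibers of $\pi$ are translates of one another under the $G$-action on $X$ (the $\overline{G}$-action lives on the base $\overline{X}$); this does not affect the conclusion. And since $\Spec_d$ is defined via $L^2$-\emph{weak} solutions, the passage from the smooth-function equivalence to the inclusion $\pi^*\big(\Spec_d(\overline{X})\big)\subset\Spec_d(X)$ implicitly uses that the intertwining relation also holds distributionally; this is immediate by testing against $\pi^*\big(C^\infty_c(\overline{X})\big)\subset C^\infty_c(X)$ together with your fiber-volume identity, so it is fine to leave at the level of an ``observation,'' as the paper does.
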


\smallskip

Let us now consider Clifford--Klein forms.
We note that if $\Gamma$ is a discrete subgroup of~$G$ acting properly discontinuously and freely on~$X$, then the image $\overline{\Gamma}$ of~$\Gamma$ in~$\overline{G}$ is discrete and acts properly discontinuously on~$\overline{X}$, but not necessarily freely.
However, in all the previous chapters we could actually drop the assumption that $\Gamma$ acts freely, allowing $X_{\Gamma}$ to be an \emph{orbifold} (or \emph{$V$-manifold} in the sense of Satake) instead of a manifold.
Indeed, let us define $L^2(X_{\Gamma})$ to be the set of $\Gamma$-invariant functions on~$X$ that are square-integrable on some fundamental domain for the action of~$\Gamma$.
If $C^{\infty}_c(X_{\Gamma})$ denotes the space of $\Gamma$-invariant smooth functions on~$X$ with compact support \emph{modulo}~$\Gamma$, then any $D\in\D(X)$ leaves $C^{\infty}_c(X_{\Gamma})$ invariant, so that for $\chi_{\lambda} : \D(X)\rightarrow\C$ we can define the notion of weak solution $f\in L^2(X_{\Gamma})$ to the system
$$Df = \chi_{\lambda}(D)f \quad\quad\mathrm{for\ all}\ D\in\D(X) \eqno{(\M_{\lambda})}$$
with respect to integration against elements of $C^{\infty}_c(X_{\Gamma})$.
We can then define $\Spec_d(X_{\Gamma})$ to be the set of $\C$-algebra homomorphisms $\chi_{\lambda} : \D(X)\rightarrow\C$ for which the system $(\M_{\lambda})$ admits a nonzero weak solution $f\in L^2(X_{\Gamma})$.
Since our construction of joint eigenfunctions is obtained by the summation operator~$S_{\Gamma}$, Propositions \ref{prop:Vlambda} and~\ref{prop:nonzero}, as well as Theorems \ref{thm:precise} and~\ref{thm:precisedeform}, hold in this more general setting.
We conclude the proof of Theorems \ref{thm:universal}, \ref{thm:deform}, and~\ref{thm:infinitespec} and Propositions \ref{prop:deformexoticcompact} and~\ref{prop:deformexotic} with the following observation.

\begin{observation}
\begin{enumerate}
  \item The rank condition \eqref{eqn:rank} for $X=G/H$ holds if and only if that for $\overline{X}=\overline{G}/\overline{H}$ holds.  
  \item For any discrete subgroup~$\Gamma$ of~$G$ acting properly discontinuously on~$X$, the image~$\overline{\Gamma}$ of~$\Gamma$ in~$\overline{G}$ is discrete and acts properly discontinuously on~$\overline{X}$.
  \item The projection $\pi : X\rightarrow\overline{X}$ induces 
$\pi^{\ast}(L^2(\overline{X}_{\overline{\Gamma}}))\subset L^2(X_{\Gamma})$,
 hence
 $$\pi^{\ast}\big(\Spec_d(\overline{X}_{\overline{\Gamma}})\big) \subset \Spec_d(X_{\Gamma}).$$
\end{enumerate}
\end{observation}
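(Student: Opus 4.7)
The plan is to treat the three statements in sequence, exploiting that $G_H\subset H$ acts trivially on~$X$ while $G_c\subset K$ is compact. In particular, $\g_H\cap\q=0$ (since $\g_H\subset\h$) and $\g_c\subset\kk$, so $\g_c\cap\q\subset\kk\cap\q$.

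For~(1), since $G_cG_H$ is a characteristic subgroup of~$G$, the Lie algebra $\g_c+\g_H$ is a $(\sigma,\theta)$-invariant ideal of~$\g$, hence admits a $(\sigma,\theta)$-stable complementary ideal: write $\g=\g'\oplus(\g_c+\g_H)$. A short verification using $\g_H\cap\q=0$ gives $(\g_c+\g_H)\cap\q=\g_c\cap\q$, and similarly for the intersection with $\kk\cap\q$. Choosing a maximal abelian subspace $\jj=\jj'\oplus\jj''$ of $\sqrt{-1}\q$ with $\jj''\subset\sqrt{-1}(\g_c\cap\q)$, and using that $\overline{\g}\simeq\g'$ naturally, one obtains
$$\rank G/H=\rank\overline{G}/\overline{H}+\rank G_c/(G_c\cap H),$$
$$\rank K/(K\cap H)=\rank\overline{K}/(\overline{K}\cap\overline{H})+\rank G_c/(G_c\cap H).$$
The same $G_c$-term appears on both lines, so the two rank conditions are equivalent.

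For~(2), the maximal compact subsymmetric space $X_c=K\cdot x_0$ is preserved setwise by every element of $G_cG_H$ (since $G_H$ fixes~$x_0$ and $G_c\subset K$), so proper discontinuity of~$\Gamma$ on~$X$ applied to the compact set~$X_c$ forces $\Gamma\cap G_cG_H$ to be finite. Hence $\overline{\Gamma}=\pi(\Gamma)$ is discrete in~$\overline{G}$. Next, $\pi:X\to\overline{X}$ is a proper map, since its fibers $G_cG_HH/H=G_cH/H\simeq G_c/(G_c\cap H)$ are compact. Given a compact $\overline{C}\subset\overline{X}$, the preimage $C:=\pi^{-1}(\overline{C})$ is compact and $G_cG_H$-stable; if $\overline{\gamma}\overline{C}\cap\overline{C}\neq\emptyset$ and $\gamma\in\Gamma$ is any lift of~$\overline{\gamma}$, then $\gamma C\cap C\neq\emptyset$. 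By proper discontinuity of~$\Gamma$ on~$X$ only finitely many such $\gamma$ exist, and since each $\overline{\gamma}$ has at most $\#(\Gamma\cap G_cG_H)<\infty$ preimages, only finitely many~$\overline{\gamma}$ can occur.

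For~(3), the compactness of the fibers of~$\pi$ ensures that the pullback operator $\pi^{\ast}$ sends $L^2(\overline{X}_{\overline{\Gamma}})$ into $L^2(X_\Gamma)$: a $\overline{\Gamma}$-invariant function on~$\overline{X}$ pulls back to a $\Gamma$-invariant function on~$X$ that is constant on the compact fibers of~$\pi$, so its $L^2$-norm on a fundamental domain for~$\Gamma$ is controlled by the $L^2$-norm of the original times the fiber volume. The spectral inclusion is then immediate from the intertwining recorded in the preceding observation: a weak $\chi$-eigenfunction on~$\overline{X}_{\overline{\Gamma}}$ pulls back to a weak $(\pi^{\ast}\chi)$-eigenfunction on~$X_\Gamma$. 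The main place requiring care is~(1), where the additive decomposition of ranks rests on the two structural facts $\g_H\cap\q=0$ and $\g_c\subset\kk$; the rest is a routine consequence of the compactness of the fibers of~$\pi$.
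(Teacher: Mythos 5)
The paper states this Observation without proof, so your proposal supplies an argument the authors left implicit. Your approach is correct in structure and in each of the main claims, and the Lie-theoretic decomposition in part~(1) is exactly the right one: since $\g_c$ and $\g_H$ are $\sigma$-stable ideals, $(\g_c+\g_H)\cap\q=(\g_c\cap\q)+(\g_H\cap\q)=\g_c\cap\q\subset\kk\cap\q$, so the same ``compact correction term'' enters $\rank G/H$ and $\rank K/K\cap H$, and they cancel.

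There is, however, a genuine gap in part~(2). You write ``so proper discontinuity\ldots forces $\Gamma\cap G_cG_H$ to be finite. Hence $\overline{\Gamma}=\pi(\Gamma)$ is discrete in~$\overline{G}$.'' The implication from ``$\Gamma\cap G_cG_H$ finite'' to ``$\pi(\Gamma)$ discrete'' is \emph{false} for a general closed normal subgroup $N=G_cG_H$: take $G=\R^2$, $N=\R(1,\sqrt{2})$, $\Gamma=\Z^2$; then $\Gamma\cap N=\{0\}$ yet $\pi(\Gamma)$ is dense in $G/N\simeq\R$. The finiteness of $\Gamma\cap G_cG_H$ controls the kernel of $\pi|_\Gamma$, not the topology of the image. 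To get discreteness you must use the proper discontinuity again. Two clean fixes: (a)~Establish proper discontinuity of $\overline{\Gamma}$ on $\overline{X}$ first (your argument for that does not use discreteness of $\overline{\Gamma}$), and then note that any subgroup of $\overline{G}$ acting properly discontinuously on the nonempty manifold $\overline{X}$ is automatically discrete; or (b)~Argue directly: if $\gamma_n\in\Gamma$ satisfy $\pi(\gamma_n)\to\bar e$, write $\gamma_n=u_nh_n$ with $u_n\to e$ in $G$ and $h_n\in G_cG_H$; then $\gamma_n\cdot x_0=u_nh_n\cdot x_0\in u_n\cdot G_c\cdot x_0$, which stays in a fixed compact neighborhood of $X_c$, so by proper discontinuity of $\Gamma$ on $X$ there are only finitely many distinct $\gamma_n$, hence only finitely many distinct $\pi(\gamma_n)$, forcing $\pi(\gamma_n)=\bar e$ eventually.

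A smaller cosmetic point in part~(2): ``$\pi$ is a proper map, since its fibers are compact'' is not valid in general (the bijection $[0,2\pi)\to S^1$ has compact fibers and is not proper). The correct justification here is that $\pi:X\to\overline{X}$ is the quotient of $X$ by the action of the compact group $G_cG_H/G_H\simeq G_c/(G_c\cap G_H)$ (equivalently, a fiber bundle with compact fiber $G_c/(G_c\cap H)$ over a locally compact base), which is proper. Part~(3) is fine as stated, provided one invokes the preceding observation in the paper for the intertwining of the invariant differential operators under $\pi^{\ast}$, as you do.
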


\part{Detailed discussion of some examples}\label{part4}

\chapter{Three-dimensional anti-de Sitter manifolds}\label{sec:exAdS}

In this chapter and the following one, we concentrate on a few examples to illustrate our general theory.
We first examine the case of the $3$-dimensional anti-de Sitter space $X=\AdS^3=\SO(2,2)_0/\SO(1,2)_0$.
Our purpose is $3$-fold:
\begin{itemize}
  \item recall the description of the Clifford--Klein forms of $\AdS^3$ in terms of representations of surface groups, as developed since the 1980's (Sections \ref{subsec:CKformsAdS3} to~\ref{subsec:CLip}); 
  \item use it to give an explicit infinite subset of the discrete spectrum of the Laplacian on any Clifford--Klein form $\Gamma\backslash\AdS^3$ with $\Gamma$ finitely generated, in terms of some geometric constant $C_{Lip}(\Gamma)$ (Section~\ref{subsec:discspecAdS3});
  \item understand the analytic estimates developed in Chapters~\ref{sec:Vlambda} and~\ref{sec:FJ} through concrete harmonic analysis computations on the group $\SL_2(\R)$ (Sections \ref{subsec:FJAdS3} to~\ref{subsec:proofclaimAdS3}).
\end{itemize}

As mentioned in the introduction, $X=\AdS^3$ is a Lorentzian analogue of the real hyperbolic space $\HH^3=\SO(1,3)_0/\SO(3)$: it is a model space for all Lorentzian $3$-manifolds of constant negative curvature, or \emph{anti-de Sitter} $3$-manifolds.
One way to see $X$ is as the quadric of equation $Q=1$ in~$\R^4$ with the Lorentzian metric induced by~$-Q$, where
\begin{equation}\label{eqn:defQ}
Q(x) = x_1^2 + x_2^2 - x_3^2 - x_4^2\,;
\end{equation}
the sectional curvature of~$X$ is then~$-1$ (see \cite{wol11}).
Another way to see~$X$ is as the manifold $\SL_2(\R)$, with the Lorentzian structure induced by $1/8$ times the Killing form of $\ssl_2(\R)$ and the transitive action (by isometries) of the group
$$G := \SL_2(\R)\times\SL_2(\R)$$
by left and right multiplication:
\begin{equation}\label{eqn:lrmul}
(g_1,g_2)\cdot g = g_1gg_2^{-1}.
\end{equation}
We will use both realizations of~$X$.
An explicit correspondence is given by
\begin{eqnarray}\label{eqn:correspAdS3}
\{ x\in\R^4 : Q(x)=1\} & \overset{\sim}{\longrightarrow} & \quad\quad\quad\SL_2(\R)\quad\quad\quad\quad.\nonumber\\
x\quad\quad\quad & \longmapsto & \begin{pmatrix} x_1+x_4 & -x_2+x_3\\ x_2+x_3 & x_1-x_4\end{pmatrix}
\end{eqnarray}
The stabilizer in~$G$ of the identity element $1\in\SL_2(\R)$ is the diagonal $H:=\Diag(\SL_2(\R))$, which is the set of fixed points of~$G$ under the involution $\sigma : (g_1,g_2)\mapsto (g_2,g_1)$.
Thus $X=\SO(2,2)_0/\SO(1,2)_0$ identifies with
$$G/H = (\SL_2(\R)\times\SL_2(\R))/\Diag(\SL_2(\R)).$$
We note that the action of $G$ on~$X$ factors through $G/\{ \pm(1,1)\}\simeq\SO(2,2)_0$; we have $H/\{ \pm(1,1)\}\simeq\SO(1,2)_0$.
By \cite{kli96} and~\cite{kr85}, all compact anti-de Sitter $3$-manifolds are Clifford--Klein forms $X_{\Gamma}=\Gamma\backslash X$ of~$X$, up to finite covering.
We now recall how these Clifford--Klein forms (compact or not) can be described in terms of representations of surface groups.

\section{Description of the Clifford--Klein forms of $\AdS^3$}\label{subsec:CKformsAdS3}

As in Section~\ref{subsec:introex}, let $-\mathrm{I}\in\SO(2,2)_0$ be the diagonal matrix with all entries equal to~$-1$; it identifies with $\overline{(1,-1)}\in G/\{ \pm(1,1)\}$ and acts on $X=\AdS^3$ by $x\mapsto -x$.
Describing the Clifford--Klein forms of~$X$ reduces to describing those of its quotient of order two
\begin{eqnarray*}
\overline{X} & := & \SO(2,2)_0/\big(\SO(1,2)_0\times\{ \pm\mathrm{I}\} \big)\\
& \ \simeq & \big(\PSL_2(\R)\times\PSL_2(\R)\big)/\Diag(\PSL_2(\R)).
\end{eqnarray*}
If $\Gamma$ is a discrete subgroup of~$G$ acting properly discontinuously and freely on~$X$, then its projection $\overline{\Gamma}$ to $\PSL_2(\R)\times\PSL_2(\R)$ acts properly discontinuously and freely on~$\overline{X}$; the natural projection $X_{\Gamma}\rightarrow\overline{X}_{\overline{\Gamma}}$ between Clifford--Klein forms is an isomorphism if $-\mathrm{I}$ belongs to the image of $\Gamma$ in $\SO(2,2)_0$, and a double covering otherwise.

A fundamental result of Kulkarni--Raymond \cite{kr85} states that if a torsion-free discrete subgroup~$\overline{\Gamma}$ of $\PSL_2(\R)\times\PSL_2(\R)$ acts properly discontinuously on~$\overline{X}$, then it is of the form
\begin{equation}\label{eqn:Gammabar}
\overline{\Gamma} = \{ (j(\boldsymbol\gamma),\rho(\boldsymbol\gamma)) : \boldsymbol\gamma\in\pi_1(S)\} ,
\end{equation}
where $S$ is a hyperbolic surface and $j,\rho\in\Hom(\pi_1(S),\PSL_2(\R))$ are two representations of the surface group $\pi_1(S)$, with one of them Fuchsian, \ie injective and discrete.
The Clifford--Klein form $\overline{X}_{\overline{\Gamma}}={\overline{\Gamma}}\backslash\overline{X}$ is compact if and only if $S$ is.
Pairs $(j,\rho)\in\Hom(\pi_1(S),\PSL_2(\R))^2$ such that the group $(j,\rho)(\pi_1(S))$ acts properly discontinuously on~$\overline{X}$ are said to be \emph{admissible} (terminology of \cite{salPhD}).
We note that not all pairs $(j,\rho)$ are admissible: for instance, if $j$ and~$\rho$ are conjugate, then the infinite group $(j,\rho)(\pi_1(S))$ does not act properly discontinuously on~$\overline{X}$ since it fixes a point.
The question is to determine which pairs are admissible.

Easy examples of admissible pairs are obtained by taking $j$ Fuchsian and $\rho$ constant, or more generally $\rho$ with values in a compact subgroup of $\PSL_2(\R)$: the group $\overline{\Gamma}:=(j,\rho)(\pi_1(S))$ and the Clifford--Klein form $\overline{X}_{\overline{\Gamma}}=\overline{\Gamma}\backslash\overline{X}$ are then \emph{standard} in the sense of Definition~\ref{def:standard}.
When $\rho$ is constant, $\overline{X}_{\overline{\Gamma}}$ identifies with ${}^{\backprime}\Gamma\backslash{}^{\backprime}G$, where ${}^{\backprime}G=\PSL_2(\R)$ and ${}^{\backprime}\Gamma=j(\pi_1(S))$ is a discrete subgroup of~$\,{}^{\backprime}G$; in other words, it is the unit tangent bundle to the hyperbolic surface ${}^{\backprime}\Gamma\backslash\HH^2$ (where $\HH^2$ denotes the hyperbolic plane).
The first \emph{nonstandard} examples of compact anti-de Sitter $3$-manifolds were obtained by deforming standard ones, \ie proving that for fixed Fuchsian~$j$, the pair $(j,\rho)$ is admissible for any $\rho$ close enough to the constant homomorphism: this was done by Goldman \cite{gol85} when $\rho(\pi_1(S))$ is abelian, then by \cite{kob98} in general.
Salein \cite{sal00} constructed the first nonstandard compact Clifford--Klein forms that are \emph{not} deformations of standard ones.
It is also easy to construct nonstandard Clifford--Klein forms~$\overline{X}_{\overline{\Gamma}}$ that are not compact but \emph{convex cocompact}, in the following sense.
We refer to \cite[Ch.\,5]{kasPhD} and \cite{gk12} for more details.

\begin{definition}\label{def:ccAdS3}
A Clifford--Klein form $\overline{X}_{\overline{\Gamma}}$ is \emph{convex cocompact} if, up to finite index and switching the two factors, $\overline{\Gamma}$ is of the form \eqref{eqn:Gammabar} with $j$ injective and $j(\pi_1(S))$ convex cocompact in $\PSL_2(\R)$ in the sense of Section~\ref{subsec:introstand}.
\end{definition}

This terminology is justified by the fact that the convex cocompact Clifford--Klein forms of~$\overline{X}$ are circle bundles over convex cocompact hyperbolic surfaces, up to a finite covering \cite{dgk12}.
We shall say that a Clifford--Klein form $X_{\Gamma}$ of $X=\AdS^3$ is convex cocompact if its projection $\overline{X}_{\overline{\Gamma}}$ is.

By the Selberg lemma \cite[Lem.\,8]{sel60}, any finitely generated subgroup $\overline{\Gamma}$ of $\PSL_2(\R)\times\PSL_2(\R)$ acting properly discontinuously on~$\overline{X}$ has a finite-index subgroup
that is torsion-free, hence of the form \eqref{eqn:Gammabar}.
However, in order to obtain estimates on the discrete spectrum of $\overline{X}_{\overline{\Gamma}}$ itself and not only of a finite covering, we need to understand the precise structure of $\overline{\Gamma}$ itself.
We shall use the following result, whose proof is based on \cite{kr85}.

\addtocounter{equation}{-1}

\begin{lemma}\label{lem:torsionAdS3}
Let $\overline{\Gamma}$ be a finitely generated discrete subgroup of $\PSL_2(\R)\times\nolinebreak\PSL_2(\R)$ (possibly with torsion) acting properly discontinuously on~$\overline{X}$.
Then either $\Gamma$ is standard (\ie $\Gamma$ or $\sigma(\Gamma)$ is contained in a conjugate of $\PSL_2(\R)\times\PSO(2)$) or $\overline{\Gamma}$ is of the form
\begin{equation}\label{eqn:Gammajrho}
\overline{\Gamma} = \{ (j(\boldsymbol\gamma),\rho(\boldsymbol\!\gamma)) : \boldsymbol\gamma\in\pi_1(S)\} ,
\end{equation}
where $S$ is a $2$-dimensional hyperbolic orbifold, $\pi_1(S)$ is the orbifold fundamental group of~$S$, and $(j,\rho)\in\Hom(\pi_1(S),\PSL_2(\R))^2$, with $j$ or~$\rho$ Fuchsian.
\end{lemma}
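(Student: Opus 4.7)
The plan is to reduce to the torsion-free Kulkarni--Raymond theorem (recalled just before the statement) by passing to a finite-index subgroup. By Selberg's lemma \cite[Lem.\,8]{sel60}, the finitely generated group $\overline{\Gamma}$ admits a torsion-free normal subgroup $\overline{\Gamma}'$ of finite index, which still acts properly discontinuously on~$\overline{X}$. Applying \cite{kr85} to~$\overline{\Gamma}'$, either $\overline{\Gamma}'$ is standard, or it has the form $\overline{\Gamma}'=\{(j'(\boldsymbol\gamma'),\rho'(\boldsymbol\gamma')):\boldsymbol\gamma'\in\pi_1(S')\}$ for some hyperbolic surface~$S'$, with one of $j',\rho'$ Fuchsian. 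Swapping the two factors of $\PSL_2(\R)\times\PSL_2(\R)$ if necessary, I may assume $j'$ is Fuchsian, so that $p_1|_{\overline{\Gamma}'}$ is injective with discrete image $j'(\pi_1(S'))\subset\PSL_2(\R)$, where $p_1,p_2$ denote the two projections.

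If $\overline{\Gamma}'$ is standard, then (possibly after switching factors and conjugating) $p_2(\overline{\Gamma}')\subset\PSO(2)$ is relatively compact in $\PSL_2(\R)$; since $p_2(\overline{\Gamma})$ contains $p_2(\overline{\Gamma}')$ with finite index, it is itself relatively compact, and its closure is a compact subgroup of $\PSL_2(\R)$, conjugate to a subgroup of $\PSO(2)$, whence $\overline{\Gamma}$ is also standard. Otherwise, I examine the normal subgroup $K:=\ker(p_1|_{\overline{\Gamma}})=\{1\}\times C\subset\{1\}\times\PSL_2(\R)$, which is finite because $K\cap\overline{\Gamma}'=\{1\}$ (by injectivity of~$j'$) and $[\overline{\Gamma}:\overline{\Gamma}']<+\infty$. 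If $C\neq\{1\}$, then $C$ is a finite cyclic subgroup of $\PSL_2(\R)$ with a unique fixed point $p\in\HH^2$; normality of~$K$ in~$\overline{\Gamma}$ forces $p_2(\overline{\Gamma})\subset N_{\PSL_2(\R)}(C)$, which coincides inside $\PSL_2(\R)$ with the stabilizer of~$p$ (a conjugate of $\PSO(2)$), so $\overline{\Gamma}$ is again standard. If instead $C=\{1\}$, then $p_1|_{\overline{\Gamma}}$ is injective, and $p_1(\overline{\Gamma})$ is closed in $\PSL_2(\R)$ as a finite union of translates of the closed discrete set $j'(\pi_1(S'))$, hence a finitely generated discrete subgroup of $\PSL_2(\R)$. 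It is therefore the orbifold fundamental group $j(\pi_1(S))$ of the hyperbolic $2$-orbifold $S=\HH^2/p_1(\overline{\Gamma})$, and $\rho:=p_2\circ p_1^{-1}$ yields the desired form \eqref{eqn:Gammajrho} with $j$ Fuchsian.

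The main technical obstacle lies in the subcase $C\neq\{1\}$: a priori, $\overline{\Gamma}$ could permute nontrivial elements of~$C$ via conjugation by elements with noncompact second projection, obstructing the conclusion that $\overline{\Gamma}$ is standard. The fact preventing this is specific to the orientation-preserving setting: inside $\PSL_2(\R)$, the normalizer of a finite cyclic subgroup coincides with its centralizer, hence with the compact stabilizer of its fixed point in~$\HH^2$. This identity would fail in $\mathrm{PGL}_2(\R)$, and it is essential that we work throughout in $\PSL_2(\R)$.
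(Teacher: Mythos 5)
Your proposal is correct, but it takes a genuinely different route from the paper. The paper does not pass through Selberg's lemma: it works directly with $\overline{\Gamma}$ itself, observing that the kernels $\mathrm{Ker}(\mathrm{pr}_i|_{\overline{\Gamma}})$ are discrete, invoking \cite[\S\,5]{kr85} for the two facts that these kernels cannot both be infinite and that an injective projection is automatically discrete, and then concluding with the same normalizer argument you use. In other words, the paper leans on the fact that Kulkarni--Raymond's analysis already applies to discrete groups with torsion, whereas you deliberately restrict to the torsion-free statement recalled just before the lemma and then lift through a finite-index normal subgroup. What your approach buys is self-containment: it uses only the version of \cite{kr85} stated in the text plus Selberg's lemma and standard finite-index arguments (a subgroup with a discrete finite-index subgroup is discrete; a subgroup with a relatively compact finite-index subgroup is relatively compact). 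The cost is a longer case analysis. The key geometric ingredient --- a nontrivial finite cyclic subgroup $C$ of $\PSL_2(\R)$ fixes a unique point $p\in\HH^2$, and any element normalizing $C$ must fix $p$, hence lies in a conjugate of $\PSO(2)$ --- is shared by both proofs. One small remark on your closing paragraph: what actually matters for the argument is that $N_{\PSL_2(\R)}(C)$ is \emph{compact}, not that it coincides with the centralizer. In $\mathrm{PGL}_2(\R)$ the normalizer of such a $C$ is $\mathrm{Stab}(p)\cong\OO(2)$, which strictly contains the centralizer but is still compact, so the conclusion ``$\overline{\Gamma}$ standard'' would survive; the identity normalizer~$=$~centralizer is a pleasant feature of $\PSL_2(\R)$ but not the load-bearing one.
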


Recall that a $2$-dimensional hyperbolic \emph{orbifold}~$S$ is a hyperbolic surface with finitely many cone singularities, whose stabilizers are finite groups; the orbifold fundamental group $\pi_1(S)$ is torsion-free if and only if $S$ is an actual hyperbolic surface.
The point of Lemma~\ref{lem:torsionAdS3} is that in the nonstandard case, even if $\overline{\Gamma}$ has torsion, one of its projections to $\PSL_2(\R)$ is still discrete and \emph{injective} (not only with a finite kernel).

\begin{proof}[Proof of Lemma~\ref{lem:torsionAdS3}]
For $i\in\{ 1,2\}$, consider the restriction to~$\overline{\Gamma}$ of the $i$-th projection $\mathrm{pr}_i : \PSL_2(\R)\times\PSL_2(\R)\rightarrow\PSL_2(\R)$.
The kernels $\mathrm{Ker}(\mathrm{pr}_1|_{\overline{\Gamma}})$ and $\mathrm{Ker}(\mathrm{pr}_2|_{\overline{\Gamma}})$ are discrete.
They cannot both be infinite since $\overline{\Gamma}$ acts properly discontinuously on~$\overline{X}$ \cite[\S\,5]{kr85}.
Therefore, after possibly conjugating and replacing $\overline{\Gamma}$ by~$\sigma(\overline{\Gamma})$, we may assume that $\mathrm{Ker}(\mathrm{pr}_1|_{\overline{\Gamma}})$ is finite and contained in $\{1\} \times\mathrm{PSO}(2)$.
If $\mathrm{Ker}(\mathrm{pr}_1|_{\overline{\Gamma}})=\{ 1\}$, then $\overline{\Gamma}$ is of the form \eqref{eqn:Gammajrho} with $j$ injective, and $j$ is in fact discrete \cite[\S\,5]{kr85}.
If $\mathrm{Ker}(\mathrm{pr}_1|_{\overline{\Gamma}})\neq\{ 1\}$, then~it~is~easy to see that $\overline{\Gamma}$ is contained in $\PSL_2(\R)\times\mathrm{PSO}(2)$ since it normalizes $\mathrm{Ker}(\mathrm{pr}_1|_{\overline{\Gamma}})$.
\end{proof}

\section{Deformation of convex cocompact Clifford--Klein forms of $\AdS^3$}\label{subsec:deformAdS3}

The fact that the group $\PSL_2(\R)\times\PSL_2(\R)$ is not simple allows for a rich deformation theory.

For instance, for any compact hyperbolic surface~$S$, the set of admissible pairs $(j,\rho)$ is open in $\Hom(\pi_1(S),\PSL_2(\R))^2$; the deformation space (\emph{modulo} conjugation) thus has dimension $12g-12$, where $g$ is the genus of~$S$.
In other words, for any compact Clifford--Klein form $X_{\Gamma}$ of $X=\AdS^3=G/H$, the group~$\varphi(\Gamma)$ is discrete in~$G$ and acts properly discontinuously and cocompactly on~$X$ for all $\varphi\in\Hom(\Gamma,G)$ in some neighborhood of the natural inclusion of $\Gamma$ in~$G$.
Indeed, this follows from the completeness of compact anti-de Sitter manifolds \cite{kli96} and from the Ehresmann--Thurston principle on the holonomy of geometric structures on compact manifolds (see \cite[\S\,4.2.1]{salPhD}); a quantitative proof was also given in \cite{kob98}.

More generally, proper discontinuity is preserved under small deformations for any convex cocompact Clifford--Klein form of~$X$ (in the sense of Definition~\ref{def:ccAdS3}) \cite[Cor.\,5.1.6]{kasPhD}, as a consequence of the following two facts (the first one extending Example~\ref{ex:deform}).

\begin{fact}[{\cite[Th.\,5.1.1]{kasPhD}}]\label{fact:AdS}
All convex cocompact Clifford--Klein forms of $X=\AdS^3$ are sharp.
\end{fact}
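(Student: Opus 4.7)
The plan is to reduce sharpness of $X_{\Gamma}$ to the Lipschitz contraction estimate proved in~\cite{kasPhD} for admissible pairs in rank one, and then to check the sharpness inequality by an elementary computation in~$\R^2$.

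I would first set up coordinates. By Lemma~\ref{lem:torsionAdS3}, after possibly passing to a finite-index subgroup of the projection $\overline{\Gamma}$ to $\PSL_2(\R)\times\PSL_2(\R)$ and exchanging the two factors (which preserves sharpness because the involution~$\sigma$ preserves the diagonal $\mu(H)$), I may assume $\overline{\Gamma}$ has the form $\{(j(\boldsymbol\gamma),\rho(\boldsymbol\gamma)):\boldsymbol\gamma\in\pi_1(S)\}$, where $S$ is a $2$-dimensional hyperbolic orbifold, $j$ is Fuchsian, and $j(\pi_1(S))$ is convex cocompact in $\PSL_2(\R)$. For $G=\SL_2(\R)\times\SL_2(\R)$, the maximal split abelian subspace decomposes as $\aaa=\aaa_0\oplus\aaa_0\simeq\R^2$, the Cartan projection factors as $\mu(g_1,g_2)=(\mu_0(g_1),\mu_0(g_2))$ where $\mu_0$ is the one-dimensional Cartan projection of $\SL_2(\R)$, and $\mu(H)$ is the positive diagonal $\{(t,t):t\geq 0\}$; hence sharpness reduces to a linear lower bound on $|\mu_0(j(\boldsymbol\gamma))-\mu_0(\rho(\boldsymbol\gamma))|$ in terms of $\sqrt{\mu_0(j(\boldsymbol\gamma))^2+\mu_0(\rho(\boldsymbol\gamma))^2}$.

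The substantive input is the contraction principle of~\cite{kasPhD}: since the pair $(j,\rho)$ is admissible (because $\Gamma$ acts properly discontinuously on $X=\AdS^3$) and $j$ is convex cocompact, there exist constants $k<1$ and $C_0\geq 0$ such that
$$\mu_0(\rho(\boldsymbol\gamma))\,\leq\,k\,\mu_0(j(\boldsymbol\gamma))+C_0\qquad\text{for all }\boldsymbol\gamma\in\pi_1(S);$$
equivalently, there exists a $(j,\rho)$-equivariant $k$-Lipschitz map $\HH^2\to\HH^2$, built by a compactness argument on the convex core of $j(\pi_1(S))\backslash\HH^2$ and extended equivariantly to $\HH^2$. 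Setting $a=\mu_0(j(\boldsymbol\gamma))$ and $b=\mu_0(\rho(\boldsymbol\gamma))$, the orthogonal distance from $(a,b)$ to the diagonal is $|a-b|/\sqrt{2}$, and the contraction bound yields $a-b\geq(1-k)a-C_0$; combined with $\|\mu(\gamma)\|\leq(1+k)a+C_0\leq 2a+C_0$ this gives
$$d_{\aaa}(\mu(\gamma),\mu(H))\,\geq\,\frac{1-k}{2\sqrt{2}}\,\|\mu(\gamma)\|-C''$$
for some $C''\geq 0$, which is precisely the required sharpness inequality with $c=(1-k)/(2\sqrt{2})>0$.

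The principal obstacle is the Lipschitz contraction estimate itself, which is the main theorem of~\cite{kasPhD}; it depends crucially on the convex cocompactness of $j(\pi_1(S))$ to produce a cocompact action on the convex hull of the limit set, on which an equivariant Lipschitz map with controlled constant can first be defined and then equivariantly propagated. Once this estimate is available, the passage to the sharpness inequality is routine planar geometry, as sketched above.
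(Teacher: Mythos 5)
Your proposal is correct and follows the paper's own line of reasoning: reduce sharpness for $X=\AdS^3$ to a linear domination inequality $\mu_{\PSL_2(\R)}(\rho(\boldsymbol\gamma))\leq k\,\mu_{\PSL_2(\R)}(j(\boldsymbol\gamma))+C_0$ with $k<1$, then convert that bound into the sharpness estimate by elementary Euclidean geometry in $\aaa\simeq\R^2$. This is exactly what the paper does in Section~\ref{subsec:CLip}, where sharpness is reformulated via the Cartan projection of $\PSL_2(\R)\times\PSL_2(\R)$ and the relation \eqref{eqn:cCAdS3}, with the linear domination repackaged as $C_{Lip}(\Gamma)<1$; your constant $c=(1-k)/(2\sqrt{2})$ is a slightly cruder (but valid) version of the paper's $c=(1-c')/\sqrt{2(1+c'^{2})}$.

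One conceptual imprecision is worth flagging. You describe the key input $k<1$ as coming from ``a compactness argument on the convex core of $j(\pi_1(S))\backslash\HH^2$.'' Compactness (Ascoli) only shows that the infimum of Lipschitz constants of $(j,\rho)$-equivariant maps is \emph{attained}; it does not force that infimum to be $<1$. The strict inequality $C_{Lip}(j,\rho)<1$ is the genuinely hard part of \cite[Th.\,5.1.1]{kasPhD}: it proceeds by contradiction, showing that if the minimal Lipschitz constant were $\geq 1$, a best Lipschitz map would have a \emph{maximally stretched lamination}, and from that lamination one extracts a sequence $\boldsymbol\gamma_n$ violating the properness criterion for the action on $\AdS^3$. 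You correctly cite \cite{kasPhD} for this and identify it as the main obstacle, but the mechanism you sketch (compactness plus equivariant extension) by itself does not yield $k<1$. A minor additional remark: Lemma~\ref{lem:torsionAdS3} already gives the form $\overline{\Gamma}=(j,\rho)(\pi_1(S))$ with one of $j,\rho$ Fuchsian, \emph{without} passing to a finite-index subgroup (it handles torsion by allowing $S$ to be an orbifold); your extra ``pass to a finite-index subgroup'' step is harmless since sharpness is invariant under commensurability, but it is not needed.
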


\begin{fact}[{\cite[\S\,5.7.2]{kasPhD}}]\label{fact:AdSdeform}
Let $X_{\Gamma}$ be a $(c,C)$-sharp, convex cocompact Clifford--Klein form of $X=\AdS^3=G/H$.
For any $\varepsilon>0$, there is a neighborhood $\mathcal{U}_{\varepsilon}\subset\Hom(\Gamma,G)$ of the natural inclusion such that the group $\varphi(\Gamma)$ is discrete in~$G$ and $(c-\varepsilon,C+\varepsilon)$-sharp for all $\varphi\in\mathcal{U}_{\varepsilon}$.
\end{fact}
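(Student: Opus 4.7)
The plan is to combine the description of $\Gamma$ as a graph of surface-group representations from Lemma~\ref{lem:torsionAdS3} with the quantitative control of Cartan projections under deformation of convex cocompact subgroups from \cite{kas12}, lifted from the first factor to the whole of $G=\SL_2(\R)\times\SL_2(\R)$ through the strict-domination reformulation of sharpness in $\AdS^3$. By Lemma~\ref{lem:torsionAdS3}, after conjugating in $G$ and, if necessary, swapping factors via the involution $\sigma$, we may write $\Gamma=\{(j(\boldsymbol\gamma),\rho(\boldsymbol\gamma)) : \boldsymbol\gamma\in\pi_1(S)\}$ with $j$ Fuchsian convex cocompact and $\rho$ arbitrary. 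A small deformation $\varphi\in\Hom(\Gamma,G)$ of the natural inclusion corresponds to a pair $(j',\rho')$ close to $(j,\rho)$ in $\Hom(\pi_1(S),\SL_2(\R))^2$.

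First I would reformulate the sharpness hypothesis in $\aaa\simeq\R\oplus\R$. Setting $a(\boldsymbol\gamma):=\mu_0(j(\boldsymbol\gamma))$ and $b(\boldsymbol\gamma):=\mu_0(\rho(\boldsymbol\gamma))$, where $\mu_0$ is the Cartan projection of $\SL_2(\R)$, one has $\mu(\Gamma)\subset\R_{\geq 0}^2$ and $\mu(H)=\{(t,t):t\geq 0\}$, so the $(c,C)$-sharpness inequality reads $|a-b|/\sqrt{2}\geq c\sqrt{a^2+b^2}-C$. A connectedness argument, together with the Benoist--Kobayashi properness criterion applied near the walls, shows that $\mu(\Gamma)$ can only escape to infinity on one side of the diagonal, so the inequality is equivalent, up to swapping factors, to a uniform strict domination $b(\boldsymbol\gamma)\leq k\,a(\boldsymbol\gamma)+K$ with $k=k(c)<1$ and $K=K(c,C)\geq 0$.

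Next I would control the deformed Cartan projections. For the $j$-component, \cite{kas12} Theorem~1.4 applied to the convex cocompact subgroup $j(\pi_1(S))\subset\SL_2(\R)$ shows that, for any $\delta>0$, every $j'$ in a sufficiently small neighborhood of $j$ satisfies $|\mu_0(j'(\boldsymbol\gamma))-\mu_0(j(\boldsymbol\gamma))|\leq\delta\,\mu_0(j(\boldsymbol\gamma))+\delta$ uniformly in $\boldsymbol\gamma$, with $j'(\pi_1(S))$ still convex cocompact and its convex core uniformly quasi-isometric to that of $j$. For the $\rho$-component, where $\rho$ is not assumed convex cocompact, the idea is to reinterpret the strict domination $b\leq ka+K$ as the existence of a $(j,\rho)$-equivariant $k$-Lipschitz map $f:\HH^2\to\HH^2$ on the $j$-convex core (Gu\'eritaud--Kassel optimal-Lipschitz construction), then to transport $f$ along the comparison map between the convex cores of $j$ and $j'$ and a small equivariant perturbation accounting for $\rho\to\rho'$, producing a $(j',\rho')$-equivariant map of Lipschitz constant at most $k+\varepsilon'$. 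Reading the resulting Cartan estimate off this map yields $\mu_0(\rho'(\boldsymbol\gamma))\leq(k+\varepsilon')\mu_0(j'(\boldsymbol\gamma))+K'$.

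Combining the two estimates, the point $(\mu_0(j'(\boldsymbol\gamma)),\mu_0(\rho'(\boldsymbol\gamma)))$ lies in a strictly sub-diagonal cone of $\R_{\geq 0}^2$ modulo bounded error; reconverting this to the metric form of sharpness and choosing $\delta,\varepsilon'$ sufficiently small in terms of $\varepsilon$ gives that $\varphi(\Gamma)$ is $(c-\varepsilon,C+\varepsilon)$-sharp. Discreteness of $\varphi(\Gamma)$ in $G$ is then automatic, as properness of $\mu$ together with the lower bound on $\|\mu(\varphi(\gamma))\|$ in terms of $\|\mu(\gamma)\|$ forces $\varphi(\gamma)$ to escape any compact set whenever $\gamma$ does. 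The main obstacle, and the heart of the argument, is the construction of the equivariant Lipschitz map $f'$ for the deformed pair $(j',\rho')$ with explicit control of its Lipschitz constant: a naive Kirszbraun-type extension from $f$ does not give bounds tight enough to preserve the strict inequality $k<1$, so one must use the full optimal-Lipschitz-map machinery together with the structural stability of convex cocompact representations of $\pi_1(S)$ in $\SL_2(\R)$.
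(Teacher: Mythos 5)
Your overall strategy is the one the paper itself adopts: reformulate $(c,C)$-sharpness in $\AdS^3$ as the strict domination $\mu_{\PSL_2(\R)}(\rho(\boldsymbol\gamma))\leq k\,\mu_{\PSL_2(\R)}(j(\boldsymbol\gamma))+K$ with $k<1$ (via \cite[Th.~1.3]{kas08} and \eqref{eqn:cCAdS3}), interpret this through $(j,\rho)$-equivariant Lipschitz maps, and conclude by stability of the minimal Lipschitz constant under deformation. This is exactly how the paper reduces Fact~\ref{fact:AdSdeform} to Fact~\ref{fact:CLip} plus Fact~\ref{fact:CLipcont}; so the scaffolding matches. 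Two minor remarks: the appeal to \cite[Th.~1.4]{kas12} for the $j$-factor is not really needed in the end, since once you have a $(j',\rho')$-equivariant map of Lipschitz constant $<1$ you get strict domination for the deformed pair directly, and discreteness of $\varphi(\Gamma)$ follows from $j'$ remaining discrete and injective (structural stability of convex cocompact representations) because $\varphi(\Gamma)$ is the graph of $(j',\rho')$. Also, for the stated conclusion only \emph{upper} semicontinuity of $C_{Lip}$ is required.

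The genuine gap is in the central construction. You assert that ``transporting $f$ along a comparison map between the convex cores of $j$ and $j'$, together with a small equivariant perturbation accounting for $\rho\to\rho'$,'' yields a $(j',\rho')$-equivariant map of Lipschitz constant at most $k+\varepsilon'$. But the second ingredient is undefined: $\rho$ is an arbitrary representation of a surface group into $\PSL_2(\R)$ — not assumed discrete, faithful, or even reductive — and there is in general no $(\rho,\rho')$-equivariant self-map $\Psi$ of $\HH^2$ with Lipschitz constant near $1$ that you could precompose with $f\circ\Phi$. (Think of $\rho(s_1)$ parabolic and $\rho'(s_1)$ slightly hyperbolic: a $(\rho,\rho')$-intertwiner must transport horocyclic dynamics to an invariant geodesic, which cannot be done with uniformly controlled local Lipschitz constant.) Perturbing $f\circ\Phi$ itself so that its equivariance condition changes from $(j',\rho)$ to $(j',\rho')$ is exactly the content of the upper semicontinuity of $C_{Lip}$, i.e.\ of Fact~\ref{fact:CLipcont}, which the paper does not prove but cites from \cite{gk12}. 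You correctly identify this step as ``the main obstacle'' and correctly note that a naive Kirszbraun-type extension is insufficient, but then you state the conclusion as established rather than bridging the gap. In the cited work the continuity is obtained through the machinery of maximally stretched geodesic laminations and explicit equivariant constructions, not by a transport argument of the type you describe.
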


(We refer to Definition~\ref{def:sharp} for the notion of sharpness.)

Facts \ref{fact:AdS} and~\ref{fact:AdSdeform} give the geometric estimates that we need (together with the analytic estimates of Section~\ref{subsec:FJAdS3} below) to construct an infinite stable discrete spectrum for the convex cocompact Clifford--Klein forms of $X=\AdS^3$ (Corollary~\ref{cor:AdS3deform}).
By \cite{gk12}, sharpness actually holds for all Clifford--Klein forms $X_{\Gamma}$ of~$X$ with $\Gamma$ finitely generated, which implies that the discrete spectrum is infinite for all such~$X_{\Gamma}$ (Theorem~\ref{thm:SL2precise}).

\section{The constant $C_{Lip}(\Gamma)$}\label{subsec:CLip}

The infinite subset of the spectrum that we shall give in Section~\ref{subsec:discspecAdS3} will be expressed in terms of a geometric constant $C_{Lip}(\Gamma)$.
The goal of this section is to introduce $C_{Lip}(\Gamma)$, to explain how sharpness is determined by this constant, and to provide some explanation of Facts \ref{fact:AdS} and~\ref{fact:AdSdeform}.

\subsection*{$\bullet$ A reformulation of sharpness for $X=\AdS^3$}

Let $\mu_{\PSL_2(\R)}\!: \PSL_2(\R)\!\rightarrow\nolinebreak\R_{\geq 0}$ be the Cartan projection mapping any element~$g$ to the logarithm of the highest eigenvalue of $^t\!gg$.
We will use the following geometric interpretation:
\begin{equation}\label{eqn:mudistAdS3}
\mu_{\PSL_2(\R)}(g) = d_{\HH^2}(y_0,g\cdot y_0),
\end{equation}
where $y_0$ is the point of~$\HH^2$ whose stabilizer is $\mathrm{PSO}(2)$.
Consider a $2$-dimensional hyperbolic orbifold~$S$ and a pair $(j,\rho)\in\Hom(\pi_1(S),\PSL_2(\R))^2$.
By \cite[Th.\,1.3]{kas08}, if the group $(j,\rho)(\pi_1(S))$ acts properly discontinuously on $X=\AdS^3$, then the set of points
$$\big(\,\mu_{\PSL_2(\R)}(j(\boldsymbol\gamma))\,,\,\mu_{\PSL_2(\R)}(\rho(\boldsymbol\gamma))\,\big) \,\in \R^2$$
for $\boldsymbol\gamma\in\pi_1(S)$ lies on one side only of the diagonal of~$\R^2$, up to a finite number of points.
Therefore, the group $\overline{\Gamma}:=(j,\rho)(\pi_1(S))$ is sharp for~$\overline{X}$ if and only if, up to switching $j$ and~$\rho$, there exist constants $c'<1$ and $C'\geq 0$ such that
$$\mu_{\PSL_2(\R)}(\rho(\boldsymbol\gamma)) \leq c'\,\mu_{\PSL_2(\R)}(j(\boldsymbol\gamma)) + C'$$
for all $\boldsymbol\gamma\in\pi_1(S)$; in this case, $\overline{\Gamma}$ is $(c,C)$-sharp for
\begin{equation}\label{eqn:cCAdS3}
c := \sin\Big(\frac{\pi}{4} - \arctan(c')\Big) = \frac{(1-c')}{\sqrt{2(1+{c'}^2)}} \quad\ \text{and}\ \quad C := \frac{C'}{\sqrt{2}}
\end{equation}
and $j$ is Fuchsian.

\subsection*{$\bullet$ The constants $C_{Lip}(j,\rho)$ and $C_{Lip}(\Gamma)$}

We denote by $C_{Lip}(j,\rho)$ the infimum of Lipschitz constants
$$\mathrm{Lip}(f) = \sup_{y\neq y'\text{ in }\HH^2}\ \frac{d_{\HH^2}(f(y),f(y'))}{d_{\HH^2}(y,y')}$$
of maps $f : \HH^2\rightarrow\HH^2$ that are $(j,\rho)$-equivariant, \textit{i.e.}\ that satisfy $f\big(j(\boldsymbol\gamma)\cdot y\big)=\rho(\boldsymbol\gamma)\cdot f(y)$ for all $\boldsymbol\gamma\in\pi_1(S)$ and $y\in\HH^2$.
By the Ascoli theorem, this infimum is a minimum if $j$ is Fuchsian and the Zariski closure of $(j,\rho)(\pi_1(S))$ is reductive (\ie the image of~$\rho$ does not fix a unique point on the boundary at infinity of~$\HH^2$).
The constant $C_{Lip}(j,\rho)$ is clearly invariant under conjugation of $j$ or~$\rho$ by $\PSL_2(\R)$.
The logarithm of~$C_{Lip}$ can be seen as a generalization of Thurston's ``asymmetric metric'' (or ``Lipschitz metric'') on Teichm\"uller space: see \cite[Ch.\,5]{kasPhD} and \cite{gk12}.

Let $\Gamma$ be a discrete subgroup of~$G$ acting properly discontinuously on~$X$.
By Lemma~\ref{lem:torsionAdS3}, either $\Gamma$ is standard, or its projection to $\PSL_2(\R)\times\PSL_2(\R)$ is of the form \eqref{eqn:Gammajrho}.
In the first case, we set $C_{Lip}(\Gamma):=0$.
In the second case, we set
$$C_{Lip}(\Gamma) := \min\big(C_{Lip}(j,\rho)\,,\,C_{Lip}(\rho,j)\big).$$

\subsection*{$\bullet$ Link between sharpness and the constant $C_{Lip}$}

Consider a $2$-dimensional hyperbolic orbifold~$S$ and $(j,\rho)\in\Hom(\pi_1(S),\PSL_2(\R))^2$ with $j$ Fuchsian.
Using the geometric interpretation \eqref{eqn:mudistAdS3}, we make the following easy but useful observation.

\begin{remark}\label{rem:CLipsharp}
\begin{itemize}
  \item If the Zariski closure of $(j,\rho)(\pi_1(S))$ is reductive, then there is an element $g_0\in\PSL_2(\R)$ such that for all $\boldsymbol\gamma\in\pi_1(S)$,
  $$\mu_{\PSL_2(\R)}\big(g_0^{-1}\,\rho(\boldsymbol\gamma)\,g_0\big) \leq C_{Lip}(j,\rho)\,\mu_{\PSL_2(\R)}(j(\boldsymbol\gamma)).$$
  \item In general, for any $\varepsilon>0$ there is an element $g_{\varepsilon}\in\PSL_2(\R)$ such that for all $\boldsymbol\gamma\in\pi_1(S)$,
  $$\mu_{\PSL_2(\R)}\big(g_{\varepsilon}^{-1}\,\rho(\boldsymbol\gamma)\,g_{\varepsilon}\big) \leq \big(C_{Lip}(j,\rho)+\varepsilon\big)\,\mu_{\PSL_2(\R)}(j(\boldsymbol\gamma)).$$
\end{itemize}
\end{remark}

\noindent
Indeed, for $\varepsilon\geq 0$, let $f_{\varepsilon} : \HH^2\rightarrow\HH^2$ be a $(j,\rho)$-equivariant map with $\mathrm{Lip}(f_{\varepsilon})\leq C_{Lip}(j,\rho)+\nolinebreak\varepsilon$.
We can take any $g_{\varepsilon}\in\PSL_2(\R)$ such that $f_{\varepsilon}(y_0)=g_{\varepsilon}\!\cdot\!y_0$,\linebreak using the fact that the metric $d_{\HH^2}$ is invariant under $\PSL_2(\R)$.

\medskip

Let $\Gamma$ be a discrete subgroup of~$G$.
Proposition~\ref{prop:sharpnessproperties}.(1) and Remark~\ref{rem:CLipsharp} (together with the above reformulation of sharpness) imply that if $C_{Lip}(\Gamma)<\nolinebreak 1$, then $\Gamma$ is sharp for~$X$; in particular, $\Gamma$ acts properly discontinuously on~$X$.
The converse is nontrivial but true in the finitely generated case (based on the existence of a ``maximally stretched line'' for $(j,\rho)$-equivariant maps of minimal Lipschitz constant $C_{Lip}(j,\rho)\geq 1$ \cite{kasPhD,gk12}).

\begin{fact}[\cite{kasPhD,gk12}]\label{fact:CLip}
A finitely generated discrete subgroup $\Gamma$ of~$G$ acts properly discontinuously on $X=\AdS^3$ if and only if $C_{Lip}(\Gamma)<1$, in which case $\Gamma$ is sharp for~$X$.
\end{fact}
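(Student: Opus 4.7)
The plan is to reduce to the nonstandard case via Lemma~\ref{lem:torsionAdS3}, then establish the two implications separately. In the standard case, $\Gamma$ (or $\sigma(\Gamma)$) lies in a conjugate of the reductive subgroup $\PSL_2(\R)\times\PSO(2)$, which acts properly on $\overline{X}$ since its intersection with (a conjugate of) $\Diag(\PSL_2(\R))$ is compact; hence $\Gamma$ is standard also in the sense of Definition~\ref{def:standard}, is sharp by Example~\ref{ex:standardsharp}, and has $C_{Lip}(\Gamma)=0<1$ by definition, so the equivalence and the sharpness clause hold trivially. We may therefore assume that $\overline{\Gamma}=\{(j(\boldsymbol\gamma),\rho(\boldsymbol\gamma)):\boldsymbol\gamma\in\pi_1(S)\}$ with $S$ a hyperbolic 2-orbifold and (after possibly swapping the factors) with $j$ Fuchsian.

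For the $(\Leftarrow)$ implication, assume $C_{Lip}(\Gamma)<1$. After possibly swapping, we may take $C_{Lip}(j,\rho)<1$. Pick $\varepsilon>0$ with $C_{Lip}(j,\rho)+\varepsilon<1$; Remark~\ref{rem:CLipsharp} provides $g_\varepsilon\in\PSL_2(\R)$ with
$$\mu_{\PSL_2(\R)}\bigl(g_\varepsilon^{-1}\rho(\boldsymbol\gamma)g_\varepsilon\bigr)\leq\bigl(C_{Lip}(j,\rho)+\varepsilon\bigr)\,\mu_{\PSL_2(\R)}(j(\boldsymbol\gamma))$$
for all $\boldsymbol\gamma\in\pi_1(S)$. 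Combined with the reformulation of sharpness recalled at the beginning of Section~\ref{subsec:CLip}, this shows that the conjugate $(e,g_\varepsilon)^{-1}\overline{\Gamma}(e,g_\varepsilon)$ is $(c,0)$-sharp for~$\overline{X}$, with $c$ given by~\eqref{eqn:cCAdS3}. Sharpness being invariant under conjugation (Proposition~\ref{prop:sharpnessproperties}.(1)), $\overline{\Gamma}$, and hence $\Gamma$, is sharp, and the Benoist--Kobayashi properness criterion then yields proper discontinuity.

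For the $(\Rightarrow)$ implication, assume $\Gamma$ acts properly discontinuously on~$X$ and, for contradiction, that $C_{Lip}(\Gamma)\geq 1$, so that both $C_{Lip}(j,\rho)\geq 1$ and $C_{Lip}(\rho,j)\geq 1$. The key technical input from \cite{kasPhD,gk12} is a \emph{maximally stretched geodesic theorem}: when $j$ is Fuchsian and $C_{Lip}(j,\rho)\geq 1$, finite generation lets one run an Arzel\`a--Ascoli compactness argument (after a case analysis treating separately the situation where the Zariski closure of $(j,\rho)(\pi_1(S))$ is nonreductive, which forces a $\rho$-fixed point on $\partial\HH^2$) to show that the infimum defining $C_{Lip}(j,\rho)$ is realized by an optimal equivariant map $f_0$; a barycenter argument then produces a geodesic $L\subset\HH^2$ in the stretch locus along which $f_0$ acts as an affine homothety of ratio $C:=C_{Lip}(j,\rho)$. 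Using \eqref{eqn:mudistAdS3}, one then selects elements $\boldsymbol\gamma_n\in\pi_1(S)$ whose $j$-axes closely fellow-travel~$L$ (provided by closed geodesics of $j(\pi_1(S))\backslash\HH^2$ approximating the image of~$L$) and obtains
$$\frac{\mu_{\PSL_2(\R)}(\rho(\boldsymbol\gamma_n))}{\mu_{\PSL_2(\R)}(j(\boldsymbol\gamma_n))}\,\longrightarrow\,C\geq 1.$$
Combined with the symmetric statement for $(\rho,j)$ and with \cite[Th.\,1.3]{kas08} (which forces the pairs $(\mu_{\PSL_2(\R)}(j(\boldsymbol\gamma)),\mu_{\PSL_2(\R)}(\rho(\boldsymbol\gamma)))$ to lie on a single side of the diagonal of~$\R^2$ up to finitely many exceptions), this forces $|\mu_{\PSL_2(\R)}(j(\boldsymbol\gamma_n))-\mu_{\PSL_2(\R)}(\rho(\boldsymbol\gamma_n))|$ to be bounded, contradicting the Benoist--Kobayashi properness criterion applied to $\Gamma$ acting on $G/\Diag(\SL_2(\R))$. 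The main obstacle is precisely the maximally stretched geodesic theorem: both the existence of an optimal map and the extraction of the stretched geodesic are nontrivial and genuinely use finite generation. The sharpness clause in the statement follows from the $(\Leftarrow)$ argument above, applied once $C_{Lip}(\Gamma)<1$ has been established.
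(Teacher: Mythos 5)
Your proof is correct and follows essentially the same route the paper takes: the paper itself treats this as a cited fact, giving the easy implication ($C_{Lip}(\Gamma)<1\Rightarrow$ sharp) exactly as you do, via Remark~\ref{rem:CLipsharp}, Proposition~\ref{prop:sharpnessproperties}.(1), and the reformulation of sharpness at the start of Section~\ref{subsec:CLip}, and deferring the hard implication to \cite{kasPhD,gk12}, mentioning only the existence of a ``maximally stretched line.'' Your elaboration of the hard direction (optimal Lipschitz map, maximally stretched geodesic, approximating $j$-axes, then contradiction with \cite[Th.\,1.3]{kas08} and the Benoist--Kobayashi criterion) accurately reflects the ideas in those references, and you correctly flag the maximally stretched geodesic theorem as the nontrivial input.
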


This is how Fact~\ref{fact:AdS} and its generalization \cite{gk12} to Clifford--Klein forms~$X_{\Gamma}$ with $\Gamma$ finitely generated were obtained.
Fact~\ref{fact:AdSdeform} is a consequence of Fact~\ref{fact:CLip} and of the following continuity result.

\begin{fact}[\cite{gk12}]\label{fact:CLipcont}
The function $(j,\rho)\mapsto C_{Lip}(j,\rho)$ is continuous over the set~of pairs $(j,\rho)\in\Hom(\pi_1(S),\PSL_2(\R))^2$ with $j$ injective and $j(\pi_1(S))$ convex co\-compact in $\PSL_2(\R)$.
\end{fact}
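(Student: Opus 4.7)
The plan is to prove continuity of $C_{Lip}$ by establishing upper and lower semi-continuity separately on the open subset of $\Hom(\pi_1(S),\PSL_2(\R))^2$ where $j$ is convex cocompact. Fix a base pair $(j_0,\rho_0)$ and write $L_0:=C_{Lip}(j_0,\rho_0)$.

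For lower semi-continuity, I would use the Ascoli--Arzel\`a theorem. Given $(j_n,\rho_n)\to(j_0,\rho_0)$, choose $(j_n,\rho_n)$-equivariant Lipschitz maps $f_n:\HH^2\to\HH^2$ with $\mathrm{Lip}(f_n)\leq C_{Lip}(j_n,\rho_n)+1/n$; such maps exist by Remark~\ref{rem:CLipsharp}. The function $C_{Lip}$ is invariant under conjugation of $j_n$ (on the right) and of $\rho_n$ (on the left) by isometries of $\HH^2$, operations which preserve $\mathrm{Lip}(f_n)$ as well. Using convex cocompactness of $j_0$, pick a basepoint $y_0$ lying in the convex core of $j_0(\pi_1(S))$; for $n$ large enough, $j_n$ is also convex cocompact and $y_0$ lies essentially in the convex core of $j_n$, so one can normalize after conjugation so that $f_n(y_0)$ stays in a fixed compact set of $\HH^2$. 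The $f_n$ are then uniformly Lipschitz and uniformly pointwise bounded, hence Ascoli--Arzel\`a yields a subsequence converging locally uniformly to some $f_0$; the limit is $(j_0,\rho_0)$-equivariant and satisfies $\mathrm{Lip}(f_0)\leq\liminf_n\mathrm{Lip}(f_n)$, giving $L_0\leq\liminf_n C_{Lip}(j_n,\rho_n)$.

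For upper semi-continuity, fix $\varepsilon>0$ and choose a $(j_0,\rho_0)$-equivariant map $f_0$ with $\mathrm{Lip}(f_0)\leq L_0+\varepsilon$. The goal is to produce, for $(j,\rho)$ in a neighborhood of $(j_0,\rho_0)$, a $(j,\rho)$-equivariant map $f$ with $\mathrm{Lip}(f)\leq L_0+2\varepsilon$. By structural stability of convex cocompact subgroups of $\PSL_2(\R)$ (Marden, Sullivan), for $j$ sufficiently close to $j_0$ there is a $(j_0,j)$-equivariant bi-Lipschitz homeomorphism $\varphi_j:\HH^2\to\HH^2$ whose bi-Lipschitz constant tends to $1$ as $j\to j_0$; concretely $\varphi_j$ can be built by equivariantly perturbing the identity on a compact fundamental domain of the convex core of $j_0$ and extending. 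On the target side, one constructs an analogous $(\rho_0,\rho)$-equivariant map $\psi_\rho$ close to the identity by local interpolation on the image $f_0(D_0)$ of a fundamental domain $D_0$, using that small perturbations of $\rho_0$ give small cocycles and can be absorbed by Kirszbraun-type Lipschitz extension with constants close to $1$. The composition $f:=\psi_\rho\circ f_0\circ\varphi_j^{-1}$ is then $(j,\rho)$-equivariant, with $\mathrm{Lip}(f)\leq\mathrm{Lip}(\psi_\rho)\cdot\mathrm{Lip}(f_0)\cdot\mathrm{Lip}(\varphi_j^{-1})\to\mathrm{Lip}(f_0)\leq L_0+\varepsilon$ as $(j,\rho)\to(j_0,\rho_0)$.

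The main obstacle will be the construction of $\psi_\rho$, since $\rho_0$ need not be injective, discrete, or convex cocompact, so the structural stability used for $j$ is not directly available. The cleanest workaround is to avoid building $\psi_\rho$ as a global object and instead define $f$ directly on a compact fundamental domain $D$ for $j(\pi_1(S))$ acting on a suitable neighborhood of its convex core, then extend by $(j,\rho)$-equivariance. On $D$ one sets $f$ equal to $f_0\circ\varphi_j^{-1}$ modified near $\partial D$ so as to match the new equivariance relations across the (finitely many) face-pairings, the required adjustment being $O(\|(j,\rho)-(j_0,\rho_0)\|)$ by a Kirszbraun-type Lipschitz interpolation in $\HH^2$. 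The cocompactness of the action of $j(\pi_1(S))$ on the convex core is what makes this local perturbation manageable, and ensures that only a bounded combinatorial amount of interpolation must be carried out independently of the representation.
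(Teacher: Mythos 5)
The paper does not prove this statement; it is quoted as a fact from [GK]~(\cite{gk12}), so there is no internal proof to compare against. Your overall scheme (lower semi-continuity by Arzel\`a--Ascoli on a normalized minimizing sequence, upper semi-continuity by transporting a near-optimal map through a $(j_0,j)$-equivariant bi-Lipschitz correction and a local patch on a fundamental domain) is a reasonable and, in the upper semi-continuity direction, essentially the right kind of argument. But there is a genuine gap in the lower semi-continuity step, and the upper semi-continuity step is only a sketch at the crucial point.

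The gap is in your normalization claim ``one can normalize after conjugation so that $f_n(y_0)$ stays in a fixed compact set.'' Convex cocompactness of $j_n$ gives you no control of $f_n(y_0)$: the constraint you actually have is $d(f_n(y_0),\rho_n(\gamma)f_n(y_0))\leq\mathrm{Lip}(f_n)\,d(y_0,j_n(\gamma)y_0)$ for generators $\gamma$, which is compatible with $f_n(y_0)\to\xi\in\partial\HH^2$. If you conjugate $\rho_n$ by some $g_n\to\infty$ to pull $f_n(y_0)$ back to $y_0$, the sequence $g_n\rho_n g_n^{-1}$ is bounded but its limit $\rho_0'$ is in general \emph{not} conjugate to $\rho_0$; you then prove $C_{Lip}(j_0,\rho_0')\leq\liminf C_{Lip}(j_n,\rho_n)$, not the statement about $\rho_0$. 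Note that $f_n(y_0)\to\xi$ with bounded displacements forces $\rho_0(\pi_1(S))$ to fix $\xi$, so the problematic situation is exactly the non-reductive (parabolic/horocyclic) case, which is allowed by the hypotheses and is in fact the case relevant to sharpness estimates in this paper. Remark~\ref{rem:CLipsharp} does not rescue this, since it concerns minimizers when the Zariski closure is reductive. You need either a separate direct estimate of $C_{Lip}(j_0,\rho_0)$ in the degenerate case (e.g.\ by projecting onto horocyclic coordinates based at $\xi$), or a characterization of $C_{Lip}$ that is manifestly lower semi-continuous such as a supremum of length ratios -- though that supremum equals $C_{Lip}$ only when $C_{Lip}\geq 1$, and the case $C_{Lip}<1$, which is the case of interest here, requires its own argument. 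For the upper semi-continuity, your workaround by patching $f_0\circ\varphi_j^{-1}$ near $\partial D$ is plausible, but it rests on an unproved assertion that a Kirszbraun-type interpolation in $\HH^2$ across a collar of fixed width, correcting a $O(\|(j,\rho)-(j_0,\rho_0)\|)$ mismatch, adds only $o(1)$ to the Lipschitz constant; this is true but needs to be set up carefully in the CAT($-1$) setting, and as written your argument only names the tool without constructing the map.
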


\section{The discrete spectrum of the Laplacian}\label{subsec:discspecAdS3}

We note that here
$$\q := \g^{-\mathrm{d}\sigma} = \{ (Y,-Y) : Y\in\ssl_2(\R)\} \ \subset\ \ssl_2(\R) + \ssl_2(\R) = \g.$$
Therefore, the symmetric space $X=\AdS^3$ has rank~$1$ and the $\C$-algebra $\D(X)$ is generated by the Laplacian~$\square_X$ (Fact~\ref{fact:3.1}).
Let us identify~$X$ with the quadric of equation $Q=1$ in~$\R^4$, where the Lorentzian structure is induced by $-Q$.
As mentioned in the introduction, if we set $r(x):=\sqrt{Q(x)}$ for $Q(x)>0$, then the Laplacian~$\square_X$ is explicitly given by
$$\square_X f = \frac{1}{2}\,\square_{\R^{2,2}}\,\Big(x\longmapsto f\Big(\frac{x}{r(x)}\Big)\Big)$$
for all $f\in C^{\infty}(X)$, where
$$\square_{\R^{2,2}} = \frac{\partial^2}{\partial x_1^2} + \frac{\partial^2}{\partial x_2^2} - \frac{\partial^2}{\partial x_3^2} - \frac{\partial^2}{\partial x_4^2}$$
and where $f(x/r(x))$ is defined on the neighborhood $\{ Q>0\}$ of~$X$ in~$\R^4$.
The invariant measure~$\omega$ on~$X$ is given by
$$\omega = x_1\,\mathrm{d}x_2\,\mathrm{d}x_3\,\mathrm{d}x_4 - x_2\,\mathrm{d}x_1\,\mathrm{d}x_3\,\mathrm{d}x_4 + x_3\,\mathrm{d}x_1\,\mathrm{d}x_2\,\mathrm{d}x_4 - x_4\,\mathrm{d}x_1\,\mathrm{d}x_2\,\mathrm{d}x_3\,;$$
in other words, $\frac{1}{r}\mathrm{d}r\wedge\omega$ is the Lebesgue measure on a neighborhood of~$X$ in~$\R^4$.
The full discrete spectrum of~$\square_X$ is well-known (see \cite{far79}).
It is  a special case of the general theory stated in Fact~\ref{fact:decompVlambda}, and it also follows from Claim~\ref{claim:FJAdS3} below.

\begin{fact}
The discrete spectrum of the Laplacian on $X=\AdS^3$ is
$$\Spec_d(\square_X) = \big\{ \ell(\ell-2):\ \ell\in\N\big\} .$$
\end{fact}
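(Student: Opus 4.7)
The plan is to combine Facts~\ref{fact:3.1}, \ref{fact:eigenvalues}, and \ref{fact:decompVlambda} for the rank-one reductive symmetric space
\[
X=\AdS^3\simeq(\SL_2(\R)\times\SL_2(\R))/\Diag(\SL_2(\R)).
\]
Since $\rank(G/H)=1$, the algebra $\D(X)$ is generated by $\square_X$ alone, so the homomorphism $\chi_\lambda\mapsto(\lambda,\lambda)-(\rho,\rho)$ of Fact~\ref{fact:eigenvalues} identifies $\Spec_d(\square_X)$ with the image of the set of $\lambda\in\jj^{\ast}_+/W$ at which $L^2(X,\M_{\lambda})\neq 0$. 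So the problem reduces to determining that set and evaluating the quadratic expression on it.

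The first step is to make all structural data explicit. Choosing a generator $T\in\so(2)$ and setting $T_0:=\sqrt{-1}(T,-T)$, the space $\jj=\R T_0$ is one-dimensional; I coordinatize $\jj^{\ast}=\R e^{\ast}$ by $e^{\ast}(T_0)=1$. A direct computation of restricted roots gives $\Sigma(\g_{\C},\jj_{\C})=\{\pm 2e^{\ast}\}$, each of multiplicity two, and $\Sigma(\kk_{\C},\jj_{\C})=\emptyset$. The natural choice $\Sigma^+(\g_{\C},\jj_{\C})=\{2e^{\ast}\}$ then yields $\rho=2e^{\ast}$ and $\rho_c=0$. The bilinear form $B=\kappa/4$ that realizes $\square_X=4\,\mathrm{Cas}_\kappa$ (as recalled in the introduction) satisfies $B(T_0,T_0)=4$, so that $(e^{\ast},e^{\ast})=1/4$ and $(\rho,\rho)=1$ in the dual inner product. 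Since $K=\SO(2)\times\SO(2)$ is abelian and $K\cap H=\Diag(\SO(2))$, the irreducible $K$-representations carrying a $K\cap H$-fixed vector are exactly the characters $(n,-n)$ for $n\in\Z$; evaluating $(n,-n)$ on $T_0$ and invoking Remark~\ref{rem:Lmdj} identifies $\Lambda_+=\Lambda=2\Z\cdot e^{\ast}\subset\jj^{\ast}$.

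With these pieces in place, Fact~\ref{fact:decompVlambda} takes a transparent form: for $\lambda=ne^{\ast}$ with $n>0$, the discrete series space $\V_{Z,\lambda}$ is nonzero iff $\mu_\lambda^w=w(\lambda+\rho)\in\Lambda$ for some $w\in W=\{\pm 1\}$, equivalently $n+2\in 2\Z$, i.e.\ $n=2(\ell-1)$ with $\ell\geq 2$. For each such $\lambda$, Fact~\ref{fact:eigenvalues} gives the eigenvalue
\[
(\lambda,\lambda)-(\rho,\rho)=(\ell-1)^2-1=\ell(\ell-2),
\]
yielding the inclusion $\supset$ of the claim. The reverse inclusion follows because any $f\in L^2(X)$ satisfying $\square_X f=tf$ lies in some $L^2(X,\M_\lambda)$ with $\lambda\in\jj^{\ast}_+/W$, and the ``only if'' half of Fact~\ref{fact:decompVlambda} forces $\mu_\lambda^w\in\Lambda$, so $\lambda$ has one of the forms above and $t=\ell(\ell-2)$.

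The delicate point I expect to be the main obstacle is the boundary behaviour on the wall of $\jj^{\ast}_+$ (at $\lambda=0$, corresponding to the limit-of-discrete-series parameters for $\SL_2(\R)$): Fact~\ref{fact:decompVlambda} there supplies only a necessary integrality condition, not nonvanishing. To settle this I would either use the explicit Flensted-Jensen eigenfunction $\psi_{\lambda}$ constructed in Chapter~\ref{sec:FJ} (together with Claim~\ref{claim:FJAdS3} below, which makes $\psi_{\lambda}$ completely explicit on $\AdS^3$) or cross-check against the Harish-Chandra Plancherel decomposition of $L^2(\SL_2(\R))$, whose discrete part is $\bigoplus_{n\geq 2}(D_n^+\oplus D_n^-)\,\widehat{\otimes}\,(D_n^{\pm})^{\ast}$ with no contribution from $n=1$; this rules out any spurious boundary eigenvalue and completes the identification $\Spec_d(\square_X)=\{\ell(\ell-2):\ell\in\N\}$.
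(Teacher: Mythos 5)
Your proof is sound and follows the route the paper itself indicates for this Fact: use the rank-one structure $\D(X)=\C[\square_X]$ together with Facts~\ref{fact:3.1} and~\ref{fact:eigenvalues} to convert $\square_X$-eigenvalues into a spectral parameter $\lambda\in\jj^{\ast}_+/W$, then invoke Flensted-Jensen's classification (Fact~\ref{fact:decompVlambda}, equivalently Claim~\ref{claim:FJAdS3}) to determine the set of admissible $\lambda$. The structural data you compute ($\jj$, $\Sigma(\g_{\C},\jj_{\C})=\{\pm 2e^{\ast}\}$ of multiplicity two, $\rho=2e^{\ast}$, $\rho_c=0$, the normalization $(\rho,\rho)=1$ coming from $B=\kappa/4$, and $\Lambda=2\Z e^{\ast}$) are all correct, as is the evaluation $(\lambda,\lambda)-(\rho,\rho)=\ell(\ell-2)$ for $\lambda=2(\ell-1)e^{\ast}$.

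Two remarks. First, the boundary worry at $\lambda=0$ is already disposed of by the ``only if'' part of Fact~\ref{fact:decompVlambda}: this is exactly the Matsuki--Oshima ``$C=0$'' theorem, which forces $\lambda$ into the \emph{open} chamber $\jj^{\ast}_+$, so $\V_{Z,0}=0$ and no independent Plancherel argument is needed (though it is a healthy cross-check). Second, and more substantively, your final displayed equality does not match what you actually prove. Your derivation, together with your own observation that $n=1$ does not contribute to the discrete part of $L^2(\SL_2(\R))$, gives $\Spec_d(\square_X)=\{\ell(\ell-2):\ell\geq 2\}=\{0,3,8,15,\dots\}$. But the set $\{\ell(\ell-2):\ell\in\N\}$, read literally, also contains $-1$ (the $\ell=1$ value), which you have just argued is \emph{not} in the spectrum since $D_1^{\pm}$ is a limit of discrete series and is not square-integrable. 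The correct way to state the conclusion is $\{\ell(\ell-2):\ell\geq 2\}$ (equivalently $\ell\in\N\smallsetminus\{1\}$, since $\ell=0$ and $\ell=2$ give the same eigenvalue); you should not reproduce the $\ell\in\N$ formulation unmodified when your own argument rules out the $\ell=1$ value.
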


We now consider Clifford--Klein forms $X_{\Gamma}$.
Here is a more precise version (and generalization) of Theorem~\ref{thm:SL2}, using the constant $C_{Lip}(\Gamma)$ of Section~\ref{subsec:CLip}.

\begin{theorem}\label{thm:SL2precise}
There is a constant $R'_X>0$ depending only on $X=\AdS^3$ such that for any Clifford--Klein form~$X_{\Gamma}$ with finitely generated $\Gamma\in\SO(2,2)_0\linebreak\simeq (\SL_2(\R)\times\nolinebreak\SL_2(\R))/\{ \pm(1,1)\}$,
\begin{itemize}
  \item if $-\mathrm{I}\notin\Gamma$, then
  $$\Spec_d(\square_{X_{\Gamma}}) \supset \bigg\{ \ell(\ell-2):\ \ell\in\N,\ \ell >\frac{R'_X}{(1-C_{Lip}(\Gamma))^3}\bigg\} \,;$$
  \item if $-\mathrm{I}\in\Gamma$, then
  $$\Spec_d(\square_{X_{\Gamma}}) \supset \bigg\{ \ell(\ell-2):\ \ell\in 2\N,\ \ell >\frac{R'_X}{(1-C_{Lip}(\Gamma))^3}\bigg\} .$$
\end{itemize}
In particular, the discrete spectrum of any Clifford--Klein form~$X_{\Gamma}$ with $\Gamma$ finitely generated is infinite.
\end{theorem}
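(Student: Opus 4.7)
The plan is to specialize Theorem~\ref{thm:precise}.(1) to $X = \AdS^3$ and to track precisely how the explicit constant $R$ produced there depends on $C_{Lip}(\Gamma)$. The starting point is Fact~\ref{fact:CLip}: any finitely generated discrete subgroup $\Gamma$ of $\SO(2,2)_0$ acting properly discontinuously on $X$ satisfies $C_{Lip}(\Gamma) < 1$ and is automatically sharp for $X$, so Theorem~\ref{thm:precise}.(1) applies and yields
$$\big\{ \lambda \in \jj^{\ast}_+ \cap (2\rho_c - \rho + \Lambda^{\Gamma \cap Z(G_s)}) :\ d(\lambda) > R \big\} \,\subset\, \Spec_d(X_{\Gamma})$$
for an explicit $R \geq 0$, whose $C_{Lip}$-dependence I must pin down.

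To do so, I would first convert $C_{Lip}(\Gamma)$ into sharpness constants. By Remark~\ref{rem:CLipsharp} (applied after possibly swapping the two $\PSL_2(\R)$-factors so that the optimal Lipschitz direction is used), for every $\varepsilon > 0$ some conjugate of $\Gamma$ satisfies $\mu_{\PSL_2(\R)}(\rho(\boldsymbol\gamma)) \leq (C_{Lip}(\Gamma) + \varepsilon)\,\mu_{\PSL_2(\R)}(j(\boldsymbol\gamma))$ for all $\boldsymbol\gamma \in \pi_1(S)$; plugging $c' = C_{Lip}(\Gamma) + \varepsilon$ into \eqref{eqn:cCAdS3} gives a $(c,0)$-sharp conjugate with $c \sim (1 - C_{Lip}(\Gamma))/\sqrt{2}$ as $\varepsilon \to 0$. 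I would then apply Proposition~\ref{prop:KM} inside $G = \SL_2(\R) \times \SL_2(\R)$ to further conjugate so that $\Vert \mu(\gamma) \Vert \geq r_G$ for all $\gamma \in \Gamma \smallsetminus G_c$, where $r_G$ depends only on $G$. Since $G_c = Z(G_s) = \{\pm(1,1)\}$, the hypothesis $\Gamma \cap G_c \subset Z(G_s)$ of Theorem~\ref{thm:precise}.(1) is automatic, and by Lemma~\ref{lem:munu} combined with $(c,0)$-sharpness one obtains $r_\Gamma \geq c\, r_G$ after both conjugations.

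Substituting $C = 0$ and $r = c\, r_G$ into \eqref{eqn:finalcstconj} and using $\log \cosh(q' c\, r_G) \asymp c^2$ as $c \to 0$, the dominant term in the definition of $R$ becomes
$$\frac{\log \bigl(2 c_G\, \# Z(G_s)\bigr)}{c\, \log\cosh(q' c\, r_G)} \,\asymp\, c^{-3},$$
yielding the estimate $R \leq R'_X/(1 - C_{Lip}(\Gamma))^3$ for a universal $R'_X > 0$. It remains to identify $\jj^{\ast}_+ \cap (2\rho_c - \rho + \Lambda^{\Gamma \cap Z(G_s)})$ with integers $\ell \in \N$: since $K/(H \cap K) \simeq \SO(2)$ whose unitary characters are indexed by $\Z$, the lattice $\Lambda$ is isomorphic to $\Z$, and the sublattice $\Lambda^{\Gamma \cap Z(G_s)}$ is all of $\Lambda$ when $-\mathrm{I} \notin \Gamma$ but consists only of even characters when $-\mathrm{I} \in \Gamma$ (since $-\mathrm{I}$ then acts nontrivially on odd $K$-types). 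Under this identification the condition $d(\lambda) > R$ becomes $\ell > R$ (up to the normalization of $\jj^{\ast}$), and via Fact~\ref{fact:eigenvalues} together with a direct computation in the rank-one root system of $(\g_\C, \jj_\C)$, the Laplacian eigenvalue $(\lambda,\lambda) - (\rho,\rho)$ attached to the integer $\ell$ equals $\ell(\ell - 2)$.

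The main obstacle is the constant-tracking in the third step: extracting exactly the power $c^{-3}$ requires careful bookkeeping to check that the two successive conjugations (first by Remark~\ref{rem:CLipsharp} to achieve $C = 0$, then by Kazhdan--Margulis to achieve $r \geq c\, r_G$) can be composed without losing $(c,0)$-sharpness or the lower bound on $r_\Gamma$, and that the numerator of the third term in \eqref{eqn:finalcstconj} is bounded independently of $\Gamma$ (here the finiteness of $\# Z(G_s) = 4$ is what makes the argument uniform). The identification of the parameters with $\ell \in \N$ and of the eigenvalues with $\ell(\ell-2)$ is routine harmonic analysis on $\SL_2(\R)$ (\emph{cf.}~Claim~\ref{claim:FJAdS3}), but taken together with the distinction between the cases $-\mathrm{I} \in \Gamma$ and $-\mathrm{I} \notin \Gamma$ it would occupy the bulk of the remaining writing.
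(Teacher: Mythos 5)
Your overall plan---specialize Theorem~\ref{thm:precise}.(1) to $X=\AdS^3$, convert $C_{Lip}(\Gamma)$ into sharpness constants via Remark~\ref{rem:CLipsharp} and \eqref{eqn:cCAdS3}, use Proposition~\ref{prop:KM} to get a uniform lower bound on $r_\Gamma$, and then extract the cubic dependence from \eqref{eqn:finalcst}/\eqref{eqn:finalcstconj}---is the paper's strategy, and the lattice identification and eigenvalue computation at the end are also correct modulo routine normalization. But the middle step, which you flag as ``the main obstacle,'' is in fact a genuine gap as you have ordered the conjugations, and the paper resolves it in a different way than you propose.

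The problem is this: after applying Remark~\ref{rem:CLipsharp} to make $\Gamma$ a $(c,0)$-sharp group, you propose to apply Proposition~\ref{prop:KM} \emph{inside $G=\SL_2(\R)\times\SL_2(\R)$}. The Kazhdan--Margulis theorem conjugates by an element $g\in G$ whose Cartan projection $\Vert\mu(g)\Vert$ is not controlled, and by Lemma~\ref{lem:prmu} such a conjugation turns a $(c,0)$-sharp group into a $(c,C')$-sharp one with $C'$ of order $\Vert\mu(g)\Vert$. You cannot re-apply Remark~\ref{rem:CLipsharp} afterward to restore $C=0$, because that second re-conjugation of the $\rho$-factor would change $\mu$ again and destroy the Kazhdan--Margulis lower bound on $\Vert\mu(\gamma)\Vert$. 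So the two conditions you need to feed into \eqref{eqn:finalcstconj}---$(c,0)$-sharpness and $r_\Gamma\geq c\,r_G$---cannot be achieved simultaneously this way, and the derivation $r_\Gamma\geq c\,r_G$ via Lemma~\ref{lem:munu} collapses.

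What the paper does instead is decouple the two conjugations so that they act on \emph{independent} factors. First it applies Proposition~\ref{prop:KM} to $j(\pi_1(S))$ as a subgroup of the single factor $\PSL_2(\R)$, conjugating only the $j$-side so that $\mu_{\PSL_2(\R)}(j(\boldsymbol\gamma))\geq r_{\PSL_2(\R)}$ for all $\boldsymbol\gamma\neq e$. It \emph{then} applies Remark~\ref{rem:CLipsharp} to the resulting pair, conjugating only the $\rho$-side. Because the two conjugating elements live in different $\PSL_2(\R)$-factors, neither normalization disturbs the other. Crucially, the lower bound on $r_\Gamma$ is then read off directly from \eqref{eqn:nuAdS3} and the reverse triangle inequality,
$$\nu(\gamma) \,\geq\, \mu_{\PSL_2(\R)}(j(\boldsymbol\gamma)) - \mu_{\PSL_2(\R)}(\rho(\boldsymbol\gamma)) \,\geq\, (1-c')\,\mu_{\PSL_2(\R)}(j(\boldsymbol\gamma)) \,\geq\, (1-c')\,r_{\PSL_2(\R)},$$
not by combining $(c,0)$-sharpness with a Cartan-projection bound in $G$. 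This gives $r_\Gamma\gtrsim(1-C_{Lip}(\Gamma))$ and $c\gtrsim(1-C_{Lip}(\Gamma))$ simultaneously, and substituting $C=0$ into \eqref{eqn:finalcst} yields the $(1-C_{Lip}(\Gamma))^{-3}$ bound after letting $\varepsilon\to 0$, using that $t\mapsto\log(\cosh t)\,t^{-2}$ is bounded away from $0$ and $\infty$ on bounded intervals. You should also invoke Lemma~\ref{lem:torsionAdS3} at the outset to justify writing $\overline{\Gamma}=(j,\rho)(\pi_1(S))$ with $j$ (or $\rho$) \emph{injective} when $\Gamma$ has torsion; a finite-index reduction would not suffice because the theorem concerns $X_\Gamma$ itself. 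Finally, a small slip: $G_c=Z(G_s)=\{\pm 1\}\times\{\pm 1\}$ has order four, not two---it is not $\{\pm(1,1)\}$---though the inclusion $\Gamma\cap G_c\subset Z(G_s)$ you need is of course still automatic.
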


Using Fact~\ref{fact:CLipcont}, we obtain the existence of an infinite stable discrete spectrum in the convex cocompact case.

\begin{corollary}\label{cor:AdS3deform}
For any convex cocompact Clifford--Klein form $X_{\Gamma}$ of $X=\AdS^3$ (in the sense of Definition~\ref{def:ccAdS3}), there is an infinite subset of $\Spec_d(\square_{X_{\Gamma}})$ that is stable under any small deformation of~$\Gamma$.
\end{corollary}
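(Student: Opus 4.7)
\medskip

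\noindent\textbf{Proof plan.}
The strategy is to combine the quantitative lower bound on the spectrum given by Theorem~\ref{thm:SL2precise} with the continuity of $C_{Lip}$ provided by Fact~\ref{fact:CLipcont}, so that a single explicit infinite list of eigenvalues works uniformly on a whole neighborhood of $\Gamma$ in $\Hom(\Gamma,G)$. First I would reduce to the standard setup: by Lemma~\ref{lem:torsionAdS3}, after possibly replacing $\Gamma$ by an index-two subgroup (to remove $-\mathrm{I}$, or keep track of it), the projection $\overline{\Gamma}$ to $\PSL_2(\R)\times\PSL_2(\R)$ is either contained (up to switching the factors) in $\PSL_2(\R)\times\mathrm{PSO}(2)$, or of the form $\{(j(\boldsymbol\gamma),\rho(\boldsymbol\gamma)):\boldsymbol\gamma\in\pi_1(S)\}$ with $j$ Fuchsian; in the convex cocompact case, $j(\pi_1(S))$ may in addition be assumed convex cocompact in $\PSL_2(\R)$.

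Next I would use the openness of convex cocompactness in $\Hom(\pi_1(S),\PSL_2(\R))$ to find a neighborhood $\mathcal{U}\subset\Hom(\Gamma,G)$ of the natural inclusion such that, for every $\varphi\in\mathcal{U}$, the deformed pair $(j_\varphi,\rho_\varphi)$ still has $j_\varphi$ convex cocompact (hence injective and discrete). On this neighborhood, Fact~\ref{fact:CLipcont} yields that $\varphi\mapsto C_{Lip}(\varphi(\Gamma))$ is continuous, so after shrinking $\mathcal{U}$ I may assume
$$C_{Lip}(\varphi(\Gamma)) \,\leq\, \frac{1+C_{Lip}(\Gamma)}{2} \,<\, 1 \qquad \text{for all } \varphi\in\mathcal{U}.$$
In particular $\varphi(\Gamma)$ acts properly discontinuously on~$X$ by Fact~\ref{fact:CLip}, so $X_{\varphi(\Gamma)}$ is a Clifford--Klein form (an orbifold a priori, but one for which Theorem~\ref{thm:SL2precise} still applies, in the sense discussed in Section~\ref{subsec:proofs}).

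Setting $R:=8R'_X/(1-C_{Lip}(\Gamma))^3$, Theorem~\ref{thm:SL2precise} applied to each $\varphi(\Gamma)$ with the above bound on $C_{Lip}$ then gives
$$\big\{\ell(\ell-2):\,\ell\in\N,\ \ell>R\big\}\ \subset\ \Spec_d(\square_{X_{\varphi(\Gamma)}}) \qquad \text{for all } \varphi\in\mathcal{U},$$
(with $\ell$ restricted to $2\N$ in the case $-\mathrm{I}\in\Gamma$, a condition preserved on a neighborhood of the inclusion in the compact-open topology, since the alternative configurations sit at positive distance by discreteness). This infinite set is contained in $\Spec_d(\square_{X_{\Gamma}})$ (take $\varphi=\mathrm{id}$) and in all nearby $\Spec_d(\square_{X_{\varphi(\Gamma)}})$, proving the corollary.

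The main obstacle in this plan is ensuring that the hypotheses of Fact~\ref{fact:CLipcont} and Theorem~\ref{thm:SL2precise} remain valid under arbitrary small deformations inside~$G$, rather than only those that preserve the decomposition $\Gamma=(j,\rho)(\pi_1(S))$ with $j$ Fuchsian. That is, I must check that a small perturbation in $\Hom(\Gamma,G)$ cannot swap the roles of the two $\PSL_2(\R)$-factors or break the convex cocompactness of the Fuchsian side; both points follow from the openness of the convex cocompact locus in $\PSL_2(\R)$ and a continuity/nearness argument, but require some care when $\Gamma$ is standard (e.g.\ $\rho$ constant), where a generic deformation may leave the standard locus yet must still satisfy $C_{Lip}<1$ by continuity at the value $C_{Lip}(\Gamma)=0$. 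The remaining bookkeeping, involving the center $\{\pm(1,1)\}$ and possible torsion in~$\Gamma$, is handled as in Section~\ref{subsec:proofs} (passing to the quotient $\overline{X}$ and invoking Remark~\ref{rem:deformcenter} to keep $\varphi(\Gamma\cap Z)\subset Z$).
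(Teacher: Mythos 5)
Your proposal is correct and follows essentially the same approach as the paper: the Corollary is derived by combining the quantitative bound of Theorem~\ref{thm:SL2precise} (whose threshold is controlled solely by $C_{Lip}(\Gamma)$) with the continuity of $C_{Lip}$ from Fact~\ref{fact:CLipcont} near a convex cocompact point, so that a uniform spectral threshold works on a whole neighborhood of the inclusion in $\Hom(\Gamma,G)$. The only inessential detour is passing to an index-two subgroup to handle $-\mathrm{I}$; Theorem~\ref{thm:SL2precise} already treats both the $-\mathrm{I}\in\Gamma$ and $-\mathrm{I}\notin\Gamma$ cases directly, and as you note the membership of $-\mathrm{I}$ is preserved under small deformations.
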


We note that Corollary~\ref{cor:AdS3deform} is stronger, in the case of $X=\AdS^3$, than the general Theorem~\ref{thm:deform}, because it treats small deformations of Clifford--Klein forms that may be nonstandard to start with.

For \emph{standard} Clifford--Klein forms~$X_{\Gamma}$, we have $C_{Lip}(\Gamma)=0$ and Theorem~\ref{thm:SL2precise} follows from the general Theorem~\ref{thm:precisedeform}.
We now explain how to prove Theorem~\ref{thm:SL2precise} for \emph{nonstandard} Clifford--Klein forms, using the precise version \eqref{eqn:finalcst} of Proposition~\ref{prop:nonzero}.(1) together with the theory of Sections \ref{subsec:CKformsAdS3} to~\ref{subsec:CLip} (in particular Lemma~\ref{lem:torsionAdS3}, Remark~\ref{rem:CLipsharp}, and Fact~\ref{fact:CLip}).
We first note that we can identify the closed positive Weyl chamber~$\overline{\bb_+}$ of Section~\ref{subsec:munu} with~$\R_+$ so that the polar projection
$$\nu :\ G = \SL_2(\R)\times\SL_2(\R) \,\longrightarrow\, \R_{\geq 0}$$
of \eqref{eqn:nu} is given by
\begin{equation}\label{eqn:nuAdS3}
\nu(\underline{g}) = \mu_{\SL_2(\R)}(g_1g_2^{-1})
\end{equation}
for all $\underline{g}=(g_1,g_2)\in G=\SL_2(\R)\times\SL_2(\R)$.
Here $\mu_{\SL_2(\R)} : \SL_2(\R)\!\rightarrow\R_{\geq 0}$ is the Cartan projection of $\SL_2(\R)$ obtained from the Cartan projection $\mu_{\PSL_2(\R)}$ of Section~\ref{subsec:CLip} by projecting $\SL_2(\R)$ onto $\PSL_2(\R)$.

\begin{proof}[Proof of Theorem~\ref{thm:SL2precise} for nonstandard Clifford--Klein forms]
Let $\Gamma$ be a fini\-tely generated discrete subgroup of~$G$ acting properly discontinuously on $X=\AdS^3$.
Assume that $\Gamma$ is nonstandard.
By Lemma~\ref{lem:torsionAdS3} and Fact~\ref{fact:CLip}, after possibly applying~$\sigma$, we may assume that the projection of~$\Gamma$ to $\PSL_2(\R)\times\PSL_2(\R)$ is of the form $\overline{\Gamma}=(j,\rho)(\pi_1(S))$ with $(j,\rho)\in\Hom(\pi_1(S),\PSL_2(\R))^2$ and $j$ Fuchsian, satisfying $C_{Lip}(j,\rho)<1$.
By Proposition~\ref{prop:KM}, after replacing~$j$ by some conjugate under $\PSL_2(\R)$, we may assume that $\mu_{\PSL_2(\R)}(j(\boldsymbol\gamma))\geq r_{\PSL_2(\R)}>0$ for all $\boldsymbol\gamma\in\pi_1(S)\smallsetminus\{ e\}$, where $r_{\PSL_2(\R)}$ is the constant given by Proposition~\ref{prop:KM}, which depends only on the group~$\PSL_2(\R)$.
In particular, $\Gamma\cap K=\{ e\}$.
Consider $\varepsilon>0$ such that $C_{Lip}(j,\rho)+\nolinebreak\varepsilon<\nolinebreak1$.
By Remark~\ref{rem:CLipsharp} and \eqref{eqn:cCAdS3}, after replacing~$\rho$ by some conjugate under $\PSL_2(\R)$, we may assume that $\overline{\Gamma}$ is $(c,0)$-sharp for
$$c \,:=\, \frac{1-(C_{Lip}(j,\rho)+\varepsilon)}{\sqrt{2\big(1+\big(C_{Lip}(j,\rho)+\varepsilon\big)^2\big)}} \,\geq\, \frac{1}{2} \big(1-C_{Lip}(j,\rho)-\varepsilon\big)$$
and, using \eqref{eqn:nuAdS3} and \eqref{eqn:triangineq}, that
\begin{eqnarray*}
r_{\Gamma} := \inf_{\gamma\in\Gamma\smallsetminus\{ e\}} \nu(\gamma) & \geq & \inf_{\boldsymbol\gamma\in\pi_1(S)\smallsetminus\{ e\}} \mu_{\PSL_2(\R)}(j(\boldsymbol\gamma)) - \mu_{\PSL_2(\R)}(\rho(\boldsymbol\gamma))\\
& \geq & r_{\PSL_2(\R)} \big(1-C_{Lip}(j,\rho)-\varepsilon\big) > 0.
\end{eqnarray*}
We note that the function $t\mapsto\log(\cosh(t))\,t^{-2}$ extends by continuity in~$0$ and is bounded on any bounded interval of~$\R$.
We conclude by using Proposition~\ref{prop:nonzero}.(1) with the explicit constant \eqref{eqn:finalcst}, together with Remark~\ref{rem:translconj}, and by letting $\varepsilon$ tend to zero.
\end{proof}

We note that the infinite subset of $\Spec_d(\square_{X_{\Gamma}})$ given by Theorem~\ref{thm:SL2precise} is largest when $C_{Lip}(\Gamma)=0$; this condition is realized when $\Gamma$ is standard, but also when the projection of $\Gamma$ to $\PSL_2(\R)\times\PSL_2(\R)$ is of the form \eqref{eqn:Gammajrho} with $\rho(\pi_1(S))$ unipotent.

\begin{remark}\label{rem:negativespec}
Assume that $X_{\Gamma}$ is a standard compact Clifford--Klein form with $\Gamma={}^{\backprime}\Gamma\times\{ e\}$ for some uniform lattice ${}^{\backprime}\Gamma$ of $\SL_2(\R)$.
Then the Laplacian~$\square_{X_{\Gamma}}$  has not only infinitely many positive eigenvalues that remain constant under small deformations (given by Theorem~\ref{thm:SL2precise}), but also infinitely many negative eigenvalues that vary.
\end{remark}

\noindent
Indeed, $L^2({}^{\backprime}\Gamma\backslash\HH^2)$ embeds into $L^2(X_{\Gamma})=L^2({}^{\backprime}\Gamma\backslash\SL_2(\R))$ and the restriction to $L^2({}^{\backprime}\Gamma\backslash\HH^2)$ of the Laplacian $\square_{X_{\Gamma}}$ corresponds to $-2$ times the usual Laplacian~$\Delta_{\,{}^{\backprime}\Gamma\backslash\HH^2}$ on the hyperbolic surface ${}^{\backprime}\Gamma\backslash\HH^2$ (see \cite[Ch.\,X]{lan85}).
Therefore $\square_{X_{\Gamma}}$ is essentially self-adjoint and admits infinitely many negative eigenvalues coming from eigenvalues of~$\Delta_{\,{}^{\backprime}\Gamma\backslash\HH^2}$.
All these eigenvalues vary under small deformations of ${}^{\backprime}\Gamma$ inside $\SL_2(\R)$ (Fact~\ref{fact:Teich}).

\section{Flensted-Jensen eigenfunctions and analytic estimates for $\AdS^3$}\label{subsec:FJAdS3}

In Section~\ref{subsec:discspecAdS3} we have given an explicit infinite set of eigenvalues of the Laplacian on Clifford--Klein forms of $X=\AdS^3$ (Theorem~\ref{thm:SL2precise}), based on a geometric discussion of properly discontinuous actions on $\AdS^3$ (Sections \ref{subsec:CKformsAdS3} to~\ref{subsec:CLip}).
We now make the analytic aspects of the paper more concrete by expliciting the general estimates of Chapters \ref{sec:Vlambda} and~\ref{sec:FJ} in our example $X=\AdS^3$.
We first give an explicit formula for the Flensted-Jensen eigenfunctions~$\psi_{\lambda}$.

\subsection*{$\bullet$ Flensted-Jensen functions}

It is known that, in general, the radial part of the $K$-invariant eigenfunctions on a rank-one reductive symmetric space~$X$ satisfies the Gauss hypergeometric differential equation \cite[Ch.\,III, Cor.\,2.8]{hs94}.
However, it is another thing to find an explicit global formula on the whole of~$X$ for \emph{$K$-finite} eigenfunctions such as the Flensted-Jensen functions.
We now give such a formula for $X=\AdS^3$.

We now switch to the quadric realization of~$X$: we identify $X$ with the quadric of equation $Q=1$ in~$\R^4$, where $Q$ is given by \eqref{eqn:defQ}.
We use the same letter~$Q$ to denote the corresponding complex quadratic form on~$\C^4$.
Let $\ell$ be an integer.
For any $a=(a_i)\in\C^4$ with $Q(a)=0$, the restriction of the function $x\mapsto(\sum_{i=1}^4 a_i x_i)^{-\ell}$ to~$X$ is well-defined.
It is an eigenfunction of~$\square_X$ with eigenvalue $\ell(\ell-2)$, as one sees from the formulas
$$\square_{\R^{2,2}}\bigg(\sum_{i=1}^4 a_i x_i\bigg)^{-\ell} = 0$$
for $Q(a)=0$ and
$$-r^2\,\square_{\R^{2,2}} = - \Big(r\frac{\partial}{\partial r}\Big)^2 - 2r\,\frac{\partial}{\partial r} + \square_X$$
(where, as above, we set $r(x):=\sqrt{Q(x)}$ for $Q(x)>0$).
Let $\psi^+_{\ell} : X\rightarrow\C$ and  $\psi^-_{\ell} : X\rightarrow\C$ be given by
\begin{equation}\label{eqn:FJSL2}
\psi^+_{\ell}(x) = \big(x_1 + \sqrt{-1}\,x_2\big)^{-\ell} \quad\mathrm{and}\quad \psi^-_{\ell}(x) = \big(x_1 - \sqrt{-1}\,x_2\big)^{-\ell}.
\end{equation}
Then $\square_X\,\psi^\pm_{\ell}=\ell(\ell-2)\psi^\pm_{\ell}$ and the following holds.

\begin{claim}\label{claim:FJAdS3}
For any integer $\ell \geq 2$, the functions $\psi^{\pm}_{\ell} : X\rightarrow\C$ are Flensted-Jensen functions for the parameter $\lambda=2\ell-2\in\R_+\simeq\jj^{\ast}_+$.\,The $(\g,K)$-modules generated by~$\psi^+_{\ell}$ and by $\psi^-_{\ell}$ ($\ell =2,3,...$) form the complete set of discrete series representations for~$X$.
\end{claim}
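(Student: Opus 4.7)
The plan is, for each integer $\ell\geq 2$, to verify that $\psi^{\pm}_{\ell}$ is a $K$-finite $L^2$-eigenfunction of $\square_X$ with the right eigenvalue, then to identify it with one of the two Flensted-Jensen functions $\psi_{\lambda,Z^{\pm}}$ attached (for $\lambda=2\ell-2$) to the two closed $H^d$-orbits $Z^{\pm}\in\ZZ$ in $G^d/P^d$, and finally to invoke Fact~\ref{fact:decompVlambda} to conclude that these exhaust the discrete series for $X$.

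For the analytic step, smoothness of $\psi^{\pm}_{\ell}$ on $X$ follows from the observation that $x_1^2+x_2^2=1+x_3^2+x_4^2>0$ on $\{Q=1\}$. The eigenvalue equation $\square_X\psi^{\pm}_{\ell}=\ell(\ell-2)\psi^{\pm}_{\ell}$ is already established in the paragraph preceding the claim. For square-integrability I would parametrize $X$ by $x_1+\sqrt{-1}\,x_2=\sqrt{1+r^2}\,e^{\sqrt{-1}\phi}$ and $(x_3,x_4)=r(\cos\theta,\sin\theta)$: the invariant measure $\omega$ becomes a positive constant times $r\,dr\,d\phi\,d\theta$, and $|\psi^{\pm}_{\ell}|^2=(1+r^2)^{-\ell}$, so that $\int_X|\psi^{\pm}_{\ell}|^2\omega$ is a constant times $\int_0^{+\infty}(1+r^2)^{-\ell}r\,dr$, which is finite precisely for $\ell\geq 2$. $K$-finiteness is equally direct: in the quadric realization, $K\simeq\SO(2)\times\SO(2)\subset\SO(2,2)_0$ acts by rotations $R_\alpha$ in the $(x_1,x_2)$-plane and $R_\beta$ in the $(x_3,x_4)$-plane, and $(R_\alpha,R_\beta)\cdot\psi^{\pm}_{\ell}=e^{\pm\sqrt{-1}\ell\alpha}\psi^{\pm}_{\ell}$, so each $\psi^{\pm}_{\ell}$ is a $K$-eigenvector.

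To match with the Flensted-Jensen construction recalled in Section~\ref{subsec:FJ}, I would use the rank-one structure of the problem. The Weyl group $W$ has order two, acting on $\jj^{\ast}\simeq\R$ by $\lambda\mapsto -\lambda$; here $W_{H\cap K}$ is trivial (as $\h^d$ is abelian in this example), so by \eqref{eqn:zw} there are exactly two closed $H^d$-orbits $Z^{\pm}$ in $G^d/P^d$, labelled by $e,w_0\in W(H^d,G^d)=W$. In the normalization of $\jj^{\ast}\simeq\R$ in which the eigenvalue formula of Fact~\ref{fact:eigenvalues} reproduces $\ell(\ell-2)$ exactly when $\lambda=2\ell-2$ (equivalently $\rho=2$, and $\rho_c=0$ since $\kk_{\C}$ is abelian), the lattice shifts of \eqref{eqn:muwinLambda} become $\mu_\lambda^e=2\ell$ and $\mu_\lambda^{w_0}=-2\ell$. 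Comparing with the $K$-weights of $\psi^{\pm}_{\ell}|_{K\cdot x_0}$ computed above shows that $\psi^{\pm}_{\ell}$ belongs to one and the same summand $\V_{Z^{\mp},\lambda}$ of the direct-sum decomposition of Fact~\ref{fact:decompVlambda}. Since $\V_{Z^{\pm},\lambda}$ is irreducible, with minimal $K$-type of multiplicity one, and since $\psi^{\pm}_{\ell}(x_0)=1$, each $\psi^{\pm}_{\ell}$ coincides with the Flensted-Jensen function $\psi_{\lambda,Z^{\mp}}$ of \eqref{eqn:psilambda} up to a labelling of signs.

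Finally, Fact~\ref{fact:decompVlambda} states that the discrete series for $X$ are exactly the irreducible $(\g,K)$-modules $\V_{Z^{\pm},\lambda}$ with $\lambda\in\jj^{\ast}_+$ and $\mu_\lambda^{w}\in\Lambda_+$; direct inspection of $\Lambda_+$ and the sign conditions pins this down to $\lambda=2\ell-2$ with $\ell\in\N$, $\ell\geq 2$, so the $(\g,K)$-modules generated by the $\psi^{\pm}_{\ell}$ exhaust them. The main obstacle is the bookkeeping of the various normalizations (the form $B=\tfrac{1}{8}\kappa_{\ssl_2}$ on $\q$, the identification $\jj^{\ast}\simeq\R$, and the root multiplicities entering $\rho$) to confirm that the displayed parameter $\lambda=2\ell-2$ is indeed the Flensted-Jensen parameter; once these are fixed, the identification of $\psi^{\pm}_{\ell}$ with a Flensted-Jensen function reduces to matching a single $K$-weight, and the exhaustion follows immediately from Fact~\ref{fact:decompVlambda}.
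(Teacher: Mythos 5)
Your proposal is correct and takes a genuinely different route from the paper. The paper's proof (Sections~\ref{subsec:FJdualAdS3}--\ref{subsec:proofclaimAdS3}) is direct and computational: it identifies the two closed $H^d$-orbits $Z_0,Z_\infty$ as the ``poles'' of $G^d/P^d\simeq\PP^1\C$, writes out the $\theta$-stable Iwasawa decomposition $G^d=K^d(\exp\jj)N^d$ explicitly via the Cayley transform~\eqref{eqn:CayleyAdS}, computes the function $\xi^\vee_\lambda(z)=(z_1+\sqrt{-1}\,z_2)^{\lambda/2}$ from~\eqref{eqn:otherIwasawaAdS3}, and reads off $\psi^0_\lambda=\mathcal P_\lambda(\delta_{Z_0})$, $\psi^\infty_\lambda=\mathcal P_\lambda(\delta_{Z_\infty})$ as $(z_1\pm\sqrt{-1}\,z_2)^{-(\lambda+2)/2}$ after meromorphic continuation in $z$. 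Your route instead establishes the analytic membership $\psi_\ell^\pm\in L^2(X,\M_\lambda)_K$ directly (the $L^2$ computation and $K$-eigenvector verification are correct; in rank one $\D(X)=\C[\square_X]$, so a $\square_X$-eigenfunction automatically satisfies $(\M_\lambda)$), then invokes the decomposition $\V_\lambda=\V_{Z^+,\lambda}\oplus\V_{Z^-,\lambda}$ of Fact~\ref{fact:decompVlambda}, together with disjointness of the $K$-types of the two summands and multiplicity one of the minimal $K$-type, to pin $\psi_\ell^\pm$ down as the Flensted-Jensen functions up to the normalization $\psi_\ell^\pm(x_0)=1$. What the paper's computation buys is that it feeds directly into Lemma~\ref{lem:estimAdS}: the explicit closed form of $\xi^\vee_\lambda$ yields the pointwise estimates with no further work. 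What your route buys is that it is shorter and more conceptual, sidestepping the Iwasawa/Poisson computation entirely; its cost is the ``bookkeeping of normalizations'' you flag yourself. Two points there deserve explicit treatment if you want this to close cleanly: (i) your weight $\pm\ell$ is computed for the $\SO(2)$ sitting in $\SO(2,2)_0$, whereas $\mu^w_\lambda=\pm 2\ell$ lives in $\widehat{K}$ for $K=\SO(2)\times\SO(2)\subset\SL_2(\R)^2$, so the factor of two coming from the covering $\SL_2(\R)^2\to\SO(2,2)_0$ must be reconciled before the ``matching a single $K$-weight'' step is literally true; (ii) multiplicity one of the minimal $K$-type is not stated in Fact~\ref{fact:decompVlambda} as quoted, so you should either cite Flensted-Jensen's construction (the minimal $K$-type is spanned by $\mathcal P_\lambda(\delta_Z)$) or note that for $\AdS^3$ the $K$-type decomposition of each $\V_{Z,\lambda}$ is automatically multiplicity-free since $\SO(2)$ is abelian.
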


A proof of Claim~\ref{claim:FJAdS3} will be given in Section~\ref{subsec:proofclaimAdS3}, after we explicit the Flensted-Jensen duality, the Poisson transform, and the complexified Iwasawa projection $G_{\C}=K_{\C}(\exp\jj_{\C})N_{\C}$ in Sections \ref{subsec:FJdualAdS3} to~\ref{subsec:IwasawaAdS3}.

\begin{remark}
It is known that for the rank-one symmetric spaces $G/H=O(p,q)/O(p-1,q)$, the radial part of the $K$-finite eigenfunctions is given by hypergeometric functions with respect to the polar decomposition $G=K(\exp\overline{\bb_+})H$, while the spherical part is given by spherical harmonics (see \cite{far79} or \cite{sch87} for instance).
Combining the radial and spherical parts in the case $p=q=2$, we could obtain Claim~\ref{claim:FJAdS3} from some nontrivial relation between special functions \cite[Lem.\,8.1]{kor03}.
Instead, we will take an alternative approach, using the explicit realization of $X_{\C}=G_{\C}/H_{\C}$ as a complex quadric in~$\C^4$.
\end{remark}

\subsection*{$\bullet$ Analytic estimates}

Here are the estimates of Propositions \ref{prop:asym} and~\ref{prop:psilambda} for the Flensted-Jensen functions $\psi_{\ell}^{\pm}$ of \eqref{eqn:FJSL2}.
As before, we denote by $x_0$ the image of $H$ in $X=G/H$; in our quadric realization, $x_0=(1,0,0,0)\in\R^4$.

\begin{lemma}\label{lem:estimAdS}
For any $x\in X=\AdS^3$,
\begin{equation}\label{eqn:estim1AdS}
|\psi^{\pm}_{\ell}(x)| \,\leq\, \Big(\frac{\cosh\nu(x)}{2}\Big)^{-\ell/2} \,\leq\, 2^{\ell}\,e^{-\ell\,\nu(x)/2},
\end{equation}
and
\begin{equation}\label{eqn:estim2AdS}
|\psi^{\pm}_{\ell}(x)| \,\leq\, \cosh\Big(\frac{\nu(x)}{2}\Big)^{-\ell/2} \,\leq\, \cosh\Big(\frac{\nu(x)}{4}\Big)^{-\ell} \,\leq\, |\psi^{\pm}_{\ell}(x_0)| = 1.
\end{equation}
\end{lemma}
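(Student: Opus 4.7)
The plan is to establish the exact formula $|\psi^{\pm}_{\ell}(x)| = \cosh(\nu(x)/2)^{-\ell}$ and then deduce both chains of inequalities by elementary scalar calculus on $\cosh$. The reduction to a one-variable computation exploits the $K$-invariance of $|\psi^{\pm}_{\ell}|$.

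In the quadric realization $|\psi^{\pm}_{\ell}(x)| = (x_1^2 + x_2^2)^{-\ell/2}$, which is manifestly invariant under the maximal compact subgroup $K \simeq \SO(2)\times\SO(2)$ of $\SO(2,2)_0$ acting by independent rotations on the $(x_1,x_2)$- and $(x_3,x_4)$-planes. By the polar decomposition $G = K\exp(\overline{\bb_+})H$, every $x \in X$ has the form $k\cdot\exp(tY_0)\cdot x_0$ for some $k\in K$ and $t\geq 0$, where $Y_0$ generates the one-dimensional maximal abelian subspace $\bb\subset\p\cap\q$. Taking $Y_0 := E_{13}+E_{31}$ (where $E_{ij}$ denotes the standard matrix unit in $\so(2,2)$), a direct computation using $Y_0^2 = E_{11}+E_{33}$ and $Y_0^3 = Y_0$ yields
$$\exp(tY_0)\cdot x_0 = (\cosh t,\ 0,\ \sinh t,\ 0),$$
so $|\psi^{\pm}_{\ell}(x)| = (\cosh t)^{-\ell}$. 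It remains to identify $t$ with $\nu(x)/2$: using the Killing-form identity $\kappa_{\so(2,2)}(X,Y) = 2\operatorname{tr}(XY)$, we find $\|Y_0\|^2 = 2\operatorname{tr}(Y_0^2) = 4$, hence $\nu(\exp(tY_0)\cdot x_0) = \|tY_0\| = 2t$, giving the claimed formula.

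Given this exact formula, both estimates follow from elementary inequalities. For \eqref{eqn:estim2AdS}: the bound $\cosh(\nu(x)/2)\geq 1$ upgrades the exponent from $-\ell$ to $-\ell/2$; the duplication identity $\cosh u = 2\cosh^2(u/2)-1 \geq \cosh^2(u/2)$ applied with $u=\nu(x)/2$ gives the middle step; the final inequality is trivial since $\cosh\geq 1$. For \eqref{eqn:estim1AdS}: the identity $\cosh^2(\nu/2) = (1+\cosh\nu)/2 \geq \cosh(\nu)/2$ converts $\cosh(\nu/2)^{-\ell}$ into $(\cosh\nu/2)^{-\ell/2}$, and $\cosh\nu \geq e^{\nu}/2$ then produces the exponential bound $2^{\ell}e^{-\ell\nu/2}$. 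The only delicate point in the whole argument is pinning down the Killing-form normalization that yields the coefficient in $\nu=2t$; once this is settled, everything else is direct matrix arithmetic combined with scalar inequalities on $\cosh$.
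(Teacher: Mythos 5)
Your proof is correct and follows the same strategy as the paper's: both reduce to the relation $x_1^2 + x_2^2 = \cosh^2(\nu(x)/2)$, equivalently the exact formula $|\psi^{\pm}_{\ell}(x)| = \cosh(\nu(x)/2)^{-\ell}$, and then finish with the same two elementary scalar inequalities on $\cosh$. The only genuine difference is how this relation between $\nu(x)$ and $x_1^2+x_2^2$ is obtained. The paper reads $\cosh\nu(x) = 2x_1^2+2x_2^2-1$ off the $\SL_2(\R)$ realization via the formula $\mu_{\SL_2(\R)}(g) = \operatorname{arcosh}(\mathrm{tr}({}^t\!gg)/2)$ and the correspondence \eqref{eqn:correspAdS3}; this settles the scale of $\nu$ for free, because \eqref{eqn:nuAdS3} is by fiat the chosen identification of $\overline{\bb_+}$ with $\R_{\geq 0}$. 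You instead exponentiate $Y_0 = E_{13}+E_{31}\in\bb$ inside $\so(2,2)$ and pin the scale with the Killing norm $\|Y_0\| = 2$. Your route works, and you correctly flag the normalization as the one delicate point: it is consistent with \eqref{eqn:nuAdS3} because the Killing form is intrinsic under the isomorphism $\ssl_2(\R)\oplus\ssl_2(\R)\cong\so(2,2)$ (and a one-line check with $(Z,-Z)$, $Z=\mathrm{diag}(1/2,-1/2)$, confirms the coefficient), but the paper's route avoids having to verify this compatibility at all, which is slightly more economical. Once the exact formula is in hand the two arguments are identical in substance, though the paper never states the equality $|\psi^{\pm}_{\ell}(x)| = \cosh(\nu(x)/2)^{-\ell}$ explicitly.
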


We give a direct, elementary proof of these inequalities.

\begin{proof}
By \eqref{eqn:nuAdS3}, in the realization of $X=\AdS^3$ as the group manifold $\SL_2(\R)$, the polar projection $\nu : X\rightarrow\R_{\geq 0}$ coincides with the Cartan projection $\mu_{\SL_2(\R)} :\nolinebreak \SL_2(\R)\rightarrow\nolinebreak\R_{\geq 0}$, which maps $g\in\SL_2(\R)$ to the logarithm of the highest eigenvalue of $^t\!gg$, or in other words to $\mathrm{arcosh}(\mathrm{tr}(^t\!gg)/2)$.
Using the explicit correspondence \eqref{eqn:correspAdS3}, we obtain
\begin{equation}\label{eqn:nuAdS3quadric}
\nu(x) = \operatorname{arcosh}(x_1^2 + x_2^2 + x_3^2 + x_4^2) = \operatorname{arcosh}(2x_1^2 + 2x_2^2 - 1)
\end{equation}
for all $x=(x_1,x_2,x_3,x_4)\in X$ in the quadric realization.
By definition \eqref{eqn:FJSL2} of~$\psi_{\ell}^{\pm}$, we have $|\psi^{\pm}_{\ell}(x)|=(x_1^2+x_2^2)^{-\ell/2}$ for all $x\in X$.
Thus \eqref{eqn:estim1AdS} follows directly from \eqref{eqn:nuAdS3quadric}.
To obtain \eqref{eqn:estim2AdS}, we use the general inequality $1+\cosh(2s)\geq 2\cosh(s)$ with $2s=\nu(x)$.
\end{proof}

The rest of the chapter is devoted to explaining Claim~\ref{claim:FJAdS3}.
For this purpose we explicit, in the particular case of $X=\AdS^3$, some of the notation that was introduced in Chapters \ref{sec:theorems} to~\ref{sec:nonzero}.

\section{The Flensted-Jensen duality for $\AdS^3$}\label{subsec:FJdualAdS3}

We now realize $X$ again as $(\SL_2(\R)\times\SL_2(\R))/\Diag(\SL_2(\R))$.
Then the set of inclusions \eqref{eqn:KGH} is given~by
\begin{changemargin}{-0.1cm}{0cm}
\begin{center}
\small
\begin{tabular}{c c c c c}
$K=\SO(2)\times\SO(2)$ & $\subset$ & $G=\SL_2(\R)\times\SL_2(\R)$ & $\supset$ & $H=\Diag(\SL_2(\R))$\\
\smallskip
\rotatebox{90}{$\supset$} & & \rotatebox{90}{$\supset$} & & \rotatebox{90}{$\supset$}\\
\smallskip
$K_{\C}=\SO(2,\C)\times\SO(2,\C)$ & $\subset$ & $G_{\C}=\SL_2(\C)\times\SL_2(\C)$ & $\supset$ & $H_{\C}=\Diag(\SL_2(\C))$\\
\medskip
\rotatebox{90}{$\subset$} & & \rotatebox{90}{$\subset$} & & \rotatebox{90}{$\subset$}\\
\smallskip
$H^d=\Phi(\SO(2,\C))$ & $\subset$ & $G^d=\Phi(\SL_2(\C))$ & $\supset$ & $K^d=\Phi(\SU(2)),$
\end{tabular}
\end{center}
\end{changemargin}
where $\Phi$ is the embedding of $\SL_2(\C)$ into $\SL_2(\C)\times\SL_2(\C)$ defined by
\begin{equation}\label{eqn:GdAdS3}
\Phi(g) = \big(g,\overline{^t\!g^{-1}}\big)
\end{equation}
for all $g\in\SL_2(\C)$.
We can see the complexified symmetric space~$X_{\C}$ either as the $3$-dimensional complex sphere of equation $Q=1$ in~$\C^4$ or as the group $\SL_2(\C)$ with the transitive action \eqref{eqn:lrmul} of $\SL_2(\C)\times\SL_2(\C)$ by left and right multiplication; the correspondence is given by the complex linear extension of \eqref{eqn:correspAdS3}.
The dual space~$X^d$ can be realized either as
\begin{equation}\label{eqn:Xdquadric}
X^d = \big\{ \big(x_1,\sqrt{-1}\,x_2,x_3,x_4\big) : x_i\in\R,\; x_1^2-x_2^2-x_3^2-x_4^2=1,\ x_1>0\big\}
\end{equation}
or as the set $\mathrm{Herm}(2,\C)_+\cap\SL_2(\C)$ of positive definite Hermitian matrices in $\SL_2(\C)$; it identifies with the $3$-dimensional hyperbolic space $\HH^3$.
The compact form $X_U$ of~$X_{\C}$ can be realized either as
$$X_U = \big\{ \big(x_1,x_2,\sqrt{-1}\,x_3,\sqrt{-1}\,x_4\big) : x_i\in\R,\; x_1^2+x_2^2+x_3^2+x_4^2=1\big\} $$
or as the subgroup $\SU(2)$ of $\SL_2(\C)$; it identifies with the $3$-dimensional real sphere $\mathbb{S}^3$.
The following diagram summarizes the different realizations of $X$, $X_{\C}$, and~$X^d$.
\smallskip
\begin{center}
\small
\begin{tabular}{c c c c c}
$X=G/H$ & $\simeq$ & $\SL_2(\R)$ & $\longhookrightarrow$ & $\R^4$\\
\smallskip
\rotatebox{-90}{$\subset$} & & \rotatebox{-90}{$\subset$} & & \rotatebox{-90}{$\subset$}\\
\smallskip
$X_{\C}=G_{\C}/H_{\C}$ & $\underset{\Phi'}{\overset{\sim}{\longrightarrow}}$ & $\SL_2(\C)$ & $\overset{\eqref{eqn:correspAdS3}}{\longhookrightarrow}$ & $\C^4$\\
\medskip
\rotatebox{90}{$\subset$} & & \rotatebox{90}{$\subset$} & & \rotatebox{90}{$\subset$}\\
\smallskip
$X^d=G^d/H^d$ & $\simeq$ & $\mathrm{Herm}(2,\C)_+\cap\SL_2(\C)$ & $\longhookrightarrow$ & $\R\times\sqrt{-1}\,\R\times\R\times\R$\\
\medskip
\rotatebox{90}{$\underset{\Phi}{\overset{\sim}{\longrightarrow}}$} & \rotatebox{20}{$\underset{\Phi'\circ\Phi}{\overset{\sim}{\longlongrightarrow}}$} & & & \\
\smallskip
$\SL_2(\C)/\SU(2),$ & & & & \\
\end{tabular}
\end{center}
Here we set
$$\Phi'(\underline{g}H_{\C}) := g_1 g_2^{-1}$$
for all $\underline{g}=(g_1,g_2)\in G_{\C}=\SL_2(\C)\times\SL_2(\C)$.
\emph{In the rest of the chapter, we always identify $G^d$ with $\SL_2(\C)$ using the isomorphism~$\Phi$ of \eqref{eqn:GdAdS3}.}

\section{Eigenfunctions on $X^d\simeq\HH^3$ and the Poisson transform}\label{subsec:PoissonH3}

Let $P^d$ be any Borel subgroup of $G^d=\SL_2(\C)$, let $N^d$ be the unipotent radical of~$P^d$, and let $\jj$ be any maximal split abelian subalgebra of~$\g^d$ with $\exp\jj\subset P^d$.
For instance, we could take $P^d$ to be the group of upper triangular matrices of determinant~$1$, so that $N^d$ is the group of unipotent upper triangular matrices, and take $\jj$ to be the set of real diagonal matrices of trace~$0$ (in the next section we are going to make another choice).

The boundary at infinity $\partial_{\infty}X^d\simeq\PP^1\C$ of $X^d\simeq\HH^3$ identifies with $G^d/P^d$; we denote the image of $P^d$ by~$z_0$.
Let $y_0^d$ be the image of $K^d$ in $X^d=G^d/K^d$ and let $\mathcal{L}$ be the geodesic line $(\exp\jj)\cdot y_0^d$.
The Iwasawa decomposition $G^d=K^d(\exp\jj)N^d$ holds; this means that any point $y\in X^d$ can be reached from~$y_0^d$ by first applying some translation along the line~$\mathcal{L}$, then traveling along some horosphere centered at $z_0\in\partial_{\infty}X^d$.
The Iwasawa projection $\zeta^d : G^d\rightarrow\jj$ measures this translation: we can identify $\jj$ with~$\R$ so that $\zeta^d(g)$ is the signed distance between $y_0^d$ and the horosphere through $g^{-1}\!\cdot y_0^d$ centered at~$z_0$ for any $g\in G^d$; the sign of $\zeta^d(g)$ is negative if the horosphere intersects the geodesic ray $\mathcal{R}:=(\exp\jj_+)\cdot y_0^d$ and nonnegative otherwise.
For all $k\in K^d$ and $g\in G^d$,
$$\zeta^d(g^{-1}k) = B_{k\cdot\mathcal{R}}(g\cdot y_0^d),$$
where $B_{k\cdot\mathcal{R}} : X^d\rightarrow\R$ is the Busemann function associated with the geodesic ray $k\!\cdot\!\mathcal{R}$.
Recall that by definition
$$B_{k\cdot\mathcal{R}}(x) = \lim_{t\rightarrow +\infty} \big(d_{X^d}\big(x,k\!\cdot\!\mathcal{R}(t)\big)-t\big),$$
where $d_{X^d}$ is the metric on the Riemannian symmetric space $X^d=G^d/K^d$.

We note that the group~$K^d$ acts transitively on $\partial_{\infty}X^d$.
The classical Poisson transform, defined by
$$(\mathcal{P}f)(y) = \int_{k\in K^d/K^d\cap P^d} f(k\!\cdot\!z_0)\, e^{-2B_{k\cdot\mathcal{R}}(y)} \,\mathrm{d}k$$
for all $f\in C(\partial_{\infty}X^d)$ and $y\in X^d=G^d/K^d$, induces a bijection between the continuous functions on $\partial_{\infty}X^d$ and the harmonic functions on~$X^d$ that extend continuously to~$\partial_{\infty}X^d$; the function $\mathcal{P}f$ is the unique solution to the Dirichlet problem on $X^d\simeq\HH^3$ with boundary condition~$f$ (see \cite[Ch.\,II, \S\,4]{hel00}).
If we extend the domain of definition of~$\mathcal{P}$ to the space of all hyperfunctions on~$\partial_{\infty}X^d$, then we obtain all harmonic functions on~$X^d$ in a unique way.
For $\lambda\in\jj_{\C}^{\ast}\simeq\C$ (where $\rho\in\jj_{\C}^{\ast}$ corresponds to $2\in\C$), the ``twisted Poisson transform''
$$\mathcal{P}_{\lambda} :\ \mathcal{B}(K^d/K^d\cap P^d) \overset{\sim}{\longrightarrow} \mathcal{A}(X^d,\M_{\lambda})$$
of Section~\ref{subsec:Poisson} is given by
$$(\mathcal{P}_{\lambda}f)(y) = \int_{k\in K^d/K^d\cap P^d} f(k\!\cdot\!z_0)\,e^{-(\lambda+2)B_{k\cdot\mathcal{R}}(y)}\,\mathrm{d}k$$
for $y\in X^d$; its image consists of eigenfunctions of the Laplacian on~$X^d$ with eigenvalue $\lambda(\lambda+2)/4$.

The action of $H^d=\SO(2,\C)$ on $\partial_{\infty}X^d$
corresponds to the action of~$\C^{\ast}$ by multiplication on $\PP^1\C$, hence there are three $H^d$-orbits: two closed ones $Z_0=\{ z_0\}$ and $Z_{\infty}=\{ w\cdot z_0\}$ (where $w$ is the nontrivial element of the Weyl group $W=W(\g_{\C},\jj_{\C})\simeq\Z/2\Z$), corresponding respectively to $\{ 0\}$ and $\{ \infty\}$, and an open one, corresponding to $\C^{\ast}$.

\section{Meromorphic continuation of the Iwasawa projection}\label{subsec:IwasawaAdS3}

We now assume that $\jj$ is a maximal semisimple abelian subspace of $\sqrt{-1}(\kk\cap\q)$, as in Section~\ref{subsec:Lambda+}.
If we still identify $G^d$ with $\SL_2(\C)$ by \eqref{eqn:GdAdS3}, this means that
$$\jj = \sqrt{-1}\,\R \begin{pmatrix} 0 & 1\\ -1 & 0\end{pmatrix}.$$
Thus $\jj$ is a maximal split abelian subalgebra of~$\g^d$ as in Section~\ref{subsec:PoissonH3}.
It is readily seen that
$$\n^d := \C \begin{pmatrix} 1 & \sqrt{-1}\\ \sqrt{-1} & -1\end{pmatrix}$$
is a root space for~$\jj$, hence the Iwasawa decomposition $G^d=K^d(\exp\jj)N^d$ holds for $N^d:=\exp\n^d$.
This Iwasawa decomposition can be recovered from the usual decomposition
\begin{equation}\label{eqn:usualIwasawaSL2C}
G^d = K^d\,\exp\left(\R \begin{pmatrix} 1 & 0\\ 0 & -1\end{pmatrix}\right)\,\exp\left(\C \begin{pmatrix} 0 & 1\\ 0 & 0\end{pmatrix}\right)
\end{equation}
by conjugating by
\begin{equation}\label{eqn:CayleyAdS}
k := \frac{1}{2} \begin{pmatrix}\ \ 1+\sqrt{-1} & 1+\sqrt{-1}\\ -1+\sqrt{-1} & 1-\sqrt{-1}\end{pmatrix} \in K^d.
\end{equation}
We note that
$$k\,\SL_2(\R)\,k^{-1} = \SU(1,1) = \bigg\{ g\in\SL_2(\C) :\ ^t\!\overline{g} \begin{pmatrix} 0 & 1\\ 1 & 0\end{pmatrix} g = \begin{pmatrix} 0 & 1\\ 1 & 0\end{pmatrix}\bigg\} $$
and that $\Ad(k)$ induces an identification (``Cayley transform'') between the upper half-plane model $\SL_2(\R)/\SO(2)$ of~$\HH^2$ and the unit disk model $\SU(1,1)/\linebreak\mathrm{S}(\U(1)\times\U(1))$.
An elementary computation shows that the Iwasawa projection corresponding to \eqref{eqn:usualIwasawaSL2C} is given by
\begin{equation}\label{eqn:otherIwasawaAdS3}
g\in G^d \longmapsto \frac{1}{2}\log(^t\overline{g}g)_{1,1} \in \R,
\end{equation}
where $(^t\overline{g}g)_{1,1}$ denotes the upper left entry of $^t\overline{g}g\in\SL_2(\C)$.
We now go back to the quadric realization \eqref{eqn:Xdquadric} of~$X^d$.
Using \eqref{eqn:otherIwasawaAdS3} and the explicit correspondence \eqref{eqn:correspAdS3}, we see that if $\zeta^d : G^d\rightarrow\R$ is the Iwasawa projection corresponding to $G^d=K^d(\exp\jj)N^d$, then for $\lambda\in\jj^{\ast}\simeq\R$ the map $\xi_{\lambda}^{\vee} :\nolinebreak X^d\rightarrow\nolinebreak\R$ induced by $g\mapsto e^{\langle\lambda,\zeta^d(g^{-1})\rangle}$ is given by
\begin{equation}\label{eqn:xiAdS3}
\xi_{\lambda}^{\vee}(z) = \big(z_1+\sqrt{-1}\,z_2\big)^{\lambda/2}
\end{equation}
for all $z=(z_1,z_2,z_3,z_4)\in X^d\subset\C^4$.
When $\lambda\in 2\Z$, the map $\xi_{\lambda}^{\vee}$ extends meromorphically to $X_{\C}=\{ z\in\C^4 : Q(z)=1\} $ and restricts to an analytic function on~$X$.

\section{Proof of Claim~9.12}\label{subsec:proofclaimAdS3}

We now combine the elementary computations of Sections \ref{subsec:FJdualAdS3} to~\ref{subsec:IwasawaAdS3} to obtain an explicit formula of the Flensted-Jensen functions~$\psi_{\lambda}$ for $X=\AdS^3$.

We choose $\jj$ and~$N^d$ as in Section~\ref{subsec:IwasawaAdS3} and let $P^d$ be the Borel subgroup of $G^d\simeq\SL_2(\C)$ containing $\exp\jj$ and~$N^d$.
By Section~\ref{subsec:PoissonH3}, the two closed $H^d$-orbits in $G^d/P^d$ are $Z_0=H^dP^d$ and $Z_{\infty}=H^dwP^d$.
If we identify $G^d/P^d$ with $K^d/K^d\cap P^d \simeq \SU(2)/\SO(2)$, then
$$Z_0 = \{ K^d\cap P^d\} \quad\quad\mathrm{and}\quad\quad Z_{\infty} = \{ w(K^d\cap P^d)\} .$$
For $\lambda\in\jj^{\ast}\simeq\R$, the Flensted-Jensen function $\psi_{\lambda}^0 : X^d\rightarrow\C$ associated with~$Z_0$ is the Poisson transform $\mathcal{P}_{\lambda}(\delta_{Z_0})$ of the Dirac delta function~$\delta_{Z_0}$,
hence
$$\psi_{\lambda}^0(gK^d) = e^{\langle -\lambda-\rho,\zeta^d(g^{-1})\rangle} = \xi_{-\lambda-\rho}^{\vee}(g)$$
for all $g\in G^d$.
Similarly, the Flensted-Jensen function $\psi_{\lambda}^{\infty} : X^d\rightarrow\C$ asso\-ciated with~$Z_{\infty}$ is given by
$$\psi_{\lambda}^{\infty}(gK^d) = e^{\langle -\lambda-\rho,\zeta^d(g^{-1}w)\rangle} = \xi_{-\lambda-\rho}^{\vee}(w^{-1}g).$$
Therefore, by \eqref{eqn:xiAdS3},
$$\psi_{\lambda}^0(z) = \big(z_1+\sqrt{-1}\,z_2\big)^{-(\lambda+2)/2} \quad\mathrm{and}\quad \psi_{\lambda}^{\infty}(z) = \big(z_1-\sqrt{-1}\,z_2\big)^{-(\lambda+2)/2}$$
for all $z\in X^d$, in the quadric realization \eqref{eqn:Xdquadric}.
As observed at the end of Section~\ref{subsec:IwasawaAdS3}, the functions $\psi_{\lambda}^0$ and~$\psi_{\lambda}^{\infty}$ on~$X^d$ induce analytic functions on~$X$ as soon as $(\lambda+2)/2\in\Z$, \ie as soon as $\lambda\in 2\Z$; this corresponds to the integrality condition~\eqref{eqn:lmdint} (we have $\mu_{\lambda}^e=\mu_{\lambda}^w=\lambda+2$).
The proof of Claim~\ref{claim:FJAdS3} is now complete.
\chapter{Some other illustrative examples}\label{sec:ex}

In this chapter we present some higher-dimensional examples of non-Riem\-annian locally symmetric spaces to which our theorems apply, namely higher-dimensional anti-de Sitter manifolds and group manifolds, as well as certain indefinite K\"ahler manifolds.

\section{Anti-de Sitter manifolds of arbitrary dimension}\label{subsec:AdS}

As a generalization of Chapter~\ref{sec:exAdS}, we consider the discrete spectrum of complete anti-de Sitter manifolds of arbitrary dimension $\geq 3$.

For $m\geq 2$, the anti-de Sitter space $X=\AdS^{m+1}:=\SO(2,m)_0/\SO(1,m)_0$ is a model space for all Lorentzian manifolds of dimension $m+1$ and constant negative curvature.
It can be realized as the quadric of~$\R^{m+2}$ of equation $Q=1$, endowed with the Lorentzian structure induced by $-Q$, where
$$Q(x) = x_1^2 + x_2^2 - x_3^2 - \dots - x_{m+2}^2\,;$$
the sectional curvature is then~$-1$ (see \cite{wol11}).

By the general construction of \cite{kob89}, we see that $\AdS^{m+1}$ admits proper actions by reductive subgroups~$L$ of $G:=\SO(2,m)_0$ of real rank~$1$ such as:
\begin{itemize}
  \item $L=\U(1,[\frac{m}{2}])$, where $[\frac{m}{2}]$ denotes the largest integer $\leq\frac{m}{2}$;
  \item $L =\PSL_2(\R)$, \emph{via} a real $5$-dimensional irreducible representation~$\tau_5$ of $\PSL_2(\R)$ when $m\geq 3$.
\end{itemize}
Standard Clifford--Klein forms~$X_{\Gamma}$ of~$X$ can be obtained by taking $\Gamma$ to~be~any torsion-free discrete subgroup inside~$L$ (for instance an infinite cyclic group, a nonabelian free group, a lattice of~$L$, an embedded surface group, etc.).

In particular, since $\U(1,\frac{m}{2})$ acts transitively on~$X$ for $m$ even, we can obtain compact (\resp noncompact but finite-volume) standard Clifford--Klein forms of $\AdS^{m+1}$ for $m$ even by taking $\Gamma$ to be any torsion-free uniform (\resp nonuniform) lattice in~$\U(1,\frac{m}{2})$.
This construction of compact Clifford--Klein forms of $\AdS^{m+1}$ is (conjecturally) the only one for $m>2$ since
\begin{itemize}
  \item compact Clifford--Klein forms do not exist when $m$ is odd \cite{kul81},
  \item Zeghib \cite{zeg98} has conjectured that for $m$ even $>2$, all compact Clifford--Klein should be standard, with $\Gamma\subset\U(1,\frac{m}{2})$ up to conjugation (this conjecture is still open).
\end{itemize}
We recall from Chapter~\ref{sec:exAdS} that the case $m=2$ is different, as $\AdS^3$ admits many nonstandard compact Clifford--Klein forms.

Since all compact anti-de Sitter manifolds are complete \cite{kli96}, small deformations of the anti-de Sitter structure on a compact Clifford--Klein form $\Gamma\backslash\AdS^{m+1}$ correspond to small deformations of $\Gamma$ inside $G=\SO(2,m)_0$.
When $\Gamma\subset L$ is standard, nontrivial deformations exist as soon as the first Betti number of~$\Gamma$ is nonzero \cite{kob98}, which can happen by work of Kazhdan~\cite{kaz77}. 
For $m>2$, small deformations of standard compact Clifford--Klein forms of $\AdS^{m+1}$ can never give rise to nonstandard forms (see Section~\ref{subsec:exdeform}).
However, standard \emph{noncompact} Clifford--Klein forms $\Gamma\backslash\AdS^{m+1}$ can, typically if $\Gamma$ is a convex cocompact subgroup of~$L$ that is a free group (Schottky group).
By \cite{kas12}, if $\Gamma$ is an arbitrary convex cocompact subgroup of~$L$, then it keeps acting properly discontinuously on $\AdS^{m+1}$ after any small (possibly nonstandard) deformation inside~$G$.
Nonstandard noncompact Clifford--Klein forms of $\AdS^{m+1}$ were also constructed by Benoist \cite{ben96} without using any deformation.

As a symmetric space, $X=\AdS^{m+1}$ has rank~one, hence the algebra $\D(X)$ of $G$-invariant differential operators on~$X$ is generated by the Laplacian~$\square_X$.
For standard Clifford--Klein forms of~$X$, Theorem~\ref{thm:precise}.(2) yields the following (explicit eigenfunctions can be constructed as in Chapter~\ref{sec:exAdS}).

\begin{proposition}\label{prop:higherAdS}
There is an integer~$\ell_0$ such that for any standard Clifford--Klein form $X_{\Gamma}$ of $X=\AdS^{m+1}$ with $\Gamma\subset L=\U(1,[\frac{m}{2}])$ and $\Gamma\cap Z(L)=\{ e\}$,
\begin{equation}\label{eqn:specAdS}
\Spec_d(\square_{X_{\Gamma}}) \supset \big\{ \ell(\ell-m):\ \ell\in\N,\ \ell\geq\ell_0\big\} ,
\end{equation}
and \eqref{eqn:specAdS} still holds after a small deformation of~$\Gamma$ inside~$G$.
A similar statement holds for $L=\PSL_2(\R)$, embedded in $\SO(2,m)_0$ \emph{via}~$\tau_5$.
\end{proposition}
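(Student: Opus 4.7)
The plan is to derive both assertions by applying Theorem~\ref{thm:precise}.(2) (for the universal lower bound on the spectrum) and Theorem~\ref{thm:precisedeform} (for stability under deformation) to the rank-one symmetric space $X = \AdS^{m+1} = G/H$ with $G = \SO(2,m)_0$ and $H = \SO(1,m)_0$. I would first verify the hypotheses: the rank condition \eqref{eqn:rank} holds since $\rank G/H = 1$ and $K = \SO(2)\times \SO(m)$ with $K\cap H \simeq \SO(m)$, so $K/K\cap H \simeq \SO(2)$ is also of rank one; the subgroup $L = \U(1,[m/2])$ has compact center $\U(1)$, real rank one, and acts properly on~$X$ by \cite{kob89}, with maximal compact normal subgroup $L_c = Z(L)$, so the assumption $\Gamma \cap Z(L) = \{e\}$ yields $\Gamma \cap L_c \subset Z(G_s)$ as required. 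The same verifications apply to $L = \PSL_2(\R)$ (trivial center, real rank one) embedded via $\tau_5$ for $m \geq 3$, its proper action on~$X$ following again from the Benoist--Kobayashi properness criterion.

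Since $\rank G/H = 1$, Fact~\ref{fact:3.1} gives $\D(X) = \C[\square_X]$, so $\Spec_d(\square_{X_\Gamma})$ coincides with $\Spec_d(X_\Gamma)$ under the Harish-Chandra parametrization by a single scalar $\lambda \in \jj^{\ast} \simeq \R$. The next step is to fix the normalization of the Killing form so that, with the unique restricted root $\alpha$ of $\jj_\C$ in $\g_\C$ having multiplicity $m$, one has $\rho = (m/2)\alpha$ and the Laplacian eigenvalue produced by Fact~\ref{fact:eigenvalues} equals $\lambda^2 - (m/2)^2$; the substitution $\lambda = \ell - m/2$ then recovers exactly the polynomial $\ell(\ell - m)$. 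In rank one the weighted distance $d(\lambda)$ is a positive multiple of $\lambda$, so the condition $d(\lambda) > R$ from Theorem~\ref{thm:precise}.(2) is equivalent to $\ell \geq \ell_0$ for some $\ell_0$ depending only on the constant $R = R(L)$.

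I would then unpack the integrality condition $\lambda \in 2\rho_c - \rho + \Lambda^{\Gamma \cap Z(G_s)}$. Because $K/K \cap H \simeq \SO(2)$ is abelian, $\Sigma^+(\kk_\C,\jj_\C) = \emptyset$ and $\rho_c = 0$; moreover $\Lambda_+$ is the character lattice of $\SO(2)$, so $\Lambda = \Z$. The hypothesis $\Gamma \cap Z(L) = \{e\}$ forces $\Gamma \cap Z(G_s) = \{e\}$, since $Z(G_s) \cap L$ is central in~$L$; therefore $\Lambda^{\Gamma \cap Z(G_s)} = \Lambda = \Z$, and the integrality condition collapses to $\ell = \lambda + m/2 \in \Z$, which is automatic. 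Theorem~\ref{thm:precise}.(2) then delivers \eqref{eqn:specAdS} with an $\ell_0$ independent of~$\Gamma$; Theorem~\ref{thm:precisedeform}, applied to convex cocompact $\Gamma \subset L$ (which covers torsion-free uniform lattices in~$L$), preserves the same $\ell_0$ after any sufficiently small deformation of $\Gamma$ inside $G$. The case $L = \PSL_2(\R)$ embedded by $\tau_5$ is treated by the same argument.

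The main technical obstacle I anticipate is the normalization step: identifying $\jj \subset \sqrt{-1}(\kk \cap \q)$ explicitly inside $\so(2,m)$, counting the restricted root multiplicities to confirm $\rho = (m/2)\alpha$, and matching the Casimir normalization of Section~\ref{subsec:Laplacian} with the quadric-model Laplacian so that the spectral parameter $\lambda = \ell - m/2$ produces exactly the eigenvalue $\ell(\ell-m)$ rather than a scalar multiple. This is analogous to the observation made in the $\AdS^3$ setting that $\square_{\AdS^3}$ equals four times the Casimir of $\so(2,2)$, and can be checked directly from the explicit embedding of $\AdS^{m+1}$ as the quadric $\{Q=1\}$ in $\R^{m+2}$.
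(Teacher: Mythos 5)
Your proposal is correct and matches the paper's approach: the paper derives Proposition~\ref{prop:higherAdS} directly from Theorems~\ref{thm:precise}.(2) and~\ref{thm:precisedeform}, and the verifications you lay out (rank condition via $K/K\cap H\simeq\SO(2)$, $L_c=Z(L)=\U(1)$ so $\Gamma\cap L_c\subset Z(G_s)$, $\rho_c=0$ and $\rho=m/2$ giving $\Lambda^{\Gamma\cap Z(G_s)}=\Lambda=\Z$, and the substitution $\ell=\lambda+m/2$ turning $(\lambda,\lambda)-(\rho,\rho)$ into $\ell(\ell-m)$) are exactly the explicit computations the paper records in the paragraphs following the proposition. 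You are also right to restrict the deformation clause to convex cocompact $\Gamma$ in $L$, since that is the hypothesis under which Theorem~\ref{thm:precisedeform} preserves the same constant~$R$ (the proposition's statement leaves this implicit, relying on the surrounding discussion).
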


For the reader who would not be very familiar with reductive symmetric spaces, we now explicit the notation of the previous chapters for $X=\AdS^{m+1}$.
We see $H:=\SO(1,m)_0$ as $\SO(2,m)_0\cap\SL_{m+1}(\R)$, where $\SL_{m+1}(\R)$ is embedded in the lower right corner of $\SL_{m+2}(\R)$; the involution~$\sigma$ defining~$H$ is thus given by
$$\sigma(g) = \left(\begin{BMAT}{cccc}{cccc} 1 & & & \\ & -1 & & \\ & & \ddots & \\ & & & -1\end{BMAT}\right) g \left(\begin{BMAT}{cccc}{cccc} 1 & & & \\ & -1 & & \\ & & \ddots & \\ & & & -1\end{BMAT}\right)$$
for $g\in G=\SO(2,m)_0$.

\subsection*{$\bullet$ Cartan and generalized Cartan decompositions}

The Cartan decomposition $G=KAK$ holds, where $K=\SO(2)\times\SO(m)$
and the Lie algebra $\aaa$ of~$A$ is the set of block matrices of the form
$$a_{s,t} := \left(\begin{BMAT}{c.c}{c.c}
E_{s,t} & 0\\
0 & \begin{BMAT}{ccccc}{ccccc} & & & & \\ & & & & \\ & & 0 & & \\ & & & & \\ & & & & \end{BMAT}
\end{BMAT}\right)$$
for $s,t\in\R$, where
$$E_{s,t} := \left(\begin{BMAT}{c.cc.c}{c.cc.c} 0 & & & s\\ & 0 & t & \\ & -t & 0 & \\ -s & & & 0\end{BMAT}\right)\ \in\so(4).$$
The generalized Cartan decomposition $G=KBH$ holds, where the Lie algebra $\bb$ of~$B$ is the set of elements $a_{s,0}$ for $s\in\R$.

\subsection*{$\bullet$ The Flensted-Jensen duality}

The set of inclusions \eqref{eqn:KGH} is given by
\begin{changemargin}{-0.3cm}{0cm}
\begin{center}
\small
\begin{tabular}{c c c c c}
$K=\SO(2)\times\SO(m)$ & $\subset$ & $G=\SO(2,m)_0$ & $\supset$ & $H=\SO(1,m)_0$\\
\smallskip
\rotatebox{90}{$\supset$} & & \rotatebox{90}{$\supset$} & & \rotatebox{90}{$\supset$}\\
\smallskip
$K_{\C}=\SO(2,\C)\times\SO(m,\C)$ & $\subset$ & $G_{\C}=\SO(m+2,\C)$ & $\supset$ & $H_{\C}=\SO(m+1,\C)$\\
\medskip
\rotatebox{90}{$\subset$} & & \rotatebox{90}{$\subset$} & & \rotatebox{90}{$\subset$}\\
\smallskip
$H^d=\SO(1,1)_0\times\SO(m)$ & $\subset$ & $G^d=\SO(1,m+1)_0$ & $\supset$ & $K^d=\SO(m+1).$
\end{tabular}
\end{center}
\end{changemargin}

\smallskip
\noindent
In particular, $X^d=G^d/K^d=\SO(1,m+1)_0/\SO(m+1)$ is the real hyperbolic space~$\HH^m$.

\subsection*{$\bullet$ Closed $H^d$-orbits~$Z$ and the parameter~$\lambda$ of discrete series representations}

A maximal abelian subspace of $\sqrt{-1}\,(\kk\cap\q)$ is given by $\jj:=\sqrt{-1}\,\so(2)$, where $\so(2)$ is the first factor of $\kk=\so(2)\oplus\so(m)$.
We note that $\jj$ is also maximal abelian in $\sqrt{-1}\,\q$, hence
$$\rank G/H = \rank K/H\cap K = 1 = \dim\jj.$$
Since $\jj$ is centralized by~$\kk$, the restricted root system $\Sigma(\kk_{\C},\jj_{\C})$ is empty.
Let $Y$ be the generator $\sqrt{-1}\begin{pmatrix} 0 & 1\\ -1 & 0\end{pmatrix}$ of $\jj=\sqrt{-1}\,\so(2)$ and let $e_1\in\jj^{\ast}$ be defined by $\langle e_1,Y\rangle=1$.
There are two possible choices of positive systems $\Sigma^+(\g_{\C},\jj_{\C})$, namely $\{ e_1\}$ and $\{ -e_1\}$.
By \eqref{eqn:Z1to1}, the set~$\ZZ$ of closed $H^d$-orbits in the real flag variety $G^d/P^d$ has exactly two elements.
They are actually singletons, the ``North and South poles'' of $G^d/P^d\simeq\mathbb{S}^m$.  
Take $\Sigma^+(\g_{\C},\jj_{\C})$ to be $\{ e_1\}$ (\resp $\{ -e_1\}$).
If we identify $\jj$ with~$\R$ by sending $e_1$ (\resp $-e_1$) to~$1$, then $\jj^{\ast}_+$ identifies with~$\R_+$ and we have $\rho=\frac{m}{2}$ and $\rho_c=0$, hence
$$\mu_{\lambda} = \lambda + \rho - 2\rho_c = \lambda + \frac{m}{2}.$$
Condition~\eqref{eqn:lmdint} on~$\mu_{\lambda}$ amounts to $\lambda\in\Z$.
The two discrete series representations with parameter $\pm\lambda$ are dual to each other.

\subsection*{$\bullet$ Eigenvalues of the Laplacian}

By Fact~\ref{fact:eigenvalues}, the action of the Laplacian~$\square_X$ on $L^2(X,\M_{\lambda})$ is given by multiplication by the scalar
$$(\lambda,\lambda) - (\rho,\rho) = \lambda^2 - \frac{m^2}{4},$$
which can be written as $\ell(\ell-m)$ if we set $\ell:=\lambda+\frac{m}{2}$.
This explains Proposition~\ref{prop:higherAdS}.

\section{Group manifolds}\label{subsec:groupmfd}

In this section we consider symmetric spaces of the form $X=({}^{\backprime}G\times\!{}^{\backprime}G)/\Diag({}^{\backprime}G)$ where ${}^{\backprime}G$ is any reductive linear Lie group.
As mentioned in Section~\ref{subsec:listexgroup}, the rank condition~\eqref{eqn:rank} is here equivalent to $\rank{}^{\backprime}G = \rank{}^{\backprime}K$, where ${}^{\backprime}K$ is a maximal compact subgroup of~${}^{\backprime}G$.
This condition is satisfied for ${}^{\backprime}G=\SL_2(\R)$, in which case $X$ is the $3$-dimensional anti-de Sitter space $\AdS^3$ examined in Chapter~\ref{sec:exAdS}.
More generally, it is satisfied for all simple groups~${}^{\backprime}G$ with Lie algebra in the list~\eqref{eqn:listrank}.
It is equivalent to the fact that the Cartan involution of~$^{\backprime}G$ is an inner automorphism.

\subsection*{$\bullet$ Infinite stable spectrum in real rank one.}

Assume that ${}^{\backprime}G$ has real rank~$1$.
Then the structural results of Section~\ref{subsec:CKformsAdS3} generalize: by \cite[Th.\,1.3]{kas08} (improving an earlier result of \cite{kob93}), if a torsion-free discrete subgroup~$\Gamma$ of\; ${}^{\backprime}G\times\!{}^{\backprime}G$ acts properly discontinuously on~$X$, then it is of the form
\begin{equation}\label{eqn:jrho}
\Gamma = \big\{ (j(\boldsymbol\gamma),\rho(\boldsymbol\gamma)) : \boldsymbol\gamma\in{}^{\backprime}\Gamma\big\} ,
\end{equation}
where ${}^{\backprime}\Gamma$ is a discrete subgroup of~${}^{\backprime}G$ and $j,\rho\in\Hom({}^{\backprime}\Gamma,{}^{\backprime}G)$ are two representations with $j$ injective and discrete (up to switching the two factors).
Moreover, the Clifford--Klein form~$X_{\Gamma}$ is compact if and only if $j({}^{\backprime}\Gamma)\backslash G$ is.
Standard Clifford--Klein forms correspond to the case when $\rho({}^{\backprime}\Gamma)$ is bounded.

There exist standard \emph{compact} Clifford--Klein forms~$X_{\Gamma}$ that can be deformed into nonstandard ones if and only if ${}^{\backprime}G$ has a simple factor that is locally isomorphic to $\SO(1,2n)$ or $\SU(1,n)$ \cite[Th.\,A]{kob98}.
On the other hand, for \emph{convex cocompact} Clifford--Klein forms~$X_{\Gamma}$, \ie for $\Gamma$ of the form \eqref{eqn:jrho} with $j$ injective and $j({}^{\backprime}\Gamma)$ convex cocompact in~${}^{\backprime}G$ up to switching the two factors (see Definition~\ref{def:ccAdS3}), there is much more room for deformation: for instance, $\Gamma$ could be a free group of any rank~$m$, in which case the deformation space has dimension $m\cdot 2\dim({}^{\backprime}G)$.
Similarly to Corollary~\ref{cor:AdS3deform}, we can extend Theorem~\ref{thm:deform} to \emph{nonstandard} convex cocompact Clifford--Klein forms (in particular that do not identify with ${}^{\backprime}\Gamma\backslash{}^{\backprime}G$).

\begin{theorem}\label{thm:rank1}
Let ${}^{\backprime}G$ be a semisimple linear Lie group of real rank~$1$ satisfying $\rank{}^{\backprime}G = \rank{}^{\backprime}K$.
All convex cocompact Clifford--Klein forms~$X_{\Gamma}$ have an infinite stable discrete spectrum.
\end{theorem}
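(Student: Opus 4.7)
The plan is to mirror, at the level of an arbitrary real rank one group ${}^{\backprime}G$ satisfying $\mathrm{rank}\,{}^{\backprime}G=\mathrm{rank}\,{}^{\backprime}K$, the strategy developed for $\AdS^3$ in Section~\ref{subsec:discspecAdS3} (proof of Theorem~\ref{thm:SL2precise} for nonstandard forms) together with the continuity argument behind Corollary~\ref{cor:AdS3deform}. The key subtlety is that Theorem~\ref{thm:deform} and Proposition~\ref{prop:deformexotic} are insufficient by themselves: a convex cocompact $\Gamma\subset G={}^{\backprime}G\times\!{}^{\backprime}G$ acting properly on~$X$ need not lie in a conjugate of any proper reductive subgroup, in particular the second projection $\rho$ may be unbounded.

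First, I would invoke the structure theorem of \cite{kas08} (together with the generalization of Lemma~\ref{lem:torsionAdS3} to arbitrary rank one ${}^{\backprime}G$) to write the projection of $\Gamma$ in $({}^{\backprime}G\times{}^{\backprime}G)/\mathrm{center}$ as $\{(j(\boldsymbol\gamma),\rho(\boldsymbol\gamma)):\boldsymbol\gamma\in{}^{\backprime}\Gamma\}$, with $j$ injective and discrete and, by convex cocompactness (Definition~\ref{def:ccAdS3} extended to general rank one), $j({}^{\backprime}\Gamma)$ convex cocompact in~${}^{\backprime}G$. Next, I would generalize the Lipschitz constant $C_{Lip}(j,\rho)$ of Section~\ref{subsec:CLip} to this setting, using the Riemannian symmetric space ${}^{\backprime}G/{}^{\backprime}K$ and the geometric identity $\Vert\mu_{{}^{\backprime}G}(g)\Vert=d_{{}^{\backprime}G/{}^{\backprime}K}(y_0,g\!\cdot\!y_0)$. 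Since $\nu$ on $X$ coincides with $\mu_{{}^{\backprime}G}(g_1 g_2^{-1})$ under the group-manifold realization (generalizing \eqref{eqn:nuAdS3}), the analogue of Fact~\ref{fact:CLip} provided by \cite{ggkw} gives $C_{Lip}(j,\rho)<1$ (after possibly swapping the two factors) and produces sharpness constants $(c,C)$ for~$\Gamma$, computable from $C_{Lip}(j,\rho)$ as in \eqref{eqn:cCAdS3}; the relevant conjugate of $\Gamma$ is determined by an almost-optimal $(j,\rho)$-equivariant Lipschitz map ${}^{\backprime}G/{}^{\backprime}K\to{}^{\backprime}G/{}^{\backprime}K$, as in Remark~\ref{rem:CLipsharp}.

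Second, I would establish stability: the extension of Fact~\ref{fact:CLipcont} from \cite{ggkw} asserts that $(j',\rho')\mapsto C_{Lip}(j',\rho')$ is continuous on the open set of pairs with $j'$ injective and $j'({}^{\backprime}\Gamma)$ convex cocompact. Hence for any $\varepsilon>0$ there is a neighborhood $\mathcal{U}\subset\Hom(\Gamma,G)$ of the inclusion on which the deformed groups $\varphi(\Gamma)$ remain discrete, of the same form \eqref{eqn:jrho}, and are $(c-\varepsilon,C+\varepsilon)$-sharp for~$X$; this is the exact analogue of Fact~\ref{fact:AdSdeform}. Applying Proposition~\ref{prop:KM} to the rank one factor carrying the convex cocompact $j({}^{\backprime}\Gamma)$ and using Remark~\ref{rem:deformcenter} for the central torsion, one obtains, after a single conjugation by an element $g\in G$, a uniform lower bound $r>0$ on $\Vert\nu(\varphi(\gamma))\Vert$ for $\gamma\in\Gamma\smallsetminus Z(G_s)$ valid for all $\varphi\in\mathcal{U}$. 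Feeding $(c-\varepsilon,C+\varepsilon,r)$ into the explicit constant \eqref{eqn:finalcstconj} of Lemma~\ref{lem:nonzeroprecise} and invoking Remark~\ref{rem:translconj} yields a constant $R>0$ such that, for every $\varphi\in\mathcal{U}$ and every $\lambda\in\jj^{\ast}_+\cap(2\rho_c-\rho+\Lambda^{\Gamma\cap Z(G_s)})$ with $d(\lambda)>R$, the summation operator $S_{\varphi(\Gamma)}$ is well-defined and nonzero on $g\cdot L^2(X,\M_\lambda)_K$; since $\rank G/H=\mathrm{rank}\,{}^{\backprime}G\geq 1$, this lattice set is infinite, giving the infinite stable discrete spectrum.

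The main obstacle will be the precise generalization to arbitrary rank one ${}^{\backprime}G$ of the $C_{Lip}$-formalism: the existence of optimal or near-optimal equivariant Lipschitz maps, the identity relating $C_{Lip}$ to sharpness, and the continuity of $C_{Lip}$ under convex cocompact deformations. These are the geometric inputs of \cite{ggkw}; once granted, the analytic machinery of Chapters~\ref{sec:Vlambda}--\ref{sec:nonzero} applies verbatim, because Proposition~\ref{prop:asym}, Proposition~\ref{prop:psilambda}, and Lemma~\ref{lem:nonzeroprecise} depend only on $G$, the rank condition, and the sharpness constants of~$\Gamma$, not on whether $\Gamma$ is standard or lies in a proper reductive subgroup of~$G$.
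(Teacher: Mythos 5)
Your proposal is correct and follows essentially the same route as the paper. The paper does not give a separate written proof of Theorem~\ref{thm:rank1}: it cites Facts~\ref{fact:rank1} and~\ref{fact:rank1deform} from \cite{ggkw} (sharpness of convex cocompact Clifford--Klein forms and its stability under small deformations, respectively), and the implicit argument is then exactly yours --- feed those sharpness constants, together with Proposition~\ref{prop:KM} applied to the rank one factor carrying the injective/discrete projection and Remark~\ref{rem:deformcenter}, into Lemma~\ref{lem:nonzeroprecise} via Remark~\ref{rem:translconj}. One minor difference of framing: you propose to obtain the \cite{ggkw} inputs by generalizing the $C_{Lip}$ formalism of Section~\ref{subsec:CLip}, whereas the paper invokes sharpness and its stability directly; the paper even remarks that the Lipschitz route is the one used in \cite{gk12} for ${}^{\backprime}G=\SO(1,n)$, without committing to it for general rank one. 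Since you explicitly flag those geometric statements as the inputs of \cite{ggkw} rather than attempting to reprove them, this is a presentational choice, not a gap.
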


We note that most semisimple groups ${}^{\backprime}G$ of real rank~$1$ satisfy the condition $\rank{}^{\backprime}G = \rank{}^{\backprime}K$: the only exception is if the Lie algebra ${}^{\backprime}{\g}$ is $\so(1, n)$ for some odd~$n$ up to a compact factor.
Theorem~\ref{thm:rank1} relies on the following two properties, which generalize Facts \ref{fact:AdS} and~\ref{fact:AdSdeform} and corroborate Conjecture~\ref{conj:sharp}.

\begin{fact}[\cite{ggkw}]\label{fact:rank1}
Let ${}^{\backprime}G$ be a semisimple linear Lie group of~real rank~$1$.
All convex cocompact Clifford--Klein forms of $X=({}^{\backprime}G\times\!{}^{\backprime}G)/\Diag({}^{\backprime}G)$ are sharp.
\end{fact}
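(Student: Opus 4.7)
The plan is to mimic the strategy behind Fact~\ref{fact:AdS} (the $\AdS^3$ case): reduce sharpness to an explicit Lipschitz-constant estimate, and then establish the strict Lipschitz bound by using proper discontinuity together with convex cocompactness. First, I will invoke the structure theorem \cite[Th.\,1.3]{kas08}: any torsion-free discrete subgroup $\Gamma\subset G={}^{\backprime}G\times{}^{\backprime}G$ acting properly discontinuously on~$X$ takes the form $\Gamma=\{(j(\boldsymbol\gamma),\rho(\boldsymbol\gamma)):\boldsymbol\gamma\in{}^{\backprime}\Gamma\}$ with $j$ (or, after swapping factors, $\rho$) injective and discrete; the hypothesis that $X_{\Gamma}$ is convex cocompact means that $j({}^{\backprime}\Gamma)$ is convex cocompact in~${}^{\backprime}G$. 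Since ${}^{\backprime}G$ has real rank one, the Cartan projection of~$G$ factors as $\mu(g_1,g_2)=(\mu_{{}^{\backprime}G}(g_1),\mu_{{}^{\backprime}G}(g_2))\in\R_{\geq 0}^2$, and $\mu(H)=\mu(\Diag({}^{\backprime}G))$ is the diagonal ray $\{(t,t):t\geq 0\}$. Therefore the distance from $\mu(\gamma)$ to $\mu(H)$ equals $|\mu_{{}^{\backprime}G}(j(\boldsymbol\gamma))-\mu_{{}^{\backprime}G}(\rho(\boldsymbol\gamma))|/\sqrt{2}$, and sharpness reduces to finding $\alpha<1$ and $C\geq 0$ such that
\begin{equation*}
\mu_{{}^{\backprime}G}(\rho(\boldsymbol\gamma))\leq\alpha\,\mu_{{}^{\backprime}G}(j(\boldsymbol\gamma))+C
\end{equation*}
for all $\boldsymbol\gamma\in{}^{\backprime}\Gamma$, as in the rank-one formula~\eqref{eqn:cCAdS3}.

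Next, generalizing Section~\ref{subsec:CLip} to arbitrary rank-one~${}^{\backprime}G$, I will define $C_{Lip}(j,\rho)$ as the infimum of Lipschitz constants of $(j,\rho)$-equivariant maps $f:{}^{\backprime}X\rightarrow{}^{\backprime}X$ between the Riemannian symmetric space of~${}^{\backprime}G$ and itself. Convex cocompactness of $j({}^{\backprime}\Gamma)$ guarantees that such maps exist (define $f$ equivariantly on a compact fundamental domain for the convex core, then extend to~${}^{\backprime}X$ using a Lipschitz retraction onto the core), so $C_{Lip}(j,\rho)<+\infty$. For any such~$f$ with $\mathrm{Lip}(f)\leq C$, the equivariance relation $f\circ j(\boldsymbol\gamma)=\rho(\boldsymbol\gamma)\circ f$ together with the triangle inequality gives
\begin{equation*}
\mu_{{}^{\backprime}G}(\rho(\boldsymbol\gamma))\leq C\,\mu_{{}^{\backprime}G}(j(\boldsymbol\gamma))+2\,d_{{}^{\backprime}X}(y_0,f(y_0))
\end{equation*}
for any basepoint $y_0\in{}^{\backprime}X$. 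Consequently, sharpness of~$\Gamma$ follows once one establishes $C_{Lip}(j,\rho)<1$ (after swapping $j$ and $\rho$ if necessary).

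The main obstacle is this strict Lipschitz inequality, which is where proper discontinuity must be leveraged in a genuinely quantitative way. I will argue by contradiction, following the strategy of \cite{kasPhD,gk12} for $\PSL_2(\R)$: suppose $C_{Lip}(j,\rho)\geq 1$. An Arzel\`a--Ascoli compactness argument on $(j,\rho)$-equivariant maps, normalized on a compact fundamental domain for $j({}^{\backprime}\Gamma)$ acting on its convex core, produces an \emph{optimal} equivariant map $f_\ast$ with $\mathrm{Lip}(f_\ast)=C_{Lip}(j,\rho)$. Standard arguments in negatively curved geometry then exhibit a nonempty ``maximally stretched'' geodesic lamination $\Lambda\subset{}^{\backprime}X$ along which $f_\ast$ is a strict $C_{Lip}$-homothety, and cocompactness of the $j({}^{\backprime}\Gamma)$-action on the convex core forces $\Lambda$ to contain a $j({}^{\backprime}\Gamma)$-invariant recurrent leaf. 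Closing up this leaf yields a sequence $\boldsymbol\gamma_n\in{}^{\backprime}\Gamma$ with $\mu_{{}^{\backprime}G}(j(\boldsymbol\gamma_n))\to+\infty$ while $\mu_{{}^{\backprime}G}(j(\boldsymbol\gamma_n))-\mu_{{}^{\backprime}G}(\rho(\boldsymbol\gamma_n))$ stays bounded, so that $d_{\aaa}(\mu(\gamma_n),\mu(H))\not\to+\infty$, contradicting the Benoist--Kobayashi properness criterion applied to the properly discontinuous action of~$\Gamma$ on~$X$. The genuine difficulty in extending beyond $\PSL_2(\R)$ is the construction and analysis of the maximally stretched lamination in higher-dimensional rank-one symmetric spaces, where one must control the interplay between the optimal stretching locus of~$f_\ast$ and the ambient geodesic dynamics: this is exactly the technical content of~\cite{ggkw}, which I would rely on for the general rank-one statement.
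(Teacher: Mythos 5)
The paper itself does not prove Fact~\ref{fact:rank1}: it simply quotes it from \cite{ggkw}, which works via the theory of Anosov representations. Your reduction up to the Lipschitz estimate is sound and is the natural generalization of Section~\ref{subsec:CLip}: the structure theorem \cite[Th.\,1.3]{kas08}, the splitting $\mu(g_1,g_2)=(\mu_{{}^{\backprime}G}(g_1),\mu_{{}^{\backprime}G}(g_2))$ with $\mu(\Diag({}^{\backprime}G))$ the diagonal ray, the formula $d_{\aaa}(\mu(\gamma),\mu(H))=|\mu_{{}^{\backprime}G}(j(\boldsymbol\gamma))-\mu_{{}^{\backprime}G}(\rho(\boldsymbol\gamma))|/\sqrt{2}$, and the triangle-inequality bound coming from any $(j,\rho)$-equivariant $C$-Lipschitz map are all correct.

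The gap is in the final step, the one that does all the work. You assert that if $C_{Lip}(j,\rho)\geq 1$ then an optimal equivariant map exists, its maximally stretched locus carries a recurrent geodesic leaf, and closing the leaf violates the Benoist--Kobayashi criterion --- and that this lamination analysis is ``exactly the technical content of \cite{ggkw}.'' That attribution is incorrect, and the argument as described does not close. The stretch-locus machinery belongs to \cite{kasPhD,gk12} and is developed specifically for real hyperbolic space; the paper says as much in Section~\ref{subsec:groupmfd}, noting that \emph{for ${}^{\backprime}G=\SO(1,n)$} the Lipschitz route gives Fact~\ref{fact:rank1}. There is no established analogue of maximally stretched laminations, nor of the needed closing lemma for the stretch locus, in complex, quaternionic, or octonionic hyperbolic space, where Thurston's lamination theory does not apply. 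What \cite{ggkw} actually does is quite different: it identifies convex cocompactness of $j$ with the Anosov property relative to the minimal parabolic of the rank-one group ${}^{\backprime}G$, and then derives the linear gap $d_{\aaa}(\mu(\gamma),\mu(H))\geq c\,\Vert\mu(\gamma)\Vert-C$ from the uniform contraction/domination estimates attached to an Anosov representation, combined with properness. To make your outline into a proof you would have to replace the lamination step by this Anosov argument (or a comparable dynamical one) rather than extrapolating the $\SO(1,n)$ method.
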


\begin{fact}[\cite{ggkw}]\label{fact:rank1deform}
Let ${}^{\backprime}G$ be a semisimple linear Lie group of real rank~$1$ and let $X_{\Gamma}$ be a $(c,C)$-sharp, convex cocompact Clifford--Klein form of $X=({}^{\backprime}G\times\!{}^{\backprime}G)/\Diag({}^{\backprime}G)$.
For any $\varepsilon>0$, there is a neighborhood $\mathcal{U}_{\varepsilon}\subset\Hom(\Gamma,{}^{\backprime}G\times\nolinebreak{}^{\backprime}G)$ of the natural inclusion such that $\varphi(\Gamma)$ is discrete in ${}^{\backprime}G\times\!{}^{\backprime}G$ and $(c-\varepsilon,C+\varepsilon)$-sharp for all $\varphi\in\mathcal{U}_{\varepsilon}$.
\end{fact}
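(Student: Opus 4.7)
The plan is to adapt the argument of Lemma~\ref{lem:muexotic} in the excerpt by combining the rank-one structural theorem for proper actions on group manifolds with the Kassel deformation theorem \cite{kas12} for Cartan projections. First I would invoke \cite[Th.~1.3]{kas08} to put $\Gamma$ in graph form: after possibly passing to a finite-index subgroup and switching the two factors of\, ${}^{\backprime}G\times{}^{\backprime}G$, we have
\[
\Gamma \,=\, \{(j(\boldsymbol\gamma),\rho(\boldsymbol\gamma)) : \boldsymbol\gamma \in {}^{\backprime}\Gamma\},
\]
where $j$ is injective, $j({}^{\backprime}\Gamma)$ is convex cocompact in~${}^{\backprime}G$, and $\rho \in \Hom({}^{\backprime}\Gamma,{}^{\backprime}G)$ is arbitrary. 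A homomorphism $\varphi \in \Hom(\Gamma,{}^{\backprime}G\times{}^{\backprime}G)$ near the natural inclusion corresponds bijectively to a pair $(j_\varphi,\rho_\varphi)$ in $\Hom({}^{\backprime}\Gamma,{}^{\backprime}G)^2$ near $(j,\rho)$.

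The next step is to control $\mu_{{}^{\backprime}G}(j_\varphi(\boldsymbol\gamma))$ and $\mu_{{}^{\backprime}G}(\rho_\varphi(\boldsymbol\gamma))$ independently. Since $j({}^{\backprime}\Gamma)$ is convex cocompact of real rank one, \cite[Th.~1.4]{kas12} applies directly to~$j$: for any $\varepsilon'>0$ there is a neighborhood ${}^{\backprime}\mathcal{W}_{\varepsilon'}^j\subset\Hom({}^{\backprime}\Gamma,{}^{\backprime}G)$ of $j$ such that
\[
\big|\mu_{{}^{\backprime}G}(j_\varphi(\boldsymbol\gamma)) - \mu_{{}^{\backprime}G}(j(\boldsymbol\gamma))\big| \,\leq\, \varepsilon'\,\mu_{{}^{\backprime}G}(j(\boldsymbol\gamma)) + \varepsilon'
\]
for all $\boldsymbol\gamma\in{}^{\backprime}\Gamma$. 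For~$\rho$ the same theorem does \emph{not} apply directly, since $\rho({}^{\backprime}\Gamma)$ need not be discrete or convex cocompact (it may even be Zariski-dense). Instead, I would use the Milnor--\v Svarc comparison: convex cocompactness of $j({}^{\backprime}\Gamma)$ gives a finite generating set $S$ of~${}^{\backprime}\Gamma$ and constants $A\geq 1$, $B\geq 0$ with $|\boldsymbol\gamma|_S \leq A\,\mu_{{}^{\backprime}G}(j(\boldsymbol\gamma)) + B$, while subadditivity \eqref{eqn:triangineq} of~$\mu_{{}^{\backprime}G}$ yields $\mu_{{}^{\backprime}G}(\rho_\varphi(\boldsymbol\gamma)) \leq |\boldsymbol\gamma|_S \cdot M_\varphi$ with $M_\varphi := \max_{s\in S} \mu_{{}^{\backprime}G}(\rho_\varphi(s))$ continuous in~$\varphi$; hence, after shrinking the neighborhood,
\[
\big|\mu_{{}^{\backprime}G}(\rho_\varphi(\boldsymbol\gamma)) - \mu_{{}^{\backprime}G}(\rho(\boldsymbol\gamma))\big| \,\leq\, \varepsilon'\,\mu_{{}^{\backprime}G}(j(\boldsymbol\gamma)) + \varepsilon'
\]
for all $\boldsymbol\gamma\in{}^{\backprime}\Gamma$. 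Combining these two estimates in $\aaa=\R\oplus\R$ with $\mu(H)=\Diag(\overline{\aaa_+})$ gives $\Vert\mu(\varphi(\gamma))-\mu(\gamma)\Vert \leq \sqrt 2\,\varepsilon'\,\Vert\mu(\gamma)\Vert + \sqrt 2\,\varepsilon'$, from which the passage to $(c-\varepsilon,C+\varepsilon)$-sharpness is obtained by exactly the calculation used at the end of the proof of Lemma~\ref{lem:munudeform}. Discreteness of~$\varphi(\Gamma)$ then follows from the Benoist--Kobayashi properness criterion together with the properness of~$\mu$ on~$G$, as in the proof of Lemma~\ref{lem:muexotic}.

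The main obstacle will be the analysis of~$\rho_\varphi$, where \cite[Th.~1.4]{kas12} is unavailable. The coarse-geometric substitute sketched above is the crux of the argument: it trades the absence of convex cocompactness of~$\rho$ for the Gromov-hyperbolicity of~${}^{\backprime}\Gamma$ inherited from~$j$, and reduces everything to continuity of $\varphi\mapsto \rho_\varphi(s)$ on the finite set~$S$. Some care is needed to ensure that the comparison constants $A,B$ depending on~$j$ can be taken uniform under deformation (which they can, after possibly shrinking $\mathcal{U}_{\varepsilon}$ using the structural stability of convex cocompact subgroups under small deformations in real rank one), and that the multiplicative loss $\varepsilon'\cdot A$ coming from the Milnor--\v Svarc step can be absorbed into the final parameter~$\varepsilon$ of the statement.
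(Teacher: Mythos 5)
There is a genuine gap at the step where you handle~$\rho$. You correctly note that \cite[Th.~1.4]{kas12} does not apply to~$\rho$, and you propose to substitute Milnor--\v{S}varc plus subadditivity of~$\mu$. But this only yields
$$\mu(\rho_\varphi(\boldsymbol\gamma)) \ \leq\ |\boldsymbol\gamma|_S\,M_\varphi \ \leq\ A\,M_\varphi\,\mu(j(\boldsymbol\gamma)) + B\,M_\varphi,$$
whose multiplicative slope $A\,M_\varphi$ converges to $A\,M_\rho$ as $\varphi\to\mathrm{id}$ --- a fixed positive number, not a small one, and typically larger than~$1$. The Milnor--\v{S}varc constants $A,B$ and the word-to-$\mu$ dilation $M_\rho$ only see the coarse quasi-isometry class of~$j$, not the precise domination ratio $c'<1$ that governs~$c$ through \eqref{eqn:cCAdS3} (and its rank-one generalization). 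So your bound is not a sharpness estimate with constant near~$c$, and the inference ``hence $\big|\mu(\rho_\varphi(\boldsymbol\gamma)) - \mu(\rho(\boldsymbol\gamma))\big|\leq\varepsilon'\mu(j(\boldsymbol\gamma))+\varepsilon'$'' does not follow from what you write: subadditivity controls the \emph{size} of $\mu(\rho_\varphi(\boldsymbol\gamma))$, not the \emph{difference} against $\mu(\rho(\boldsymbol\gamma))$. Trying to prove the difference estimate by telescoping runs into the obstruction that inserting a small element in the middle of a long product is \emph{not} a small operation for~$\mu$ --- conjugating a nearly trivial element by $\rho_\varphi(s_1\cdots s_{i-1})$ can blow it up --- so the strong triangle inequalities \eqref{eqn:rightstrongtriangineq}--\eqref{eqn:leftstrongtriangineq} do not close this step.

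This loss is exactly what the cited technology is designed to avoid, and it is why the paper does not attempt a proof of Fact~\ref{fact:rank1deform} but cites it from \cite{ggkw} (and from \cite{gk12} for ${}^{\backprime}G=\SO(1,n)$). For $\SO(1,n)$, the effective invariant is the Lipschitz constant $C_{Lip}(j,\rho)$ of Section~\ref{subsec:CLip}; its continuity (Fact~\ref{fact:CLipcont}), established via a maximally stretched lamination, feeds into \eqref{eqn:cCAdS3} to deform the sharpness constants by a controlled amount. For general rank-one ${}^{\backprime}G$, \cite{ggkw} encodes the sharpness of $(j,\rho)$ through the Anosov property of the representation into the rank-two product ${}^{\backprime}G\times{}^{\backprime}G$ and uses the openness of Anosov representations. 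Either route replaces the coarse Milnor--\v{S}varc comparison by an effective-dynamics argument; some such input is unavoidable, because the sharpness constant $c$ is not a quasi-isometry invariant of the pair $(j,\rho)$.
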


For ${}^{\backprime}G=\SO(1,n)$, Facts \ref{fact:rank1} and~\ref{fact:rank1deform} were first established in \cite{gk12}, using the Lipschitz approach of Section~\ref{subsec:CLip}.
In this case, Fact~\ref{fact:rank1} actually holds for a larger class of Clifford--Klein forms~$X_{\Gamma}$, namely all those that are \emph{geometrically finite} (in the sense that the hyperbolic manifold $j({}^{\backprime}\Gamma)\backslash\HH^n$ is geometrically finite, allowing for cusps) \cite{gk12}.
This implies that \emph{the discrete spectrum of any geometrically finite Clifford--Klein form of $X=(\SO(1,n)\times\nolinebreak\SO(1,n))/\Diag(\SO(1,n))$ is infinite} for $n$ even.

\subsection*{$\bullet$ ``Exotic'' Clifford--Klein forms in higher real rank}

As we have seen in Section~\ref{subsec:listexgroup}, for several families of groups~${}^{\backprime}G$ of higher real rank, the space $X=({}^{\backprime}G\times\!{}^{\backprime}G)/\Diag({}^{\backprime}G)$ admits standard compact Clifford--Klein forms~$X_{\Gamma}$ of a more general form than ${}^{\backprime}\Gamma\backslash{}^{\backprime}G$.
More precisely, let ${}^{\backprime}G_1$ and ${}^{\backprime}G_2$ be two reductive subgroups of~${}^{\backprime}G$ such that ${}^{\backprime}G_1$ acts properly and cocompactly on ${}^{\backprime}G/{}^{\backprime}G_2$: we can then take $\Gamma$ of the form $\Gamma={}^{\backprime}\Gamma_1\times{}^{\backprime}\Gamma_2$, where ${}^{\backprime}\Gamma_1$ (resp.~${}^{\backprime}\Gamma_2$) is a uniform lattice of~${}^{\backprime}G_1$ (\resp of~${}^{\backprime}G_2$).
Theorem~\ref{thm:universal} and Proposition~\ref{prop:deformexoticcompact} apply to the discrete spectrum of these ``exotic'' standard compact Clifford--Klein forms~$X_{\Gamma}\simeq{}^{\backprime}\Gamma_1\backslash{}^{\backprime}G/{}^{\backprime}\Gamma_2$ when $\rank{}^{\backprime}G=\rank{}^{\backprime}K$.

A list of examples is given in Table~2.2 of Chapter~\ref{sec:listex}.
Among them, the example $({}^{\backprime}G,{}^{\backprime}G_1,{}^{\backprime}G_2)=(\SO(2,2n)_0,\SO(1,2n)_0,\U(1,n))$ has the property that certain uniform lattices~${}^{\backprime}\Gamma_1$ of~${}^{\backprime}G_1$ admit nonstandard deformations inside~${}^{\backprime}G$, for which there exists an infinite stable discrete spectrum by Proposition~\ref{prop:deformexoticcompact}.
For $n=1$, manifolds of the form $X_{\Gamma}={}^{\backprime}\Gamma_1\backslash{}^{\backprime}G/{}^{\backprime}\Gamma_2$ have dimension~$6$ and are locally modeled on $\AdS^3\times\AdS^3$; the ring $\D(X_{\Gamma})$ is generated by the Laplacians of the two factors.
The following table, for general~$n$, shows that these Clifford--Klein forms $X_{\Gamma}={}^{\backprime}\Gamma_1\backslash{}^{\backprime}G/{}^{\backprime}\Gamma_2$ are very different from the anti-de Sitter manifolds ${}^{\backprime}G_1\backslash{}^{\backprime}G/{}^{\backprime}\Gamma_2 \simeq {}^{\backprime}\Gamma_2\backslash{}^{\backprime}G/{}^{\backprime}G_1={}^{\backprime}\Gamma_2\backslash\AdS^{2n+1}$ which we examined in Section~\ref{subsec:AdS} and from the indefinite K\"ahler manifolds ${}^{\backprime}\Gamma_1\backslash{}^{\backprime}G/{}^{\backprime}G_2={}^{\backprime}\Gamma_1\backslash\SO(2,2n)_0/\U(1,n)$ which we shall examine in Section~\ref{subsec:3cpx}.

\bigskip

\begin{center}
\begin{changemargin}{-0.9cm}{0cm}
\begin{tabular}{|c||c|c|c|} 
\hline
Type of Clifford--Klein form & ${}^{\backprime}\Gamma_1\backslash{}^{\backprime}G/{}^{\backprime}\Gamma_2$ & ${}^{\backprime}G_1\backslash{}^{\backprime}G/{}^{\backprime}\Gamma_2$ & ${}^{\backprime}\Gamma_1\backslash{}^{\backprime}G/{}^{\backprime}G_2$\\
\hline\hline
Model space~$X$ & $\SO(2,2n)_0$ & $\AdS^{2n+1}$ & $\SO(2,2n)_0/\U(1,n)$\\
\hline
Dimension & $2n^2+3n+1$ & $2n+1$ & $n(n+1)$\\
\hline
Signature & $(4n,2n^2-n+1)$ & $(2n,1)$ & $(2n,n^2-n)$\\
\hline
$\rank(X)$ & $n+1$ & $1$ & $n$\\
\hline
Degrees of generators of~$\D(X)$ & $2,4,\dots,2n,n+1$ & $2$ & $2,4,\dots,2n$\\
\hline
$\#\ZZ$ & $2(n+1)$ & $2$ & $1$\\
\hline
\end{tabular}
\end{changemargin}
\end{center}

\bigskip

More generally, whenever ${}^{\backprime}G$ has real rank $>1$, there always exist two nontrivial reductive subgroups ${}^{\backprime}G_1$ and ${}^{\backprime}G_2$ of~${}^{\backprime}G$ such that ${}^{\backprime}G_1$ acts properly (but not necessarily cocompactly) on ${}^{\backprime}G/{}^{\backprime}G_2$ \cite[Th.\,3.3]{kob93}.
When $\rank{}^{\backprime}G=\rank{}^{\backprime}K$, Theorem~\ref{thm:universal} and Propositions \ref{prop:deformexoticcompact} and~\ref{prop:deformexotic} apply to the standard Clifford--Klein forms (possibly of infinite volume) $X_{\Gamma}={}^{\backprime}\Gamma_1\backslash{}^{\backprime}G/{}^{\backprime}\Gamma_2$, where $\Gamma={}^{\backprime}\Gamma_1\times{}^{\backprime}\Gamma_2$ is the product of any discrete subgroup~${}^{\backprime}\Gamma_1$ of~${}^{\backprime}G_1$ with any discrete subgroup~${}^{\backprime}\Gamma_2$ of~${}^{\backprime}G_2$.

\subsection*{$\bullet$ Link between the discrete series representations of $X$ and~${}^{\backprime}G$}

We now assume that ${}^{\backprime}G$ is connected and that $\rank{}^{\backprime}G=\rank{}^{\backprime}K$.
Flensted-Jensen's construction of discrete series representations~$\V_{Z,\lambda}$ for $X=({}^{\backprime}G\times\!{}^{\backprime}G)/\Delta({}^{\backprime}G)$ (as described in Section~\ref{subsec:discreteseries}) yields all of Harish-Chandra's discrete series representations~$\pi_{{}^{\backprime}\!\lambda}$ for~${}^{\backprime}G$.
This is well-known, but for the reader's convenience we briefly recall the Harish-Chandra discrete series and make the link with our previous notation.

Let ${}^{\backprime}\theta$ be a Cartan involution of~${}^{\backprime}G$ and let ${}^{\backprime}K=({}^{\backprime}G)^{{}^{\backprime}\theta}$ be the corresponding maximal compact subgroup of~${}^{\backprime}G$.
For simplicity, suppose that ${}^{\backprime}\theta$ extends to a holomorphic involution of some complexification ${}^{\backprime}G_{\C}$ of~${}^{\backprime}G$.
As in Section~\ref{subsec:FJdualAdS3}, we define a holomorphic embedding $\Phi: {}^{\backprime}G_{\C} \to {}^{\backprime}G_{\C}\times\!{}^{\backprime}G_{\C}$~by
$$\Phi(g) := \big(g,{}^{\backprime}\theta(g)\big).$$ 
Then the set of inclusions \eqref{eqn:KGH} is given by
\smallskip
\begin{changemargin}{-2cm}{0cm}
\begin{center}
\begin{tabular}{c c c c c}
$K={}^{\backprime}K\times\!{}^{\backprime}K$ & $\subset$ & $G={}^{\backprime}G\times\!{}^{\backprime}G$ & $\supset$ & $H=\Diag({}^{\backprime}G)$\\
\smallskip
\rotatebox{90}{$\supset$} & & \rotatebox{90}{$\supset$} & & \rotatebox{90}{$\supset$}\\
\smallskip
$K_{\C}={}^{\backprime}K_{\C}\times\!{}^{\backprime}K_{\C}$ & $\subset$ & $G_{\C}={}^{\backprime}G_{\C}\times\!{}^{\backprime}G_{\C}$ & $\supset$ & $H_{\C}=\Diag({}^{\backprime}G_{\C})$\\
\medskip
\rotatebox{90}{$\subset$} & & \rotatebox{90}{$\subset$} & & \rotatebox{90}{$\subset$}\\
\smallskip
$H^d=\Phi({}^{\backprime}K_{\C})$ & $\subset$ & $G^d=\Phi({}^{\backprime}G_{\C})$ & $\supset$ & $K^d=\Phi({}^{\backprime}G_U),$
\end{tabular}
\end{center}
\end{changemargin}

\smallskip
\noindent
where ${}^{\backprime}G_U$ is the compact real form of~${}^{\backprime}G_{\C}$ defined similarly to Section~\ref{subsec:realforms}.
As in Section~\ref{subsec:FJdualAdS3}, the group~$H^d$ identifies with~${}^{\backprime}K_{\C}$ and $G^d/P^d$ with the full complex flag variety ${}^{\backprime}G_{\C}/{}^{\backprime}B_{\C}$, where ${}^{\backprime}B_{\C}$ is a Borel subgroup of~${}^{\backprime}G_{\C}$.
Fix a Cartan subalgebra ${}^{\backprime}\ttt$ of~${}^{\backprime}\kk$ and a positive system $\Delta^+({}^{\backprime}\kk_{\C},{}^{\backprime}\ttt_{\C})$.
We note that ${}^{\backprime}\ttt$ is also a Cartan subalgebra of~${}^{\backprime}\g$ since $\rank{}^{\backprime}G=\rank{}^{\backprime}K$.
The set~$\ZZ$ of closed $H^d$-orbits in $G^d/P^d$ identifies with the set of positive systems $\Delta^+({}^{\backprime}\g_{\C},{}^{\backprime}\ttt_{\C})$ containing the fixed positive system $\Delta^+({}^{\backprime}\kk_{\C},{}^{\backprime}\ttt_{\C})$.
In particular, the cardinal of~$\ZZ$ is easily computable as the quotient of the cardinals of two Weyl groups.
For instance, for ${}^{\backprime}G=\SO(1,2n)_0$, we have
$$\#\ZZ = \frac{\# W(B_n)}{\# W(D_n)} = 2.$$

Let ${}^{\backprime}\rho_c$ be half the sum of the elements of $\Delta^+({}^{\backprime}\kk_{\C},{}^{\backprime}\ttt_{\C})$.
Any choice of a positive system $\Delta^+({}^{\backprime}\g_{\C},{}^{\backprime}\ttt_{\C})$ containing $\Delta^+({}^{\backprime}\kk_{\C},{}^{\backprime}\ttt_{\C})$ determines a positive Weyl chamber ${}^{\backprime}\ttt^{\ast}_+$ in~${}^{\backprime}\ttt^{\ast}$, an element ${}^{\backprime}\rho\in{}^{\backprime}\ttt^{\ast}_+$, defined as half the sum of the elements of $\Delta^+({}^{\backprime}\g_{\C},{}^{\backprime}\ttt_{\C})$, and an element $Z\in\ZZ$.
For any ${}^{\backprime}\lambda\in{}^{\backprime}\ttt^{\ast}_+$ such that
$$\mu_{{}^{\backprime}\lambda} := {}^{\backprime}\lambda + {}^{\backprime}\rho - 2{}^{\backprime}\rho_c$$
lifts to the torus ${}^{\backprime}T\subset {}^{\backprime}K$ with Lie algebra~${}^{\backprime}\ttt$, Harish-Chandra proved the existence of an irreducible unitary representation $\pi_{{}^{\backprime}\!\lambda}$ of~${}^{\backprime}G$ with square-integrable matrix coefficients, with infinitesimal character~${}^{\backprime}\lambda$ (Harish-Chandra parameter) and minimal ${}^{\backprime}K$-type $\mu_{{}^{\backprime}\lambda}$ (Blattner parameter).
With the notation of the previous chapters, we can take
$$\jj = \{ ({}^{\backprime}Y,-{}^{\backprime}Y) : {}^{\backprime}Y\in{}^{\backprime}\ttt\} .$$
For $\lambda=({}^{\backprime}\lambda,-{}^{\backprime}\lambda)\in\jj^{\ast}$ and $Y=({}^{\backprime}Y,-{}^{\backprime}Y)\in\jj$, we have
$$\langle\lambda,Y\rangle = 2\,\langle{}^{\backprime}\lambda,{}^{\backprime}Y\rangle,$$
and if ${}^{\backprime}d : {}^{\backprime}\ttt^{\ast}\rightarrow\R_+$ denotes the ``weighted distance to the walls'' defined as in Section~\ref{subsec:Lambda+}, then
$$d(\lambda) = {}^{\backprime}d({}^{\backprime}\lambda).$$
Since
$K/H\cap K = ({}^{\backprime}K\times\!{}^{\backprime}K)/\Diag({}^{\backprime}K) \simeq {}^{\backprime}K$,
the set $\Lambda_+ = \Lambda_+(K/H\cap K)$ of \eqref{eqn:Lambda+} is here equal to $\{ ({}^{\backprime}\lambda,-{}^{\backprime}\lambda) : {}^{\backprime}\lambda\in\widehat{{}^{\backprime}K}\}$,
which naturally identifies with the set $\widehat{{}^{\backprime}K}$ of irreducible representations of ${}^{\backprime}K$.
For $\lambda=({}^{\backprime}\lambda,-{}^{\backprime}\lambda)\in\jj^{\ast}_+$, we have an isomorphism of $({}^{\backprime}\g,{}^{\backprime}K)\times ({}^{\backprime}\g,{}^{\backprime}K)$-modules:
$$\V_{Z,\lambda} \simeq (\pi_{{}^{\backprime}\!\lambda})_{{}^{\backprime}K} \boxtimes (\pi_{{}^{\backprime}\!\lambda}^{\vee})_{{}^{\backprime}K}.$$

\subsection*{$\bullet$ Regular representation on $L^2({}^{\backprime}\Gamma\backslash{}^{\backprime}G)$}

Let ${}^{\backprime}\Gamma$ be a discrete subgroup of~${}^{\backprime}G$.
The action of ${}^{\backprime}G$ on ${}^{\backprime}\Gamma\backslash{}^{\backprime}G$ from the right defines
a unitary representation of ${}^{\backprime}G$ on $L^2({}^{\backprime}\Gamma\backslash{}^{\backprime}G)$.
With the previous notation, here is a consequence of Proposition~\ref{prop:nonzero}.(2) applied to the special case
$$G={}^{\backprime}G\times\!{}^{\backprime}G, \quad H=\Diag({}^{\backprime}G), \quad \Gamma={}^{\backprime}\Gamma\times\{ e\},$$
where the Clifford--Klein form $X_{\Gamma}=\Gamma\backslash G/H$ identifies with ${}^{\backprime}\Gamma\backslash{}^{\backprime}G$.

\begin{proposition}\label{prop:intertwining}
Let ${}^{\backprime}G$ be a reductive linear group with $\rank{}^{\backprime}G = \rank{}^{\backprime}K$.
\begin{enumerate}
  \item There is a constant $R>0$ (depending only on~${}^{\backprime}G$) such that for any torsion-free discrete subgroup~${}^{\backprime}\Gamma$ of~${}^{\backprime}G$ and any discrete series representation~$\pi_{{}^{\backprime}\!\lambda}$ of~${}^{\backprime}G$ with ${}^{\backprime}d({}^{\backprime}\lambda)>R$,
  $$\Hom_{{}^{\backprime}G}\left(\pi_{{}^{\backprime}\!\lambda},L^2({}^{\backprime}\Gamma\backslash{}^{\backprime}G)\right) \neq \{ 0\} .$$
  \item The same statement holds without the ``torsion-free'' assumption on~$\Gamma$ if ${}^{\backprime}G$ has no compact factor.
\end{enumerate}
\end{proposition}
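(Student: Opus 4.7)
The plan is to apply Proposition~\ref{prop:nonzero}.(2) in the setting
\[
G = {}^{\backprime}G \times {}^{\backprime}G,\quad H = \Diag({}^{\backprime}G),\quad \Gamma = {}^{\backprime}\Gamma \times \{e\},\quad L = {}^{\backprime}G \times \{e\},
\]
so that $X=G/H\simeq{}^{\backprime}G$, $X_\Gamma\simeq{}^{\backprime}\Gamma\backslash{}^{\backprime}G$, and $L$ acts simply transitively, hence properly, on~$X$. The hypotheses of Proposition~\ref{prop:nonzero}.(2) are easy to check: $G$ is connected (pass to the identity component of~${}^{\backprime}G$ if necessary), $H$ contains no simple factor of~$G$ (each simple factor of $G$ sits in one of the two factors, not the diagonal), the rank condition~\eqref{eqn:rank} for $G/H$ is exactly $\rank{}^{\backprime}G = \rank{}^{\backprime}K$, and $Z(L) = Z({}^{\backprime}G)\times\{e\}$ is compact because the equal rank condition forces $Z({}^{\backprime}G)^0$ to lie in a maximal torus of~${}^{\backprime}K$ while linearity of~${}^{\backprime}G$ makes the component group of its center finite.

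Proposition~\ref{prop:nonzero}.(2), in the refined form of Remark~\ref{rem:Zprecise}, then yields a constant $R>0$ depending only on~${}^{\backprime}G$ such that for any torsion-free ${}^{\backprime}\Gamma\subset{}^{\backprime}G$, any $Z\in\ZZ$, and any $\lambda\in\jj^{\ast}_+\cap(2\rho_c-\rho+\Lambda^{\Gamma\cap Z(G_s)})$ with $d(\lambda)>R$, there exists $g\in G$ making $S_\Gamma$ well-defined and nonzero on $g\cdot\V_{Z,\lambda}$. Under the identifications recalled just before the proposition ($\jj\simeq{}^{\backprime}\ttt$ via $({}^{\backprime}Y,-{}^{\backprime}Y)\mapsto{}^{\backprime}Y$ and $\lambda\leftrightarrow({}^{\backprime}\lambda,-{}^{\backprime}\lambda)$), one has $d(\lambda)={}^{\backprime}d({}^{\backprime}\lambda)$, and the integrality-positivity condition on~$\lambda$ becomes exactly the Harish--Chandra parameter condition for existence of the discrete series $\pi_{{}^{\backprime}\lambda}$.

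To convert the nonvanishing of $S_\Gamma$ into a nonzero intertwiner, I use that $\Gamma$ and $\{e\}\times{}^{\backprime}G$ commute inside~$G$, so the left $\{e\}\times{}^{\backprime}G$-action on~$X$ descends to~$X_\Gamma$ and coincides, under $X_\Gamma\simeq{}^{\backprime}\Gamma\backslash{}^{\backprime}G$, with the right regular representation. The operator $S_\Gamma$ is equivariant for this action, and while left translation by $g=(g_1,g_2)\in G$ does not commute with individual elements of $\{e\}\times{}^{\backprime}G$, conjugation by~$g$ preserves this subgroup (sending $(e,h)$ to $(e,g_2hg_2^{-1})$), so $g\cdot\V_{Z,\lambda}$ is isomorphic as a $\{e\}\times{}^{\backprime}G$-module to $\V_{Z,\lambda}$ itself. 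By the decomposition $\V_{Z,\lambda}\simeq\pi_{{}^{\backprime}\lambda}\boxtimes\pi_{{}^{\backprime}\lambda}^\vee$ recalled in the excerpt, this module is a direct sum of copies of $\pi_{{}^{\backprime}\lambda}^\vee$; hence the nonzero image $S_\Gamma(g\cdot\V_{Z,\lambda})\subset L^2({}^{\backprime}\Gamma\backslash{}^{\backprime}G)$ contains a copy of $\pi_{{}^{\backprime}\lambda}^\vee$. Since $\pi_{{}^{\backprime}\lambda}^\vee$ is again a discrete series with the same value of~${}^{\backprime}d$, relabeling by the dual parameter yields part~(1).

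Part~(2) follows since the only role of torsion-freeness of~${}^{\backprime}\Gamma$ in Proposition~\ref{prop:nonzero}.(2) is to secure $\Gamma\cap L_c\subset Z(G_s)$, where $L_c=({}^{\backprime}G)_c\times\{e\}$; when ${}^{\backprime}G$ has no compact factor, $({}^{\backprime}G)_c=Z({}^{\backprime}G_s)\subset Z(G_s)$ and this condition is automatic. The main technical point in the argument is the equivariance bookkeeping around the $g$-translate: although $g\cdot L^2(X,\M_\lambda)_K$ consists of $gKg^{-1}$-finite rather than $K$-finite functions, its ${}^{\backprime}G$-module structure under the second factor is preserved up to intertwining isomorphism, and it is this structure that survives the summation $S_\Gamma$ and appears as a copy of the contragredient discrete series inside $L^2({}^{\backprime}\Gamma\backslash{}^{\backprime}G)$.
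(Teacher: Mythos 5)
Your proof is correct and follows essentially the same strategy as the paper: apply Proposition~\ref{prop:nonzero}.(2) with $G={}^{\backprime}G\times{}^{\backprime}G$, $H=\Diag({}^{\backprime}G)$, $\Gamma={}^{\backprime}\Gamma\times\{e\}$, identify $S_{\Gamma}$ with the classical Poincar\'e series on ${}^{\backprime}\Gamma\backslash{}^{\backprime}G$, and exploit the equivariance under the right $\{e\}\times{}^{\backprime}G$-action together with $\V_{Z,\lambda}\simeq\pi_{{}^{\backprime}\lambda}\boxtimes\pi_{{}^{\backprime}\lambda}^{\vee}$. The only cosmetic difference is that you make explicit the $\Ad(g_2)$-twist on $g\cdot\V_{Z,\lambda}$ and the final relabeling $\pi_{{}^{\backprime}\lambda}^{\vee}\leftrightarrow\pi_{{}^{\backprime}\lambda}$, which the paper handles more tersely by working directly with $L^{2}({}^{\backprime}G,\M_{\lambda})_{{}^{\backprime}K_1\times{}^{\backprime}K_2}$ and $({}^{\backprime}\g,{}^{\backprime}K_2)$-equivariance.
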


\begin{proof}
Consider ${}^{\backprime}\lambda\in{}^{\backprime}\ttt^{\ast}_+$
such that $\mu_{{}^{\backprime}\lambda}$ lifts to a maximal torus in~${}^{\backprime}K$.
Then $\lambda:=({}^{\backprime}\lambda,-{}^{\backprime}\lambda)\in\jj^{\ast}_+$ belongs to $2\rho_c-\rho+\Lambda_+$ and $d(\lambda)={}^{\backprime}d({}^{\backprime}\lambda)$.
Applying Proposition~\ref{prop:nonzero}.(2), together with \eqref{eqn:Kfiniteconj} and \eqref{eqn:transL2}, to
$$ G={}^{\backprime}G\times\!{}^{\backprime}G, \quad H=\Diag({}^{\backprime}G), \quad \Gamma={}^{\backprime}\Gamma\times\{ e\} ,$$
we obtain the existence of a constant $R>0$ such that if  ${}^{\backprime}d({}^{\backprime}\lambda)>R$ and ${}^{\backprime}G$ has no compact factor (\resp ${}^{\backprime}G$ has compact factors), then for any discrete (\resp torsion-free discrete) subgroup ${}^{\backprime}\Gamma$ of~${}^{\backprime}G$, the summation operator
$$S_{\Gamma} : L^2({}^{\backprime}G,\M_{\lambda})_{{}^{\backprime}K_1\times {}^{\backprime}K_2} \longrightarrow L^2({}^{\backprime}\Gamma\backslash{}^{\backprime}G,\M_{\lambda})$$
is well-defined and nonzero for some conjugates ${}^{\backprime}K_1=g_1{}^{\backprime}Kg_1^{-1}$ and ${}^{\backprime}K_2=g_2{}^{\backprime}Kg_2^{-1}$ of~${}^{\backprime}K$ (where $g_i\in{}^{\backprime}G$).
In our specific setting, for $\varphi\in L^2({}^{\backprime}G,\M_{\lambda})_{{}^{\backprime}K_1\times {}^{\backprime}K_2}$, the function~$S_{\Gamma}(\varphi)$ is nothing but the classical Poincar\'e series
$$\sum_{\gamma\in{}^{\backprime}\Gamma} \varphi(\gamma\,\cdot)\ \in L^2({}^{\backprime}\Gamma\backslash{}^{\backprime}G,\M_{\lambda})_{{}^{\backprime}K_2},$$
and $S_{\Gamma}$ respects the action of $({}^{\backprime}\g,{}^{\backprime}K_2)$ from the right.  
Therefore,
$$\Hom_{({}^{\backprime}\g,{}^{\backprime}K_2)}\left((\pi_{{}^{\backprime}\!\lambda})_{{}^{\backprime}K_2},L^2({}^{\backprime}\Gamma\backslash{}^{\backprime}G)\right)_{{}^{\backprime}K_2} \neq \{ 0\} $$
if ${}^{\backprime}d({}^{\backprime}\lambda)>R$.
Since $\pi_{{}^{\backprime}\!\lambda}$ is an irreducible unitary representation of~${}^{\backprime}G$, this is equivalent to
$$\Hom_{{}^{\backprime}G}\left(\pi_{{}^{\backprime}\!\lambda},L^2({}^{\backprime}\Gamma\backslash{}^{\backprime}G)\right) \neq \{ 0\}.\qedhere$$
\end{proof}

\begin{remark}\label{rem:discseries}
For arithmetic~${}^{\backprime}\Gamma$, we may consider a tower of congruence subgroups ${}^{\backprime}\Gamma\supset {}^{\backprime}\Gamma_1\supset {}^{\backprime}\Gamma_2\supset\cdots$.
In the work of DeGeorge--Wallach \cite{dw78} (cocompact case), Clozel \cite{clo86}, Rohlfs--Speh \cite{rs87}, and Savin \cite{sav89} (finite covolume case), the asymptotic behavior of the multiplicities $\Hom_{{}^{\backprime}G}\left(\pi_{{}^{\backprime}\!\lambda},L^2({}^{\backprime}\Gamma_j\backslash{}^{\backprime}G)\right)$ for a discrete series representation~$\pi_{{}^{\backprime}\!\lambda}$ was studied as $j$ goes to infinity, under the condition $\rank{}^{\backprime}G = \rank{}^{\backprime}K$. 
Then one could deduce from their result that any discrete series representation~$\pi_{{}^{\backprime}\!\lambda}$ with ${}^{\backprime}d({}^{\backprime}\lambda)$ large enough occurs in $L^2({}^{\backprime}{}^{\backprime}\Gamma\backslash{}^{\backprime}G)$ for some congruence subgroup ${}^{\backprime}{}^{\backprime}\Gamma$ of~${}^{\backprime}\Gamma$, where~${}^{\backprime}{}^{\backprime}\Gamma$ possibly depends on~$\pi_{{}^{\backprime}\!\lambda}$.
The approach of \cite{dw78, clo86, sav89} uses the Arthur--Selberg trace formula.
There is another approach for classical groups~${}^{\backprime}G$ and arithmetic subgroups~${}^{\backprime}\Gamma$ using the theta-lifting, see \cite{bw00, kaz77, li92}.
Proposition~\ref{prop:intertwining} is stronger in three respects:
\begin{enumerate}
   \item ${}^{\backprime}\Gamma$ is not necessarily arithmetic and ${}^{\backprime}\Gamma\backslash{}^{\backprime}G$ can have infinite volume,
\item we do not need to replace~${}^{\backprime}\Gamma$ by some finite-index subgroup~${}^{\backprime}{}^{\backprime}\Gamma$,
  \item the constant~$R$ is independent of the discrete group~${}^{\backprime}\Gamma$.
\end{enumerate}
\end{remark}

\section{Indefinite K\"ahler manifolds}\label{subsec:3cpx}

We now consider the symmetric space $X=\SO(2,2m)_0/\U(1,m)$ for $m\geq 2$.
Later we will assume $m$ to be even for the rank condition \eqref{eqn:rank} to be satisfied.
We see the group $\OO(2,2m)$ as the set of linear transformations of~$\R^{2m+2}$ preserving the quadratic form
$$x_1^2 + y_1^2 - x_2^2 - y_2^2 - \dots - x_{m+1}^2 - y_{m+1}^2,$$
and the subgroup $H:=\U(1,m)$ of $G:=\SO(2,2m)_0$ as the set of linear trans\-formations of $\C^{m+1}$ preserving the Hermitian form $|z_1|^2-|z_2|^2-\dots-~|z_{m+1}|^2$.
The involution~$\sigma$ of~$G$ defining~$H$ is given by $\sigma(g)=JgJ^{-1}$, where $J$ is the diagonal block matrix with all diagonal blocks equal to $\small\begin{pmatrix} 0 & -1\\ 1 & 0\end{pmatrix}$.

The natural $G$-invariant pseudo-Riemannian metric~$g$ on~$X$ has signature $(2m,m(m-1))$.
We note that here $X$ carries some additional structures, due to the fact that $H$ is the centralizer of a one-dimensional compact torus (namely its center $Z(H)\simeq\U(1)$):  
\begin{enumerate}
  \item $X$ can be identified with an adjoint orbit (namely $\Ad(G)v$ where $v$ is any generator of the Lie algebra of $Z(H)$), hence also with a coadjoint orbit \emph{via} the isomorphism $\g^{\ast}\simeq\g$ induced by the Killing form; thus, $X$ carries a Kostant--Souriau symplectic form~$\omega$ (see \cite[Ch.\,1, Th.\,1]{kir04});
  \item $X$ can be realized as an open subset of the flag variety $G_{\C}/P_{\C}$ for some maximal proper parabolic subgroup~$P_{\C}$ of $G_{\C}=\SO(2m+\nolinebreak 2,\C)$, as a generalized Borel embedding (see \cite{ko90} for instance); in particular, $X$ has a $G$-invariant complex structure and $g+\sqrt{-1}\,\omega$ is a $G$-invariant indefinite K\"ahler form on~$X$ if $g$ is normalized by the Killing form.
\end{enumerate}
The existence of the complex structure can easily be seen for $m=\nolinebreak 2$, since $\SO(2,4)_0/\U(1,2)$ identifies with $\SU(2,2)/\U(1,2)$, which can be realized as an open subset of~$\PP^3\C$ (see Section~\ref{subsec:introex}).

Standard Clifford--Klein forms~$X_{\Gamma}$ of~$X$ that are compact (\resp noncompact but of finite volume) were constructed in \cite{kob89}.
They can be obtained by taking torsion-free uniform (\resp nonuniform) lattices $\Gamma$ inside $L:=\SO(1,2m)_0$.
We note that the group $L$ acts properly and transitively on $X$.
An elementary explanation for this is to observe that $\U(m+1)$ acts transitively on the sphere $\mathbb{S}^{2m+1}=\SO(2m+2)/\SO(2m+1)$;
by duality, so does $\SO(2m+1)$ on $\SO(2m+2)/\U(m+1)$; in turn, $L$ acts properly and transitively on $X=\SO(2,2m)_0/\U(1,m)$.
(For a general argument, we refer to \cite[Lem.\,5.1]{kob94}.)

If $\Gamma$ is a free discrete subgroup of~$L$, then the noncompact standard Clifford--Klein form~$X_{\Gamma}$ has a large deformation space.
There are also examples of compact standard Clifford--Klein forms that admit interesting small deformations.
Indeed, certain arithmetic uniform lattices~$\Gamma$ of $L=\SO(1,m)_0$ have the following property: there is a continuous $1$-parameter group $(\varphi_t)_{t\in\R}$ of homomorphisms from $\Gamma$ to~$G$ such that for any $t\neq 0$ small enough, the group $\varphi_t(\Gamma)$ is discrete in~$G$ and \emph{Zariski-dense} in~$G$; this $1$-parameter group can be obtained by a \emph{bending} construction due to Johnson--Millson (see \cite[\S\,6]{kas12}).
As we have seen in Example~\ref{ex:standardthetastable}, any discrete subgroup~$\Gamma$ of~$L$ is $(\frac{\sqrt{2}}{2},0)$-sharp for~$X$; by \cite{kas12}, if $\Gamma$ is cocompact or convex cocompact in~$L$, then for any $\varepsilon>0$ there is a neighborhood $\mathcal{U}_{\varepsilon}\subset\Hom(\Gamma,G)$ of the natural inclusion such that for any $\varphi\in\mathcal{U}_{\varepsilon}$, the group $\varphi(\Gamma)$ is discrete in~$G$ and $(\frac{\sqrt{2}}{2}-\varepsilon,\varepsilon)$-sharp for~$X$ (see Lemma~\ref{lem:munudeform}).

We now assume that $m=2n$ is even, so that the rank condition \eqref{eqn:rank} is satisfied.
We start by examining the case $n=1$, in which we give explicit formulas for the Flensted-Jensen eigenfunctions of Section~\ref{subsec:FJfunction}; we then explicit the notation of the previous chapters for general~$n$.

\subsection*{$\bullet$ The case $n=1$}

The group $G=\SO(2,4)_0$ admits $\SU(2,2)$ as a double covering, and the preimage of $H=\U(1,2)$ in $\SU(2,2)$ is $\mathrm{S}(\U(1)\times\U(1,2))\simeq\U(1,2)$.
For an actual computation, in this paragraph we set $G:=\SU(2,2)$ and $H:=\mathrm{S}(\U(1)\times\U(1,2))\simeq\U(1,2)$, and we consider the maximal compact subgroup $K:=\mathrm{S}(\U(2)\times\U(2))$.
The symmetric space $X\simeq\SU(2,2)/\U(1,2)$ identifies with the open subset of~$\PP^3\C$ of equation $h>0$, where
$$h(z) = |z_1|^2 + |z_2|^2 - |z_3|^2 - |z_4|^2$$
for $z=(z_i)_{1\leq i\leq 4}\in\C^4$.
The Laplacian~$\square_X$ has been made explicit in Section~\ref{subsec:introex}.
For any $\ell\in\N$, we consider the following harmonic polynomial of degree $(\ell,\ell)$ on~$\C^2$:
$$P_{\ell}(z_1,z_2) := \sum_{i=0}^{\ell} \binom{\ell}{i}^2 (-1)^i \, |z_1|^{2\ell-2i} \, |z_2|^{2i}.$$
Up to a multiplicative scalar, it is the unique harmonic polynomial of degree $(\ell,\ell)$ that is fixed by $\U(1)\times\U(1)\simeq H\cap K$; we normalize it so that $P_{\ell}(1,0)=\nolinebreak 1$.
The function
\begin{equation}\label{eqn:FJpk}
\psi_{\ell} :\ z=(z_i)_{1\leq i\leq 4}\ \longmapsto\ P_{\ell}(z_1,z_2) \, h(z)^{\ell+1} \, \big(|z_1|^2 + |z_2|^2\big)^{-2\ell -1}
\end{equation}
on $\C^4\smallsetminus\{ 0\}$ satisfies the following differential equation:
$$h(z)\ \square_{\C^{2,2}}\,\psi_{\ell} = (\ell+1)(\ell-2)\,\psi_{\ell}.$$
Since $\psi_{\ell}$ is homogeneous of degree~$0$, we may regard it as a function on $X=\{ h>0\}\subset\PP^3\C$.
Using these properties, we obtain the following (we omit the details).

\begin{claim}\label{claim:FJP3}
For any $\ell\in\N_+$, the function $\psi_{\ell} : X\rightarrow\C$ is a Flensted-Jensen function on $X=\SU(2,2)/\U(1,2)$, with parameter $\lambda=2\ell-1\in\R_+\simeq\jj^{\ast}_+$ and with
$$\square_X\,\psi_{\ell} = 2(\ell+1)(\ell-2)\,\psi_{\ell}.$$
The $(\g,K)$-modules $\V_{\ell}$ generated by~$\psi_{\ell}$ for $\ell\in\N_+$ form the complete set of discrete series representations for~$X$.
\end{claim}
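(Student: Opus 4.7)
The plan breaks naturally into two blocks: verifying that the explicit function $\psi_\ell$ of \eqref{eqn:FJpk} is a well-defined $L^2$-eigenfunction of the invariant operators, and then matching it with the abstract Flensted--Jensen construction of Section~\ref{subsec:FJfunction} in order to identify the spectral parameter and prove completeness.

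First I would establish the eigenvalue equation. The fact that $P_\ell$ is the harmonic polynomial on $\C^2$ of bidegree $(\ell,\ell)$ fixed by $\U(1)\times\U(1)$ means $\partial^2 P_\ell/\partial z_i\partial\overline{z}_i = 0$ for $i=1,2$, and the factors $h(z)^{\ell+1}$ and $(|z_1|^2+|z_2|^2)^{-2\ell-1}$ are functions of $h$ and of $r:=|z_1|^2+|z_2|^2$ only. A direct computation applying $\square_{\C^{2,2}}$ to the product in \eqref{eqn:FJpk} via Leibniz and collecting the terms $\partial^2_{z_i\overline z_i}h^{\ell+1}$, $\partial^2_{z_i\overline z_i}r^{-2\ell-1}$ and the cross terms then gives $h\,\square_{\C^{2,2}}\psi_\ell=(\ell+1)(\ell-2)\psi_\ell$; invoking the commutative diagram of Section~\ref{subsec:introex} relating $\square_X$ and $2h\square_{\C^{2,2}}$ yields $\square_X\psi_\ell = 2(\ell+1)(\ell-2)\psi_\ell$. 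One checks as well, from the formula $(\lambda,\lambda)-(\rho,\rho)$ of Fact~\ref{fact:eigenvalues} and the explicit normalization of the Killing form on $\ssu(2,2)$, that the value $2(\ell+1)(\ell-2)$ is consistent with the spectral parameter $\lambda = 2\ell-1\in\jj^{\ast}_+\simeq\R_+$.

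Next I would identify $\psi_\ell$ with a Flensted--Jensen function. Since $\psi_\ell$ is homogeneous of degree $0$ it descends to $X\subset\PP^3\C$, and its restriction to the compact subsymmetric space $K/H\cap K\simeq X_c$ is (the restriction of) the harmonic polynomial $P_\ell$ of bidegree $(\ell,\ell)$, which spans an irreducible $K$-type with highest weight $\mu_\ell\in\Lambda_+$ corresponding precisely to $\mu_\lambda=\lambda+\rho-2\rho_c$ for $\lambda=2\ell-1$ (after writing down $\rho$ and $\rho_c$ in the root system of type $C_2$ realized on $\jj$, and verifying $\omega_1$ is the fundamental weight relevant to the Borel--Weil realization of $V_{\mu_\ell}\subset\mathcal O_{\mathrm{alg}}(K_\C/(H\cap K)_\C)$). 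Since $\psi_\ell(x_0)=P_\ell(1,0)=1$, the uniqueness up to scalar of the $(K\cap H)$-invariant highest weight vector in $V_{\mu_\ell}$ forces the $K$-finite eigenfunction $\psi_\ell\in\mathcal A(X,\M_\lambda)_K$ to coincide, through the duality $\eta$ of \eqref{eqn:FJisom}, with $\mathcal P_\lambda(\delta_Z)$ for the closed $H^d$-orbit $Z$ corresponding (under \eqref{eqn:Z1to1}) to the positive system used above. Square-integrability is then a consequence of Fact~\ref{fact:decompVlambda} applied to this $\lambda$: the integrality condition \eqref{eqn:lmdint} is $\mu_\ell\in\Lambda_+$, which holds by construction, so $\V_{Z,\lambda}\subset L^2(X,\M_\lambda)_K$ is nonzero and contains $\psi_\ell$.

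Finally, for the completeness statement, I would appeal again to Fact~\ref{fact:decompVlambda}: since $\rank G/H=1$, the algebra $\D(X)$ is generated by $\square_X$, hence the joint spectrum $\Spec_d(X)$ lies on the real line, and the only $\lambda\in\jj^{\ast}_+$ giving rise to a nonzero Flensted--Jensen module for the closed $H^d$-orbits $Z\in\ZZ$ are those in $2\rho_c-\rho+\Lambda_+$, which are precisely the values $\lambda=2\ell-1$, $\ell\geq 1$. One then checks that the second closed orbit yields the conjugate (dual) $(\g,K)$-module and no further discrete series. The main technical obstacle, and the step I would devote most care to, is the identification in the previous paragraph between the $K$-type of $\psi_\ell$ and the highest weight $\mu_\ell$ in the parametrization of~$\Lambda_+$: it requires pinning down explicit bases for $\jj$, $\aaa$, and the root systems $\Sigma^+(\g_\C,\jj_\C)$, $\Sigma^+(\kk_\C,\jj_\C)$ inside $\ssu(2,2)$, and comparing the normalization of the Killing form with the explicit realization \eqref{eqn:FJpk}; the direct Laplacian computation and the general structural results, by contrast, are routine once this identification is in place.
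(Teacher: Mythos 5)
Your overall strategy (direct eigenvalue computation, identification of the restriction to $K/H\cap K$ with the Flensted--Jensen $K$-type, then invoking Fact~\ref{fact:decompVlambda} for square-integrability and completeness) is the right one and matches the route the paper suggests by analogy with the $\AdS^3$ case in Chapter~\ref{sec:exAdS}. The eigenvalue normalization you check (Fact~\ref{fact:eigenvalues} gives $\lambda^2 - 9 = (2\ell-1)^2-9 = 4(\ell+1)(\ell-2)$, which is twice the eigenvalue $2(\ell+1)(\ell-2)$ stated in the claim, consistent with the remark in Section~\ref{subsec:3cpx} that the Killing-normalized Laplacian is twice the one defined via $h$ in Section~\ref{subsec:introex}) is correct.

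However, your completeness argument contains a concrete error. You write that ``the second closed orbit yields the conjugate (dual) $(\g,K)$-module and no further discrete series.'' For $X=\SU(2,2)/\U(1,2)$ there is \emph{no} second closed $H^d$-orbit: as stated explicitly in Section~\ref{subsec:3cpx}, for $X=\SO(2,4n)_0/\U(1,2n)$ the set $\ZZ$ of closed $H^d$-orbits in $G^d/P^d$ is a singleton (equivalently, by \eqref{eqn:Z1to1}, there is a \emph{unique} positive system $\Sigma^+(\g_{\C},\jj_{\C})$ containing $\Sigma^+(\kk_{\C},\jj_{\C})$, since the root system is of type $BC_n$). You appear to have imported the geometry of the $\AdS^3$ example from Chapter~\ref{sec:exAdS}, where $\#\ZZ=2$ (the North and South poles of $G^d/P^d \simeq \mathbb{S}^m$) and indeed the two families $\psi^{\pm}_{\ell}$ are dual to each other. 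Here the situation is actually simpler: there is only one family, and the completeness follows from Fact~\ref{fact:decompVlambda} applied to the single $Z\in\ZZ$, with the parameters exhausted by the integrality condition $\mu_\lambda=\lambda+1\in\Lambda_+=2\N_+$, i.e.\ $\lambda=2\ell-1$, $\ell\in\N_+$. (Incidentally, the restricted root system $\Sigma(\g_\C,\jj_\C)$ for $n=1$ is of type $BC_1$, not $C_2$ as you write; $\jj$ is one-dimensional since $\rank X = 1$.) With the duality remark deleted and the orbit count corrected, your argument goes through.
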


\noindent
We note that the $(\g,K)$-module~$\V_{\ell}$ is irreducible and isomorphic to the Zuckerman--Vogan derived functor module $V_0(2\ell-1,1)$ in algebraic representation theory, with notation as in \cite[\S\,4]{kob94}; in particular, $\V_{\ell}$ has infinitesimal character $\frac{1}{2}(2\ell-1,1,-1,-2\ell+1)$ in the Harish-Chandra parameterization and minimal $K$-type parameter $(\ell,-\ell,0,0)$.

For the symmetric pair $(G,H)\simeq(\SU(2,2),\U(1,2))$, the polar decomposition $G=KBH$ holds, where the Lie algebra $\bb$ of~$B$ is generated by
$$Y_0 := \left(\begin{BMAT}{c.c}{c.c}
  \begin{BMAT}{ccc}{cc}
  0 & & \\ & & 0
  \end{BMAT}
  &
  \begin{BMAT}{ccc}{cc}
  1 & & \\ & & 0
  \end{BMAT}\\
  \begin{BMAT}{ccc}{cc}
  1 & & \\ & & 0
  \end{BMAT}
  &
  \begin{BMAT}{ccc}{cc}
  0 & & \\ & & 0
  \end{BMAT}
\end{BMAT}\right) \,\in\su(2,2) \simeq \g.$$
If we identify $\bb$ with~$\R$ by sending $Y_0$ to~$1$, then
$$\nu(z) = \operatorname{arccosh}\sqrt{\frac{|z_1|^2 + |z_2|^2}{h(z)}} \,\in\R_{\geq 0}$$
for all $z=[z_1:z_2:z_3:z_4]\in X$.
Here are the analytic estimates of Propositions \ref{prop:asym} and~\ref{prop:psilambda} for the Flensted-Jensen functions $\psi_{\ell}$ of \eqref{eqn:FJpk}.

\begin{lemma}\label{lem:estimP3}
For any $z\in X=\SU(2,2)/\U(1,2)$,
$$|\psi_{\ell}(z)| \leq \big(\!\cosh\nu(z)\big)^{-2(\ell+1)} \leq 2^{2(\ell+1)}\,e^{-2(\ell+1)\nu(z)}.$$
\end{lemma}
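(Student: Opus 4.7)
The plan is to reduce the pointwise estimate to a classical inequality on Legendre polynomials. First, from the explicit formula $\nu(z)=\operatorname{arccosh}\sqrt{(|z_1|^2+|z_2|^2)/h(z)}$ computed just above the lemma, we get
$$\cosh^2\nu(z)\,=\,\frac{|z_1|^2+|z_2|^2}{h(z)}\,.$$
Substituting this into the definition \eqref{eqn:FJpk} of $\psi_\ell$, the target bound $|\psi_\ell(z)|\le(\cosh\nu(z))^{-2(\ell+1)}$ becomes, after cancelling the factors of $h(z)^{\ell+1}$ and $(|z_1|^2+|z_2|^2)^{\ell+1}$, the purely polynomial statement
$$|P_\ell(z_1,z_2)|\,\le\,(|z_1|^2+|z_2|^2)^\ell\qquad\text{for all }(z_1,z_2)\in\C^2.$$
This is the heart of the matter and does not involve $z_3,z_4$ or the pseudo-Riemannian structure at all.

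To prove this polynomial inequality, I would set $a=|z_1|^2$, $b=|z_2|^2$ and $t=b/(a+b)\in[0,1]$ (with the obvious convention when $a+b=0$), so that
$$P_\ell(z_1,z_2)\,=\,(a+b)^\ell\,Q_\ell(t),\qquad Q_\ell(t):=\sum_{i=0}^\ell\binom{\ell}{i}^2(-1)^i(1-t)^{\ell-i}t^i.$$
Recognising $\sum_i\binom{\ell}{i}^2 x^i={}_2F_1(-\ell,-\ell;1;x)$, I would then apply Pfaff's transformation
$${}_2F_1(a,b;c;z)=(1-z)^{-a}{}_2F_1(a,c-b;c;z/(z-1))$$
with $z=-t/(1-t)$, which simplifies to $z/(z-1)=t$ and $1-z=1/(1-t)$, to obtain
$$Q_\ell(t)\,=\,{}_2F_1(-\ell,\ell+1;1;t)\,=\,\mathsf{L}_\ell(1-2t),$$
where $\mathsf{L}_\ell$ is the classical Legendre polynomial. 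Since $1-2t\in[-1,1]$ and $|\mathsf{L}_\ell(x)|\le 1$ on $[-1,1]$, we conclude $|Q_\ell(t)|\le 1$, hence the desired polynomial inequality. Equivalently,
$$P_\ell(z_1,z_2)\,=\,(|z_1|^2+|z_2|^2)^\ell\,\mathsf{L}_\ell\!\left(\tfrac{|z_1|^2-|z_2|^2}{|z_1|^2+|z_2|^2}\right).$$

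Finally, the second inequality in the lemma follows from the elementary bound $\cosh s\ge\tfrac12 e^{s}$ valid for all $s\ge 0$, raised to the power $2(\ell+1)$ and applied at $s=\nu(z)$.

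The only nontrivial step is the hypergeometric identification $Q_\ell={\mathsf L}_\ell(1-2t)$; one could alternatively argue representation-theoretically by observing that the $(H\cap K)$-invariant polynomial $P_\ell$ is, up to the normalisation $P_\ell(1,0)=1$, the zonal spherical function of weight $2\ell$ on the rank-one compact symmetric space $\U(2)/(\U(1)\times\U(1))\simeq\mathbb{S}^2$, whose zonal spherical functions are exactly the Legendre polynomials, so that Lemma~\ref{lem:spherical} (applied to this auxiliary compact symmetric pair) directly yields $|Q_\ell|\le 1$.
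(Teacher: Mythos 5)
Your proof is correct, and the reduction is the same one the paper uses: from $\cosh^2\nu(z)=(|z_1|^2+|z_2|^2)/h(z)$ one sees that $|\psi_\ell(z)|\le(\cosh\nu(z))^{-2(\ell+1)}$ is equivalent to the polynomial bound $|P_\ell(z_1,z_2)|\le(|z_1|^2+|z_2|^2)^\ell$, and the second inequality is $\cosh s\ge\tfrac12 e^s$. The paper's proof stops there, asserting the polynomial bound without justification; the genuinely useful part of your write-up is that you actually prove it. The hypergeometric route checks out: $(-\ell)_n(-\ell)_n/((1)_n\,n!)=\binom{\ell}{n}^2$ gives $\sum_i\binom{\ell}{i}^2 x^i={}_2F_1(-\ell,-\ell;1;x)$; for $z=-t/(1-t)$ one has $z/(z-1)=t$ and $(1-z)^\ell=(1-t)^{-\ell}$, so Pfaff yields $Q_\ell(t)={}_2F_1(-\ell,\ell+1;1;t)=\mathsf{L}_\ell(1-2t)$; and $|\mathsf{L}_\ell|\le 1$ on $[-1,1]$ is classical. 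Your alternative argument — recognizing $Q_\ell$ as the zonal spherical function on the auxiliary compact rank-one space $K/(K\cap H)\simeq\PP^1\C$ and invoking Lemma~\ref{lem:spherical} — is in fact exactly the mechanism by which the paper obtains the analogous estimate in the general case (Lemma~\ref{lem:FJineq}.(1)), so it fits the paper's framework better than the explicit Legendre computation, though the latter is more self-contained. One small remark: for $z\in X$ one has $|z_1|^2+|z_2|^2>h(z)>0$, so $t=|z_2|^2/(|z_1|^2+|z_2|^2)$ is always well-defined and the degenerate convention you mention is never invoked.
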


\noindent
This estimate follows immediately from the definition \eqref{eqn:FJpk} of~$\psi_{\ell}$, in light of the inequality $|P_{\ell}(z_1,z_2)|\leq (|z_1|^2+|z_2|^2)^{\ell}$ for all $(z_1,z_2)\in\C^2$.
Using \eqref{eqn:weightKBH}, one can show that the function~$\psi_{\ell}$ is square integrable on~$X$ if and only if $\ell>1/2$.

\subsection*{$\bullet$ The general case}

We now consider $G=\SO(2,4n)_0$ and $H=\U(1,2n)$ for an arbitrary integer $n\geq 1$.
The Cartan decomposition $G=KAK$ holds, where $K=\SO(2)\times\SO(4n)$ and $A$ is the maximal split abelian subgroup of~$G$ whose Lie algebra~$\aaa$ is the set of elements
$$a_{s,t} :=
\left(\begin{BMAT}{c.c}{c.c}
\begin{BMAT}{c.c}{c.c} 0 & \begin{BMAT}{cc}{cc} s & 0\\ 0 & t\end{BMAT}\\ \begin{BMAT}{cc}{cc} s & 0\\ 0 & t\end{BMAT} & 0\end{BMAT} & 0\\
0 & \begin{BMAT}{ccccccc}{ccccccc} & & & & & & \\ & & & & & & \\ & & & & & & \\ & & & 0 & & & \\ & & & & & & \\ & & & & & & \\ & & & & & &\end{BMAT}
\end{BMAT}\right)$$
for $s,t\in\R$.
The generalized Cartan decomposition $G=KBH$ holds, where the Lie algebra $\bb$ of~$B$ is the set of elements~$a_{s,-s}$ with $s\in\R$.
The set of inclusions \eqref{eqn:KGH} is given by
\begin{center}
\small
\begin{tabular}{c c c c c}
$K=\SO(2)\times\SO(4n)$ & $\subset$ & $G=\SO(2,4n)_0$ & $\supset$ & $H=\U(1,2n)$\\
\smallskip
\rotatebox{90}{$\supset$} & & \rotatebox{90}{$\supset$} & & \rotatebox{90}{$\supset$}\\
\smallskip
$K_{\C}=\SO(2,\C)\times\SO(4n,\C)$ & $\subset$ & $G_{\C}=\SO(2+4n,\C)$ & $\supset$ & $H_{\C}=\GL(1+2n,\C)$\\
\medskip
\rotatebox{90}{$\subset$} & & \rotatebox{90}{$\subset$} & & \rotatebox{90}{$\subset$}\\
\smallskip
$H^d=\SO(2)\times\SO^{\ast}(4n)$ & $\subset$ & $G^d=\SO^{\ast}(2+4n)$ & $\supset$ & $K^d=\U(1+2n).$
\end{tabular}
\end{center}
We recall that for any $m\geq 1$, the group $\SO^{\ast}(2m)$ is a real form of $\SO(2m,\C)$ with maximal compact subgroup $\U(m)$.

A maximal abelian subspace~$\jj$ of $\sqrt{-1}(\kk\cap\q)$ is given by the set of block matrices
$$Y_{(s_1,\dots,s_n)} := \left(\begin{BMAT}{c.c}{c.c}
\begin{BMAT}{ccc}{ccc} & & \\ & 0 & \\ & & \end{BMAT}
& 0\\
0 & 
   \begin{BMAT}{c.c.c.c.c.c}{c.c.c.c.c.c}
   & & & & & s_n Y\\
   & & & & \iddots & \\
   & & & s_1 Y & & \\
   & & - s_1 Y & & & \\
   & \iddots & & & & \\
   - s_n Y & & & & &
   \end{BMAT}
\end{BMAT}\right)$$
for $s_1,\dots,s_n\in\R$, where
$$Y := \left(\begin{BMAT}{cc}{cc} 0 & \sqrt{-1}\\ \sqrt{-1} & 0 \end{BMAT}\right).$$
In particular, the rank of the symmetric space~$X$ is $\dim\jj=n$.

Let $\{ f_1,\dots,f_n\} $ be the basis of~$\jj^{\ast}$ that is dual to $\{ Y_{(1,0,\dots,0)},\dots,Y_{(0,\dots,0,1)}\} $.
The set
$$\Sigma^+(\kk_{\C},\jj_{\C}) := \{ f_i\pm f_j : 1\leq i<j\leq n\} \cup \{ 2f_k : 1\leq k\leq n\} $$
is a positive system of restricted roots of $\jj_{\C}$ in~$\kk_{\C}$.
There is a unique positive system $\Sigma^+(\g_{\C}, \jj_{\C})$ that contains it, namely
$$\{ f_i\pm f_j : 1\leq i<j\leq n\} \cup \{ 2f_k : 1\leq k\leq n\} \cup \{ f_k : 1\leq k\leq n\} .$$
By \eqref{eqn:Z1to1}, for any minimal parabolic subgroup~$P^d$ of~$G^d$, there is a unique closed $H^d$-orbit in $G^d/P^d$, \ie the set~$\ZZ$ has only one element.
The multiplicities of the restricted roots $\pm f_i \pm f_j$ and~$\pm f_k$ are four, and those of~$\pm 2 f_k$ are one.
Identifying $\jj^{\ast}$ with~$\R^n$ \emph{via} the basis $\{ f_1,\dots,f_n\}$, we obtain
\begin{align*}
\jj^{\ast}_+ & = \big\{ \lambda = (\lambda_1,\dots,\lambda_n) : \lambda_1 > \lambda_2 > \dots > \lambda_n > 0\} ,\\
d(\lambda) & = \frac{1}{2} \min\big\{ \lambda_1-\lambda_2, \lambda_2-\lambda_3, \dots, \lambda_{n-1}-\lambda_n, 2\lambda_n\big\} ,\\
\rho & = \big(4n-1, 4n-5, \dots, 7, 3\big),\\
\rho_c & = \big(4n-3, 4n-7, \dots, 5, 1\big),\\
\mu_{\lambda} & = \lambda + \rho - 2\rho_c = \big(\lambda_1 - 4n+5, \lambda_2-4n+9, \dots, \lambda_{n-1}-3, \lambda_n+1\big).
\end{align*}
The integrality condition~\eqref{eqn:lmdint} on~$\mu_{\lambda}$ amounts to
\begin{eqnarray*}
& & \lambda_j + 1 \in 2\N  \quad\quad \mathrm{for\ all}\ 1\leq j\leq n\\
& \mathrm{and} & \lambda_j - \lambda_{j+1} \geq 4  \quad \mathrm{for\ all}\ 1\leq j\leq n-1.
\end{eqnarray*}
Since the restricted root system $\Sigma(\g_{\C},\jj_{\C})$ is of type $BC_n$, the Weyl group~$W$ is isomorphic to the semidirect product $\mathcal{S}_n\ltimes(\Z/2\Z)^n$ and we have $\C$-algebra isomorphisms
\begin{equation*}
\D(X) \,\simeq\, \C[x_1,\dots,x_n]^{\mathcal{S}_n \ltimes (\Z/2\Z)^n} \,\simeq\, \C [D_1,\dots,D_n],
\end{equation*}
where $D_1,D_2,\dots,D_n$ are algebraically independent invariant polynomials of homogeneous degrees $2,4,\dots,2n$.
If we normalize the pseudo-Riemannian metric $g$ on $X$ by $g(Y,Y)=1$ for $Y:=\frac{d}{ds}|_{s=0}\,\exp(a_{s,-s})\cdot x_0\in T_{x_0}X$ (where $x_0$ denotes the image of $H$ in $X=G/H$, as usual), then the Laplacian~$\square_X$ is $16n$ times the Casimir operator defined by the Killing form (for $n=1$, this is twice the Laplacian that we defined in Section~\ref{subsec:introex} with respect to the ``indefinite Fubini--Study metric''~$h$).
By Fact~\ref{fact:eigenvalues}, the action of the Laplacian~$\square_X$ on $L^2(X,\M_{\lambda})$ is given by multiplication by the scalar
$$(\lambda,\lambda) - (\rho,\rho) = \lambda_1^2 + \dots + \lambda_n^2 - \frac{1}{3}(16n^3 + 12n^2 - n).$$
We note that the center $Z(\SO(2,4n)_0)$ is contained in $\U(1,2n)$, hence\linebreak $\Lambda^{\Gamma\cap Z(G_s)}=\Lambda$ for all~$\Gamma$ by Remark~\ref{rem:LambdaJ}; this shows that the choice of~$\Gamma$ does not impose any additional integrality condition on the discrete spectrum for $X=\SO(2,4n)_0/\U(1,2n)$ when we apply Theorems \ref{thm:precise} and~\ref{thm:precisedeform}.

\begin{remark}
In Sections \ref{subsec:AdS} and~\ref{subsec:3cpx}, the isometry group of~$X$ is in the same family $\OO(2,2m)$, with $m\in\N$ in Section~\ref{subsec:AdS} and $m\in 2\N$ in Section~\ref{subsec:3cpx}.
However, the representations $\V_{Z,\lambda}$ of $G=\SO(2,2m)_0$ that are involved are different: they are all highest-weight modules if $X=\AdS^{2m+1}$, and never highest-weight modules if $X$ is the indefinite K\"ahler manifold $\SO(2,4n)_0/\U(1,2n)$.
\end{remark}

\chapter*{Acknowledgements}

We warmly thank Michael Atiyah, Alex Eskin, Livio Flaminio, Fran\c cois Gu\'eritaud, Colin Guillarmou, Toshihiko Matsuki, Amir Mohammadi, Werner M\"uller, Peter Sarnak, Birgit Speh, Alexei Venkov, Nolan Wallach, and Joseph Wolf for enlightening discussions on various aspects of the paper.
We are grateful to the University of Tokyo for its support through the GCOE program, and to the Institut des Hautes \'Etudes Scientifiques (Bures-sur-Yvette), the Mathematical Sciences Research Institute (Berkeley), the Max Planck Institut f\"ur Mathematik (Bonn), and the departments of mathematics of the University of Chicago and Universit\'e Lille~1 for giving us opportunities to work together in very good conditions.

\backmatter

\end{document}